\documentclass{amsart}
\usepackage[utf8]{inputenc}

\DeclareMathAlphabet{\mathpzc}{OT1}{pzc}{m}{it}

\usepackage{amssymb,amsmath,mathtools,latexsym, amscd, epsfig}
\usepackage{amsthm}
\usepackage{setspace}
\usepackage{graphicx}
\usepackage{mathrsfs}
\usepackage{mdwlist}
\usepackage{color}
\usepackage{bbm,soul}
\usepackage{url}
\usepackage[colorlinks=true]{hyperref}

\usepackage[all,cmtip]{xy}
\usepackage[shortlabels]{enumitem}
\usepackage{todonotes}

\newtheorem{theorem}{Theorem}
\newtheorem{lemma}[theorem]{Lemma}
\newtheorem{definition}[theorem]{Definition}
\newtheorem{proposition}[theorem]{Proposition}

\newtheorem{corollary}[theorem]{Corollary}
\theoremstyle{remark}
\newtheorem{remark}[theorem]{Remark}
\newtheorem{example}[theorem]{Example}

\theoremstyle{definition}
\newtheorem{defin}[theorem]{Definition}

\newtheorem*{claim*}{Claim}

%\swapnumbers
%\newtheorem{corollary}[theorem]{Corollary}

\numberwithin{theorem}{section}
\numberwithin{equation}{section}

%custom theorems
\newtheoremstyle{customlabel}{}{}{\itshape}{}{\bfseries}{.}{.5em}{#1 \thmnote{#3}}
\theoremstyle{customlabel}

\newtheoremstyle{named}{}{}{\itshape}{}{\bfseries}{.}{.5em}{\thmnote{#3}}
\theoremstyle{named}

%% Algebras
\newcommand{\C}{\mathbb{C}}
\newcommand{\R}{\mathbb{R}}
\newcommand{\Q}{\mathbb{Q}}
\newcommand{\Z}{\mathbb{Z}}
\newcommand{\N}{\mathbb{N}}
\newcommand{\A}{\mathbb{A}}
\newcommand{\Mat}{\mathrm{Mat}}

\newcommand{\Lie}{\mathrm{Lie}}

%% Groups
\newcommand{\G}{\mathbf{G}} % alg. group G
\renewcommand{\H}{\mathbf{H}} % alg. group H
\newcommand{\GL}{\mathrm{GL}}

\newcommand{\SL}{\mathrm{SL}}
\newcommand{\SO}{\mathrm{SO}}
\newcommand{\Spin}{\mathrm{Spin}}
 % (pointwise) stabilizer group
 % algebraic torus

\newcommand{\Aut}{\mathrm{Aut}}

%% Function spaces

\newcommand{\ord}{\mathrm{ord}}
\newcommand{\lev}{\mathrm{lev}}

%% Miscellaneous
\newcommand{\id}{\mathrm{id}}
\newcommand{\Res}{\operatorname{Res}} % restriction of scalars
\newcommand{\disc}{\operatorname{disc}}
\renewcommand{\mod}{\, \mathrm{mod}\,} % mod without weird spacing
\newcommand{\de}{\,\mathrm{d}} % differential
\newcommand{\vol}{\mathrm{vol}} % volume
\newcommand{\height}{\mathrm{ht}} % height
\newcommand{\minht}{\mathrm{minht}}
\newcommand{\content}{\mathsf{c}}
\newcommand{\Ad}{\mathrm{Ad}}
\newcommand{\ad}{\mathrm{ad}}
\newcommand{\red}{\mathrm{red}}
\newcommand{\metric}{\mathrm{d}}
\newcommand{\norm}[1]{\|#1\|}

%% ---------------------------------
%% Special alphabets
%% ---------------------------------

%% Caligraphic letters

\newcommand{\Hcal}{\mathcal{H}}

%% Fraktur letters

\newcommand{\cfrak}{\mathfrak{c}}

\newcommand{\gfrak}{\mathfrak{g}}
\newcommand{\hfrak}{\mathfrak{h}}

\newcommand{\lfrak}{\mathfrak{l}}
\newcommand{\mfrak}{\mathfrak{m}}

\newcommand{\rfrak}{\mathfrak{r}}
\newcommand{\sfrak}{\mathfrak{s}}
\newcommand{\tfrak}{\mathfrak{t}}

\newcommand{\Gfrak}{\mathfrak{G}}
\newcommand{\Hfrak}{\mathfrak{H}}

%% blackboard font
 %\Bbb already defined

%% boldface font
\newcommand{\Abf}{\mathbf{A}}

\newcommand{\Cbf}{\mathbf{C}}

\newcommand{\Ebf}{\mathbf{E}}

\newcommand{\Gbf}{\mathbf{G}}
\newcommand{\Hbf}{\mathbf{H}}

\newcommand{\Lbf}{\mathbf{L}}
\newcommand{\Mbf}{\mathbf{M}}
\newcommand{\Nbf}{\mathbf{N}}

\newcommand{\Pbf}{\mathbf{P}}

\newcommand{\Rbf}{\mathbf{R}}

\newcommand{\Tbf}{\mathbf{T}}

\newcommand{\Vbf}{\mathbf{V}}
\newcommand{\Wbf}{\mathbf{W}}

\newcommand{\Zbf}{\mathbf{Z}}

\def\vpz{\mathpzc{v}}
\def\upz{\mathpzc{u}}

\def\wpz{\mathpzc{w}}
\def\xpz{\mathpzc{x}}

\def\zpz{\mathpzc{z}}

\newcommand\data{\mathcal D}
\newcommand\adele{\mathbb A}

\newcommand{\order}{\mathfrak{o}}
\newcommand{\places}{\Sigma}
\newcommand{\Nr}{\mathrm{Nr}}

\newcommand{\jmap}{{\rm j}}
\def\plw{{\mathcal{S}}}

\newcommand{\mcpl}{{\rm mcpl}}

\newcommand{\gen}{\mathrm{gen}}
\newcommand{\spingenus}{\mathrm{spn}}

%% -----------------
%% quotient commands
%% -----------------
%% right-quotient
\newcommand\rquot[2]{
  \mathchoice
  {% \displaystyle
    \text{\raise0.5ex\hbox{$#1$}\big/\lower0.5ex\hbox{$#2$}}%
  }
  {% \textstyle
    #1\,/\,#2
  }
  {% \scriptstyle
    #1\,/\,#2
  }
  {% \scriptscriptstyle
    #1\,/\,#2
  }
}

%% left-quotient
\newcommand\lquot[2]{
  \mathchoice
  {% \displaystyle
    \text{\lower0.5ex\hbox{$#1$}\big\backslash\raise0.5ex\hbox{$#2$}}%
  }
  {% \textstyle
    #1\,\backslash\,#2
  }
  {% \scriptstyle
    #1\,\backslash\,#2
  }
  {% \scriptscriptstyle
    #1\,\backslash\,#2
  }
}

%% left- and right-quotient
\newcommand\lrquot[3]{
  \mathchoice
  {% \displaystyle
    \text{\lower0.5ex\hbox{$#1$}\big\backslash\raise0.5ex\hbox{$#2$}\big/
      \lower0.5ex\hbox{$#3$}}%
  }
  {% \textstyle
    #1\,\backslash\,#2\,/\,#3
  }
  {% \scriptstyle
    #1\,\backslash\,#2\,/\,#3
  }
  {% \scriptscriptstyle
    #1\,\backslash\,#2\,/\,#3
  }
}

\newcounter{consta}
\renewcommand{\theconsta}{{A_{\arabic{consta}}}}
\newcommand{\consta}{\refstepcounter{consta}{\color{red}\theconsta}}

\newcounter{constk}
\renewcommand{\theconstk}{{\kappa_{\arabic{constk}}}}
\newcommand{\constk}{\refstepcounter{constk}{\color{red}\theconstk}}

%\newcounter{constb}
%\renewcommand{\theconstb}{{A_{\arabic{constb}}}}
%\newcommand{\constb}{\refstepcounter{constb}\theconstb}

\newcounter{constc}
\renewcommand{\theconstc}{{c_{\arabic{constc}}}}
\newcommand{\constc}{\refstepcounter{constc}{\color{red}\theconstc}}

\newcounter{constE}

\newcounter{constd}

\newcommand{\be}{\begin{equation}}
\newcommand{\ee}{\end{equation}}
\DeclareMathOperator{\diff}{d}

  %base point closing lemma in G
 %base point for closing lemma

\def\dG{{\dim(\Gbf)}}

\def\hcal{\mathcal{H}}

\newcommand{\extranil}{\mathcal{N}}
\newcommand{\tempered}{\mathsf{T}}
\newcommand{\cpl}{\mathrm{cpl}}

\newcommand{\gp}{p}
\newcommand\gep{\ell}
\newcommand{\gplace}{w}
\newcommand\nth{m}

\newcommand{\smalllattice}{\mathfrak{M}}
\newcommand{\biglattice}{\mathfrak{L}}

\newcommand{\diagsubgrp}{{A}}

\newcommand{\Ganisotropic}{}

\begin{document}

\title[Effective Equidistribution of semisimple adelic periods]{Effective Equidistribution of semisimple adelic periods and representations of quadratic forms}

\author{M.~Einsiedler}
	\address[M. E.]{Departement Mathematik, ETH Z\"urich, R\"amistrasse 101, 8092 Z\"urich,
	Switzerland}
	\email{manfred.einsiedler@math.ethz.ch}

\author{E.~Lindenstrauss}
\address[E.L.]{Institute of Advanced Study, 1 Einstein Drive, Princeton, NJ 08540, USA\newline
\emph{and}\newline
The Einstein Institute of Mathematics, Edmund J. Safra Campus, Givat Ram,
The Hebrew University of Jerusalem, Jerusalem 91904, Israel
}
\email{elonl@ias.edu}

\author{A.~Mohammadi}
\address[A.M.]{Department of Mathematics, University of California, San Diego, CA 92093, USA}
\email{ammohammadi@ucsd.edu}
\thanks{}

\author{A.~Wieser}
\address[A.W.]{Institute of Advanced Study, 1 Einstein Drive, Princeton, NJ 08540, USA}
\email{awieser@ias.edu}

\thanks{M.E.\ acknowledges support by SNF grants 178958 and 10003145.
E.L.\ and A.W.\ acknowledge the support of ERC 2020 grant HomDyn (grant no.~833423).
A.M.\ acknowledges support by NSF grants DMS-2055122 and 2350028. 
A.W.\ was also supported by SNF grant 217944.
}

\begin{abstract}
We prove an effective equidistribution theorem for semisimple closed orbits on compact adelic quotients. The obtained error depends polynomially on the minimal complexity of intermediate orbits and the complexity of the ambient space.
The proof uses dynamical arguments, property $(\tau)$, Prasad's volume formula, an effective closing lemma, and a novel effective generation result for subgroups. The latter in turn relies on an effective version of Greenberg's theorem.

We apply the above to the problem of establishing a local-global principle for representations of integral quadratic forms, improving the codimension assumptions and providing effective bounds in a theorem of Ellenberg and Venkatesh.
\end{abstract}

\maketitle
\setcounter{tocdepth}{1}
\tableofcontents

\section{Introduction}\label{sec:intro}

There is a very fruitful interplay between homogeneous dynamics and number theory.  The origins of this interplay are quite classical. It can be traced back to Minkowski's pioneering work on the Geometry of Numbers, which allowed geometry and implicitly dynamics to be brought to bear on questions in number theory and Diophantine approximations. This connection was further explored e.g.\ in  pioneering work by Artin as well as the remarkable results of Linnik and his school (see e.g.\ \cite{Linnik-book}). 

The results obtained by Dani--Margulis and Ratner on unipotent flows have been particularly influential, stemming from conjectures of Raghunathan and Dani \cite{Dani-Horo2}, which were also implicitly touched upon in the work of Cassels and Swinnerton-Dyer~\cite{Cassels-Swinnerton-Dyer}. 
These conjectures were fully resolved by Ratner in \cite{Ratner-Acta, Ratner-measure, Ratner-topological}, with important special cases proved using a different approach by Dani and Margulis \cite{Margulis-Oppenheim, DM-Oppenheim, DM-MathAnn}. To pass from the measure classification to understanding individual orbits Ratner used important nondivergence estimates of Margulis and Dani \cite{Marg-Nondiv, Dani-divergent, DM-Nondiv}. Particularly powerful has been the combination of Ratner's measure classification results \cite{Ratner-measure} with significant additional techniques known as the Linearization method  that came out of the above mentioned works of Dani and Margulis \cite{DM-Linearization}; cf.\ also Shah's related work \cite{Shah-MathAnn}.

In this paper, our focus is on equidistribution of \emph{periodic orbits of semisimple groups}. Using Ratner's work and the Linearization method, Mozes and Shah showed in \cite{Mozes-Shah} that a sequences of such orbits in a quotient space $\Gamma\backslash G$ (say with volume tending to infinity) either has a subsequence that becomes equidistributed in a homogeneous subspace of $ \Gamma\backslash G$ which contains them (up to small shifts), or escapes to the cusp; unless the sequence become equidistributed in $ \Gamma\backslash G$ this can be explained by algebraic reasons. Our main result in this paper is a fully effective and quantitative and `fully adelic' version of this result when $ \Gamma\backslash G$ is compact. Let $\G$ be a semisimple group defined over a number field $F$. Then the space we work in\footnote{Strictly speaking, this is only true when $\G$ is simply connected; more generally we look at a slight variant of this space.} is $\G(F)\backslash \G(\A)$ with $\A$ the ring of adeles of $F$, and by fully adelic we mean an equidistribution result for a sequence of (translated) orbits corresponding to semisimple  $F$-subgroups of $\G$ that can be quantified solely in terms of global properties of the subgroups and not on how they split at given places.
A non effective adelic equidistribution result was given by Gorodnik and Oh in \cite{GorOh-RationalPts} but it is not fully adelic in our sense as it requires the $F$-subgroups to have no compact factors over a fixed place of $F$. Gorodnik and Oh's work relies on $S$-arithmetic generalizations of Ratner's measure classification theorem by Ratner \cite{Ratner-p-adic} and by Margulis and Tomanov \cite{MargulisTomanov, Margulis:1996p86}; cf.\ also Tomanov's paper~\cite{Tomanov:2000p98}.

Perhaps surprisingly, the powerful dynamical techniques of unipotent flows, whose origins stem from the study of statistical properties of typical trajectories in dynamical systems, have been very successful in studying integer points on arithmetic varieties. 
An early notable examples of this was the work of Eskin, Mozes and Shah \cite{Eskin-Mozes-Shah-1996} on counting the number of integer matrices of norm $<T$ with a given characteristic polynomial.
Gorodnik and Oh in \cite{GorOh-RationalPts} in their paper used $S$-arithmetic unipotent flows to count rational point of bounded height as the height increases.
In this paper, we need dynamics already to show the \emph{existence} of at least one integer point on the relevant arithmetical variety, though our techniques also find the asymptotic number of points. That one can establish existence of integer points on `sufficiently large' varieties using dynamics was shown by Ellenberg and Venkatesh in \cite{EllenbergVenkatesh-localglobal} who established a local-to-global principle for representation of a quadratic form in $m$ variables by a form in $n$ variables, $n \geq m+3$. We strengthen their work by making it effective (hence one can establish the existence of such representation in explicit cases). Moreover by using our fully adelic equidistribution result we can drop certain splitting conditions Ellenberg and Venkatesh had to impose when $n<m+5$. 
The local-to-global principle of \cite{EllenbergVenkatesh-localglobal} as well as the stronger results we prove in this paper rely on the Hasse--Minkowski theorem, a classical local-to-global result that shows that while we do not know apriori that our variety has an integer point, at least we have a rational point on it.

An effective equidistribution result for periodic orbits of a fixed semisimple group~$H$ in quotients of a real group $G$ by a congruence lattice was proved by Margulis, Venkatesh, and one of us (M.E.) in \cite{EMV}. In that work, the semisimple subgroup $H$ was assumed to have finite centralizer in the bigger group $G$, hence the periodic orbits are isolated and do not come in continuous families.
Margulis, Venkatesh, M.E.\ and A.M.\ in \cite{EMMV} proved a fully adelic equidistribution results for maximal semisimple $F$-subgroups $\H$ of an $F$-group $\G$. The fact that the result obtained is not just effective (and quantitative) but also fully adelic allowed the authors of \cite{EMMV} to prove new equidistribution results that were not known even qualitatively earlier. Note that the assumption that $\H$ is maximal is more restrictive than the finite centralizer assumption of \cite{EMV}.

Unfortunately, the finite centralizer assumption is a significant one, and in particular made it impossible to use these tools to provide an effective local-to-global result extending the work of Ellenberg and Venkatesh from \cite{EllenbergVenkatesh-localglobal}.
The finite centralizer assumption was recently removed by A.W. for quotients of real algebraic groups in \cite{AW-realsemisimple}. There are difficulties in extending \cite{AW-realsemisimple} already to the $S$-arithmetic context due to the presence of small subgroups in $p$-adic groups; the fully adelic version we give here requires additional ideas and careful tracking of the dependence of all steps of the proof on the arithmetic complexity of all groups involved.

Finally, we mention that recently there have been significant advances in understanding quantitatively the distribution properties of arbitrary unipotent trajectories, we mention in particular \cite{LMW22,Yang-SL3,LMWY25}. At present, such results are known only in groups of rank at most $2$, but in principle one could envision that modification of that approach could prove e.g.\ results like \cite{EMV, AW-realsemisimple} (and of course can be used to study general orbits, not just periodic orbits). However, these results are inherently not fully adelic in the sense discussed above.

\subsection{Equidistribution of adelic periods}\label{sec:intro equid}

In the following~$F$ will\index{F@$F$, the number field} 
always denote a number field,~$\adele$ will denote\index{a@$\adele, \adele_f$, adeles and finite adeles over~$F$}
the ring of adeles over~$F$, and~$\G$ will be a connected semisimple algebraic $F$-group. We also consider as given a homomorphism with finite central kernel $\rho:\G\to\SL_N$. \index{r@$\rho:\G\rightarrow\SL_N$, a fixed embedding over~$F$}

\begin{quote}
Throughout this paper, we will assume $\G$ is $F$-anisotropic.
\end{quote}\Ganisotropic
We define the compact (ambient) homogeneous space\index{X@$X=\G(F)\backslash\G(\adele)$, the ambient space} 
\begin{align*}
X= [\rho(\G(\A))] = \SL_N(F)\rho(\G(\A)) \subset \lquot{\SL_N(F)}{\SL_N(\A)}.
\end{align*}
Let $\mu_X$ denote the $\rho(\G(\A))$-invariant probability measure on $X$.

We wish to study closed orbits, also known as adelic periods, inside $X$ arising from data $\data=(\mathbf{H},\iota,g)$\index{data@$\data=(\mathbf{H},\iota,g)$, triple defining a ASH} 
consisting of 
\begin{itemize}
\item[(1)] an $F$-algebraic group\index{H@$\mathbf{H}$, the algebraic group giving rise to the ASH~$Y$} $\mathbf{H}$
such that~$\H(F)\backslash \H(\adele)$ has finite volume, 
\item[(2)] an algebraic homomorphism $\iota: \mathbf{H} \rightarrow \SL_N$ defined over $F$ with finite kernel, and
\item[(3)] an element $g \in \SL_N(\A)$. 
\end{itemize}
We call $\mathcal{D}= (\mathbf{H},\iota,g)$ consistent with $(\G,\rho)$ if $\iota(\Hbf) \subset \rho(\G)$ and $g \in \rho(\G(\A))$.

To any data $\data=(\mathbf{H},\iota,g)$, we may associate the \emph{algebraic homogeneous set} 
\[
 Y_{\data} := [\iota(\mathbf{H}(\adele))g] = \SL_N(F)\iota(\mathbf{H}(\adele))g
\]
and the {\em algebraic homogeneous measure} $\mu_{\data}=\mu_{Y_\data}$\index{mu@$\mu_\data, Y_\data$, 
algebraic homogeneous measure and homogeneous set} given by the push-forward, under the map $x \mapsto \iota(x) g,$ 
of the normalized Haar measure on $\mathbf{H}(F) \backslash \mathbf{H}(\adele)$.
We say that $\data$, $Y_\data$, or $\mu_\data$ is simple, semisimple, simply connected, etc.\
according to whether the algebraic group~$\H$ is so.
We note that $\mu_\data$ is invariant under $H=g^{-1}\iota(\H(\A))g$ and supported on the single orbit $Y_\data=\SL_N(F)g H$. 
If $\data$ is semisimple, simply connected, and consistent with $(\G,\rho)$, $Y_\data$ is contained in $X$.

We now define the \emph{complexity} associated to an orbit (slightly adapted from \cite[App.~B]{EMMV}, sometimes also called a discriminant or height),
which will become the basis for
the error rate in the desired equidistribution result.
Let $\data = (\Hbf,\iota,g)$ be data for an algebraic homogeneous measure and let
\begin{align*}
\vpz_\Hbf \in \bigwedge^{\dim(\Hbf)} \iota(\Lie(\H))(F)\subseteq\bigwedge^{\dim(\Hbf)} \mathfrak{gl}_N(F)
\end{align*}
be any non-zero vector.
For every place $v$ of $F$ we let $F_v$ be the completion of $F$ at $v$ and let $\norm{\cdot}_v$ denote the norm on $\bigwedge^{\dim(\Hbf)} \mathfrak{gl}_N(F_v)$ induced by the maximum norm, with respect to the elementary matrices, on $\mathfrak{gl}_N(F_v)$.
Then the \emph{complexity} of $Y_\data$
is defined by
\begin{align}\label{eq:defcomplexity}
\cpl(Y_\data) = \prod_v \norm{g_v^{-1}.\vpz_\Hbf}_v,
\end{align}
where the product is over all places of $F$.
Note that the definition is independent of the choice of vector $\vpz_\Hbf$ and so only depends on $\data$.
In fact, it only depends on the orbit $Y_\data$.

If an algebraic homogeneous set $Y_\data\subset X$ is contained in another such proper subset of intermediate dimension and small complexity,
then $\mu_\data$ cannot be close to $\mu_X$. Therefore we are also concerned with the complexity
of intermediate orbits
and define the \emph{min-complexity} of $Y_\data$ by\Ganisotropic
\[
\mcpl(Y_\data)=\min\bigl\{\cpl(\SL_N(F){\bf M}(\A)g): \iota(\H)\subseteq {\bf M}\subsetneq \rho(\G) \text{ semisimple}\bigr\}.
\]

Recall that a $C^1$-function $f$ on $X$ is a function invariant under a compact open subgroup $K<\G(\A_{f})$ (via $\rho$)
and $C^1$ on the ($\rho(\G(F\otimes \R))$-invariant) manifold $X/\rho(K)$,
where $\A_{f}<\A$ is the ring of finite adeles over $F$. 
If $L\geq 1$ is minimal so that $\rho(K)<\SL_N(\A_{f})$ can be defined using a congruence condition modulo $L$, then we say that $f$ \emph{has level $L$}.
Finally we fix an inner product on $\mathfrak{gl}_N(F)\otimes\R$
and define the $C^1$-norm $\|f\|_{C^1(X)}$ of a smooth function $f$ as the maximum of the sup norms of the function and its partial derivatives in directions corresponding to an orthonormal basis of $\Lie(\G)(F)\otimes\R$.

In this paper we prove the following effective equidistribution result regarding algebraic homogeneous measures: 

\begin{theorem}\label{thm:main equi}
Assume that $\G$ and $\H$ are semisimple simply connected $F$-groups that are also $F$-anisotropic. Let $X=[\rho(\G(\A))]$
and $Y_\data\subset X$ be defined by data $\data=(\H,\iota,g)$ consistent with $(\G,\rho)$. 
Then there exists $\consta\label{A:main}\geq 1$ depending only on $N$ and $[F:\Q]$ so that
\[
\biggl|\int_{Y_\data} f \diff\!\mu_{\data}- \int_X f\diff\!\mu_X\biggr|\ll \frac{\cpl(X)^{\ref{A:main}}}{\mcpl(Y_\data)^{1/\ref{A:main}}} 
\|f\|_{C^1(X)}L
\]
where $f$ is $C^1$, $L$ is the level of $f$, and the implied constant depends only on~$N$, $[F:\Q]$, and polynomially on $|\disc(F)|$.
\end{theorem}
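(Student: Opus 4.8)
The plan is to reduce the problem to a combination of three inputs: (i) an $S$-arithmetic effective equidistribution theorem with the finite-centralizer hypothesis (a polynomially effective version of \cite{EMV,EMMV} in the adelic setting), (ii) an effective generation result for subgroups built from an effective Greenberg's theorem, and (iii) an effective closing lemma together with property $(\tau)$ and Prasad's volume formula to control the geometry of the relevant centralizers and the spectral gap. The overall architecture mirrors \cite{AW-realsemisimple}, but with every constant tracked against the arithmetic complexity $\cpl(X)$, $\cpl(Y_\data)$, $\mdisc$, and $|\disc(F)|$.

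The first step is a dichotomy. Write $\Hbf$ as an almost-direct product of its simple factors and let $\Lbf$ be the centralizer of $\iota(\Hbf)$ inside $\rho(\G)$. If $\Lbf$ is finite (equivalently $\iota(\Hbf)$ has finite centralizer), we are in a situation accessible to the machinery behind \cite{EMMV}; one runs the argument there keeping track of polynomial dependence and obtains the bound directly, with $\mcpl$ in the denominator since the only obstructions to equidistribution in $X$ are the intermediate semisimple $\Mbf$ with $\iota(\Hbf)\subseteq\Mbf\subsetneq\rho(\G)$. If $\Lbf$ is positive-dimensional, let $\Mbf$ be the (almost) product of $\iota(\Hbf)$ with the semisimple part of $\Lbf$; this is a proper semisimple subgroup of $\rho(\G)$ strictly larger than $\iota(\Hbf)$, and $Y_\data$ sits inside the homogeneous set $Y' = [\Mbf(\A)g]$ of complexity controlled by $\mcpl(Y_\data)$. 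We then argue by induction on $\dim(\rho(\G)) - \dim(\iota(\Hbf))$: first equidistribute $\mu_\data$ inside $\mu_{Y'}$ relative to the acting group $g^{-1}\Mbf(\A)g$ — here $\iota(\Hbf)$ does have finite centralizer inside $\Mbf$, so the finite-centralizer case applies with the \emph{ambient} space replaced by $Y'$ and the error governed by $\cpl(Y')$ and the min-complexity of $Y_\data$ relative to $Y'$ — and then equidistribute $\mu_{Y'}$ inside $\mu_X$, which is the inductive hypothesis applied to the data $(\Mbf,\iota|_{\Mbf},g)$ (observing $\mcpl(Y') \gtrsim \mcpl(Y_\data)$ up to polynomial factors, since an intermediate orbit for $Y'$ is one for $Y_\data$). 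Combining the two, with the triangle inequality and the a priori bound $\cpl(Y') \ll \mcpl(Y_\data)^{O(1)}$ coming from Prasad's volume formula, yields the stated estimate.

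The technical heart, and the main obstacle, is making the reduction to the finite-centralizer case effective in the \emph{adelic} (not just $S$-arithmetic) setting. The issue flagged in the introduction is that $p$-adic groups contain arbitrarily small compact subgroups, so the standard strategy — produce a unipotent element in the acting group that is close to the identity yet moves the orbit, contradicting a closing lemma unless the orbit is already large — must be run uniformly over all places, including the (infinitely many) places where $\Hbf$ may have anisotropic behavior. This is where the novel effective generation result enters: starting from the point mass being nearly invariant under $H = g^{-1}\iota(\Hbf(\A))g$, one uses an effective Greenberg's theorem to locate, at finitely many controlled places, actual $F_v$-points of the relevant subgroups near approximate ones, and then shows these generate (a finite-index subgroup of) the expected semisimple group — the output being that either $\mu_\data$ is genuinely invariant under a larger group $\Mbf$ (so the induction applies), or one manufactures a contradiction with the effective closing lemma at a scale polynomial in $\mcpl(Y_\data)$. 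Property $(\tau)$ supplies the quantitative mixing / spectral gap needed to convert "nearly invariant under a large subgroup" into "nearly equal to the homogeneous measure on $Y'$" with a polynomial rate. The care required is that the closing lemma scale, the Greenberg approximation scale, and the $(\tau)$-gap must all be bounded by fixed powers of the arithmetic data with exponents depending only on $N$ and $[F:\Q]$, and the $|\disc(F)|$-dependence must be isolated to a polynomial prefactor; this bookkeeping, rather than any single new idea, is the bulk of the work.
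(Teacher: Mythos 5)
The central reduction in your proposal is not viable, and it is in fact the very obstacle this paper was written to overcome. When the centralizer $\Lbf$ of $\iota(\Hbf)$ in $\rho(\G)$ is positive-dimensional and you form $\Mbf$ as the almost-direct product of $\iota(\Hbf)$ with the semisimple part $\Lbf^{\mathrm{ss}}$ of $\Lbf$, the centralizer of $\iota(\Hbf)$ inside $\Mbf$ is \emph{not} finite: it contains $\Lbf^{\mathrm{ss}}$ by construction. Worse, the statement you want — effective equidistribution of $\mu_\data$ towards $\mu_{Y'}$ — is simply \emph{false} in that situation. The orbit $Y' = [\Mbf(\A)g]$ fibers (up to finite index) over $\Lbf^{\mathrm{ss}}(\A)$-cosets, and $Y_\data$ is a single fiber; no amount of effectivization makes a single closed fiber equidistribute against the full invariant measure on $Y'$. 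So the step ``equidistribute $\mu_\data$ inside $\mu_{Y'}$ using the finite-centralizer case'' is unsound, and the inductive scheme as you set it up cannot be launched.

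The paper's route is different in a way that specifically avoids this trap. Rather than trying to reduce to the finite-centralizer setting of \cite{EMV,EMMV}, the argument (Propositions \ref{prop:addinv-intro} and \ref{prop:notweakerprop}, Theorem \ref{thm:effgen-intro}) works directly with the measure $\mu_\data$ on all of $X$: one shows at a single good prime $\gp$ that $\mu_\data$ is almost invariant under an iteratively growing group $M_i < G_\gp$, where crucially $M_i$ is \emph{not} required to contain $H_\gp$ (nor to centralize it), only to contain the principal $\SL_2$. The additional-invariance direction found at each step is \emph{transversal} to $\mfrak_i$ but need not lie in the centralizer of $\hfrak_\gp$, and the effective generation theorem handles the book-keeping when the generated group changes shape unpredictably after perturbation. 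The closing lemma (Proposition \ref{prop:closing-lemma}) is what rules out stalling unless $\mcpl(Y_\data)$ is small. The induction over intermediate semisimple subgroups that does appear in the proof of Theorem \ref{thm:main equi} (after Proposition \ref{prop:notweakerprop}) serves a different purpose — to control the size of the good prime $\gp$ relative to $\cpl(X)$ rather than $\cpl(Y_\data)$ — and is not a reduction to a finite-centralizer case. Finally, the paper first proves everything over $\Q$ and then deduces the general-$F$ statement via a careful restriction of scalars (\S\ref{sec:numberfield case}), a step your proposal does not address; the $|\disc(F)|$-dependence in the statement comes precisely from there.
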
  

The above theorem describes equidistribution in the ambient space.
The following theorem describes equidistribution to an intermediate object at polynomial rate in the complexity of the orbit $Y_\data$. Nominally it is a generalization of Theorem~\ref{thm:main equi}, though we deduce it from that theorem.

\begin{theorem}\label{thm:asinEMV}
Assume $\G$ and $\H$ are semisimple connected $F$-anisotropic and that $\H$ is simply connected. 
Let $Y_\data$ be defined by data $\data=(\H,\iota,g)$ consistent with $(\G,\rho)$. 
There exists $\consta\label{a:main2}>0$ depending only on $N$ and $[F:\Q]$ so that the following holds.
For any $B \geq \ref{a:main2}|\disc(F)|^{\ref{a:main2}}\cpl(X)^{\ref{a:main2}}$ there exists a semisimple simply connected data $\data' = (\Hbf',\iota',g)$ consistent with $(\G,\rho)$ and with $\cpl(Y_{\data'}) \leq B$ such that $\iota'(\Hbf') \supset \iota(\Hbf)$ and
\[
\biggl|\int_{Y_\data} f \diff\!\mu_{\data}- \int_{Y_{\data'}}f \de \mu_{\data'}\biggr|
\leq B^{-1/\ref{a:main2}} \|f\|_{C^1(X)}L
\]
for all $C^1$-functions $f$ of level $L\geq 1$. 
\end{theorem}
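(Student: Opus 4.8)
The plan is to reduce to the simply connected, $F$-anisotropic case already handled by Theorem~\ref{thm:main equi}, and then to run an iterative ``enlargement'' argument: if $Y_\data$ is not yet close to $Y_{\data'}$, then its min-complexity relative to the intermediate group is small, so Theorem~\ref{thm:main equi} (applied inside the ambient orbit of an intermediate group $\mathbf{M}$) forces equidistribution of $\mu_\data$ into a \emph{strictly larger} semisimple orbit, whose complexity we can control; we repeat until the complexity exceeds $B$. Let me describe the steps.

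\textbf{Step 1 (reduction to simply connected $\H$).} Since $\H$ is assumed simply connected, $\iota(\H)$ generates — together with its conjugates — a connected semisimple subgroup of $\rho(\G)$; but more to the point, we should pass from a general semisimple connected $\G$ to its simply connected cover $\widetilde{\G}$. The map $\widetilde{\G}\to\G$ has central (finite) kernel, and the data $\data$ lifts to data over $\widetilde\G$ at bounded cost in complexity and disc$(F)$ (using that the cover, the isogeny, and a realization $\widetilde\rho:\widetilde\G\to\SL_{N'}$ can all be chosen of complexity bounded in terms of $N$ and $[F:\Q]$ — this is a standard structure-theory input, cf.\ the bookkeeping in \cite{EMMV}). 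The $F$-anisotropy is preserved. So from now on we may assume $\G$ is semisimple simply connected $F$-anisotropic and $\data$ is semisimple simply connected consistent with $(\G,\rho)$; and it suffices to produce $\data'$ for this lifted problem, then push back down, absorbing the bounded losses into the exponent $\ref{a:main2}$ and the $|\disc(F)|$-dependence.

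\textbf{Step 2 (the enlargement step).} Suppose $\data=(\H,\iota,g)$ with $\cpl(Y_\data)=c$. If $\mcpl(Y_\data)\ge c^{\ref{A:main}^{2}}$ (say), then Theorem~\ref{thm:main equi} already gives $|\int_{Y_\data}f-\int_X f|\ll c^{-1/\ref{A:main}^2}\|f\|_{C^1}L$, so we may take $\data'=(\G,\rho,g)$ with $\cpl(Y_{\data'})=\cpl(X)$, which is $\le B$ by hypothesis. Otherwise there is a semisimple $\mathbf{M}$ with $\iota(\H)\subseteq\mathbf{M}\subsetneq\rho(\G)$ and $\cpl(\SL_N(F)\mathbf{M}(\A)g)<c^{\ref{A:main}^2}$. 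We pass to the ambient space $X_{\mathbf{M}}:=[\mathbf{M}(\A)g]$, which is itself a compact adelic quotient of a semisimple simply connected $F$-anisotropic group (after a further bounded-complexity simply-connected-cover cleanup as in Step~1), of complexity polynomial in $c$. Applying Theorem~\ref{thm:main equi} \emph{inside} $X_{\mathbf{M}}$ to the orbit $Y_\data\subset X_{\mathbf M}$ — which has the same min-complexity measurement up to bounded factors — we get that either $\mu_\data$ is $\varepsilon$-close to $\mu_{X_{\mathbf M}}$ (a strictly larger semisimple orbit than $Y_\data$ inside $X$, with complexity $\le c^{O(1)}$), with $\varepsilon \ll \mathrm{mcpl}_{\mathbf M}(Y_\data)^{-1/\ref{A:main}}\cpl(X_{\mathbf M})^{\ref{A:main}}\|f\|_{C^1}L$, or else $Y_\data$ sits inside a yet-smaller semisimple orbit of $X_{\mathbf M}$, of complexity again polynomial in $c$, and we recurse on $\mathbf{M}$. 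The chain $\iota(\H)\subsetneq\cdots\subsetneq\rho(\G)$ has length bounded in terms of $N$, so after boundedly many steps we reach a semisimple $\widehat{\mathbf M}$ for which the min-complexity is large relative to its own complexity; Theorem~\ref{thm:main equi} inside $X_{\widehat{\mathbf M}}$ then equidistributes $\mu_\data$ into $\mu_{X_{\widehat{\mathbf M}}}$ with the stated polynomial error.

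\textbf{Step 3 (choosing the target and closing the loop).} The output of Step~2 is a semisimple simply connected data $\data''$ with $\iota''(\H'')\supseteq\iota(\H)$, complexity $c''\le c^{O(1)}$, and $|\int_{Y_\data}f-\int_{Y_{\data''}}f|\ll (c'')^{-\kappa}\|f\|_{C^1}L$ for some $\kappa>0$ depending only on $N,[F:\Q]$ (after tracing all the bounded losses, the $|\disc(F)|$-dependence remains polynomial). Now I distinguish two regimes against the threshold $B$. If $c''\ge B$, I have overshot: I then run Step~2 \emph{in reverse} on the chain that produced $\data''$, stopping at the first intermediate semisimple orbit whose complexity lies in the window $[B^{1/\ref{a:main2}},\,B]$ — such a member exists because consecutive complexities in the chain differ by at most a polynomial factor, so I can pick the largest one that is $\le B$; call it $\data'$. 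By the choice of window and the error estimate at that stage of the iteration (which was $\ll (\text{that complexity})^{-1/\ref{A:main}^2}\|f\|_{C^1}L$), we get $|\int_{Y_\data}f-\int_{Y_{\data'}}f|\le B^{-1/\ref{a:main2}}\|f\|_{C^1}L$ for a suitable final choice of $\ref{a:main2}$. If instead $c''<B$, I take $\data'=\data''$ directly and enlarge it one more time (via Theorem~\ref{thm:main equi} in $X_{\widehat{\mathbf M}}$ again, or if already $=\rho(\G)$ then in $X$) until the complexity first lands in $[B^{1/\ref{a:main2}},B]$; the constraint $B\ge \ref{a:main2}|\disc(F)|^{\ref{a:main2}}\cpl(X)^{\ref{a:main2}}$ guarantees there is room to land in this window and that the error at that final stage is $\le B^{-1/\ref{a:main2}}\|f\|_{C^1}L$. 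Since each $\mathbf{M}$ in the chain contains $\iota(\H)$, so does $\iota'(\H')$, as required.

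\textbf{Main obstacle.} The crux is the bookkeeping in Step~2: one must verify that passing to the ambient quotient $X_{\mathbf M}$ of an intermediate group (a) keeps all relevant quantities — the complexity of $X_{\mathbf M}$, the level of $f$ restricted appropriately, the relation between $\mathrm{mcpl}$ measured in $X$ versus in $X_{\mathbf M}$, and the simply-connected-cover corrections — polynomially controlled by $\cpl(Y_\data)$ and $|\disc(F)|$, \emph{with the chain having length bounded purely in terms of $N$} so the exponents do not blow up. This is where one leans on Prasad's volume formula and the complexity estimates of \cite[App.~B]{EMMV} to compare volumes and complexities across the subgroup lattice, and where the anisotropy hypothesis (ensuring all these quotients are compact, so Theorem~\ref{thm:main equi} applies at every stage) is essential. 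The dynamical content is entirely outsourced to Theorem~\ref{thm:main equi}; the work here is purely combinatorial/arithmetic tracking.
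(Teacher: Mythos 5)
Your high-level strategy—descend through the lattice of intermediate semisimple $F$-subgroups containing $\iota(\H)$, applying Theorem~\ref{thm:main equi} inside the one at which the relative min-complexity is finally large—is exactly the paper's approach. The reduction to simply connected $\G$ in Step~1 and the reduction to $\Q$ via restriction of scalars (which you leave implicit) are also the right moves. However, there is a genuine gap in the quantitative bookkeeping, and it does not appear to be cosmetic.

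The crux is that your thresholds are set relative to $c=\cpl(Y_\data)$ rather than relative to the given parameter $B$, and this causes the argument to break at the point you call ``overshooting.'' In the paper, one fixes $A\asymp\ref{A:main}^2$ and $\delta=A^{-\dim\G}$, and then defines the descent so that the $j$-th intermediate group $\Lbf_j$ always satisfies $\cpl([\Lbf_j(\A)])\leq B^{A^{j-1}\delta}\leq B$ by construction, while the stopping criterion is $\mcpl_{\Lbf_j}(Y_\data)>B^{A^j\delta}$. The ratio of these two exponents is exactly the factor $A$, chosen so that Theorem~\ref{thm:main equi} applied inside $\Lbf_j$ yields an error $\leq B^{-A^{j-1}\delta\ref{A:main}}\leq B^{-\delta\ref{A:main}}$, and $\delta\ref{A:main}$ depends only on $N$ because the chain has length at most $\dim\G$. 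This geometric escalation of thresholds in powers of $B$ is what makes both $\cpl(Y_{\data'})\leq B$ and the error bound $B^{-1/\ref{a:main2}}$ hold simultaneously with no overshooting possible. Your descent instead compares $\mcpl$ against powers of $c$, so (a) the intermediate complexities escalate as $c^{\ref{A:main}^{2k}}$ and may wildly exceed $B$, and (b) the quantity $B$ that is supposed to govern the final estimate never enters the stopping criterion, so there is no reason the error at your stopping point is $\leq B^{-1/\ref{a:main2}}$.

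Your Step~3 ``run the chain in reverse'' patch cannot repair this. When you pick the largest intermediate orbit $\Lbf_j$ with $\cpl([\Lbf_j(\A)])\leq B$, that $\Lbf_j$ is necessarily one you descended through in Step~2, which means precisely that the stopping criterion \emph{failed} there: $\mcpl_{\Lbf_j}(Y_\data)$ was small relative to $\cpl([\Lbf_j(\A)])^{\ref{A:main}^2}$. Theorem~\ref{thm:main equi} applied inside $\Lbf_j$ therefore gives an error bound $\gg 1$, not the $(\cpl([\Lbf_j(\A)]))^{-1/\ref{A:main}^2}$ you quote—that estimate is only available at the \emph{stopping} point, which by hypothesis has complexity $>B$. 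To fix the argument you must anchor the entire iteration to $B$ from the outset, as the paper does, rather than to $\cpl(Y_\data)$.
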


As mentioned earlier, Gorodnik and Oh \cite{GorOh-RationalPts} have considered sequences of semisimple simply connected adelic periods $[\iota_j(\mathbf{H}_j(\adele))g_j]$ and classified their limit measures \emph{provided} there exists a place $v$ of $F$ for which all of the semisimple $F_v$-groups $\mathbf{H}_j(F_v)$ have no compact factors.
Removing this type of splitting condition already requires proving an effective theorem as the size of the `least splitting place' can increase along the sequence of orbits (the size of the least splitting place was addressed in \cite{EMMV}).

\subsection{Representations of quadratic forms}\label{sec:intro-qf}

A \emph{quadratic lattice} over $\Z$ is a pair $(\Z^n,Q)$ where $Q: \Z^n \to \Z$ is a quadratic form.
A \emph{representation} of a quadratic lattice $(\Z^m,Q)$ by another quadratic lattice $(\Z^n,Q_{0})$ is a linear isometry
\begin{align*}
    \iota: (\Z^m,Q) \to (\Z^n,Q_{0}).
\end{align*}
We say that $\iota$ is \emph{primitive} if $\iota(\Z^m)$ is a primitive lattice in $\Z^n$ (that is, $\iota(\Z^m) = (\iota(\Z^m)\otimes\Q)\cap \Z^n$) and that $(\Z^m,Q)$ is primitively representable by $(\Z^n,Q_{0})$ if a primitive representation exists.
A necessary criterion for primitive representability is that there exist local primitive representations $(\Z_{\gep}^m,Q) \to (\Z_{\gep}^n,Q_{0})$ for any prime $\gep$ as well as a representation of real quadratic spaces $(\R^m,Q) \to (\R^n,Q_{0})$.
In that case, we will call $(\Z^m,Q)$ \emph{locally primitively representable} by $(\Z^n,Q_{0})$.

The (integral) \emph{local-to-global principle} for primitive representations of quadratic lattices asks whether local primitive representability implies (global) primitive representability.
This is an analogy to the Hasse-Minkowski theorem by which local representability of \emph{rational} forms implies global representability.
When $n \geq m+3$ and the quadratic form $Q_0$ on the rank $n$ lattice is indefinite, the local-to-global principle is a consequence of strong approximation for spin groups established by Eichler \cite{Eicher-localglobal}.

When $Q_{0}$ is positive definite, the problem has proven to be significantly harder. 
In the 70's, Hsia, Kitaoka, and Kneser \cite{HsiaKitaokaKneser} established the local-to-global principle for $n \geq 3m+3$ under the (necessary) additional assumption that the minimum $\min(Q) = \min_{0 \neq x \in \Z^m}Q(x)$ is sufficiently large in terms of $Q_{0}$.
J\"ochner and Kitaoka \cite{JochnerKitaoka} improved the assumptions of \cite{HsiaKitaokaKneser} to $n \geq 2m+3$ under some weak conditions on $Q$.

In a later breakthrough, Ellenberg and Venkatesh \cite{EllenbergVenkatesh-localglobal} utilized methods from homogeneous dynamics to show the local-to-global principle for $n \geq m+5$ (and under a Linnik-type splitting condition for $n \geq m+3$) and $\min(Q)$ sufficiently large.
Their result is ineffective as it relies on measure classification results from unipotent dynamics \cite{Ratner-p-adic,MargulisTomanov}.
To start off, they utilize (as do we) the Hasse principle to produce a primitive representation by a form in the spin genus of $Q$.

Here, we prove an effective version of the local-to-global principle for dimensions $n \geq m+3$, also avoiding a Linnik-type splitting condition.

\begin{theorem}\label{thm:local global}
Let $m,n$ be positive integers with $n \geq m+3$.
Then there exist constants $C,A >0$ depending only on $n$ with the following property.
    
Let $(\Z^m,Q),(\Z^n,Q_{0})$ be positive definite quadratic lattices.
Suppose that $(\Z^m,Q)$ is locally primitively representable by $(\Z^n,Q_{0})$ and that
\begin{align*}
\min_{x \in \Z^m\setminus\{0\}} Q(x) \geq C \disc(Q_{0})^A.
\end{align*}
Then $(\Z^m,Q)$ is primitively representable by $(\Z^n,Q_{0})$.
\end{theorem}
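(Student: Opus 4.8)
\textbf{Proof proposal for Theorem~\ref{thm:local global}.}

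The plan is to translate the primitive-representation problem into an equidistribution statement for adelic periods of spin groups and then invoke Theorem~\ref{thm:main equi}. First I would set up the arithmetic framework: since $(\Z^m,Q)$ is locally primitively representable by $(\Z^n,Q_0)$, the Hasse--Minkowski theorem produces a rational primitive representation $\iota_\Q:(\Q^m,Q)\to(\Q^n,Q_0)$, and one then locates, at each prime $\ell$, an integral primitive representation over $\Z_\ell$; these local representations differ from $\iota_\Q$ by an element of $\SO(Q_0)(\Q_\ell)$, producing an adele $g=(g_\ell)\in\SO(Q_0)(\A_f)$ (times the archimedean component coming from the real representation). The $\SO(Q_0)$-orbit of this point in the adelic quotient parametrizes, via the usual correspondence, lattices in the genus of the orthogonal complement of $\iota_\Q(\Q^m)$ in $(\Q^n,Q_0)$; a point of this orbit lying in the image of $\SO(Q_0)(\Q)\backslash\SO(Q_0)(\A)$ from the identity coset (more precisely, in the support of the relevant double-coset) corresponds exactly to a global integral primitive representation. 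Because $\SO(Q_0)$ need not be simply connected, I would work with the spin cover $\G=\Spin(Q_0)$ and the subgroup $\H=\Spin$ of the stabilizer of the image of $\Q^m$, which is $\Spin$ of a rank $(n-m)\ge 3$ quadratic space; both are semisimple simply connected, and anisotropy over $\Q$ (equivalently over $\R$, since $Q_0$ is positive definite) is exactly the hypothesis that lets us use the compact-quotient theorem.

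The heart of the argument is then the following dichotomy: the adelic period $Y_\data$ attached to $\H=\Spin$ of the complement either equidistributes in $X=[\rho(\G(\A))]$, or its min-complexity $\mcpl(Y_\data)$ is bounded. By Theorem~\ref{thm:main equi}, once $\mcpl(Y_\data)\gg \cpl(X)^{A_{\ref{A:main}}^2}$, the orbit $Y_\data$ is so close to $\mu_X$ that its support must meet every fixed open set; in particular it meets the open set corresponding to a global integral primitive representation, which finishes the proof. Here $\cpl(X)$ is controlled polynomially by $\disc(Q_0)$ (the complexity of the ambient space depends only on how $\Spin(Q_0)$ sits inside $\SL_N$, which is governed by $\disc(Q_0)$). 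So the entire theorem reduces to the quantitative claim: if $\min(Q)\ge C\,\disc(Q_0)^A$ then $\mcpl(Y_\data)$ is large. This is where the size condition on $\min(Q)$ enters, and it is the step I expect to be the main obstacle. One shows $\cpl(Y_\data)$ itself grows with $\min(Q)$ — the complexity of the complement lattice is forced to be large because the vectors representing $Q$ inside $(\Z^n,Q_0)$ are long — and then, crucially, one must show the same lower bound for the complexity of every intermediate semisimple $\mathbf M$ with $\iota(\H)\subsetneq\mathbf M\subsetneq\rho(\G)$. The intermediate subgroups of $\Spin$ of a quadratic space are classified (they come from orthogonal splittings of the ambient space), so one can enumerate the possibilities and, for each, argue that the corresponding sublattice again contains long vectors and hence has large complexity, or else leads to a contradiction with primitivity/local representability. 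Controlling this uniformly — with exponents depending only on $n$, not on $F$ (here $F=\Q$) or the forms — is the delicate combinatorial-geometric core.

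Finally I would assemble the pieces: fix $n$, let $A_{\ref{A:main}}$ be the constant from Theorem~\ref{thm:main equi} (with $F=\Q$, and $N=\binom{n}{2}$ or the relevant dimension of the spin representation, both depending only on $n$), choose $A$ in the statement larger than a fixed multiple of $A_{\ref{A:main}}^2$ times the exponent relating $\disc(Q_0)$ to $\cpl(X)$, and choose $C$ to absorb all implied constants, which by Theorem~\ref{thm:main equi} depend only on $n$ and polynomially on $|\disc(\Q)|=1$. With these choices, $\min(Q)\ge C\,\disc(Q_0)^A$ forces $\mcpl(Y_\data)$ past the threshold where $Y_\data$ is equidistributed finely enough to detect the integral point, so a global primitive representation exists. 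The only subtlety in the last step is the passage from "$\mu_\data$ is close to $\mu_X$ in the $C^1$-dual norm" to "$\mathrm{supp}(\mu_\data)$ meets a prescribed open set of bounded level $L$": one tests against a fixed nonnegative bump function of controlled level and $C^1$-norm supported on that open set, and notes that for $\mcpl(Y_\data)$ large enough the integral against $\mu_\data$ is positive.
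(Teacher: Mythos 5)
Your proposal follows essentially the same route as the paper's: Hasse--Minkowski gives a rational primitive representation, you pass to the spin group and its adelic period, invoke Theorem~\ref{thm:main equi}, control $\cpl(X)$ by $\disc(Q_0)$ and $\mcpl(Y_\data)$ by $\min(Q)$ via reduction theory and a classification of intermediate semisimple subgroups as pointwise stabilizers $\H_{W'}$ for subspaces $W'\subset W$, and then detect an integral point by testing against the indicator of a suitable $K$-orbit. Two points you glide past are worth spelling out. First, to land inside $X=[\varrho(\G(\A))]$ you must pass not merely to the genus but to the \emph{spin} genus: the local isometries correcting $\iota_\Q$ a priori lie in $\SO(Q_0)(\Q_\ell)$, and one needs to shift them into $\varrho(\Spin(Q_0)(\Q_\ell))$ by elements of the local pointwise stabilizer, using surjectivity of the local spinor norm on both groups --- this is exactly where the hypothesis $n\ge m+3$ enters, cf.\ Lemma~\ref{lem:spinrepresentable}. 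Second, ``the integral against $\mu_\data$ is positive'' needs a quantitative lower bound for $\int_X f$, not mere positivity; the paper computes $\int_X f=(\omega_{\spingenus(\biglattice)}\cdot\#\Aut(\biglattice))^{-1}$ in \S\ref{sec:weights} and bounds $\omega_{\spingenus(\biglattice)}$ polynomially in $\disc(Q_0)$ via reduction theory, so the main term beats the error precisely when $\min(Q)$ exceeds a fixed power of $\disc(Q_0)$. Neither issue changes your architecture; they are the details to be filled in, and the second in particular is what ultimately fixes the exponent $A$ in the statement.
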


For $m=1$ the above theorem encapsulates a local-to-global principle for primitive representations of integers, which is already contained in the literature.
Given codimension at least $3$, this has been established by Kloosterman (for diagonal forms) and by Tartakovskii (see for instance \cite[Ch.~11]{Iwaniec-topics}).
For primitive representations of integers, a local-to-global principle exists in codimension $2$, but is more intricate to formulate --- see the work of Duke and Schulze-Pillot \cite{DukeSchulzePillot} relying on Duke \cite{Duke-hyperbolic} and Iwaniec \cite{Iwaniec-halfintegral}.
Correspondingly, we suspect a version of Theorem~\ref{thm:local global} to hold in codimension $2$ for $m>1$ (for instance, for quadratic lattices primitively representable by the spin genus).

Existing literature (e.g.~the work of Hsia, Kitaoka, and Kneser \cite{HsiaKitaokaKneser}) typically focusses on local-to-global principles for (not necessarily primitive) representations.
Here, the question is whether local representability implies global representability for quadratic lattices with sufficiently large minimum.
This version of the local-to-global principle can be seen to fail in general when the codimension is $3$.
For instance, the form $x_1^2+x_2^2+25x_3^2 + 25x_4^2$ locally represents any positive number, yet does not represent numbers of the form $3\cdot 2^k$ for $k \geq 0$.
For general examples of this form see \cite[p.~144]{Kitaoka-modularformsII}.
Work of Kitaoka \cite{Kitaoka-primitive1,Kitaoka-primitive2,Kitaoka-primitive3} has aimed to compare the two variants of the local-to-global principle (primitive or not necessarily primitive).
In conjuction with the work of Ellenberg and Venkatesh \cite{EllenbergVenkatesh-localglobal} and Theorem~\ref{thm:local global} this implies the local-to-global principle for representations given dimensions $(m,n)$ 
\begin{itemize}
    \item when $m=2$ and $n\geq 6$,
    \item when $m=3,4,5$ and $n\geq 2m+1$, and finally
    \item when $m\geq 6$ and  $n\geq 2m$. 
\end{itemize}
In particular, the counter-examples of Kitaoka, \cite{EllenbergVenkatesh-localglobal} and Theorem \ref{thm:local global} completely resolve the local-to-global principle for representations of binary and ternary forms.
For binary forms, this was already known following work of J\"ochner \cite{Jochner-2in6}.
To the knowledge of the authors it remains unclear in which codimensions the local-to-global principle for representations should be suspected to hold in general (e.g.~does it hold for $(m,n) = (4,8)$?).
We also note that the current work may be extended to representations of bounded imprimitivity --- see Schulze-Pillot \cite{SchulzePillot09}.

We refer to the comprehensive surveys of Schulze-Pillot \cite{SchulzePillot-survey1,SchulzePillot-survey2} for a discussion of known results towards the local-to-global principle that we have omitted here.
In particular, there is a variety of results proving local-to-global principles under assumptions on successive minima of $Q$ --- see for instance \cite{ChanEstesJochner}.

\subsection*{Acknowledgments} The origin of this work is in the joint works by two of us (M.E.\ and A.M.) with Gregory Margulis and Akshay Venkatesh \cite{EMV, EMMV}. Their insight plays an important role in this paper and we thank them for many helpful and illuminating discussions. 
We are grateful to Rainer Schulze-Pillot for his input clarifying to us some issues in the theory of representations of quadratic forms. We would also like to thank Peter Sarnak for his interest and encouragement. E.L.\ would like to thank Michael Larsen for helpful conversations about algebraic groups. We thank the Forschungsinstitut f\"ur Mathematik at ETH Z\"urich, the Hebrew University of Jerusalem, the Institute for Advanced Study, and the Mathematische Forschungsinstitut Oberwolfach for stimulating environments and for supporting mutual visits.

\section{Representations of quadratic forms}

We will now phrase a strengthening of Theorem~\ref{thm:local global} by working over an arbitrary number field and obtaining asymptotics for representation numbers.

Let $F$ be a number field, let $\mathcal{O}_F$ be its ring of integers, and let $(V,Q)$ be a quadratic space over $F$.
A lattice (over $\mathcal{O}_F$) is a finitely generated $\mathcal{O}_F$-module $\biglattice \subset V$ spanning $V$ over $F$.
The rank of $\biglattice$ is $\mathrm{rk}(\biglattice) = \dim(V)$.
A lattice $\biglattice \subset V$ is quadratic if $Q(\biglattice) \subset \mathcal{O}_F$.
(Sometimes, one calls the pair $(\biglattice,Q)$ a quadratic lattice avoiding the ambient space $V$.)
 
In much of the discussion to follow the quadratic form $Q$ or the ambient vector space will sometimes be implicit (though they are a crucial part of the data).
The discriminant $\mathfrak{d}\biglattice$ is the ideal of $\mathcal{O}_F$ generated by the discriminants of the quadratic form on all free submodules of $\biglattice$ of full rank.

The notions from \S\ref{sec:intro-qf} carry over to the current, more general setup without much effort.
For instance, a \emph{representation} $\iota\colon \smalllattice \to \biglattice$ of quadratic lattices is an $\mathcal{O}_F$-linear isometric embedding and it is \emph{primitive} if $\iota(\smalllattice) = \biglattice \cap (\iota(\smalllattice) \otimes F)$.
The terms \emph{primitively representable} and \emph{locally primitively representable} are defined similarly to \S\ref{sec:intro-qf}.

In this setting, strong approximation for spin groups \cite{Eicher-localglobal} also implies the local-to-global principle unless $F/\Q$ is totally real and the quadratic form on the `larger' lattice is totally definite.
Here, a quadratic space $(V,Q)$ is totally definite if the quadratic form $Q\colon V \otimes F_v \to F_v$ is definite for every real place $v$ and we call a quadratic lattice totally definite if its ambient quadratic space is.

Assume in the following that $F/\Q$ is totally real and that $\smalllattice,\biglattice$ are totally definite quadratic lattices.
Let
\begin{align*}
\mathcal{R}(\smalllattice,\biglattice) = \{\iota:\smalllattice \to \biglattice\text{ primitive representation}\}
\end{align*}
be the set of primitive representations of $\smalllattice$ by $\biglattice$ and set $r(\smalllattice,\biglattice) = \#\mathcal{R}(\smalllattice,\biglattice)$.
Let $\Aut(\biglattice)$ be the (finite) group of $\mathcal{O}_F$-linear invertible isometries of the quadratic lattice $\biglattice$.
Note that $\Aut(\biglattice)$ acts on the set $\mathcal{R}(\smalllattice,\biglattice)$ of primitive representations.

Recall that the genus $\gen(\biglattice)$ of $\biglattice$ consists of all lattices in $\biglattice \otimes F$ locally isometric to $\biglattice$.
It is a classical result that the genus consists of finitely many equivalence classes (i.e.~(global) isometry classes) of quadratic lattices; let $\biglattice_1= \biglattice,\biglattice_2,\ldots, \biglattice_g$ be a set of representatives.
Note that the numbers $r(\smalllattice,\biglattice_i)$ and $\#\Aut(\biglattice_i)$ depend only on the equivalence class of $\biglattice_i$.

Define
\begin{align}
\omega_{\biglattice} &= \sum_{i} \#\Aut(\biglattice_i)^{-1}\label{eq:Siegelav-genus}
\end{align}
and
\begin{align}
r(\smalllattice,\gen(\biglattice)) &= \frac{1}{\omega_{\biglattice}} \sum_{i} \frac{r(\smalllattice,\biglattice_i)}{ \#\Aut(\biglattice_i)} \label{eq:Siegelav}.
\end{align}
When $\smalllattice$ is locally primitively representable by $\biglattice$, then it is primitively representable by some element of the genus and hence $r(\smalllattice,\gen(\biglattice))$ is non-zero.
We remark that Siegel's mass formula \cite{Siegel-I,Siegel-II,Siegel-III}  writes the averaged representation number in \eqref{eq:Siegelav} in terms of a local product of representation numbers modulo prime powers as well as archimedean factors involving only the discriminants.

We prove an effective asymptotic for representation numbers.

\begin{theorem}\label{thm:local global2}
Let $n \geq 1$ and $d \geq 1$ be integers.
Then there exists $A>1$ depending on $n,d$ with the following property.

Let $F$ be a totally real number field of degree $d$.
Let $\biglattice$ be a totally definite quadratic lattice of rank $n$ over $\mathcal{O}_F$.
Then for any quadratic space $(W,q)$ of dimension at most $n-3$ and any quadratic lattice $\smalllattice \subset W$ locally primitively representable by $\biglattice$ we have
\begin{align*}
\Big|\frac{r(\smalllattice,\biglattice)}{r(\smalllattice,\gen(\biglattice))} -1 \Big|
\ll_{n,F} \Big(\min_{x \in \smalllattice\setminus\{0\}} |\Nr_{\Q}^F(q(x))| \Big)^{-1/A} |\Nr_{\Q}^F(\mathfrak{d}\biglattice)|^{A}.
\end{align*}
\end{theorem}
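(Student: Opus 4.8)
The plan is to deduce Theorem~\ref{thm:local global2} from the adelic equidistribution statement of Theorem~\ref{thm:main equi} by realizing the set of primitive representations $\Rcal(\smalllattice,\biglattice)$, and more generally representations into the whole genus, as an orbit-counting problem on a compact adelic quotient attached to a spin group. Concretely, fix a primitive representation of the ambient quadratic spaces $W \hookrightarrow V := \biglattice \otimes F$ over $F$ (which exists by Hasse--Minkowski once $\smalllattice$ is locally primitively representable, after possibly replacing $\biglattice$ by another lattice in its genus, as in Ellenberg--Venkatesh); let $W^\perp$ be the orthogonal complement, let $\Gbf = \Spin(V)$ and let $\Hbf = \Spin(W^\perp) \hookrightarrow \Gbf$ be the pointwise stabilizer of $W$. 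Since $\biglattice$ is totally definite over a totally real field, $\Gbf$ is $F$-anisotropic and, provided $\dim W^\perp \geq 3$ (equivalently $\dim W \leq n-3$), $\Hbf$ is semisimple simply connected and $F$-anisotropic as well, so the hypotheses of Theorem~\ref{thm:main equi} are met with $\rho$ the spin representation or a suitable faithful representation into some $\SL_N$. The key translation is the classical orbit parametrization: the set of primitive representations $\iota\colon \smalllattice \to \biglattice'$ into lattices $\biglattice'$ in the genus, taken up to isometry, is in bijection (via strong approximation / the theory of the spinor genus) with a union of $\Hbf(\adele_f)$-orbits inside the double coset space $\Gbf(F)\backslash\Gbf(\adele)/\compactopen_{\biglattice}$, where $\compactopen_{\biglattice}$ is the stabilizer of $\widehat{\biglattice}$; the relevant data $\data = (\Hbf,\iota,g)$ has complexity controlled polynomially by $\min_{x\in\smalllattice\setminus\{0\}}|\Nr^F_\Q(q(x))|$ from above and by $|\Nr^F_\Q(\dfrak\biglattice)|$ issues from below.

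The second main step is to compute the two quantities appearing in the ratio $r(\smalllattice,\biglattice)/r(\smalllattice,\gen(\biglattice))$ as integrals against, respectively, a well-chosen test function $f$ on $X = [\rho(\Gbf(\adele))]$ and the uniform measure $\mu_X$. Here $f$ should be (a smoothed version of) the characteristic function of the $\rho(\compactopen_\biglattice)$-coset corresponding to $\biglattice$ itself — a compact open subgroup of level $L$ polynomial in $|\Nr^F_\Q(\dfrak\biglattice)|$ — so that $\int_{Y_\data} f\,\de\mu_\data$ recovers (a normalization of) $r(\smalllattice,\biglattice)$ weighted by automorphisms, while $\int_X f\,\de\mu_X$ recovers the genus average $r(\smalllattice,\gen(\biglattice))$, the latter being exactly the content of Siegel's mass formula reinterpreted measure-theoretically. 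One then has $\|f\|_{C^1(X)}$ bounded in terms of $L$ and the geometry of $X$, and $\cpl(X)$ bounded polynomially in $|\Nr^F_\Q(\dfrak\biglattice)|$. Plugging into Theorem~\ref{thm:main equi} (using that $\mcpl(Y_\data) \geq \cpl(Y_\data)$ up to the intermediate-subgroup complexities, which are themselves bounded below by a power of the complexity since all intermediate semisimple subgroups are again spin groups of rational subspaces) yields
\begin{align*}
\Big|\frac{r(\smalllattice,\biglattice)}{r(\smalllattice,\gen(\biglattice))}-1\Big| \ll_{n,F} \frac{\cpl(X)^{\ref{A:main}} L}{\mcpl(Y_\data)^{1/\ref{A:main}}},
\end{align*}
and it remains only to bound each factor by the asserted powers of $|\Nr^F_\Q(\dfrak\biglattice)|$ and of $\min_x|\Nr^F_\Q(q(x))|$.

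The main obstacle, and the step requiring the most care, is establishing the two-sided polynomial relationship between the \emph{dynamical} complexity $\cpl(Y_\data)$ (and $\mcpl(Y_\data)$) and the \emph{arithmetic} quantity $\min_{x\in\smalllattice\setminus\{0\}}|\Nr^F_\Q(q(x))|$: one must show that a primitive representation of a lattice with large minimum forces the attached period to have large complexity, uniformly, and in particular that no proper intermediate semisimple $\Mbf$ with $\iota(\Hbf)\subseteq\Mbf\subsetneq\rho(\Gbf)$ has small complexity. This is where the hypothesis $\dim W \leq n-3$ (codimension at least $3$) enters decisively — it guarantees $\Hbf$ has no exceptional intermediate behaviour and that the orthogonal complement is large enough for the volume/complexity of $Y_\data$ to grow like a positive power of the minimum (via a lattice-point/reduction-theory argument bounding $\norm{g_v^{-1}.\vpz_\Hbf}_v$ at the bad places in terms of how $\smalllattice$ sits inside $\biglattice$). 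A secondary technical point is the passage between the several lattices $\biglattice_i$ in the genus and the bookkeeping of automorphism groups so that the measure-theoretic identity matches Siegel's mass formula exactly; this is essentially classical but must be done with effective control on all constants, which is why the final error is stated with an implied constant depending on $n$ and $F$ and a single exponent $A = A(n,d)$.
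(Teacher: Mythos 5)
Your approach is essentially the paper's: fix a primitive representation of the ambient quadratic spaces via Hasse--Minkowski, set $\Gbf = \Spin(V)$ and $\Hbf$ the pointwise stabilizer of the image of $W$, realize the representation count as a comparison between $\int_{Y_\data} f$ and $\int_X f$ for $f$ the indicator of a $K$-orbit, and feed in Theorem~\ref{thm:main equi} together with polynomial two-sided bounds on $\cpl(X)$, $\mcpl(Y_\data)$, and the level $L$. The codimension-$\geq 3$ hypothesis enters precisely where you say it does: in the classification of intermediate semisimple subgroups (they are all stabilizers of subspaces of $W$, as in Lemma~\ref{eq:intermediategroups-qf}) and in having surjective spinor norm so that the replacement of $\biglattice$ can be made inside the \emph{spin} genus.

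There is, however, a real gap in the orbit--representation dictionary. You assert that the primitive representations of $\smalllattice$ by the genus, up to isometry, are \emph{in bijection} with $\Hbf(\adele_f)$-orbits in the relevant double coset space, and that consequently a single $\int_{Y_\data} f\,\de\mu_\data$ recovers (a normalization of) $r(\smalllattice,\biglattice)$. This is false in general: for a fixed embedding $\iota_0$, the map sending $\Hbf$-orbits to classes of primitive representations is injective but need not be surjective (the paper records a counterexample explicitly, in the Remark following Lemma~\ref{lem:primitive representations}: one can have two subspaces $W_1,W_2$ with $\biglattice\cap W_1 \cong \biglattice\cap W_2$ but $\biglattice\cap W_1^\perp \not\cong \biglattice\cap W_2^\perp$, so the corresponding primitive representations lie on distinct $\Hbf$-periods). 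The single orbit $Y_\data$ only detects primitive representations in the $\sim$-equivalence class of $(\biglattice_0,\iota_0)$, where $\sim$ is the ``spin genus class'' relation on primitive representations. To recover the full count $r(\smalllattice,\biglattice)$ one must run the equidistribution estimate for every $\sim$-class and take a weighted (convex) combination with weights $\omega_{[(\biglattice_0,\iota_0)]}$; the resulting average telescopes to $r(\smalllattice,\biglattice)/(\omega_{\spingenus(\biglattice)} r(\smalllattice,\spingenus(\biglattice))\#\Aut(\biglattice))$ by the unfolding identity \eqref{eq:avrepnumbers}. The crucial point that makes this viable is that the error bound from Theorem~\ref{thm:main equi} is uniform across $\sim$-classes, so the convex combination inherits the same rate.

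A second, smaller omission: the dynamical argument naturally produces the \emph{spin genus} average $r(\smalllattice,\spingenus(\biglattice))$, not the genus average $r(\smalllattice,\gen(\biglattice))$ that appears in the statement. These agree (for $m\leq n-3$) by a classical theorem of Kneser and Weil, but this identity is a genuine input that your sketch silently elides under the phrase ``Siegel's mass formula reinterpreted measure-theoretically.'' Finally, note that the paper's Lemma~\ref{lem:spinrepresentable} ensures the initial replacement of $\biglattice$ lands in the spin genus (not merely the genus), using surjectivity of the spinor norm in codimension $\geq 3$; your invocation of ``another lattice in its genus, as in Ellenberg--Venkatesh'' should be sharpened to the spin genus, as otherwise the double coset bookkeeping does not close up.
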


Using volume estimates contained in this article and in \cite{EMMV} one can show (see Lemma~\ref{lem:spinrepvsrep} below) that there exists $\delta>0$ depending only on $n$ and $d$ such that 
\begin{align}\label{eq:genrepnumber-lowerbound}
r(\smalllattice,\gen(\biglattice)) \gg_{F,\biglattice} |\Nr_{\Q}^F(\mathfrak{d}\smalllattice)|^\delta.
\end{align}
In particular, Theorem~\ref{thm:local global2} provides an amount of primitive representation polynomial in the discriminant of $\smalllattice$ as soon as one assumes the minimum of $\smalllattice$ to be sufficiently large.
Estimating local densities in Siegel's theorem likely yields much more precise lower bounds in \eqref{eq:genrepnumber-lowerbound}; see also \cite[Lemma 2.1]{SchulzePillot-survey1}.

\begin{remark}[Variants of Theorems~\ref{thm:local global} and \ref{thm:local global2}]
Theorem~\ref{thm:local global2} can be extended in various ways.
For instance, given local primitive representations and $M \in \N$ one may prove asymptotics for the number of primitive representations congruent to these local representations mod $M$ in analogy to e.g.~\cite{JochnerKitaoka}.
Also, the positions of the primitive representations as points on the appropriate Grassmannian variety can be shown to be asymptotically random (see e.g.~the discussion in \cite[\S1.2]{AKA_MUSSO_WIESER_2023}).
\end{remark}

This section is structured as follows: In \S\ref{sec:qf-effequiorbits} we establish an effective equidistribution result for adelic periods of spin groups as a corollary of the theorems in the introduction.
In \S\ref{sec:qf-reformulation}, we recall various facts regarding spin genera and primitive representations and how to view these objects in a setting relevant to this article.
In \S\ref{sec:qf-proofthm}, we prove Theorems~\ref{thm:local global} and \ref{thm:local global2}.

\subsection{Adelic periods of spin stabilizer groups}\label{sec:qf-effequiorbits}

We first deduce an effective equidistribution result for adelic periods of spin groups as a consequence of Theorem~\ref{thm:main equi}.
Let $F$ be a totally real number field with discriminant $\disc(F)$ and ring of integers $\mathcal{O}_F$.
To avoid some of the technicalities, we assume in the following that $Q$ is a totally definite quadratic form on $F^n$,
\begin{align*}
Q(x_1,\ldots,x_n) = \sum_{i\leq j} m_{ij}x_ix_j,
\end{align*}
with coefficients $m_{ij} \in \mathcal{O}_F$.
We set $h_v = \max_{i\leq j} |m_{ij}|_v$ for each place $v$ of $F$ and $h_\infty = \prod_{v \mid \infty} h_v$ as well as $h = \prod_v h_v \leq h_\infty$.

We let $\G = \Spin_Q$ and write $\varrho: \G \to \G'= \SO_Q$ for the standard representation.
For any $F$-subspace $W \subset V$ we write
\begin{align*}
\Hbf_W = \{g \in \G: \varrho(g)w = w \text{ for any }w \in W\}
\end{align*}
for the pointwise stabilizer group of the subspace $W$ under the action of $\G$.

The following corollary captures the input of our dynamical theorems into local-to-global principles discussed in this section.

\begin{corollary}\label{cor:qf-dynthm}
There exist $\consta\label{a:effspinperiods}>0$ depending only on $n$ and $[F:\Q]$ with the following property.

Let $W \subset V$ be an $F$-subspace of codimension at least $3$ and set
\begin{align*}
X = [\varrho(\G(\A))],\quad Y_W = [\varrho(\H_W(\A))].
\end{align*}
Then for any $f \in C^1(X)$ of level $L$
\begin{align*}
\Big| \int_{Y_W}f -\int_X f\Big| \ll \big(\min_{w \in W \cap \mathcal{O}_F^n\setminus\{0\}} \big|\Nr^F_\Q(Q(w))\big| \big)^{-1/\ref{a:effspinperiods}} |\disc(F)|^{\ref{a:effspinperiods}} h_\infty^{\ref{a:effspinperiods}} \norm{f}_{C^1(X)} L.
\end{align*}
\end{corollary}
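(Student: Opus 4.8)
The plan is to deduce this from Theorem~\ref{thm:main equi} applied to the semisimple simply connected anisotropic $F$-group $\G = \Spin_Q$ (anisotropy being automatic since $Q$ is totally definite and $F$ totally real, so $\G(F\otimes\R)$ is compact and $\G$ has no $F$-split torus) together with the data $\data = (\Hbf_W, \iota, e)$ where $\iota = \varrho|_{\Hbf_W}$ and $g = e$. The group $\Hbf_W$ is the pointwise stabilizer of a codimension-$\geq 3$ subspace, hence is itself a spin group of the restriction $Q|_{W^\perp}$ of rank $\geq 3$; in particular it is semisimple simply connected, and anisotropic for the same reason as $\G$. So the hypotheses of Theorem~\ref{thm:main equi} are met and it remains to (a) bound $\cpl(X)$ from above in terms of $h_\infty$ and $|\disc(F)|$, and (b) bound $\mcpl(Y_W)$ from below in terms of $\min_{w\in W\cap\mathcal O_F^n\setminus\{0\}}|\Nr^F_\Q(Q(w))|$, after which the stated inequality follows with $\ref{a:effspinperiods}$ a suitable multiple of $\ref{A:main}$ (absorbing the $|\disc(F)|$-dependence of the implied constant).

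For (a): since $g = e$, the complexity $\cpl(X) = \prod_v \norm{\vpz_\G}_v$ where $\vpz_\G \in \bigwedge^{\dim\G}\varrho(\Lie\G)(F)$. One picks a convenient basis of $\varrho(\Lie\G) = \so_Q \subset \mathfrak{gl}_n$, namely the matrices $E_{ij} - $ (the $Q$-adjoint of $E_{ij}$), whose entries are polynomial expressions in the coefficients $m_{ij}$ of $Q$ and in the entries of $Q^{-1}$ (so their denominators are controlled by $\disc(Q)$, which divides a power of $\Nr^F_\Q(\mathfrak d\biglattice)$, but here we are content with $h$). Taking $\vpz_\G$ to be the wedge of this basis, each local factor $\norm{\vpz_\G}_v$ is bounded by a fixed power of $\max(h_v, |2|_v^{-1}, \ldots)$, and the product over all $v$ telescopes: the finite places contribute a bounded power of $h_f^{-1}\cdot(\text{denominators})$ and the archimedean places a bounded power of $h_\infty$; all told $\cpl(X) \ll |\disc(F)|^{O(1)} h_\infty^{O(1)}$. (This is precisely the type of elementary height estimate carried out in \cite[App.~B]{EMMV}.)

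For (b), which I expect to be the main obstacle: one must show that \emph{every} intermediate semisimple $F$-group $\varrho(\Hbf_W)\subseteq \Mbf \subsetneq \varrho(\G)$ has $\cpl(\SL_N(F)\Mbf(\A)) \gg (\min_w|\Nr^F_\Q(Q(w))|)^{c}$. The structure of intermediate groups between a pointwise stabilizer of a subspace and the full $\SO_Q$ is classical: $\Mbf$ must be (the identity component of) $\SO_{Q|_U}\times\{1\}$ for some subspace $U\supsetneq W^\perp$ with $W\supset U^\perp$, i.e. $\Mbf$ is the spin/special-orthogonal group of some proper subspace $U'\subsetneq V$ containing a nonzero vector $w_0\in W\cap\mathcal O_F^n$ chosen so that $Q(w_0)$ realizes the minimum (one reduces to $U' = w_0^\perp$, the largest such proper subspace, since shrinking $U'$ only raises the complexity). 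The Lie algebra $\Lie\Mbf = \so_{w_0^\perp}$ is cut out by the single extra condition of annihilating $w_0$; computing $\cpl$ with a basis adapted to this, the relevant wedge vector picks up a factor essentially equal to a power of $Q(w_0)$ (up to bounded archimedean and $h$-powers), because passing to the orthogonal complement of an integral vector $w_0$ with $\gcd$ of coordinates dividing a controlled quantity forces denominators of size $\asymp Q(w_0)$ at the primes dividing $Q(w_0)$, whence $\prod_v\norm{\cdot}_v \gg |\Nr^F_\Q(Q(w_0))|^{c}$ after correcting by $h$-powers. Feeding the bounds from (a) and (b) into Theorem~\ref{thm:main equi}, noting that $L$ and $\norm{f}_{C^1(X)}$ pass through unchanged and that the $|\disc(F)|$-dependence of the implied constant there is polynomial, yields the corollary; the archimedean factor $h_\infty$ appears because $\cpl(X)$ does, and one sets $\ref{a:effspinperiods}$ large enough to dominate all the exponents that arose.
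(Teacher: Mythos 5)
Your overall plan — reduce to Theorem~\ref{thm:main equi} by proving an upper bound on $\cpl(X)$ and a lower bound on $\mcpl(Y_W)$ — matches the paper exactly, and the $\cpl(X)$ estimate (though the paper runs it through the Bombieri--Vaaler version of Siegel's lemma rather than an explicit basis) is fine. The classification of intermediate groups as pointwise stabilizers $\Hbf_{W'}$ with $\{0\}\neq W'\subset W$ is also what the paper uses (Lemma~\ref{eq:intermediategroups-qf}); you should note that one must first pass to the minimal normal $F$-subgroup of $\Mbf$ containing $\Hbf_W$ (the paper does this via \cite[Lemma 8.6]{AW-realsemisimple}) before the classification applies.

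The gap is in your treatment of $\mcpl(Y_W)$. You reduce to $U'=w_0^\perp$, i.e.\ $W'=Fw_0$ with $w_0$ the $Q$-minimizer in $W\cap\mathcal O_F^n$, on the grounds that ``shrinking $U'$ only raises the complexity.'' This monotonicity is neither obvious nor proven, and there is no reason to expect it: the height of a Lie subalgebra is a covolume, and sub-lattices of smaller rank do not in general have larger covolume relative to the ambient integral structure. Moreover, even granting the monotonicity, the reduction would only take you to $W'=Fw$ for some $w\in W'$ depending on the given intermediate $\Mbf$, and that $w$ need not be the global $Q$-minimizer $w_0$. The paper avoids both issues: for an arbitrary nontrivial $W'\subset W$ it compares $\height(\Hbf_{W'})$ with $\height(W')$ via Siegel's lemma applied to the linear system $\mathrm{D}\varrho(\vpz_i)w=0$, then invokes Minkowski's first theorem \emph{inside $W'$} to produce a short integral $w\in W'\subset W$, and finally uses that the archimedean part of $\height(w)$ is $\gg h_\infty^{-1}|\Nr^F_\Q(Q(w))|^{1/2}\geq h_\infty^{-1}(\min_W|\Nr^F_\Q(Q)|)^{1/2}$ after normalizing $w$ to be nearly primitive. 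Chaining these inequalities gives $\height(\Hbf_{W'})\gg h_\infty^{-\star}|\disc(F)|^{-\star}(\min|\Nr^F_\Q(Q)|)^\star$ without any comparison between different $W'$'s. You would need to either prove your monotonicity claim (and also handle the mismatch between $w_0$ and the arbitrary $W'$) or adopt the short-vector argument directly.
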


By Theorem~\ref{thm:main equi} the proof of the corollary will boil down to the following estimates:
\begin{align}\label{eq:qf-ambientvol}
\cpl(X) &\ll h^\star |\disc(F)|^\star,\\
\mcpl(Y_W) &\gg h_\infty^{-\star} |\disc(F)|^{-\star} \Big(\min_{w \in W \cap \mathcal{O}_F^n \setminus\{0\}} \big|\Nr^F_{\Q}(Q(w))\big|\Big)^{\star} \label{eq:qf-mincpl}.
\end{align}
The latter will require statements regarding intermediate groups between $\H_W$ and $\G$ already present in \cite{EllenbergVenkatesh-localglobal}.
For any subgroup $\Mbf < \G$ recall the choice of vector $\vpz_{\Mbf} \in \wedge^{\dim(\Mbf)}\gfrak$ corresponding to the Lie algebra of $\Mbf$ and define $\height(\Mbf) = \prod_v \norm{\vpz_{\Mbf}}_v$.
Throughout we will use the following version of Siegel's lemma over $F$ due to Bombieri and Vaaler.

\begin{theorem}[{\cite[Thm.~9]{BombieriVaaler}}]\label{thm:SiegeloverF}
For $k<\ell$ let $A \in \Mat_{k\ell}(\mathcal{O}_F)$ be of full rank. Then there exist $v_1,\ldots,v_{\ell-k} \in \mathcal{O}_F^\ell$ linearly independent over $F$ with $Av_i = 0$ and
\begin{align*}
\prod_i \height(v_i) 
\ll |\disc(F)|^{\frac{\ell-k}{2[F:\Q]}} \height(a_1\wedge \ldots \wedge a_k)
\end{align*}
where $a_1,\ldots,a_k$ denote the rows of $A$.
\end{theorem}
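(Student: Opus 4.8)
This is the number-field form of Siegel's lemma due to Bombieri and Vaaler \cite{BombieriVaaler}, with $\height$ the absolute Weil height, so that $\height(\cdot)^{[F:\Q]}=\prod_v\norm{\cdot}_v=:H(\cdot)$; the plan is to deduce it from the geometry of numbers over $\mathcal{O}_F$. First I would reduce to the kernel lattice. Set $d=\ell-k$, let $V_0=\ker(A)\subseteq F^\ell$, an $F$-subspace of dimension $d$, and $\Lambda=V_0\cap\mathcal{O}_F^\ell$, an $\mathcal{O}_F$-lattice of rank $d$ spanning $V_0$. For an $F$-subspace $V\subseteq F^\ell$ the quantity $H(V)=\prod_v\norm{w_1\wedge\cdots\wedge w_{\dim V}}_v$ is independent of the $F$-basis $w_1,\ldots,w_{\dim V}$ by the product formula, and the classical duality of subspace heights (\cite[\S I]{BombieriVaaler}) gives $H(V_0)=H(V_0^\perp)$, where $V_0^\perp\subseteq F^\ell$ is the annihilator of $V_0$. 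Since $A$ has full rank, its rows $a_1,\ldots,a_k$ form an $F$-basis of $V_0^\perp$, so $H(V_0)=H(a_1\wedge\cdots\wedge a_k)$; dividing the target inequality by a $[F:\Q]$-th power, it suffices to find $F$-linearly independent $v_1,\ldots,v_d\in\Lambda$ with $\prod_{i=1}^d H(v_i)\ll|\disc(F)|^{d/2}H(V_0)$.

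Next I would run Minkowski's second theorem after restriction of scalars. Put $n_0=[F:\Q]$ and let $r_1,r_2$ be the numbers of real and complex places of $F$. Under $F\otimes_\Q\R=\prod_{v\mid\infty}F_v$ the lattice $\Lambda$ becomes a lattice of full rank $dn_0$ in $V_0\otimes_\Q\R\subseteq(F\otimes_\Q\R)^\ell$, and the standard computation of the covolume of an $\mathcal{O}_F$-lattice in terms of the discriminant and the height of the spanned subspace gives $\vol\big((V_0\otimes\R)/\Lambda\big)\asymp_{\ell,n_0}|\disc(F)|^{d/2}H(V_0)$. Take for $C$ the symmetric convex body in $V_0\otimes\R$ cut out by $\norm{x}_v\le 1$ for all $v\mid\infty$; its volume lies between two positive constants depending only on $\ell$ and $n_0$. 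Minkowski's second theorem then produces successive minima $0<\lambda_1\le\cdots\le\lambda_{dn_0}$ of $(\Lambda,C)$ with $\prod_{j=1}^{dn_0}\lambda_j\asymp_{\ell,n_0}|\disc(F)|^{d/2}H(V_0)$, attained by $\Z$-linearly independent $u_1,\ldots,u_{dn_0}\in\Lambda$ with $\norm{u_j}_v\le\lambda_j$ for all $v\mid\infty$. Since every nonzero vector of $\mathcal{O}_F^\ell$ has $\prod_{v\mid\infty}\norm{\cdot}_v\ge 1$ by the product formula (pick a nonzero coordinate), its largest archimedean local norm is $\ge1$, and hence $\lambda_j\ge1$ for all $j$.

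Then I would extract an $F$-linearly independent subsystem greedily: put $v_1=u_1$, and having chosen $F$-linearly independent $v_1,\ldots,v_j$ let $v_{j+1}$ be the first $u_m$ outside their $F$-span. A $j$-dimensional $F$-subspace meets $\mathcal{O}_F^\ell$ in a lattice of $\Z$-rank at most $jn_0$, while $u_1,\ldots,u_{jn_0+1}$ are $\Z$-linearly independent, so necessarily $m\le jn_0+1$ and $\norm{v_{j+1}}_v\le\lambda_{jn_0+1}$ at every infinite place; this yields $v_1,\ldots,v_d$ with $\norm{v_i}_v\le\lambda_{(i-1)n_0+1}$ for all $v\mid\infty$. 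Consequently $H(v_i)=\prod_v\norm{v_i}_v\le\prod_{v\mid\infty}\norm{v_i}_v\le\lambda_{(i-1)n_0+1}^{r_1+r_2}\le\lambda_{(i-1)n_0+1}^{n_0}$, where the first inequality uses $v_i\in\mathcal{O}_F^\ell$ and the last uses $\lambda_j\ge1$ together with $r_1+r_2\le n_0$. Multiplying over $i$ and using monotonicity of the $\lambda_j$ gives $\prod_{i=1}^d H(v_i)\le\prod_{j=1}^{dn_0}\lambda_j\asymp|\disc(F)|^{d/2}H(V_0)$, which is the reduced target; taking $[F:\Q]$-th roots and recalling $H(V_0)=H(a_1\wedge\cdots\wedge a_k)$ recovers the claimed bound.

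The substantial content lies not in these reductions but in the two inputs they rest on: the duality $H(V_0)=H(V_0^\perp)$ and the covolume formula $\vol\big((V_0\otimes\R)/\Lambda\big)\asymp|\disc(F)|^{d/2}H(V_0)$. Both are instances of an adelic volume computation on $\mathbb{A}_F^\ell$ and require careful bookkeeping of the real-versus-complex archimedean normalisations and of the local structure at primes dividing $\disc(F)$, where the different intervenes. Bombieri and Vaaler circumvent the restriction-of-scalars gymnastics by working directly with an adelic convex body and an adelic form of Minkowski's second theorem, which additionally yields the optimal implied constant; setting up that framework cleanly — rather than the elementary reductions above — is the real obstacle, and is why the statement is quoted here rather than reproved.
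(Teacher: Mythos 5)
The paper does not prove Theorem~\ref{thm:SiegeloverF}; it is stated as a direct citation of \cite[Thm.~9]{BombieriVaaler}, so there is no in-paper argument to compare against. What you have produced is therefore a genuine proof sketch rather than a reconstruction.

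Your route --- restrict scalars, apply classical Minkowski over $\Z$ to the $\Z$-lattice $\Lambda=V_0\cap\mathcal{O}_F^\ell$ inside $V_0\otimes\R$, and then extract $F$-linearly independent vectors greedily --- is a legitimate and genuinely different path from Bombieri--Vaaler's, who work with an adelic convex body and an adelic second theorem (and thereby get the sharp constant $(2/\pi)^s$, which you do not). The reductions you carry out are sound: the duality $H(V_0)=H(V_0^\perp)$ correctly converts the height of the row space into the height of the kernel; the greedy argument correctly uses that a $j$-dimensional $F$-subspace meets $\mathcal{O}_F^\ell$ in a $\Z$-lattice of rank at most $j[F:\Q]$, so one of $u_1,\ldots,u_{j[F:\Q]+1}$ must escape; the lower bound $\lambda_j\ge 1$ from the product formula on $\mathcal{O}_F$ is correct and is what lets you pass from $\lambda^{r_1+r_2}$ to $\lambda^{[F:\Q]}$. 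One phrasing slip: after identifying $\height$ with the absolute Weil height you should say you are \emph{raising} the target inequality to the $[F:\Q]$-th power, not dividing; the algebra you then do is the right one.

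The two places where the argument is not self-contained are exactly the two you flag: the duality of subspace heights and the covolume formula $\vol\bigl((V_0\otimes\R)/\Lambda\bigr)\asymp|\disc(F)|^{(\ell-k)/2}H(V_0)$. Both are true and standard, but both require the same careful bookkeeping of archimedean normalizations and of the different at ramified primes that the adelic framework is designed to handle cleanly --- so in effect your sketch relocates, rather than removes, the technical core. That is a fair and honest assessment on your part, and it is the reason citing Bombieri--Vaaler is the right call in the paper.
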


\begin{proof}[Proof of \eqref{eq:qf-ambientvol}]
We need to estimate the height of the Lie algebra of $\varrho(\Spin_Q) = \SO_Q$ as a point in the appropriate projective space for $\mathfrak{sl}_n$.
Note that $\Lie(\SO_Q)$ is given by
\begin{align*}
\Lie(\SO_Q) = \{X \in \Mat_n: X M_Q + M_Q X^t = 0\}
\end{align*}
where we write $M_Q \in \frac{1}{2}\Mat_{n}(\mathcal{O}_F)$ for the representation matrix.
We may apply Theorem~\ref{thm:SiegeloverF} to find an $F$-basis for $\Lie(\SO_Q)(F)$ consisting of integral vectors $\vpz_1,\ldots,\vpz_d$ with height $\ll |\disc(F)|^\star h^\star$.
In particular, 
\begin{align*}
\cpl(X) = \height(\SO_Q) = \height(\vpz_1\wedge \ldots \wedge \vpz_d) \ll |\disc(F)|^\star h^\star
\end{align*}
as claimed.
\end{proof}

We turn to proving \eqref{eq:qf-mincpl}.
As mentioned earlier, we will require some understanding of intermediate groups between $\Hbf_W$ and $\G$.

\begin{lemma}\label{eq:intermediategroups-qf}
Let $\Lbf \lneq \G $ be a connected semisimple $F$-subgroup so that $\Hbf_W$ is contained in $\Lbf$ but not in any proper normal $F$-subgroup of $\Lbf$.
Then $\Lbf = \Hbf_{W'}$ for a non-trivial $F$-subspace $W' \subset W$.
\end{lemma}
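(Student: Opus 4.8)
The assertion is a classification statement for the intermediate subgroups of $\mathbf{G} = \Spin_Q$ that contain the pointwise stabilizer $\Hbf_W$ of a subspace $W$ (of codimension $\geq 3$) and are minimal in the sense that $\Hbf_W$ does not lie in a proper normal subgroup. The plan is to work over $\bar{F}$ (or over a splitting field), where the geometry of $\SO_Q$-orbits on subspaces and the structure of stabilizers is transparent, and then descend. First I would replace $\mathbf{G}$ by its image $\mathbf{G}' = \SO_Q$ under $\varrho$ throughout, since $\varrho$ has finite central kernel and $\Hbf_W$, $\Lbf$ are both semisimple so the statement is unaffected; this lets me argue directly with the standard representation on $V$.

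The heart of the argument is the following observation about the subspace $U = U_{\Lbf} := \{v \in V : \Lbf \text{ fixes } v \text{ pointwise}\}$, the fixed space of $\Lbf$ acting on $V$. Since $\Hbf_W \subseteq \Lbf$, we have $U \subseteq W$. The key claim is that $U$ is non-trivial and that $\Lbf = \Hbf_U = \SO_{U^{\perp}}$ (the latter acting trivially on $U$ and as the full special orthogonal group of the restriction of $Q$ to the non-degenerate part of $U^{\perp}$). To see that $U \neq 0$: if $\Lbf$ had no non-zero fixed vector in $V$, then since $\Hbf_W \cong \Spin_{W^{\perp}}$ (up to isogeny) is quite large — indeed $\Hbf_W$ is already a semisimple group of rank $\geq 1$ because $\dim W^{\perp} \geq 3$ — one shows that $\Lbf$ would have to be too small relative to being a group strictly between $\Hbf_W$ and $\SO_Q$. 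More precisely, I would argue: over $\bar F$, $\Lbf$ is a connected semisimple subgroup of $\SO_n$ containing $\Spin_{k}$ where $k = \dim W^{\perp} \geq 3$; the candidates are constrained, and any such $\Lbf$ either fixes a non-trivial subspace (landing us in the $\Hbf_{U}$ case) or is all of $\SO_Q$, contradicting $\Lbf \lneq \mathbf{G}$. The minimality hypothesis (not contained in a proper normal subgroup of $\Lbf$) is used to rule out $\Lbf$ being a product, e.g. to exclude $\Lbf = \SO_{W_1^\perp} \times \SO_{W_2}$ type decompositions and force $\Lbf$ to be the ``diagonal-free'' stabilizer $\Hbf_{W'}$ for a single subspace $W' = U \subseteq W$.

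To make the inclusion $\Lbf \subseteq \Hbf_U$ into an equality, I would use that $\Lbf$ is connected semisimple and acts on $U^{\perp}$, and that $\Hbf_U$ (for $U$ non-degenerate, which one can arrange or handle separately in the degenerate case via the radical) is $\SO$ of a space of dimension $n - \dim U$; any connected semisimple subgroup of $\SO_{U^\perp}$ that contains $\Hbf_W = \Hbf_{W^\perp \text{ in } U^\perp}$... wait — here one iterates: $\Lbf$ sits strictly between $\Hbf_W$ and $\SO_{U^\perp}$, or equals $\SO_{U^\perp}$. If it equals $\SO_{U^\perp} = \Hbf_U$, we are done with $W' = U$. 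If it is strictly between, then by minimality again $\Lbf$ fixes a larger subspace $U' \supsetneq U$, contradicting the definition of $U$ as the \emph{full} fixed space. This gives $\Lbf = \Hbf_U$ with $W' = U \subseteq W$ non-trivial.

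The main obstacle I anticipate is handling the case where the fixed space $U$ (or the relevant intermediate subspaces) is \emph{degenerate} for $Q$, i.e. $U \cap U^\perp \neq 0$. In that case $\Hbf_U$ is not simply $\SO$ of a subspace but has a non-trivial unipotent radical, so ``$\Hbf_U$ is semisimple'' can fail; since $Q$ is totally definite over a totally real field, every non-zero subspace is automatically non-degenerate, so this obstacle actually evaporates in the setting of Corollary~\ref{cor:qf-dynthm} — but if one wants the lemma in the stated generality one must either impose non-degeneracy or note that a semisimple $\Lbf$ cannot have the fixed space be a degenerate-but-nontrivial subspace and still be semisimple, forcing $U$ non-degenerate. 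I would include a remark that under the totally-definite hypothesis this is automatic, and otherwise invoke that $\Lbf$ semisimple forces the radical of its fixed space to be $\Lbf$-fixed hence contained in $U$ and handled by the same iteration. The other bookkeeping point is purely group-theoretic: enumerating connected semisimple overgroups of $\Spin_k$ inside $\SO_n$ — here I would cite that such an overgroup, being reductive and containing a large orthogonal subgroup, must stabilize the orthogonal complement of its fixed space and act there as the full $\SO$, a standard fact about maximal subgroups of classical groups, or prove it by a short highest-weight / dimension count over $\bar F$.
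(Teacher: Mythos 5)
Your proposal has the right overall shape (focus on the fixed space $U$ of $\Lbf$ in $V$, argue $\Lbf = \Hbf_U$, iterate), but it leaves a genuine gap at the one step that actually carries the content of the lemma. The claim you need is: \emph{any connected semisimple $\Lbf$ strictly between $\Hbf_W$ and $\G$, with $\Hbf_W$ not contained in a proper normal subgroup of $\Lbf$, has a non-trivial fixed vector in $V$.} You assert this by saying the ``candidates are constrained,'' that it is ``a standard fact about maximal subgroups of classical groups,'' or that it follows from ``a short highest-weight / dimension count.'' None of these is a proof; the first two are, in substance, an appeal to the Dynkin/Guralnick--Saxl classification of subgroups of classical groups --- which is precisely what the paper's Lemma~2.7 is designed to avoid (Ellenberg--Venkatesh's original argument did cite Guralnick--Saxl, and this lemma replaces that citation with an elementary decomposition argument). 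Your closing iteration (``if $\Lbf \subsetneq \Hbf_U$ then $\Lbf$ fixes more'') also presupposes exactly this unproved claim applied to $\Lbf < \Hbf_U$, so it is not an independent way around the gap.

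The paper's elementary argument, which you would need to supply, runs as follows: decompose $\mathfrak{so}_n = \mathfrak{so}_k \oplus \mathfrak{so}_{n-k} \oplus \rfrak_k$ with $\rfrak_k \simeq \R^k \otimes \R^{n-k}$ as an $\hfrak = \mathfrak{so}_k$-module (here $k = \dim W^\perp \geq 3$); use irreducibility of the standard representation of $\SO_k(\R)$ on $\R^k$ to conclude $\lfrak \cap \rfrak_k = \R^k \otimes U$ for some $U \subset \R^{n-k}$; conjugate $U$ into standard position by an element of $\SO_{n-k}(\R)$; show $\lfrak$ then contains (and, by a second application of the same decomposition and the no-proper-normal-subgroup hypothesis, equals) $\mathfrak{so}_{k'}$ for $k' = k + \dim U$.

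One further, separate issue: you propose to work over $\bar F$, but the paper works over each archimedean completion $\R$ precisely because $Q$ is totally definite there. That is not a cosmetic choice. Over $\R$ with $Q$ definite, $\SO_{n-k}(\R)$ is compact and acts transitively on subspaces of $\R^{n-k}$ of each dimension; over $\bar F$ the form is split, subspaces stratify by the rank of the restricted form (isotropic vs.\ non-degenerate, and everything in between), and $\SO_{n-k}(\bar F)$ does not act transitively on subspaces of a given dimension. So passing to $\bar F$ would in fact complicate the conjugation step and re-introduce the degenerate-subspace headaches that you correctly flag in your last paragraph and that totally-definiteness over $\R$ makes disappear. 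The Galois-descent step the paper actually uses is the opposite of what you suggest: prove the Lie-algebra statement over every real completion $F_w$ (where definiteness holds), then use that $\Lie(\Lbf)$ is defined over $F$ to obtain a single $F$-subspace $W' \subset W$.
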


In \cite{EllenbergVenkatesh-localglobal}, Ellenberg and Venkatesh invoke classification results of Guralnick and Saxl \cite{GuralnickSaxl} on groups generated by reflections to obtain a similar statement. An earlier version of the same article on the arXiv contains an elementary argument in codimension at least $7$.
The elementary argument we present here is already present in work of M.E.~ and Wirth \cite[Prop.~3.1]{EinsiedlerWirth}, a variant of which is also contained in work of Lee and Oh \cite[Cor.~3.8]{LeeOh}.

\begin{proof}
We first prove a version of the lemma for Lie algebras over $\R$:
Suppose that $\lfrak < \mathfrak{so}_n(\R)$ is a proper Lie subalgebra so that
\begin{align*}
\hfrak = \Big\{\begin{pmatrix}
\ast_{k,k} & 0_{k,n-k}\\ 0_{n-k,k} & 0_{n-k,n-k}
\end{pmatrix} \Big\} \subset\lfrak
\end{align*}
and $\hfrak$ is not contained in any proper factor of $\lfrak$.
We claim that there exists $k \leq k' < n$ and $g \in \{1_k\} \times \SO_{n-k}(\R)$ such that
\begin{align}\label{eq:intermediatealgR}
\Ad(g)\lfrak = \Big\{\begin{pmatrix}
\ast_{k',k'} & 0_{k',n-k'}\\ 0_{n-k',k'} & 0_{n-k',n-k'}
\end{pmatrix} \Big\}.
\end{align}
To keep the notation simple, we write $\SO_k(\R)$ for the subgroup of $\SO_n(\R)$ with Lie algebra $\hfrak$ and $\SO_{n-k}(\R)$ for the group with Lie algebra the centralizer of $\hfrak$.
The adjoint representation of $\hfrak$ on $\mathfrak{so}_{n}(\R)$ can be decomposed as $\mathfrak{so}_{k}(\R) \oplus \mathfrak{so}_{n-k}(\R) \oplus \rfrak_k$ where
\begin{align*}
\rfrak_k = \Big\{\begin{pmatrix}
0_{k,k} & A\\ -A^t & 0_{n-k,n-k}
\end{pmatrix}: A \in \Mat_{k,n-k}(\R) \Big\}.
\end{align*}
Note that $\rfrak_k \simeq \R^k \otimes \R^{n-k}$ is invariant under $\SO_k(\R) \times \SO_{n-k}(\R)$ where the action on $\R^k \otimes \R^{n-k}$ is the natural one.
Since $\lfrak \cap \rfrak_k$ is invariant under $\SO_k(\R)$,
there exists a subspace $U \subset \R^{n-k}$ so that $\lfrak \cap \rfrak_k = \R^k \otimes U$ under the identification $\rfrak_k \simeq \R^k \otimes \R^{n-k}$.
Here, we used that the standard representation of $\SO_k(\R)$ is irreducible.
As $\SO_{n-k}(\R)$ acts transitively on subspaces of $\R^{n-k}$, there exists $k' \geq k$ and $g \in \SO_{n-k}(\R)$ so that $\Ad(g)\lfrak \cap \rfrak_k \simeq \R^k \otimes (\R^{k'-k} \times \{0_{n-k'}\})$.
In particular, $\Ad(g)\lfrak$ contains the subspace
\begin{align*}
\Big\{\begin{pmatrix}
\ast_{k,k} & A\\ -A^t & 0_{k'-k,k'-k}
\end{pmatrix}: A \in \Mat_{k,k'-k}(\R)\Big\}
\end{align*}
viewed as embedded in the top left corner in the above coordinates on $\mathfrak{so}_n(\R)$.
This implies that $\Ad(g)\lfrak$ in fact contains $\mathfrak{so}_{k'}(\R)$, again embedded in the top left corner.

To see that $\Ad(g)\lfrak = \mathfrak{so}_{k'}(\R)$, notice that the adjoint representation of $\mathfrak{so}_n(\R)$ restricted to $\mathfrak{so}_{k'}(\R)$ may be decomposed in precisely the same manner as for $\hfrak$.
Moreover, $\Ad(g)\lfrak$ cannot intersect the complement $\rfrak_{k'}$ to $\mathfrak{so}_{k'}(\R)\oplus \mathfrak{so}_{n-k'}(\R)$. Indeed, any intersection would yield a vector with non-zero entry in the last $n-k'$ entries of the top row (by irreducilibity of the standard representation of $\mathfrak{so}_{k'}(\R)$) contradicting the choice of $k'$.
This proves $\lfrak \subset \mathfrak{so}_{k'}(\R) \oplus \mathfrak{so}_{n-k'}(\R)$ and thus \eqref{eq:intermediatealgR} since $\hfrak$ is not contained in any proper factor of $\lfrak$.

We turn to proving the lemma.
Recall that $Q$ was assumed to be totally definite. For any real place $w$ of $F$, \eqref{eq:intermediatealgR} shows that $\lfrak_w = \Lie(\Lbf)(F_w)$ is the Lie algebra of the stabilizer of some $F_w$-subspace of $W \otimes F_w$.
Since $\lfrak$ is defined over $F$, this shows $\Lie(\Lbf) = \Lie(\Hbf_{W'})$ for an $F$-subspace $W' \subset W$.
This proves the lemma.
\end{proof}

\begin{proof}[Proof of \eqref{eq:qf-mincpl}]
Let $\Mbf< \G$ be a proper semisimple $F$-subgroup of $\G$ with $\H_W \subset \Mbf \subsetneq \G$.
By e.g.~\cite[Lemma 8.6]{AW-realsemisimple}, any normal $F$-subgroup of $\Mbf$ has height controlled polynomially by the height of $\Mbf$.
We may hence, for simplicity, replace $\Mbf$ with the minimal normal $F$-subgroup of $\Mbf$ containing $\Hbf_W$.

By Lemma~\ref{eq:intermediategroups-qf}, there exists a non-trivial $F$-subspace $W' \subset W$ with $\Mbf = \Hbf_{W'}$.
By a version of Minkowski's second theorem --- see \cite[Thm.~8]{BombieriVaaler} --- there exists an $F$-basis $\vpz_1,\ldots,\vpz_d$ of $\Lie(\H_{W'})$ consisting of integral vectors with
\begin{align*}
\prod_i \height(\vpz_i) \ll |\disc(F)|^\star \height(\H_{W'}).
\end{align*}
Also, observe that $W'$ is the unique subspace pointwise stabilized by $\H_{W'}$ and so
\begin{align*}
W' = \{w \in F^n: \mathrm{D}\varrho(\vpz_i) w = 0 \text{ for all } i\}.
\end{align*}
By Theorem~\ref{thm:SiegeloverF} we have
\begin{align*}
\height(\H_{W'}) \gg\height(W')^\star |\disc(F)|^{-\star}.
\end{align*}
By Minkowski's first theorem (a sufficient version of which is implied by Theorem~\ref{thm:SiegeloverF}), there exists a vector $w \in W'$ with $\height(w) \ll |\disc(F)|^\star \height(\Hbf_{W'})^\star$.
Replacing $w$ by a multiple and using Minkowski's bound, we may assume that
\begin{align*}
[W \cap \mathcal{O}_F^n:\mathcal{O}_F w] \ll |\disc(F)|^{\frac{1}{2}}.
\end{align*}
In particular, $\prod_{v \text{ finite}} \norm{w}_v \gg |\disc(F)|^{-\frac{1}{2}}$. 
We also have 
\begin{align*}
\prod_{v \mid \infty} \norm{w}_v \gg h_\infty^{-1} \prod_{v \mid \infty} |Q(w)|_v^{\frac{1}{2}} = h_\infty^{-1} \big|\Nr^F_\Q(Q(w))\big|^{\frac{1}{2}}.
\end{align*}
Putting all these estimates together shows 
\begin{align*}
\height(\Mbf) = \height(\H_{W'}) 
&\gg \height(w)^\star |\disc(F)|^{-\star}
\gg \prod_{v \mid \infty} \norm{w}_v^\star |\disc(F)|^{-\star}\\
&\gg h_\infty^{-\star} |\disc(F)|^{-\star} \big|\Nr^F_\Q(Q(w))\big|^\star.
\end{align*}
This proves the claim.
\end{proof}

\begin{proof}[Proof of Corollary~\ref{cor:qf-dynthm}]
In view of \eqref{eq:qf-ambientvol} and \eqref{eq:qf-mincpl} this is a direct consequence of Theorem~\ref{thm:main equi}.
\end{proof}

\subsection{Spin genera, primitive representations, and adelic orbits}\label{sec:qf-reformulation}

The goal of this subsection is to describe the sets at the beginning of the section (the genus, the set of primitive representations etc) in terms of adelic periods.
While this is mostly standard, we do so from first principles for the readers' convenience.

\subsubsection{Some notation}
As before, $F$ is a totally real number field with ring of integers $\mathcal{O}_F$. For any finite place $v$ of $F$ we write $\order_v$ for the ring of integers of $F_v$.
Let $(V,Q)$ be an $n$-dimensional totally definite quadratic space over $F$. 
We write $\G'=\SO_{Q}$ for the special orthogonal group of ${Q}$, $\G = \Spin_{Q}$ for the spin group of ${Q}$ (the simply connected cover of $\SO_{Q}$), and $\varrho: \G \to \G'$ for the standard representation.
For any subspace $U \subset V$ we let
\begin{align*}
\Hbf_U = \{g \in \G: \varrho(g)u = u \text{ for all }u \in U\}
\end{align*}
be the pointwise stabilizer group of the subspace $U$ under the action of $\G$.
Similarly, we define $\H_U'<\G'$ to be the pointwise stablizer group in $\G'$.
One may identify $\Hbf_U$ (resp.~$\H'_U$) with the spin group (resp.~special orthogonal group) of the restriction of the quadratic form $Q$ to the orthogonal complement of $U$.

\subsubsection{The genus and the spin genus}
Recall that the standard action of $\GL_n(\A_f)$ on $\mathcal{O}_F$-lattices $\biglattice\subset V$ is given by letting $g_\ast \biglattice$ be the unique lattice with completions $g_v (\biglattice \otimes \order_v)$ at every finite place $v$ of $F$.
Explicitly, 
\[
g_\ast \biglattice = \bigcap_{v \nmid \infty} \big((g_v \biglattice\otimes \order_v)  \cap V\big).
\]
For brevity, we set $\biglattice\otimes \order_v = \biglattice_v$.

Two quadratic lattices are equivalent (or isometric) if they are in the same $\G'(F)$-orbit.
The genus of a quadratic lattice $\biglattice \subset V$ is the orbit 
\begin{align*}
\gen(\biglattice) = \G'(\A_f)_\ast \biglattice.
\end{align*}
The genus consists of finitely many equivalence classes.
If $F=\Q$ and $Q(\biglattice) \subset \Z$, the genus of $\biglattice$ is naturally identified with the genus of the quadratic form $Q|_{\biglattice}$. Indeed, the quadratic form restricted to a lattice in $\SO_{Q}(\A_f)_\ast \biglattice$ is locally equivalent to the quadratic form on $\biglattice$. 

The spin genus of a quadratic lattice $\biglattice \subset V$ is
\begin{align}\label{eq:spingenus}
\spingenus(\biglattice) = \G'(F)\varrho(\G(\A_f))_\ast \biglattice
\end{align}
where we note that $\varrho(\G(\A_f))$ is normalized by $\G'(F)$.

\subsubsection{Primitive representations}
Suppose now that $\smalllattice$ is a quadratic lattice of rank $m\leq n$ in another quadratic space $(W,q)$.

\begin{lemma}\label{lem:spinrepresentable}
If $m \leq n-3$ and $\smalllattice$ is locally primitively representable by a lattice $\biglattice \subset V$, then $\smalllattice$ is primitively representable by some $\biglattice' \in \spingenus(\biglattice)$.
That is, there exists an $F$-subspace $U \subset V$ isometric to $W$ such that $\biglattice' \cap U$ is isometric to $\smalllattice$.
\end{lemma}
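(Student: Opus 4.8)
The statement is essentially a local-to-global principle at the level of spin genera, so the plan is to combine the Hasse--Minkowski theorem (to produce a \emph{rational} primitive representation) with a spinor-norm computation (to move it into the spin genus rather than just the genus). First I would use the hypothesis of local primitive representability: at every finite place $v$ there is a primitive representation $\smalllattice_v \hookrightarrow \biglattice_v$, and over every real place the quadratic space $(W,q)$ embeds isometrically into $(V,Q)$ (here the totally-definite hypothesis enters only through the real places, where the embedding of definite spaces of the right signatures always exists). In particular, for every place $v$ of $F$ the quadratic space $W \otimes F_v$ embeds isometrically into $V \otimes F_v$. By the Hasse--Minkowski theorem applied to the form $q \perp (-Q)$ (or directly to the representation problem, which is governed by local conditions in codimension $\geq 1$), there is an $F$-subspace $U_0 \subset V$ with $(U_0,Q|_{U_0}) \cong (W,q)$. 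Fix such a $U_0$ and an isometry $W \xrightarrow{\sim} U_0$, and let $\smalllattice_0 \subset U_0$ be the image of $\smalllattice$ under this isometry; it is a lattice in $U_0$, though not yet one that need be primitive in $\biglattice$ nor equal to $\biglattice \cap U_0$.

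\textbf{Adjusting locally to match the given local representations.} The subtlety is that $\smalllattice_0$ inside $\biglattice$ need not be primitive and need not be locally isometric, at each $v$, to the prescribed local primitive representation $\iota_v \colon \smalllattice_v \hookrightarrow \biglattice_v$. I would fix this by a local patching argument on the orthogonal group. For each finite place $v$, both $\smalllattice_0 \otimes \order_v \subset \biglattice_v$ (after possibly replacing $\smalllattice_0$ by its primitive closure $\smalllattice_0' = \biglattice \cap (U_0 \otimes F)$, which only enlarges it and keeps it a lattice in $U_0$) and $\iota_v(\smalllattice_v) \subset \biglattice_v$ are primitive sublattices of $\biglattice_v$ spanning isometric subspaces $U_0 \otimes F_v \cong W \otimes F_v \cong U_0 \otimes F_v$; by Witt's theorem over the local field (together with the elementary divisor theorem for the integral structures in the non-dyadic case, and a slightly more careful but still standard argument at dyadic places, valid because the codimension $n - m \geq 3$ gives enough room) there exists $g_v \in \G'(F_v) = \SO_Q(F_v)$ with $g_v(\smalllattice_0' \otimes \order_v) = \iota_v(\smalllattice_v)$ as sublattices of $\biglattice_v$, and one may arrange $g_v \in \SO_Q(\order_v)$ for almost all $v$ so that $(g_v)_v$ is a genuine adelic element. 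Then $\biglattice' := (g_v)_{v,\ast}\biglattice$ lies in $\gen(\biglattice)$ and contains $\smalllattice_0'$ (now the image of $\smalllattice$) as a primitive sublattice with $\biglattice' \cap U_0 = \smalllattice_0'$, which is the asserted conclusion — except that we need $\biglattice' \in \spingenus(\biglattice)$, not merely $\gen(\biglattice)$.

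\textbf{The main obstacle: landing in the spin genus.} The crux is the spinor-norm correction. The adelic element $g = (g_v)_v \in \G'(\A_f)$ produced above need not lie in $\G'(F)\varrho(\G(\A_f))$, i.e.\ its class in the finite adelic spinor-norm group $\theta(\G'(\A_f)) = \A_f^\times/(\A_f^\times)^2$ modulo $F^\times$ and modulo local spinor norms need not be trivial. The remedy, which is exactly the place where the codimension hypothesis $m \leq n-3$ is used substantively (so that $\dim U_0^\perp \geq 3$), is to modify each $g_v$ by an element of $\SO_Q(F_v)$ \emph{supported on the orthogonal complement of $U_0 \otimes F_v$} — such an element fixes $U_0$ pointwise, hence does not disturb the property $g_v(\smalllattice_0' \otimes \order_v) = \iota_v(\smalllattice_v)$, while its spinor norm can be chosen freely in $F_v^\times/(F_v^\times)^2$ because the orthogonal group of a form in $\geq 3$ variables over $F_v$ surjects onto all square classes via the spinor norm. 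Choosing these corrections at a suitable finite set of places to kill the obstruction class — using that $\varrho(\G(\A_f))_\ast\biglattice$ sees all of $\A_f^\times/F^\times(\A_f^\times)^2$ modulo the (possibly nontrivial, but finite) spinor class group — adjusts $g$ into $\G'(F)\varrho(\G(\A_f))$ without changing the intersection with $U_0$. Hence the resulting $\biglattice'$ lies in $\spingenus(\biglattice)$ and $\smalllattice \cong \biglattice' \cap U_0$ primitively, proving the lemma. The delicate points to be careful about in the full write-up are the dyadic local analysis in the Witt-type extension step (handled by the codimension $\geq 3$ slack, cf.\ the references to Eichler and to the treatment in Ellenberg--Venkatesh) and making the infinitely many local adjustments simultaneously adelic, which is routine once one works with $\SO_Q(\order_v)$ at almost all $v$.
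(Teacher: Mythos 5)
Your proposal follows essentially the same route as the paper's proof (which is that of Schulze-Pillot's Lemma~2, cited in the paper): Hasse--Minkowski produces the rational isometry $U\cong W$, Witt's theorem realigns $\biglattice$ at the finitely many bad places into the genus, and the surjectivity of the spinor norm on $\H'_U(F_v)$ (using codimension $\geq 3$) corrects the element into $\varrho(\G(F_v))$ to land in the spin genus. Two small streamlinings in the paper are worth noting. First, the dyadic/elementary-divisor worry you flag in the Witt step is unnecessary: once one extends the rational isometry $\iota_v' \mapsto \iota|_{W\otimes F_v}$ to $g_v\in\G'(F_v)$ by Witt over the \emph{field} $F_v$, the identity $\iota(\smalllattice_v)=(U\otimes F_v)\cap g_v\biglattice_v$ is automatic because $g_v$ is a global isometry of $V\otimes F_v$ carrying a primitive sublattice of $\biglattice_v$ to one of $g_v\biglattice_v$, so no integral refinement is needed. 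Second, the spinor-norm correction is purely local, done independently at each $v\in\mathcal{S}$ by choosing $h_v\in\H'_U(F_v)$ with $h_v g_v\in\varrho(\G(F_v))$ (both $\G'(F_v)$ and $\H'_U(F_v)$ surject onto $F_v^\times/(F_v^\times)^2$ when $n-m\geq 3$); there is no global obstruction class to kill, so the appeal to a spinor class group in your last step is a detour, though not an error.
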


\begin{proof}
This is a well-known fact contained e.g.~in \cite[p.~2]{Hsia-spinorgenera} and \cite[Lemma 2]{SchulzePillot09}. Here, we replicate the proof of \cite[Lemma 2]{SchulzePillot09} in our notation for the readers' convenience.
By the Hasse-Minkowski theorem \cite[Thm.~66:3]{OMeara}, there exists a linear isometry $\iota: W \to V$. Set $U = \iota(W)$ and
\begin{align*}
\mathcal{S} = \{v \text{ finite place of }F: \iota(\smalllattice_v) \neq (U\otimes F_v) \cap \biglattice_v\}.
\end{align*}
Note that $\mathcal{S}$ is finite.
By assumption, $\smalllattice$ is locally primitively representable by $\biglattice$ and so there exists for any $v \in \mathcal{S}$ a linear isometry $\iota_v': \smalllattice_v \to \biglattice_v$ with $\iota_v'(\smalllattice_v)= (\iota_v'(\smalllattice_v) \otimes F_v) \cap \biglattice_v$.
By Witt's extension theorem (cf.~\cite[p.~21]{Cassels-book}, \cite[\S42F]{OMeara}), we may further find $g_v \in \G'(F_v)$ such that $g_v\iota_v'(w) = \iota(w)$ for any $w \in W\otimes F_v$.
Thus, for any $v \in \mathcal{S}$ we have $\iota(\smalllattice_v) = (U \otimes F_v) \cap g_v\biglattice_v$. If $\biglattice' \in \gen(\biglattice)$ is the lattice with $\biglattice_v' = \biglattice_v$ for all $v \not\in \mathcal{S}$ and $\biglattice_v' = g_v\biglattice_v$ for all $v \in \mathcal{S}$, we have shown that $\smalllattice$ is primitively representable by $\biglattice'$.

We adapt $g_v$, which is uniquely determined up to left-multiples with $\H'_U(F_v)$, so that $g_v \in \varrho(\G(F_v))$.
Since $m \leq n-3$, the spinor norm on both $\G'(F_v)$ and $\H'_U(F_v)$ is surjective (cf.~\cite[Thm.~91:6]{OMeara}) and we obtain exact sequences 
\begin{align*}
\G(F_v) &\to \G'(F_v)  \to F_v^\times/(F_v^\times)^2 \to 1,\\
\H_U(F_v) &\to \H'_U(F_v) \to F_v^\times/(F_v^\times)^2 \to 1.
\end{align*}
In particular, for every $v \in \mathcal{S}$ there exists $h_v \in \H'_U(F_v)$ with $h_v g_v \in \varrho(\G(F_v))$.
The lattice $\biglattice'$ with $\biglattice_v' = \biglattice_v$ for all $v \not\in \mathcal{S}$ and with $\biglattice_v' = h_vg_v\biglattice_v$ for all $v \in \mathcal{S}$ belongs to $\spingenus(\biglattice)$ and primitively represents $\smalllattice$. Thus, the lemma follows.
\end{proof}

From now on, we will always assume $m \leq n-3$.

In the following, we view for a given lattice $\biglattice\subset V$ primitive representations of $\smalllattice$ by the spin genus of $\biglattice$ as tuples $(\biglattice',\iota)$ where $\biglattice' \in \spingenus(\biglattice)$ and $\iota\colon W \to V$ is an isometry with $\iota^{-1}(\biglattice') = \smalllattice$.
The group $\G'(F)$ acts on the set of primitive representations by the spin genus via $\gamma.(\biglattice',\iota) = (\gamma \biglattice',\gamma \iota)$ (where $(\gamma \iota) (w) = \gamma \iota(w)$).
We write $[(\biglattice',\iota)] = \G'(F).(\biglattice',\iota)$ for the $\G'(F)$-equivalence class and define the following sets:
\begin{itemize}
    \item $\tilde{\mathcal{R}}(\smalllattice,\spingenus(\biglattice))$ is the set of $\G'(F)$-equivalence classes of primitive representations of $\smalllattice$ by the spin genus of $\biglattice$.
    \item $\mathcal{R}(\smalllattice,\biglattice)$ is the set of primitive representations of $\smalllattice$ by $\biglattice$.
    \item $\tilde{\mathcal{R}}(\smalllattice,\biglattice)$ is the set of primitive representations of $\smalllattice$ by $\biglattice$ up to the action of $\Aut(\biglattice)$ (where $\Aut(\biglattice)$ is the finite group of linear isometries of $\biglattice$).
\end{itemize}
In the above notation, whenever $\spingenus(\biglattice) = \bigsqcup_i \G'(F)_\ast \biglattice_i$ then
\begin{align*}
\tilde{\mathcal{R}}(\smalllattice,\spingenus(\biglattice)) \simeq
\bigsqcup_i \tilde{\mathcal{R}}(\smalllattice,\biglattice_i).
\end{align*}
By Lemma~\ref{lem:spinrepresentable}, if $\smalllattice$ is locally primitively representable by $\biglattice$, then $\tilde{\mathcal{R}}(\smalllattice,\spingenus(\biglattice))$ is non-empty.
There is an obvious forgetful map 
\begin{align*}
\pi: \tilde{\mathcal{R}}(\smalllattice,\spingenus(\biglattice)) \to \G'(F) \backslash \spingenus(\biglattice).
\end{align*}
In this phrasing, the conclusion of Theorem~\ref{thm:local global} corresponds to showing that $\tilde{\mathcal{R}}(\smalllattice,\biglattice_i)$ is non-empty for every $i$ or, equivalently, that $\pi$ is surjective.

\begin{lemma}\label{lem:primitive representations}
Let $(\biglattice,\iota)$ be a primitive representation of $\smalllattice$. 
The map
\begin{align}\label{eq:locallyisoemb}
\big\{\H'_{\iota W}(F)\text{-orbits in } \H'_{\iota W}(F)\varrho(\Hbf_{\iota W}(\A_f))_\ast \biglattice\big\} \to \tilde{\mathcal{R}}(\smalllattice,\spingenus(\biglattice))
\end{align}
given by $(\H'_{\iota W}(F)h)_\ast \biglattice \mapsto [(h_\ast \biglattice,\iota)]$ is injective.

Moreover, a class of primitive representations $[(\biglattice',\iota')]$ belongs to the image of \eqref{eq:locallyisoemb} if and only if there exist $\gamma \in \G'(F)$ and $g \in \G(\A_f)$ such that $\biglattice' = (\gamma \varrho(g))_\ast \biglattice$ and $\iota' = \gamma \varrho(g_v) \iota$ for every finite place $v$ of $F$.
\end{lemma}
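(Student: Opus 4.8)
The plan is to unwind the definitions on both sides and match them up. For injectivity, suppose $h, h' \in \Hbf_{\iota W}(\A_f)$ (with $h, h'$ understood via $\varrho$, i.e.\ $\varrho(h), \varrho(h') \in \H'_{\iota W}(\A_f)$) produce the same class, so $[(\varrho(h)_\ast \biglattice, \iota)] = [(\varrho(h')_\ast \biglattice, \iota)]$ in $\tilde{\mathcal R}(\smalllattice, \spingenus(\biglattice))$. By definition of the $\G'(F)$-action this means there is $\gamma \in \G'(F)$ with $\gamma \varrho(h)_\ast \biglattice = \varrho(h')_\ast \biglattice$ and $\gamma \iota = \iota$ as maps $W \to V$. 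The second equation forces $\gamma$ to fix $\iota W$ pointwise, i.e.\ $\gamma \in \H'_{\iota W}(F)$. The first equation then reads $\varrho(h')^{-1}\gamma \varrho(h)$ stabilizes $\biglattice$ locally at every finite place; but we only need the orbit statement, so from $\gamma \in \H'_{\iota W}(F)$ we directly get $(\H'_{\iota W}(F)\varrho(h'))_\ast \biglattice = (\H'_{\iota W}(F)\gamma\varrho(h))_\ast\biglattice = (\H'_{\iota W}(F)\varrho(h))_\ast \biglattice$, which is what injectivity of \eqref{eq:locallyisoemb} asserts. (Conversely, if the two $\H'_{\iota W}(F)$-orbits agree, tracing the same equalities backwards shows the images agree, so the map is well-defined and injective.)

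For the "moreover" part, I would argue by double inclusion. The image of \eqref{eq:locallyisoemb} consists exactly of classes of the form $[(h_\ast\biglattice, \iota)]$ with $\varrho(h) \in \varrho(\Hbf_{\iota W}(\A_f))$ — wait, more precisely $h \in \Hbf_{\iota W}(\A_f)$ viewed through $\varrho$ — and acting by a further $\gamma_0 \in \G'(F)$ (coming from passing to the $\G'(F)$-equivalence class) gives exactly the classes $[(\gamma_0 \varrho(h)_\ast \biglattice, \gamma_0\iota)]$. So a class $[(\biglattice',\iota')]$ is in the image iff one can choose a representative $(\biglattice', \iota')$ of it and $\gamma \in \G'(F)$, $g \in \Hbf_{\iota W}(\A_f)$ — but here is where care is needed: the stated condition only demands $g \in \G(\A_f)$, not $g \in \Hbf_{\iota W}(\A_f)$. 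The point is that once $\iota' = \gamma\varrho(g_v)\iota$ holds at every place, the element $\varrho(g)$ need not fix $\iota W$, but the \emph{composite} requirement that $\iota'$ be a genuine isometry $W \to V$ with $(\iota')^{-1}\biglattice' = \smalllattice$ and that $\biglattice' \in \spingenus(\biglattice)$, together with $\biglattice' = (\gamma\varrho(g))_\ast\biglattice$, is precisely equivalent to: after replacing $(\biglattice',\iota')$ by its $\G'(F)$-translate by $\gamma^{-1}$, we may assume $\gamma = 1$, and then $\iota' = \varrho(g_v)\iota$ at every $v$ says $\varrho(g)^{-1}$ sends $\iota W$ to $\iota' W = \iota W$... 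I need to be careful that it sends it \emph{pointwise}, which it does since the identity $\iota' = \varrho(g)\iota$ of maps means $\varrho(g_v)$ carries the vector $\iota(w)$ to $\iota'(w)$, and after normalizing $\iota' = \iota$ this gives $\varrho(g_v)\iota(w) = \iota(w)$, i.e.\ $g \in \Hbf_{\iota W}(\A_f)$. So the ostensibly weaker condition "$g \in \G(\A_f)$" is self-correcting: the constraints force $g$ into $\Hbf_{\iota W}(\A_f)$ up to the $\G'(F)$ and $\Hbf_{\iota W}$-ambiguities already present, and then $h_\ast\biglattice = \varrho(g)_\ast\biglattice$ with $h = g$ exhibits the class in the image. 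Running this equivalence in the other direction (image $\Rightarrow$ existence of such $\gamma, g$) is immediate by taking $\gamma = 1$ and $g = h$.

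The main obstacle I anticipate is bookkeeping the various ambiguities consistently: a primitive representation by the spin genus is a pair $(\biglattice',\iota')$ only \emph{up to} $\G'(F)$, $\biglattice'$ is only defined up to its local data, $g_v$ is only defined up to $\Hbf_{\iota W}(F_v)$ at each place, and $\iota'$ is only pinned down once one fixes which $\G'(F)$-representative one works with; one must check that the map \eqref{eq:locallyisoemb} is insensitive to all of these in a compatible way, and in particular that the surjectivity-onto-image criterion is genuinely an "iff" and not merely an implication. I would organize this by first proving well-definedness of \eqref{eq:locallyisoemb} carefully (which is really the heart of the matter), then injectivity as above, and finally the image characterization as a formal consequence, using Witt's extension theorem and the surjectivity of the spinor norm in codimension $\geq 3$ (exactly as in the proof of Lemma~\ref{lem:spinrepresentable}) only where one needs to move between $\G$ and $\G'$.
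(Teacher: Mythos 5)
Your injectivity argument is the same as the paper's and is correct: $\gamma\iota=\iota$ forces $\gamma\in\H'_{\iota W}(F)$, and then the two $\H'_{\iota W}(F)$-orbits coincide. The well-definedness observation (that $h_\ast\biglattice\cap\iota W = \iota(\smalllattice)$ for $h$ in the domain) is also the right starting point, and your closing paragraph correctly names the two tools that do the heavy lifting in the image characterization (Witt's extension theorem and surjectivity of the spinor norm in codimension $\geq 3$). So the overall approach matches the paper.

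However, the middle paragraph contains a genuine gap. After replacing $(\biglattice',\iota')$ by $\gamma^{-1}\cdot(\biglattice',\iota')$ to reduce to $\gamma=1$, and then applying Witt to find $\gamma'\in\G'(F)$ with $\gamma'\iota'=\iota$, what you obtain from the condition $\iota=\gamma'\varrho(g_v)\iota$ at all finite $v$ is that the element $\gamma'\varrho(g)$ lies in $\H'_{\iota W}(\A_f)$. That is \emph{not} the same as ``$g\in\Hbf_{\iota W}(\A_f)$'': you have an adelic element of the special orthogonal stabilizer, written as a product of a rational element of $\G'(F)$ and an adelic element of $\varrho(\G(\A_f))$, and you need it to lie in the strictly smaller set $\H'_{\iota W}(F)\varrho(\Hbf_{\iota W}(\A_f))$, which is the actual domain of the map \eqref{eq:locallyisoemb}. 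The passage from $\H'_{\iota W}(\A_f)\cap \G'(F)\varrho(\G(\A_f))$ to $\H'_{\iota W}(F)\varrho(\Hbf_{\iota W}(\A_f))$ is precisely where the spinor norm argument is \emph{required}: one must check that the spinor norm of $\gamma'\gamma\in\G'(F)$ is totally positive (using definiteness of $Q$ at the real places), then use surjectivity of the spinor norm on $\H'_{\iota W}(F)$ to write $\gamma'\gamma=\gamma_1\varrho(\gamma_2)$ with $\gamma_1\in\H'_{\iota W}(F)$, $\gamma_2\in\G(F)$, and absorb $\gamma_2$ into $g$. Calling the condition ``self-correcting'' and attributing the remaining work to ``bookkeeping the $\G'(F)$ and $\Hbf_{\iota W}$-ambiguities'' understates this: it is not a notational ambiguity that gets fixed automatically, but a real arithmetic step, and without it the claim $g\in\Hbf_{\iota W}(\A_f)$ does not follow. (Also, your parenthetical assertion $\iota'W=\iota W$ before the Witt step is unjustified; it is exactly Witt's theorem that lets you reduce to $\iota'=\iota$, after which the equality of subspaces is trivial.)
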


By construction, the composition of \eqref{eq:locallyisoemb} with $\pi$ is given by the natural map
\begin{align}\label{eq:qf-inclusionHtoG}
\H'_{\iota W}(F) \backslash \H'_{\iota W}(F)\varrho(\Hbf_{\iota W}(\A_f))_\ast \biglattice 
\to \G'(F)\backslash\G'(F)\varrho(\G(\A_f))_\ast \biglattice.
\end{align}

The lemma yields an equivalence relation $\sim$ on $\tilde{\mathcal{R}}(\smalllattice,\spingenus(\biglattice))$.
(Following Kneser \cite{Kneser-Darstellungsmasse} one could call the classes under $\sim$ `spin genus classes of primitive representations'.)
We note that the claim for the image of \eqref{eq:locallyisoemb} will not be used for the proof of Theorem~\ref{thm:local global}.

\begin{proof}
For any $\biglattice' \in \H'_{\iota W}(F)\varrho(\Hbf_{\iota W}(\A_f))_\ast \biglattice$ we have $\biglattice' \cap \iota W = \iota(\smalllattice)$ and hence the map in \eqref{eq:locallyisoemb} is well-defined.
If $[(\biglattice_1,\iota)] = [(\biglattice_2,\iota)]$ then there exists $\gamma \in \G'(F)$ with $\gamma \biglattice_1 = \biglattice_2$ and $\gamma \iota = \iota$; the latter implies $\gamma \in \H'_{\iota W}(F)$ and so injectivity follows.

For the claim regarding the image, note that any image under \eqref{eq:locallyisoemb} has the desired property by construction.
Conversely, suppose there exist $g \in \G(\A_f)$ and $\gamma \in \G'(F)$ such that $\biglattice' = (\gamma \varrho(g))_\ast \biglattice$ and $\iota' = \gamma \varrho(g_v) \iota$ for every finite place $v$ of $F$.
By Witt's extension theorem (cf.~\cite[p.~21]{Cassels-book}, \cite[\S42F]{OMeara}), there is $\gamma' \in \G'(F)$ such that $\gamma' \iota' = \iota$. Replacing the representative of the class we may thus assume $\iota' = \iota$.
Thus, $\gamma \varrho(g) \in \H'_{\iota W}(\A_f)$. 
By \cite[101:8]{OMeara} using $m\leq n-3$, the spinor norms $\G'(F) \to F^\times/(F^\times)^2$ and $\H'_{\iota W}(F)\to F^\times/(F^\times)^2$ surject onto the set of totally positive elements.
We may thus write $\gamma = \gamma_1\varrho(\gamma_2)$ where $\gamma_1 \in \H'_{\iota W}(F)$ and $\gamma_2 \in \G(F)$. 
Replacing $g$ with $\gamma_2 g$ we can hence assume that $\gamma \in \H'_{\iota W}(F)$ and $g \in \H_{\iota W}(\A_f)$. 
This proves the lemma.
\end{proof}

\begin{remark}
The map in \eqref{eq:locallyisoemb} need not be surjective.
For an example, one may construct a non-unimodular quadratic lattice $\biglattice$ and two subspaces $W_1,W_2$ where $\biglattice \cap W_1$ and $\biglattice \cap W_2$ are isometric but $\biglattice \cap W_1^\perp$ and $\biglattice \cap W_2^\perp$ are not.
\end{remark}

Since \eqref{eq:qf-inclusionHtoG} factors through \eqref{eq:locallyisoemb}, the following corollary is immediate.

\begin{corollary}\label{cor:surjectivityspingenus}
If $(\biglattice,\iota)$ is a primitive representation of $\smalllattice$ such that \eqref{eq:qf-inclusionHtoG} is surjective, then $\smalllattice$ is primitively representable by any element of the spin genus of~$\biglattice$.
\end{corollary}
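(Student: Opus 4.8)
The statement is a formal consequence of the factorization of \eqref{eq:qf-inclusionHtoG} already established: namely \eqref{eq:qf-inclusionHtoG} equals the composition of the map \eqref{eq:locallyisoemb} with the forgetful map $\pi\colon \tilde{\mathcal{R}}(\smalllattice,\spingenus(\biglattice)) \to \G'(F)\backslash \spingenus(\biglattice)$. The plan is therefore: (i) deduce surjectivity of $\pi$ from the hypothesis; (ii) unwind what surjectivity of $\pi$ says about primitive representations; (iii) transport the resulting primitive representation along the $\G'(F)$-action to land on an arbitrary prescribed lattice in the spin genus.

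First, since \eqref{eq:qf-inclusionHtoG} is surjective and factors as $\pi$ precomposed with \eqref{eq:locallyisoemb}, the final arrow $\pi$ in the composition is itself surjective. Now fix an arbitrary $\biglattice' \in \spingenus(\biglattice)$. By surjectivity of $\pi$, there is a class $[(\biglattice'',\iota'')] \in \tilde{\mathcal{R}}(\smalllattice,\spingenus(\biglattice))$ whose image under $\pi$ is the class $\G'(F)_\ast\biglattice'$; that is, $\biglattice''$ lies in the $\G'(F)$-orbit of $\biglattice'$, say $\biglattice'' = \gamma_\ast\biglattice'$ for some $\gamma \in \G'(F)$. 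By definition of $\tilde{\mathcal{R}}(\smalllattice,\spingenus(\biglattice))$, the pair $(\biglattice'',\iota'')$ is a primitive representation of $\smalllattice$, i.e.\ $\iota''\colon W \to V$ is an isometry with $\iota''^{-1}(\biglattice'') = \smalllattice$.

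Finally, apply $\gamma^{-1} \in \G'(F)$ using the action $\gamma^{-1}.(\biglattice'',\iota'') = (\gamma^{-1}_\ast\biglattice'', \gamma^{-1}\iota'') = (\biglattice', \gamma^{-1}\iota'')$. Since this is the action of $\G'(F)$ on the set of primitive representations of $\smalllattice$ by $\spingenus(\biglattice)$, the pair $(\biglattice', \gamma^{-1}\iota'')$ is again a primitive representation of $\smalllattice$: indeed $(\gamma^{-1}\iota'')^{-1}(\biglattice') = \iota''^{-1}(\gamma\biglattice') = \iota''^{-1}(\biglattice'') = \smalllattice$. Hence $\smalllattice$ is primitively representable by $\biglattice'$, and since $\biglattice' \in \spingenus(\biglattice)$ was arbitrary, this proves the corollary. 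There is no real obstacle here beyond bookkeeping; the only point worth spelling out is that the $\G'(F)$-action preserves the property of being a primitive representation of $\smalllattice$, which is immediate from its definition.
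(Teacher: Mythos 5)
Your proof is correct and is exactly the unpacking that the paper leaves as "immediate": since \eqref{eq:qf-inclusionHtoG} is the composition $\pi \circ$\eqref{eq:locallyisoemb}, its surjectivity forces surjectivity of the forgetful map $\pi$, which (as the paper notes just before the corollary) is precisely equivalent to $\smalllattice$ being primitively representable by every lattice in $\spingenus(\biglattice)$. Same route; you have simply written out the bookkeeping the paper elides.
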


Corollary~\ref{cor:surjectivityspingenus} provides a clear path to proving Theorem~\ref{thm:local global}. Indeed, let $(\biglattice,\iota)$ be a primitive representation of $\smalllattice$ and set
\begin{align}
K_f &= \{g \in \G'(F)\varrho(\G(\A_f)): g_\ast \biglattice = \biglattice\},\nonumber\\ 
K &= \{g \in \G'(F)\varrho(\G(\A)): (g_f)_\ast \biglattice = \biglattice\} = \varrho(\G(F \otimes \R))K_f \label{eq:qf-maxcpt}
\end{align}
where we used $\varrho(\G(F \otimes \R)) = \G'(F \otimes \R)$ for the last equality; the latter holds since $\G'(F \otimes \R)$ is connected.

Then $\smalllattice$ is primitively representable by any lattice in the spin genus of~$\biglattice$ if the homogeneous set $[\varrho(\H_{\iota W}(\A))]$ intersects any of the finitely many (right-)$K$-orbit in $[\varrho(\G(\A))]$.
Thus, effective equidistribution (or rather effective density) of $[\varrho(\H_{\iota W}(\A))]$ can yield Theorem~\ref{thm:local global}.

\subsubsection{Determining the weights}\label{sec:weights}
Let $\biglattice \subset V$ be a quadratic lattice and suppose that there is a primitive representation $(\biglattice,\iota)$ of $\smalllattice$ by $\biglattice$.
Define $K$ as in \eqref{eq:qf-maxcpt}.
In the following we compute
\begin{enumerate}[a)]
    \item the measure of $K$-orbits on $[\varrho(\G(\A))]$ and
    \item the measure of $K \cap (\H'_{\iota W}(F)\varrho(\H_{\iota W}(\A)))$-orbits on $[\varrho(\H_{\iota W}(\A))]$
\end{enumerate}
for the Haar probability measures on the respective orbits.
Note that only a) (or rather a lower bound on the measures in a)) is needed to establish Theorem~\ref{thm:local global} as a corollary of Theorem~\ref{thm:main equi}.

We start with a).
Let $\nu_\G$ be the Haar measure on the group $  \G'(F)\varrho(\G(\A))$ which projects to the probability measure $\bar{\nu}_\G$ on the quotient $[\varrho(\G(\A))]$.
The measure of an orbit $[\varrho(g)K]$ is
\begin{align*}
\bar{\nu}_\G\big([\varrho(g)K]\big) = \frac{\nu_\G(K)}{\#\Aut(\biglattice')}
\end{align*}
where $\biglattice' = \varrho(g_f)_\ast \biglattice \in \spingenus(\biglattice)$, and recall that $\Aut(\biglattice') = \{g \in \G'(F): g_\ast \biglattice' = \biglattice'\}$.

Using the fact that $\bar{\nu}_\G$ is a probability measure we have $\nu_\G(K)^{-1} = \omega_{\spingenus(\biglattice)}$ where
\begin{align}\label{eq:omegaspin}
\omega_{\spingenus(\biglattice)} := \sum_{\G'(F)_\ast \biglattice' \subset \spingenus(\biglattice)} \#\Aut(\biglattice')^{-1}.
\end{align}
Thus,
\begin{align*}
\bar{\nu}_\G([\varrho(g)K]) = \frac{1}{\omega_{\spingenus(\biglattice)} \#\Aut(\biglattice')}.
\end{align*}

For b), set $K_\iota = K \cap (\H'_{\iota W}(F)\varrho(\H_{\iota W}(\A)))$ (which implicitly also depends on $\biglattice$).
Let $\nu_\iota$ be the Haar measure on $\H'_{\iota W}(F)\varrho(\H_{\iota W}(\A))$ that descends to the probability measure on $[\varrho(\H_{\iota W}(\A))]$.
In view of the map \eqref{eq:locallyisoemb} in Lemma~\ref{lem:primitive representations}, any $K_\iota$-orbit in $[\varrho(\H_{\iota W}(\A))]$ corresponds to a class $[(\biglattice',\iota)]$ of primitive representations in $\tilde{\mathcal{R}}(\smalllattice,\spingenus(\biglattice))$. 
The measure of this orbit is
\begin{align}\label{eq:qf-weightsHorbit}
\frac{\nu_\iota(K_\iota)}{\#(\Aut(\biglattice') \cap \H'_{\iota W}(F))}
= \frac{1}{\omega_{[(\biglattice,\iota)]}\cdot \#(\Aut(\biglattice') \cap \H'_{\iota W}(F))}
\end{align}
where $\omega_{[(\biglattice,\iota)]}$ is the normalizing constant 
\begin{align}\label{eq:qf-Htotvol}
\omega_{[(\biglattice,\iota)]}= \nu_\iota(K_\iota)^{-1} = \sum_{[(\biglattice',\iota)]\sim [(\biglattice,\iota)]} \frac{1}{\#(\Aut(\biglattice') \cap \H'_{\iota W}(F))}.
\end{align}
Here, we used the equivalence relation $\sim$ defined after Lemma~\ref{lem:primitive representations}.

Suppose now that $\biglattice_0\subset V$ is a quadratic lattice.
When $\{\biglattice_i\}$ is a set of representatives for $\G'(F)$-orbits in the spin genus of $\biglattice_0$, we have by unfolding
\begin{equation}\label{eq:avrepnumbers}
\begin{aligned}
\sum_{[(\biglattice,\iota)] \in \tilde{\mathcal{R}}(\smalllattice,\spingenus(\biglattice_0))/\sim} \omega_{[(\biglattice,\iota)]}
&= \sum_{[(\biglattice,\iota)]}\frac{1}{\#(\Aut(\biglattice) \cap \H'_{\iota W}(F))}\\
&= \sum_i \sum_{[(\biglattice_i,\iota)]} \frac{1}{\#(\Aut(\biglattice_i) \cap \H'_{\iota W}(F))} \\
&= \sum_i \sum_{[(\biglattice_i,\iota)]} \frac{\#(\Aut(\biglattice_i).(\biglattice_i,\iota))}{\#\Aut(\biglattice_i)} \\
&= \sum_i \frac{r(\smalllattice,\biglattice_i)}{\#\Aut(\biglattice_i)}.
\end{aligned}
\end{equation}
For future use, we set
\begin{align*}
r(\smalllattice,\spingenus(\biglattice_0)) = \frac{1}{\omega_{\spingenus(\biglattice_0)}}\sum_i \frac{r(\smalllattice,\biglattice_i)}{\#\Aut(\biglattice_i)}.
\end{align*}

\begin{lemma}\label{lem:spinrepvsrep}
When $m\leq n-3$ have
\begin{align}\label{eq:spinrepvsrep}
r(\smalllattice,\spingenus(\biglattice_0)) 
= r(\smalllattice,\gen(\biglattice_0)).
\end{align}
Moreover, if $\biglattice_0$ locally primitively represents $\smalllattice$ then
\begin{align*}
r(\smalllattice,\spingenus(\biglattice_0)) \gg_{F,\biglattice_0} |\Nr_{\Q}^F(\mathfrak{d}\smalllattice)|^\star.
\end{align*}
\end{lemma}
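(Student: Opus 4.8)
The first identity \eqref{eq:spinrepvsrep} is really a combinatorial bookkeeping statement comparing two weighted averages, and the natural approach is to expand both sides over representatives of $\G'(F)$-orbits inside $\spingenus(\biglattice_0)$ on the one hand and inside $\gen(\biglattice_0)$ on the other. On the spin genus side we have, by \eqref{eq:avrepnumbers}, that $\sum_i \frac{r(\smalllattice,\biglattice_i)}{\#\Aut(\biglattice_i)}$ (sum over a set of representatives $\{\biglattice_i\}$ for $\G'(F)$-orbits in $\spingenus(\biglattice_0)$) equals $\sum_{[(\biglattice,\iota)]/\sim} \omega_{[(\biglattice,\iota)]}$. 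The point is that the \emph{ratio} appearing in $r(\smalllattice,\spingenus(\biglattice_0))$ and $r(\smalllattice,\gen(\biglattice_0))$ is insensitive to whether one groups lattices by spin genus or by genus: if $\gen(\biglattice_0) = \bigsqcup_j \spingenus(\biglattice_0^{(j)})$ is the partition of the genus into spin genera, then one checks that $\omega_{\spingenus(\biglattice_0^{(j)})}$ and $\sum_{i \in j}\frac{r(\smalllattice,\biglattice_i)}{\#\Aut(\biglattice_i)}$ are each independent of $j$ (this is the standard fact that the spinor genera in a genus are all ``the same size'' — it follows from $\varrho(\G(\A_f))$ being normal in $\G'(\A_f)$ with abelian quotient, so that $\G'(\A_f)$ permutes the spin genera transitively and this permutation respects both $\Aut$-masses and primitive representation counts, Witt's theorem again allowing one to transport representations). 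Dividing, the number of spin genera in the denominator and numerator cancels and one gets \eqref{eq:spinrepvsrep}. I would phrase this cleanly by letting $t$ be the number of spin genera in $\gen(\biglattice_0)$ and showing $\omega_{\gen(\biglattice_0)} = t\,\omega_{\spingenus(\biglattice_0)}$ and $\sum_{\biglattice_i \in \gen} \frac{r(\smalllattice,\biglattice_i)}{\#\Aut(\biglattice_i)} = t \sum_{\biglattice_i \in \spingenus} \frac{r(\smalllattice,\biglattice_i)}{\#\Aut(\biglattice_i)}$ separately.

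For the lower bound $r(\smalllattice,\spingenus(\biglattice_0)) \gg_{F,\biglattice_0} |\Nr^F_\Q(\mathfrak{d}\smalllattice)|^\star$, the idea is to produce \emph{many} primitive representations into the spin genus of $\biglattice_0$ and to compare against the normalizing mass $\omega_{\spingenus(\biglattice_0)}$, which depends only on $F$ and $\biglattice_0$. Since $\smalllattice$ is locally primitively representable, Lemma~\ref{lem:spinrepresentable} gives a primitive representation $(\biglattice,\iota)$ with $\biglattice \in \spingenus(\biglattice_0)$; fixing this, the set $\tilde{\mathcal{R}}(\smalllattice,\spingenus(\biglattice_0))$ receives the image of the injection \eqref{eq:locallyisoemb}, whose source is the orbit space $\H'_{\iota W}(F)\backslash \H'_{\iota W}(F)\varrho(\Hbf_{\iota W}(\A_f))_\ast\biglattice$. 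The cardinality of this orbit space, weighted by the automorphism masses as in \eqref{eq:qf-Htotvol}, is $\omega_{[(\biglattice,\iota)]} = \nu_\iota(K_\iota)^{-1}$, which is (a multiple of) the inverse covolume of the arithmetic group $\Aut(\smalllattice)$-type object attached to $\Hbf_{\iota W} = \Spin$ of the complementary form. By Prasad's volume formula (or the Minkowski–Siegel mass formula for the orthogonal complement, which has rank $n - m \geq 3$), this covolume is bounded polynomially in the discriminant of the complementary lattice, hence polynomially in $|\Nr^F_\Q(\mathfrak{d}\smalllattice)|$ and $|\Nr^F_\Q(\mathfrak{d}\biglattice_0)|$; absorbing the $\biglattice_0$-dependence into the implied constant gives $\omega_{[(\biglattice,\iota)]} \gg_{F,\biglattice_0} |\Nr^F_\Q(\mathfrak{d}\smalllattice)|^{\star}$. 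Finally, from the unfolding \eqref{eq:avrepnumbers} and the definition of $r(\smalllattice,\spingenus(\biglattice_0))$ one has
\[
r(\smalllattice,\spingenus(\biglattice_0)) = \frac{1}{\omega_{\spingenus(\biglattice_0)}}\sum_{[(\biglattice',\iota')]/\sim} \omega_{[(\biglattice',\iota')]} \geq \frac{\omega_{[(\biglattice,\iota)]}}{\omega_{\spingenus(\biglattice_0)}} \gg_{F,\biglattice_0} |\Nr^F_\Q(\mathfrak{d}\smalllattice)|^{\star},
\]
since $\omega_{\spingenus(\biglattice_0)}$ depends only on $F$ and $\biglattice_0$ and every $\omega_{[(\biglattice',\iota')]}$ is nonnegative.

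I expect the main obstacle to be the volume/mass estimate for the complementary spin group: one must know that the mass (or the covolume in $[\varrho(\Hbf_{\iota W}(\A))]$, equivalently $\omega_{[(\biglattice,\iota)]}$) grows at least like a positive power of the discriminant of $\smalllattice$, uniformly, and that it does not degenerate (e.g.~the orbit is genuinely there and of positive volume). This is exactly where ``volume estimates contained in this article and in \cite{EMMV}'' are invoked — one needs a lower bound on $\vol$ of the relevant adelic period in terms of its complexity, which by the complexity computations already carried out (Corollary~\ref{cor:qf-dynthm} and especially the bound \eqref{eq:qf-mincpl} relating $\height(\Hbf_W)$ to $|\Nr^F_\Q(Q(w))|$) translates the minimum of $\smalllattice$, and more to the point its discriminant, into the required polynomial lower bound. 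The secondary subtlety is making the comparison of spin genus versus genus in part one fully rigorous over a general totally real $F$ — i.e.~that the genus of $\biglattice_0$ is a disjoint union of equally-sized spin genera with matching representation masses — which is classical (Eichler, Kneser) but should be stated carefully using the normality of $\varrho(\G(\A_f))$ in $\G'(\A_f)$.
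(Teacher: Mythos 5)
Your approach to the second inequality is essentially the paper's: both reduce to bounding $\omega_{[(\biglattice,\iota)]}$ from below, identify it with the (adapted) volume of the adelic period $Y=[\varrho(\Hbf_{\iota W}(\A))]$, and then invoke the volume–complexity comparison from \cite{EMMV} (which rests on Prasad's formula) together with the complexity lower bound already carried out in \S\ref{sec:qf-effequiorbits} to convert $\cpl(Y)$ into a power of $|\Nr^F_\Q(\mathfrak{d}\smalllattice)|$. That part is fine.

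For the identity \eqref{eq:spinrepvsrep}, however, the paper simply cites Kneser \cite{Kneser-Darstellungsmasse} and Weil \cite{Weil-qf}, and you propose to reprove it. Your sketch is not wrong in spirit, but the parenthetical remark ``this permutation respects both $\Aut$-masses and primitive representation counts, Witt's theorem again allowing one to transport representations'' is doing all the work, and it is not as innocent as it sounds. The equality of the spinor-genus masses $\omega_{\spingenus(\biglattice_0^{(j)})}$ does follow cheaply from the normality of $\varrho(\G(\A_f))$ in $\G'(\A_f)$. But the equality of the weighted representation counts $\sum_{i\in j} r(\smalllattice,\biglattice_i)/\#\Aut(\biglattice_i)$ across spinor genera is precisely the nontrivial content of Kneser's Darstellungsma\ss e paper: the action of $g\in\G'(\A_f)$ on lattices does \emph{not} send a lattice to one with the same number of primitive representations, so the asserted invariance only holds after averaging over the full spinor genus, and proving this requires the kind of adelic unfolding and surjectivity-of-spinor-norm argument that the paper sets up in Lemma~\ref{lem:primitive representations} and around \eqref{eq:avrepnumbers}. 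In particular the codimension hypothesis $m\leq n-3$ (used to make spinor norms of $\G'$ and $\Hbf'_{\iota W}$ both surjective) enters here and is not free. So if you want to present a self-contained proof rather than citing Kneser--Weil, you should make that unfolding step explicit; as currently written, the sentence in parentheses asserts the theorem rather than proving it.
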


Recall that $r(\smalllattice,\gen(\biglattice_0))$ was defined in \eqref{eq:Siegelav} in a similarly to $r(\smalllattice,\spingenus(\biglattice_0))$ using the full genus.

\begin{proof}
The comparison of averaged numbers of representations by different spinor genera in a genus in \eqref{eq:spinrepvsrep} is due to Kneser \cite{Kneser-Darstellungsmasse} and Weil \cite[p.~473]{Weil-qf}.
These works also verify that $\omega_{\spingenus(\biglattice_0)}$ depends only on the genus of $\biglattice_0$.

Since we do not strictly speaking require the lower bound on $r(\smalllattice,\spingenus(\biglattice_0))$, we shall be brief in proving it.
It suffices to bound $\omega_{[(\biglattice,\iota)]}$ from below for any primitive representation $(\biglattice,\iota)$ by the spin genus of $\biglattice_0$.
Notice that by \eqref{eq:qf-Htotvol}
\begin{align*}
\omega_{[(\biglattice,\iota)]} = \nu_\iota\Bigl(K \cap (\H'_{\iota W}(F)\varrho(\H_{\iota W}(\A)))\Bigr)^{-1}
\end{align*}
where $K$ is as in \eqref{eq:qf-maxcpt} and $\nu_\iota$ is the Haar measure on the group $\H_{\iota W}'(F)\varrho(\H_{\iota W}(\A))$ which descends to the Haar probability measure on the quotient 
\begin{align*}
Y = [\varrho(\H_{\iota W}(\A))].
\end{align*}
In other words, $\omega_{[(\biglattice,\iota)]} = \widetilde{\vol}(Y)$ where the adapted volume $\widetilde{\vol}$ is defined in \cite[\S5.12]{EMMV}.
By \cite[\S5.12, App.~B]{EMMV}, we have $\widetilde{\vol}(Y) \gg_{F,\mathcal{L}_0} \cpl(Y)^\star$ where one uses reduction theory to realize $Y$ appropriately as a subset of $\SL_n(F)\backslash\SL_n(\A)$ (cf.~\S\ref{sec:reductiontheoryqf} below).
As in \S\ref{sec:qf-effequiorbits}, one may verify $\cpl(Y) \gg_{F,\biglattice_0} |\Nr_{\Q}^F(\mathfrak{d}\smalllattice)|^\star$ proving the lemma.
\end{proof}

\subsection{Proof of Theorems~\ref{thm:local global} and \ref{thm:local global2}}\label{sec:qf-proofthm}

The goal of this section is to prove Theorems~\ref{thm:local global} and \ref{thm:local global2}.
To make our arguments accessible, we will prove Theorem~\ref{thm:local global} first despite it being a direct corollary to Theorem~\ref{thm:local global2}.
The proof of Theorem~\ref{thm:local global2} will warrant a discussion of reduction theory for quadratic forms over the totally real number field $F$; we will recall these facts in \S\ref{sec:reductiontheoryqf}.

Throughout, $\biglattice \subset V$ and $\smalllattice \subset W$ are quadratic lattices and $\smalllattice$ is locally primitively representable by $\biglattice$.

\subsubsection{Proof of Theorem~\ref{thm:local global}}
Recall that $F=\Q$ is assumed in Theorem~\ref{thm:local global}. 
We fix $\biglattice_0 \in \spingenus(\biglattice)$ which primitively represents $\smalllattice$ (this is possible by Lemma~\ref{lem:spinrepresentable}).
Let $(\biglattice_0,\iota_0)$ be such a primitive representation.

By Minkowski's reduction theory of quadratic forms over $\Q$ (see e.g.~\cite[Ch.~12]{Cassels-book}), there exist linearly independent vectors $v_1,\ldots,v_n \in \biglattice_0$ spanning a sublattice of index $O_n(1)$ in $\biglattice_0$ such that
\begin{align*}
Q(v_1) \leq \ldots \leq Q(v_n)
\end{align*}
and $|(v_i,v_j)_Q| \leq Q(v_i)$ for $i < j$. In particular, $\disc(Q) \asymp \prod_i Q(v_i)$.
We use this basis to identify $V$ with $\Q^n$ and hence identify the isogeny $\varrho$ with a homomorphism $\varrho:\G = \Spin_Q \to \SL_n$.
Set
\begin{align*}
X = [\varrho(\G(\A))],\quad Y =  [\varrho(\H_{\iota_0W}(\A))].
\end{align*}
In this setting, Corollary~\ref{cor:qf-dynthm} yields that for any $f \in C^1(X)$ of level $L$
\begin{align}\label{eq:prooflocglob1}
\Big| \int_{Y}f -\int_X f\Big| \ll \big(\min_{w \in \smalllattice\setminus\{0\}} q(w)\big)^{-\star} \disc(Q|_{\biglattice_0})^\star\norm{f}_{C^1(X)} L^\star.
\end{align}
Note that $D := \disc(Q|_{\biglattice_0}) = \disc(Q|_{\biglattice})$ does not depend on the element of the genus of $\biglattice$.

Since $\biglattice_0 \in \spingenus(\biglattice)$ there exists $g_0 \in \G'(\Q)\varrho(\G(\A_f))$ with $(g_0)_\ast \biglattice_0 = \biglattice$.
As in \eqref{eq:qf-maxcpt}, let
\begin{align*}
K_f &= \{g \in \G'(\Q)\varrho(\G(\A_f)): g_\ast \biglattice_0 = \biglattice_0\},\
K = \varrho(\G(\R))K_f.
\end{align*}
By Lemma~\ref{lem:primitive representations}, the theorem follows if we can show that $Y$ intersects the open subset $[g_0K]$ of $X$ (see the comments after Corollary~\ref{cor:surjectivityspingenus}).
Let $f$ be its indicator function.
Note that $f$ is locally constant at the real place and invariant under $K_f$.
In particular, it is of level $O_n(1)$ recalling that the integral structure is defined using $v_1,\ldots v_n$.
By a) in \S\ref{sec:weights} we have
\begin{align*}
\int_X f = \frac{1}{\omega_{\spingenus(\biglattice)}\cdot \#\Aut(\biglattice)}
\end{align*}
where $\omega_{\spingenus(\biglattice)}$ is as in \eqref{eq:omegaspin}.
We have $\#\Aut(\biglattice) \ll 1$. 
In view of the above discussion of reduction theory of integral quadratic forms, $\omega_{\spingenus(\biglattice)} \ll D^\star$. 
Indeed, this follows from reduction theory as 
there are $\ll d^\star$ many forms over $\Z$ with coefficients bounded by $d$.
Overall, this shows that $\int_X f \gg D^{-\star}$.
Thus, if $\int_Y f = 0$ we have by \eqref{eq:prooflocglob1}
\begin{align*}
D^{-\star} \ll \int_X f \ll \big(\min_{w \in \smalllattice\setminus\{0\}} q(w)\big)^{-\star}D^\star
\end{align*}
and hence $\min_{w \in \smalllattice\setminus\{0\}} q(w) \ll D^{\star}$.
This proves the theorem.
\qed

\subsubsection{Reduction theory for quadratic forms over number fields}\label{sec:reductiontheoryqf}
The classical reduction theory over $\Q$ by Minkowksi \cite{Cassels-book} has been extended to quadratic forms over number fields by Humbert \cite{Humbert1,Humbert2} (see also \cite[\S9--11]{Koecher}).
Here, we briefly recall this theory in a slightly refined form (as our lattices are not free); the reader interested in the case $F=\Q$ may skip this subsection.

Let $\biglattice \subset V$ be a quadratic lattice.
We set
\begin{align*}
\min(\biglattice) = \min_{x \in \biglattice\setminus\{0\}} |\Nr^F_\Q(Q(x))|.
\end{align*}
Since $\biglattice$ is not necessarily free, the statements to follow are slightly more intricate than usual phrasings of reduction theory over $F$.
We refer to e.g.~\cite[Thm.~3.11]{ChanIcaza} for the following discussion.
By Minkowski's bound and the classification of finitely generated modules over Dedekind domains, there exists a free submodule $\Lambda \subset \biglattice$ such that
\begin{align*}
[\biglattice:\Lambda] \ll |\disc(F)|^{\star}.
\end{align*}
In fact, there exists $C_F>0$ (depending only on $F$) and such a free sublattice $\Lambda= \mathcal{O}_F \vpz_1 + \ldots + \mathcal{O}_F v_n$ so that $C_F\min(\biglattice) \geq \min(\Lambda)$, and the quadratic form $Q$ in the basis $v_1,\ldots,v_n$ is given by
\begin{align}\label{eq:Lagrangeexp}
a_1(x_1+b_{12}x_2+\ldots)^2 + a_2 (x_2 + b_{23}x_3+\ldots)^2 + \ldots + a_n x_n^2
\end{align}
for some $a_i,b_{ij} \in F$ with the following properties.
\begin{itemize}
    \item $|\Nr^F_{\Q}(a_1)| \leq C_F\min(\biglattice)$.
    \item For any $v \mid \infty$ and any $i<j$ we have $|b_{ij}|_v \leq C_F$.
    \item For any $i < j$ we have 
    \begin{align*}
        |\Nr^F_{\Q}(a_i)| \leq C_F |\Nr^F_{\Q}(a_j)|.
    \end{align*}
    \item For any $v,w\mid \infty$ and any $i$ we have $|a_i|_v \leq C_F |a_i|_w$.
\end{itemize}
In particular, applying these estimates in \eqref{eq:Lagrangeexp} the matrix representation $(m_{ij})$ of the quadratic form $Q|_\Lambda$ (in the basis provided above) satisfies for any place $v \mid \infty$
\begin{align}\label{eq:redthqf-boundcoeff}
|m_{ij}|_v \ll_F |\disc(Q|_\Lambda)|_v^\star \ll_F |\Nr^F_\Q(\mathfrak{d}\biglattice)|^\star.
\end{align}
Here, recall that $\mathfrak{d}\biglattice$ is the ideal of $\mathcal{O}_F$ generated by the discriminants of free sublattices of $\biglattice$ and that $[\biglattice:\Lambda] \ll |\disc(F)|^{\star}$.

We also record the following corollary of the above discussion.

\begin{corollary}\label{cor:boundgenus}
For any quadratic lattice $\biglattice \subset V$
\begin{align*}
\#(\G'(F)\backslash \spingenus(\biglattice)) \ll_F |\Nr^F_\Q(\mathfrak{d}\biglattice)|^\star.
\end{align*}
\end{corollary}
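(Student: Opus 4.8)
The plan is to bound the number of $\G'(F)$-orbits in $\spingenus(\biglattice)$ by bounding the number of $\G'(F)$-orbits in the full genus $\gen(\biglattice)$, since $\spingenus(\biglattice) \subset \gen(\biglattice)$ and so the former count is at most the latter. Recall that $\G'(F)$-orbits in $\gen(\biglattice)$ are precisely the isometry classes of quadratic lattices locally isometric to $\biglattice$. So it suffices to bound the number of isometry classes of lattices in $\gen(\biglattice)$.

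The key idea is to apply the reduction theory recalled above to each lattice in the genus. Let $\biglattice'$ be any lattice in the genus of $\biglattice$; then $\mathfrak{d}\biglattice' = \mathfrak{d}\biglattice$ and $\min(\biglattice') \leq C_F' |\Nr^F_\Q(\mathfrak{d}\biglattice)|^\star$ by Minkowski's bound applied to $\biglattice'$ (the minimum of a lattice is bounded polynomially in terms of its discriminant norm, using that the lattice is totally definite and $[\biglattice':\Lambda] \ll |\disc(F)|^\star$ for a free sublattice $\Lambda$). By the reduction theory discussion, there is a free sublattice $\Lambda' \subset \biglattice'$ of index $\ll |\disc(F)|^\star$ admitting a basis in which the Gram matrix $(m_{ij}')$ has entries satisfying $|m_{ij}'|_v \ll_F |\Nr^F_\Q(\mathfrak{d}\biglattice)|^\star$ at every archimedean place $v$, and integral at every finite place. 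Since the $m_{ij}'$ lie in $\mathcal{O}_F$ (after clearing a bounded denominator, or one works with the half-integral convention as above) and are bounded archimedeanly, there are only $\ll_F |\Nr^F_\Q(\mathfrak{d}\biglattice)|^\star$ many possibilities for the tuple $(m_{ij}')$ — this uses that the number of elements of $\mathcal{O}_F$ of bounded archimedean size at all places is polynomial in that bound, with implied constant depending on $F$ (e.g.\ on $|\disc(F)|$ and the regulator).

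Finally, I recover the isometry class of $\biglattice'$ from this bounded data up to bounded ambiguity: the Gram matrix $(m_{ij}')$ determines $\Lambda'$ as a quadratic lattice up to isometry, and then $\biglattice'$ is recovered from $\Lambda'$ as an overlattice of index $\ll |\disc(F)|^\star$ inside $\frac{1}{[\biglattice':\Lambda']}\Lambda'$; there are only $\ll |\disc(F)|^\star$ such overlattices. Multiplying the counts gives $\#(\G'(F)\backslash\gen(\biglattice)) \ll_F |\Nr^F_\Q(\mathfrak{d}\biglattice)|^\star$, and hence the same bound for $\spingenus(\biglattice)$.

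The main obstacle — really the only point requiring care — is to make the passage from "bounded Gram matrix" to "bounded number of isometry classes" genuinely finite and polynomial: one must ensure that the reduction theory is applied uniformly across all lattices in the genus (so that the same constant $C_F$ works), that the minimum $\min(\biglattice')$ is controlled solely in terms of $\mathfrak{d}\biglattice$ and $F$ (not in terms of $\biglattice'$), and that counting tuples $(m_{ij}') \in \mathcal{O}_F$ of bounded archimedean size is genuinely polynomial in the bound with an $F$-dependent constant. All of these are standard but the uniformity is the crux; everything else is bookkeeping already assembled in \S\ref{sec:reductiontheoryqf}.
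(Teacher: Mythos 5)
Your proof is correct and follows essentially the same strategy as the paper's: apply the reduction theory of \S\ref{sec:reductiontheoryqf} to each lattice in the (spin) genus to obtain a free sublattice of bounded index with Gram matrix of controlled archimedean size, count such Gram matrices, and bound the ambiguity in recovering the ambient lattice from its free sublattice. The paper phrases that last step as a bound on the mutual index $[\biglattice_i : \biglattice_i\cap\gamma_\ast\biglattice_j]$ whereas you count overlattices of $\Lambda'$ directly, but these amount to the same thing; your explicit invocation of Minkowski's bound on $\min(\biglattice')$ is harmless but redundant given that the coefficient bound in \eqref{eq:redthqf-boundcoeff} is already expressed purely in terms of $|\Nr^F_\Q(\mathfrak{d}\biglattice)|$.
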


\begin{proof}
The number of quadratic forms over $\mathcal{O}_F$ in $n$ variables whose coefficients $(m_{ij})$ satisfy $|m_{ij}|_v \leq h$ for all $v \mid \infty$ is $\ll h^\star$.
Write $\G'(F)\backslash \spingenus(\biglattice) = \bigsqcup_i \G'(F)_\ast\biglattice_i$ and apply the reduction theory from above to find for any $i$ a free sublattice $\Lambda_i \subset \biglattice_i$ with the required properties.
The number of $\G'(F)$-inequivalent lattices $\Lambda_i$ is $\ll_F |\Nr^F_\Q(\mathfrak{d}\biglattice)|^\star$.
If $\Lambda_i = \gamma_\ast\Lambda_j$ for some $\gamma \in \G'(F)$, then 
\begin{align*}
[\biglattice_i: \biglattice_i \cap \gamma_\ast \biglattice_j],
[\gamma_\ast\biglattice_j: \biglattice_i \cap \gamma_\ast \biglattice_j] \ll_F 1.
\end{align*}
In particular, there are $\ll_F 1$ such $j$'s for any given $i$. This proves the corollary.
\end{proof}

\subsubsection{Proof of Theorem~\ref{thm:local global2}}
We proceed similarly to the above proof of Theorem~\ref{thm:local global}.
By Lemma~\ref{lem:spinrepresentable}, $\smalllattice$ is primitively representable by an element of the spin genus of $\biglattice$.
For the time being, fix a pair $(\biglattice_0,\iota_0)$ where $\biglattice_0 \in \spingenus(\biglattice)$ and $\iota_0$ is a primitive representation of $\smalllattice$.
Note that we will later have to vary this choice over all equivalence classes of the relation~$\sim$ introduced in Lemma~\ref{lem:primitive representations}.

By reduction theory as in \S\ref{sec:reductiontheoryqf}, let $\Lambda_0 \subset \biglattice_0$ be a free sublattice with a basis $v_1,\ldots,v_n$ satisfying the required coefficient bounds.
We identify $V$ with $F^n$ through this basis.
In particular, we view $\varrho$ as a homomorphism $\varrho: \G \to \SL_n$.
Set
\begin{align*}
X = [\varrho(\G(\A))],\quad
Y = [\varrho(\H_{\iota_0 W}(\A))].
\end{align*}
By Corollary~\ref{cor:qf-dynthm} we thus have for any $f \in C^1(X)$ of level $L$
\begin{align*}
\Big|\int_X f -\int_Y f\Big| \ll_F \min(\Lambda_0 \cap \iota_0(W))^{-\star} |\Nr^F_\Q(\mathfrak{d}\biglattice_0)|^\star \norm{f}_{C^1(X)} L.
\end{align*}
Notice that $\Nr^F_\Q(\mathfrak{d}\biglattice_0) = \Nr^F_\Q(\mathfrak{d}\biglattice)$ and
\begin{align*}
\min(\Lambda_0 \cap \iota_0(W)) 
\asymp_F \min(\biglattice_0 \cap \iota_0(W))
= \min(\smalllattice)
\end{align*}
so that
\begin{align}\label{eq:prooflocglob2}
\Big|\int_X f -\int_Y f\Big| 
\ll_F \min(\smalllattice)^{-\star} |\Nr^F_\Q(\mathfrak{d}\biglattice)|^\star \norm{f}_{C^1(X)} L.
\end{align}
As in \eqref{eq:qf-maxcpt} let
\begin{align*}
K_f &= \{g \in \G'(F)\varrho(\G(\A_f)): g_\ast \biglattice_0 = \biglattice_0\},\
K = \varrho(\G(\R))K_f.
\end{align*}
Since $\biglattice_0 \in \spingenus(\biglattice)$ there exists $g \in \G'(F)\varrho(\G(\A_f))$ with $g_\ast \biglattice_0 = \biglattice$.
We apply the estimate in \eqref{eq:prooflocglob2} to the characteristic function $f$ of $[gK]$.
The function $f$ is locally constant at all archimedean places and, since $[\biglattice_0:\Lambda_0] \ll_F 1$, the level $L$ of $f$ satisfies $L \ll_F 1$.
By a) in \S\ref{sec:weights} we have
\begin{align*}
\int_X f = \frac{1}{\omega_{\spingenus(\biglattice)} \#\Aut(\biglattice)}.
\end{align*}
Moreover, by \eqref{eq:qf-weightsHorbit}
\begin{align}\label{eq:qf-integraldiffHorbits}
\int_Y f = 
\frac{1}{\omega_{[(\biglattice_0,\iota_0)]}} \sum_{\substack{[(\biglattice',\iota_0)] \in \pi^{-1}(\G'(F).\biglattice)\\
[(\biglattice',\iota_0)] \sim [(\biglattice_0,\iota_0)]
}}
\frac{1}{\#(\Aut(\biglattice') \cap \H'_{\iota_0 W}(F))}
\end{align}
where, as was the case earlier,
\begin{align*}
\pi: \tilde{\mathcal{R}}(\smalllattice,\spingenus(\biglattice_0)) \to \G'(F) \backslash \spingenus(\biglattice_0).
\end{align*}
is the forgetful map.
Inserting these expressions into \eqref{eq:prooflocglob2} one obtains an asymptotic for primitive representations belonging to the equivalence class for $\sim$ of $(\biglattice_0,\iota_0)$.
Notice that the rate does not depend on the equivalence class and, hence, the same effective asymptotic holds for any average.

We take the convex combination of \eqref{eq:qf-integraldiffHorbits} 
using the weights $W^{-1}\omega_{[(\biglattice_0,\iota_0)]}$ for all equivalence classes $[(\biglattice_0,\iota_0)]$ of $\sim$. 
By \eqref{eq:avrepnumbers} we have
\begin{align*}
W=\sum_{[(\biglattice_0,\iota_0)] \in \tilde{\mathcal{R}}(\smalllattice,\spingenus(\biglattice))/\sim} \omega_{[(\biglattice_0,\iota_0)]}
= \omega_{\spingenus(\biglattice)}r(\smalllattice,\spingenus(\biglattice)).
\end{align*}
Moreover, by the same calculation in \eqref{eq:avrepnumbers} (for a fixed $\biglattice_i$) the weighted average of the right-hand side of \eqref{eq:qf-integraldiffHorbits} yields
\begin{align*}
\frac{r(\smalllattice,\biglattice)}{\omega_{\spingenus(\biglattice)}r(\smalllattice,\spingenus(\biglattice))\#\Aut(\biglattice)}.
\end{align*}
In summary, the above analysis shows with \eqref{eq:prooflocglob2}
\begin{align*}
\Big| \frac{1}{\omega_{\spingenus(\biglattice)} \#\Aut(\biglattice)} - \frac{r(\smalllattice,\biglattice)}{\omega_{\spingenus(\biglattice)}r(\smalllattice,\spingenus(\biglattice))\#\Aut(\biglattice)} \Big|
\ll_F \min(\smalllattice)^{-\star} |\Nr^F_\Q(\mathfrak{d}\biglattice)|^\star.
\end{align*}
Since $\#\Aut(\biglattice) \ll 1$ and, by Corollary~\ref{cor:boundgenus}, $\omega_{\spingenus(\biglattice)} \ll_F |\Nr^F_\Q(\mathfrak{d}\biglattice)|^\star$ we deduce 
\begin{align*}
\Big|1 - \frac{r(\smalllattice,\biglattice)}{r(\smalllattice,\spingenus(\biglattice))} \Big|
\ll_F \min(\smalllattice)^{-\star} |\Nr^F_\Q(\mathfrak{d}\biglattice)|^\star
\end{align*}
which proves the theorem since $r(\smalllattice,\spingenus(\biglattice)) = r(\smalllattice,\gen(\biglattice))$ (cf.~\eqref{eq:spinrepvsrep}).
\qed

\section{Notation and preliminaries}\label{sec: notation gp}

\subsection{Setup}\label{sec:setup}
We will prove Theorems~\ref{thm:main equi}--\ref{thm:asinEMV} first over $\Q$ and then deduce the version over general number fields. 
In particular, most of the article will assume $F=\Q$.
Let $\places$ and $\places_f$ denote the set of places, respectively finite places of $\Q$.

For $\nth\geq 0$ and every $\gep\in\places_{f}$, let 
\[
\SL_{N}(\Z_\gep)[k]=\mathrm{ker}\big(\SL_N(\Z_\gep) \rightarrow \SL_N(\Z_\gep/\gep^k \Z_\gep)\big)
\]
be the $k$-th congruence subgroup of $\SL_N(\Z_\gep)$.
For any closed subgroup $M<\SL_N(\Q_\gep)$ and any integer $k\geq 0$, we put $M[k]:= M\cap\SL_{N}(\Z_\gep)[k]$ and refer to this as the principal subgroup of level $k$ in $M$. 
When $k\geq 0$ is not an integer, we set $M[k] = M[\lceil k\rceil]$ for simplicity of notation.

Let $\G$ be simply connected semisimple $\Q$-group and let $\rho\colon \G \rightarrow \SL_N$ be an algebraic homomorphism defined over $\Q$\index{r@$\rho:\G\rightarrow\SL_N$, a fixed homomorphism over~$\Q$ with central kernel} with central kernel.
We define 
\[
K_\gep=\rho^{-1}(\SL_N(\Z_\gep)),
\]
and let $K_f=\prod_{\gep\in\places_{f}}K_\gep$. Also, set
\begin{equation} \label{eq:Kvm definition}
K_\gep[k] := \mathrm{ker}(K_\gep \rightarrow \SL_N(\Z_\gep/\gep^k \Z_\gep))
\end{equation}
for $k \geq 1$. 
It is convenient to write $K_\gep[0] := K_\gep$ and $G_\gep = \rho(\G(\Q_\gep))$.

Given $g \in \SL_N(\A)$ we write $[g] \in \SL_N(\Q)\backslash \SL_N(\A)$ for the corresponding coset and, similarly, we write $[B]$ for the set of cosets represented by a subset $B \subset~\SL_N(\A)$.

We consider data $\data = (\H,\iota,g)$ where $\H$ is a simply connected semisimple $\Q$-group, $\iota: \H \to \SL_N$ an algebraic homomorphism defined over $\Q$ with central kernel, and $g_\data \in \SL_N(\A)$.
Similarly to the introduction, we assume that the data $\data$ is consistent with the pair $(\G,\rho)$ i.e.~$\iota(\H) < \rho(\G)$ and $g_\data \in \rho(\G(\A))$.
We set
\begin{align*}
X = [\rho(\G(\A))],\quad
Y_{\data} = [\iota(\H(\A))g_{\data}].
\end{align*}
Then $Y_{\data}$ is invariant under the group 
 \[
 H_\data=g_{\data}^{-1}\iota(\H(\adele))g_{\data},
 \]
and for every $\gep\in\places$, we put 
\begin{align*}
H_\gep=g_{\data,\gep}^{-1}\iota(\H(\Q_\gep))g_{\data,\gep}.
\end{align*}
When there is no confusion, we denote $H_\data$ simply by $H$.

We also set up the corresponding notions at the level of the Lie algebra $\mathfrak{g}$
of~$\G$. \index{g@$\mathfrak{g},\mathfrak{g}_\gep$, Lie algebra and Lie algebra over
local field~$\Q_\gep$}
For any~$\gep\in\Sigma$, we let~$\mathfrak{g}_\gep$ be the Lie algebra of~$\G$ over~$\Q_\gep$.
For~$\gep\in\Sigma_f$, $\mathfrak{g}_\gep[0]$ denotes the preimage of the $\Z_\gep$-integral 
$N \times N$ matrices under the differential ${\rm D}\rho: \mathfrak{g} \rightarrow \mathfrak{sl}_N$.
More generally, we write $\gfrak_\gep[]$ for the preimage of the matrices all of whose entries have valuation at least $k$.
\index{g@$\mathfrak{g}_\gep[k]$, compact open subgroup
of level~$k$ of~$\mathfrak g_\gep$ for~$v\in\Sigma_f$}
For any subspace $V < \gfrak_\gep$ we put $V[k] = V \cap \gfrak_\gep[k]$.
For convenience, we will usually identify $\gfrak$ with its image under the isomorphism $\mathrm{D}\rho$ and similarly for its subalgebras.
We will also view $\gfrak$ as a linear subvariety of $\mathfrak{sl}_N$.

Throughout, $\red_\gep\colon\SL_N(\Z_\gep)\rightarrow\SL_N(\mathbb F_\gep)$ 
denotes the reduction map mod $\gep$; 
similarly we consider reduction mod $\gep$ for the Lie algebras, 
see~\cite[Ch.~3]{PlRa} for a discussion of reduction maps.\index{r@$\gep$, reduction maps}  

For $g \in \G(\Q_\gep)$, we write $\|g\|_\gep$, or simply $\|g\|$ if there is no confusion, for the largest absolute value of the matrix entries of 
$\rho(g)$ and $\rho(g)^{-1}$. For an element $g\in\G(\adele)$ we write 
\[
\|g\|=\max\{\|g_\gep\|_\gep:\gep\in\places\}.
\]
\index{g@$\|g\|$, norm of~$g\in\G(\Q_\gep)$ and $g\in\G(\adele)$.}
Notice that $\|g\|=\|g^{-1}\|$.
Moreover, for any $g\in\G(\adele)$ we define $\height(g)=\prod_{\gep\in\places}\|g_\gep\|$.
\index{g@$\height(g)$, height of~$g\in\G(\adele)$}

The content of a vector $\wpz \in (\bigwedge^k \mathfrak{sl}_N)(\A)$ (or $\wpz \in \A^N$) is
\begin{align}\label{eq:def-content}
\content(\wpz) = \prod_{\ell\in \Sigma} \norm{\wpz_\ell}_\ell.
\end{align}
Here, for $\gep$ a prime the norm $\norm{\cdot}_\ell$ is the largest absolute value of the entries of $\wpz$ (in an integral basis) and $\norm{\cdot}_\infty$ is the usual Euclidean norm.
Note that $\content(\alpha \wpz) = \content(\wpz)$ for any $\wpz$ and any $\alpha \in \Q^\times$.

For any $\Q$-subgroup $\Lbf < \SL_N$ we let 
\begin{align*}
\vpz_{\Lbf}\in \bigwedge^{\dim(\Lbf)}\Lie(\Lbf)(\Q) \subset \bigwedge^{\dim(\Lbf)}\mathfrak{sl}_N(\Q)
\end{align*}
be one of the two primitive integral vectors in the line $\bigwedge^{\dim(\Lbf)}\Lie(\Lbf)(\Q)$. 
The height of $\Lbf$, denoted by $\height(\Lbf)$, is the Euclidean norm of $\vpz_{\Lbf}$ where the Euclidean norm is induced from the usual Euclidean norm on $\mathfrak{sl}_N \subset \Mat_N$.
Since the height only depends on the Lie algebra, we write $\height(\Lbf) = \height(\rho(\Lbf))$ for any $\Q$-subgroup $\Lbf < \G$ (and in particular, for $\G$ itself).

More generally, and similarly to the introduction, we define the complexities
\begin{align*}
\cpl(X) = \content\big(\vpz_{\rho(\G)}\big)=\height(\G),\quad
\cpl(Y_{\data}) = \content\big(g_{\data}^{-1}.\vpz_{\iota(\H)}\big).
\end{align*}

\subsection{Volume of a homogeneous set}\label{sec:volumeoverQ}
We now recall a notion of volume of a homogeneous set from~\cite{EMMV}.
We will do so only in our setting i.e.~for adelic periods over $\Q$ (as this is sufficient for our purposes). Note that this discussion is subsumed by the discussion in Appendix~\ref{sec:volvscpl}.

Let $\Omega\subset\SL_N(\A)$ be an open neighborhood of the identity with compact closure.
Define the volume of $Y_\data=[\iota(\H(\A))g_\data]$ (and similarly for $X$) by 
\[
\vol(Y_\data)=\frac{m_H(Y_\data)}{m_H(H\cap\Omega)},
\]
where $m_H$ is a Haar measure on $H=g_\data^{-1}\iota(\H(\A))g_\data$.

If $\Omega'\subset \SL_N(\A)$ is another compact open neighborhood of the identity, then the volume $\vol'(\cdot)$ defined using $\Omega'$ satisfies $\vol'(\cdot)\asymp\vol(\cdot)$, see~\cite[\S2.3]{EMMV}. In the sequel, we will assume that
\[
\Omega=\prod_{v\in \Sigma}\Omega_v,
\]
where~$\Omega_\infty\subset\SL_N(\R)$ \index{Omega@$\Omega_0=\prod_{\gep\in\places}\Omega_\gep$, open set in~$\SL_N(\A)$ 
to normalize the volume of~$H$-orbits} is a fixed open neighborhood
of the identity in $\SL_N(\R)$ and~$\Omega_\gep=\SL_N(\Z_\gep)$ for all primes $\gep$. 

The following proposition is a special case of Proposition~\ref{prop: vol complexity} and is proved in Appendix~\ref{sec:volvscpl}.

\begin{proposition}\label{prop: vol complexity'}
Assume that $\G$ is $\Q$-anisotropic.
Then there exists a constant $\consta\label{a:cplvsvol}>0$ depending only on $N$ such that
\begin{align}\label{eq:volvscpl'}
\cpl(Y_\data)^{1/\ref{a:cplvsvol}} \ll \vol(Y_\data) \ll \cpl(X)^{\ref{a:cplvsvol}}\cpl(Y_\data)^{\ref{a:cplvsvol}}
\end{align}
where the implicit constants depend only on $N$.
\end{proposition}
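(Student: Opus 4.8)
The plan is to relate the volume $\vol(Y_\data)$ to the complexity $\cpl(Y_\data)$ through two separate inequalities, each controlled polynomially. Recall that $\cpl(Y_\data) = \content(g_\data^{-1}.\vpz_{\iota(\H)})$ is a product of local norms of a single wedge vector, while $\vol(Y_\data) = m_H(Y_\data)/m_H(H \cap \Omega)$ measures the size of the orbit. The key organizing principle is that both quantities can be compared to the same intermediate object: a suitably normalized arithmetic invariant of $\H$ built from Prasad's volume formula. Since the general statement (Proposition~\ref{prop: vol complexity}) and its proof live in Appendix~\ref{sec:volvscpl}, the real content here is just to check that the hypotheses of that general proposition are met in the special adelic-over-$\Q$ setting and that $\G$ being $\Q$-anisotropic is what makes both $X$ and $Y_\data$ genuinely compact (so the volumes are finite and the local factors at all but finitely many primes are trivial).

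First I would establish the lower bound $\cpl(Y_\data)^{1/\ref{a:cplvsvol}} \ll \vol(Y_\data)$. The idea is that a large complexity forces the orbit to be "spread out": if $g_\data^{-1}.\vpz_{\iota(\H)}$ has large content, then in particular it has large norm at some place, which via reduction theory (and the fact that $\iota(\H)$ sits inside $\SL_N$ with a fixed integral structure) means the orbit $[\iota(\H(\A))g_\data]$ cannot be covered by too few translates of a fixed compact set $\Omega$; equivalently $m_H(Y_\data)$ is large relative to $m_H(H\cap\Omega)$. Concretely one uses that $H$ acts on the line through $g_\data^{-1}.\vpz_{\iota(\H)}$, that stabilizers of lattice points have index growing polynomially in the norm, and that $\H$ has property $(\tau)$ / bounded generation so the orbit-counting is under control. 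This is exactly the direction handled in \cite[App.~B]{EMMV} and in the appendix to this paper, so I would cite Proposition~\ref{prop: vol complexity} for it.

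For the upper bound $\vol(Y_\data) \ll \cpl(X)^{\ref{a:cplvsvol}}\cpl(Y_\data)^{\ref{a:cplvsvol}}$, the strategy is to bound the volume of the $H$-orbit from above using Prasad's volume formula: the volume of $\H(\Q)\backslash\H(\A)$ (in a fixed normalization) is bounded polynomially in terms of the discriminant-type data of $\H$, which in turn is bounded polynomially in $\height(\H)$, hence in $\cpl(Y_\data)$; the extra factor $\cpl(X)$ accounts for the ambient normalization and the fact that $\Omega_\infty$ is fixed relative to $\SL_N(\R)$ rather than to $\G(\R)$. One also needs that the conjugating element $g_\data$ lies in $\rho(\G(\A))$, so its finite part is integral outside a set of primes controlled by $\cpl(X)$ and $\cpl(Y_\data)$, keeping the denominator $m_H(H\cap\Omega)$ from being too small. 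Again this is the content of the appendix.

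The main obstacle, and the reason this is deferred to an appendix, is the careful bookkeeping of normalizations: one must pin down Haar measures on $H$, on $\G(\A)$, and on the local groups $H_\gep$ consistently, and then show that the "bad primes" — those where $g_{\data,\gep}$ is not integral or where $\iota(\H)$ has bad reduction — contribute only a polynomially bounded factor. Making this uniform in $N$ while allowing the implicit constants to depend on $N$ requires Prasad's formula together with effective bounds on local densities, and the anisotropy of $\G$ is used precisely to guarantee that $X$ and $Y_\data$ are compact so that no cuspidal divergence enters. Since all of this is carried out in Appendix~\ref{sec:volvscpl}, here the proof is simply: \emph{this is the special case $F=\Q$ of Proposition~\ref{prop: vol complexity}, proved in Appendix~\ref{sec:volvscpl}.}

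\begin{proof}
This is the special case $F = \Q$ of Proposition~\ref{prop: vol complexity}, whose proof is given in Appendix~\ref{sec:volvscpl}. Indeed, under the running assumptions ($\G$ simply connected semisimple and $\Q$-anisotropic, $\rho\colon\G\to\SL_N$ with central kernel, and $\data$ consistent with $(\G,\rho)$), the space $X = [\rho(\G(\A))]$ and the orbit $Y_\data = [\iota(\H(\A))g_\data]$ are both compact, the complexities $\cpl(X) = \content(\vpz_{\rho(\G)}) = \height(\G)$ and $\cpl(Y_\data) = \content(g_\data^{-1}.\vpz_{\iota(\H)})$ are finite, and the volume $\vol(Y_\data)$ is defined using the fixed open set $\Omega = \prod_v \Omega_v$. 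The two inequalities in \eqref{eq:volvscpl'} are then precisely the two estimates provided by Proposition~\ref{prop: vol complexity}: the lower bound $\cpl(Y_\data)^{1/\ref{a:cplvsvol}} \ll \vol(Y_\data)$ follows from bounding the orbit from below in terms of the content of $g_\data^{-1}.\vpz_{\iota(\H)}$ via reduction theory, and the upper bound $\vol(Y_\data) \ll \cpl(X)^{\ref{a:cplvsvol}}\cpl(Y_\data)^{\ref{a:cplvsvol}}$ follows from Prasad's volume formula together with the control on the bad primes of $g_\data$ afforded by $g_\data \in \rho(\G(\A))$. All implicit constants depend only on $N$.
\end{proof}
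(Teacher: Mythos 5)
Your proof is, at the level of citation, the same as the paper's: Proposition~\ref{prop: vol complexity'} is indeed deferred to Appendix~\ref{sec:volvscpl}, where the more general Proposition~\ref{prop: vol complexity} is proved. However, two points in your description are off and worth correcting. First, this is not "the special case $F=\Q$" of Proposition~\ref{prop: vol complexity} — the entire appendix is already formulated over $\Q$. The actual relationship is that Proposition~\ref{prop: vol complexity} gives $\cpl(Y)^{1/A} \ll \vol(Y) \ll \minht(Y)^A\cpl(Y)^A$ with a factor of the \emph{minimal height in the cusp} $\minht(Y)$, and the $\Q$-anisotropy assumption on $\G$ is used to bound that factor by a power of $\cpl(X)$. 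Second, and related, your explanation of where the $\cpl(X)$ factor comes from ("ambient normalization", "control on bad primes of $g_\data$") is not the mechanism. The mechanism is Proposition~\ref{prop:smallvectors}: anisotropy of $\G$ implies a uniform lower bound $\min_{\wpz\neq 0}\content(\rho(g)\wpz)\gg \height(\G)^{-\star}=\cpl(X)^{-\star}$ on the content of nonzero rational vectors under any $g\in\G(\A)$, which is exactly the statement $\minht(Y_\data)\ll\cpl(X)^\star$. Substituting this into Proposition~\ref{prop: vol complexity} yields the stated form. Since your proof is a reference, this misattribution doesn't produce a logical error in the conclusion, but it does misrepresent the role of the anisotropy hypothesis, which is the one nontrivial ingredient connecting the two propositions.
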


In view of Proposition~\ref{prop: vol complexity'}, we may switch between $\vol(\cdot)$ and $\cpl(\cdot)$ at will in the proof of Theorems~\ref{thm:main equi}--\ref{thm:asinEMV}.

\subsection{Good primes}\label{sec:good place}
Let $\H=\H_1\cdots\H_k$ be a direct product decomposition of $\H$ 
into $\Q$-almost simple factors --- recall that $\H$ is simply connected. Let $F_j/\Q$ be a finite extension so that $\H_j={\rm Res}_{F_j/\Q}(\H_j')$ where $\H_j'$ is an absolutely almost simple $F_j$-group for all $1\leq j\leq k$. 
Then $[F_j:\Q]$ is bounded by $\dim\H$.  
Let $\mathcal H'_j$ be the quasi-split inner form of $\H'_j$ over $F_j$.
Let $L_j/F_j$ be the corresponding number field defined as in~\cite[\S0.2]{Pr-Volume}. 
That is, $L_j$ is the splitting field of~$\Hcal_j'$ except in the case where~$\Hcal_j'$ is a triality form of type~${}^6\mathsf{D}_4$ where it is a degree~$3$ subfield
of the degree~$6$ Galois splitting field with Galois group~$S_3$. 

For any prime $\gep$, let 
\[
K_\gep^* =  \iota^{-1}(g_\gep \SL_N(\Z_\gep) g_\gep^{-1}).
\] 
Then $K_\gep^*\subset \prod_{j=1}^k\prod_{v|\gep}K^*_{j,v}$ where $K^*_{j,v}$
is the projection of $K^*_\gep$ into $\H_j'(F_{j,v})$ and, in particular, it is a compact open subgroup of $\H_j'(F_{j,v})$.
We recall the following

\begin{proposition}[{\cite[\S5.11]{EMMV}}]\label{prop:splitting place}
For every $A\geq 1$, there exists a prime $\gp$ satisfying 
\[
A\leq p\ll_A \max\{(\log\vol (Y_\data))^{2}, (\log\vol (X))^{2}\}
\]
so that all of the following properties hold:
\begin{enumerate}  
	\item $\G$ is quasi split over~$\Q_\gp$ and splits over the maximal unramified extension~$\widehat{\Q_\gp}$, and~$K_\gp$
	is a hyperspecial subgroup of~$\G(\Q_\gp)$,
\item $L_j/\Q$ is unramified at $\gp$ for every $1\leq j\leq k$,
\item $\H_{j,v}'$ is quasi split over $F_{j,v}$ and splits
over~$\widehat{F_{j,v}}=\widehat{\Q_\gp}$, for every $1\leq j\leq k$ and every $v|\gp$,
\item $K_\gp^*=\prod_{j=1}^k\prod_{v|\gp}K_{j,v}^*$, and $K_{j,v}^*$ is hyperspecial for all $1\leq j\leq k$ and all $v|\gp$.
\end{enumerate}
\end{proposition}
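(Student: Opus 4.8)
The plan is to invoke the analogous statement from \cite[\S5.11]{EMMV} verbatim and observe that each of the four conditions is a congruence/splitting condition that excludes only finitely many primes, whose number (or log) is controlled by the volume. More concretely, conditions (1)--(4) are all of the following shape: ``$\gp$ avoids the primes dividing some integer $D$'', where $D$ is a product of discriminants of the number fields $F_j,L_j$ and of the integer $L$ cutting out the compact open subgroups $K_\gp$ and $K_\gp^*$. First I would recall that the fields $F_j,L_j$ are determined by $\H$ alone and have degree $\ll \dim\H$, so their discriminants are bounded in terms of $\height(\H)$ and hence (by Proposition~\ref{prop: vol complexity'}) polynomially in $\vol(Y_\data)$; thus the set of primes at which some $L_j/\Q$ ramifies, or at which $\G$ fails to be quasi-split or $K_\gp$ fails to be hyperspecial, has size $\ll \log\vol(Y_\data)$.

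The key steps, in order: (i) Reduce to $F=\Q$ using restriction of scalars as in \S\ref{sec:good place}, so that $\H=\prod_j \mathrm{Res}_{F_j/\Q}\H_j'$ with $\H_j'$ absolutely almost simple; (ii) For condition (1), note that the set of primes at which the quasi-split inner form of $\G$ fails to split over $\widehat{\Q_\gp}$ or at which the hyperspecial structure $K_\gp=\rho^{-1}(\SL_N(\Z_\gp))$ degenerates is finite and bounded by the primes dividing $\disc$ of the splitting field of $\G$ together with the primes appearing in the definition of $\rho$; this is exactly the content of \cite[\S5.11]{EMMV} applied to the pair $(\G,\rho)$, and the resulting bound is $\ll(\log\vol(X))^2$; (iii) For conditions (2)--(3), exclude the (finitely many) primes ramifying in any $L_j$ or over which some $\H_{j,v}'$ fails to be quasi-split over $F_{j,v}$ and split over $\widehat{F_{j,v}}$ --- again a finite set bounded via $\height(\H)$ and Prasad's setup; (iv) For condition (4), note that $K_\gp^\ast=\iota^{-1}(g_\gp\SL_N(\Z_\gp)g_\gp^{-1})$ is hyperspecial and factors as claimed for all $\gp$ outside a finite set controlled by $\cpl(Y_\data)=\content(g_\data^{-1}.\vpz_{\iota(\H)})$, since at a prime $\gp$ with $\norm{(g_\data)_\gp.\vpz}_\gp=1$ and good reduction the local stabilizer of the lattice is hyperspecial; (v) Finally, count: the number of excluded primes is $\ll \log(\height(\H)\cdot\height(\G)\cdot\cpl(Y_\data))\ll \max\{(\log\vol(Y_\data))^2,(\log\vol(X))^2\}^{1/2}$, so for any $A\geq 1$ the smallest prime $\gp\geq A$ not in the excluded set satisfies $A\le \gp\ll_A \max\{(\log\vol(Y_\data))^2,(\log\vol(X))^2\}$ by Bertrand/PNT applied to arithmetic progressions or simply by the prime-counting bound that among the first $O(\log D)$ primes above $A$ at least one avoids a set of size $\ll\log D$.

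The main obstacle I expect is bookkeeping rather than conceptual: one must verify that the ``bad prime'' set for $K_\gp^\ast$ and for the quasi-split/unramified conditions really is bounded polynomially (equivalently, logarithmically after taking $\log$) in the complexity, uniformly in the data $\data$ --- in particular tracking how $g_\data$ enters only through $\content(g_\data^{-1}.\vpz_{\iota(\H)})$ and not through finer information at individual places. This is precisely where Proposition~\ref{prop: vol complexity'} (bounding $\vol$ by a power of $\cpl(X)\cpl(Y_\data)$ and conversely) is used to convert the discriminant/denominator bounds into a bound in terms of $\log\vol$. Once this translation is in place, the proposition is immediate from \cite[\S5.11]{EMMV}, and indeed the cited reference already carries out essentially this argument; our only additional point is that the $S$-arithmetic factorisation in (4) holds simultaneously at all $v\mid\gp$ because $\gp$ is chosen unramified in every $F_j$, so $F_{j,v}=\Q_\gp$ and $\widehat{F_{j,v}}=\widehat{\Q_\gp}$.
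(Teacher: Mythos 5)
The paper does not give its own proof of this proposition: the bracketed attribution in the theorem header --- \cite[\S5.11]{EMMV} --- is the entire content, and the result is imported verbatim. Your sketch is a faithful reconstruction of the argument that appears in EMMV \S5.11: one bounds the discriminants of the fields $F_j,L_j$ and the denominators/discriminants controlling $K_\gp,K_\gp^*$ polynomially in $\height(\G)$, $\height(\H)$, and $\cpl(Y_\data)$; translates to volumes via Proposition~\ref{prop: vol complexity'}; and then uses the prime-counting bound to locate a good prime above $A$ of the claimed size, the extra factor in the exponent absorbing the $\log\log$. I have no substantive objection to your proposal; the only minor slip is that your step~(i) is not a ``reduction'' --- the paper's standing assumption in \S\ref{sec: notation gp} already has $F=\Q$, and the restriction of scalars in \S\ref{sec:good place} is internal to $\H$ (writing $\H_j = \Res_{F_j/\Q}\H_j'$), not a reduction of the ambient setup.
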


We will refer to a prime $\gp\in\places_f$\index{p@$\gp$, starting with~\S\ref{ss;good-place} a good prime (or good place) $Y_\data$} satisfying properties in Proposition~\ref{prop:splitting place} as a {\em good} prime (for~$X$ and $Y_\data$).
For simplicity in notation, we write $\jmap_\gp\colon\H\to\rho(\G)$ for the homomorphism
defined by $\jmap_\gp(\cdot)=g_{\data,\gp}^{-1}\iota(\cdot)g_{\data,\gp}$ at a good prime~$\gp$\index{jp@$\jmap_\gp(\cdot)=g_{\data,\gp}^{-1}\iota(\cdot)g_{\data,\gp}$, homomorphism at good prime~$\gp$}.

Bruhat-Tits theory, see~\cite{Tits-Corvallis}, provides smooth group schemes $\Gfrak_\gp$ and $\Hfrak_\gp$ whose generic fibers are $\G$ and $\H$, respectively, and so that 
\[
K_\gp=\Gfrak_\gp(\Z_\gp)\quad\text{ and }\quad K_\gp^*=\Hfrak_\gp(\Z_\gp).
\]
In fact, $\Gfrak_\gp=\prod_i \Gfrak_{\gp,i}$ where for every $i$, the generic fiber of $\Gfrak_{\gp,i}$ is a $\Q_\gp$-simple factor of $\G(\Q_\gp)$ and the special fiber of  $\Gfrak_{\gp,i}$ is an $\mathbb F_\gp$-simple factor of the special fiber of $\Gfrak_{\gp}$. A similar statement holds for $\Hfrak_\gp$.

It follows from~\cite[\S6.2]{EMMV} that $\rho$ extends to a closed immersion from $\Gfrak_\gp$ to $(\SL_N)_{\Z_\gp}$, and that the map $\jmap_\gp$ extends to a closed immersion from $\Hfrak_\gp$ to $(\SL_N)_{\Z_\gp}$, respectively. 
In particular, the $\Z_\gp$-structure on $(\mathfrak{sl}_N)_{\Z_\gp}$ defined above agrees with the $\Z_\gp$-structure on $\Lie(\mathfrak G_\gp)$. 

Recall that $\red_\gp$ denotes the reduction map modulo $\gp$ and, for simplicity, we will write $\underline{\bullet}$ to denote $\red_\gp(\bullet)$ for any $\Z_\gp$-module, scheme, or morphism $\bullet$. Abusing the notation, given an $\Q_\gp$-subspace $V\subset\mathfrak{sl}_N(\Q_\gp)$, we denote $\underline{V[0]}$ simply by $\underline{V}$.  

\medskip

The following will play an important role in the sequel. 

\begin{lemma}\label{lem:principal SL2}
 There exists a closed immersion 
\[
\theta_{0,\gp}: \SL_2 \longrightarrow \Hfrak_\gp,
\] 
of $\Z_\gp$-group schemes so that $\theta_\gp:=\jmap_\gp\circ\theta_{0,\gp}$ satisfies the following properties.
\begin{enumerate}
\item The map $\theta_\gp:\SL_2\to \SL_N$ is a closed immersion. 
\item The projection of $\theta_\gp(\SL_2(\Q_\gp))$ into each $\Q_\gp$-almost simple factor of $H_\gp$ is nontrivial.
\item $\Ad\circ\theta_\gp: \SL_2\to \SL(\mathfrak{sl}_N)$ is a closed immersion of $\Z_\gp$-group schemes.
\item We have $\underline{\Ad\circ\theta_\gp}= \Ad\circ\underline{\theta_\gp}$
as representations of $\SL_2(\overline{\mathbb F_\gp})$ on $\mathfrak{sl}_N\otimes_{\mathbb F_\gp}\overline{\mathbb F_\gp}$. 
\end{enumerate}
\end{lemma}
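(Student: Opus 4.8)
The plan is to construct $\theta_{0,\gp}$ as (the integral model of) a principal $\SL_2$ inside $\Hfrak_\gp$ adapted to the almost simple factors of $H_\gp$, and then verify the four properties by a combination of Bruhat--Tits theory at the good prime $\gp$ and standard facts about principal $\SL_2$'s. Since $\gp$ is a good prime, properties (1)--(4) of Proposition~\ref{prop:splitting place} guarantee that $\Hfrak_\gp=\prod_{j,v|\gp}\Hfrak_{\gp,j,v}$ is a product of reductive group schemes over $\Z_\gp$ (hyperspecial models), each with connected reductive special fiber of the same type as the generic fiber. First I would recall that for a split (or quasi-split and unramified, hence split over $\widehat{\Q_\gp}$) simple group there is a \emph{principal $\mathrm{SL}_2$}: pick a Chevalley pinning, let $e=\sum_\alpha e_\alpha$ be the sum of simple root vectors, let $h=\sum 2\varpi_\alpha^\vee$ (equivalently the unique semisimple element with $[h,e]=2e$), and complete to an $\mathfrak{sl}_2$-triple $(e,h,f)$. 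Over $\Z_\gp$ with $\gp$ large enough relative to the Coxeter number (which is bounded in terms of $\dim\H$, hence in terms of $N$ — and $\gp$ may be taken large since Proposition~\ref{prop:splitting place} only asks $\gp\geq A$), Kostant's construction exponentiates to a closed immersion $\SL_2\hookrightarrow \Hfrak_{\gp,j,v}$ of group schemes. Taking the product over the factors $(j,v)$ and composing the diagonal $\SL_2\to\prod_{j,v}\SL_2$ gives $\theta_{0,\gp}\colon\SL_2\to\Hfrak_\gp$; composing with $\jmap_\gp$ (which by the discussion preceding the lemma extends to a closed immersion $\Hfrak_\gp\hookrightarrow(\SL_N)_{\Z_\gp}$) yields $\theta_\gp$.

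For property (2): because $\theta_{0,\gp}$ is built using the diagonal into the product of factors, its projection to each $\Q_\gp$-almost simple factor of $H_\gp$ is a principal $\SL_2$ in that factor, in particular nontrivial. (Here one uses that the $\Q_\gp$-almost simple factors of $H_\gp$ are exactly the $\mathrm{Res}_{F_{j,v}/\Q_\gp}$ of the $F_{j,v}$-simple factors.) For property (1): $\theta_\gp=\jmap_\gp\circ\theta_{0,\gp}$ is a composition of closed immersions of $\Z_\gp$-schemes, hence a closed immersion; restricting to generic fibers gives the closed immersion $\SL_2\to\SL_N$. For property (3): $\Ad\circ\theta_\gp$ is the composition of $\theta_\gp$ with the adjoint representation $\SL_N\to\SL(\mathfrak{sl}_N)$; since $\theta_\gp$ is a closed immersion and $\Ad$ is a closed immersion on $\SL_N$ modulo its (finite, central) kernel — and $\SL_2$ has trivial intersection with the scalars after the principal embedding (a principal $\SL_2$ is never central) — one checks $\Ad\circ\theta_\gp$ is a closed immersion of $\Z_\gp$-group schemes, using again that $\gp$ is large so no torsion phenomena intervene on the special fiber. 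For property (4): this is the statement that formation of the adjoint representation of a \emph{smooth} group scheme commutes with base change to the residue field — i.e.\ $\red_\gp$ applied to the $\Z_\gp$-morphism $\Ad\circ\theta_\gp$ equals $\Ad\circ\red_\gp(\theta_\gp)$ — which is immediate once one knows $\Ad\circ\theta_\gp$ and $\underline{\theta_\gp}$ are both defined integrally and $\Ad$ is defined over $\Z$ on $(\SL_N)_{\Z_\gp}$ with $(\mathfrak{sl}_N)_{\Z_\gp}$ its natural integral model; the only subtlety is that the $\Z_\gp$-structure on $\mathfrak{sl}_N$ coming from $\rho$ agrees with $\Lie(\Gfrak_\gp)$, which was recorded just before the lemma.

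The main obstacle I anticipate is \textbf{the integrality of the principal $\SL_2$ and its adjoint at the prime $\gp$}: Kostant's $\mathfrak{sl}_2$-triple and its exponential only integrate to a closed immersion of \emph{group schemes over $\Z_\gp$} when $\gp$ does not divide certain structure constants (roughly, when $\gp$ exceeds the Coxeter number, or more precisely avoids the bad primes of $\H$ and of the representation $\Ad$). One must therefore (i) enlarge the lower bound $A$ in the application of Proposition~\ref{prop:splitting place} so that $\gp$ avoids all these finitely many bad primes — crucially these bad primes are bounded purely in terms of $N$ and $[F:\Q]$, so this does not affect the asserted dependence of constants — and (ii) track that the hyperspecial models $\Hfrak_{\gp,j,v}$ provided by Bruhat--Tits at a good prime are exactly the Chevalley models in which Kostant's construction lives, which follows from property (4) of Proposition~\ref{prop:splitting place} (hyperspeciality of $K_{j,v}^*$) together with the fact that over an unramified local field a hyperspecial model of a split group is a Chevalley model. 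Once these bookkeeping points are settled, properties (1)--(4) are each a short verification.
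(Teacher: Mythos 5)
Your proposal reconstructs the principal-$\SL_2$ construction that the paper simply cites from \cite[\S6.7]{EMMV}, and then establishes (1)--(4) by the same composition-of-closed-immersions argument, so this is essentially the paper's approach with the cited material filled in. Your treatment of (3) is if anything slightly more careful than the paper's phrasing, since $\Ad$ on $\SL_N$ has kernel the center and one must indeed use that $\theta_\gp(\SL_2)$ meets $Z(\SL_N)$ trivially rather than invoke $\Ad$ as a closed immersion outright.
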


\begin{proof}
For the existence of $\theta_{0,\gp}$ see~\cite[\S6.7]{EMMV} and the references therein. 
Then the first part follows from the fact that $\jmap_\gp$ is a closed immersion.

    The second part is proved in~\cite[\S6.7]{EMMV}. Indeed $\theta_{0,\gp}\colon \SL_2 \longrightarrow \Hfrak_\gp$ is constructed in loc.\ cit.\ precisely so that if we define $\theta_\gp$ as above, then part~(2) holds.

Since $\gp\gg_N1$, the adjoint representation $\Ad\colon \SL_N \to \SL(\mathfrak{sl}_N)$ is a closed immersion and the reduction mod $\gp$ is the adjoint representation of $(\SL_N)_{\overline{\mathbb{F}_\gp}}$.
Thus, Part~(3) follows as $\Ad \circ \theta_\gp$ is a composition of closed immersions. Taking the reduction mod $\gp$, we also conclude for Part~(4).   
\end{proof}

We will refer to $\theta_\gp(\SL_2(\Q_\gp))$ as the {\em principal} $\SL_2$ in the sequel (though we caution readers that $\theta_\gp(\SL_2(\Q_\gp))$ is merely isomorphic to a quotient of $\SL_2(\Q_\gp)$ by a finite normal subgroup). 
We define a one-parameter unipotent subgroup~$u:\Q_\gp\to\theta_\gp(\SL_2(\Q_\gp))$
by
\be\label{eq:ut in SL2}
 u(s):=\theta_\gp\left(\begin{pmatrix} 1&s\\0&1\end{pmatrix}\right);
\ee
let $U=\{u(s): s\in \Q_\gp\}$. 
We write 
\begin{align}\label{eq:direction z}
\zpz = \zpz^+ = \mathrm{D}\theta_\gp  \left(\begin{pmatrix}
0 &1 \\ 0 &0
\end{pmatrix}\right)
\in \gfrak_p[0]
\end{align}
for the derivative of $u(\cdot)$ at $0$ where we recall that $\gfrak_\gp$ is identified with its image under $\rho$.
We let $\zpz^- \in \gfrak_\gp[0]$ be the analogously defined direction using the lower nilpotent in $\mathfrak{sl}_2$.
For any $t\in \Q_\gp^\times$ set 
\be\label{eq:at in SL2}
 a(t):=\theta_\gp\left(\begin{pmatrix} t&0\\0&t^{-1}\end{pmatrix}\right),
\ee
and let $\diagsubgrp=\{a(t): t\in \Q_\gp^\times\}$ and $a:=a(\gp^{-1})$. 
Note that $U$ and $A$ are the $\Q_\gp$-points of $\Z_\gp$-schemes, and write, as before, $\{\underline{a}(t): t\in \mathbb F_\gp^\times\}$ and $\{\underline{u}(s): s\in \mathbb F_\gp\}$ for the reduction mod $\gp$ of the groups
$\{a(t): t\in \Z_\gp^\times\}$ and $\{u(s): s\in \Z_\gp\}$, respectively. 
Abusing the notation we write $\underline{A}=\{\underline{a}(t): t\in \mathbb F_\gp^\times\}$ and 
$\underline{U}=\{\underline{u}(s): s\in \mathbb F_\gp\}$.

\subsection{The adjoint representation of the principal $\SL_2$}\label{sec: adjoint rep}
A vector $\vpz \in \mathfrak{sl}_N(\Q_\gp)$ is said to be of weight $\lambda$ if $a(t).\vpz=t^\lambda\vpz$;
we will say $\vpz$ has pure weight if it has some weight. A non-zero vector will be called a highest weight vector if it has pure weight and is $U$ invariant. If a $\Q_\gp$-subspace $V\subset\mathfrak{sl}_N(\Q_\gp)$ is invariant under $A$, we let $V^{(\lambda)}$ denote the space of vectors in $V$ of weight $\lambda$, and let $V^{\rm hw}$ (resp.~$V^{\mathrm{hw},(\lambda)}$) denote the subspace of $U$-invariant vectors (resp,~the subspace of $U$-invariant vectors of weight $\lambda$). 

Similarly, a vector $\wpz \in \mathfrak{sl}_N(\mathbb{F}_\gp)$ is said to be of weight $\lambda$ if $\underline{a}(t).\wpz=t^\lambda\wpz$. 
We also define pure and highest weight vectors accordingly, albeit with $\underline{U}$ in place of $U$.  
If $W\subset\mathfrak{sl}_N(\mathbb{F}_\gp)$ is an $\mathbb F_\gp$-subspace which is invariant under $\underline{A}$, we let $W^{(\lambda)}$ denote the space of vectors in $W$ of weight $\lambda$. Define $W^{\rm hw}$ and $W^{\mathrm{hw},(\lambda)}$ similarly. 

\begin{lemma}\label{lem: reduction highest weight}
    Let $V\subset\mathfrak{sl}_N(\Q_\gp)$ be a $\theta_\gp(\SL_2(\Q_\gp))$-invariant subspace. Then 
    \[
    V^{\rm hw}[0]=\bigoplus_{\lambda} V^{\rm hw,(\lambda)}[0],
    \]
    and for each $\lambda$, there is a $\Z_\gp$-basis 
    $\{\vpz_{\lambda,1},\ldots,\vpz_{\lambda,d_\lambda}\}$ of $V^{\mathrm{hw},(\lambda)}[0]$ so that the following hold for every $\lambda$:
    \begin{enumerate}[label=\textnormal{(\theenumi)}]
        \item  $\bigl\{\underline{\vpz_{\lambda,1}},\ldots,\underline{\vpz_{\lambda,d_\lambda}}\bigr\}$ is a basis for $\underline{V^{\rm hw,(\lambda)}}$.
        \item For every $1\leq i\leq d_\lambda$, 
        \[
        \SL_2(\Q_\gp).\vpz_{\lambda,i}
        \]
        spans an $\lambda+1$-dimensional irreducible representation of $\SL_2(\Q_\gp)$, and 
        \[
        \SL_2(\mathbb F_\gp).\underline{\vpz_{\lambda,i}}
        \]
        spans an $\lambda+1$-dimensional irreducible representation of $\SL_2(\mathbb F_\gp)$. 
        \item $\underline{V^{\rm hw,(\lambda)}}=\underline V^{\rm hw,(\lambda)}$. In particular, $\underline{V^{\rm hw}}=\underline{V}^{\rm hw}$.
    \end{enumerate}
\end{lemma}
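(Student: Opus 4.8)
\textbf{Proof plan for Lemma~\ref{lem: reduction highest weight}.}

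The plan is to exploit the fact that $p$ is a good prime, so that by Lemma~\ref{lem:principal SL2}(4) the reduction of $\Ad\circ\theta_\gp$ is $\Ad\circ\underline{\theta_\gp}$, and in particular the $\Z_\gp$-scheme $V[0]$ (a saturated $\Z_\gp$-lattice in $V$, being the intersection of $V$ with the $\Z_\gp$-module $\gfrak_\gp[0]$) carries an action of the $\Z_\gp$-group scheme $\SL_2$ via $\theta_{0,\gp}$. First I would invoke the representation theory of $\SL_2$ over the ring $\Z_\gp$ (valid since $p \gg_N 1$, so $p$ exceeds all weights occurring in $\Ad$ on $\mathfrak{sl}_N$, forcing all Weyl modules to be irreducible mod $p$): every finite free $\Z_\gp[\SL_2]$-module decomposes as a direct sum of the standard irreducibles $\mathrm{Sym}^\lambda$ for $\lambda < p$, and this decomposition is compatible with reduction mod $p$. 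Applying this to $V[0]$ gives the decomposition $V[0] = \bigoplus_\lambda (\text{$\lambda$-isotypic part})$, and taking $U$-invariants (which over a PID is again a saturated submodule) yields $V^{\rm hw}[0] = \bigoplus_\lambda V^{\rm hw,(\lambda)}[0]$, since the highest weight space of $\mathrm{Sym}^\lambda$ is exactly its $A$-weight-$\lambda$ line.

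Next, for each fixed $\lambda$, I would take \emph{any} $\Z_\gp$-basis $\{\vpz_{\lambda,i}\}$ of the free $\Z_\gp$-module $V^{\rm hw,(\lambda)}[0]$. Claims (1) and (3) then say that $V^{\rm hw,(\lambda)}[0]$ is saturated in $\gfrak_\gp[0]$, i.e.\ its reduction equals $\underline{V}^{\rm hw,(\lambda)}$; this follows because $V^{\rm hw,(\lambda)}[0]$ is cut out inside the saturated lattice $V[0]$ by the linear conditions of being $U$-fixed and $A$-weight-$\lambda$, and over the local ring $\Z_\gp$ a direct summand of a free module stays a direct summand after reduction. The equality $\underline{V^{\rm hw}} = \underline V^{\rm hw}$ follows by summing over $\lambda$. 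For Claim (2), each $\vpz_{\lambda,i}$ generates $\mathrm{Sym}^\lambda$ over $\Z_\gp$ (since it is a highest weight vector of weight $\lambda$ in the $\lambda$-isotypic summand, and the $\Z_\gp$-span of its $\SL_2(\Q_\gp)$-orbit is the full $\Z_\gp$-lattice $\mathrm{Sym}^\lambda_{\Z_\gp}$ by the good-prime hypothesis), so reducing mod $p$ gives the irreducible $(\lambda+1)$-dimensional $\SL_2(\mathbb F_\gp)$-module because $\lambda < p$ ensures $\mathrm{Sym}^\lambda$ stays irreducible over $\mathbb F_\gp$.

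The main obstacle I anticipate is making the ``$\SL_2$-representation theory over $\Z_\gp$'' step clean: one must check that $\Ad\circ\theta_\gp$ really does act on the integral lattice $\gfrak_\gp[0]$ (this uses that $\rho$ extends to a closed immersion $\Gfrak_\gp \hookrightarrow (\SL_N)_{\Z_\gp}$ and that $\theta_{0,\gp}$ is a $\Z_\gp$-morphism, both from \S\ref{sec:good place}), and that the weight decomposition is defined over $\Z_\gp$ rather than merely after inverting $p$ --- this is where one genuinely needs $p$ large compared to $N$ (so that the weights $\lambda$ appearing are $< p$, and the factorials $\lambda!$ are units). One also has to be slightly careful that the $A$-action used to define weights is through the $\Z_\gp$-split torus in the principal $\SL_2$, so that the weight spaces $V^{(\lambda)}$ are saturated; this again follows from $\theta_{0,\gp}$ being a closed immersion of $\Z_\gp$-group schemes. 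Once these integrality points are in place, everything else is bookkeeping with the standard decomposition $\mathrm{Sym}^\lambda|_{U} $ and reduction mod $p$.
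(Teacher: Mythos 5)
Your proposal is correct but takes a genuinely different route from the paper. The paper proceeds ``bottom-up'': it first proves the present lemma by (i) decomposing $V^{\rm hw}$ over $\Q_\gp$, (ii) a clever elementary argument applying $\underline{a}(t)$ and using the fact that a nonzero polynomial in $t$ of degree $\ll N$ has few roots in $\mathbb{F}_\gp$ to show that reductions of highest-weight vectors of distinct weights remain linearly independent, and (iii) a dimension count $\dim(V)=\dim(\underline V)=\sum_\lambda(\lambda+1)\dim(\underline V^{\rm hw,(\lambda)})\geq\sum_\lambda(\lambda+1)\dim(V^{\rm hw,(\lambda)})=\dim(V)$ to force equality in the inclusion $\underline{V^{\rm hw,(\lambda)}}\subset\underline V^{\rm hw,(\lambda)}$. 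Only afterwards, in Lemma~\ref{lem: reduction completely red}, is the integral isotypic decomposition $V[0]=\bigoplus_i W_i[0]$ established as a \emph{consequence}. You invert this order: you invoke, as known input, the semisimplicity of free $\Z_\gp$-modules with rational $\SL_2$-action when all weights are $<p$ (i.e.\ that $V[0]$ is a direct sum of Weyl modules $\mathrm{Sym}^\lambda_{\Z_\gp}$, each reducing to the irreducible mod $p$), and then read off all three claims by taking $U$-invariants and weight spaces, which become saturated direct summands.

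Both routes work. The trade-off is that your approach is cleaner and more conceptual \emph{if} the integral structure theory is admitted as a citable black box, whereas the paper's argument is self-contained and avoids any appeal to the literature on representations of reductive group schemes over DVRs. Be aware, though, that the fact you are citing is essentially the content of the paper's Lemma~\ref{lem: reduction completely red} (in the guise of ``no nontrivial extensions between Weyl modules over $\Z_\gp$ when weights are $<p$''), so if you were asked to make that input precise and self-contained you would find yourself proving something very close to the paper's sequence of Lemmas~\ref{lem: reduction highest weight}--\ref{lem: weight spaces are integral}. One small sharpening to your second paragraph: the assertion that $V^{\rm hw,(\lambda)}[0]$ is ``cut out by linear conditions'' is not by itself enough to give saturation --- the kernel of a map of free $\Z_\gp$-modules need not be a direct summand. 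What actually gives saturation is that, granted the isotypic decomposition $V[0]=\bigoplus_\lambda V_\lambda[0]$ with each $V_\lambda[0]$ a sum of copies of $\mathrm{Sym}^\lambda_{\Z_\gp}$, the highest-weight line of each copy of $\mathrm{Sym}^\lambda_{\Z_\gp}$ is a rank-one direct summand (because the raising operator $e$ has unit coefficients on the integral basis when $\lambda<p$), so $V^{\rm hw,(\lambda)}[0]$ is a direct summand of $V[0]$ and hence saturated. You have all the pieces; just be explicit that the saturation comes from the integral decomposition rather than from being a kernel.
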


\begin{proof}
    First note that if $W\subset \mathfrak{sl}_N(\Q_\gp)$ is a $\Q_\gp$-subspace and $\wpz_1,\ldots,\wpz_d\in W[0]$ is a basis for $W[0]$ as $\Z_\gp$-module, then  $W[0]=\oplus_i \Z_\gp \wpz_i$ and $\{\underline{\wpz_1},\ldots,\underline{\wpz_d}\}$ is a basis for $\underline{W}$. 

    Using representation theory of $\SL_2$ over $\Q_\gp$ we conclude that
$V^{\rm hw}=\oplus_{\lambda\geq 0} V^{\rm hw,(\lambda)}$ and each non-zero vector $\vpz\in V^{\rm hw,(\lambda)}$ with $\lambda\geq 0$ is a highest weight vector. Moreover, $\theta_\gp(\SL_2(\Q_\gp)).\vpz$ spans an $\lambda+1$-dimensional irreducible representation of $\theta_\gp(\SL_2(\Q_\gp))$.

For each $\lambda$, fix an $\Z_\gp$-basis $\{\vpz_{\lambda,1},\ldots,\vpz_{\lambda,d_\lambda}\}$ for $V^{\rm hw,(\lambda)}[0]$. Then for every $i$, $\vpz_{\lambda,i}$ is a highest weight vector with weight $\lambda$. 
Since $\gp\gg 1$, we have that $\underline{\vpz_{\lambda,i}}$ is also a highest weight vector with weight $\lambda$.
Moreover, $\{\underline{\vpz_{\lambda,1}},\ldots,\underline{\vpz_{\lambda,d_\lambda}}\}$ is an $\mathbb F_\gp$-basis for $\underline{V^{\rm hw,(\lambda)}}$ (that is, part (1) of the lemma is verified) and the following holds  
\be\label{eq: basis for V-ht}
\bigl\{\underline{\vpz_{\lambda,i}}: \lambda\geq 0, 1\leq i\leq d_\lambda\bigr\}\quad\text{is a basis for $\underline{V^{\rm hw}}$.}
\ee
To verify \eqref{eq: basis for V-ht}, suppose $\sum_{\lambda,i}c_{\lambda,i}\underline{v_{\lambda,i}}=0$, and let $\upz_\lambda=\sum_ic_{\lambda,i}\underline{v_{\lambda,i}}$. 
Let $m$ be the largest weight where $\upz_{m}\neq 0$, then $t^m\upz_m=-\sum_{m'<m}t^{m'}\upz_{m'}$ for all $t\in \mathbb{F}_\gp$ by applying $\underline{a}(t)$. 
Since $\gp \gg 1$ and the above is a polynomial equation in $t$, this implies $\upz_\lambda=0$ for all $\lambda$.
As $\{\underline{\vpz_{\lambda,i}}:i\leq d_\lambda\}$ is an $\mathbb F_\gp$-basis for $\underline{V^{\rm hw,(\lambda)}}$,
\eqref{eq: basis for V-ht} holds.

Now since $\oplus_{\lambda} V^{\rm hw,(\lambda)}[0]\subset V^{\rm hw}[0]$, we conclude from~\eqref{eq: basis for V-ht} that 
\begin{align*}
V^{\rm hw}[0]=\oplus_{\lambda\geq 0} V^{\rm hw,(\lambda)}[0]   
\end{align*}
as it was claimed in the lemma. 

We now show that part~(2) also holds for these vectors. The claim regarding $\SL_2(\Q_\gp)$ was already discussed, thus, we work over the residue field $\mathbb F_\gp$.
Since $\gp\gg 1$ and the representation of $\SL_2(\mathbb F_\gp)$ on $\underline{\gfrak_\gp}$ is given by $\underline{\Ad\circ\theta_\gp}$, see Lemma~\ref{lem:principal SL2}, $\underline V$ is completely reducible and the representation appearing in $V$ are the standard highest weight representation. In particular, $\SL_2(\mathbb F_\gp).\underline{\vpz_{\lambda,i}}$ spans an $\lambda+1$-dimensional irreducible representation of $\underline{V}$. 

To see part~(3), first note that by part~(2), we have $\underline{V^{\rm hw,(\lambda)}}\subset \underline{V}^{\rm hw,(\lambda)}$. Moreover, by part~(1) we have $\dim(\underline{V^{\rm hw,(\lambda)}})=\dim(V^{\rm hw,(\lambda)})$. 
These, the above remark regarding subrepresentations of $\underline V$, and dimension count, imply
\begin{equation}\label{eq: dimension count}
\begin{aligned}
    \dim(V) = \dim(\underline V) &= \sum_{\lambda} (\lambda+1) \dim(\underline{V}^{\rm hw,(\lambda)}) \\
    &\geq \sum_{\lambda} (\lambda+1) \dim(V^{\rm hw,(\lambda)}) = \dim(V).
\end{aligned}
\end{equation}
Thus $\dim(\underline{V}^{\rm hw,(\lambda)}) = \dim (\underline{V^{\rm hw,(\lambda)}}) = \dim (V^{\rm hw,(\lambda)})$ for every $\lambda\geq 0$, which proves part~(3).
\end{proof}

\begin{lemma}\label{lem: reduction completely red}
Let $V\subset\mathfrak{sl}_N(\Q_\gp)$ be a $\theta_\gp(\SL_2(\Q_\gp))$-invariant subspace. Then there are $\theta_\gp(\SL_2(\Q_\gp))$-irreducible subspaces $W_1,\ldots, W_d\subset V$ so that 
\[
V[0]=\bigoplus_{i=1}^d W_i[0]
\]
and $\underline{V}=\bigoplus_{i=1}^d \underline{W_i}$ is decomposition of $\underline{V}$ into $\SL_2(\mathbb F_\gp)$-irreducible representations. 

In particular, if $V$ is irreducible, then so is $\underline{V}$.      
\end{lemma}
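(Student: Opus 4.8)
The plan is to build the $W_i$ directly from the highest weight vectors supplied by Lemma~\ref{lem: reduction highest weight} and then to pin down the integral structures by a dimension count combined with Nakayama's lemma. For each weight $\lambda$ I would fix the $\Z_\gp$-basis $\vpz_{\lambda,1},\ldots,\vpz_{\lambda,d_\lambda}$ of $V^{\mathrm{hw},(\lambda)}[0]$ provided by that lemma, and let $W_{\lambda,i}\subset V$ be the $\theta_\gp(\SL_2(\Q_\gp))$-submodule generated by $\vpz_{\lambda,i}$; by part~(2) of the lemma it is irreducible of dimension $\lambda+1$. Since we are in characteristic $0$, the $\SL_2(\Q_\gp)$-module $V$ is completely reducible and the multiplicity of the $(\lambda+1)$-dimensional irreducible in $V$ equals $\dim V^{\mathrm{hw},(\lambda)}=d_\lambda$; as $\{\vpz_{\lambda,i}\}_i$ is a basis of the highest weight space $V^{\mathrm{hw},(\lambda)}$, the submodules $W_{\lambda,1},\ldots,W_{\lambda,d_\lambda}$ span the $\lambda$-isotypic component of $V$ as a direct sum, and hence $V=\bigoplus_{\lambda,i}W_{\lambda,i}$. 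After relabelling, these are the asserted $W_1,\ldots,W_d$.

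Next I would pass to lattices. Put $W_{\lambda,i}[0]=W_{\lambda,i}\cap\gfrak_\gp[0]$, a free $\Z_\gp$-module of rank $\lambda+1$ with $W_{\lambda,i}[0]\otimes_{\Z_\gp}\Q_\gp = W_{\lambda,i}$, and observe that $V[0]/W_{\lambda,i}[0]$ injects into $V/W_{\lambda,i}$ and is therefore torsion-free, so $W_{\lambda,i}[0]$ is a $\Z_\gp$-module direct summand of $V[0]$ and $\underline{W_{\lambda,i}[0]}$ embeds into $\underline V$. The lattice $W_{\lambda,i}[0]$ is preserved by $\theta_\gp(\SL_2(\Z_\gp))$: by Lemma~\ref{lem:principal SL2}(1) the map $\theta_\gp$ extends to a closed immersion of $\Z_\gp$-schemes, so $\theta_\gp(\SL_2(\Z_\gp))\subset\SL_N(\Z_\gp)$ acts on $\mathfrak{sl}_N(\Z_\gp)$ and preserves $W_{\lambda,i}\cap\mathfrak{sl}_N(\Z_\gp)=W_{\lambda,i}[0]$. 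Consequently $\underline{W_{\lambda,i}[0]}$ is a $\SL_2(\mathbb{F}_\gp)$-submodule of $\underline V$ of dimension $\lambda+1$ containing the highest weight vector $\underline{\vpz_{\lambda,i}}$, which is nonzero of weight $\lambda$ by Lemma~\ref{lem: reduction highest weight}(1); since $\SL_2(\mathbb{F}_\gp).\underline{\vpz_{\lambda,i}}$ already spans a space of dimension $\lambda+1$ by Lemma~\ref{lem: reduction highest weight}(2), we conclude $\underline{W_{\lambda,i}[0]}=\SL_2(\mathbb{F}_\gp).\underline{\vpz_{\lambda,i}}$, an irreducible of dimension $\lambda+1$.

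Finally I would compare $\bigoplus_{\lambda,i}W_{\lambda,i}[0]\subset V[0]$ with the ambient lattice via reduction. Using parts~(1) and~(3) of Lemma~\ref{lem: reduction highest weight}, the multiplicity of the $(\lambda+1)$-dimensional irreducible in the completely reducible module $\underline V$ is $\dim\underline V^{\mathrm{hw},(\lambda)}=d_\lambda$, and since $\underline{\vpz_{\lambda,1}},\ldots,\underline{\vpz_{\lambda,d_\lambda}}$ remain linearly independent, the irreducible submodules $\underline{W_{\lambda,i}[0]}$ span the $\lambda$-isotypic component of $\underline V$ with their sum there direct; summing over $\lambda$ gives $\underline V=\bigoplus_{\lambda,i}\underline{W_{\lambda,i}[0]}$. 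Hence the inclusion $\bigoplus_{\lambda,i}W_{\lambda,i}[0]\subset V[0]$ of finitely generated $\Z_\gp$-modules induces a surjection on reductions modulo $\gp$, so it is an equality by Nakayama's lemma. This yields $V[0]=\bigoplus_i W_i[0]$ together with the decomposition $\underline V=\bigoplus_i\underline{W_i}$ into $\SL_2(\mathbb{F}_\gp)$-irreducibles, and the final assertion is the case $d=1$. I expect the only delicate point to be the bookkeeping that forces the highest weight multiplicities over $\Q_\gp$ and over $\mathbb{F}_\gp$ to agree and keeps everything genuinely $\Z_\gp$-integral, but this is precisely what Lemma~\ref{lem: reduction highest weight} and the closed-immersion statements of Lemma~\ref{lem:principal SL2} are designed to provide, so I do not anticipate a real obstacle.
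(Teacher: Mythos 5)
Your proof is correct and follows essentially the same route as the paper: both build the $W_i$ from the highest weight basis supplied by Lemma~\ref{lem: reduction highest weight}, show the reductions are irreducible via part (2), verify they exhaust $\underline V$ by a dimension count on highest weight spaces, and conclude with a Nakayama-type argument that the lattices fill up $V[0]$. The only (cosmetic) difference is that you work directly with $W_i[0]=W_i\cap\gfrak_\gp[0]$, whereas the paper first passes through the possibly smaller lattice $\mathcal{W}_i$ spanned by $\theta_\gp(\SL_2(\Z_\gp)).\vpz_i$ and shows a posteriori that it coincides with $W_i[0]$.
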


\begin{proof}
    By Lemma~\ref{lem: reduction highest weight}, $V^{\rm hw}[0]$ has a $\Z_\gp$-basis 
    $\{\vpz_i: 1\leq i\leq d\}$ so that $\{\underline{\vpz_i}: 1\leq i\leq d\}$ is an $\mathbb F_\gp$-basis for $\underline{V}^{\rm hw}$. For every $i$, let $W_i$ be the $\Q_\gp$-span of $\theta_\gp(\SL_2(\Q_\gp)).\vpz_i$. We claim the lemma holds with $W_1,\ldots, W_d$. 
    
    First note that $V=\oplus_i W_i$ and $\oplus W_i[0]\subset V[0]$. Let now $1\leq i\leq d$, and let $\mathcal W_i$ be the $\Z_\gp$-span of $\theta_\gp(\SL_2(\Z_\gp)).\vpz_i$. Then $\mathcal W_i\subset W_i[0]$, moreover, in view of Lemma~\ref{lem:principal SL2} and part~(2) of Lemma~\ref{lem: reduction highest weight}, $\underline{\mathcal W_i}$ is an irreducible representation of $\SL_2(\mathbb F_\gp)$ of dimension $\dim W_i$.  Put $\mathcal V=\oplus_i \mathcal W_i$; arguing as in~\eqref{eq: dimension count}, we conclude that $\underline{\mathcal V}=\underline{V}$. Since $\mathcal V\subset V[0]$, the lemma follows.   
\end{proof}

For any vector $\vpz \in V[0]$ in a subrepresentation $V \subset \mathfrak{sl}_N(\mathbb{Q}_\gp)$ of the principal $\SL_2$, we write $\vpz^{\mathrm{nt}}$ for the natural projection onto the direct sum of the non-trivial irreducible subrepresentations of $V$.
By Lemma~\ref{lem: reduction completely red}, we have $\vpz^{\mathrm{nt}} \in V[0]$.

We will also need the following lemma. 

\begin{lemma}\label{lem: weight spaces are integral}
Let the notation be as in Lemma~\ref{lem: reduction completely red}, and let $W_i\subset V$ be any of the irreducible representations given by loc.\ cit. Then there exists a $\Z_\gp$-basis $\{\wpz_{i,1},\ldots,\wpz_{i,d_i}\}$ of $W_i[0]$ so that $\wpz_{i,j}$ is a pure weight vector for all $j$. 

In particular, $\underline{V}^{(\lambda)} = \underline{V^{(\lambda)}}$ for any $\lambda$.
\end{lemma}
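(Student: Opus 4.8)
The plan is to exhibit an explicit \emph{weight} basis of $W_i[0]$ by applying divided powers of the lowering operator $\zpz^-$ to a highest weight vector, and then to read off the ``in particular'' from the fact that the weights occurring in $V$ are bounded in terms of $N$.

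Concretely, write $d=d_i=\dim W_i$ and let $\lambda=d-1$ be the highest weight of $W_i$ as an irreducible $\theta_\gp(\SL_2(\Q_\gp))$-representation. Following the construction in the proof of Lemma~\ref{lem: reduction completely red}, I would take a generator $v$ of the rank-one $\Z_\gp$-module $W_i^{\mathrm{hw},(\lambda)}[0]$; by Lemma~\ref{lem: reduction highest weight} its reduction $\underline v$ spans $\underline{W_i}^{\mathrm{hw},(\lambda)}$, so $\underline v\neq 0$ and $v$ is primitive in $W_i[0]$. Since $\zpz^-\in\gfrak_\gp[0]$ by \eqref{eq:direction z} and $\zpz^-\in\Lie(\theta_\gp(\SL_2))$, the operator $\ad(\zpz^-)$ preserves both $W_i$ and $\mathfrak{sl}_N(\Z_\gp)$, hence preserves $W_i[0]$. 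I would then set $\wpz_{i,j+1}=\tfrac{1}{j!}(\zpz^-)^j.v$ for $0\le j\le\lambda$. As $\gp\gg_N1$ and $\lambda\le\dim\mathfrak{sl}_N$, each $j!$ with $j\le\lambda$ is a unit in $\Z_\gp$, so $\wpz_{i,j+1}\in W_i[0]$; by $\mathfrak{sl}_2$-representation theory over $\Q_\gp$ it has pure weight $\lambda-2j$, and $\wpz_{i,1},\dots,\wpz_{i,d}$ form a $\Q_\gp$-basis of $W_i$.

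The next step is to promote this to a $\Z_\gp$-basis of $W_i[0]$. By Lemma~\ref{lem: reduction completely red} the reduction $\underline{W_i}$ is an irreducible $\SL_2(\mathbb F_\gp)$-representation of dimension $d$, and by Lemma~\ref{lem:principal SL2}(4) $\ad(\underline{\zpz^-})$ realises its lowering operator. The recursion $\wpz_{i,j+2}=\tfrac{1}{j+1}\zpz^-.\wpz_{i,j+1}$ reduces mod $\gp$ (all denominators $1,\dots,\lambda$ being units since $\gp\gg_N1$), so $\underline{\wpz_{i,j+1}}$ is a unit multiple of the $j$-fold image of the highest weight vector $\underline v$ under the lowering operator in the $(\lambda{+}1)$-dimensional irreducible $\mathbb F_\gp$-representation $\underline{W_i}$; since $\lambda<\gp$ these $d$ vectors are non-zero of pairwise distinct $\mathbb F_\gp^\times$-weights, hence form a basis of $\underline{W_i}=W_i[0]/\gp W_i[0]$. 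Nakayama's lemma then gives that $\{\wpz_{i,1},\dots,\wpz_{i,d}\}$ is a $\Z_\gp$-basis of $W_i[0]$, proving the first assertion.

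For the ``in particular'', I would combine this over all $i$: by Lemma~\ref{lem: reduction completely red} one has $V[0]=\bigoplus_i W_i[0]$, so the $\wpz_{i,j}$ form a $\Z_\gp$-basis of $V[0]$ on which $A=\{a(t)\}$ acts diagonally; hence $V^{(\lambda)}[0]$ is the $\Z_\gp$-span of those $\wpz_{i,j}$ of weight $\lambda$, and $\underline{V^{(\lambda)}}$ the $\mathbb F_\gp$-span of the corresponding $\underline{\wpz_{i,j}}$. On the other hand $\underline V=\bigoplus_{i,j}\mathbb F_\gp\underline{\wpz_{i,j}}$ with $\underline a(t).\underline{\wpz_{i,j}}=t^{\mathrm{wt}(\wpz_{i,j})}\underline{\wpz_{i,j}}$, and since every weight occurring in $V$ — being a weight of $\Ad\circ\theta_\gp$ on $\mathfrak{sl}_N$ — is bounded in absolute value in terms of $N$, for $\gp\gg_N1$ distinct integer weights give distinct characters of $\mathbb F_\gp^\times$; therefore $\underline V^{(\lambda)}$ is exactly the span of the $\underline{\wpz_{i,j}}$ of weight $\lambda$, i.e.\ $\underline V^{(\lambda)}=\underline{V^{(\lambda)}}$. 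The one genuinely delicate point, and the step I would be most careful about, is the $\gp$-integrality of the divided powers: it is precisely the largeness of the good prime relative to $N$ (which bounds $\lambda$, hence all denominators $j!$ and $1,\dots,\lambda$) that keeps the $\wpz_{i,j}$ integral and keeps their reductions independent, exactly as for the admissible ($\Z$-form) lattice of an $\SL_2$-representation; without it the argument fails.
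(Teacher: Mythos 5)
Your argument is correct, but it does a bit more work than the paper's own proof. The paper simply fixes, for each weight $\lambda_j$ occurring in $W_i$, a primitive generator $\wpz_j$ of the rank-one $\Z_\gp$-module $W_i^{(\lambda_j)}[0]$, and then argues exactly as you do at the reduction step: the $\underline{\wpz_j}$ are nonzero weight vectors of pairwise distinct $\mathbb{F}_\gp^\times$-weights (since $\gp\gg_N 1$ and the $\lambda_j$ are $O_N(1)$), hence linearly independent, hence a basis of the $d$-dimensional $\underline{W_i}$, and Nakayama upgrades this to a $\Z_\gp$-basis of $W_i[0]$. You instead \emph{construct} those weight-space generators explicitly by applying divided powers $\tfrac{1}{j!}(\ad\zpz^-)^j$ to the highest weight vector, i.e.\ you exhibit the admissible $\Z_\gp$-form of the irreducible $\SL_2$-module. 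That is a perfectly valid route — the $\gp$-integrality of the divided powers and the non-vanishing of the lowering operator mod $\gp$ both come down to $\lambda<\gp$, which is guaranteed by the hypothesis on the good prime — but the construction step is not strictly necessary: once one knows $W_i^{(\lambda_j)}[0]$ is rank one, taking a primitive vector there is enough. Your treatment of the ``in particular'' is also slightly more explicit than the paper's one-line citation of Lemma~\ref{lem: reduction completely red}, but the content (weights bounded by $O(N)$, so distinct weights give distinct $\mathbb{F}_\gp^\times$-characters for $\gp\gg_N 1$) is exactly the intended argument.
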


\begin{proof}
Let us first prove the first claim. For simplicity in the notation we will drop the index $i$ and denote $W_i$, $d_i$, etc.\ by $W$, $d$, etc. Let $\lambda_1,\ldots,\lambda_d$ denote the distinct weights of $W$. For every weight $\lambda_j$, let $\wpz_j\in W^{(\lambda_j)}[0]$ denote a primitive vector, i.e., $\Z_\gp.\wpz_j=W^{(\lambda_j)}[0]$. We claim $\{\wpz_1,\ldots,\wpz_d\}$ is the desired basis. To see this, note that for any $t\in\Z_\gp^\times$, we have 
    \[
    \underline{a}(\underline{t}) \underline{\wpz_j}=\underline{a(t)\wpz_j}=\underline{t^{\lambda_j}\wpz_j}=\underline{t}^{\lambda_j}\underline{\wpz_j}.
    \]
As $p$ is large,  $\{\underline{\wpz_1},\ldots,\underline{\wpz_d}\}$ are linearly independent.
    Since $\underline{W}$ is a $d$-dimensional irreducible representation of $\SL_2(\mathbb F_\gp)$, $\{\underline{\wpz_1},\ldots,\underline{\wpz_d}\}$ is a basis for $\underline{W}$. This completes the first assertion in the lemma.

    The second claim in the lemma follows from the above and Lemma~\ref{lem: reduction completely red}. 
\end{proof}

For $\vpz \in V[0]$ contained in a subrepresentation $V \subset \mathfrak{sl}_N(\mathbb{Q}_\gp)$ of the principal $\SL_2$ we write $\vpz^+$ (resp.~$\vpz^-$) for its projection onto the positive (resp.~negative) weight components.
Note that $\vpz^+,\vpz^-\in V[0]$ by Lemma~\ref{lem: weight spaces are integral}.

\subsection{Undistorted complements}\label{sec: undistorted}
We now record two corollaries of the above discussion which will be used in the sequel.  

\begin{lemma}\label{lem:nofactorscontainingHp}
Suppose that $\iota(\H)$ is not contained in the image of any proper $\Q$-factor of $\G$ under $\rho$.
Then the image of the closed immersion $\theta_{\gp}: \SL_2 \to \SL_N$ is not contained in the image of any proper factor of $\Gfrak_\gp$ under $\rho$.
Similarly, the image of the closed immersion $\underline{\theta_{\gp}}: (\SL_2)_{\overline{\mathbb{F}_\gp}} \to (\SL_N)_{\overline{\mathbb{F}_\gp}}$ is not contained in the image of any proper factor of $\underline{\Gfrak_\gp}$ under $\underline{\rho}$.
\end{lemma}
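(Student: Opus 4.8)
The plan is to translate both assertions into a single statement about how the principal $\SL_2$ acts on the Lie algebra, to prove that statement in characteristic zero, and then to transfer it to characteristic $\gp$ using \S\ref{sec: adjoint rep}. Write $G_1',\dots,G_r'$ for the $\Q_\gp$-almost simple factors of $\rho(\G)_{\Q_\gp}$ and $\gfrak_i'=\Lie(G_i')\subset\gfrak_\gp$; since $\theta_\gp(\SL_2)\subset\rho(\G)$ normalizes every $G_i'$, each $\gfrak_i'$ is a $\theta_\gp(\SL_2(\Q_\gp))$-invariant subspace of $\mathfrak{sl}_N(\Q_\gp)$, and likewise $\underline{\gfrak_i'}=\Lie(\underline{\Gfrak_{\gp,i}})$ is $\underline{\theta_\gp}(\SL_2)$-invariant. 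Because in an almost-direct product of almost simple groups the centralizer of one factor is the identity component of the product of the others, a connected subgroup is contained in a proper factor of $\rho(\Gfrak_\gp)$ exactly when it centralizes some $G_{i_0}'$, i.e.\ when $\gfrak_{i_0}'$ is a trivial representation of it; the same holds over $\overline{\mathbb F_\gp}$ for $\underline{\Gfrak_\gp}$. Hence it suffices to prove: \textup{(i)} $\gfrak_i'$ is a nontrivial $\theta_\gp(\SL_2(\Q_\gp))$-representation for every $i$; and \textup{(ii)} $\underline{\gfrak_i'}$ is a nontrivial $\underline{\theta_\gp}(\SL_2)$-representation for every $i$.

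For \textup{(i)}, suppose $\gfrak_{i_0}'$ were trivial, so $\theta_\gp(\SL_2)$ centralizes $G_{i_0}'$. Conjugating by $g_{\data,\gp}\in\rho(\G(\Q_\gp))$, which preserves each $G_i'$ (being normal in $\rho(\G(\Q_\gp))$), and using that $\Ad(g_{\data,\gp})\circ\jmap_\gp=\iota$ so that $g_{\data,\gp}\theta_\gp(\SL_2)g_{\data,\gp}^{-1}=\iota(\theta_{0,\gp}(\SL_2))$, we find that $\iota(\theta_{0,\gp}(\SL_2))$ centralizes $G_{i_0}'$; equivalently $\gfrak_{i_0}'$ is trivial as a representation of $\theta_{0,\gp}(\SL_2)$ via $\Ad\circ\iota$. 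Here one invokes the key fact that a representation of $\H$ is trivial whenever its restriction to the principal $\SL_2$ is: the kernel is a normal $\Q$-subgroup whose identity component over $\Q_\gp$ is an almost-product of $\Q_\gp$-almost simple factors of $\H$, and Lemma~\ref{lem:principal SL2}(2) (transported along $\Ad(g_{\data,\gp})$) says the principal $\SL_2$ projects nontrivially onto every such factor, forcing that kernel to be all of $\H$. Thus $\iota(\H)$ centralizes $G_{i_0}'$, and a Galois-descent step produces the contradiction: fixing an embedding $\overline\Q\hookrightarrow\overline{\Q_\gp}$, let $\mathsf G_0$ be a geometric factor of $\rho(\G)$ contained in $G_{i_0}'$ and let $\rho(\G_{\alpha_0})$ be the $\Q$-almost simple factor of $\G$ containing it; since $\iota(\H)$ is defined over $\Q$ it centralizes every $\Gal(\overline\Q/\Q)$-translate of $\mathsf G_0$, and these translates exhaust the geometric factors of $\rho(\G_{\alpha_0})$, so $\iota(\H)$ centralizes $\rho(\G_{\alpha_0})$ and therefore lies in the proper $\Q$-factor $\prod_{\alpha\neq\alpha_0}\rho(\G_\alpha)$, contrary to hypothesis.

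For \textup{(ii)}, I would feed \textup{(i)} into the complete-reducibility statements of \S\ref{sec: adjoint rep}. Applying Lemma~\ref{lem: reduction completely red} to the $\theta_\gp(\SL_2(\Q_\gp))$-invariant subspace $V=\gfrak_i'\subset\mathfrak{sl}_N(\Q_\gp)$ produces irreducible subspaces $W_1,\dots,W_d\subset\gfrak_i'$ with $\underline{\gfrak_i'}=\bigoplus_l\underline{W_l}$, each $\underline{W_l}$ an irreducible $\SL_2(\mathbb F_\gp)$-representation of the same dimension as $W_l$. By \textup{(i)} some $W_{l_0}$ has $\dim W_{l_0}\geq 2$, hence $\underline{W_{l_0}}$ is a nontrivial representation of $\underline{\theta_\gp}(\SL_2)$, so $\underline{\gfrak_i'}$ is nontrivial, which is \textup{(ii)}. (If one also wishes to exclude containment in a proper geometric factor of $\underline{\Gfrak_\gp}\otimes\overline{\mathbb F_\gp}$, one repeats the Galois-descent step of \textup{(i)}, now over $\Gal(\overline{\mathbb F_\gp}/\mathbb F_\gp)$, using that $\underline{\theta_\gp}$ is defined over $\mathbb F_\gp$.)

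I expect the main obstacle to be the interplay of the three notions of ``factor'': the hypothesis concerns $\Q$-almost simple factors of $\G$, whereas $\Gfrak_\gp$ decomposes into $\Q_\gp$- and $\overline{\mathbb F_\gp}$-factors, and bridging this gap needs both the conjugation by $g_{\data,\gp}$ (to replace $\theta_\gp$ by $\iota\circ\theta_{0,\gp}$ without disturbing the factor decompositions) and the Galois-descent arguments above. The remaining ingredients — the centralizer bookkeeping and the transfer to characteristic $\gp$ — are routine given \S\ref{sec: adjoint rep} and Lemma~\ref{lem:principal SL2}.
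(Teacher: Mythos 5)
Your proof is correct and rests on the same ingredients as the paper's: Lemma~\ref{lem:principal SL2}(2), the fact that the simply connected group $\H$ is generated by conjugates of the principal $\SL_2$, a Galois-descent argument, and the reduction-mod-$\gp$ lemmas of \S\ref{sec: adjoint rep}. The differences are presentational rather than substantive: the paper argues directly by showing that the minimal ideal of $\gfrak_\gp$ containing $\Lie(H_\gp)$ is $\Q$-defined and then invokes the hypothesis, whereas you run the contrapositive (assume $\theta_\gp(\SL_2)$ centralizes a $\Q_\gp$-factor, lift to $\iota(\H)$, and use Galois conjugation of a geometric factor to exhibit a proper $\Q$-factor containing $\iota(\H)$); and for the passage to characteristic $\gp$ you cite Lemma~\ref{lem: reduction completely red} where the paper cites the neighbouring Lemma~\ref{lem: reduction highest weight}. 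One small imprecision: the kernel of the representation $\H_{\Q_\gp}\to\GL(\gfrak_{i_0}')$ is a normal $\Q_\gp$-subgroup, not a $\Q$-subgroup as you write; this does not affect the argument, since you only need its identity component to be an almost-product of $\Q_\gp$-almost simple factors of $\H_{\Q_\gp}$.
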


\begin{proof}
Recall that $G_\gp=\rho(\Gfrak(\Q_\gp))$ and $H_\gp=\jmap_\gp(\Hfrak_\gp(\Q_\gp))$. 
We first show that if $\sfrak \lhd \gfrak_p$ is the minimal ideal (with respect to inclusion) containing $\Lie(H_\gp)$, then 
$\sfrak$ is defined over $\Q$.
Indeed, since $\sfrak$ is an ideal, it is also the minimal ideal containing $\Lie(\iota(\Hbf(\Q_\gp)))$, which is defined over $\Q$.
Thus for any $\sigma \in \mathrm{Aut}(\overline{\Q_\gp}/\Q)$ the Lie algebra $\sfrak \cap \sfrak^\sigma$ also contains $\Lie(\iota(\Hbf(\Q_\gp)))$; in view of the minimality of $\sfrak$ hence $\sfrak \cap \sfrak^\sigma=\sfrak$. This implies that $\sfrak$ is necessarily defined over $\Q$ as we claimed. 
Combined with our assumption on $\iota(\H)$, thus, $H_\gp$ has non-trivial projection to each $\Q_\gp$-almost simple factor of $G_\gp$. 

Recall now from part~(2) of Lemma~\ref{lem:principal SL2} that the projection of $\theta_{\gp}(\SL_2(\Q_p))$ to each $\Q_\gp$-almost simple factor of $H_\gp$ is non-trivial. 
Together with the fact that $\H$ is simply connected, this implies that $H_\gp$ is generated by $H_\gp$-conjugates of $\theta_{\gp}(\SL_2(\Q_p))$. Altogether, we conclude that the projection of $\theta_{\gp}(\SL_2(\Q_p))$ to each 
$\Q_\gp$-almost simple factor of $G_{\gp,i}$ of $G_{\gp}$ is non-trivial; establishing the first claim.  

To see the second claim, recall that each $\Q_\gp$-almost simple factor $G_{\gp,i}$ is the image of the generic fiber of 
$\Gfrak_{\gp,i}$ under $\rho$ and $\underline{\Gfrak_{\gp,i}}$ is an $\mathbb F_\gp$-almost simple factor of $\underline{\Gfrak_\gp}$. 
Moreover, all 
$\mathbb F_\gp$-almost simple factors of $\underline{\Gfrak_\gp}$ arise this way, see the discussion preceding Lemma~\ref{lem:principal SL2}. 
Now by the first claim, the representation of $\theta_{\gp}(\SL_2(\Q_p))$ on $\Lie(G_{\gp,i})$ 
has a nontrivial highest weight. Thus by Lemma~\ref{lem: reduction highest weight} the representation of $\SL_2(\mathbb F_\gp)$ on 
\[
\underline{\Lie(G_{\gp,i})}=\underline{\Gfrak_{\gp,i}}(\mathbb F_\gp)
\] 
is non-trivial. This implies the second claim and finishes the proof of the lemma.  
\end{proof}

For any $\Q_\gp$-subspace $V \subset \gfrak_\gp$ and $k \geq 0$ we write $V[k] = V \cap \gfrak_\gp[k]$.
A complement $W$ to $V$ is \emph{undistorted} if $W[0] \oplus V[0] = \gfrak_\gp[0]$; note that if this holds, then $W[k] \oplus V[k] = \gfrak_\gp[k]$ for all $k \geq 0$. Arguing as in the proof of Lemma~\ref{lem: reduction completely red}, we have the following. 

\begin{lemma}[Existence of undistorted complements]\label{lem: undistorted complement}
Let $V\subset\gfrak_\gp$ be a subspace invariant under the principal $\SL_2$.
  Then there exists an undistorted complement $W$ to $V$ which is also invariant under the principal $\SL_2$. 
\end{lemma}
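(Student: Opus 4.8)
The plan is to run the highest-weight analysis of Lemma~\ref{lem: reduction highest weight} and Lemma~\ref{lem: reduction completely red} not only for $V$ but simultaneously for the ambient adjoint representation $\gfrak_\gp$ (which is invariant under the principal $\SL_2$), choosing integral bases so that the one for $V$ is a sub-basis of the one for $\gfrak_\gp$. First I would observe, for each weight $\lambda$, that the $\Z_\gp$-lattice $V^{\mathrm{hw},(\lambda)}[0] = V^{\mathrm{hw},(\lambda)} \cap \gfrak_\gp[0]$ is saturated in $\gfrak_\gp^{\mathrm{hw},(\lambda)}[0]$ (the quotient is torsion-free since $V^{\mathrm{hw},(\lambda)}$ is a $\Q_\gp$-subspace), hence a $\Z_\gp$-module direct summand. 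Using part (1) of Lemma~\ref{lem: reduction highest weight}, fix a $\Z_\gp$-basis $\vpz_{\lambda,1},\ldots,\vpz_{\lambda,d_\lambda}$ of $V^{\mathrm{hw},(\lambda)}[0]$, complete it by a $\Z_\gp$-basis $\wpz_{\lambda,1},\ldots,\wpz_{\lambda,e_\lambda}$ of a chosen $\Z_\gp$-complement, and take the union over all $\lambda$; invoking part (1) of Lemma~\ref{lem: reduction highest weight} for $\gfrak_\gp$ this yields a $\Z_\gp$-basis of $\gfrak_\gp^{\mathrm{hw}}[0]$ consisting of pure-weight highest weight vectors that contains the chosen basis of $V^{\mathrm{hw}}[0]$.

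Next, let $V_{\lambda,i}$ and $W_{\lambda,j}$ be the $\Q_\gp$-linear spans of $\theta_\gp(\SL_2(\Q_\gp)).\vpz_{\lambda,i}$ and $\theta_\gp(\SL_2(\Q_\gp)).\wpz_{\lambda,j}$, respectively, and put $W = \bigoplus_{\lambda,j} W_{\lambda,j}$, a subspace invariant under the principal $\SL_2$. As in the proof of Lemma~\ref{lem: reduction completely red}, representation theory of $\SL_2$ over $\Q_\gp$ gives $V = \bigoplus_{\lambda,i} V_{\lambda,i}$ and $\gfrak_\gp = \bigoplus_{\lambda,i} V_{\lambda,i} \oplus \bigoplus_{\lambda,j} W_{\lambda,j}$, so in particular $\gfrak_\gp = V \oplus W$. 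It then remains to upgrade this to the integral identity $\gfrak_\gp[0] = V[0]\oplus W[0]$, which is exactly undistortedness (the statement at all levels $k\geq 0$ then follows from the remark recorded just before the lemma).

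For the integral identity I would repeat the argument of Lemma~\ref{lem: reduction completely red}: let $\mathcal{V}_{\lambda,i}$ and $\mathcal{W}_{\lambda,j}$ be the $\Z_\gp$-spans of $\theta_\gp(\SL_2(\Z_\gp)).\vpz_{\lambda,i}$ and $\theta_\gp(\SL_2(\Z_\gp)).\wpz_{\lambda,j}$, so that $\mathcal{V}_{\lambda,i}\subseteq V[0]$ and $\mathcal{W}_{\lambda,j}\subseteq W[0]$. Since each $\vpz_{\lambda,i}$, $\wpz_{\lambda,j}$ lies in a pure-weight $\Z_\gp$-basis of $\gfrak_\gp^{\mathrm{hw},(\lambda)}[0]$, part (2) of Lemma~\ref{lem: reduction highest weight} together with part (4) of Lemma~\ref{lem:principal SL2} shows that $\underline{\mathcal{V}_{\lambda,i}}$ and $\underline{\mathcal{W}_{\lambda,j}}$ are irreducible $\SL_2(\mathbb{F}_\gp)$-subrepresentations of $\underline{\gfrak_\gp}$ of dimension $\lambda+1$. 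Setting $\mathcal{M} = \sum_{\lambda,i}\mathcal{V}_{\lambda,i} + \sum_{\lambda,j}\mathcal{W}_{\lambda,j}\subseteq \gfrak_\gp[0]$, the dimension count from \eqref{eq: dimension count} forces $\underline{\mathcal{M}} = \underline{\gfrak_\gp}$, and hence $\mathcal{M} = \gfrak_\gp[0]$ by Nakayama's lemma; since $\mathcal{M}\subseteq V[0] + W[0]\subseteq \gfrak_\gp[0]$ and $V\cap W = 0$ we conclude $\gfrak_\gp[0] = V[0]\oplus W[0]$. The only step that is genuinely new relative to Lemma~\ref{lem: reduction completely red}, and therefore the one deserving care, is the extension in the first paragraph — that the pure-weight integral highest-weight basis of $V$ can be completed to such a basis of $\gfrak_\gp$ compatibly with reduction mod $\gp$; everything afterwards is a rerun of the earlier argument.
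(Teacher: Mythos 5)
Your proposal is correct and takes essentially the same approach as the paper: the paper likewise extends a $\Z_\gp$-basis of $V^{\rm hw}[0]$ to a $\Z_\gp$-basis of $\gfrak_\gp^{\rm hw}[0]$ (using that $\Z_\gp$ is a PID), forms the $W_i$ as spans of $\theta_\gp(\SL_2(\Q_\gp))$-orbits of the basis vectors, and invokes the proof of Lemma~\ref{lem: reduction completely red} to obtain the integral decomposition $\gfrak_\gp[0]=\bigoplus_i W_i[0]$. Your rendering is slightly more explicit about doing the basis extension weight by weight (so the extending vectors remain of pure weight, which is what makes the orbit spans irreducible), a point the paper leaves implicit behind Lemma~\ref{lem: reduction highest weight}.
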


\begin{proof}
    First note that since $\Z_\gp$ is a PID, there exists a $\Z_\gp$-basis 
    $(\vpz_i)_{1 \leq i \leq m}$ of $\gfrak_\gp^{\rm hw}[0]$ so that $(\vpz_i)_{1 \leq i \leq d}$ is a $\Z_\gp$-basis for $V^{\rm hw}[0]$. 
    In particular, $(\underline{\vpz_i})_{1\leq i\leq m}$ is an $\mathbb F_\gp$-basis for $\underline{\gfrak_\gp}^{\rm hw}$ and 
    $(\underline{\vpz_i})_{1\leq i\leq d}$ is an $\mathbb F_\gp$-basis for $\underline{V}^{\rm ht}$ by Lemma~\ref{lem: reduction highest weight}.
    
    For every $1\leq i\leq m$, let $W_i$ be the $\Q_\gp$-span of $\theta_\gp(\SL_2(\Q_\gp)).\vpz_i$. 
    By the proof of Lemma~\ref{lem: reduction completely red}, applied with $V$ and $\gfrak_\gp$ and the above basis, we have 
    \[
    \gfrak_\gp[0]= \bigoplus_{i=1}^m W_i[0]\quad\text{and}\quad V[0]=\bigoplus_{i=1}^d W_i[0] 
    \]
    The claim in the lemma thus holds with $W=\oplus_{i=d+1}^m W_i$.   
\end{proof}

\section{Outline of the proof}\label{sec:outlineproof}

\subsection{Standing assumptions}\label{sec:standingassumptions}
For the outline of the proof of Theorems~\ref{thm:main equi}, \ref{thm:asinEMV} in this section, and until the proof of the main theorems in \S\ref{sec:proofmainthms} we make the following
\textsc{Standing Assumptions:}
\begin{itemize}
    \item $\G$ is a $\Q$-anisotropic simply connected semisimple group, $\rho: \G \to \SL_N$ is a homomorphism defined over $\Q$ with central kernel, and
    \begin{align*}
    X = [\rho(\G(\A))].
    \end{align*}
    The Lie algebra $\gfrak$ and its subalgebras are identified with their images under~$\rho$.
    \item $\data = (\H,\iota,g_\data)$ is semisimple simply connected data over $\Q$ consistent with $(\G,\rho)$ i.e.~$\iota: \H \to \rho(\G)$ is a homomorphism defined over $\Q$ with central kernel and $g_\data \in \rho(\G(\A))$.
    \item $\iota(\H)$ is not contained in any proper $\Q$-factor of $\rho(\G)$.
    \item $\mu = \mu_{\data}$ is the invariant probability measure on
    \begin{align*}
    Y_\data = [\iota(\H(\A))g_\data].
    \end{align*}
    \item $\gp$ is a good prime for $X$ and $Y_\data$ as in Proposition~\ref{prop:splitting place} (assumed to be $\gg_N 1$).
    \item $\theta_\gp(\SL_2(\Q_\gp))$ is a fixed principal $\SL_2$ (cf.~Lemma~\ref{lem:principal SL2}) contained in $H_\gp = g_{\data,\gp}^{-1}\iota(\H(\Q_\gp))g_{\data,\gp}$. We denote by
 $\{u(r)\},\{a(t)\}$ the unipotent and diagonal subgroups respectively of the principal $\SL_2$ (cf.\ \eqref{eq:ut in SL2} and~\eqref{eq:at in SL2}).
\end{itemize}
Weights and highest weights are understood with respect to these choices (see \S\ref{sec: adjoint rep}).

\subsection{Strategy}\label{sec:strategy}
The overarching strategy is roughly the same as in previous works --- see e.g.~\cite{EMV,EMMV,AW-realsemisimple} --- accumulating `almost invariance'. 
To illustrate this, we will first phrase the approach in vague terms emphasizing the corresponding imprecisions\footnote{In particular we will ignore less crucial multiplicative constants in the informal discussion. However, Proposition \ref{prop:addinv-intro} and Theorem \ref{thm:effgen-intro} are precise as stated.} by placing the terms that need to be made precise in quotes.

The goal is to show `almost invariance' under more and more highest weight directions in an (undistorted) invariant complement $\rfrak$ of $\hfrak_\gp = \Lie(H_\gp)$. In the course of the argument it would transpire (using \ref{item:mainclaims2} below) that if $\mu$ is `almost invariant' under all highest weight directions then it is `very close' to the uniform measure on $X$.

By induction, suppose we are given $\vpz_1,\ldots,\vpz_n \in \rfrak[0]$ highest weight vectors (of non-trivial weight) which are linearly independent modulo $\gp$. 
Assume further that the measure $\mu$ is `almost invariant' under the one-parameter unipotent subgroups $U_i = \{\exp(t\vpz_i)\colon t \in \Q_\gp\}$ and that $U_1,\ldots,U_n$ together with $\theta_\gp(\SL_2(\Q_\gp))$ `effectively generate' a ball that is not `too small' in a group $M\lneq  \rho(\G(\Q_\gp))=: G_p$.
We do not assume that $M$ contains the group $H_\gp$.

Assuming that $Y_\data$ is not contained in an orbit of `very small' complexity, one wishes to say the following:
\begin{enumerate}[{\color{red}(A)}]
    \item\label{item:mainclaims1} \emph{Additional almost invariance:}
    There is an additional direction $\vpz_{n+1}$ in $\Lie(G_p)$ of highest (non-zero) weight which is `transversal' to $\Lie(M)$, so that $\mu$ is `almost invariant' under the one-parameter unipotent subgroup $\{\exp(t\vpz_{n+1}): t \in \Q_\gp\}$.
    \item\label{item:mainclaims2} \emph{Effective generation:} Assuming $\vpz_{n+1}$ as in \ref{item:mainclaims1} exists,  
    there is a `perturbation' $\vpz_1',\ldots,\vpz_{n+1}'$
    so that $\mu$ is `almost invariant' under the corresponding one-parameter unipotent subgroups $U_1',\ldots,U_{n+1}'$ and in addition
    $U_1',\ldots,U_{n+1}'$ together with $\theta_\gp(\SL_2(\Q_\gp))$ `effectively generate' a group $M'$.
\end{enumerate}
We point out that we do not prove that $M'$ has larger dimension than $M$, but certainly $\dim(M') \geq (n+3)+1$.
Iterating the above one obtains `almost invariance' under $G_p$, and in particular the horospherical subgroups of $G_p$ corresponding to $a(t)$; from here, one can conclude using spectral gap on the ambient space.
We turn to making the above steps precise.

\subsection{Some effective notions}\label{sec:outline-effnotions}
Given a $C^1$-function $f$ on $X$ we write $\lev(f)$ for the level of $f$ i.e.~the least integer $L\geq 1$ such that $f$ is invariant under $\prod_\gep G_\gep[\ord_\gep(L)]$.
Moreover, as in the introduction we fix an inner product on $\mathfrak{gl}_N(\R)$
and define the $C^1$-norm $\norm{f}_{C^1(X)}$ as the maximum of the sup norms of the function and its partial derivatives in directions corresponding to an orthonormal basis of $\gfrak_{\infty}$.
We use the following notion of almost invariance:

\begin{definition}\label{def:almost invariance}
Let $\varepsilon>0$.
We say that $\mu$ is $\varepsilon$-\emph{almost invariant} under $g \in G_p$ if for all $C^1$-functions $f$ on $X$
\begin{align*}
\Big| \int f(\cdot g)  \de \mu - \int f \de \mu \Big| \leq \varepsilon\, \lev(f) \norm{f}_{C^1(X)}.
\end{align*}
Moreover, $\mu$ is $\varepsilon$-almost invariant under a subgroup of $G_p$ if it is $\varepsilon$-almost invariant under every element of that subgroup.
Lastly, $\mu$ is $\varepsilon$-almost invariant under $\vpz \in \gfrak_{\gp}[0]$ if it is $\varepsilon$-almost invariant under $\{\exp(t \vpz): t\in \gp \Z_\gp\}$.
\end{definition}

Notice that the above definition differs from e.g.~the notion of almost invariance in \cite{EMMV} where $L^2$-Sobolev norms were used.
Definition~\ref{def:almost invariance} directly implies that 
\begin{align}\label{eq:trivialalminv}
\mu \text{ is } 2\gp^{-k}\text{-almost invariant under all }g \in G_\gp[k].
\end{align}
Other elementary properties of the definition will be discussed in \S\ref{sec:almost invariance} below.

We will use the following notion of effective generation.
Let $M< G_p$ be a closed subgroup with Lie algebra $\mfrak$.

\begin{definition}\label{def:effective generation}
We say that $M$ is $k$-\emph{generated} by nilpotents $\vpz_1,\ldots,\vpz_{\dim(\mfrak)}\in \mfrak[0]$ if the following holds:
There exists $t\in \Z_{\gp}^{\dim(\mfrak)}$ so that for the map
$\varphi\colon \Q_{\gp}^{\dim(\mfrak)}\to \SL_N(\Q_{\gp})$ defined by
\begin{align}\label{eq:def phi}
\varphi(t_1,\ldots,t_{\dim(\mfrak)})=\exp(t_1\vpz_1) \cdots \exp(t_{\dim(\mfrak)}\vpz_{\dim(\mfrak)})
\end{align}
the derivative ${\rm D}_t\varphi$ has a $\dim(\mfrak)$-minor of absolute value at least ${\gp}^{-k}$.
\end{definition}

In this case (cf.~Lemma~\ref{lem:implicitfctthm}), a quantitative open mapping theorem implies that 
\begin{align*}
\varphi(\Z_\gp^{\dim(\mfrak)})^{-1}\varphi(\Z_\gp^{\dim(\mfrak)})
\end{align*}
contains $M[3k]$ --- the ball of radius $\gp^{-3k}$ in $M$ around the identity.
In particular, this notion will allow us to pass from almost invariance under nilpotent elements to almost invariance under a `small' subgroup.

Since the measure $\mu$ is trivially $2\gp^{-k}$-almost invariant under $M[k]$ by \eqref{eq:trivialalminv}, we need to ensure that 
there is no competition between the quality of almost invariance under the given nilpotents and the quality of effective generation.

\subsection{Additional almost invariance}

We prove in \S\ref{sec:prepclosing}--\S\ref{sec:extrainv} the following precise version of \ref{item:mainclaims1} above.

\begin{proposition}\label{prop:addinv-intro}
There exists $\consta\label{a:addinv-intro}>1$ depending only on $N$ with the following property.

Let $\Mbf < \rho(\G)$ be a proper $\Q_\gp$-subgroup containing $\theta_\gp(\SL_2)$.
Let $k \in \N$ with
\begin{align*}
\gp^{\ref{a:addinv-intro}} \cpl(X)^{\ref{a:addinv-intro}} \leq \gp^k \leq 
\mcpl(Y_\data)^{1/\ref{a:addinv-intro}}.
\end{align*}
Suppose that $M= \Mbf(\Q_\gp) < G_p$ is $k$-generated by some nilpotents of pure non-zero weight and that $\mu$ is $\gp^{-\ref{a:addinv-intro}k}$-almost invariant under $M[3k]$.

Then there exists a highest weight vector $\vpz \in \gfrak_\gp[0]$ (of non-zero weight) with $\vpz \mod p \not\in \mfrak[0] \mod p$ so that $\mu$ is $\gp^{-k/\ref{a:addinv-intro}}$-almost invariant under $\vpz$.
\end{proposition}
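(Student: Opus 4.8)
The plan is to run the ``accumulation of almost invariance'' scheme of \cite{EMV,EMMV,AW-realsemisimple}, adapted to the adelic and $p$-adic setting: manufacture one small \emph{transversal} return for $\mu$, use the principal $\SL_2$ to renormalise it so that its leading part points along a highest weight direction, and promote this to almost invariance under the corresponding one-parameter subgroup. Begin by fixing structure. Put $\mfrak = \Lie(\Mbf)$ and, by Lemma~\ref{lem: undistorted complement}, an undistorted $\theta_\gp(\SL_2)$-invariant complement $\rfrak$ of $\mfrak$ in $\gfrak_\gp$, so that $\rfrak[0]\oplus\mfrak[0] = \gfrak_\gp[0]$ and hence $\underline{\rfrak}\cap\underline{\mfrak} = 0$. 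Since $M$ is $k$-generated by nilpotents of non-zero weight, $\mfrak$ is spanned by weight vectors of non-zero weight and therefore $\mfrak\subseteq\gfrak_\gp^{\mathrm{nt}}$; as $\Mbf$ is proper while, by Lemma~\ref{lem:nofactorscontainingHp}, the principal $\SL_2$ meets every $\Q_\gp$-almost simple factor of $\rho(\G)$ nontrivially (so $\gfrak_\gp^{\mathrm{nt}}$ is a subalgebra only when $\gfrak_\gp^{\mathrm{triv}}=0$), one gets $\mfrak\subsetneq\gfrak_\gp^{\mathrm{nt}}$ and thus $\rfrak^{\mathrm{nt}} := \rfrak\cap\gfrak_\gp^{\mathrm{nt}}\neq 0$. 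Finally fix $D=D(N)$ bounding the weights in $\Ad\circ\theta_\gp$ on $\gfrak_\gp$; every loss below is polynomial in $D$, which is what produces the final loss $k\mapsto k/\ref{a:addinv-intro}$.

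The heart of the matter, carried out in \S\ref{sec:prepclosing}--\S\ref{sec:extrainv}, is an effective closing lemma producing the transversal return. One exploits recurrence of the $a(\cdot)$-flow on $(Y_\data,\mu)$ --- here $\mu$ is genuinely $a(\cdot)$-invariant because $\theta_\gp(\SL_2(\Q_\gp))\subseteq H_\gp$ stabilises $\mu$ --- made quantitative via spectral gap on $X$ (property $(\tau)$), to produce a scale $j$ with $k\ll_D j\ll_D k$ and a displacement $\exp(\wpz)$, $\wpz\in\gfrak_\gp[j]$, under which $\mu$ is $\gp^{-cj}$-almost invariant and whose $\rfrak^{\mathrm{nt}}$-component is of exact scale $\gp^{-j}$. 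The hypotheses are used precisely here: the bound $j\ll_D k$ follows from the recurrence time being controlled by $\vol(Y_\data)\ll\cpl(X)^\star\cpl(Y_\data)^\star$ via Proposition~\ref{prop: vol complexity'} together with the assumed $\gp^k\geq\gp^{\ref{a:addinv-intro}}\cpl(X)^{\ref{a:addinv-intro}}$; and the \emph{transversality} uses $\gp^k\leq\mcpl(Y_\data)^{1/\ref{a:addinv-intro}}$, since if all nearby returns were essentially inside $\Mbf$ a linearisation argument would trap $Y_\data$ in the orbit $[\Lbf(\A)g_\data]$ of some semisimple $\Q$-group $\iota(\H)\subseteq\Lbf\subsetneq\rho(\G)$ of complexity $\ll\gp^{\ref{a:addinv-intro}k}\leq\mcpl(Y_\data)$, contradicting the definition of $\mcpl$.

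Writing $\wpz = \wpz_\mfrak+\wpz^{\mathrm{nt}}$ with $\wpz_\mfrak\in\mfrak[j]$ and $\wpz^{\mathrm{nt}}\in\rfrak^{\mathrm{nt}}[j]$ of exact scale $\gp^{-j}$, one discards the $\mfrak$-part: $\exp(\wpz_\mfrak)\in M[j]$, under which $\mu$ is $2\gp^{-j}$-almost invariant by \eqref{eq:trivialalminv} (the hypothesised $\gp^{-\ref{a:addinv-intro}k}$-almost invariance under the larger ball $M[3k]$, together with the $k$-generation of $M$ via the open mapping theorem of Lemma~\ref{lem:implicitfctthm}, is what is needed when this accounting must instead be carried out inside $M$, and is the place where one must prevent the quality $\gp^{-\ref{a:addinv-intro}k}$ from competing with the effective generation, cf.\ the remark after Definition~\ref{def:effective generation}). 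This leaves almost invariance under $\exp(\wpz^{\mathrm{nt}})$, a purely transversal element. Now one runs the polynomial drift: conjugating by $u(r)$ with $r\in\Z_\gp$ --- integral, hence costing nothing in level --- and then by a bounded power of $a$ to renormalise, the $\SL_2$-representation theory of $\rfrak^{\mathrm{nt}}$ (Lemmas~\ref{lem: reduction completely red}, \ref{lem: weight spaces are integral}) extracts from $\wpz^{\mathrm{nt}}$ a vector whose reduction mod $\gp$ is a nonzero highest weight vector $\underline{\vpz}$; the corresponding primitive $\vpz\in\rfrak^{\mathrm{nt}}[0]\subseteq\gfrak_\gp[0]$ has non-zero weight, is nilpotent (any non-zero weight vector for $\{a(t)\}$ is, being strictly triangular in a suitable basis), and satisfies $\vpz\bmod\gp\notin\mfrak[0]\bmod\gp$ since $\underline{\rfrak}\cap\underline{\mfrak}=0$. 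A further averaging/spreading over the (positive-measure family of) returning points then upgrades this to $\gp^{-j/D}$-almost invariance of $\mu$ under the whole subgroup $\{\exp(s\vpz):s\in\gp\Z_\gp\}$, and choosing $\ref{a:addinv-intro}=\ref{a:addinv-intro}(N)$ large enough that $j/D\geq k/\ref{a:addinv-intro}$ completes the proof. The main obstacle is this last sequence of quantitative reductions: one must keep all of the losses --- level distortion, the commutator discrepancy between $\exp(\wpz_\mfrak)\exp(\wpz^{\mathrm{nt}})$ and $\exp(\wpz)$, the factor from iterating to reach scale $\gp\Z_\gp$, and the competition flagged after Definition~\ref{def:effective generation} --- simultaneously under control, folding them into a single polynomial-in-$D$ loss; this, rather than any one conceptual step, is what occupies \S\ref{sec:prepclosing}--\S\ref{sec:extrainv}.
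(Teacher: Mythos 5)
Your outline correctly identifies the overall circle of ideas — closing lemma, effective ergodic theorem via property $(\tau)$, $\SL_2$-representation theory to renormalize a small transversal displacement to a highest-weight direction, and Proposition~\ref{prop:from1elto1param} to promote invariance under one element to invariance under a one-parameter group — and these are indeed the tools the paper assembles in \S\ref{sec:prepclosing}--\S\ref{sec:extrainv}. But there is a genuine gap in the step you summarize as ``the effective closing lemma produces a displacement $\exp(\wpz)$ whose $\rfrak^{\mathrm{nt}}$-component is of exact scale $\gp^{-j}$'' and then ``discard the $\mfrak$-part.''

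The closing lemma (Proposition~\ref{prop:closing-lemma}) only yields, in its contrapositive form, a returning displacement $g$ that does \emph{not} $\gp^{-n}$-normalize $\mfrak$ in the projective sense. This says nothing directly about the size of any linear component of $\log g$; in particular $g$ could have a large component in the centralizer $\cfrak$ of $\theta_\gp(\SL_2(\Q_\gp))$, and such a component survives your ``discard the $\mfrak$-part'' accounting (since $\cfrak\cap\rfrak_\mfrak\neq 0$ in general) yet is completely useless for the subsequent $a^k$-drift: it neither grows under conjugation nor contributes a highest-weight vector. Your decomposition $\wpz=\wpz_\mfrak+\wpz^{\mathrm{nt}}$ in fact misses the third piece $\wpz\in\cfrak\cap\rfrak_\mfrak$ living in trivial $\SL_2$-subrepresentations of the complement, and it is exactly the need to rule out that the displacement is (after moving by $M[3k]$) concentrated in $C\cdot M$ that the paper's \emph{alignment lemma} (Proposition~\ref{prop:alignment}, resting on Lemma~\ref{lemma:alignmenttechnical}) addresses. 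There, the hypothesized $\gp^{-\ref{a:addinv-intro}k}$-almost invariance under $M[3k]$ together with $k$-generation are used not as a trivial bookkeeping device but to pre- and post-compose $g$ with elements $m_1,m_2\in M[3k]$ --- chosen from positive-measure subsets of typical points provided by the effective ergodic theorem --- so that the new displacement $m_1gm_2=\exp(\wpz)$ lies entirely in $\rfrak_\mfrak[3k]$ and, crucially, satisfies $\norm{\wpz^{\mathrm{nt}}}\geq\gp^{-4k_2}$. That lower bound is extracted via the technical Lemma~\ref{lemma:alignmenttechnical}, whose contrapositive says that if $\mathcal{O}g$ avoids $C[3k_1]M[3k_1]G_\gp[4k_2]$ for a positive-measure $\mathcal{O}\subset M[3k]$, then $g$ must almost-normalize $\mfrak$ --- a quantitative analogue of the observation that $\{g:\Mbf g\subset\overline{\Cbf\Mbf}^z\}$ has the same identity component as the normalizer of $\Mbf$. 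Without this step, there is no quantitative transversality to feed into the $\SL_2$-drift, and the argument stalls. Secondary inaccuracies (the appeal to ``a linearisation argument trapping $Y_\data$ in $[\Lbf(\A)g_\data]$'' rather than the actual route through Diophantine points, Corollary~\ref{cor:FubiniDio}, and the conclusion of the closing lemma that $\mfrak$ would have to be a semisimple ideal of $\gfrak_\gp$, contradicting Lemma~\ref{lem:nofactorscontainingHp}) are symptoms of the same gap.
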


Let us indicate some ingredients of the proof of Proposition~\ref{prop:addinv-intro}.
The basic idea, that can already be traced to Ratner's proof of her measure classification theorem~\cite{Ratner-Acta,Ratner-measure}, is that nearby Birkhoff generic points give rise to additional invariance (for the current context, see \cite{Einsiedler-RatnerSL2} or \cite[\S2]{EMV}).
We wish to find two points $x_1,x_2=x_1g\in X$ that are generic in a suitable effective sense so that the small displacement $g$ does not `almost normalize' $M$. If this does not happen, we employ an effective closing lemma in Proposition~\ref{prop:closing-lemma} to get a contradiction to our assumption that $\mcpl(Y_{\data})$ is large (compared to $p^k$). The closing lemma we use here is a variant of the closing lemma that is given in joint work of Margulis, Shah and three of the authors of this paper (E.L., A.M., and A.W.)~\cite{ClosingLemma}. The closing lemma in~\cite{ClosingLemma} is more general (though written only for quotients of real groups), but does not take into account the dependence on the ambient group.

When the displacement $g$ does not `almost normalize' $M$, one may `realign' the points $x_1,x_2$ so that $g$ is of the form $\exp(\vpz)$ for some $\vpz \in \gfrak_\gp[0]$ with $\vpz$ `transversal' to $\mfrak$ and with $\Ad(a(t))\vpz$ diverging `sufficiently quickly' as $|t| \to \infty$ (i.e.~$\Ad(a(t))\vpz$ reaches size $1$ for $|t| \geq \norm{\vpz}^{-\star}$).
That $\vpz$ was arranged to be `transversal' to $\mfrak$ ensures there is no contribution from $M$ to this displacement.
Making sure that $\Ad(a(t))\vpz$ diverges `sufficiently quickly' is more involved; letting~$\Cbf$ be the centralizer group of the principal $\SL_2$ the argument employs an effective variant of the group $\big\{g \in \rho(\G): \Mbf g \subset \overline{\Cbf \Mbf}^z \big\}$. The identity component of this group agrees with the identity component of the normalizer group of $\Mbf$, and if $\Ad(a(t))\vpz$ cannot be made to diverge `sufficiently quickly' the effective analogue would contradict the fact that $g$ does not `almost normalize' $M$.

The realigned points are then used
with a version of the averaging operator
\begin{align}\label{eq:avoperator}
f \mapsto \int_{\Z_\gp} f(\cdot\, u(s)a(t)^{-1}) \de s
\end{align}
to prove additional almost invariance. 
A large set of generic points for this operator is provided by an effective ergodic theorem based on uniformity of the spectral gap; see \cite[\S7.6]{EMMV} or \S\ref{sec:effergthm} below.

\subsection{Effective generation}

Suppose we are given a list of nilpotent elements $\extranil=(\vpz_1,\ldots,\vpz_n)$
and let $\Mbf$ be the Zariski closure of the group generated by $\theta_{\gp}(\SL_2(\Q_\gp))$ together with the unipotent one-parameter subgroups defined by the elements of $\extranil$.
We wish to show that $\Mbf(\Q_{\gp})$ is `$k$-generated' by one-parameter subgroups corresponding to $\extranil$ and the usual one-parameter unipotent subgroups of $\theta_{\gp}(\SL_2(\Q_\gp))$.
If this does not work we instead would like to perturb the nilpotent directions in $\extranil$ `slightly' to decrease the dimension of the group $\Mbf$, which should make it easier to be `$k$-generated'. Iterating this scheme we obtain in \S \ref{sec:effectivegeneration} the following precise version of \ref{item:mainclaims2}.

\begin{theorem}\label{thm:effgen-intro}
 There exists $\constk\label{k:effgen-intro}>0$ 
depending only on $N$ with the following properties.
Let $\extranil=(\vpz_1,\ldots,\vpz_n) \in \gfrak_{\gp}[0]^n$
be a list of nilpotent elements with $n\leq \dim(\gfrak)$ where each $\vpz_j$ for $j=1,\ldots,n$ is of pure weight.
Let $\delta \in (0,1/2)$ and
\begin{align}\label{eq:lowerboundeffgen}
k>(\ref{k:effgen-intro}\delta)^{-2\dim(\gfrak)} (\log_{\gp}(\cpl(X))+1).
\end{align}
Then there exists $\alpha \in (\ref{k:effgen-intro}\delta^{\dim(\gfrak)},1]$ and a new list of nilpotent elements $\widetilde{\extranil}=(\tilde{\vpz}_1,\ldots,\tilde{\vpz}_n)\in \gfrak_{\gp}[0]^n$ 
with the following properties:
\begin{itemize}
\item For $j=1,\ldots,n$ we have $\norm{\tilde{\vpz}_j-\vpz_j} < {\gp}^{-\alpha k+\dim(\gfrak)}$.
\item For each $j=1,\ldots,n$ the nilpotent $\tilde{\vpz}_j$ is of the same pure weight as $\vpz_j$.
If $\vpz_j$ is of highest weight, then so is $\tilde{\vpz}_j$.
\item Let $\widetilde{\Mbf}<\rho(\G)$ be the Zariski closure of the group generated by the one-parameter groups $\{\exp(t \tilde{\vpz}_j)\colon t\in \Q_{\gp}\}$ for $j=1,\ldots,n$ and $\theta_{\gp}(\SL_2)$. 
Then $\widetilde{M} =\widetilde{\Mbf}(\Q_{\gp})$ is $\delta\alpha k$-generated by nilpotents of pure non-zero weight either contained in $\widetilde{\extranil}$ or equal to $\zpz^+,\zpz^-$.
\end{itemize}
\end{theorem}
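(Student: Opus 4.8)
The plan is to argue by \emph{downward induction on the dimension of the group generated by the data}. For a list $\extranil=(\vpz_1,\dots,\vpz_n)$ of nilpotents write $\Mbf[\extranil]$ for the Zariski closure inside $\rho(\G)$ of the subgroup generated by $\theta_\gp(\SL_2(\Q_\gp))$ together with the one-parameter unipotent subgroups $\{\exp(t\vpz_j):t\in\Q_\gp\}$; this is a connected $\Q_\gp$-group of dimension $\le\dim(\gfrak)$ which contains $\theta_\gp(\SL_2)$, so $\zpz^+,\zpz^-\in\Lie(\Mbf[\extranil])$. The output $\widetilde\extranil$ will be produced after at most $\dim(\gfrak)-3$ perturbation steps, the scale $\alpha$ losing a controlled factor at each step; this loss is exactly what forces the range $\alpha\in(\ref{k:effgen-intro}\,\delta^{\dim(\gfrak)},1]$, and the hypothesis $k>(\ref{k:effgen-intro}\,\delta)^{-2\dim(\gfrak)}(\log_\gp(\cpl(X))+1)$ is what keeps the working ``budget'' above threshold through all $\le\dim(\gfrak)$ iterations.

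Fix $\extranil$, set $\Mbf=\Mbf[\extranil]$, $d=\dim\Mbf$, $M=\Mbf(\Q_\gp)$, and let $\kappa$ be the current budget (intended to be $\delta\alpha k$ at the last step). Since $\Mbf$ is connected and generated by the listed one-parameter unipotent subgroups, a greedy dimension count produces a word $\varphi_{\vec\imath}\colon\A^d\to\Mbf$ — a product of $d$ of these exponentials, with repetitions allowed, using the non-zero-weight members of $\extranil$ and $\zpz^\pm$ — which is dominant. In characteristic zero a dominant morphism between irreducible varieties of equal dimension is generically smooth, so the $d\times d$ minors of ${\rm D}_t\varphi_{\vec\imath}$, which are polynomials of degree $\ll_N1$ with $\Z_\gp$-coefficients (as $\exp$ of a nilpotent in $\mathfrak{sl}_N$ is polynomial of degree $<N$), are not all identically zero. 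Let $\kappa_0$ be the least $\gp$-adic valuation of a coefficient of such a minor, minimised over dominant words of length $d$; since $\gp\gg_N1$ exceeds the degrees, some $t\in\Z_\gp^d$ realises a minor of size $\ge\gp^{-\kappa_0}$, so $M$ is $\kappa_0$-generated. If $\kappa_0\le\kappa$ we stop at this step (this is the desired conclusion). At the bottom of the induction $\Mbf=\theta_\gp(\SL_2)$ and this is just the big-cell factorisation $\exp(t\zpz^+)\exp(s\zpz^-)\exp(u\zpz^+)$, whose Jacobian is a unit at a suitable $\Z_\gp$-point, so the induction terminates.

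In the complementary case $\kappa_0>\kappa$, i.e.\ every dominant word map is degenerate to depth $\kappa$ (all its Jacobian minors vanish identically mod $\gp^\kappa$), and the plan is to \emph{perturb so as to drop the dimension}. The idea is that being far from the locus where the generated group is smaller should force efficient generation: an effective \L{}ojasiewicz-type estimate — with exponent and constant depending only on $N$ and the height of $\gfrak$, hence on $\cpl(X)$ — should show that depth-$\kappa$ degeneracy of all word maps places $\extranil$, viewed inside the space of weight-preserving perturbations, within $\gp^{-\kappa/\ref{k:effgen-intro}}$ of the $\Z_\gp$-subvariety $\mathcal Z=\{\extranil''\colon\dim\Mbf[\extranil'']<d\}$, which has bounded degree. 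The effective version of Greenberg's theorem, applied to the equations cutting out $\mathcal Z$ (whose complexity again involves only $N$ and $\cpl(X)$), then yields a genuine point $\extranil'\in\mathcal Z(\Z_\gp)$ with $\|\vpz'_j-\vpz_j\|<\gp^{-\kappa/\ref{k:effgen-intro}+\dim(\gfrak)}$ for all $j$; and since the principal-$\SL_2$ weight decomposition of $\gfrak_\gp$, as well as the subspace of highest weight vectors, are defined over $\Z_\gp$ (Lemmas~\ref{lem: reduction highest weight} and \ref{lem: weight spaces are integral}), one may take $\extranil'$ to preserve the pure weight of each $\vpz_j$ and the highest-weight property when present. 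As $\extranil'\in\mathcal Z$ we have $\dim\Mbf[\extranil']<d$, so iterating strictly decreases the dimension (never below $3=\dim\theta_\gp(\SL_2)$); bounding the total perturbation by the largest single one and taking $\alpha$ to record the accumulated loss gives both the perturbation bound $\gp^{-\alpha k+\dim(\gfrak)}$ and the $\delta\alpha k$-generation in the stated range for $\alpha$.

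The main obstacle is the third paragraph: upgrading the soft statement ``$\theta_\gp(\SL_2)$ and the $\exp(\Q_\gp\vpz_j)$ generate $\Mbf$'' to a quantitative one in which failure of $\kappa$-generation pins $\extranil$ near the dimension-dropping locus $\mathcal Z$, and then rounding to an honest point of $\mathcal Z$ via effective Greenberg with only a constant-factor loss of depth and with full control of the dependence on $\cpl(X)$ (rather than on $\gp^k$); this is precisely the step that dictates the shape of the lower bound on $k$. A minor technical point is the treatment of any weight-zero members of $\extranil$: one must ensure they become redundant for generating $\widetilde\Mbf$, which can be folded into the definition of $\mathcal Z$ (adding the condition that such directions lie in the group generated by $\theta_\gp(\SL_2)$ and the non-zero-weight members). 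In the applications of interest all $\vpz_j$ have non-zero weight and this issue does not arise.
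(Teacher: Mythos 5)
The overall architecture is the right one: reduce to a single‑step dichotomy (either the current list already $k$‑generates, or a small weight‑preserving perturbation drops the dimension of the generated group), prove the dichotomy by applying the effective Greenberg theorem to a ``dimension‑drop'' variety cut out by the coefficients of the Jacobian minors of word maps, and iterate at most $\dim(\gfrak)$ times, which is precisely why the exponent $\delta^{\dim(\gfrak)}$ and the lower bound $(\ref{k:effgen-intro}\delta)^{-2\dim(\gfrak)}(\log_\gp\cpl(X)+1)$ appear. This matches the paper's Proposition~\ref{prop:effgenind} and its iteration. The ``effective \L ojasiewicz'' step you interpose is redundant rather than wrong: Theorem~\ref{thm:greenberg} already \emph{is} the $\gp$-adic effective \L ojasiewicz/Artin-approximation statement you want, so there is no separate estimate to be established.

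The genuine gap is in how you preserve the weight and highest‑weight structure of the perturbed nilpotents. You propose to work ``inside the space of weight‑preserving perturbations'' — i.e., to restrict the dimension‑drop variety to the linear slice of $\gfrak_\gp^n$ picked out by the fixed principal $\SL_2$, cite Lemmas~\ref{lem: reduction highest weight} and~\ref{lem: weight spaces are integral} for $\Z_\gp$‑integrality of that slice, and then apply Greenberg directly. This does not go through in the form stated. Theorem~\ref{thm:greenberg} takes polynomials with coefficients in $\Z$ of controlled \emph{archimedean} height $h$, and requires $k\gtrsim\log_\gp h$; it is precisely a $\Z$‑statement whose proof hinges on an effective Nullstellensatz over $\Z$. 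The slice you want to restrict to is defined by the $\mathfrak{sl}_2$‑triple $\theta_\gp$, which is a $\Q_\gp$‑datum (there is no reason $\theta_\gp$ descends to $\Q$), so after the coordinate change the defining polynomials have $\Z_\gp$‑coefficients, not $\Z$‑coefficients. You would need a $\Z_\gp$‑version of Theorem~\ref{thm:greenberg} whose exponent $A$ is controlled in terms of $N$ alone, and this is exactly what is not available: the effective Nullstellensatz constant $B_1$ for the restricted system lives in $\Z_\gp$, and bounding its $\gp$‑adic valuation is a nontrivial additional step that your argument does not supply.

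The paper avoids the issue by \emph{not} fixing $\theta_\gp$: it treats the $\mathfrak{sl}_2$‑triple as another variable, working inside a $\Q$‑variety $\Vbf\subset\Ebf_\gfrak'\times\gfrak^n$ whose defining polynomials are over $\Z$ with height $\ll\height(\G)^\star$ (Step~0 of the proof of Proposition~\ref{prop:effgenind}, Proposition~\ref{prop:varietyofembeddings}, Lemma~\ref{lem:exactdim}). After Greenberg produces a nearby $\Z_\gp$‑point of the subvariety $\Wbf\subset\Vbf$, the new $\mathfrak{sl}_2$‑triple is only \emph{close} to $\theta_\gp$; the open‑mapping theorem for closed $\G$‑orbits (Proposition~\ref{prop:localseparatedness}, applied to $\Ebf_\gfrak'$) supplies a small $g\in G_\gp[0]$ that moves the new triple exactly back onto $\theta_\gp$, and translating by $g$ both keeps the point in $\Wbf$ (by $\G$‑invariance of $\Wbf$) and restores the required weight and highest‑weight structure of the nilpotents. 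This $\Vbf\to\Ebf_\gfrak'$ realignment step, built on the geometric‑invariant‑theory input (Lemma~\ref{lem:closedorbitsembeddings}, Proposition~\ref{prop:varietyofembeddings}) and Proposition~\ref{prop:localseparatedness}, is the missing ingredient in your proposal and is not merely a detail to be filled in: without it (or a $\Z_\gp$‑Greenberg statement you would have to prove separately) you cannot assert that the perturbed nilpotents retain pure weight, and the iteration is not set up to continue.

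Two smaller remarks. First, you should spell out how the subvariety $\mathcal Z$ is cut out by $\Z$‑polynomials of controlled height and degree; in the paper this is the content of Step~1 (the coefficient set $\mathcal F$ of the Jacobian minors of all words $\Phi^{\mathcal I}_{\xpz}$), and it is needed for Greenberg to deliver the claimed bound. Second, your side remark about weight‑zero members of $\extranil$ is moot in context: Proposition~\ref{prop:effgenind} assumes all $\vpz_j$ have pure weight and the application in \S\ref{sec:proofmainthms} feeds in only nilpotents of non‑zero weight, so there is no ``folding in'' to be done.
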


In practice, we will apply the above theorem to a list of nilpotent elements, say $\vpz_1,\ldots,\vpz_n$, for which the measure $\mu$ is $\gp^{-k}$-almost invariant.
(This list is obtained by adding to the list from the previous iteration step the new direction found in Proposition~\ref{prop:addinv-intro}.)
The newly found directions, denoted $\tilde{\vpz}_1,\ldots,\tilde{\vpz}_n$, leave the measure ${\gp}^{-\alpha k+\star}$-almost invariant and generate a comparatively sizable ball around the identity in the group $\widetilde{M}$ (the difference in size between this ball and the amount of invariance we have is, in logarithmic terms, determined by $\delta$).
Note here that the freedom of the parameter $\delta$ is crucial.
Indeed, in order to continue the iteration we wish to apply Proposition~\ref{prop:addinv-intro} with the new group $\widetilde{M}$ and thus choose $\delta$ `sufficiently small' in relationship to the constant $\ref{a:addinv-intro}$ from Proposition~\ref{prop:addinv-intro}.
A fortiori, this asserts that the quality of almost invariance is way better than what the assumed continuity on~$f$ would give.

\section{Effective generation}\label{sec:effectivegeneration}

The goal of this section is to prove the effective generation result in Theorem~\ref{thm:effgen-intro}.
We show that there is a dichotomy --- either the given list of nilpotents either `effectively generates' a group together with $\theta_\gp(\SL_2)$ or there is a `small perturbation' of the list generating a smaller group, which we then iterate to improve the quality of the generation in terms of the size of the pertubation. 

The main tool for establishing the existence of such a perturbation is an effective version of a theorem of Greenberg \cite{Greenberg1,Greenberg2} --- Theorem~\ref{thm:greenberg} below. 
This should be seen as a version of Hensel's lifting lemma for not necessarily smooth varieties. It is crucial for us to have effective dependence of the size of the pertubation in terms of the polynomials defining the variety.

This section is structured as follows:
In \S\ref{sec:firstres eff gen}, we prove a few preliminary properties regarding effective generation. 
In \S\ref{sec:effGreenberg}, we recall an effective version of Greenberg's theorem; the precise form of this theorem we use is taken from \cite{LMMS}.
In \S\ref{sec:geominvth}, we establish a simple fact regarding $\G$-orbits in certain representations using geometric invariant theory.
In \S\ref{sec:varietyofembeddings}, we construct a precursor to the varieties for which we apply Greenberg's theorem.
In \S\ref{sec:openmappingthm}, we prove an open mapping theorem for certain affine varieties with an `essentially' transitive $\G$-action (e.g.~for closed $\G$-orbits in linear representations).
The rough aim is to show that two nearby $\Q_\gp$-points on such varieties differ by a small element of $G_\gp$.
In \S\ref{sec:proofeffgen}, we finally prove Theorem~\ref{thm:effgen-intro}.

\subsection{First results regarding effective generation}\label{sec:firstres eff gen}

Recall our notion of effective generation from \S\ref{sec:outline-effnotions}.
The following statement was used in the outline in \S\ref{sec:outlineproof} in order to pass from `almost invariance' under a list of nilpotents to `almost invariance' under a `small' group.

\begin{lemma}\label{lem:implicitfctthm}
There exists $\consta\label{a:implicitfctthm} >0$ depending only on $N$ with the following property.
Assume ${\gp}>N$ and let $k\geq 0$.
Let $M < \SL_N(\Q_{\gp})$ be a closed subgroup with Lie algebra $\mfrak$.
Suppose that $M$ is $k$-generated by nilpotents $\vpz_1,\ldots,\vpz_{\dim(\mfrak)}\in \mfrak[0]$.
Then 
\begin{align*}
\varphi(\gp\Z_{\gp}^{\dim(\mfrak)})^{-1}\varphi(\gp\Z_{\gp}^{\dim(\mfrak)}) \supset M[2k+\ref{a:implicitfctthm}] 
\end{align*}
where $\varphi$ is given by
\begin{align*}
\varphi(t_1,\ldots,t_{\dim(\mfrak)})=\exp(t_1\vpz_1) \cdots \exp(t_{\dim(\mfrak)}\vpz_{\dim(\mfrak)}).
\end{align*}
\end{lemma}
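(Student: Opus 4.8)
The plan is to deduce this from a quantitative $\gp$-adic inverse function theorem (a version of Hensel's lemma for systems), using the group law of $M$ to promote the image of a small ball under $\varphi$ to a genuine neighbourhood of the identity inside $M$. Write $d=\dim(\mfrak)$. Several structural features of $\varphi$ are in play. Since $\gp>N$ and each $\vpz_i$ is nilpotent, $\exp(t_i\vpz_i)=\sum_{j<N}t_i^{\,j}\vpz_i^{\,j}/j!$ has $\Z_\gp$-coefficients (the factorials $j!$ are units), so $\varphi$ is a polynomial map of degree $\ll_N 1$ with $\Z_\gp$-coefficients; it maps $\Z_\gp^d$ into $\SL_N(\Z_\gp)$, maps $\gp\Z_\gp^d$ into $\SL_N(\Z_\gp)[1]$, takes values in $M$ (as $\vpz_i\in\mfrak$), and $\varphi(0)=I$. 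Moreover, for such $\gp$, $\exp$ and $\log$ are mutually inverse analytic bijections $\mfrak[j]\leftrightarrow M[j]$ for every $j\geq 1$, with differential the identity at $I$, and $M$ is a $\gp$-adic analytic subgroup of dimension $d$.

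Next I normalise the witness $t\in\Z_\gp^d$ provided by $k$-generation. Moving the factors $\exp(t_i\vpz_i)$ to the left one rewrites $\varphi(t+s)=\varphi(t)\,\widetilde\varphi(s)$, where $\widetilde\varphi(s)=\exp(s_1\wpz_1)\cdots\exp(s_d\wpz_d)$ is again a product of exponentials of nilpotents $\wpz_i=\Ad(h_i)\vpz_i\in\mfrak[0]$ with $h_i\in M\cap\SL_N(\Z_\gp)$. Since left translation by $\varphi(t)\in\SL_N(\Z_\gp)$ acts on $\mathfrak{gl}_N(\Z_\gp)$ through $\GL(\mathfrak{gl}_N(\Z_\gp))$ it preserves elementary divisors, so $\mathrm{D}_0\widetilde\varphi=[\wpz_1|\cdots|\wpz_d]$ still has a $d$-minor of absolute value $\geq \gp^{-k}$; after this reduction (and, applying the same device to the residue of $t$, arranging the good point to lie in $\gp\Z_\gp^d$ at a cost $\ll_N1$ in $k$) I may assume the good-derivative point is the origin and the relevant product map is $\widetilde\varphi$. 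Finally choose a coordinate projection $\pi\colon\mathfrak{sl}_N(\Q_\gp)\to\Q_\gp^d$ so that $\pi$ restricted to $\mfrak$ — equivalently to the image of $\mathrm{D}_0\widetilde\varphi$ — has $|\det|\geq \gp^{-k-C_0}$ for a constant $C_0$ depending only on $N$; then $G:=\pi\circ\log\circ\widetilde\varphi(\gp\,\cdot)\colon\Z_\gp^d\to\Z_\gp^d$ is analytic with $\Z_\gp$-coefficients, $G(0)=0$, and $\ord_\gp(\det\mathrm{D}_0 G)\leq k+C_0$.

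Now invoke quantitative Hensel: with $m:=\ord_\gp(\det\mathrm{D}_0 G)$, the map $G$ restricts to a bijection of $\gp^{m+1}\Z_\gp^d$ onto a set containing $\gp^{2m+1}\Z_\gp^d$, the solution being unique. Given $g\in M[2k+C_1]$ with $C_1=2C_0+1$ we have $\log g\in\mfrak[2k+C_1]$, hence $\pi(\log g)\in \gp^{2k+C_1}\Z_\gp^d\subseteq \gp^{2m+1}\Z_\gp^d$; solving $G(s)=\pi(\log g)$ with $s$ small and using that $\log\widetilde\varphi(\gp s)$ and $\log g$ both lie in $\mfrak$, on which $\pi$ is injective, yields $\widetilde\varphi(\gp s)=g$. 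Thus $M[2k+C_1]\subseteq\widetilde\varphi(\gp\Z_\gp^d)$, and since by the normalisation $\widetilde\varphi(\gp\Z_\gp^d)=\varphi(t)^{-1}\varphi(\gp\Z_\gp^d)$ (here one uses that the witness was brought into $\gp\Z_\gp^d$) we obtain $\varphi(\gp\Z_\gp^d)\supseteq\varphi(t)\,M[2k+C_1]$ and therefore
\[
\varphi(\gp\Z_\gp^d)^{-1}\varphi(\gp\Z_\gp^d)\ \supseteq\ M[2k+C_1]^{-1}\,\varphi(t)^{-1}\varphi(t)\,M[2k+C_1]\ =\ M[2k+C_1],
\]
which is the assertion with $\ref{a:implicitfctthm}=C_1$.

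The main obstacle is the $\gp$-adic bookkeeping. One must control the loss in the quality of the derivative through the reduction to the origin and the choice of $\pi$, and insist that the neighbourhood eventually recovered has radius $\gp^{-(2k+O_N(1))}$ rather than, say, $\gp^{-O(k^2)}$ or something with a $k$-dependent loss beyond the factor $2$, all the while keeping the domain equal to $\gp\Z_\gp^d$ and not the larger $\Z_\gp^d$: the witness furnished by $k$-generation need not a priori lie in $\gp\Z_\gp^d$, so the translation step that replaces $\varphi$ by $\widetilde\varphi$ while preserving the domain — together with the absorption of the residue of $t$ — is the delicate point. A secondary task is to state the quantitative Hensel lemma in exactly the form needed and to check that the exponentials, logarithms, and conjugations appearing are integral for $\gp>N$, so that all estimates are uniform in $\gp$ (and in the subgroup $M$).
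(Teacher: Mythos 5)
Your outline is essentially that of the paper: exploit $\gp>N$ to make everything a polynomial (or analytic) map with $\Z_\gp$-coefficients, then apply a quantitative $\gp$-adic open-mapping/Hensel lemma to $\log\circ\varphi(\gp\,\cdot)$ near a good-derivative point, and exponentiate to recover a full neighbourhood in $M$. That much matches.

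The gap, which you flag yourself as the delicate point, is the step ``arranging the good point to lie in $\gp\Z_\gp^d$ at a cost $\ll_N 1$ in $k$'' via ``the same device'' (left translation). This does not work. The translation identity $\varphi(t+s)=\varphi(t)\,\widetilde\varphi(s)$ indeed moves the good-derivative point to the origin for $\widetilde\varphi$, but it simultaneously shifts the parameter domain: you have $\widetilde\varphi(\gp\Z_\gp^d)=\varphi(t)^{-1}\varphi(t+\gp\Z_\gp^d)$, and $t+\gp\Z_\gp^d$ equals $\gp\Z_\gp^d$ only when $t\in\gp\Z_\gp^d$ in the first place. Writing $t=t^{(0)}+t^{(1)}$ with $t^{(1)}\in\gp\Z_\gp^d$ and conjugating by $\varphi(t^{(0)})$ produces a map with good derivative at $t^{(1)}\in\gp\Z_\gp^d$, but what you then control is $\varphi(t^{(0)})^{-1}\varphi(t^{(0)}+\gp\Z_\gp^d)$, which still involves the coset $t^{(0)}+\gp\Z_\gp^d$ and not $\gp\Z_\gp^d$ itself. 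So the final equality $\widetilde\varphi(\gp\Z_\gp^d)=\varphi(t)^{-1}\varphi(\gp\Z_\gp^d)$ that your proof invokes is exactly the assertion you still need, not something the conjugation trick supplies, and without it you only obtain $\varphi(t^{(0)}+\gp\Z_\gp^d)^{-1}\varphi(t^{(0)}+\gp\Z_\gp^d)\supset M[2k+C_1]$ for the shifted cube, not the one required.

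What is actually needed is a statement about the polynomial minors themselves: each $\dim(\mfrak)$-minor of $\mathrm{D}_t\varphi$ is a polynomial in $t$ of degree $\ll_N 1$ with $\Z_\gp$-coefficients, and one attains absolute value $\geq\gp^{-k}$ somewhere in $\Z_\gp^d$. The paper's proof applies a Remez-type inequality to these minors to produce a point $t_0\in\gp\Z_\gp^d$ where the minor is still $\geq\gp^{-k-A}$ with $A=A(N)$; only then is the open-mapping lemma applied around $t_0/\gp$, and the exponential and the group structure of $M[2k+2A+1]$ finish the argument as you describe. The conjugation trick alone cannot replace this Remez step because it relocates the base point at the expense of relocating the domain by the same amount.
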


Following our notation for Lie algebras, we set $\Z_\gp[m] = \gp^m \Z_\gp$ for $m \geq 0$.
We will make use of a well-known quantitative open mapping lemma (see for instance also \cite[Lemma 53$'$]{SG-superapproximationII})  which we summarize as follows.

\begin{lemma}\label{lem:openmapping}
Let $f: \Z_{\gp}^n \to \Z_{\gp}^n$ be an analytic map with $\Z_{\gp}$-coefficients so that $|\det(\mathrm{D}_0 f) |\geq {\gp}^{-k}$ for some $k \geq 0$.
Then 
\begin{align*}
f(\Z_{\gp}[k+1]^n) \supset f(0)+\Z_{\gp}[2k+1]^n.
\end{align*}
\end{lemma}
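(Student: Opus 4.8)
The statement to prove is Lemma~\ref{lem:openmapping}: a quantitative open mapping theorem for analytic maps $f\colon \Z_\gp^n \to \Z_\gp^n$ with $\Z_\gp$-coefficients and $|\det(\mathrm{D}_0 f)| \geq \gp^{-k}$, concluding that $f(\Z_\gp[k+1]^n) \supset f(0) + \Z_\gp[2k+1]^n$.

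Let me think about how to prove this. This is a $p$-adic version of the inverse function theorem / Newton's method, essentially Hensel's lemma in several variables with explicit radius control.

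The standard approach: WLOG $f(0) = 0$ (translate). Write $f(x) = Ax + g(x)$ where $A = \mathrm{D}_0 f$ and $g$ has no constant or linear term, so $g(x) \in$ (quadratic and higher), meaning each coordinate of $g(x)$ is a sum of monomials of degree $\geq 2$ with $\Z_\gp$ coefficients. Since $|\det A| \geq \gp^{-k}$, we have $A^{-1}$ exists over $\Q_\gp$ with entries of valuation $\geq -k$ (by Cramer's rule: $A^{-1} = \mathrm{adj}(A)/\det(A)$, and $\mathrm{adj}(A)$ has $\Z_\gp$ entries, so $A^{-1}$ has entries in $\gp^{-k}\Z_\gp$). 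So $\|A^{-1} y\| \leq \gp^k \|y\|$ for all $y$.

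Given a target $y \in \Z_\gp[2k+1]^n$, i.e., $\|y\| \leq \gp^{-(2k+1)}$, we want to solve $f(x) = y$ with $\|x\| \leq \gp^{-(k+1)}$. Set up the fixed-point iteration $x \mapsto \Phi(x) := A^{-1}(y - g(x))$. A solution of $f(x) = y$ is a fixed point: $Ax + g(x) = y \iff x = A^{-1}(y - g(x))$.

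Now I need to check $\Phi$ is a contraction on the ball $B = \Z_\gp[k+1]^n$ and maps $B$ into itself. For $x \in B$: each coordinate of $g(x)$ is a sum of monomials of degree $\geq 2$ in the $x_i$, each with a $\Z_\gp$ coefficient, so $\|g(x)\| \leq \|x\|^2 \leq \gp^{-2(k+1)}$. Then $\|A^{-1} g(x)\| \leq \gp^k \cdot \gp^{-2(k+1)} = \gp^{-k-2} < \gp^{-(k+1)}$, and $\|A^{-1} y\| \leq \gp^k \cdot \gp^{-(2k+1)} = \gp^{-k-1}$. So $\|\Phi(x)\| \leq \max(\gp^{-k-1}, \gp^{-k-2}) = \gp^{-(k+1)}$, meaning $\Phi(B) \subset B$. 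For the contraction: for $x, x' \in B$, $g(x) - g(x') = $ (terms), and the key estimate is $\|g(x) - g(x')\| \leq \max(\|x\|, \|x'\|) \cdot \|x - x'\| \leq \gp^{-(k+1)} \|x - x'\|$ — this follows from the mean value / telescoping argument on monomials of degree $\geq 2$: writing $x^\alpha - (x')^\alpha$ as a telescoping sum where each term has one factor $(x_i - x_i')$ and at least one other factor from the ball. Hence $\|\Phi(x) - \Phi(x')\| = \|A^{-1}(g(x') - g(x))\| \leq \gp^k \cdot \gp^{-(k+1)} \|x - x'\| = \gp^{-1}\|x - x'\|$, a genuine contraction with factor $\gp^{-1} < 1$. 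Since $\Z_\gp[k+1]^n$ is a complete metric space (closed in $\Z_\gp^n$), the Banach fixed point theorem gives a unique fixed point $x^* \in B$ with $f(x^*) = y$. This establishes the claim.

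The main obstacle — really the only point requiring care — is the convergence of the power series $g(x)$ and the monomial estimates $\|g(x)\| \leq \|x\|^2$ and the Lipschitz bound $\|g(x) - g(x')\| \leq \max(\|x\|,\|x'\|)\|x-x'\|$ on the ball. Since $f$ has $\Z_\gp$-coefficients and is analytic on $\Z_\gp^n$, the coefficients are bounded in absolute value by $1$, so the relevant series converge on the unit polydisc and one has term-by-term ultrametric estimates; because every monomial in $g$ has total degree $\geq 2$, each contributes at least two factors of size $\leq \|x\| \leq \gp^{-(k+1)} \leq \gp^{-1} < 1$, which is what drives all the bounds. The plan is to write this up by reducing to $f(0)=0$, invoking the Cramer's-rule bound on $A^{-1}$, setting up $\Phi$, verifying self-map and contraction on $\Z_\gp[k+1]^n$, and concluding by the contraction mapping principle; all of it is routine ultrametric bookkeeping once the setup is in place, which is presumably why the authors state it as well known and relegate a reference to \cite{SG-superapproximationII}.
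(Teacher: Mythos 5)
Your proof is correct and takes essentially the same approach as the paper: both are the $p$-adic Newton/Hensel iteration $x\mapsto x + A^{-1}\bigl(y-f(x)\bigr)$ with $A=\mathrm{D}_0 f$, which after translating by $f(0)$ is exactly your fixed-point map $\Phi(x)=A^{-1}(y-g(x))$. The only cosmetic difference is that you package the convergence as a Banach contraction with factor $\gp^{-1}$, whereas the paper observes the quadratic Taylor estimate $\|f(x_1)-y\|\le\|x_1-x_0\|^2$ and iterates; both yield the same conclusion and the same radius bookkeeping.
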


\begin{proof}
For concreteness, we will give a self-contained proof using Newton's algorithm. Let $x_0=0$.
Let $y \in f(x_0) + \Z_{\gp}[2k+1]^n$. We define
\begin{align*}
x_1 = x_0+(\mathrm{D}_{x_0} f)^{-1}(y-f(x_0)) \in \Q_{\gp}^n
\end{align*}
so that
\[y = f(x_0) + (\mathrm{D}_{x_0} f) (x_1-x_0)
\]
and $x_1-x_0 \in \Z_{\gp}[k+1]^n$. Moreover, by Taylor expansion at $x_0$
\begin{align*}
\norm{f(x_1)-y} \leq \norm{x_1-x_0}^2 \leq {\gp}^{2k} \norm{y-f(x_0)}^2 
\end{align*}
so that $\norm{f(x_1)-y} < \norm{f(x_0)-y}$.
Iterating this procedure and taking the limit, one finds a point $x\in \Z_{\gp}[k+1]^n$ with $f(x) = y$ as claimed.
\end{proof}

\begin{proof}[Proof of Lemma~\ref{lem:implicitfctthm}]
By assumption, there is a point $t_0'\in \Z_{\gp}^{\dim(\mfrak)}$ for which the derivative $\mathrm{D}_{t_0'}\varphi$ has a minor of size at least $\gp^{-k}$.
As $\gp >N$, the map $\varphi$, its derivative, and all its minors are polynomial maps with coefficients in $\Z_{\gp}$.
Applying a Remez-type inequality (see e.g.~\cite[Lemma 5.4]{LMMS}), it follows that there exists a point $t_0\in \gp\Z_{\gp}^{\dim(\mfrak)}$ at which the derivative of $\varphi$ has a $\dim(\mfrak)$-minor of absolute value at least ${\gp}^{-k-A}$ for some $A = A(N)>0$.

Set $f(\cdot) = \log \circ \varphi(\gp\, \cdot): \Z_{\gp}^{\dim(\mfrak)} \to \mfrak[1]$.
This is an analytic map with coefficients in $\Z_{\gp}$ (since $\gp >N$) whose derivative $\mathrm{D}_{t_0/\gp}f$ has determinant of size at least ${\gp}^{-k-A}$.
By Lemma~\ref{lem:openmapping}
\begin{align*}
f\big(t_0/\gp+\Z_\gp[k+A+1]^{\dim(\mfrak)}\big) \supset f(t_0/\gp)+\mfrak[2k+2A+1].
\end{align*}
Taking the exponential,
the image of $\varphi$ on $\gp\Z_{\gp}^{\dim(\mfrak)}$ contains 
\begin{align*}
\exp\big(f(t_0/\gp)+\mfrak[2k+2A+1]\big) = \varphi(t_0)M[2k+2A+1].
\end{align*}
Here, the equality follows for instance from the Baker-Campbell-Hausdorff formula and the fact that $\mfrak[2k+2A+1]$ is invariant under brackets with $\mfrak[1]$.
Thus,
\begin{align*}
\varphi\big(\gp\Z_{\gp}^{\dim(\mfrak)}\big)^{-1}\varphi\big(\gp\Z_{\gp}^{\dim(\mfrak)}\big)
&\supset M[2k+2A+1]\varphi(t_0)^{-1}\varphi(t_0)M[2k+2A+1]\\
&= M[2k+2A+1]
\end{align*}
and we conclude the lemma with $\ref{a:implicitfctthm}=2A+1$.
\end{proof}

In the proof of Theorem~\ref{thm:main equi}, we know precise invariance with respect to a list of nilpotents effectively generating $H_\gp$.
The existence of such a list is a property of the good prime $\gp$ and the content of the following lemma.

\begin{lemma}\label{lem:Heffgen}
If the good prime $\gp$ is sufficiently large depending on $N$, 
the group $G_\gp$ is $0$-generated by (not necessarily distinct) nilpotents $\vpz_1,\ldots,\vpz_{\dim(\gfrak)}\in \gfrak_\gp[0]$ of non-zero pure weight.
The same holds for $H_\gp$.
\end{lemma}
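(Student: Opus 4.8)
The plan is to reduce the statement modulo $\gp$ and then to a purely Lie-theoretic generation fact, after which a single $\mathbb{F}_\gp$-point and a Hensel-type lift finish the argument. Since $\theta_\gp(\SL_2)$ is contained in $H_\gp\subseteq G_\gp$ and, by Lemma~\ref{lem:principal SL2}(2) and Lemma~\ref{lem:nofactorscontainingHp}, projects nontrivially to every almost simple factor of $H_\gp$ and of $G_\gp$, it suffices to treat $G_\gp$; the argument for $H_\gp$ is verbatim with $(\Gfrak_\gp,\gfrak_\gp)$ replaced by $(\Hfrak_\gp,\hfrak_\gp)$. By Lemma~\ref{lem: weight spaces are integral} there is a $\Z_\gp$-basis of $\bigoplus_{\lambda\neq 0}\gfrak_\gp^{(\lambda)}[0]$ consisting of pure-weight vectors, each of which is automatically nilpotent: conjugation by $a(t)$ scales it by $t^\lambda$ with $\lambda\neq0$, so the semisimple part of its Jordan decomposition, which has the same property, has no nonzero eigenvalue and hence vanishes. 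Using Definition~\ref{def:effective generation}, the reduction compatibility of Lemmas~\ref{lem: reduction highest weight}, \ref{lem: reduction completely red} and \ref{lem: weight spaces are integral}, and the fact that for $\gp\gg_N1$ exponentials of elements of $\gfrak_\gp[0]$ lie in $\SL_N(\Z_\gp)$ and all relevant minors are polynomials over $\Z_\gp$ in the parameters, the Lemma follows once we exhibit $d=\dim(\gfrak)$ of these basis vectors $\vpz_1,\dots,\vpz_d$ (repetitions allowed, in a suitable order) for which the reduced product map $\underline\varphi(t)=\exp(t_1\underline{\vpz_1})\cdots\exp(t_d\underline{\vpz_d})$ is dominant onto $\underline{\Gfrak_\gp}$ over $\overline{\mathbb{F}_\gp}$: one then picks an $\mathbb{F}_\gp$-point where the corresponding $d\times d$ Jacobian minor is nonzero (such a point exists since $\gp\gg_N1$ and the minor is a nonzero polynomial of degree bounded in $N$), lifts it to $\Z_\gp^d$, and observes that the same minor of $\mathrm{D}\varphi$ is then a $\gp$-adic unit, i.e.\ of absolute value $1=\gp^0$.

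The first ingredient is that $\bigoplus_{\lambda\neq0}\gfrak_\gp^{(\lambda)}$ Lie-generates $\gfrak_\gp$, and likewise after reduction (for which we need $\gp\gg_N1$ so that each absolutely simple factor of $\underline{\Gfrak_\gp}$ has simple Lie algebra). The argument is characteristic-free once the Lie algebra is simple, so it is enough to work in one such factor: write $\sigma\colon\mathfrak{sl}_2\to\gfrak$ for the homomorphism defining the grading, which is nonzero on each factor by Lemma~\ref{lem:nofactorscontainingHp} and Lemma~\ref{lem:principal SL2}(2), and let $\afrak$ be the subalgebra generated by $\bigoplus_{\lambda\neq0}\gfrak^{(\lambda)}$. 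Then $\afrak$ is graded, contains $[\gfrak^{(\lambda)},\gfrak^{(-\lambda)}]$ for every $\lambda$ (so, with $\lambda$ the weight of $\zpz^+$, contains $\sigma(\mathfrak{sl}_2)$), and a Jacobi-identity computation shows $[\gfrak^{(0)},\afrak]\subseteq\afrak$, since the bracket of a weight-$0$ element with an iterated bracket of nonzero-weight elements is again such an iterated bracket. Hence $\afrak$ is a nonzero ideal of the simple Lie algebra $\gfrak$, so $\afrak=\gfrak$.

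The second ingredient is a greedy selection producing the word $i_1,\dots,i_d$. Suppose $i_1,\dots,i_\ell$ have been chosen and the partial map $\underline\varphi_\ell(t)=\exp(t_1\underline{\vpz_{i_1}})\cdots\exp(t_\ell\underline{\vpz_{i_\ell}})$ has image of dimension $r<d$. I claim that for some index $i$ appending $\exp(t\,\underline{\vpz_i})$ strictly increases the dimension: otherwise, at a generic smooth point the left-translated tangent space of $\overline{\mathrm{im}\,\underline\varphi_\ell}$ would contain every $\underline{\vpz_i}$, so $\overline{\mathrm{im}\,\underline\varphi_\ell}$ would be invariant under right translation by each $\exp(t\,\underline{\vpz_i})$, hence by the subgroup they generate, which is Zariski dense in $\underline{\Gfrak_\gp}$ by the first ingredient; this forces $\overline{\mathrm{im}\,\underline\varphi_\ell}=\underline{\Gfrak_\gp}$, contradicting $r<d$. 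Iterating, after at most $d$ steps the map becomes dominant, and one pads the word with trivial factors to make its length exactly $d$, which does not lower the generic rank. Combining the two ingredients with the reduction and lifting described above proves the Lemma, the case of $H_\gp$ being identical.

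The main obstacle is the characteristic-$\gp$ bookkeeping rather than any genuinely new idea: one must take $\gp$ large in terms of $N$ so that exponentials and logarithms of integral nilpotents behave as in characteristic zero, so that the adjoint action of the principal $\SL_2$ on $\underline{\gfrak_\gp}$ is completely reducible and the Lie algebras of the simple factors of $\underline{\Gfrak_\gp}$ are simple (both available from \S\ref{sec: adjoint rep} and \S\ref{sec:good place} for good $\gp$), and so that the nonvanishing locus of the Jacobian minor, a nonempty open subvariety of $\mathbb{A}^d$ of degree bounded in $N$, has an $\mathbb{F}_\gp$-point. All of this is precisely what the hypothesis that $\gp$ be sufficiently large depending on $N$ supplies.
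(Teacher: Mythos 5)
Your plan follows essentially the same route as the paper's proof: generate the special fiber by nonzero-weight unipotents, iterate a dimension-increase argument to build a dominant product map $\underline\varphi$ over $\overline{\mathbb{F}_\gp}$, find an $\mathbb{F}_\gp$-point where the Jacobian minor is a unit, and lift. The Lie-algebra argument for why the nonzero weight spaces generate (that $\afrak$ is a graded ideal by Jacobi, containing $\theta_\gp(\mathfrak{sl}_2)$) is correct and a nice alternative to the group-level phrasing in the paper. The greedy construction of the word $i_1,\dots,i_d$ is also the right idea.

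There is, however, a genuine gap in how you pass from dominance of $\underline\varphi$ to nonvanishing of the Jacobian minor. You assert that the nonvanishing locus of the $d\times d$ minor is a \emph{nonempty} open subvariety, and then use $\gp\gg_N1$ only to produce an $\mathbb{F}_\gp$-point of it. But in characteristic $\gp$, a dominant map need not have any point with surjective differential: the Frobenius $t\mapsto t^\gp$ is dominant with identically vanishing derivative. So dominance alone does not make the Jacobian minor a nonzero polynomial. What is needed, and what the paper supplies, is that $\underline\varphi$ is \emph{separable}: each coordinate of $\underline\varphi$ is a polynomial in the $t_j$ of degree $\ll_N 1$ (since each $\exp(t_j\underline{\vpz}_{i_j})$ truncates at order $N$), so the degree of the field extension $[\overline{\mathbb{F}_\gp}(t_1,\dots,t_d):\underline\varphi^*\overline{\mathbb{F}_\gp}(\underline{\Gfrak_\gp})]$ is $\ll_N 1 < \gp$; being prime to $\gp$, it is separable, and only then is the Jacobian minor a nonzero polynomial of bounded degree, after which your Remez/pigeonhole step for an $\mathbb{F}_\gp$-point is fine.

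A smaller issue is in the dimension-increase step: the implication "the left-translated tangent space of $Z=\overline{\mathrm{im}\,\underline\varphi_\ell}$ contains $\underline{\vpz}_i$ at a generic smooth point, so $Z$ is invariant under right translation by $\exp(t\underline{\vpz}_i)$" is not valid in characteristic $\gp$ (nor even in characteristic $0$ at a single point; a flow argument would be needed, which is not available in characteristic $\gp$). You should argue directly: if the image of $(t,s)\mapsto\underline\varphi_\ell(t)\exp(s\underline{\vpz}_i)$ has dimension $\le r=\dim Z$, then its closure equals $Z$, so $Z\exp(s\underline{\vpz}_i)\subseteq Z$ for all $s$, hence $Z\exp(s\underline{\vpz}_i)=Z$ by equality of dimensions. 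This yields invariance without the invalid tangent-space detour, and the rest of your contradiction goes through.
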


\begin{proof}
This is certainly well known; for the reader's convenience and lack of explicit reference, we present a proof; we prove the lemma for $G_{\gp}$, the proof for $H_{\gp}$ is identical.  

Let $\mathcal{W}$ denote the set of non-zero weights for $\diagsubgrp$ in $\gfrak_\gp$ where $A$ is the diagonal subgroup of the principal $\SL_2$ in \eqref{eq:at in SL2}.
As in \S\ref{sec: adjoint rep}, we let $\gfrak_{\gp}^{(\lambda)}\subset\gfrak_{\gp}$ denote the space of vectors with weight $\lambda\in \mathcal{W}$.
It follows from Lemma~\ref{lem: weight spaces are integral} that $\underline{\gfrak_{\gp}^{(\lambda)}}=\underline{\gfrak_{\gp}}^{(\lambda)}$ for all~$\lambda$.

We first claim that $\underline{\mathfrak G_{\gp}}$ is generated by the unipotent subgroups $\exp(\cdot \underline{\vpz})$ where $\underline{\vpz} \in \underline{\gfrak_{\gp}}^{(\lambda)}$ and $\lambda \in \mathcal{W}$.
Observe here that $\underline{\mathfrak G_{\gp}}$ is connected since $\G$ is simply connected, see~\cite[\S3.8]{Tits-Corvallis}.
Recall that the special fiber of a simple factor $\mathfrak G_{\gp}'$ of $\mathfrak G_{\gp}$ is an $\mathbb{F}_\gp$-simple factor $\underline{\mathfrak G_{\gp}}'$ of the special fiber $\underline{\mathfrak G_{\gp}}$.
Given any simple factor $\underline{\mathfrak G_{\gp}'}$, the projection of $\underline{\theta_\gp(\SL_2)}$ is non-trivial by Lemma~\ref{lem:nofactorscontainingHp}. The subgroup of the factor generated by all unipotent subgroups for non-zero pure weight is a normal subgroup and contains the projection of $\underline{\theta_\gp(\SL_2)}$ and so is equal to $\underline{\mathfrak G_{\gp}'}$.
This proves the claim.

Using $\gp\gg 1$ and a dimension increase argument (see e.g.~\cite[Prop.~5.2]{EinsiedlerGhosh}), there exists $\vpz_1,\ldots,\vpz_{\dim\G}$ with $\vpz_i\in\gfrak^{(\lambda_i)}$, where $\lambda_i\in\mathcal W$ for all $i$, so that
\[
\underline{\varphi}: \underline{\mathbf A}^{\dim\G}\to \underline{\mathfrak G_{\gp}}
\]
defined by $\underline{\varphi}(t_1,\ldots,t_{\dim\G})=\exp(t_1\underline{\vpz_1})\cdots\exp(t_{\dim\G}\underline{\vpz_{\dim\G}})$ is a dominant map.

The rest of the argument is similar to \cite[\S6.9]{EMMV}; we recall some of the details. 
Using $\gp\gg1$ and $\deg \underline{\varphi}\ll N^\star$ it follows that $\underline{\varphi}$ is a separable map. 
In view of this and using $p\gg1$ again, a simple pigeonhole argument shows that there is some $\underline{t}\in\mathbb F_\gp^{\dim\G}$ so that $
{\rm D}_{\underline{t}}\underline{\varphi}$ has full rank.
Define $\varphi:\Q_\gp^{\dim\G}\to \G(\Q_p)$ by lifting $\underline{\varphi}$ in the obvious way.
The above discussion implies that there is some $t\in\Z_\gp^{\dim\G}$ so that $\det({\rm D}_t\varphi)\in\Z_\gp^\times$, as we claimed. 
\end{proof}

\subsection{An effective version of Greenberg's theorem}\label{sec:effGreenberg}

As mentioned, Greenberg's theorems \cite{Greenberg1,Greenberg2} can be understood as an analogue of Hensel's lifting lemma for non-smooth varieties. 
Vaguely, one would like to assert that whenever $\vpz \in \Z_{\gp}^m$ is a point with
\begin{align*}
f_1(\vpz) \equiv \ldots \equiv f_n(\vpz) \equiv 0 \;\; (\mod \gp^k)
\end{align*}
for $f_1,\ldots,f_n$ polynomials over $\Z$, then $\vpz$ is $p^{-\star k}$-close to a $\Z_\gp$-point on the variety $\{f_1= \ldots = f_n =0\}$.
Alternatively, this can be seen as a $p$-adic analogue of Lojasiewicz's inequality for polynomials.

In Greenberg's work, the dependency on the polynomials $f_1,\ldots,f_n$ is inexplicit though the method can be effectivized using additionally an effective Nullstellensatz.
Recall that the height of a polynomial over $\Z$ is the maximum of the absolute values of its coefficients.

\begin{theorem}\label{thm:greenberg}
Let $f_1,\ldots,f_n \in \Z[t_1,\ldots,t_m]$ have total degree at most $d$ and height at most $h$.
There exists $A>0$ depending only on $n,m,d$ so that the following holds:
Let $\gp$ be a prime.
Suppose that $\wpz_1,\ldots,\wpz_m \in \Z_{\gp}$ are such that 
$$f_j(\wpz_1,\ldots,\wpz_m) \equiv 0 \;\; (\mod \gp^k)$$
for all $j$ and for $k \in \N$ with $k \geq A\log_{\gp}(h)+A$.

Then there exist $\wpz_1',\ldots,\wpz_m' \in \Z_{\gp}$ such that $f_j(\wpz_1',\ldots,\wpz_m') = 0$ for all $j$ and for all $1 \leq i \leq m$
\begin{align*}
\wpz_i' \equiv \wpz_i \;\; (\mod \gp^{\lceil k/A\rceil}).
\end{align*} 
\end{theorem}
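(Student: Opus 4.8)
The statement is an effective version of Greenberg's theorem on lifting approximate solutions of polynomial equations to exact ones. The plan is to reduce to the case of a single equation and then run a quantitative Newton/Hensel-type iteration whose convergence is controlled by an effective Łojasiewicz-type inequality over $\Q_\gp$, with the crucial point being that the Łojasiewicz exponent is bounded in terms of $n,m,d$ alone and the constant in the inequality is bounded polynomially in the height $h$. First I would replace the system $f_1 = \dots = f_n = 0$ by the single polynomial $f = f_1^2 + \dots + f_n^2$ (or, over $\Z_\gp$, work with the ideal directly, but squaring avoids sign issues and the zero set is unchanged). Then the hypothesis $f_j(\wpz) \equiv 0 \pmod{\gp^k}$ for all $j$ gives $f(\wpz) \equiv 0 \pmod{\gp^{2k}}$, and the goal becomes: find $\wpz'$ on the variety $V = \{f = 0\}$ (equivalently on $\{f_1 = \dots = f_n = 0\}$ since we will recover the exact vanishing of each $f_j$) with $\wpz' \equiv \wpz \pmod{\gp^{\lceil k/A\rceil}}$.

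The heart of the matter is an \emph{effective Łojasiewicz inequality}: there exist $C = C(n,m,d) \geq 1$ (the exponent) and a polynomial bound in $h$ such that for any $\wpz \in \Z_\gp^m$,
\begin{align*}
\mathrm{dist}_\gp(\wpz, V \cap \Z_\gp^m) \;\leq\; \big(h^{C}\, |f(\wpz)|_\gp\big)^{1/C}
\end{align*}
whenever the right-hand side is small enough; here $\mathrm{dist}_\gp$ is the $\gp$-adic distance and $V \cap \Z_\gp^m$ is assumed non-empty along the relevant residue disc. The standard route to such an inequality is via an effective Nullstellensatz: since $f$ vanishes on $V$ as a set, some power $f^N$ lies in the ideal generated by the $f_j$ together with the polynomials cutting out the (finitely many) components, and the effective Nullstellensatz of Masser–Wüstholz / Kollár bounds both $N$ and the heights of the cofactors in terms of $n,m,d$ and polynomially in $h$. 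Feeding this into a Newton iteration $\wpz^{(0)} = \wpz$, $\wpz^{(i+1)} = \wpz^{(i)} - (\nabla f(\wpz^{(i)}))^{\dagger} f(\wpz^{(i)})$ (a suitable pseudo-inverse of the gradient, or differentiation along a direction where a partial derivative is $\gp$-adically as large as the Łojasiewicz bound permits) yields a Cauchy sequence in $\Z_\gp^m$ whose limit $\wpz'$ satisfies $f(\wpz') = 0$ and stays within $\gp$-adic distance $|f(\wpz)|_\gp^{1/C'}$ of $\wpz$, after absorbing the $h^{C}$ factor into the exponent by shrinking — which is exactly where the hypothesis $k \geq A\log_\gp(h) + A$ is used: it guarantees $h^{C} |f(\wpz)|_\gp \leq \gp^{-2k + C A \log_\gp h} \leq \gp^{-k}$ is genuinely small, so the iteration converges and the displacement is $\gp^{-\lceil k/A \rceil}$ with $A$ depending only on $C$ and the Nullstellensatz bounds, hence only on $n,m,d$. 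Finally, since $\wpz'$ lies on $V = \{f_1^2 + \dots + f_n^2 = 0\}$ and we are over the domain $\Q_\gp$, each $f_j(\wpz') = 0$ as required.

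The main obstacle is making the constant $A$ depend \emph{only} on $n,m,d$ and not on $\gp$ or $h$ (beyond the explicit $\log_\gp h$ term), which forces one to use genuinely effective forms of the Nullstellensatz and of the Łojasiewicz inequality with degree- and height-bounds that are uniform in the prime. A clean way to avoid re-deriving all of this is simply to cite the explicit form already isolated in the literature: as the excerpt indicates, the precise version needed is \cite{LMMS}, so in the write-up I would state Theorem~\ref{thm:greenberg} and attribute it directly to that reference (it being a $\gp$-adic analogue of Łojasiewicz's inequality, effectivizing Greenberg \cite{Greenberg1,Greenberg2} by combining his argument with an effective Nullstellensatz), noting that the $\gp$-uniformity is what distinguishes it from Greenberg's original inexplicit statement. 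A secondary technical point worth a remark is the passage from a system to a single equation and back: squaring works over $\Q_\gp$ (a field, so no zero divisors among the values), and one should record that the exponent $C$ for the sum of squares is controlled by the $C$'s for the individual $f_j$ together with $n$.
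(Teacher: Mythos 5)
Your ultimate conclusion matches the paper's treatment: Theorem~\ref{thm:greenberg} is not reproved here but attributed to \cite[App.~A]{LMMS}, where it is established by an induction on dimension following Greenberg's original argument, with the base case (empty variety) handled by an effective Nullstellensatz \cite[Thm.~IV]{MasserWustholz}. So deferring to that reference is exactly what the paper does.

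However, the proof sketch you propose has a genuine gap that would need to be flagged before it could be trusted as an outline. The reduction of the system $f_1 = \dots = f_n = 0$ to the single equation $f := f_1^2 + \dots + f_n^2 = 0$ is sound over $\R$ but fails over $\Q_\gp$: the parenthetical ``a field, so no zero divisors among the values'' is a non sequitur, since having no zero divisors says $ab = 0 \Rightarrow a = 0$ or $b = 0$, and does not prevent $a^2 + b^2 = 0$ with $a,b \neq 0$. Concretely, $-1$ is a square in $\Q_\gp$ whenever $\gp \equiv 1 \pmod 4$, so already $x^2 + y^2$ has nontrivial zeros, and more generally $\Q_\gp$ is not formally real, so sums of squares vanish nontrivially. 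Thus the zero locus of $f$ is strictly larger than $V(f_1,\dots,f_n)$, and a point $\wpz'$ found on $\{f = 0\}$ need not satisfy $f_j(\wpz') = 0$. This breaks the final step of the sketch. A second, subtler issue is that the proposed Newton iteration with a pseudo-inverse does not by itself handle singular points of $V$; Greenberg's dimension induction (or, equivalently, a genuine \emph{system} version of the effective $\gp$-adic Łojasiewicz inequality) is precisely the machinery that controls the singular locus, so one cannot both invoke an effective Łojasiewicz inequality as a black box and then also need a Newton argument on top of it --- they are addressing the same difficulty. If you want a self-contained route, you would need to argue directly with the ideal $(f_1,\dots,f_n)$ and the effective Nullstellensatz applied to the singular strata, as in \cite[App.~A]{LMMS}, rather than via a sum-of-squares reduction.
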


The proof of Theorem~\ref{thm:greenberg} can be found in \cite[App.~A]{LMMS}; it follows Greenberg's proof by induction on the dimension making explicit the dependencies on the height.
The base case of the empty variety is a direct consequence of an effective Nullstellensatz --- see e.g.~\cite[Thm.~IV]{MasserWustholz}.

\subsection{Preliminaries from geometric invariant theory}\label{sec:geominvth}

For the proof it will be convenient to know that $\G$-orbits of certain specific vectors in representations of $\G$ are closed.
We establish these facts here using geometric invariant theory. 

Recall the following: suppose we are given a representation $\Mbf \to \GL(V)$ of a reductive group $\Mbf$ over an algebraically closed field of characteristic zero and a vector $\vpz \in V$.
If $\overline{\Mbf.\vpz} \setminus \Mbf.\vpz$ non-empty, there exists a vector $\vpz' \in \overline{\Mbf.\vpz} \setminus\Mbf.\vpz$ and a cocharacter $\lambda: \G_m \to \Mbf$ such that $\lim_{t \to 0}\lambda(t). \vpz = \vpz'$.
This is a result due to Mumford; see for instance \cite[Ch.~2,\S1]{Mumford}, \cite[Thm.~1.4]{Kempf}, or \cite[Thm.~4.2]{Birkes} where the latter contains an elementary proof due to Richardson.

\begin{lemma}\label{lem:closedorbitsembeddings}
Let $\Mbf$ be a reductive group over an algebraically closed field $K$ of characteristic zero.
For $r>0$ let constants $a_{ijk} \in K$, $1 \leq i,j,k\leq r$, be given and assume that there is an (abstract) reductive Lie algebra with structure coefficients $a_{ijk}$.
Let $\mfrak$ be the Lie algebra of $\Mbf$ and consider the subvariety $\Vbf$ of $\mfrak^r$ consisting of tuples $(\vpz_{i})_{1 \leq i \leq r}$ such that $[\vpz_i,\vpz_j] = \sum_{k}a_{ijk}\vpz_k$ for all $1 \leq i,j\leq r$.
Then for any point $\vpz= (\vpz_{i})_i \in \Vbf(K)$ the $\Mbf$-orbit $\Mbf.\vpz$ is closed.
\end{lemma}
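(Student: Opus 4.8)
The plan is to use the Hilbert--Mumford / Kempf--Richardson criterion for closed orbits: an orbit $\Mbf.\vpz$ in a representation $V$ of a reductive group over an algebraically closed field of characteristic zero is closed if and only if for every cocharacter $\lambda\colon\G_m\to\Mbf$ for which $\lim_{t\to 0}\lambda(t).\vpz$ exists, that limit already lies in $\Mbf.\vpz$. Concretely, suppose for contradiction that $\Mbf.\vpz$ is not closed. By Mumford's result quoted just before the lemma, there is $\vpz'=(\vpz_i')_i \in \overline{\Mbf.\vpz}\setminus\Mbf.\vpz$ and a cocharacter $\lambda\colon\G_m\to\Mbf$ with $\lim_{t\to 0}\lambda(t).\vpz=\vpz'$, where $\Mbf$ acts diagonally on $\mfrak^r$ by the adjoint representation on each factor.

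The key structural observation is that $\vpz'$ still lies in $\Vbf(K)$: the defining equations $[\vpz_i,\vpz_j]=\sum_k a_{ijk}\vpz_k$ are $\Mbf$-equivariant (the bracket and the coefficients are preserved under $\Ad$, since $\Ad(m)[\vpz_i,\vpz_j]=[\Ad(m)\vpz_i,\Ad(m)\vpz_j]$), hence satisfied on the whole orbit and, being closed conditions, on its closure. So both $\vpz$ and $\vpz'$ are tuples spanning (a priori) subalgebras of $\mfrak$ with the \emph{same} structure constants $a_{ijk}$; more precisely the linear maps $K^r\to\mfrak$ sending the $i$-th basis vector to $\vpz_i$ (resp.\ $\vpz_i'$) are Lie algebra homomorphisms from the fixed abstract reductive Lie algebra $\afrak$ with structure constants $(a_{ijk})$ into $\mfrak$. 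Now I would use the cocharacter $\lambda$ to derive a contradiction. Write $\mfrak=\bigoplus_{n\in\Z}\mfrak_n$ for the weight decomposition under $\Ad\circ\lambda$. Since $\lim_{t\to0}\lambda(t).\vpz_i=\vpz_i'$ exists, each $\vpz_i$ has only non-negative $\lambda$-weights, $\vpz_i=\sum_{n\geq 0}\vpz_{i,n}$, and $\vpz_i'=\vpz_{i,0}$ is its weight-$0$ component, i.e.\ its image under the projection $\pi\colon\mfrak\to\mfrak_0$ (the centralizer of $\lambda$). The crucial point is that $\mfrak_0$ is a Lie subalgebra and $\pi$ is \emph{not} in general a Lie algebra homomorphism, but the composite $\pi\circ(\text{our homomorphism }\afrak\to\mfrak)$ \emph{is} a homomorphism provided all the $\vpz_i$ have non-negative weights --- because then $[\vpz_i,\vpz_j]$ also has non-negative weights and $\pi[\vpz_i,\vpz_j]=[\pi\vpz_i,\pi\vpz_j]$ holds for the weight-$0$ components. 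Hence $\vpz'$ and $\vpz$ define homomorphisms $\afrak\to\mfrak$ with the same image dimension... but that is not quite enough; what we actually get is that $\dim(\afrak/\ker)$ is the same. So I would instead argue: the stabilizer (centralizer) of $\vpz$ in $\mfrak$ is $\mathfrak{z}_\mfrak(\vpz_1,\dots,\vpz_r)$, and by upper semicontinuity of stabilizer dimension along $t\to 0$, $\dim\mathfrak{z}_\mfrak(\vpz')\geq \dim\mathfrak{z}_\mfrak(\vpz)$, with equality iff $\vpz'\in\Mbf.\vpz$. So it suffices to show $\dim\mathfrak{z}_\mfrak(\vpz')=\dim\mathfrak{z}_\mfrak(\vpz)$.

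To prove this dimension equality I would exploit that $\afrak$ is \emph{reductive}: by Weyl's theorem the adjoint action of $\afrak$ on $\mfrak$ (via either homomorphism) is completely reducible, and $\mathfrak{z}_\mfrak(\vpz)=\mfrak^{\afrak}$ is the space of invariants, whose dimension equals the multiplicity of the trivial representation. Now $\lambda$ acts on $\mfrak$ compatibly, and the weight-$0$ piece $\pi$ is a $\lambda$-equivariant deformation of the $\afrak$-module structure (a standard "degeneration within the variety of Lie algebra representations" argument): the $\afrak$-module $\mfrak$ with the $\vpz'$-action is the associated graded of the $\afrak$-module $\mfrak$ with the $\vpz$-action with respect to the $\lambda$-filtration. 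For a reductive $\afrak$ with its action coming from an honest representation, taking associated graded with respect to a $\G_m$-compatible filtration preserves the isomorphism class of the module (semisimplicity forces the filtration to split equivariantly), hence preserves all multiplicities, in particular $\dim\mfrak^{\afrak}$. Therefore $\dim\mathfrak{z}_\mfrak(\vpz')=\dim\mathfrak{z}_\mfrak(\vpz)$, forcing $\vpz'\in\Mbf.\vpz$, a contradiction. Hence $\Mbf.\vpz$ is closed.

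\textbf{Main obstacle.} The step I expect to require the most care is making rigorous the claim that the $\vpz'$-action of $\afrak$ on $\mfrak$ is \emph{isomorphic} (not merely "has the same structure constants") to the $\vpz$-action, i.e.\ that the $\lambda$-degeneration does not change the $\afrak$-module $\mfrak$ up to isomorphism. The clean way is: fix a cocharacter, filter $\mfrak$ by $F_{\geq n}=\bigoplus_{m\geq n}\mfrak_m$; this is an $\afrak$-stable filtration for the $\vpz$-action since the $\vpz_i$ have weights $\geq 0$; the associated graded is the $\vpz'$-action; and a filtered module over a reductive Lie algebra (finite-dimensional, char $0$) is isomorphic to its associated graded because it is semisimple and the filtration splits as a direct sum of isotypic-graded pieces. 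One should double-check that $\mfrak$ is genuinely a \emph{module} over the abstract $\afrak$ in a way independent of whether $\afrak\to\mfrak$ is injective --- it is, via $x\mapsto \ad_\mfrak(\text{image of }x)$ --- and that this is the structure whose invariants compute the centralizer. Everything else (equivariance of the defining equations, closedness of $\Vbf$ in $\mfrak^r$, semicontinuity of stabilizer dimension, the Kempf--Richardson reduction to a one-parameter subgroup) is routine and already cited in the excerpt.
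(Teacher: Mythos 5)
Your proof is correct and takes a genuinely different route from the paper's, although both begin with the same Kempf--Mumford--Richardson reduction to a cocharacter $\lambda$ with $\lim_{t\to0}\lambda(t).\vpz=\vpz'\in\overline{\Mbf.\vpz}\setminus\Mbf.\vpz$, and both ultimately hinge on reductivity of the abstract Lie algebra $\afrak$. The paper argues on the group side: $\vpz$ lies in $\Lie(\Pbf)^r$ for the parabolic $\Pbf$ attached to $\lambda$, $\vpz'$ lies in $\Lie(\Lbf)^r$ for the Levi $\Lbf$, and since $\sfrak=\mathrm{span}(\vpz_i)$ is reductive, the Levi--Malcev theorem produces $u\in\Rbf_u(\Pbf)(K)$ with $\Ad(u)\sfrak\subset\Lie(\Lbf)$; because $\lambda(t)u\lambda(t)^{-1}\to\mathrm{id}$ this forces $\vpz'=u.\vpz$, a contradiction. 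You argue instead on the module side: the $\lambda$-filtration $F_{\geq n}=\bigoplus_{m\geq n}\mfrak_m$ is $\afrak$-stable for the $\vpz$-action, its associated graded carries the $\vpz'$-action, and complete reducibility of the finite-dimensional $\afrak$-module $\mfrak$ splits the filtration, so the two module structures are isomorphic; hence $\dim\mathfrak{z}_\mfrak(\vpz')=\dim\mathfrak{z}_\mfrak(\vpz)$ and upper semicontinuity of stabilizer dimension forces $\vpz'\in\Mbf.\vpz$. Your version replaces the explicit Levi--Malcev conjugation by a representation-theoretic degeneration argument plus a dimension count --- slightly less direct, but softer, and it makes transparent exactly where reductivity enters. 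One shared caveat is worth flagging: the hypothesis ``reductive'' on $\afrak$ is being used to mean complete reducibility of the $\afrak$-module $\mfrak$ (your proof), resp.\ that a reductive subalgebra of $\Lie(\Pbf)$ can be conjugated into the Levi by $\Rbf_u(\Pbf)$ (paper's proof); both of these fail if, say, $\afrak$ is one-dimensional abelian and $\vpz_1\in\mfrak$ is a nonzero nilpotent, and indeed the orbit of a nilpotent is not closed. The lemma is sound exactly when $\afrak$ is semisimple, as it is in the intended application ($\afrak=\mathfrak{sl}_2$ in Proposition~\ref{prop:varietyofembeddings}), and under that reading both arguments go through.
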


Note that we will only apply the lemma for $\Mbf=\G$ and the variety of $\mathfrak{sl}_2$-triples in $\gfrak^3$; see Proposition~\ref{prop:varietyofembeddings} below.

\begin{proof}
Let $\vpz$ be as in the lemma and suppose by contradiction that $\overline{\Mbf.\vpz} \setminus \Mbf.\vpz $ is non-empty.
Then there exists a cocharacter $\lambda: \G_m \to \Mbf$ with $\lim_{t \to 0}\lambda(t).\vpz = \vpz' \in \overline{\Mbf.\vpz} \setminus \Mbf.\vpz$.
By construction, all components $\vpz_i$ of $\vpz$ belong to the Lie algebra of the parabolic subgroup
\begin{align*}
\Pbf = \{g \in \Mbf: \lim_{t \to 0}\lambda(t)g\lambda(t)^{-1} \text{ exists in }\Mbf\}.
\end{align*}
The components $\vpz'_i$ of $\vpz'$ are invariant under the adjoint action by $\lambda(\G_m)$ and hence belong to the Lie algebra of the centralizer $\Lbf$ of $\lambda(\G_m)$ in $\Pbf$.
The Lie algebra $\sfrak$ spanned by $\vpz_1,\ldots,\vpz_r$ is a factor of the abstract Lie algebra over $K$ with structure coefficients $(a_{ijk})_{ijk}$ and hence reductive.
Thus, there exists a Levi subalgebra of $\Lie(\Pbf)$ containing $\sfrak$.
By the Levi-Malcev theorem, there is some $u \in \Rbf_{u}(\Pbf)(K)$ such that $\Ad(u)\sfrak$ is contained in the Levi subalgebra $\Lie(\Lbf)$. 
Here, $\Rbf_{u}(\Pbf)$ is the unipotent radical of $\Pbf$.
But then necessarily 
\begin{align*}
\vpz' = \lim_{t \to 0}\lambda(t).\vpz = \lim_{t \to 0}\lambda(t).(u.\vpz) = u.\vpz
\end{align*}
where we used $\lambda(t) u \lambda(t)^{-1} \to \id$ in the second equality and the fact that $u.\vpz$ is fixed by $\lambda(\G_m)$ in the third one. This contradicts our choice of $\vpz'$ and  proves the lemma.
\end{proof}

\subsection{A variety of homomorphisms}\label{sec:varietyofembeddings}
The proof of Theorem~\ref{thm:effgen-intro} relies on Greenberg's theorem in the form given by Theorem~\ref{thm:greenberg} applied to suitable varieties. 
We will now a building block of these varieties, specifically the variety of homomorphisms $\mathfrak{sl}_2 \to \gfrak$.

\begin{proposition}\label{prop:varietyofembeddings}
Consider the subvariety $\Ebf_{\gfrak}$ of $\gfrak^3$ defined over $\Q$ consisting of tuples $(\wpz_+,\wpz_0,\wpz_-)$ satisfying the relations
\begin{align*}
[\wpz_0,\wpz_+] = 2 \wpz_+,\
[\wpz_0,\wpz_-] = -2 \wpz_-,\
[\wpz_+,\wpz_-] =  \wpz_0
\end{align*}
(i.e.~$\mathfrak{sl}_2$-triples). 
Then $\Ebf_{\gfrak}$ is invariant under the (adjoint) $\G$-action on $\gfrak^{3}$ and consists of finitely many closed orbits under that action.
Moreover, the homomorphism $\theta_\gp$ defines a point in $\Ebf_{\gfrak}(\Z_\gp) = \Ebf_{\gfrak}(\Q_\gp) \cap \gfrak_\gp[0]^3$.
\end{proposition}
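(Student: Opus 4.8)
The plan is to treat the three assertions separately; the first and the third are essentially formal, and the only substantive point is the finiteness of orbits.

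\emph{$\G$-invariance.} First I would note that this is immediate: for every $g\in\G$ the operator $\Ad(g)$ is a Lie algebra automorphism of $\gfrak$, so applying $\Ad(g)$ to all three entries of a tuple $(\wpz_+,\wpz_0,\wpz_-)$ preserves each of the three bracket identities cutting out $\Ebf_{\gfrak}$. Hence $\Ebf_{\gfrak}$ is stable under the diagonal adjoint action of $\G$ on $\gfrak^3$. (The defining equations have integer coefficients in coordinates adapted to $\gfrak_\gp[0]$, which is what makes $\Ebf_{\gfrak}(\Z_\gp)=\Ebf_{\gfrak}(\Q_\gp)\cap\gfrak_\gp[0]^3$ well defined, as needed later for Greenberg's theorem.)

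\emph{Closedness and finiteness of orbits.} Closedness of each $\G$-orbit is exactly Lemma~\ref{lem:closedorbitsembeddings} applied with $\Mbf=\G$, $r=3$, and $(a_{ijk})$ the structure constants of $\mathfrak{sl}_2$ in its standard basis $(e,h,f)$: the abstract reductive Lie algebra required there is $\mathfrak{sl}_2$ itself, and $\Ebf_{\gfrak}$ is literally the variety $\Vbf$ of that lemma, so over an algebraic closure every $\G$-orbit in $\Ebf_{\gfrak}$ is Zariski closed. For finiteness I would invoke the classical structure theory of $\mathfrak{sl}_2$-triples: the projection $(\wpz_+,\wpz_0,\wpz_-)\mapsto\wpz_+$ is $\G$-equivariant, each nonzero triple has $\wpz_+$ a nonzero nilpotent of $\gfrak$ (since $\ad(\wpz_+)$ is a sum of raising operators in the resulting $\mathfrak{sl}_2$-action on $\gfrak$), by Jacobson--Morozov every nonzero nilpotent occurs this way, and by Kostant's theorem the $\mathfrak{sl}_2$-triples extending a fixed nilpotent form a single orbit under its centralizer; thus $\G$-orbits on $\Ebf_{\gfrak}\setminus\{0\}$ correspond bijectively to the nonzero nilpotent orbits in $\gfrak$, of which there are finitely many (Dynkin--Kostant; see e.g.\ Collingwood--McGovern). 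Adding the fixed point $(0,0,0)$ gives finitely many orbits in all. The main obstacle, such as it is, lies exactly here: one must be comfortable quoting Jacobson--Morozov, Kostant, and the finiteness of nilpotent orbits over a characteristic-zero algebraically closed field, and one should keep in mind that ``closed orbit'' is a geometric condition, so the cleanest formulation of finiteness is over $\overline{\Q}$ (no Galois descent is needed for the statement as it is used downstream).

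\emph{The point defined by $\theta_\gp$.} Set $\zpz^0=\mathrm{D}\theta_\gp(h)\in\gfrak_\gp$, the image of the standard semisimple element $h=\mathrm{diag}(1,-1)\in\mathfrak{sl}_2$. Since $\mathrm{D}\theta_\gp$ is a homomorphism of Lie algebras and $(e,h,f)$ satisfies $[h,e]=2e$, $[h,f]=-2f$, $[e,f]=h$, the triple $(\zpz^+,\zpz^0,\zpz^-)$ satisfies the defining relations of $\Ebf_{\gfrak}$, i.e.\ it lies in $\Ebf_{\gfrak}(\Q_\gp)$. To see that it lies in $\gfrak_\gp[0]^3$, recall from Lemma~\ref{lem:principal SL2} and the integral-immersion discussion preceding it that $\theta_{0,\gp}$, and therefore $\theta_\gp=\jmap_\gp\circ\theta_{0,\gp}$, extends to a closed immersion of $\Z_\gp$-group schemes into $(\SL_N)_{\Z_\gp}$; hence $\mathrm{D}\theta_\gp$ carries the lattice $\mathfrak{sl}_2(\Z_\gp)$ into $\gfrak_\gp[0]$ (which agrees with $\Lie(\Gfrak_\gp)(\Z_\gp)$), consistently with the fact, already recorded earlier, that $\zpz^+,\zpz^-\in\gfrak_\gp[0]$. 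As $e,h,f\in\mathfrak{sl}_2(\Z_\gp)$, we conclude $(\zpz^+,\zpz^0,\zpz^-)\in\Ebf_{\gfrak}(\Q_\gp)\cap\gfrak_\gp[0]^3=\Ebf_{\gfrak}(\Z_\gp)$, which completes the argument.
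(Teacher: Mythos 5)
Your proof is correct, and the invariance, closedness (via Lemma~\ref{lem:closedorbitsembeddings}), and integrality-of-$\theta_\gp$ parts match the paper's argument. Where you diverge is the finiteness of $\G(\overline{\Q})$-orbits on $\Ebf_\gfrak$. The paper first observes that there are finitely many $\SL_N(\overline{\Q})$-conjugacy classes of Lie algebra homomorphisms $\mathfrak{sl}_2 \to \mathfrak{sl}_N$ (by the classification of $\mathfrak{sl}_2$-representations of dimension $N$), and then invokes Richardson's theorem that each such $\SL_N$-class meets the set of homomorphisms landing in $\gfrak\otimes\overline{\Q}$ in finitely many $\G(\overline{\Q})$-orbits. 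You instead use the intrinsic Dynkin--Kostant package: Jacobson--Morozov plus Kostant's conjugacy theorem set up a bijection between nonzero $\G$-orbits of $\mathfrak{sl}_2$-triples and nonzero nilpotent $\G$-orbits in $\gfrak\otimes\overline{\Q}$, and there are finitely many of the latter. Both routes are classical and equally rigorous; yours avoids the embedding $\gfrak\hookrightarrow\mathfrak{sl}_N$ entirely and reduces directly to finiteness of nilpotent orbits, while the paper's is slightly more self-contained in that it quotes a single black box (Richardson) on top of elementary $\mathfrak{sl}_2$-representation theory. One small remark for polish: your observation that a triple with one vanishing component is entirely zero is implicit but worth making explicit so that the bijection with nonzero nilpotent orbits cleanly accounts for all of $\Ebf_\gfrak\setminus\{0\}$.
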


As is apparent from the above, the variety $\Ebf_{\gfrak}$ of homomorphisms $\mathfrak{sl}_2 \to \gfrak$ is defined by polynomials of height $\ll \height(\G)^\star$.

\begin{proof}
It is clear that $\Ebf_{\gfrak}$ is $\G$-invariant.
By the representation theory of $\mathfrak{sl}_2$, there are finitely many $\SL_N(\overline{\Q})$-conjugacy classes of Lie algebra homomorphisms $\mathfrak{sl}_2(\overline{\Q}) \to \mathfrak{sl}_N(\overline{\Q})$.
By work of Richardson \cite[Thm.~7.1]{Richardson}, the intersection of any such $\SL_N(\overline{\Q})$-conjugacy class with the set of homomorphisms $\mathfrak{sl}_2(\overline{\Q}) \to \gfrak\otimes \overline{\Q}$ is a finite union of $\G(\overline{\Q})$-conjugacy classes. 
This shows that $\Ebf_{\gfrak}$ consists of finitely many $\G$-orbits.
By Lemma~\ref{lem:closedorbitsembeddings}, any such orbit is closed.
Lastly, the point defined by $\theta_\gp$ is precisely obtained by applying the derivative of $\theta_\gp$ to the standard $\mathfrak{sl}_2$-triple in $\mathfrak{sl}_2(\Z_\gp)$ (we use here a property of the good prime through Lemma~\ref{lem:principal SL2}).
The claim follows.
\end{proof}

The variety $\Ebf_{\gfrak}$ constructed above might consist of various $\G$-orbits that could be e.g.~of different dimension (or not Galois conjugate over $\overline{\Q}$); this can be problematic for our purposes.

\begin{example}\label{exp:sl2triples in sl3}
Consider the variety $\Ebf_{\mathfrak{sl}_3}$.
A non-zero $\mathfrak{sl}_2$-triple in $\mathfrak{sl}_3$ corresponds either to an irreducible or a reducible non-zero three-dimensional representation of $\mathfrak{sl}_2$.
The so-obtained subvarieties of $\Ebf_{\mathfrak{sl}_3}$ are $\G$-invariant and have different dimension ($8$ in the former and $7$ in the latter case).
\end{example}

One can show (using Gr\"obner-based algorithms) that the height of $\Q$-irreducible components of $\Ebf_\gfrak$ (or any other affine variety defined over $\Q$) is also polynomially controlled.
In fact, such estimates are used crucially in the proof of Theorem~\ref{thm:greenberg} given in \cite[App.~A]{LMMS}.
Nevertheless, we employ here a soft argument to control the height of unions of $\Q$-irreducible components of equal dimension.

\begin{lemma}\label{lem:exactdim}
Let $\pi: \SL_N \to \GL_n$ be a rational representation of $\SL_N$ defined over $\Q$ and let $r,d \geq 1$.
Then there exist $C,A>0$ satisfying the following statement.

Let $\Vbf \subset \Abf^n$ be a $\G$-invariant subvariety defined over $\Q$ such that
\begin{itemize}
\item $\Vbf$ is defined by polynomials $f_1,\ldots,f_r \in \Z[x_1,\ldots,x_n]$ of height at most $h>2$ and degree at most $d$, and that
\item $\Vbf$ consists of finitely many closed $\G$-orbits.
\end{itemize}
Then for any dimension $D$ the subvariety of $\Vbf$ consisting of orbits of dimension exactly $D$ is defined by at most $C$ many integral polynomial equations of height at most $Ch^A \height(\G)^A$ and degree at most $C$.
\end{lemma}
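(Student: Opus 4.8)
The plan is to fix the dimension $D$ and to show that the union $\Vbf_D$ of those $\G$-orbits in $\Vbf$ of dimension exactly $D$ is cut out by few polynomials of polynomially bounded height and degree. The key point is that $\Vbf_D$ is a union of irreducible components of $\Vbf$ --- since each $\G$-orbit is a smooth locally closed subvariety of $\Vbf$ that is closed by hypothesis, the orbits of dimension $D$ are exactly the $D$-dimensional components of $\Vbf$ (and their closures are among the $\Q$-irreducible components, as $\G$ is connected so each orbit is irreducible over $\overline{\Q}$; a Galois orbit of components of common dimension $D$ assembles into a $\Q$-component of $\Vbf_D$). Thus the task reduces to: the union of the $D$-equidimensional part of a $\G$-invariant affine $\Q$-variety has polynomially controlled complexity. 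The cleanest way to prove that, with only a \emph{soft} argument as the statement advertises, is to avoid Gr\"obner basis machinery and instead exhibit $\Vbf_D$ as the image of a universally controlled construction and then invoke a quantitative elimination bound.

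Concretely, first I would bound the complexity of the ideal $I(\Vbf)$ of \emph{all} of $\Vbf$: this is $(f_1,\dots,f_r)$ up to radical, and by an effective Nullstellensatz (for instance \cite[Thm.~IV]{MasserWustholz}, already invoked for Theorem~\ref{thm:greenberg}) the radical is generated in degree $\ll_{n,d,r} 1$ by polynomials of height $\ll h^\star$. Next, I would handle equidimensional decomposition. Since $\Vbf$ is a finite union of closed orbits, its singular locus, and more generally the locus where the local dimension drops, can be described scheme-theoretically using minors of the Jacobian matrix of the $f_i$ together with the equations of $\Vbf$ itself: the locus of points lying on a component of dimension $\leq D-1$ is Zariski closed and is cut out by the $f_i$ together with the vanishing of all $(n-D+1)$-minors of the Jacobian (intersected appropriately); complements of such loci inside $\Vbf$ pick out the $\geq D$-dimensional part, and iterating picks out exactly $\Vbf_D$. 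Each such operation --- forming minors, taking unions and `saturations' to remove lower-dimensional components --- increases height and degree only polynomially in the input, provided one controls the number of steps. The number of steps is bounded in terms of $n$ alone (one iteration per possible dimension), which is where the bounds become uniform in $\Vbf$ but allowed to depend on $n,d,r$.

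Alternatively --- and this may be the more robust route to a genuinely soft argument --- I would use the $\G$-action directly. Pick a point $\vpz_0$ on a $D$-dimensional orbit; the orbit map $\G \to \G.\vpz_0 \subset \Vbf$ has image whose Zariski closure is defined over $\Q$ by elimination from the graph, with height $\ll \height(\G)^\star h^\star$ by a quantitative projection/resultant bound. Running over the finitely many orbits (finitely many in a sense bounded by $n,d,r$), taking unions, yields $\Vbf_D$ with controlled complexity. The point here is that one never needs to name the orbits explicitly: one can run a single elimination over the \emph{incidence variety} $\{(g,\vpz,\wpz) : \wpz \in \overline{\G.\vpz},\ \vpz,\wpz \in \Vbf,\ \dim \overline{\G.\vpz} = D\}$, where the dimension condition is again imposed by a Jacobian-minor condition on $\Vbf$ along the orbit, and project to the $\wpz$-coordinate. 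The quantitative elimination theory (e.g.\ via resultants or the effective closure bounds underlying \cite{MasserWustholz}) gives the height bound $Ch^A\height(\G)^A$ and degree $\leq C$ with $C,A$ depending only on $N$ (equivalently on $n,d$ via $\pi$) and on $r$.

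The main obstacle I anticipate is making the equidimensional-decomposition step quantitative while keeping it \emph{soft}: separating $\Vbf$ into its pieces of fixed dimension genuinely requires removing embedded or lower-dimensional components, i.e.\ a saturation, and controlling the height under saturation is exactly where Gr\"obner-degree bounds usually enter. My intended workaround is to exploit that the orbits are \emph{closed} and $\G$ is connected, so there are no embedded components and the decomposition is just into connected components of a smooth-in-codimension stratification; this lets me replace saturation by the cleaner operation of restricting the Jacobian-rank locus, whose complexity is governed by minors --- a polynomial operation --- rather than by ideal quotients. If even that proves delicate, the elimination-over-the-incidence-variety route sidesteps the issue entirely at the cost of invoking a quantitative elimination bound as a black box, which is acceptable here since the paper already relies on \cite{MasserWustholz} for Theorem~\ref{thm:greenberg}.
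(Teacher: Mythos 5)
Your reduction (the target is the union of the $D$-dimensional irreducible components, orbits being closed and smooth) is right, and the first ingredient --- cutting out the locus of dimension $\geq D$ by the $f_i$ together with the $(n-D+1)$-minors of the Jacobian $\mathrm{D}_x f$ --- matches the paper. But note you state this backwards: the vanishing of all $(n-D+1)$-minors forces $\mathrm{rank}(\mathrm{D}_x f) \leq n-D$, hence a kernel of dimension $\geq D$, hence local dimension $\geq D$, not $\leq D-1$.

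The genuine gap is in the step you yourself flag as the hard one: bounding the local dimension \emph{above} by $D$ as a closed polynomial condition, without saturation, ideal quotients, or elimination. The paper's key observation, which your proposal never articulates, is that one should use the $\G$-action a second time, at the level of the orbit map rather than the defining equations. Fix an integral basis $\wpz_1,\ldots,\wpz_{\dim\gfrak}$ of $\gfrak$ of height $\ll \height(\G)$ and let $\Pi(x)$ be the matrix with columns $\mathrm{D}\pi(\wpz_i)x$, i.e.~the derivative at the identity of $g \mapsto \pi(g)x$. Because every orbit in $\Vbf$ is closed (hence smooth, hence the orbit map has surjective differential onto the tangent space of the orbit), $\mathrm{rank}\,\Pi(x)$ at any $x \in \Vbf$ is \emph{exactly} the dimension of the orbit through $x$. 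Therefore the vanishing of all $(D+1)$-minors of $\Pi(x)$ is a closed condition that cuts out, inside $\Vbf$, precisely the orbits of dimension $\leq D$, with no lower-dimensional leftovers to saturate away. Intersecting this with the $(n-D+1)$-minor locus from $\mathrm{D}_x f$ gives exactly the $D$-dimensional part, and both families of minors have controlled degree $O_N(1)$ and height $\ll h^\star\height(\G)^\star$.

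Your alternative route via elimination over the incidence variety would presumably work with a quantitative elimination black box, but it is much heavier than needed: once you allow yourself Jacobian-rank conditions ``along the orbit,'' you should apply them pointwise on $\Vbf$ directly via $\Pi(x)$ rather than set up an incidence variety and project. The takeaway is that the closedness of the orbits buys you \emph{both} inequalities as minor conditions --- the lower bound from $\mathrm{D}_x f$ and the upper bound from $\Pi(x)$ --- which is exactly what replaces the saturation you were worried about.
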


\begin{proof}
Fix a $\Q$-basis $\wpz_1,\ldots,\wpz_{\dim(\gfrak)}$ of $\gfrak$ consisting of integral vectors of height $\ll\height(\G)$.
The fact that all orbits are closed implies that for any point $x$ in the variety the derivative $\gfrak \to \mathrm{T}_{x}\Vbf$ of the orbit map $\G \to \G.x$ is surjective.

Consider the $n-D+1$-minors of the derivative $\mathrm{D}_x f := (\partial_{x_j}f_i(x))_{1\leq i \leq r,1 \leq j \leq n}$ of the polynomial map $f=(f_1,\ldots,f_r)\colon \Abf^n \to \Abf^r$. 
The common zero locus of these minors together with $f_1,\ldots,f_r$ defines the subvariety $\Vbf'$ of $\Vbf$ of $\G$-orbits of dimension at least $D$. Note that $\Vbf'$ is also defined over $\Q$.

We now impose additional equations to obtain dimension exactly $D$. 
Let $\Pi(x)$ be the matrix with columns $\mathrm{D}\pi(\wpz_1)x,\ldots,\mathrm{D}\pi(\wpz_{\dim(\gfrak)})x$; this is the derivative of the map $(t_1,\ldots, t_{\dim(\gfrak)}) \to \pi(\exp(\sum_{i}t_i \wpz_i))x$.
The common zero locus of the $D+1$-minors of $\Pi(\cdot)$ defines a further subvariety $\Vbf''$ of $\Vbf'$. 
The geometric components of $\Vbf''$ have dimension at most (and hence exactly) $D$.
We have thus found the desired subvariety of $\Vbf$ and observe that the polynomial equations defining it satisfy the required properties.
\end{proof}

In the following we let $\Ebf_{\gfrak}'\subset \Ebf_{\gfrak}$ be the union of closed $\G$-orbits of dimension equal to the dimension of the orbit of the point corresponding to $\theta_\gp$ as in Proposition~\ref{prop:varietyofembeddings}.
Since $\theta_\gp$ is non-trivial, the latter dimension is non-zero and for any field $K$ of characteristic zero a point in $\Ebf_{\gfrak}'(K)$ yields an embedding $\mathfrak{sl}_2(K) \to \gfrak\otimes K$.
In view of Lemma~\ref{lem:exactdim}, $\Ebf_{\gfrak}'$ is defined by $O_N(1)$-many integral polynomial equations of height $\ll \height(\G)^\star$ and degree $O_N(1)$.

\subsection{An open mapping theorem}\label{sec:openmappingthm}
Suppose that $\Vbf$ is an affine $\Q$-variety consisting of finitely many closed $\G$-orbits.
It seems natural to suspect that nearby points on $\Vbf(\Q_{\gp})$ should be related by a small element of $\G(\Q_{\gp})$.
This also entails a local separation statement for the $\G$-orbits in $\Vbf$.
In this subsection, we prove a version of this making no use of any properties of the specific varieties from \S\ref{sec:varietyofembeddings}, our statements depend effectively on the height of polynomials defining $\Vbf$.

\begin{proposition}\label{prop:localseparatedness}
Let $\pi: \SL_N \to \GL_n$ be a rational representation of $\SL_N$ defined over $\Q$ and let $r,d \geq 1$. 
There exists $A>0$ satisfying the following statement.

Let $\Vbf \subset \Abf^n$ be a $\G$-invariant subvariety defined over $\Q$ such that
\begin{itemize}
\item $\Vbf$ is defined by polynomials $f_1,\ldots,f_r \in \Z[x_1,\ldots,x_n]$ of height at most $h>2$ and degree at most $d$, and
\item $\Vbf$ consists of finitely many closed $\G$-orbits of equal dimension.
\end{itemize}
Let $\gep>2$ be a rational prime.
Then there is some $k_0 \geq 1$ with 
\begin{align*}
\gep^{k_0} \leq \gep^A \height(\G)^Ah^A
\end{align*}
so that for any $x,y \in \Vbf(\Z_{\gep}) = \Vbf(\Q_{\gep}) \cap \Z_\gep^n$ with
\begin{align*}
\|x-y\| < \gep^{-2k_0}
\end{align*}
there exists $g \in G_{\gep}[0]$ with $\pi(g)x = y$ and $\|g-\id\| \leq \|x-y\| \gep^{{k_0}}$.
\end{proposition}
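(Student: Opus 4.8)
\textbf{Proof plan for Proposition~\ref{prop:localseparatedness}.}

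The plan is to reduce the statement to a quantitative open mapping argument (Lemma~\ref{lem:openmapping}) applied to the orbit map, after establishing uniform bounds on the complexity of the relevant Zariski-closed data. First I would fix a $\Q$-basis $\wpz_1,\ldots,\wpz_{\dim(\gfrak)}$ of $\gfrak$ consisting of integral vectors of height $\ll \height(\G)$, and for a point $x \in \Vbf(\Z_\gep)$ consider the orbit map $\psi_x\colon G_\gep \to \Vbf(\Q_\gep)$, $g \mapsto \pi(g)x$, parametrized near the identity by $(t_1,\ldots,t_{\dim(\gfrak)}) \mapsto \pi\big(\exp(\sum_i t_i \wpz_i)\big)x$. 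Since every $\G$-orbit in $\Vbf$ is closed, the derivative of $\psi_x$ at the identity is surjective onto $\mathrm{T}_x\Vbf$; as all orbits have the same dimension $D = \dim(\Vbf)$ (here one uses that $\Vbf$ is equidimensional, so $\dim \mathrm{T}_x\Vbf = D$ at every $x$ by closedness/smoothness of orbits), this derivative has rank exactly $D$.

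Next, the point $x$ and the tangent space $\mathrm{T}_x\Vbf$ must be simultaneously controlled. Concretely: choose a fixed $D$-subset $I$ of coordinates such that the submatrix of $\mathrm{D}_0\psi_x$ indexed by $I$ has maximal possible $\gep$-adic absolute value; call this value $\gep^{-k_0(x)}$. The key claim is that $k_0(x)$ can be bounded \emph{uniformly} over all $x \in \Vbf(\Z_\gep)$ by $A\log_\gep(h \cdot \height(\G)) + A$ for some $A = A(\pi,r,d)$. This is where one uses that the minors of $\mathrm{D}\psi_x$ are polynomials in $x$ with integral coefficients of height $\ll h^\star \height(\G)^\star$ and bounded degree, together with the fact that they do not all vanish on $\Vbf(\Z_\gep)$ (surjectivity of $\mathrm{D}_0\psi_x$, combined with the equidimensionality of $\Vbf$ so that rank $D$ is actually attained for the orbit direction). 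An effective Nullstellensatz-type bound --- of the same flavor as the one invoked for the base case of Theorem~\ref{thm:greenberg}, see \cite[Thm.~IV]{MasserWustholz} --- gives a uniform bound on how far below valuation $0$ the maximal such minor can sit; alternatively one can phrase this via a Remez-type inequality as in \cite[Lemma 5.4]{LMMS}. Set $k_0 = \max_x k_0(x)$, which is then $\leq A\log_\gep(h\height(\G)) + A$, i.e.\ $\gep^{k_0} \leq \gep^A \height(\G)^A h^A$.

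With $k_0$ in hand, I would finish as follows. Given $x,y \in \Vbf(\Z_\gep)$ with $\|x-y\| < \gep^{-2k_0}$, apply Lemma~\ref{lem:openmapping} to the analytic map $t \mapsto$ (the $I$-coordinates of) $\log \circ \psi_x(\gep \,\cdot)$, whose derivative at a suitable point has determinant of $\gep$-adic size $\geq \gep^{-k_0}$ (after a Remez/Hensel shift to move the good point into $\gep\Z_\gep^{\dim(\gfrak)}$, exactly as in the proof of Lemma~\ref{lem:implicitfctthm}). This shows that the image of $\psi_x$ on a ball of radius $\gep^{-k_0-1}$ in $G_\gep$ contains every point of $\Vbf(\Z_\gep)$ agreeing with $x$ in the $I$-coordinates up to valuation $2k_0+1$; since $x$ and $y$ lie on $\Vbf$ and are within $\gep^{-2k_0}$, they agree in the $I$-coordinates to the required precision, so $y = \pi(g)x$ for some $g \in G_\gep[0]$. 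The quantitative open mapping statement moreover gives $\|g - \id\| \ll \|x-y\|\,\gep^{k_0}$; absorbing the implied constant into $k_0$ (enlarging $A$) yields the claimed bound $\|g-\id\| \leq \|x-y\|\gep^{k_0}$. The main obstacle is the uniformity in the second paragraph: one needs that the \emph{same} $k_0$ works for every base point $x \in \Vbf(\Z_\gep)$, which requires knowing that the locus where the orbit-map Jacobian degenerates (in $\gep$-adic terms below a fixed threshold) is empty on $\Vbf(\Z_\gep)$ with a threshold controlled only by the height of the defining polynomials of $\Vbf$ and of $\gfrak$ --- this is precisely an effective Nullstellensatz input, and keeping the dependence polynomial in $h$ and $\height(\G)$ is the delicate point.
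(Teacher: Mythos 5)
Your approach differs from the paper's and has a genuine gap in its final step. The paper runs a Newton-type iteration directly on the pair $(x,y)$: it first uses the defining equations $f_i$ (via Smith normal form of $\mathrm{D}_x f$) to perturb the displacement $y-x$ to a nearly tangential vector $\vpz'$, then uses the surjectivity of the orbit derivative $\Pi(x)$ (again via Smith normal form) to realize $\vpz'$ approximately as $\pi(g)x - x$ for a small $g$; the key point is the residual error is quadratic $O(\gep^{2a-2k_0})$ which beats $\gep^{-a}$ when $a>2k_0$, so iterating converges. The two applications of the effective Nullstellensatz (to the ideals generated by $f_i$ with $n-D$-minors of $\mathrm{D}_x f$, and with $D$-minors of $\Pi(x)$) control the two Smith-normal-form losses.

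You instead try to invoke Lemma~\ref{lem:openmapping} once, applied to the $I$-projection of the orbit map $t\mapsto\pi(\exp(\sum t_i\wpz_i))x$. The open mapping lemma, correctly applied (to a $D\times D$ subsystem after choosing both a $D$-subset $J$ of the $t$-variables and a $D$-subset $I$ of the target coordinates --- a detail you should not gloss over since the lemma is stated for square maps), gives that the \emph{$I$-coordinates} of $\pi(G_\gep[k_0+1])x$ fill a $\gep^{-2k_0-1}$-ball around $x_I$ in $\Z_\gep^D$. Your conclusion that this forces $y\in\pi(G_\gep[k_0+1])x$ is not justified: you get a small $g$ with $(\pi(g)x)_I = y_I$, but both $\pi(g)x$ and $y$ lie in $\Vbf(\Z_\gep)$ near $x$, and you have not shown that a point of $\Vbf(\Z_\gep)$ near $x$ is determined by its $I$-coordinates. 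That requires an effective implicit-function/parametrization statement for $\Vbf$ near $x$ (the $I$-projection being a chart on a quantitatively controlled neighborhood), whose uniformity again hinges on an effective Nullstellensatz bound on the relevant minors of $\mathrm{D}_x f$. Supplying that missing step is not cosmetic: once you try to make it quantitative, you essentially land back on a Newton iteration of the same shape as the paper's, since the "graph over $I$" property is established by exactly such a correction-and-contraction argument. So the right way to read the situation is: your Nullstellensatz/minor-based uniformity analysis is on target and matches the paper's, but you need to either add the effective local parametrization of $\Vbf$ or run the contraction directly as the paper does, rather than hoping the open mapping lemma on the projected orbit map closes the loop by itself.
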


We remark that Proposition~\ref{prop:localseparatedness} could be refined in various ways e.g.~one can drop the integral assumption on the points $x,y$ at the cost of including the denominator in the resulting estimate for $g$.
The version given above is however sufficient for our purposes.

\begin{proof}
Let $D$ be the common dimension of the $\G$-orbits.
We first choose the `level' parameter ${k_0}$.
Let $J_1 \subset \Q[x_1,\ldots,x_n]$ be the ideal generated by $f_1,\ldots,f_r$ and all $n-D$-minors of the derivative $\mathrm{D}_x f := (\partial_{x_j}f_i(x))_{1\leq i \leq r,1 \leq j \leq n}$ of the polynomial map $f=(f_1,\ldots,f_r)\colon \Abf^n \to \Abf^r$.
As $\Vbf$ is smooth and all components have dimension $D$, the zero locus of $J_1$ is empty or, equivalently, $J_1 = \Q[x_1,\ldots,x_n]$.
By the effective Nullstellensatz (see e.g.~\cite[Thm.~IV]{MasserWustholz} or \cite[\S4.11]{LMMS}) there is some non-zero $B_1 \in \N$ of height at most $C_1h^{A_1}$ which is presented by a linear combination in the relations $f_i$ and the minors with integral polynomial coefficients of height $\ll h^\star$ and degree $\ll_{d,r,n} 1$.
Here, $A_1,C_1 >0$ depend only on $d,r,n$.
This implies in particular that if there were a point $x \in \Vbf(\Z_{\gep})$ such that all $n-D$-minors are congruent to $0$ modulo ${\gep}^k$ then $B_1 \equiv 0 \mod {\gep}^k$. 
We assume that ${\gep}^{k_0} > C_1 h^{A_1}$ so that $B_1 \not \equiv 0 \mod {\gep}^{k_0}$.

We proceed now similarly for the $\G$-action. 
Fix a $\Q$-basis $\wpz_1,\ldots,\wpz_{\dim(\gfrak)}$ of $\gfrak$ consisting of integral vectors of height $\ll\height(\G)$.
Recall that all orbits are closed and of dimension $D$.
This implies that the derivative $\gfrak \to \mathrm{T}_x\Vbf$ of the orbit map $\G \to \G.x$ is surjective for any point $x$ in the variety and the rank of the derivative at the identity is $D$, independent of the point $x$.
Let $\Pi(x)$ be the matrix with columns $\mathrm{D}\pi(\wpz_1)x,\ldots,\mathrm{D}\pi(\wpz_{\dim(\gfrak)})x$ for $x \in \Abf^n$; 
for $\Q_\gep$-points $x$, this matrix represents the derivative of the map $(t_1,\ldots, t_{\dim(\gfrak)}) \to \pi(\exp(\sum_{i}t_i \wpz_i))y$ at zero.
Let $J_2 \subset \Q[x_1,\ldots,x_n]$ be the ideal generated by $f_1,\ldots,f_r$ and all $D$-minors of $\Pi(\bullet)$.
As before, $J_2= \Q[x_1,\ldots,x_n]$.
Now proceed similarly to find a non-zero constant $B_2 \in \Z$ of height at most $C_2(\height(\G)h)^{A_2}$ such that if there is a point $x \in \Vbf(\Z_{\gep})$ with all $D$-minors of $\Pi(x)$ congruent to $0$ modulo ${\gep}^k$ then $B_2 \equiv 0 \mod {\gep}^k$.
Again, assume ${k_0}$ is sufficiently large, e.g.
\begin{align*}
k_0 = \lceil \max\{\log_{\gep}(C_1h^{A_1}),\log_{\gep}(C_2(\height(\G)h)^{A_2})\}\rceil +1,
\end{align*}
so that $B_2\not \equiv 0 \mod {\gep}^{k_0}$.

Now let $x,y \in \Vbf(\Z_{\gep})$ with $y-x = {\gep}^{a}\vpz$ for some primitive integral vector $\vpz \in \Z_{\gep}^n$ and $a > 2{k_0}$. 
For all $1 \leq i \leq r$
\begin{align*}
0 = f_i(y) = f_i(x + {\gep}^{a}\vpz) 
= {\gep}^{a}(\partial_{1}f_i(x),\ldots,\partial_{n}f_i(x))\vpz 
+ O({\gep}^{2a})
\end{align*}
so that $(\mathrm{D}_{x}f) \vpz \in \gep^a \Z_\gep^r$.
By the Smith normal form over the PID $\Z_{\gep}$, there exist $h_1\in \GL_{r}(\Z_{\gep})$ and $h_2 \in \GL_n(\Z_{\gep})$ such that $h_1(\mathrm{D}_{x}f)h_2$ is nonzero only on the diagonal where it has entries ${\gep}^{k_1},\gep^{k_2},\ldots,{\gep}^{k_{n-D}},0,\ldots,0$.
Note that our choice of ${k_0}$ implies $\sum_{i\geq 1} k_i \leq  {k_0}$.
Thus, $(h_2^{-1}\vpz)_i \equiv 0 \mod {\gep}^{a-{k_0}}$ for all $1 \leq i \leq n-D$.
It follows that there exist $\vpz' \in \Z_{\gep}^n$ with
\begin{align*}
\vpz \equiv \vpz' \mod {\gep}^{a-{k_0}},\quad (\mathrm{D}_{x}f) \vpz' = 0;
\end{align*}
Explicitly, take $\vpz' = h_2(0,\ldots,0,(h_2^{-1}\vpz)_{n-D+1},\ldots)^t$.
In other words, what we have shown above is that the displacement $\vpz$ is close to being tangential.

We shall now try to realize the above perturbation $\vpz'$ of the displacement $\vpz$ within derivatives coming from $\gfrak$.
For any $t_1,\ldots,t_{\dim(\gfrak)} \in \gep \Z_{\gep}$ we have
\begin{align*}
\pi\big(\exp(\sum_{i}t_i \wpz_i)\big)x &=
x + \sum_{i} t_i\mathrm{D}\pi(\wpz_i)x + O(\max_i |t_i|_\gep^2) \\
&= x + \Pi(x)t + O(\max_i |t_i|_\gep^2)
\end{align*}
where $t = (t_1,\ldots,t_{\dim(\gfrak)})^t$.
As the image of $\Pi(x)$ is exactly the tangent space at the point $x$ (the kernel of $\mathrm{D}_{x}f$), there exists $t \in \Q_\gep^{\dim(\gfrak)}$ with $\Pi(x)t = {\gep}^{a}\vpz'$.
By using the Smith normal form of $\Pi(x)$, we conclude that $t$ can be chosen so that $t \in {\gep}^{a-{k_0}}\Z_\gep^{\dim(\gfrak)}$. 
Then $g = \exp(\sum_i t_i \wpz_i)$ satisfies
\begin{align*}
\pi(g) x &\equiv x + \Pi(x)t
\equiv x + {\gep}^{a} \vpz'
\equiv x + {\gep}^{a} \vpz
\equiv y \mod \ell^{2a-2k_0}.
\end{align*}
As $2a-2{k_0} > a$, we have found a new point on the local $\G(\Q_{\gep})$-orbit through $x$ which is closer to $y$ than $x$ was. 

To conclude we repeat the above procedure.
By the above, there exists $g_1 \in \G(\Z_{\gep})$ such that $\|\pi(g_1)x-y\| < \|x-y\|$ and $\norm{g_1-\id} \leq \norm{x-y}{\gep}^{k_0}$.
By induction, there exist $g_1,g_2,\ldots$ such that
\begin{align*}
\|\pi(g_n\cdots g_1)x-y\| &< \|\pi(g_{n-1}\cdots g_1)x-y\|,\\ 
\norm{g_n-\id} &\leq \norm{\pi(g_{n-1}\cdots g_1)x-y}{\gep}^{k_0}
\end{align*}
for every $n\geq 1$.
In particular, $\norm{g_n\cdots g_1 - \id} \leq \norm{x-y}{\gep}^{k_0}$. Taking the limit we obtain the proposition.
\end{proof}

\subsection{Proof of Theorem~\ref{thm:effgen-intro}}\label{sec:proofeffgen}

The proof of Theorem~\ref{thm:effgen-intro} proceeds by induction on the following statement.

\begin{proposition}\label{prop:effgenind}
There exists $\constk\label{k:effgenind}>0$ with the following property.
For $n\leq \dim(\gfrak)$ let
$\extranil=(\vpz_1,\ldots,\vpz_n) \in \gfrak_{\gp}[0]^n$ be a list of nilpotent elements of pure weight.
Let $\Mbf$ be the Zariski closure of the group generated by $\theta_\gp(\SL_2(\Q_\gp))$ and the one-parameter unipotent groups $\{\exp(t\vpz_i): t \in \Q_{\gp}\}$ for $1\leq i \leq n$.
Then for any 
\begin{align*}
k \geq (\log_{\gp}(\height(\G))+1)/\ref{k:effgenind}
\end{align*}
one of the following is true.
\begin{enumerate}[label=\textnormal{(\alph*)}]
    \item\label{item:ind-generated} The group $M = \Mbf(\Q_{\gp})$ is $k$-generated by a list of nilpotents where each element is contained in the list $\extranil$ or is equal to $\zpz^+$ or $\zpz^-$ (cf.~\eqref{eq:direction z}).
    \item\label{item:ind-perturbation} There exist a list of nilpotent elements $\extranil' = (\vpz_1',\ldots,\vpz_n') \in \gfrak_{\gp}[0]^n$ with
    \begin{align*}
    \norm{\vpz_i'-\vpz_i}_{\gp} \leq {\gp}^{-\ref{k:effgenind} k} \qquad\text{for all }1\leq i \leq n
    \end{align*}
    so that the Zariski closure of the group generated by $\theta_\gp(\SL_2(\Q_\gp))$ and the one-parameter groups $\exp(\Q_\gp\vpz_i')$ for $1 \leq i \leq n$ has strictly smaller dimension than $\Mbf$.
    Moreover, for each $1 \leq i \leq n$ the nilpotent $\vpz_i'$ has the same pure weight as $\vpz_i$ and, if $\vpz_i$ is highest weight, then so is $\vpz_i'$.
\end{enumerate}
\end{proposition}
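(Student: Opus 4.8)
The plan is to prove the dichotomy by exhibiting a single "test word" of length $d:=\dim(\mfrak)$ that is dominant onto $\Mbf$, asking whether its differential is non-degenerate up to scale $\gp^{-k}$, and --- when it is not --- using this failure, together with the effective version of Greenberg's theorem (Theorem~\ref{thm:greenberg}), to produce a nearby perturbation on which \emph{every} length-$d$ test word degenerates, which in turn forces the generated group to shrink. I would first record the elementary dimension-increase fact underlying Lemma~\ref{lem:Heffgen} (cf.\ \cite[Prop.~5.2]{EinsiedlerGhosh}): for a connected $\Q_\gp$-group generated by one-parameter unipotent subgroups with prescribed directions, and any $d$ at most its dimension, some word of length exactly $d$ in those directions (repetitions allowed) has derivative matrix of generic rank $d$; conversely such a word forces the group to have dimension $\geq d$. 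Since $\Mbf\supseteq\theta_\gp(\SL_2)$, its Lie algebra is generated by $\zpz^+,\zpz^-,\vpz_1,\ldots,\vpz_n$, so I may fix a list from $\{\vpz_1,\ldots,\vpz_n,\zpz^+,\zpz^-\}$ of length $d$ whose word map $\varphi_0$ is dominant onto $\Mbf$. Using $\gp\gg_N1$ (so that $\exp$ is polynomial with $\gp$-integral coefficients), the $d\times d$ minors of $\mathrm{D}_t\varphi_0$ are polynomials in $t$ with $\gp$-integral coefficients, of degree $O_N(1)<\gp$, not all identically zero.

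\textbf{The dichotomy and the Greenberg step.}
The split is on the largest $\gp$-adic size $\gp^{-k_1}$ attained by such a minor over $t\in\Z_\gp^d$: if $k_1\leq k$ then $M=\Mbf(\Q_\gp)$ is $k$-generated by the list defining $\varphi_0$, which is exactly alternative~\ref{item:ind-generated}. Otherwise --- and by the dimension-increase fact the same applies if $M$ is $k$-generated by \emph{no} valid length-$d$ list --- \emph{every} length-$d$ word in $\{\vpz_i,\zpz^\pm\}$ has all its $d\times d$ derivative-minors of $\gp$-adic size $<\gp^{-k}$ at every integral point. In this case I would set up the variety for Greenberg's theorem, treating the $\mathfrak{sl}_2$-triple as free variables so that everything lives over $\Q$: let $\Wbf\subset\gfrak^3\times\gfrak^n$ be cut out by the equations of the variety $\Ebf_{\gfrak}'$ of $\mathfrak{sl}_2$-triples of the correct dimension from \S\ref{sec:varietyofembeddings}; the linear equations $[y_0,x_i]=\lambda_ix_i$ (with $\lambda_i$ the weight of $\vpz_i$), the equations $[y_+,x_i]=0$ for those $i$ with $\vpz_i$ highest weight, and $x_i^N=0$; and the vanishing of every $t$-coefficient of every $d\times d$ minor of $\mathrm{D}_t$ of every length-$d$ word map in the variables $y_+,y_-,x_1,\ldots,x_n$. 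By Proposition~\ref{prop:varietyofembeddings}, Lemma~\ref{lem:exactdim} and the boundedness of the minor-coefficients, $\Wbf$ is defined over $\Q$ by $O_N(1)$ polynomials of degree $O_N(1)$ and height $\ll\cpl(X)^{O_N(1)}$. The point $\underline X_0=(\zpz^+,\zpz^0,\zpz^-,\vpz_1,\ldots,\vpz_n)$ satisfies the $\Ebf_{\gfrak}'$-, weight-, highest-weight- and nilpotency equations exactly, and by a finite-difference argument (again using degrees $<\gp$) the remaining equations modulo $\gp^{k-C}$ for some $C=C(N)$. Hence, provided $k\geq\ref{k:effgenind}^{-1}(\log_\gp(\cpl(X))+1)$ with $\ref{k:effgenind}$ small in terms of $N$, Theorem~\ref{thm:greenberg} yields $\underline X_0'=(\zpz^+{}',\zpz^0{}',\zpz^-{}',\vpz_1',\ldots,\vpz_n')\in\Wbf(\Z_\gp)$ with $\|\underline X_0'-\underline X_0\|\leq\gp^{-(k-C)/A}$.

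\textbf{Transporting back.}
Since $(\zpz^+{}',\zpz^0{}',\zpz^-{}')$ lies in $\Ebf_{\gfrak}'(\Z_\gp)$ --- finitely many closed $\G$-orbits of equal dimension --- and is $\gp^{-(k-C)/A}$-close to $(\zpz^+,\zpz^0,\zpz^-)$, Proposition~\ref{prop:localseparatedness} applied to $\Ebf_{\gfrak}'$ produces $g\in G_\gp[0]$ conjugating the original triple to the new one with $\|g-\id\|\ll\gp^{O_N(1)}\cpl(X)^{O_N(1)}\|\underline X_0'-\underline X_0\|$ (its hypothesis $\|\underline X_0'-\underline X_0\|<\gp^{-2k_0}$ being guaranteed by our lower bound on $k$). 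Setting $\vpz_i'':=\Ad(g)^{-1}\vpz_i'$, one checks that $\vpz_i''\in\gfrak_\gp[0]$ is a nilpotent within $\gp^{-\ref{k:effgenind}k}$ of $\vpz_i$ (after possibly enlarging $\ref{k:effgenind}$), of the same pure weight as $\vpz_i$ with respect to the \emph{original} torus, and highest weight if $\vpz_i$ is --- these conditions transfer through $\Ad(g)^{-1}$ once the triple is conjugated. Moreover the group generated by the original $\theta_\gp(\SL_2)$ and the $\exp(\Q_\gp\vpz_i'')$ is the $\Ad(g)^{-1}$-conjugate of the group generated by $\theta_\gp'(\SL_2)$ and the $\exp(\Q_\gp\vpz_i')$, which by membership of $\underline X_0'$ in $\Wbf$ and the dimension-increase fact applied to the \emph{perturbed} generators has dimension $\leq d-1<\dim(\Mbf)$; this is alternative~\ref{item:ind-perturbation}, and $\ref{k:effgenind}$ is an explicit function of the constants in Theorem~\ref{thm:greenberg}, Lemma~\ref{lem:exactdim} and Proposition~\ref{prop:localseparatedness}, hence depends only on $N$.

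\textbf{Main obstacle.}
I expect the crux to be the bookkeeping that makes all of this work simultaneously: insisting on test words of length \emph{exactly} $d$ (so that the conclusion matches Definition~\ref{def:effective generation}), and routing the $\mathfrak{sl}_2$-triple through the variety $\Ebf_{\gfrak}'$, so that a Greenberg perturbation --- which a priori only degenerates one polynomial system --- genuinely decreases the dimension of the generated group while staying defined over $\Q$ with height polynomial in $\cpl(X)$, and so that the perturbation can be pushed back onto the fixed principal $\SL_2$ without losing the weight structure.
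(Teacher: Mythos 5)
Your proposal follows essentially the same route as the paper's proof: parametrize the $\mathfrak{sl}_2$-triple and the nilpotents together over the variety $\Ebf_{\gfrak}'\times\{\text{weight/hw-constrained nilpotents}\}$, cut out the locus $\Wbf$ where all length-$d$ word maps have degenerate derivative by requiring the $t$-coefficients of all $d\times d$ minors to vanish, run the dichotomy on whether the initial tuple $\xpz_0$ already witnesses $k$-generation of $\Mbf(\Q_\gp)$, apply the effective Greenberg theorem (Theorem~\ref{thm:greenberg}) in the failing case to land inside $\Wbf(\Z_\gp)$, and then use Proposition~\ref{prop:localseparatedness} on $\Ebf_{\gfrak}'$ to conjugate the perturbed $\mathfrak{sl}_2$-triple back to $\theta_\gp$ so that the resulting nilpotents retain their pure (and highest-)weight structure with respect to the original torus. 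This is precisely the paper's Steps 0--3.

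Two small points to tidy up. First, the opening paragraph sets up the dichotomy around a single fixed dominant word $\varphi_0$; this is not quite what you want, since $\varphi_0$ could fail while another length-$d$ word succeeds. Your parenthetical (``the same applies if $M$ is $k$-generated by no valid length-$d$ list'') does patch this, but it is cleaner to phrase the split directly as the paper does: either some coefficient $f\in\mathcal{F}$ has $|f(\xpz_0)|\geq\gp^{-k}$ (then Remez gives an integral $t$ with a large minor and (a) follows), or all $f\in\mathcal{F}$ vanish to order $\gp^{-k}$ at $\xpz_0$ (and Greenberg applies). Second, ``after possibly enlarging $\ref{k:effgenind}$'' should read ``shrinking'': a smaller $\ref{k:effgenind}$ weakens both the closeness requirement $\gp^{-\ref{k:effgenind}k}$ and the hypothesis $k\geq\ref{k:effgenind}^{-1}(\log_\gp(\height(\G))+1)$, which is the direction in which losses from Greenberg and Proposition~\ref{prop:localseparatedness} can be absorbed. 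Neither issue affects the substance of the argument.
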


\begin{proof}
The proof, which will be completed in several steps, relies heavily on the effective version of Greenberg's theorem given in Theorem~\ref{thm:greenberg}. 
We begin by constructing a variety of controlled complexity (using \S\ref{sec:varietyofembeddings}), which serves as the parameter space.

\textsc{Step 0: Construction of the parameter space.}
Let $\lambda_1,\ldots,\lambda_n \in \Z$ be the weights of the nilpotents in $\extranil$ given in the assumptions of the proposition.
Note that there are finitely many options for these weights depending on $N$.
Let $\Ebf_\gfrak'$ be the variety defined after the proof of Lemma~\ref{lem:exactdim}.
In particular, $\Ebf_\gfrak'$ consists of finitely many closed $\G$-orbits and is defined by $O_N(1)$-many integral polynomial equations of height $\ll \height(\G)^\star$ and of degree $O_N(1)$. 

Let $\Vbf_0 \subset \gfrak^{n}$ be the variety of nilpotent tuples and let $\Vbf \subset \Ebf_\gfrak' \times \Vbf_0$ be the subvariety of points $((\wpz_+,\wpz_0,\wpz_-),(\zpz_1,\ldots,\zpz_{n}))$ satisfying the additional equations
\begin{align*}
[\wpz_0,\zpz_i] = \lambda_i \zpz_i \text{ for all } 1\leq i\leq n
\end{align*}
as well as $[\wpz_+,\zpz_i] = 0$ if the nilpotent element with index $1 \leq i \leq n$ given in the proposition is of highest weight.
When convenient, we write $\zpz_{n+1}=\wpz_+$ and $\zpz_{n+2}=\wpz_-$.
By construction, $\Vbf$ is $\G$-invariant with a surjective $\G$-equivariant projection $\Vbf \to \Ebf_\gfrak'$ and is defined by $O_N(1)$-many polynomial equations of height $\ll \height(\G)^\star$ and of degree $O_N(1)$.

The data in the proposition gives rise to an `initial' point $\xpz_0  \in V_{\gp}[0] = \Vbf(\Z_\gp)$; it consists of the $\mathfrak{sl}_2$-triple given by Proposition~\ref{prop:varietyofembeddings} and the nilpotents in $\extranil$ (given by assumption).
\medskip

\textsc{Step 1: Construction of a subvariety.}
We construct the subvariety $\Wbf \subset \Vbf$ of points $\xpz = (\cdot,(\zpz_i))\in \Vbf$
for which the group generated by the one-parameter unipotent subgroups $\{\exp(\cdot \zpz_i)\}_{1\leq i \leq n+2}$ has dimension strictly smaller than $d= \dim(\Mbf)$.
A priori, it is unclear that this is indeed a subvariety and, if it is, that it can be defined with polynomials of controlled height.

Fix a list of indices $\mathcal{I} = (i_1,\ldots,i_{d}) \in \{1,\ldots,n+2\}^{d}$ (repetitions are allowed).
Given a point $\xpz$ of $\Vbf$ and the nilpotent elements $\zpz_i$ within it, define the polynomial map
\begin{align*}
\Phi_{\xpz}^{\mathcal{I}}: t = (t_1,\ldots,t_{d}) \in \Abf^d \mapsto \exp(t_1 \zpz_{i_1})\cdots \exp(t_d \zpz_{i_d}) \in \G.
\end{align*}
The $d$-minors of $\mathrm{D}_{t}\Phi_{\xpz}^{\mathcal{I}}$ are polynomials in $\Q[\Vbf][t]$.
Let $\mathcal{F} \subset\Q[\Vbf]$ be the set of coefficients of these polynomials where we run over the minors of all derivatives for all $\mathcal{I}$ of length $d$.
Let $\Wbf$ be the subvariety of $\Vbf$ defined by the polynomials in $\mathcal{F}$.
This will be the subvariety we apply Greenberg's theorem (Theorem~\ref{thm:greenberg}) to.
Notably, the complexity of $\Wbf$ satisfies the analogous bound as $\Vbf$ does.
Returning to the initial goal of this step, a simple dimension increase argument shows that $\xpz = (\cdot,(\zpz_i)) \in \Wbf$ if and only if the group generated by the one-parameter unipotent subgroups $\{\exp(\cdot \zpz_i)\}$ has dimension strictly smaller than $d$.
In particular, $\Wbf$ is invariant under the $\G$-action.

Specializing to the `initial' point $\xpz_0$, one would like some $d$-minor of the derivative of one of the maps $\Phi_{\xpz_0}^{\mathcal{I}}$ to be large.
We shall see that this amounts to saying that $\xpz_0$ is not too close to $\Wbf(\Q_{\gp})$.

\textsc{Step 2: Applying Greenberg's theorem.}
Let $k \geq 1$.
Suppose first that there exists a polynomial $f \in \mathcal{F}$ for which $|f(\xpz_0)|\geq {\gp}^{-k}$.
We prove \ref{item:ind-generated} in this case.
Let $\mathcal{I}$ be the index list $f$ is associated to. 
The so-defined map $\Phi_{\xpz_0}^{\mathcal{I}}$ maps into $\Mbf$. 
The minor of $\mathrm{D}_t\Phi_{\xpz_0}^{\mathcal{I}}$ of which $f$ is a coefficient is hence a polynomial in $t$ with one coefficient of size at least ${\gp}^{-k}$.
In particular, there exists $t \in \Z_{\gp}^d$ so that $\mathrm{D}_t \Phi_{\xpz_0}^{\mathcal{I}}$ has a minor of size at least ${\gp}^{-k}$ (using that $\gp$ is assumed sufficiently large). 
Therefore $M$ is $k$-generated by the nilpotents $\vpz_{i_1},\ldots,\vpz_{i_d}$ where $\mathcal{I} = (i_1,\ldots,i_d)$ and where $\vpz_{n+1}$ resp.~$\vpz_{n+2}$ are the upper resp.~lower nilpotent in the $\mathfrak{sl}_2$-triple associated to the principal $\SL_2$.

Assume now that $|f(\xpz_0)|< {\gp}^{-k}$ for all $f \in \mathcal{F}$.
We apply the effective version of Greenberg's theorem in Theorem~\ref{thm:greenberg} to $\Wbf$ and let $A>0$ be the corresponding exponent (which depends only on $N$).
If $k$ is assumed sufficiently large, there exists a point $\xpz \in \Wbf(\Z_{\gp})$ with
\begin{align*}
    \norm{\xpz-\xpz_0}\leq {\gp}^{- k/A}.
\end{align*}
The same estimate holds true for the images of $\xpz,\xpz_0$ under the projection $\Vbf \to \Ebf_\gfrak'$.
By Proposition~\ref{prop:localseparatedness}, there exists $g \in G_\gp[0]$ with $\norm{g-\id} \leq {\gp}^{- k/A+\star}\height(\G)^\star$ such that $g.\xpz$ and $\xpz_0$ have the same image under the projection. 
Observe that $g.\xpz\in \Wbf(\Q_{\gp})$ and that, by the estimate on $g$, $\norm{g.\xpz-\xpz_0} \leq {\gp}^{- k/A+\star}\height(\G)^\star$.
Write
\begin{align*}
\xpz' = g.\xpz = (\cdot,(\vpz_i')_{i \leq n+2}).
\end{align*}

\textsc{Step 3: Verifying properties in} \ref{item:ind-perturbation}.
We now show that the nilpotents $\vpz_1',\ldots,\vpz_n'$ satisfy the properties in part \ref{item:ind-perturbation}.
Since $\xpz',\xpz_0$ project to the same point in $\Ebf_\gfrak'$, 
the nilpotents $\vpz_1',\ldots,\vpz_n'$ satisfy the weight (and highest weight) requirement of \ref{item:ind-perturbation}.

Let $\Mbf'$ be the group generated by $\theta_\gp(\SL_2)$ and the one-parameter unipotent subgroups obtained from the nilpotents $\vpz_1',\ldots,\vpz_n'$.
It remains to show that $\dim(\Mbf') < d = \dim(\Mbf)$.
Since $\xpz' \in \Wbf$, the Zariski closure $\Mbf''$ of the group generated by the one-parameter subgroups $\{\exp(t\vpz_i'): t \in \Q_{\gp}\}$ for $1 \leq i \leq n+2$ is less than $\dim(\Mbf)$. 
Notice that $\theta_{\gp}(\SL_2(\Q_\gp))$ is $0$-generated by $\vpz_{n+1}',\vpz_{n+2}',\vpz_{n+1}'$ where $\vpz_{n+1}'= \vpz_{n+1},\vpz_{n+2}'= \vpz_{n+2}$.  
Therefore, $\theta_{\gp}(\SL_2(\Q_\gp)) \subset \Mbf''(\Q_{\gp})$ and $\Mbf'' = \Mbf'$ which proves (b) and hence the proposition.
\end{proof}

\begin{proof}[Proof of Theorem~\ref{thm:effgen-intro}]
    The theorem follows from Proposition~\ref{prop:effgenind} by recursion as follows.

We begin by applying Proposition~\ref{prop:effgenind} for $k_1 = \lfloor\delta k\rfloor$.
If Option \ref{item:ind-generated} holds for this choice, we conclude with $\widetilde{\extranil} = \extranil$ and $\alpha=1$.
Otherwise, there exists $\extranil^{(1)}$ as in Option \ref{item:ind-perturbation} so that in particular $\norm{\extranil^{(1)}-\extranil} \leq {\gp}^{-\ref{k:effgenind}k_1}$.

Define recursively $k_j = \lfloor\delta\ref{k:effgenind}k_{j-1}\rfloor$ so that
\begin{align*}
(\delta \ref{k:effgenind})^{j-1} \delta k -j \leq k_j \leq (\delta \ref{k:effgenind})^{j-1} \delta k.
\end{align*}
At the $j$-th step of the induction, we are given a list of nilpotents $\mathcal{N}^{(j-1)}$ satisfying, in particular, $\norm{\mathcal{N}^{(j-1)}-\mathcal{N}}\leq {\gp}^{-\ref{k:effgenind}k_{j-1}}$.
Applying Proposition~\ref{prop:effgenind} with $k_j$ we either conclude with $\alpha = (\delta \ref{k:effgenind})^{j-1}$ and the list $\mathcal{N}^{(j-1)}$ or we find a new list $\mathcal{N}^{(j)}$.

At each step the group generated as in \ref{item:ind-perturbation} decreases in dimension and hence the induction stops after less than $\dim(\gfrak)$ many steps.
\end{proof}

\section{Diophantine points and an effective avoidance principle}\label{sec:prepclosing}

In this section, we recall the notion of Diophantine points introduced by E.L., Margulis, A.M., and Shah in \cite{LMMS} and the effective avoidance results therein.
Given the desired controlled dependence on $\G$ we will also use diameter estimates due to A.M., Salehi-Golsefidy, and Thillmany \cite{MSGT-diameter}.

Let $\mathcal{S} = \{\infty,\gp\} \subset \Sigma$. 
The definitions in \S\ref{sec:setup} transfer seamlessly to $\Q_{\mathcal{S}}$-points where we write $\content_{\mathcal{S}}(\cdot)$ for the corresponding content on $\Q_{\mathcal{S}}^n$ for $\Q_\mathcal{S} = \R \times \Q_{\gp}$ (see specifically \eqref{eq:def-content}).
We write $G_{\mathcal{S}} = \rho(\G(\Q_\mathcal{S}))$, $\Gamma_{\mathcal{S}} = G_{\mathcal{S}} \cap \SL_N(\Z[1/\gp])$, and $X_{\mathcal{S}} = \lquot{\Gamma_{\mathcal{S}}}{G_{\mathcal{S}}}$.
By strong approximation, there is a quotient map
\begin{align*}
\pi_{\mathcal{S}}: X \to X_{\mathcal{S}}.
\end{align*}
Indeed, since $\gp$ is a good prime, all $\Q_\gp$-simple factors of $\G$ are $\Q_\gp$-isotropic. We can thus apply strong approximation for $\G$ (recall that $\G$ is assumed simply connected), the set of places $\mathcal{S}$, and the compact open subgroup $\prod_{\gep\in \Sigma\setminus\mathcal{S}} K_\gep \subset \prod_{\gep \in \Sigma\setminus\mathcal{S}}'\G(\Q_\gep)$ (where $K_\gep = \rho^{-1}(\SL_N(\Z_\gep))$ as in \S\ref{sec:setup}).

\subsection{Bounds on the height in the cusp}

Since $\G$ is assumed to be $\Q$-anisotropic, $X$ is compact.
The following lemma establishes an upper bound for the height in the cusp when $X$ is viewed as a subset of $\lquot{\SL_N(\Q)}{\SL_N(\A)}$.

\Ganisotropic
\begin{proposition}\label{prop:smallvectors}
There exists $\consta\label{a:smallvectors}>0$ such that for any $g \in \G(\A)$ we have
\begin{align*}
\min_{\wpz \in \Q^N\setminus\{0\}}\content(\rho(g)\wpz) \gg \height(\G)^{-\ref{a:smallvectors}}.
\end{align*}
\end{proposition}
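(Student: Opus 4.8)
The statement asserts a quantitative lower bound on the covolume-type quantity $\min_{\wpz}\content(\rho(g)\wpz)$ over all $g \in \G(\A)$, with the bound polynomial in $\height(\G)$. Since $X = [\rho(\G(\A))]$ is compact (as $\G$ is $\Q$-anisotropic), the infimum is positive for each individual $g$; the content of the proposition is \emph{uniformity} across $g$ with explicit dependence on $\height(\G)$. The natural approach is to argue by contradiction via Mahler's compactness criterion made effective: if there existed $g$ and a nonzero $\wpz \in \Q^N$ with $\content(\rho(g)\wpz)$ extremely small compared to $\height(\G)^{-A}$ for every $A$, then one extracts a rational subspace stabilized (or nearly stabilized) by a large piece of $\rho(\G(\A))$, contradicting $\Q$-anisotropy of $\G$.

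\textbf{Key steps.} First I would recall that $\content(\rho(g)\wpz) = \prod_\ell \norm{\rho(g)_\ell \wpz}_\ell$ is, up to the ambiguity $\content(\alpha\wpz) = \content(\wpz)$ for $\alpha \in \Q^\times$, the natural height of the line $\Q\wpz$ twisted by $g$; it equals (a power of) the covolume of the rank-one sublattice $\rho(g).(\Q\wpz \cap (\text{standard lattice}))$ inside its rational span. Second, I would invoke reduction theory / geometry of numbers for $\SL_N$ over $\Q$: a point $[\rho(g)] \in \lquot{\SL_N(\Q)}{\SL_N(\A)}$ lies high in the cusp precisely when there is a rational subspace $W \subset \Q^N$ whose twisted covolume $\content_\wedge(\rho(g).\vpz_W)$ is small, and for the orbit $X$ one may take $\dim W = 1$ after passing to an exterior power (or directly control the smallest vector). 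Third — the crucial step — I would use that $\rho(g)$ lies in $\rho(\G(\A))$: if $\content(\rho(g)\wpz)$ is small, then $\rho(g)^{-1}$ brings the whole standard lattice close to being orthogonal to a fixed short vector, and conjugating the stabilizer of $\Q\wpz$ (a parabolic-type condition in $\SL_N$) back by $\rho(g)^{-1}$ would force $\rho(\G)$ to have, over $\Q$, a subgroup moving inside a proper parabolic of $\SL_N$ — but since $\rho$ has central kernel and $\G$ is semisimple $\Q$-anisotropic, $\rho(\G(\A))/\SL_N(\Q)$ is compact, so there is a uniform floor. To get the \emph{polynomial} dependence on $\height(\G) = \norm{\vpz_{\rho(\G)}}$, I would run this quantitatively: the compact set $X$ is cut out inside $\lquot{\SL_N(\Q)}{\SL_N(\A)}$ by the closed condition "$\wedge^{\dim\G}$ of the tangent space is proportional to $\vpz_{\rho(\G)}$", and a Siegel/Bombieri--Vaaler-type estimate (as used in the proof of \eqref{eq:qf-ambientvol}) combined with an effective Nullstellensatz or a Łojasiewicz-type inequality for the defining equations of $\rho(\G)$ — exactly the kind of input available via Theorem~\ref{thm:greenberg} and the height bounds on varieties discussed after Lemma~\ref{lem:exactdim} — converts "$\rho(g)$ nearly stabilizes a line" into "$\rho(\G)$ nearly stabilizes a line", whose failure is quantified by $\height(\G)$.

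\textbf{Alternative cleaner route.} Rather than a contradiction argument, I would more likely argue directly: fix $g\in\G(\A)$ and $\wpz$ realizing the minimum. The lattice $\Lambda_g := \rho(g).\Z^N$ (meaning the $\Q$-lattice with local completions $\rho(g)_\ell \Z_\ell^N$) has covolume $1$, so by Minkowski its shortest vector has content $\ll 1$; but we want a \emph{lower} bound, so instead I use that \emph{every} nonzero vector of $\Lambda_g$ has content $\gg \height(\G)^{-\star}$. This is equivalent to saying $[\rho(g)]$ stays in a fixed-depth compact part of the cusp, with depth controlled by $\height(\G)$. One proves this by noting that $\rho(\G(\A))$ preserves the variety defined by the Plücker vector $\vpz_{\rho(\G)}$; the orbit map $\G(\A) \to \lquot{\SL_N(\Q)}{\SL_N(\A)}$ has image contained in $\{x : \text{height of } x \le \Psi(\height(\G))\}$ for an explicit polynomial $\Psi$, because a vector $\wpz$ with $\content(\rho(g)\wpz)$ tiny would, together with the $\G$-invariance, produce a nonzero integral solution to the defining equations of $\rho(\G)$ of abnormally small height relative to $\height(\G)$, which is impossible by the trivial lower bound $\content(\vpz) \ge 1$ for nonzero integral $\vpz$ after clearing denominators.

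\textbf{Main obstacle.} The delicate point is making the passage "$\rho(g)$ nearly stabilizes $\Q\wpz$" $\Longrightarrow$ "$\rho(\G)$ itself nearly stabilizes some rational line, contradicting anisotropy" \emph{effective in $\height(\G)$}, since a priori the implied constant in Mahler's criterion depends on the (varying) group $\rho(\G)$. I expect this to require either an effective Łojasiewicz inequality for the vanishing ideal of $\rho(\G)$ (whose generators have height $\ll \height(\G)^\star$ by the remarks following Lemma~\ref{lem:exactdim}), or a direct geometry-of-numbers argument bounding the height of $[\rho(g)]$ in terms of $\height(\vpz_{\rho(\G)})$ via the fact that $\vpz_{\rho(\G)}$, being a fixed nonzero integral vector, cannot be moved to be $p$-adically or archimedeanly small by elements preserving it. Everything else — Mahler's criterion, reduction theory for $\SL_N/\Q$, the triviality $\content \ge 1$ on nonzero integral vectors — is standard.
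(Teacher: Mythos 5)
There is a genuine gap: you never identify the mechanism the paper actually uses, which is geometric invariant theory plus the product formula. The paper's proof is short and does not require effective Mahler compactness, effective \L{}ojasiewicz, or Greenberg's theorem, all of which your plan leans on and which you yourself flag as the "delicate point." Concretely, the paper first shows (Lemma~\ref{lem:GITeff}) that $\overline{\Q}[x_1,\ldots,x_N]^\G$ is generated by $O_N(1)$-many integral $\G$-invariant polynomials $f_1,\ldots,f_r$ of bounded degree and of height $\ll \height(\G)^\star$ (an application of Siegel's lemma to the kernel of the derivative of the action on bounded-degree polynomials). Then, because $\G$ is $\Q$-anisotropic, it has no non-trivial cocharacters over $\Q$, so by Kempf's instability theorem \emph{every} $\G$-orbit through a $\Q$-point of $\Q^N$ is closed; hence any non-zero $\wpz \in \Q^N$ is separated from $0$ by some homogeneous invariant $f_i$ of positive degree, giving $f_i(\wpz) \in \Q^\times$. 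The lower bound then drops out of the product formula:
\[
1 = \prod_{\gep} |f_i(\wpz)|_\gep = \prod_{\gep} |f_i(g_\gep.\wpz)|_\gep \ll \height(\G)^\star \, \content(g\wpz)^{\deg f_i},
\]
using $\G$-invariance of $f_i$ in the middle and the height bound on $f_i$ at the end.

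Your "cleaner route" asserts that a short $\rho(g)\wpz$ would "produce a nonzero integral solution to the defining equations of $\rho(\G)$ of abnormally small height," but no such implication is established or even plausible as stated: there is no obvious way that a short vector in $\Q^N$ (an object in the representation space) yields an integral point on the variety $\rho(\G) \subset \SL_N$ (an object in the group). Your first route invokes parabolics of $\SL_N$ and an effective version of Mahler's criterion, but anisotropy of $\G$ is used in the paper in a quite different and sharper way — not as "$X$ is compact" but as "$\G$ has no $\Q$-cocharacters, hence all $\Q$-rational orbits on $\Q^N$ are closed, hence separated by invariants." Without identifying the invariant-theory mechanism, neither of your routes closes the effectivity gap you correctly worry about, and the effective Nullstellensatz you propose as the backstop is unnecessary overkill for this particular statement.
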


The proof below uses geometric invariant theory.
Recall that, by a result of Hilbert, the ring $\overline{\Q}[x_1,\ldots,x_N]^\G$ of $\G$-invariant polynomials in $\overline{\Q}[x_1,\ldots,x_N]$ (where the action is through $\rho$) is finitely generated since $\G$ is semisimple.

\begin{lemma}\label{lem:GITeff}
There exist $\G$-invariant polynomials $f_1,\ldots,f_r \in \Z[x_1,\ldots,x_N]$ of height $\ll \height(\G)^\star$ and degree $O_N(1)$ which generate the ring $\overline{\Q}[x_1,\ldots,x_N]^\G$. Moreover, $r = O_N(1)$.
\end{lemma}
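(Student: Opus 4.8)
The plan is to combine the classical finite generation of the invariant ring with an effective Noether normalization / effective Hilbert basis argument, keeping track of the dependence on $\height(\G)$. First I would recall that $\G$ acts on $\A^N = \mathrm{Spec}\,\overline{\Q}[x_1,\ldots,x_N]$ through $\rho$, and that the defining ideal of $\rho(\G) \subset \SL_N$ — equivalently the Lie algebra $\gfrak \subset \mathfrak{sl}_N$ — can be described by linear equations with integer coefficients of height $\ll\height(\G)^\star$, since a primitive integral basis of $\bigwedge^{\dim(\G)}\gfrak$ has norm comparable to $\height(\G)$ (this is precisely how $\height(\G)$ is defined in \S\ref{sec:setup}). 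Thus the "comultiplication" polynomials expressing the $\G$-action on $\overline{\Q}[x_1,\ldots,x_N]$ — or rather the infinitesimal action of $\gfrak$ via derivations $D_{\wpz}$ for $\wpz$ in an integral basis of $\gfrak[0]$ — have height $\ll\height(\G)^\star$ and bounded degree ($\deg = 1$ in the $x_i$, linear in the structure data).

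Next I would reduce the problem to a bounded-degree statement. A polynomial $f\in \overline{\Q}[x_1,\ldots,x_N]$ of degree $D$ is $\G$-invariant if and only if $D_{\wpz} f = 0$ for all $\wpz$ in a basis of $\gfrak$; since $\G$ is connected this is equivalent to $\G$-invariance, and this is a \emph{linear} condition on the (finitely many) coefficients of $f$ once $D$ is fixed, with coefficient matrix of height $\ll\height(\G)^\star$. The key input is a degree bound: by a theorem of Popov (or the Noether degree bound for reductive groups, see e.g.~\cite{Mumford}), the invariant ring $\overline{\Q}[x_1,\ldots,x_N]^\G$ is generated in degree bounded solely in terms of $N$ and $\dim(\G)\le N^2$ — call this bound $D_0 = D_0(N)$. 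Hence it suffices to produce a generating set among invariants of degree $\le D_0$, and the space $\overline{\Q}[x_1,\ldots,x_N]^\G_{\le D_0}$ of such invariants is the solution space of an explicit linear system over $\Q$ with $O_N(1)$ equations and unknowns and height $\ll\height(\G)^\star$. Solving this linear system — e.g.~via Cramer's rule or Gaussian elimination over $\Q$ and clearing denominators — yields a $\Q$-basis, and hence a spanning set, of $\overline{\Q}[x_1,\ldots,x_N]^\G_{\le D_0}$ consisting of $O_N(1)$ polynomials in $\Z[x_1,\ldots,x_N]$ of degree $\le D_0 = O_N(1)$ and height $\ll\height(\G)^\star$. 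Taking $f_1,\ldots,f_r$ to be these (and noting $r = O_N(1)$ since $\dim \overline{\Q}[x_1,\ldots,x_N]^\G_{\le D_0} = O_N(1)$) proves the lemma.

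The main obstacle is the degree bound $D_0$: one must be sure it depends only on $N$ (through $\dim\G \le N^2$) and not on the embedding $\rho$ or on $\height(\G)$. This is exactly what the effective Noether/Popov degree bounds provide — for a reductive group $\G$ acting linearly on $\A^N$, the invariant ring is generated in degrees bounded polynomially in $N$ and in a quantity depending only on the group and not on the representation's arithmetic complexity. One subtlety worth a remark: $\G$ here need not be connected as an abstract group but it is connected as an algebraic group, so invariance under the Lie algebra suffices, which is what makes the linear-algebra reduction clean. Once $D_0$ is in hand the rest is routine effective linear algebra over $\Q$, tracking that the height of the output is polynomial in the height of the input linear system, which in turn is $\ll\height(\G)^\star$.
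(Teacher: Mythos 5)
Your reduction to a bounded-degree linear-algebra problem is exactly the one the paper uses: once a degree bound $D_0 = D_0(N)$ is available, invariants of degree $\leq D_0$ are the kernel of the infinitesimal action of an integral basis of $\gfrak$ of height $\ll \height(\G)$, and Siegel's lemma produces an integral $\Q$-basis of height $\ll \height(\G)^\star$. That part of your argument coincides with the paper's.

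The genuine difference is in how the degree bound is obtained, and your own closing remark puts your finger on exactly the soft spot. You invoke a Popov/Noether/Derksen-type effective bound for reductive groups, but those bounds are stated in terms of auxiliary invariants — for instance the degree of $\rho(\G)$ as a subvariety of $\GL_N$, or the degrees of generators of the ideal of the nullcone — and it is not immediate that those quantities depend only on $N$ rather than on the particular embedding $\rho$ (and hence potentially on $\height(\G)$). The paper avoids this issue entirely with a soft finiteness argument: the claim ``$\overline{\Q}[x_1,\ldots,x_N]^\G$ is generated in degree $O_N(1)$ by $O_N(1)$ elements'' depends only on the $\overline{\Q}$-isomorphism class of the pair $(\G,\rho)$, and there are only finitely many such classes because there are finitely many simply connected semisimple $\overline{\Q}$-groups of dimension $\leq \dim \SL_N$ and, for each, finitely many isomorphism classes of $N$-dimensional representations. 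This gives $D_0(N)$ (and the bound $r = O_N(1)$) without any explicit effective invariant theory. If you want to stick with the Popov route, you would still need to run essentially this finiteness argument to bound the auxiliary degree data in terms of $N$; at that point it is cleaner to apply the finiteness argument directly, as the paper does. So your proof is not wrong, but it is incomplete as written unless you either (a) supply the argument that the relevant Popov/Derksen constants are bounded purely in terms of $N$, or (b) replace the appeal to explicit degree bounds by the finiteness-of-isomorphism-classes argument.
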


\begin{proof}
We first claim that there are polynomials $f_1',\ldots,f_{r'}' \in \overline{\Q}[x_1,\ldots,x_N]$ of degree at most $d = O_N(1)$ generating $\overline{\Q}[x_1,\ldots,x_N]^\G$ where $r'=O_N(1)$.
This statement depends only on the $\overline{\Q}$-isomorphism class of the group $\G$ and of the representation over $\overline{\Q}$. 
There are finitely many $\overline{\Q}$-isomorphism classes of simply connected groups over $\overline{\Q}$ with dimension at most $\dim(\SL_N)$ and for each such class there are finitely many isomorphism classes of representations in each given dimension.
This implies the above initial claim.

The vector space $V$ of $\G$-invariant polynomials of degree at most $d$ is defined over $\Q$ (since $\G$ and $\rho$ are).
If $\hat{\rho}$ is the $\G$-action on $\Q[x_1,\ldots,x_N]$ and $\vpz_{1},\ldots,\vpz_{\dim(\gfrak)}$ is a $\Q$-basis of $\gfrak$ of integral vectors of height $\ll \height(\G)$ then
\begin{align*}
V = \big\{f \in \Q[x_1,\ldots,x_N]: \deg(f) \leq d,\ \mathrm{D}\hat{\rho}(\vpz_i)f = 0 \text{ for all } 1 \leq i \leq \dim(\gfrak)\big\}.
\end{align*}
By construction, $V$ generates $\overline{\Q}[x_1,\ldots,x_N]^{\G}$.
By Siegel's lemma there exists a basis $f_1,\ldots,f_{\dim(V)}\in \Z[x_1,\ldots,x_N]$ of $V$ over $\Q$ where the coefficients of the polynomials $f_i$ are $\ll \height(\G)^\star$ in absolute value.
This proves the lemma.
\end{proof}

\begin{proof}[Proof of Proposition~\ref{prop:smallvectors}]
We first claim that all $\G$-orbits through points in $\Q^N$ are closed.
It follows from geometric invariant theory and, more specifically, a result of Kempf \cite{Kempf} that for any $\wpz\in \Q^N$ with $\overline{\G.\wpz} \setminus \G.\wpz$ non-empty there exists a cocharacter $\lambda:\G_m \to \G$ defined over $\Q$ with $\lim_{t \to 0}\lambda(t)\wpz \not\in \G.\wpz$.
But $\G$ is $\Q$-anisotropic and so the claim follows.

By geometric invariant theory, see e.g.~\cite[\S1.2]{Mumford} or \cite[\S4.4]{PopovVinberg}, there exists for any two closed $\G$-orbits $\G.\wpz_1\neq \G.{\wpz_2}$ an invariant polynomial $f$ with $f(\wpz_1) \neq f(\wpz_2)$.
Equivalently, the closed orbits are separated by some $f_i$ where $f_1,\ldots,f_r$ are as in Lemma~\ref{lem:GITeff}.
Without loss of generality, we may assume $f_1 =1$ and $f_i(0) = 0$ for all $i>1$.
We may also suppose that the polynomials $f_i$ are homogeneous.

Now let $\wpz \in \Q^N$ be non-zero and let $f_i$ for $i>1$ be such that $f_i(\wpz) \neq 0$.
Then
\begin{align*}
\prod_{\gep \in \Sigma}|f_i(\wpz)|_\gep = \prod_{\gep\in \Sigma}|f_i(g_\gep.\wpz)|_\gep 
\ll \height(\G)^\star \content(g\wpz)^{\deg(f_i)}.
\end{align*}
The left-hand side is equal to $1$ since $f_i(\wpz) \neq 0$ and we deduce that $\content(g\wpz) \gg \height(\G)^{-\star}$.
Hence, the proposition follows.
\end{proof}

\subsection{Diameter estimate}

The following is a consequence of {\cite{MSGT-diameter}} in combination with volume and height comparison in Proposition~\ref{prop: vol complexity'} (Proposition~\ref{prop: vol complexity}) and the estimate of the height in the cusp in Proposition~\ref{prop:smallvectors}.

\Ganisotropic
\begin{theorem}\label{thm:diameter}
There exists $\consta\label{a:diameterestimate}>0$ depending only on $N$ with the following property.
For any $g \in \G(\A)$ there exists $\gamma \in \G(\Q)$ such that
\begin{itemize}
    \item $\norm{\gamma g}_{\gep} = 1$  for any prime $\gep \neq \gp$,
    \item $\norm{\gamma g}_{\gp} \leq \gp^{\ref{a:diameterestimate}}\height(\G)^{\ref{a:diameterestimate}}$, and
    \item $\norm{\gamma g}_\infty \ll 1$.
\end{itemize}
In particular,
\begin{align*}
\norm{\gamma g} \ll \gp^{\ref{a:diameterestimate}}\height(\G)^{\ref{a:diameterestimate}}.
\end{align*}
\end{theorem}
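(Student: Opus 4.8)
The plan is to deduce Theorem~\ref{thm:diameter} from the diameter bound of \cite{MSGT-diameter}, which controls the diameter of an $S$-arithmetic quotient $\lquot{\Gamma_{\mathcal{S}}}{G_{\mathcal{S}}}$ polynomially in the volume (and in the complexity of the ambient group), and then to translate that statement back to the adelic setting using strong approximation together with the volume-versus-complexity comparison in Proposition~\ref{prop: vol complexity'}. First I would fix $g \in \G(\A)$ and pass to the $\mathcal{S}$-arithmetic quotient $X_{\mathcal{S}} = \lquot{\Gamma_{\mathcal{S}}}{G_{\mathcal{S}}}$ for $\mathcal{S} = \{\infty,\gp\}$: by strong approximation (applicable since $\gp$ is good and $\G$ is simply connected, exactly as in \S\ref{sec:prepclosing}) one can find $\gamma_0 \in \G(\Q)$ so that $\gamma_0 g$ lies in $G_{\mathcal{S}} \cdot \prod_{\gep \notin \mathcal{S}}K_\gep$, i.e. $\norm{\gamma_0 g}_\gep = 1$ for every prime $\gep \neq \gp$; this already gives the first bullet. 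It then remains to bound the $\infty$- and $\gp$-components, which amounts to bounding the distance of the image point $\pi_{\mathcal{S}}([\gamma_0 g])$ to a fixed base point in $X_{\mathcal{S}}$.

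Next I would invoke \cite{MSGT-diameter}: the diameter of $X_{\mathcal{S}}$ (with respect to a fixed left-invariant metric on $G_{\mathcal{S}}$ coming from $\rho$) is $\ll (\log\vol(X))^{\star}$, or more precisely polynomial in $\log$ of the volume with the dependence on the ambient group tracked polynomially in $\height(\G)$. Applying this to the point $\pi_{\mathcal{S}}([\gamma_0 g])$ produces $\gamma_1 \in \Gamma_{\mathcal{S}}$ with $\gamma_1 \gamma_0 g$ close to the base point; unwinding what `close' means in the $S$-arithmetic metric gives $\norm{\gamma_1 \gamma_0 g}_\infty \ll 1$ and $\norm{\gamma_1\gamma_0 g}_\gp \ll \gp^{\star}\height(\G)^{\star}$ — here one uses that an element of $G_{\mathcal{S}}$ within bounded metric distance of the identity has archimedean norm $O(1)$ and $\gp$-adic norm bounded by a fixed power of $\gp$ times a power of the ambient complexity. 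Since $\gamma_1 \in \Gamma_{\mathcal{S}} \subset \SL_N(\Z[1/\gp])$ and multiplying by it does not change the components away from $\mathcal{S}$ (it has trivial image there), setting $\gamma = \gamma_1\gamma_0 \in \G(\Q)$ preserves $\norm{\gamma g}_\gep = 1$ for $\gep \neq \gp$. Combining the three local bounds gives $\norm{\gamma g} \ll \gp^{\star}\height(\G)^{\star}$, which is the final assertion.

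The step that requires genuine care — and where I expect the main obstacle to lie — is making the input from \cite{MSGT-diameter} quantitatively compatible with the formulation here: that reference's diameter bound is phrased in terms of volume, so one must first replace $\vol$ by $\cpl = \height(\G)$-type quantities using Proposition~\ref{prop: vol complexity'}, and, more delicately, one must ensure the \emph{dependence on the ambient group} is polynomial in $\height(\G)$ rather than hidden in an implied constant. This is precisely why Proposition~\ref{prop:smallvectors} is needed: to run the reduction theory / diameter argument one needs a lower bound on the shortest vector $\content(\rho(g)\wpz)$ in the cusp (when $X$ is viewed inside $\lquot{\SL_N(\Q)}{\SL_N(\A)}$) that is polynomial in $\height(\G)^{-1}$, which controls how far into the $\SL_N$-cusp an $\G(\A)$-point can travel and hence how the $G_{\mathcal{S}}$-metric relates to the matrix norm $\norm{\cdot}$. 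So the real work is bookkeeping: assembling Proposition~\ref{prop: vol complexity'}, Proposition~\ref{prop:smallvectors}, and the diameter estimate of \cite{MSGT-diameter} with all the $\G$-complexity dependence made explicit and polynomial, and checking that the strong-approximation step does not secretly cost more than a bounded factor.
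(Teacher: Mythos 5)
Your proposal matches the paper's proof, which cites \cite[Thm.~5.5]{MSGT-diameter} to get $\norm{\gamma g}_\gp \ll (\min_{\wpz\neq 0}\content(g\wpz))^{-\star}\vol(X)^\star\gp^\star$ (with the other two bullets built into that reference), and then replaces $\vol(X)$ and the minimal content by polynomial quantities in $\height(\G)$ via Propositions~\ref{prop: vol complexity'} and~\ref{prop:smallvectors}. Your account of the strong-approximation and diameter steps is just a more explicit unfolding of what the cited theorem already packages, so the route is the same.
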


We remark for later purposes that $\gp$ is only required to be a good prime for $X$ (and not for $Y_\data$) to obtain the above theorem.

\begin{proof}
It is shown in {\cite[Thm.~5.5]{MSGT-diameter}} and its proof that the above theorem holds when the second item is replaced by
\begin{align*}
\norm{\gamma g}_{\gp} 
\ll \big(\min_{\wpz \neq 0} \content(g\wpz)\big)^{-\star} \vol(X)^{\star} \gp^\star.
\end{align*}
By Proposition~\ref{prop: vol complexity'} we have $\vol(X) \ll \height(\G)^\star$ and by Proposition~\ref{prop:smallvectors} we have $\min_{\wpz \neq 0} \content(g\wpz) \gg \height(\G)^{-\star}$.
Thus, Theorem~\ref{thm:diameter} follows.
\end{proof}

\subsection{Diophantine points and effective linearization}\label{sec:Diophantine points} 

A connected $\Q$-subgroup $\Lbf\subset \SL_N$ is said to belong to class $\mathcal H$ if $\Lbf(\mathbb C)$ is generated by unipotent subgroups, or equivalently if the radical of $\Lbf$ is unipotent. 
Since $\G$ is assumed $\Q$-anisotropic, any $\Q$-subgroup of $\rho(\G)$ of class $\mathcal{H}$ is semisimple.
We write $\mathcal{H}_\G$ for the collection of connected semisimple $\Q$-subgroups of $\rho(\G)$.

Recall from \S\ref{sec:setup} that given $\Lbf \in \mathcal{H}_\G$ we write $\vpz_{\Lbf}$ for one of the two primitive integral vectors in the rational line
\begin{align*}
\bigwedge^{\dim(\Lbf)} \Lie(\Lbf) \subset \bigwedge^{\dim(\Lbf)} \gfrak.
\end{align*}
We write $\eta_{\Lbf}: g\in \SL_N \mapsto g^{-1}.\vpz_{\Lbf}$ for the right orbit map through $\vpz_{\Lbf}$.

\begin{defin}[{\cite{LMMS}}]\label{def:Diophantine-intro} 
Let $\epsilon : \R^+ \to (0,1)$ be a monotone decreasing function and let $t\in\R^+$.
A point $\Gamma_{\mathcal{S}}g$ in $X_{\mathcal{S}}$ is called \textbf{$(\epsilon,t)$-Diophantine} (for the action of $U=\{u(s)\}$ with generator $\zpz$ as in \eqref{eq:ut in SL2}) if for all $\Lbf \in \mathcal{H}_\G$ with 
$\{e\}\neq\Lbf\neq\rho(\G)$ and $\content_\mathcal{S}(\eta_{\Lbf}(g))< e^t$
\begin{align}\label{eq:dioph-cond-intro-2}
\|\zpz\wedge\eta_{\Lbf}(g)\|\geq\epsilon\bigl(\mathsf \content_\mathcal{S}(\eta_{\Lbf}(g))\bigr).
\end{align}
\end{defin}
Here, we wrote $\zpz \wedge \eta_{\Lbf}(g) = \zpz \wedge \eta_{\Lbf}(g)_\gp$ for simplicity.

We turn to phrasing the main theorem of \cite{LMMS} in our setting.

\begin{theorem}[{\cite{LMMS}}]\label{thm:linearization}
There exists a constant $\consta\label{a:linearization}>0$ depending only on $N$ with the following property.
Let $g \in G_\mathcal{S}$, $t>0$, $k>0$, and $\eta \in (0,\frac{1}{2})$.
Suppose that for all $r>0$
\begin{align*}
\epsilon(r) 
\leq r^{-\ref{a:linearization}} (\height(\G)^{-1}\eta\,{\gp}^{-1})^{\ref{a:linearization}}.
\end{align*}
Then at least one of the following is true:
\begin{enumerate}[label=\textnormal{(\theenumi)}]
\item \emph{(Many Diophantine points)} We have
\begin{align*}
\big|\big\{|s| \leq {\gp}^{k}: \Gamma_{\mathcal{S}} g u(s) \text{ is not } &(\epsilon,t)\text{-Diophantine}\big\}\big| \leq (\height(\G)\gp)^{\ref{a:linearization}} \eta^{\frac{1}{\ref{a:linearization}}}{\gp}^k.
\end{align*}
\item \emph{(Obstruction from a class $\mathcal{H}$-subgroup)} 
There exist a non-trivial proper subgroup $\Lbf \in \mathcal{H}_\G$ such that for all $|s|\leq {\gp}^k$
\begin{align*}
\content_{\mathcal{S}}(\eta_{\Lbf}(gu(s))) 
&\leq  (\height(\G)\eta^{-1}\gp\mathrm{e}^t)^{\ref{a:linearization}},\\
\norm{\zpz \wedge \eta_{\Lbf}(gu(s))} 
&\leq {\gp}^{-k/\ref{a:linearization}} (\height(\G)\eta^{-1}\gp \mathrm{e}^t)^{\ref{a:linearization}}.
\end{align*}
\item \emph{(Obstruction from a normal-subgroup)} 
There exist a non-trivial proper normal $\Q$-subgroup $\Lbf \lhd \rho(\G)$ such that
\begin{align*}
\norm{\zpz \wedge \vpz_{\Lbf}}
&\leq \epsilon\big(\height(\Lbf)^{\frac{1}{\ref{a:linearization}}} (\height(\G)\gp)^{-\ref{a:linearization}}\eta
\big)^{\frac{1}{\ref{a:linearization}}}.
\end{align*}
\end{enumerate}
\end{theorem}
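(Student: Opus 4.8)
The plan is to deduce the statement from the effective linearization machinery of \cite{LMMS}, so the real content is to make the dependence on the ambient group $\G$ explicit; this dependence enters \emph{only} through the diameter bound of Theorem~\ref{thm:diameter} and the height-in-the-cusp estimate of Proposition~\ref{prop:smallvectors}, with heights of intermediate subgroups controlled via reduction theory and Proposition~\ref{prop: vol complexity'}. The starting observation is that the non-Diophantine locus along the orbit is a union of ``bad sets'' indexed by subgroups: $\Gamma_\mathcal{S}gu(s)$ fails to be $(\epsilon,t)$-Diophantine precisely when there is some $\Lbf\in\mathcal{H}_\G$ with $\{e\}\neq\Lbf\neq\rho(\G)$, $\content_\mathcal{S}(\eta_\Lbf(gu(s)))<e^t$, and $\|\zpz\wedge\eta_\Lbf(gu(s))\|<\epsilon(\content_\mathcal{S}(\eta_\Lbf(gu(s))))$. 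Writing $w=g^{-1}.\vpz_\Lbf$ one has $\eta_\Lbf(gu(s))=u(-s).w$, a polynomial in $s$ of degree $\ll_N 1$ (the archimedean component is constant, while $s\mapsto(u(-s).w)_\gp$ has $\Q_\gp$-coefficients). Consequently $s\mapsto\zpz\wedge(u(-s).w)_\gp$ is a $\wedge\,\gfrak_\gp$-valued polynomial of bounded degree and $s\mapsto\content_\mathcal{S}(u(-s).w)$ is a polynomially controlled function, and the whole argument runs on the $(C,\alpha)$-good property of such functions on balls in $\Q_\gp$, i.e.\ a $\gp$-adic Remez inequality.

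\textbf{Single-subgroup analysis.} For a fixed $\Lbf$ one argues as in Dani--Margulis: on an interval $I\subset\{|s|\le\gp^k\}$, \emph{either} the bad set $B_\Lbf$ occupies only a small fraction of $I$ --- quantitatively $\ll(\height(\G)\gp)^\star\eta^{1/\star}|I|$ once $\epsilon$ is as small as assumed --- \emph{or}, by the good property, $\|\zpz\wedge\eta_\Lbf(gu(s))\|$ is forced to be small on all of $I$ relative to $\sup_I\content_\mathcal{S}(\eta_\Lbf(gu(s)))$, with the content itself controlled there; transporting this back to $s=0$ (again by the good property) and using $\content_\mathcal{S}(\eta_\Lbf(g))<e^t$ gives exactly the bounds of alternative~(2) with this $\Lbf$. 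A degenerate branch arises when the polynomial $s\mapsto\zpz\wedge(u(-s).w)_\gp$ is (nearly) identically zero: this forces $\zpz$ to be (nearly) tangent to a $u(\cdot)$-invariant subspace containing $\Lie(\Lbf)_\gp$, and chasing the minimal such invariant subspace one lands on a proper normal $\Q$-subgroup, producing alternative~(3) --- which is why a separate normal-subgroup alternative is unavoidable. The heights $\height(\Lbf)$ in the conclusion are then controlled by $\content_\mathcal{S}(\eta_\Lbf(g))$ together with Proposition~\ref{prop:smallvectors}, i.e.\ the passage between $X$ and $X_\mathcal{S}$.

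\textbf{Passing to the union.} To handle all $\Lbf$ at once one uses the chain structure: the obstructing subgroups at a given $s$ form a poset under inclusion of bounded length $\ll_N 1$, so at each bad $s$ one selects a maximal obstructing $\Lbf(s)$, and polynomiality furnishes an interval $I(s)\ni s$ on which $\Lbf(s)$ remains an obstruction of comparable quality. A Besicovitch/Vitali-type covering of the bad set by the $I(s)$, with the single-subgroup estimate applied on each, bounds the measure of the non-Diophantine set by $(\height(\G)\gp)^{\ref{a:linearization}}\eta^{1/\ref{a:linearization}}\gp^k$ (alternative~(1)), unless some $I(s)$ already fills $\{|s|\le\gp^k\}$ (alternative~(2)) or the degenerate branch occurs (alternative~(3)); taking $\epsilon(r)\le r^{-\star}(\height(\G)^{-1}\eta\gp^{-1})^{\star}$ with $\star$ large in terms of $N$ makes all comparisons go through.

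\textbf{Main obstacle.} The difficulty is bookkeeping rather than a single hard idea: every constant in the Remez inequality and in the covering argument must depend only on $N$ while the correct powers of $\height(\G)$ and $\gp$ are carried through each step, and the three alternatives must be cleanly separated --- in particular distinguishing ``the content grows along the orbit, so the Diophantine condition is vacuous on part of $I$'' from ``the orbit is genuinely trapped near a class-$\mathcal{H}$ subvariety'', and treating the normal-subgroup degeneracy where the obstruction vector does not move at all. Since the analogue over real quotients is already in \cite{LMMS}, the cleanest route is to follow that argument essentially verbatim, inserting Theorem~\ref{thm:diameter} and Proposition~\ref{prop:smallvectors} at every point where a bound depending on the ambient group is needed.
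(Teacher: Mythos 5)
Your approach is the same as the paper's: Theorem~\ref{thm:linearization} is not proved in this paper but is the main avoidance theorem of \cite{LMMS}, and the only new work is to feed Theorem~\ref{thm:diameter} into \cite[Lemma~2.8]{LMMS} to make the dependence on $\height(\G)$ and $\gp$ explicit, after which the implicit multiplicative constants are absorbed using that $\gp$ is assumed large. One small correction to your final paragraph: \cite{LMMS} already works in the $S$-arithmetic setting $X_\mathcal{S}=\Gamma_\mathcal{S}\backslash G_\mathcal{S}$ (compare Definition~\ref{def:Diophantine-intro}, which is taken directly from there), so no passage from real quotients to the $p$-adic setting is required --- the theorem here is a verbatim restatement, not an adaptation. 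Your linearization outline (polynomial good property of $s\mapsto\zpz\wedge\eta_\Lbf(gu(s))$, the chain/covering argument over obstructing subgroups, and the degenerate normal-subgroup branch) is a faithful sketch of the LMMS machinery, but the paper simply cites it rather than re-running it.
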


As phrased, the above uses that $\G$ is $\Q$-anisotropic. We have also used Theorem~\ref{thm:diameter} to make the dependency on $\G$ in the main theorem of \cite{LMMS} explicit (see \cite[Lemma 2.8]{LMMS}).
We have additionally absorbed implicit multiplicative constants using the fact that $\gp$ is assumed sufficiently large.

The following corollary of Theorem~\ref{thm:linearization} establishes an abundance of Diophantine points on $Y_\data$.

\begin{corollary}\label{cor:manydio}
There exists a constant $\consta\label{a:manydio}>\ref{a:linearization}$ depending only on $N$ with the following property.
Let $\eta \in (0,1/2)$.
Suppose that for all $r>0$
\begin{align}\label{eq:eps-manydio}
\epsilon(r) \leq r^{-\ref{a:linearization}} (\height(\G)^{-1}\eta\,{\gp}^{-1})^{\ref{a:manydio}}
\end{align}
For any $t>0$ at least one of the following is true:
\begin{enumerate}[label=\textnormal{(\alph*)}]
\item \emph{(Many Diophantine points)} We have
\begin{align*}
\mu\big(\big\{ y \in Y_\data:
\pi_{\mathcal{S}}(y) \text{ is not } &(\epsilon,t)\text{-Diophantine} \big\}\big) \leq (\height(\G)\gp)^{\ref{a:linearization}} \eta^{\frac{1}{\ref{a:linearization}}}.
\end{align*}
\item \emph{(Obstruction from an intermediate semisimple subgroup)}  There exists a non-trivial proper semisimple $\Q$-subgroup $\Lbf < \rho(\G)$ containing $\iota(\H)$ such that
\begin{align*}
\cpl([\Lbf(\A)g_\data]) =
\content(\eta_\Lbf(g_\data)) \leq \big(\height(\G)\eta^{-1}\gp\,\mathrm{e}^t \big)^{\ref{a:manydio}}.
\end{align*}
\end{enumerate}
\end{corollary}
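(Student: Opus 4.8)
The corollary is deduced from Theorem~\ref{thm:linearization} by applying it along the $U$-orbits through $\mu$-generic points on $Y_\data$ and analyzing the three alternatives. First I would fix $\eta \in (0,1/2)$, choose $\consta\label{a:manydio}$ large enough (in a manner depending only on $N$) relative to $\ref{a:linearization}$, and recall that $\mu$ is $U$-invariant (since $U = \{u(s)\} \subset \theta_\gp(\SL_2(\Q_\gp)) \subset H_\gp$). The key point is that the $U$-invariance of $\mu$ together with strong approximation (the map $\pi_{\mathcal{S}}\colon X \to X_{\mathcal{S}}$ is $G_{\mathcal{S}}$-equivariant and pushes $\mu$ to a $U$-invariant measure $\bar\mu$ on $X_{\mathcal{S}}$) lets us integrate the orbit-interval estimates of Theorem~\ref{thm:linearization} in $s$ and then average over the quotient. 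Concretely, for $\mu$-a.e.\ $y \in Y_\data$ we apply Theorem~\ref{thm:linearization} with $g = \pi_{\mathcal{S}}(y) \in X_{\mathcal{S}}$ (lifting to $G_{\mathcal{S}}$), a parameter $t>0$, some fixed $k$, and the given $\eta$; the hypothesis on $\epsilon$ in \eqref{eq:eps-manydio} with $\ref{a:manydio} > \ref{a:linearization}$ ensures the hypothesis $\epsilon(r) \le r^{-\ref{a:linearization}}(\height(\G)^{-1}\eta\gp^{-1})^{\ref{a:linearization}}$ of Theorem~\ref{thm:linearization} holds (in fact with room to spare, which will be used when handling alternative (3)).

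Next I would unwind the three cases of Theorem~\ref{thm:linearization}. If alternative (1) (\emph{Many Diophantine points}) holds for $\mu$-a.e.\ $y$, then integrating the bound $|\{|s| \le \gp^k : \Gamma_{\mathcal{S}} g u(s) \text{ not } (\epsilon,t)\text{-Diophantine}\}| \le (\height(\G)\gp)^{\ref{a:linearization}}\eta^{1/\ref{a:linearization}}\gp^k$ against $\mu$, dividing by $\gp^k$, and using the $U$-invariance of $\mu$ (so that the $\mu$-measure of non-Diophantine points equals the orbit-averaged measure of the non-Diophantine set) yields option (a). The subtlety is that alternatives (2) and (3) are stated for a \emph{fixed} $y$, so one needs to rule out the possibility that different generic points fall into different alternatives; the standard fix is: if alternative (1) fails on a set of positive $\mu$-measure, pick one such $y_0$, and show the resulting obstruction subgroup is essentially independent of $y_0$ or at worst gives an obstruction of the type in option (b). I would handle alternative (3) first: a non-trivial proper normal $\Q$-subgroup $\Lbf \lhd \rho(\G)$ with $\norm{\zpz \wedge \vpz_\Lbf} \le \epsilon(\cdots)^{1/\ref{a:linearization}}$ would, by the Standing Assumption that $\iota(\H)$ is not contained in any proper $\Q$-factor of $\rho(\G)$, force $\zpz$ (the highest weight direction of the principal $\SL_2 \subset H_\gp$) to be tangent to a proper normal factor — but by Lemma~\ref{lem:nofactorscontainingHp}, $\theta_\gp(\SL_2)$ projects nontrivially to every $\Q_\gp$-simple factor, so $\zpz$ is not tangent to any proper factor; choosing $\ref{a:manydio}$ large enough makes the bound on $\norm{\zpz \wedge \vpz_\Lbf}$ small enough (using $\height(\Lbf) \le \height(\G)^\star$ for normal subgroups, e.g.\ via \cite[Lemma 8.6]{AW-realsemisimple}) to contradict this, so alternative (3) never occurs.

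For alternative (2) (\emph{Obstruction from a class $\mathcal{H}$-subgroup}), suppose it holds for some $y_0$ with lift $g_0 \in G_{\mathcal{S}}$ representing $\pi_{\mathcal{S}}(y_0)$: there is a non-trivial proper $\Lbf \in \mathcal{H}_\G$ (hence semisimple, since $\G$ is $\Q$-anisotropic) with $\content_{\mathcal{S}}(\eta_\Lbf(g_0 u(s))) \le (\height(\G)\eta^{-1}\gp\mathrm{e}^t)^{\ref{a:linearization}}$ and $\norm{\zpz \wedge \eta_\Lbf(g_0 u(s))} \le \gp^{-k/\ref{a:linearization}}(\cdots)^{\ref{a:linearization}}$ for all $|s| \le \gp^k$. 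Taking $k$ large enough (comparable to $t$ and $\log \height(\G)$, which we are free to do since $k$ is our choice), the second bound forces $\eta_\Lbf(g_0 u(s))$ to be $U$-invariant up to a tiny error along the whole interval; combined with the boundedness of its content this forces $\zpz$ to lie in $\Lie(g_0^{-1}\Lbf g_0)$ essentially exactly, and since this holds along an interval of $U$ and $g_0$ comes from a point of $Y_\data$ (which contains the $H_\gp$-orbit through $[g_0]$), a now-standard argument (as in \cite{EMV,EMMV}) upgrades this to: the conjugate $g_0^{-1}\Lbf g_0$ contains the principal $\SL_2$, hence — because $\H$ is simply connected and $\theta_\gp(\SL_2)$ projects nontrivially to each almost-simple factor of $H_\gp$, so $H_\gp$ is generated by conjugates of $\theta_\gp(\SL_2)$ — it contains $\iota(\H)$ after translating back. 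Then $\Lbf' := g_\data \Lbf g_\data^{-1}$ (a semisimple $\Q$-subgroup, by rationality of $\Lbf$ and an argument as in Lemma~\ref{lem:nofactorscontainingHp} replacing $\Lbf$ by the minimal $\Q$-subgroup through $\iota(\H)$) contains $\iota(\H)$, is proper in $\rho(\G)$, and satisfies $\content(\eta_{\Lbf'}(g_\data)) = \content_{\mathcal{S}}(\eta_\Lbf(g_0)) \le (\height(\G)\eta^{-1}\gp\mathrm{e}^t)^{\ref{a:manydio}}$ once $\ref{a:manydio}$ absorbs $\ref{a:linearization}$ and the place-comparison constants; this is option (b), recalling $\cpl([\Lbf'(\A)g_\data]) = \content(\eta_{\Lbf'}(g_\data))$ and $\height(\G) \asymp \cpl(X)$.

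\textbf{Main obstacle.} The hard part is the passage in alternative (2) from ``$\norm{\zpz \wedge \eta_\Lbf(g_0 u(s))}$ is tiny along a long $U$-interval'' to ``the relevant conjugate of $\Lbf$ contains the principal $\SL_2$, and hence a $\Q$-group containing $\iota(\H)$'': one must be careful that the polynomial-in-$s$ behavior of $\eta_\Lbf(g_0 u(s))$, forced to be nearly $U$-fixed at $\gp^k+1$ many integer points $s$, is genuinely constant (a Vandermonde/degree argument using $\gp \gg_N 1$ and $k$ large), then that this forces an exact tangency relation over $\Q_\gp$, and finally that Galois descent and the simply-connectedness of $\H$ produce a genuine $\Q$-subgroup of the required type — this is precisely the kind of argument appearing in \cite[\S6--7]{EMMV} but must be tracked with the explicit $\height(\G)$-dependence supplied by Theorem~\ref{thm:diameter} and Proposition~\ref{prop:smallvectors}.
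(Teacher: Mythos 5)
Your broad outline matches the paper's: apply Theorem~\ref{thm:linearization} along $U$-orbits, unwind the three alternatives, rule out (3) because $\zpz$ is uniformly far from every proper ideal (your appeal to Lemma~\ref{lem:nofactorscontainingHp} is the right idea; the paper records this isolation statement as Lemma~\ref{lem:Ufarfromideals}), and upgrade (2) to option (b). The gap is in the final step of (2). From the containment of the principal $\SL_2$ over $\Q_\gp$ you conclude that the conjugate of the obstruction group $\Lbf'$ contains $\iota(\H)$ because ``$H_\gp$ is generated by conjugates of $\theta_\gp(\SL_2)$.'' But the conjugating elements live in $H_\gp$, so invoking this would already require $H_\gp \subset g_0^{-1}\Lbf' g_0$ --- precisely what you are trying to prove; moreover that containment is local at $\gp$, while you need a $\Q$-group statement. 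The paper instead proceeds dynamically: it fixes a single Birkhoff-generic point $x = [gh]$ (with $g = \gamma g_\data$ reduced via Theorem~\ref{thm:diameter}), obtains the exact tangency $U \subset (g_\data h)^{-1}\Lbf(g_\data h)$ where $\Lbf = \gamma^{-1}\Lbf'\gamma$, and uses ergodicity of the $U$-action on $Y_\data$ to get density of $\overline{xU}$, hence $Y_\data \subset [\Lbf(\A) g_\data]$. Then for each $h' \in \H(\R)$ one writes $\iota(h') = \gamma' m$ with $\gamma' \in \SL_N(\Q)$ and $m \in \Lbf(\A)$, uses $h'_\gep = \mathrm{id}$ at every finite place to force $\gamma' \in \Lbf(\Q)$, deduces $\iota(\H(\R)) \subset \Lbf(\R)$, and finally $\iota(\H) \subset \Lbf$ by Zariski density. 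This density-plus-descent step cannot be replaced by a generation argument over a single place.

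Working with that single generic point and letting $k \to \infty$ also removes your other worry. The paper applies Theorem~\ref{thm:linearization} for every $k \in \N$; one alternative holds infinitely often, and since the content bound in (2) restricts the candidate $\Lbf'$ to a finite set, a single subgroup works along an infinite sequence of $k$'s, whence $\norm{\zpz \wedge \eta_{\Lbf'}(gh)} = 0$ exactly. No Vandermonde/degree interpolation is needed, and there is no measurable-selection issue among alternatives at different $y$. Your ``$\mu$-a.e.\ $y$ with fixed $k$'' framing could be repaired, but the paper's single-generic-point route is simpler and is the one its explicit height/prime tracking is designed around.
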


For the proof, we will require the following lemma giving an effective isolation statement for Lie ideals of $\gfrak$.

\begin{lemma}\label{lem:Ufarfromideals}
For any proper normal $\Q$-subgroup $\Lbf \lhd \rho(\G)$ we have
\[
\|\zpz\wedge \vpz_{\Lbf}\| =1.
\]
\end{lemma}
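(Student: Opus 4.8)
The statement asserts that the vector $\zpz$ (the upper nilpotent of the principal $\SL_2$) is never weight-orthogonal to any proper normal subgroup in a strong integral sense: $\|\zpz \wedge \vpz_{\Lbf}\|_\gp = 1$ (all other places contribute $1$ trivially since everything is integral at the good prime, by Lemma~\ref{lem:principal SL2} and the integrality of $\theta_\gp$). The key point is that a \emph{proper} normal $\Q$-subgroup $\Lbf \lhd \rho(\G)$ corresponds, after extension of scalars to $\Q_\gp$, to a product of some (but not all) of the $\Q_\gp$-almost simple factors of $G_\gp$. The first step is therefore to observe that $\vpz_{\Lbf} \in \bigwedge^{\dim \Lbf}\gfrak_\gp[0]$ is (up to a unit) a wedge of the $\Z_\gp$-structures of those factors, so $\vpz_\Lbf \mod \gp$ is the corresponding wedge of the special fibers $\underline{\Gfrak_{\gp,i}}(\mathbb{F}_\gp)$ of the factors comprising $\Lbf$; this uses the discussion preceding Lemma~\ref{lem:principal SL2} that $\Gfrak_\gp = \prod_i \Gfrak_{\gp,i}$ splits the $\Z_\gp$-scheme into simple factors.

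\textbf{Main step.} By Lemma~\ref{lem:nofactorscontainingHp} (which applies under the standing assumption that $\iota(\H)$ is not contained in a proper $\Q$-factor of $\rho(\G)$), the image of $\underline{\theta_\gp}$ projects nontrivially into \emph{every} $\mathbb{F}_\gp$-almost simple factor of $\underline{\Gfrak_\gp}$. In particular $\underline{\zpz} = \mathrm{D}\underline{\theta_\gp}\left(\begin{psmallmatrix} 0&1\\0&0\end{psmallmatrix}\right)$, being the image of a regular nilpotent under a homomorphism with nontrivial projection to each factor, has nonzero component in (the Lie algebra of) each $\mathbb{F}_\gp$-almost simple factor, and in particular in at least one factor \emph{not} contained in $\Lbf$ (since $\Lbf$ is proper). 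Since $\underline{\vpz_\Lbf}$ is the highest exterior power of the sum of the Lie algebras of the factors making up $\Lbf$, and $\underline{\zpz}$ has a nonzero component transverse to that subspace, the wedge $\underline{\zpz} \wedge \underline{\vpz_\Lbf}$ is nonzero in $\bigwedge^{\dim\Lbf + 1}\underline{\gfrak_\gp}$. Hence $\|\zpz \wedge \vpz_\Lbf\|_\gp = 1$. (One must first reduce to $\Lbf$ being a product of $\Q_\gp$-factors: a priori $\Lbf$ is only a $\Q$-subgroup, but a normal semisimple $\Q$-subgroup of $\rho(\G)$ is a product of $\Q$-almost simple factors of $\rho(\G)$, each of which base-changes to a product of $\Q_\gp$-almost simple factors; the good-prime hypothesis that $\G$ is quasi-split over $\Q_\gp$ with $K_\gp$ hyperspecial ensures the $\Z_\gp$-model respects this product decomposition.)

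\textbf{Expected obstacle.} The one point requiring care is the transition between the $\Q$-structure of $\Lbf$ and the $\Q_\gp$-factor decomposition: one wants $\underline{\vpz_\Lbf}$ to literally be (a unit times) the wedge of the $\mathbb{F}_\gp$-points of a subcollection of the special-fiber simple factors $\underline{\Gfrak_{\gp,i}}$. This follows because $\Lie(\Lbf)_{\Q_\gp}$ is a sum of some of the $\Q_\gp$-almost simple summands of $\gfrak_\gp$, the primitive integral generator of its top exterior power is a unit multiple of the product of the primitive integral generators of each summand's top exterior power (as $\Z_\gp$ is a PID and the summands span complementary saturated $\Z_\gp$-sublattices of $\gfrak_\gp[0]$), and reduction mod $\gp$ commutes with all of this since $\gp$ is good. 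Once this identification is in place the argument is a one-line dimension count: $\underline{\zpz}$ has a nonzero coordinate in a complementary factor, so wedging with $\underline{\vpz_\Lbf}$ gives something nonzero, i.e.\ $\|\zpz \wedge \vpz_\Lbf\|_\gp = 1$; combined with the trivial bound $\|\zpz \wedge \vpz_\Lbf\|_v \le 1$ at $v = \gp$ (integrality) this yields exactly $1$. I do not expect any genuine difficulty beyond bookkeeping of the factor decompositions.
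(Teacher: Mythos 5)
Your overall strategy --- decompose $\underline{\gfrak_\gp}$ into the Lie algebras of the $\mathbb{F}_\gp$-simple factors, and argue that $\underline{\zpz}$ has nonzero component in each --- is genuinely different from the paper's. The paper instead assumes $\|\zpz\wedge\vpz_\Lbf\|<1$, uses \emph{normality} of $\Lbf$ to observe that the adjoint $\underline{\theta_\gp}(\SL_2(\mathbb{F}_\gp))$-orbit of $\underline{\zpz}$ stays in the ideal $\underline{\Lie(\Lbf)}$, notes that this orbit spans all of $\underline{\hfrak'}$, and then contradicts Lemma~\ref{lem:nofactorscontainingHp} at the \emph{subgroup} level. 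The paper thus never needs to analyze the components of the single element $\underline{\zpz}$; you do.

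This creates a genuine gap in your ``Main step.'' You deduce ``$\underline{\zpz}$ has nonzero component in each factor'' from ``$\underline{\theta_\gp}$ projects nontrivially into each factor'' (the statement of Lemma~\ref{lem:nofactorscontainingHp}). But the statement of that lemma is about the \emph{group image} of $\underline{\theta_\gp}$ not being contained in a proper factor; it does not assert anything about the components of the specific Lie algebra element $\underline{\zpz}$. In characteristic $p$, a nontrivial group homomorphism $\SL_2\to M$ can have vanishing differential (Frobenius twist), so ``nontrivial projection at the group level'' does \emph{not} formally imply ``nonzero component of $\mathrm{D}\underline{\theta_\gp}(e)$.'' Your claim is in fact true here --- it follows from the step inside the \emph{proof} of Lemma~\ref{lem:nofactorscontainingHp} where one shows, via Lemma~\ref{lem: reduction highest weight}, that the representation of $\SL_2(\mathbb{F}_\gp)$ on $\underline{\Lie(G_{\gp,i})}$ is nontrivial; a nontrivial $\SL_2$-representation in large residue characteristic has a positive highest weight, so the upper nilpotent acts nonzero, whence $[\underline{\zpz}_i,\cdot]\neq 0$ and $\underline{\zpz}_i\neq 0$. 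But that chain of reasoning must be spelled out; citing only the statement of Lemma~\ref{lem:nofactorscontainingHp} does not supply it. The paper's route through the ideal and the orbit span is precisely designed to avoid this extra argument. Your ``Expected obstacle'' paragraph (on $\vpz_\Lbf$ being a wedge of factors in the $\Z_\gp$-model) is correct and is, perhaps ironically, the less problematic part.
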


\begin{proof}
Suppose that $\|\zpz\wedge \vpz_{\Lbf}\| <1$.
Since $\Lbf$ is normal, this implies that $\|\wpz\wedge \vpz_{\Lbf}\| <1$ for any $\wpz \in \hfrak'[0]$ where $\hfrak' = \Lie(\theta_\gp(\SL_2(\Q_{\gp})))$.
In particular, the reduction $\underline{\hfrak'}$ of $\hfrak'$ modulo $\gp$ is contained in a proper ideal of $\underline{\gfrak}$.
Since we assumed that $\iota(\Hbf)$ is not contained in a proper $\Q$-factor of $\rho(\G)$, this contradicts Lemma~\ref{lem:nofactorscontainingHp}.
\end{proof}

\begin{proof}[Proof of Corollary~\ref{cor:manydio}]
By Theorem~\ref{thm:diameter}, there exists $\gamma \in \rho(\G(\Q))$ so that $g = \gamma g_\data$ satisfies
\begin{align*}
g_\gep \in \SL_N(\Z_\gep) \text{ for all }\gep \neq \gp,\ \norm{g_\infty}\ll 1,\ \text{and } \norm{g_\gp} \leq \gp^{\ref{a:diameterestimate}}\height(\G)^{\ref{a:diameterestimate}}.
\end{align*}
Recall that the $U$-action on $Y_\data$ is ergodic.
There exists 
\begin{align*}
h\in g_\data^{-1}\iota(\Hbf(\A))g_\data \cap \{h' \in \SL_N(\A):\norm{h'}\leq \tfrac{3}{2}\}
\end{align*}
such that $x = [gh] \in Y_{\data}$ is Birkhoff generic and satisfies the pointwise ergodic theorem for the characteristic function of the set of $(\epsilon,t)$-Diophantine points.
We apply Theorem~\ref{thm:linearization} to $\pi_{\mathcal{S}}(x)$ for all $k \in \N$.
One of the options (1)-(3) holds infinitely often (i.e.~for infinitely many $k\in \N$). 
If (1) holds infinitely often, we conclude (a) (for any $\ref{a:manydio}\geq \ref{a:linearization}$).

Assume (2) holds infinitely often. Notice that for any $R>0$ there are finitely many $\Q$-subgroups $\Lbf'$ with $\content(\eta_{\Lbf'}(gh))\leq R$.
We may thus assume that (2) holds infinitely often for some proper subgroup $\Lbf' \in \mathcal{H}_\G$ with
\begin{align*}
\content(\eta_{\Lbf'}(gh)) = \content_\mathcal{S}(\eta_{\Lbf'}(\pi_\mathcal{S}(gh))) \leq (\height(\G)\eta^{-1}\gp\,\mathrm{e}^t)^{\ref{a:linearization}}.
\end{align*}
Here, we used $(gh)_\gep \in \SL_N(\Z_\gep)$ for $\gep \neq \gp$ in the first equality.
As the second inequality in (2) holds for infinitely many $k$, we have $\norm{\zpz \wedge \eta_{\Lbf'}(gh))}  = 0$.
In particular, $\zpz \in \Ad(gh)^{-1}\Lie(\Lbf')$ and hence $U \subset (gh)^{-1}\Lbf' gh = (g_\data h)^{-1}\Lbf (g_\data h)$ for $\Lbf = \gamma^{-1} \Lbf' \gamma$. 
By density of $xU$ in $Y_\data$ this shows that 
\begin{align*}
Y_{\data} = \overline{xU} = \overline{[g h] U} = \overline{[g_\data h] U}\subset [\Lbf(\A)g_\data h]
\end{align*}
and so $Y_{\data}\subset [\Lbf(\A)g_\data]$ since $Y_\data$ is invariant under $h$.
In particular, there exists for any $h' \in \H(\R)$ some $\gamma' \in \SL_N(\Q)$ and $m \in \Lbf(\A)$ with $\iota(h') = \gamma' m$.
Since $h'_\gep = \id$ for any prime $\gep$, this shows $\gamma' \in \Lbf(\Q)$ and so $\iota(h') \in \Lbf(\R)$. 
Thus, $\iota(\H(\R)) \subset \Lbf(\R)$ and so $\iota(\H) \subset \Lbf$ by Zariski-density.
By the above bound on $\content(\eta_{\Lbf'}(g)) = \content(\eta_\Lbf(g_\data))$, (b) follows.

Assume (3) in Theorem~\ref{thm:linearization} holds infinitely often (or once) for some non-trivial proper normal subgroup $\Lbf= \Lbf' \lhd \rho(\G)$.
Then by Lemma~\ref{lem:Ufarfromideals}
\begin{align*}
{\gp}^{-\star}\height(\G)^{-\star} \leq \norm{\zpz \wedge \vpz_{\Lbf}}
\leq \big(\height(\G)\eta^{-1}\gp\big)^{-\star \ref{a:manydio}+\star} \height(\Lbf)^{-\star}
\end{align*}
and so $\height(\Lbf) \ll \big(\height(\G)\eta^{-1}\gp\big)^{-\star \ref{a:manydio}+\star}$.
If $\ref{a:manydio}>0$ is chosen sufficiently large, this gives a contradiction as $\height(\G),\height(\Lbf) \geq 1$ 
and $\eta\in(0,1/2)$.
\end{proof}

For future convenience, we choose $\eta = \eta_0 >0$ with
\begin{align}\label{eq:def-etaG}
(\height(\G)\gp)^{\ref{a:linearization}} \eta_0^{\frac{1}{\ref{a:linearization}}} = \frac{1}{9}
\end{align}
and set
\begin{align}\label{eq:epsdef}
\epsilon_0(r) := r^{-\ref{a:linearization}} (\height(\G)^{-1}\eta_0{\gp}^{-1})^{\ref{a:manydio}}.
\end{align}
In particular, for all $r>0$
\begin{align*}
\epsilon_0(r) \gg r^{-\star} \height(\G)^{-\star} \gp^{-\star}.
\end{align*}
To simplify the terminology, we will say that $x \in X_\mathcal{S}$ is \textbf{$T$-Diophantine} if it is $(\epsilon_0,\log(T))$-Diophantine.

\begin{corollary}\label{cor:FubiniDio}
Suppose that
\begin{align}\label{eq:mcpllowerbound}
\mcpl(Y_\data) > \big(\height(\G)\eta_0^{-1}\gp\big)^{2\ref{a:manydio}}.
\end{align}
For any $n \geq 0$, the set of $y \in Y_\data$ with
\begin{align*}
\big|\big\{|s| \leq \gp^n: \pi_\mathcal{S}(y)u(s) \text{ is } \mcpl(Y_\data)^{\frac{1}{2\ref{a:manydio}}}\text{-Diophantine}\big\}\big| \leq \tfrac{2}{3}{\gp}^n
\end{align*}
has $\mu$-measure at most $\frac{1}{3}$.
\end{corollary}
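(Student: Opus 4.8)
The plan is to deduce Corollary~\ref{cor:FubiniDio} from Corollary~\ref{cor:manydio} applied with the specific choices $\eta=\eta_0$, $\epsilon=\epsilon_0$, and $t=\log(\mcpl(Y_\data)^{1/(2\ref{a:manydio})})$. First I would check that this is an admissible choice: the hypothesis \eqref{eq:eps-manydio} holds by the very definition \eqref{eq:epsdef} of $\epsilon_0$, so Corollary~\ref{cor:manydio} gives us the dichotomy between (a) many Diophantine points and (b) an obstruction from an intermediate semisimple subgroup $\Lbf$ with $\iota(\H)\subseteq\Lbf\subsetneq\rho(\G)$ and $\cpl([\Lbf(\A)g_\data])\leq(\height(\G)\eta_0^{-1}\gp\,\mathrm e^t)^{\ref{a:manydio}}$. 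With our choice of $t$ this bound reads $\cpl([\Lbf(\A)g_\data])\leq(\height(\G)\eta_0^{-1}\gp)^{\ref{a:manydio}}\mcpl(Y_\data)^{1/2}$. The hard part — really the only substantive point — is to rule out alternative (b) using the standing hypothesis \eqref{eq:mcpllowerbound}: by definition of $\mcpl$ we would have $\mcpl(Y_\data)\leq\cpl([\Lbf(\A)g_\data])\leq(\height(\G)\eta_0^{-1}\gp)^{\ref{a:manydio}}\mcpl(Y_\data)^{1/2}$, i.e.\ $\mcpl(Y_\data)^{1/2}\leq(\height(\G)\eta_0^{-1}\gp)^{\ref{a:manydio}}$, contradicting \eqref{eq:mcpllowerbound}. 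Hence alternative (a) holds.

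Once we are in case (a), the remaining work is a routine Fubini/Markov argument, which I would carry out as follows. Write $D\subset X_\mathcal{S}$ for the set of points that are \emph{not} $\mcpl(Y_\data)^{1/(2\ref{a:manydio})}$-Diophantine, i.e.\ the complement of the $(\epsilon_0,t)$-Diophantine points. By alternative (a) (recalling the definition \eqref{eq:def-etaG} of $\eta_0$, which makes $(\height(\G)\gp)^{\ref{a:linearization}}\eta_0^{1/\ref{a:linearization}}=\tfrac19$) we have
\[
\mu\big(\{y\in Y_\data:\pi_\mathcal{S}(y)\in D\}\big)\leq\tfrac19.
\]
Now consider the function $\Psi$ on $Y_\data$ defined by
\[
\Psi(y)=\frac{1}{\gp^n}\big|\{|s|\leq\gp^n:\pi_\mathcal{S}(y)u(s)\in D\}\big|.
\]
Since $U$ acts on $Y_\data$ preserving $\mu$ (the measure $\mu$ is $U$-invariant as $U\subset H$), Fubini's theorem on $Y_\data\times\{|s|\leq\gp^n\}$ with the product of $\mu$ and normalized Haar measure on the ball gives $\int_{Y_\data}\Psi\,\mathrm d\mu=\mu(\{y:\pi_\mathcal{S}(y)\in D\})\leq\tfrac19$. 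By Markov's inequality,
\[
\mu\big(\{y\in Y_\data:\Psi(y)>\tfrac13\}\big)\leq 3\cdot\tfrac19=\tfrac13.
\]
The complement of the set in the statement is exactly $\{y:\Psi(y)>\tfrac13\}$ (the set of $s$ with $\pi_\mathcal{S}(y)u(s)$ Diophantine has measure $>\tfrac23\gp^n$ iff the set of $s$ with $\pi_\mathcal{S}(y)u(s)\in D$ has measure $<\tfrac13\gp^n$). This yields the desired bound of $\tfrac13$ and completes the proof.

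One small bookkeeping point to be careful about: in Corollary~\ref{cor:manydio} the Diophantine condition is phrased for $\pi_\mathcal{S}(y)$ while here we need it for $\pi_\mathcal{S}(y)u(s)=\pi_\mathcal{S}(yu(s))$; this is harmless since $u(s)\in H$ so $yu(s)\in Y_\data$ and we are just applying the measure estimate of Corollary~\ref{cor:manydio}(a) to the $\mu$-measure of the set of points of $Y_\data$ whose image is not Diophantine, then transporting by the $\mu$-preserving $U$-action inside the Fubini integral. I expect no genuine obstacle here; the entire content is the elimination of alternative (b), which is immediate from \eqref{eq:mcpllowerbound} given that $2\ref{a:manydio}$ was built into the exponents precisely so that the square-root loss in $t$ can be absorbed.
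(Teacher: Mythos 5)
Your proposal is correct and follows essentially the same route as the paper: apply Corollary~\ref{cor:manydio} at scale $t=\log\bigl(\mcpl(Y_\data)^{1/(2\ref{a:manydio})}\bigr)$, use \eqref{eq:mcpllowerbound} to rule out alternative (b), then average the non-Diophantine fraction over the $U$-ball and apply Chebyshev/Markov via $U$-invariance of $\mu$. (The only cosmetic nit is a strict-vs-nonstrict inequality when identifying the bad set with $\{\Psi\geq\tfrac13\}$ rather than $\{\Psi>\tfrac13\}$, which does not affect the Markov bound.)
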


\begin{proof}
By Corollary~\ref{cor:manydio} and in view of the choice of $\eta_0$ in \eqref{eq:def-etaG}, we either have as in (a)
\begin{align}\label{eq:measurediomincpl}
\mu\big(\big\{ y \in Y_\data:
\pi_{\mathcal{S}}(y) \text{ is not } \mcpl(Y_\data)^{\frac{1}{2\ref{a:manydio}}}\text{-Diophantine} \big\}\big) \leq \tfrac{1}{9}
\end{align}
or as in (b)
\begin{align*}
\mcpl(Y_\data) \leq \big(\height(\G)\eta_0^{-1} \gp\, \mcpl(Y_\data)^{\frac{1}{2\ref{a:manydio}}}\big)^{\ref{a:manydio}}.
\end{align*}
The latter contradicts our assumption on the minimal complexity in \eqref{eq:mcpllowerbound} and so \eqref{eq:measurediomincpl} holds.

Given $n \geq 0$ and $y \in Y_\data$ set
\begin{align*}
f(y) = \gp^{-n}\big|\big\{|s| \leq \gp^n: \pi_\mathcal{S}(y)u(s) \text{ is not } \mcpl(Y_\data)^{\frac{1}{2\ref{a:manydio}}}\text{-Diophantine}\big\}\big|
\end{align*}
By $U$-invariance of $\mu$ and \eqref{eq:measurediomincpl} we have $\int f \de \mu \leq \frac{1}{9}$.
The corollary thus follows from the Chebyshev inequality.
\end{proof}

\section{An effective closing lemma}\label{s;closing-lem}\label{sec:closing-lem}

The main result of this section is the effective closing lemma in Proposition~\ref{prop:closing-lemma}.
Throughout the section, we fix a Lie subalgebra $\mfrak\subset\gfrak_\gp$ defined over $\Q_\gp$ and our standing assumption is that
\be\label{eq:sfrak is H-invariant}
\mfrak\text{ is $\Ad(U)$-invariant}.
\ee
Denote by $\hat{\vpz}_{\mfrak} \in \mathbb{P}(\wedge^{\dim(\mfrak)}\gfrak_\gp)$ the point corresponding to $\mfrak$.
The projective space $\mathbb{P}(\wedge^{\dim(\mfrak)}\gfrak_\gp)$ is equipped with the metric given by $\metric(\hat{\vpz},\hat{\wpz}) = \min_{\alpha \in \Z_{\gp}^\times} \norm{\vpz-\alpha\wpz}$ for any choice of unit vectors in $\vpz\in \hat{\vpz},\wpz \in \hat{\wpz}$.
Note that $G_p$ acts on $\mathbb{P}(\wedge^{\dim(\mfrak)}\gfrak_\gp)$ through the adjoint representation.
We say that $\mfrak$ is \textbf{$\varepsilon$-normalized} by an element $g \in G_\gp$ if
\[
\metric\big(g.\hat{\vpz}_\mfrak,\hat{\vpz}_\mfrak\big) \leq \varepsilon.
\]
We will study this notion further in \S\ref{sec:alignment} below.
Here, we prove the following.

\begin{proposition}\label{prop:closing-lemma}
There exists $\constk\label{k:effclosing}\in (0,1)$ so that the following holds.
Let
\begin{align*}
n \geq \tfrac{1}{\ref{k:effclosing}}(\log_\gp(\height(\G)) +1).
\end{align*}
Assume there is a point $y\in X$ and a measurable subset
\[
{\mathsf E}\subset \{r\in \Q_{\gp}: |r|_{\gp}\leq {\gp}^n\}
\] 
with the following properties:
\begin{enumerate}[label=\textnormal{(\theenumi)}]
\item $\pi_\plw(yu(s))$ is ${\gp}^{n}$-Diophantine for all $s\in{\mathsf E}$, 
\item $|\mathsf{E}|>{\gp}^{n(1-\ref{k:effclosing})}$, and 
\item for all $s,s'\in{\mathsf E}$, we have
\begin{align*}
yu(s)=yu(s')\rho(g_{ss'})
\end{align*}
where $g_{ss'}\in \G(\R)\times K_f$ satisfies $\norm{g_{ss'}}_\infty \leq 2$ and is so that the Lie algebra $\mfrak$ is $\gp^{-n}$-normalized by $(g_{ss'})_{\gp}$.
\end{enumerate}
Then $\mfrak$ is a semisimple ideal of $\gfrak_{\gp}$.
\end{proposition}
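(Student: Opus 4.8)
The plan is to exploit the interplay between the three hypotheses to force strong structural constraints on $\mfrak$. The starting point is that from hypothesis (3) the elements $g_{ss'}$ both $\gp^{-n}$-normalize $\mfrak$ \emph{and} are bounded (bounded at $\infty$ by $2$, integral at all finite places away from $\gp$, and with $\gp$-adic norm controlled—this last will follow from the other constraints together with the fact that $u(s)u(s')^{-1}$ is unipotent and $|s|,|s'| \leq \gp^n$). First I would set up the cocycle-type relations: from $yu(s) = yu(s')\rho(g_{ss'})$ we read off $\rho(g_{ss'}) = g^{-1}u(s'-s)$ where $g$ is a representative lift of $y$; more precisely the $g_{ss'}$ for $s,s'$ ranging over $\mathsf E$ form a large (of measure $\gg \gp^{n(1-\ref{k:effclosing})}$ in the appropriate sense) subset of a $\gp$-adic ball, and they all approximately normalize $\mfrak$. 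The key point is that a large subset of such displacements, together with the Diophantine property from (1), cannot all fix a ``spurious'' subalgebra unless that subalgebra is genuinely invariant—this is exactly the mechanism the effective linearization machinery (Theorem~\ref{thm:linearization}, via the notion of $T$-Diophantine points) is designed to detect.

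The core argument I would run is as follows. Consider the orbit map $\eta \colon g \mapsto g^{-1}.\hat\vpz_\mfrak$ into $\mathbb P(\wedge^{\dim(\mfrak)}\gfrak_\gp)$, or rather its lift to the affine cone, and track $\content_\mathcal{S}$ of the resulting vectors. Hypothesis (1) says that along $\mathsf E$ the point $\pi_\mathcal{S}(yu(s))$ is $\gp^n$-Diophantine, which by Definition~\ref{def:Diophantine-intro} means that for every proper nontrivial $\Lbf \in \mathcal{H}_\G$ of complexity $< \gp^n$ (at the point in question) the vector $\zpz \wedge \eta_\Lbf$ is not too small. Now hypothesis (3) gives: for $s, s' \in \mathsf E$, the element $(g_{ss'})_\gp$ moves $\hat\vpz_\mfrak$ by at most $\gp^{-n}$. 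If $\mfrak$ were \emph{not} an ideal, one produces from $\mfrak$ and its $U$-saturation (using \eqref{eq:sfrak is H-invariant}) a subalgebra that is genuinely moved by the $U$-action, i.e. $\zpz \wedge \hat\vpz_\mfrak \neq 0$; propagating this through the large set of nearly-normalizing bounded displacements $g_{ss'}$ and invoking the pigeonhole/polynomial-divergence argument behind the linearization theorem, one gets that $\eta_\Lbf(yu(s))$ for a suitable algebraic subgroup $\Lbf \supset$ (the group generated by $\theta_\gp(\SL_2)$ and the ``drift'' direction) must be \emph{both} of small complexity $\ll (\height(\G)\gp)^\star$ \emph{and} almost $U$-invariant, contradicting the $\gp^n$-Diophantine property once $n \geq \frac{1}{\ref{k:effclosing}}(\log_\gp(\height(\G))+1)$ with $\ref{k:effclosing}$ small enough. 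Hence $\mfrak$ must be $\Ad$-invariant under the group generated by $U$ and all the $g_{ss'}$; since the latter includes enough of $\theta_\gp(\SL_2(\Q_\gp))$ (via the relation $a(t) = $ limit of products of such displacements, or directly because $y u(s)$ lie on a single $U$-orbit inside $Y_\data$ whose stabilizer contains the principal $\SL_2$), one upgrades this to: $\mfrak$ is normalized by $\theta_\gp(\SL_2(\Q_\gp))$, equivalently $\mfrak$ is a subrepresentation of the principal $\SL_2$, and moreover $\mfrak$ is an ideal of $\gfrak_\gp$.

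Finally, to get that $\mfrak$ is a \emph{semisimple} ideal: once $\mfrak \lhd \gfrak_\gp$ is an ideal, since $\G$ is semisimple simply connected, $\gfrak_\gp$ is a direct sum of simple ideals and any ideal is a sum of a subset of them; in particular $\mfrak$ is automatically semisimple (it is a product of $\Q_\gp$-simple factors of $\gfrak_\gp$). I would phrase this last step by noting that $\gfrak_\gp = \Lie(G_p)$ with $G_p = \rho(\G(\Q_\gp))$ semisimple, so every Lie ideal is the Lie algebra of a product of almost-simple factors and hence semisimple; one should be slightly careful that ``ideal defined over $\Q_\gp$'' matches the $\Q_\gp$-simple factor decomposition, but this is standard since the almost-simple factors are defined over $\Q_\gp$ at a good prime (recall $\G$ is quasi-split over $\Q_\gp$ and $K_\gp$ hyperspecial).

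\textbf{Main obstacle.} The hard part will be the quantitative propagation in the middle step: turning ``$g_{ss'}$ $\gp^{-n}$-normalizes $\mfrak$ for all $s,s'$ in a set of measure $> \gp^{n(1-\ref{k:effclosing})}$'' into an honest algebraic constraint, i.e. correctly setting up the auxiliary subgroup $\Lbf$ (presumably the Zariski closure of $\langle \theta_\gp(\SL_2), \exp(\Q_\gp \vpz)\rangle$ for a drift direction $\vpz$ transversal to the normalizer of $\mfrak$, in the spirit of the $\big\{g : \Mbf g \subset \overline{\Cbf\Mbf}^z\big\}$ construction mentioned in the outline) so that the linearization dichotomy in Theorem~\ref{thm:linearization} applies and the ``obstruction'' alternatives are ruled out by the Diophantine hypothesis and by Lemma~\ref{lem:Ufarfromideals} (for the normal-subgroup alternative) and Corollary~\ref{cor:manydio}/the $\mcpl(Y_\data)$ lower bound (for the class-$\mathcal{H}$ alternative). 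Choosing the constant $\ref{k:effclosing}$ and tracking the polynomial dependence on $\height(\G) = \cpl(X)$ through all these steps is where the real bookkeeping lies.
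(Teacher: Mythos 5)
The broad strategy — extract lattice elements from the recurrence relations, apply the linearization dichotomy to an auxiliary algebraic subgroup, and use the Diophantine hypothesis to rule out the ``obstruction'' alternatives — is aligned with the paper's argument, but the proposal has two genuine gaps where the real work lives.

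First, you never actually construct the $\mathcal{S}$-arithmetic lattice elements and the intermediate $\Q$-group. From $yu(s)=yu(s')\rho(g_{ss'})$ and a representative $g_0$ of $y$ (controlled via Theorem~\ref{thm:diameter}) one extracts $\gamma_s \in \Gamma_{\mathcal{S}}$ with $\gamma_s g_0 u(s-s_0)=g_0\mathsf{g}_s$; your cocycle relation $\rho(g_{ss'})=g^{-1}u(s'-s)$ has dropped the lattice element entirely. The group to which Theorem~\ref{thm:linearization} is then applied is \emph{not} anything built from $\mfrak$ (which is only $\Q_\gp$-defined, hence inaccessible to the $\Q$-linearization machinery), but rather the class-$\mathcal{H}$ part $\Lbf$ of the Zariski closure of $\langle\gamma_s\rangle$, found after a pigeonhole argument across $\dim(\G)+1$ nested scales $R_0<\dots<R_{\dim\G+1}$ to locate a stable dimension. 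Establishing that $\Lbf$ is non-trivial requires a counting argument (lattice elements in tori grow only logarithmically, while $\mathsf E$ forces polynomially many $\gamma_s$), and establishing $\Lbf=\rho(\G)$ is where the Diophantine hypothesis and Remez inequality enter, via two separate cases ($\Lbf$ normal or not). None of these moving parts appears in your sketch.

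Second, and more seriously, the concluding step — from ``$\mfrak$ is almost normalized by a Zariski-dense set of bounded lattice elements'' to ``$\mfrak$ is an ideal'' — is not a soft limiting argument. You write that $\mfrak$ ``must be $\Ad$-invariant under the group generated by $U$ and all the $g_{ss'}$,'' but the $g_{ss'}$ only $\gp^{-n}$-normalize $\mfrak$, and a subalgebra almost-normalized by many elements need not be exactly normalized by any of them. The paper closes this gap with Proposition~\ref{prop:exp-return-semisimple}: one applies the effective Greenberg theorem (Theorem~\ref{thm:greenberg}) to a suitable variety of tuples of vectors spanning a subalgebra normalized by the $\gamma_i$, producing a genuine nearby Lie ideal, and then invokes the isolation-of-ideals estimate (Lemma~\ref{lem:isolationideals}) to force $\mfrak$ to coincide with it. This quantitative perturbation step is precisely where the assumption $n \gg \log_\gp\height(\G)$ is used to beat the height of the defining polynomials, and without it the conclusion does not follow. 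Your final remark, that an ideal of $\gfrak_\gp$ is automatically semisimple because $\gfrak_\gp$ is a sum of $\Q_\gp$-simple factors at the good prime, is correct and matches the paper.
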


For quotients of real groups, an analogous result was obtained in \cite{ClosingLemma} and, in fact, significantly strengthened using \cite{LMMS}.
For our purposes, the above proposition is sufficient.
The proof we give below follows the argument given in \cite{ClosingLemma} by adapting it to the current $\gp$-adic setting; we include it for completeness.

\subsection{Almost invariant subalgebras}

The following proposition, which is of independent interest, will play an important role in the proof of 
Proposition~\ref{prop:closing-lemma}.
For a lattice element $\gamma \in \SL_N(\Q)$ the height $\height(\gamma)$ is the Euclidean norm of the smallest non-zero multiple of $\gamma$ with integer coefficients.

\begin{proposition}\label{prop:exp-return-semisimple}
For any $r >0$ there exist $A>0$ and $C>0$ (depending on $r$ and $N$) with the following property.

Let $T>2$, $\delta>0$, and suppose that we have the following.
\begin{enumerate}[label=\textnormal{(\theenumi)}]
\item $\{\gamma_1,\ldots,\gamma_r\}\subset\rho(\G)(\Q)$ with $\height(\gamma_i)\leq T$.
\item The Zariski closure of $\langle\gamma_1,\ldots,\gamma_r\rangle$ equals $\rho(\G)$. 
\item For every $1\leq i\leq r$ the Lie algebra $\mfrak$ is $\delta$-normalized by $\gamma_i$ i.e.~we have
\begin{align}\label{eq:vR-almost-inv}
\metric(\gamma_i.\hat{\vpz}_{\mfrak},\hat{\vpz}_{\mfrak}) 
\leq \delta.
\end{align}
\end{enumerate}
Then either 
\begin{align*}
C\delta \geq (\height(\G)\gp\,T)^{-A}
\end{align*}
or $\mfrak$ is an ideal of $\gfrak_{\gp}$.
\end{proposition}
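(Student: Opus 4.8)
\textbf{Proof plan for Proposition~\ref{prop:exp-return-semisimple}.}
The idea is to show that if $\delta$ is small compared to a negative power of $\height(\G)\gp\,T$, then each $\gamma_i$ \emph{exactly} normalizes $\mfrak$, after which Zariski density of $\langle\gamma_1,\ldots,\gamma_r\rangle$ in $\rho(\G)$ forces $\mfrak$ to be $\Ad(\rho(\G))$-invariant, hence an ideal of $\gfrak$ and a fortiori of $\gfrak_\gp$. The point is a rigidity/separation statement: the stabilizer $\mathrm{Stab}_{\rho(\G)}(\hat\vpz_\mfrak)$ of the point $\hat\vpz_\mfrak \in \mathbb{P}(\wedge^{\dim(\mfrak)}\gfrak_\gp)$ is a $\Q_\gp$-algebraic subgroup, and a rational element $\gamma$ of bounded height either lies in it or moves $\hat\vpz_\mfrak$ by at least a definite (polynomially small) amount. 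Here $\hat\vpz_\mfrak$ itself need not be rational, so one cannot directly invoke a height lower bound for $\gamma.\hat\vpz_\mfrak$ vs.\ $\hat\vpz_\mfrak$; instead I would argue as follows.

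First I would reduce to a statement about one element: it suffices to show that for each $i$, either $\gamma_i$ normalizes $\mfrak$ or $\delta \geq (\height(\G)\gp\,T)^{-A}$ (with $A$, and an absorbed constant $C$, depending only on $r$ and $N$). Consider the orbit map $\SL_N \to \mathbb{P}(\wedge^{\dim(\mfrak)}\gfrak_\gp)$, $g \mapsto g.\hat\vpz_\mfrak$, and the $\Q_\gp$-subvariety $\Zbf$ cut out by the condition that $g.\hat\vpz_\mfrak = \hat\vpz_\mfrak$ \emph{together with} the vanishing of the bracket relations saying $\Ad(g)\mfrak = \mfrak$ as a subspace; this is really the normalizer $\mathrm{N}_{\SL_N}(\mfrak)$, defined over $\Q_\gp$ by polynomials whose coefficients are controlled by $\norm{\vpz_\mfrak}_\gp$ and $\gp$. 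Now $\mfrak \subset \gfrak_\gp[0]$ can be chosen so that $\vpz_\mfrak$ is a primitive integral vector (over $\Z_\gp$), and the coefficients of the defining polynomials of $\mathrm{N}_{\SL_N}(\mfrak)$, as well as of $\rho(\G)$, are then of $\gp$-adic size $O(1)$. I would apply Proposition~\ref{prop:localseparatedness} — or rather the open mapping philosophy behind it — to the affine variety $\Vbf := \rho(\G)$ with the extra structure: either $\gamma_i \in \mathrm{N}_{\rho(\G)}(\mfrak)(\Q_\gp)$, or $\gamma_i$ is at distance $\geq \gp^{-k_0}$ from that subvariety for some $k_0 \ll (\log_\gp(\height(\G)\,T)+1)$, and in the latter case $\metric(\gamma_i.\hat\vpz_\mfrak,\hat\vpz_\mfrak)$ is bounded below by a polynomially small quantity in $\height(\G)\gp\,T$ (here one uses that $\gamma_i$ is rational of height $\leq T$, so its $\gp$-adic denominator is bounded, to control the relationship between the abstract distance to $\mathrm{N}_{\rho(\G)}(\mfrak)$ and the displacement of $\hat\vpz_\mfrak$; and one uses that the orbit map has a derivative that is nonsingular transverse to $\mathrm{N}_{\rho(\G)}(\mfrak)$ with controlled minors, by the argument in the proof of Proposition~\ref{prop:localseparatedness} invoking closedness of orbits and Lemma~\ref{lem:exactdim}).

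Granting this dichotomy, set $A$ large enough (depending only on $N$) that the polynomially small lower bound exceeds $\delta/C$ unless we are in the trivial alternative $C\delta \geq (\height(\G)\gp\,T)^{-A}$. Then in the nontrivial case every $\gamma_i$ normalizes $\mfrak$: the group $\mathrm{N}_{\rho(\G)}(\mfrak)$ is a $\Q_\gp$-subgroup of $\rho(\G)$ containing $\gamma_1,\ldots,\gamma_r$, hence by hypothesis (2) it is Zariski-dense in $\rho(\G)$, hence equals $\rho(\G)$. Therefore $\Ad(\rho(\G(\Q_\gp)))$ preserves $\mfrak$, which is exactly the statement that $\mfrak$ is an ideal of $\gfrak_\gp$. (Since $\G$ is $\Q$-anisotropic and $\rho(\G)$ is semisimple, any such ideal is automatically semisimple, as used elsewhere, but this is not needed for the stated conclusion.)

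\textbf{Main obstacle.} The delicate point is the quantitative separation step: passing from ``$\gamma_i$ fails to normalize $\mfrak$'' to ``$\metric(\gamma_i.\hat\vpz_\mfrak,\hat\vpz_\mfrak) \geq (\height(\G)\gp\,T)^{-A}$'' with $A$ depending only on $N$ (not on $\mfrak$), given that $\hat\vpz_\mfrak$ is only a $\Q_\gp$-point and not rational. The resolution is to work with the \emph{rational} variety $\rho(\G)$ and its \emph{rational} (in fact $\Z_\gp$-defined, after normalizing $\vpz_\mfrak$) subvariety $\mathrm{N}_{\rho(\G)}(\mfrak)$, applying the effective open mapping theorem of \S\ref{sec:openmappingthm} — whose complexity bounds depend only on $\height(\G)$ and the $\gp$-adic size of the defining data of $\mfrak$, both under control — rather than trying to bound heights of the transcendental point $\hat\vpz_\mfrak$ directly. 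One must also be careful that the orbit $\rho(\G).\hat\vpz_\mfrak$ need not be closed in general; but $\Ad(U)$-invariance of $\mfrak$ is not needed here, and one circumvents closedness by phrasing everything in terms of the fibers of the orbit map $\rho(\G) \to \rho(\G).\hat\vpz_\mfrak$, i.e.\ in terms of $\mathrm{N}_{\rho(\G)}(\mfrak)$ inside the affine group $\rho(\G)$, to which Proposition~\ref{prop:localseparatedness}'s method (closed fibers, controlled minors of the derivative of the orbit map) applies.
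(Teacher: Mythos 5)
Your proposal takes a genuinely different route from the paper's, but the central step fails.

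\textbf{Comparison with the paper.} The paper does not attempt to show that any individual $\gamma_i$ exactly normalizes $\mfrak$. Instead it parametrizes the problem: it takes a $\Z_\gp$-basis $(\wpz_1,\ldots,\wpz_s)$ of $\mfrak[0]$ and considers the $\Z$-defined varieties $\Zbf_1$ (tuples spanning a Lie subalgebra) and $\Zbf_2$ (tuples spanning a subspace normalized by the $\gamma_i$), then applies the effective Greenberg theorem (Theorem~\ref{thm:greenberg}) to perturb the basis to a nearby tuple in $\Zbf_1\cap\Zbf_2$. The span $\mfrak'$ is then a Lie subalgebra normalized by all $\gamma_i$, Zariski density forces it to be an ideal, and Lemma~\ref{lem:isolationideals} (isolation of ideals) closes the loop to conclude $\mfrak=\mfrak'$. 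The essential feature is that $\Zbf_1,\Zbf_2$ are cut out by polynomials with \emph{integer} coefficients of height $\ll\height(\G)^\star T^\star$, so the effective Nullstellensatz applies. Your proposal fixes $\mfrak$ and works with $\mathrm{N}_{\rho(\G)}(\mfrak)$, which is only a $\Q_\gp$-variety with $\Z_\gp$-coefficients depending on $\mfrak$; the machinery of \S\ref{sec:effGreenberg}--\S\ref{sec:openmappingthm} does not apply to it.

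\textbf{The gap is not merely technical.} The dichotomy you propose --- either $\gamma_i$ exactly normalizes $\mfrak$ or $\metric(\gamma_i.\hat\vpz_\mfrak,\hat\vpz_\mfrak)\geq(\height(\G)\gp T)^{-A}$ --- is false, and Greenberg cannot rescue it. Take $\gfrak_\gp=\mathfrak{sl}_2(\Q_\gp)$, $\alpha = 1+\gp^k\beta$ with $\beta\in\Z_\gp^\times$, and $\mfrak=\Q_\gp(\alpha E_{12}+E_{21})$, a maximal torus line. Let $\gamma = \left(\begin{smallmatrix}a&b\\b&a\end{smallmatrix}\right)\in\SL_2(\Q)$ with $a^2-b^2=1$, $ab\neq 0$, and $a,b$ of bounded height. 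A direct computation shows $\gamma$ does \emph{not} normalize $\mfrak$ (the diagonal entries of $\Ad(\gamma)(\alpha E_{12}+E_{21})$ are $\pm(1-\alpha)ab\neq 0$), yet, choosing the unit $\upz = 1-\gp^k\beta b^2$, one gets $\norm{\Ad(\gamma)\vpz_\mfrak-\upz\vpz_\mfrak}\asymp\gp^{-k}$, so $\gamma$ $\gp^{-k}$-normalizes $\mfrak$ while $T=\height(\gamma)$ stays bounded as $k\to\infty$. The reason is exactly what you flag as the ``delicate point'': $\mfrak$ is only $\Q_\gp$-rational, so there is a \emph{rational} subalgebra $\mfrak_0 = \Q_\gp(E_{12}+E_{21})$ (normalized by $\gamma$) within $\gp$-adic distance $\gp^{-k}$ of $\mfrak$, and a rational $\gamma$ of bounded height can sit exactly on $\mathrm{N}(\mfrak_0)$ while being arbitrarily $\gp$-adically close to the $\Q_\gp$-variety $\mathrm{N}_{\rho(\G)}(\mfrak)$ without lying on it. Working with $\mathrm{N}_{\rho(\G)}(\mfrak)$ as the relevant fiber does not circumvent this, because that variety has no rational structure that an effective Nullstellensatz can grip. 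The paper's approach is precisely designed to avoid this: it finds the rational object nearby (the perturbed subalgebra $\mfrak'$, which in the counterexample would be $\mfrak_0$) via Greenberg on $\Z$-defined varieties, rather than trying to conclude exact normalization of the original $\mfrak$.

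(The counterexample above does not satisfy the standing $\Ad(U)$-invariance of $\mfrak$ imposed in \S\ref{sec:closing-lem}, but your argument makes no use of that hypothesis, so the gap stands as presented. For the record, the paper's proof of Proposition~\ref{prop:exp-return-semisimple} also does not use $\Ad(U)$-invariance; it relies instead on the perturbation-plus-isolation structure.)
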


The proof invokes the effective version of Greenberg's theorem (Theorem~\ref{thm:greenberg}) and the following isolation property for ideals of $\gfrak_\gp$.

\begin{lemma}\label{lem:isolationideals}
There exists $\consta\label{a:isolation of ideals}>1$ depending only on $N$ so that the following holds.   
Suppose that $\mfrak$ is a Lie subalgebra of $\gfrak_{\gp}$ and that there exists a Lie ideal $\mfrak'$ of $\gfrak_{\gp}$ with 
\begin{align*}
\metric(\hat{\vpz}_{\mfrak'},\hat{\vpz}_{\mfrak}) \leq \gp^{-\ref{a:isolation of ideals}}\height(\G)^{-\ref{a:isolation of ideals}}.
\end{align*}
Then $\mfrak= \mfrak'$ is an ideal.
\end{lemma}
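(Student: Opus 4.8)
The underlying principle is standard: ideals of $\gfrak_\gp$ form a discrete, effectively separated family inside the Grassmannian, and the separation only degrades polynomially in $\height(\G)$ (and the local prime $\gp$, although here $\gp$ enters only through the $\Z_\gp$-integral structure). So I would argue that an ideal $\gp^{-\star}\height(\G)^{-\star}$-close to a subalgebra $\mfrak$ must in fact contain $\mfrak$ and be contained in it, forcing equality.

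First I would recall that $\gfrak$ is defined over $\Q$ with $\height(\G) = \norm{\vpz_{\rho(\G)}}$, and fix a $\Z_\gp$-basis of $\gfrak_\gp[0]$ coming from a $\Q$-basis of $\gfrak$ of height $\ll \height(\G)^\star$; this is available e.g.\ via Siegel's lemma applied to $\Lie(\G) = \{X : \mathrm{D}\hat\rho(\vpz_i)X = 0\}$ as in the proof of Lemma~\ref{lem:GITeff}. In these coordinates the bracket $[\cdot,\cdot]$ on $\gfrak_\gp$ has structure constants that are integers of size $\ll \height(\G)^\star$. Next, the set of Lie ideals of $\gfrak_\gp$ is the set of $\Q_\gp$-points of the subvariety $\mathcal I \subset \mathrm{Gr}(\gfrak)$ cut out by the incidence conditions ``$[\gfrak, \vpz] \subset \vpz$'' for $\vpz$ ranging over a subspace — a variety defined over $\Q$ by polynomials of height $\ll \height(\G)^\star$ and degree $O_N(1)$ (in Plücker coordinates). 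The key geometric input is that $\mathcal I$ is a \emph{finite} set: since $\G$ is semisimple, $\gfrak_\gp = \bigoplus_j \sfrak_j$ is a direct sum of (local) simple ideals, and every ideal is a subsum $\bigoplus_{j\in S}\sfrak_j$ for $S$ a subset of the index set; there are at most $2^{\dim \gfrak}$ of them.

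The heart of the matter is to make ``finite'' effective: two distinct points of $\mathcal I(\Q_\gp)$ are at distance $\geq \gp^{-\ref{a:isolation of ideals}}\height(\G)^{-\ref{a:isolation of ideals}}$ in the metric $\metric$. For this I would run the same argument used elsewhere in this paper for effective isolation (cf.\ the proofs of Lemma~\ref{lem:exactdim}, Lemma~\ref{lem:GITeff}, and Corollary~\ref{cor:manydio}): the Plücker vectors $\vpz_{\mfrak'_1}$ and $\vpz_{\mfrak'_2}$ of two distinct ideals are non-proportional primitive integral vectors with entries $\ll \height(\G)^\star$ (the simple ideals $\sfrak_j$, hence all their subsums, are defined over a controlled extension and their Plücker coordinates are algebraic of controlled height — alternatively, run the effective Nullstellensatz of \cite{MasserWustholz} to separate the finitely many components of $\mathcal I$, each of which is a $\Q$-point so each component is a reduced point, giving an explicit polynomial of height $\ll \height(\G)^\star$ vanishing on one component but not the other). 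Either way, the pigeonhole/heights lemma gives $\metric(\hat\vpz_{\mfrak'_1},\hat\vpz_{\mfrak'_2}) \geq \height(\G)^{-\star}$; since any wedge of integral vectors of content $1$ that is nonzero at a finite place has norm $\geq \gp^{-\star}$ there, one absorbs the $\gp$-dependence, producing the constant $\ref{a:isolation of ideals}$.

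Finally I would deduce the lemma. Suppose $\mfrak' $ is an ideal with $\metric(\hat\vpz_{\mfrak'},\hat\vpz_\mfrak)\leq \gp^{-\ref{a:isolation of ideals}}\height(\G)^{-\ref{a:isolation of ideals}}$. I first observe $\dim\mfrak' = \dim\mfrak$: distinct dimensions would make $\hat\vpz_{\mfrak'},\hat\vpz_\mfrak$ lie in Grassmannian pieces that are ``far'' in the Plücker-normalized metric (a point of $\mathrm{Gr}(d',\gfrak)$ and a point of $\mathrm{Gr}(d,\gfrak)$ with $d\neq d'$ are at distance $\gg 1$, with no height dependence, since a unit decomposable $d'$-vector cannot be $o(1)$-close to the line spanned by a decomposable $d$-vector). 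Now within a fixed dimension, I want to conclude $\mfrak$ is itself an ideal; for this I run the effective Greenberg argument. The point $\hat\vpz_\mfrak$ is a $\Q_\gp$-point which, being $\gp^{-\ref{a:isolation of ideals}}\height(\G)^{-\ref{a:isolation of ideals}}$-close to the ideal $\mfrak'$, satisfies the defining equations of $\mathcal I$ (the incidence/ideal conditions) to precision $\gp^{-\star \ref{a:isolation of ideals}}\height(\G)^{-\star}$, because those equations are polynomials of height $\ll\height(\G)^\star$ and bounded degree; choosing $\ref{a:isolation of ideals}$ large compared to the Greenberg exponent $A$ from Theorem~\ref{thm:greenberg} (applied to the variety $\mathcal I$, whose defining polynomials have height $\ll\height(\G)^\star$, so $A\log_\gp(\height(\G)^\star)$ is the threshold), Theorem~\ref{thm:greenberg} produces a genuine ideal $\mfrak''\in\mathcal I(\Q_\gp)$ with $\metric(\hat\vpz_{\mfrak''},\hat\vpz_\mfrak)\leq \gp^{-\star\ref{a:isolation of ideals}}\height(\G)^{-\star} < \tfrac12\gp^{-\ref{a:isolation of ideals}}\height(\G)^{-\ref{a:isolation of ideals}}$ (after enlarging $\ref{a:isolation of ideals}$ once more). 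Then $\mfrak''$ and $\mfrak'$ are ideals at distance $< \gp^{-\ref{a:isolation of ideals}}\height(\G)^{-\ref{a:isolation of ideals}}$ from each other (triangle inequality), so by the effective isolation just established $\mfrak'' = \mfrak'$; but $\hat\vpz_{\mfrak''} = \hat\vpz_\mfrak$ is then forced, hence $\mfrak = \mfrak'$ is an ideal, as claimed.

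\smallskip
\noindent\emph{Remark on the main obstacle.} The only genuinely delicate point is the effective separation of distinct ideals (the step producing the polynomial dependence on $\height(\G)$); one must be careful that the simple summands $\sfrak_j$, though a priori only defined over the splitting field, have Plücker representatives of height polynomial in $\height(\G)$ — this is where invoking the effective Nullstellensatz of \cite{MasserWustholz} on the variety $\mathcal I$ (rather than tracking Galois descent by hand) is cleanest, exactly as in the proof of Proposition~\ref{prop:smallvectors} and Lemma~\ref{lem:exactdim}. Everything else is the now-routine combination of that separation with Greenberg's theorem, mirroring Proposition~\ref{prop:localseparatedness}.
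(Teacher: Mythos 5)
Your proposal has a genuine gap in the final deduction. You apply Greenberg's theorem to the ideal variety $\mathcal I$ and the point $\hat\vpz_\mfrak$ (which approximately satisfies the defining equations of $\mathcal I$ because it is close to the genuine ideal $\mfrak'$), obtaining an ideal $\mfrak''$ \emph{close to} $\mfrak$; by the effective separation of distinct ideals you correctly conclude $\mfrak''=\mfrak'$. But you then write that ``$\hat\vpz_{\mfrak''}=\hat\vpz_\mfrak$ is then forced, hence $\mfrak=\mfrak'$,'' and this step is unjustified: Greenberg produces a nearby solution, not the same point, so $\mfrak''=\mfrak'$ only reasserts that $\mfrak'$ is the unique ideal near $\mfrak$, and says nothing about whether $\mfrak$ itself is an ideal. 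Notice that nowhere in your argument do you actually use the hypothesis that $\mfrak$ is a Lie subalgebra; as written, the same chain of reasoning would ``prove'' the false assertion that any subspace sufficiently close to an ideal must coincide with it.

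The substantive mathematical content of the lemma, missing from your proposal, is a rigidity statement: a semisimple ideal $\mfrak'$ is an \emph{isolated point of the variety of all Lie subalgebras} of its dimension, not merely an isolated point of the variety of ideals. Concretely, since $\gfrak_\gp=\mfrak'\oplus\mfrak'^\perp$ is a direct sum of semisimple ideals, a subalgebra $\mfrak$ close to $\mfrak'$ is the graph of a small Lie algebra homomorphism $\phi\colon \mfrak'\to\mfrak'^\perp$, and one must show $\phi=0$. A nonzero such $\phi$ would embed some simple factor of $\mfrak'$ into $\mfrak'^\perp\subset\mathfrak{sl}_N(\Q_\gp)$; but such an embedding cannot be ``$\gp$-adically small'', because the image of a coroot $h$ in an $\mathfrak{sl}_2$-triple has integer eigenvalues bounded by $N$, so $\mathrm{tr}(\phi(h)^2)$ is a nonzero integer at most $N^3$ and therefore has $\gp$-adic absolute value $1$ once $\gp>N^3$. (Infinitesimally this is the vanishing $H^1(\mfrak',\gfrak_\gp/\mfrak')=0$ from Whitehead's lemma, since $\mfrak'$ acts trivially on $\gfrak_\gp/\mfrak'\cong\mfrak'^\perp$ and $\mfrak'$ is perfect.) Once this rigidity input is in place, your effective-separation machinery applied to the subalgebra variety (rather than the ideal variety) would indeed close the argument; the Greenberg step, by contrast, does not. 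Note for context that the paper itself omits the proof of this lemma, deferring to Lemma 3.3 of \cite{ClosingLemma}.
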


The proof of Lemma~\ref{lem:isolationideals} is, mutatis mutandis, the proof of \cite[Lemma 3.3]{ClosingLemma}; we omit it here.

\begin{proof}[Proof of Proposition~\ref{prop:exp-return-semisimple}]
Take a basis $\wpz_1,\ldots,\wpz_s$ of the $\Z_{\gp}$-submodule $\mfrak[0]$.
For any $1\leq i \leq r$ and $1 \leq j \leq s$ we have
\begin{align}\label{eq:altalmostnormalizer}
\norm{\Ad(\gamma_i)\wpz_j \wedge \wpz_1 \wedge \ldots \wedge \ldots \wpz_s}
&= \norm{\Ad(\gamma_i)\wpz_j} \Big\|\frac{\Ad(\gamma_i)\wpz_j}{\norm{\Ad(\gamma_i)\wpz_j}} \wedge \wpz_1 \wedge \ldots \wedge \wpz_s\Big\|\nonumber\\
&\ll T^\star \delta
\end{align}
using $\height(\gamma_i) \leq T$ for the first term and \eqref{eq:vR-almost-inv} for the second.
We want to perturb the vectors $\wpz_1,\ldots,\wpz_s$ to obtain new vectors nearby that span a subalgebra normalized by all $\gamma_i$ (and hence by $\G$).
For this, we define
\begin{align*}
\Zbf_1 &= \big\{(\wpz_1',\ldots,\wpz_s') \in \gfrak^s: [\wpz_i',\wpz_j']\wedge \wpz_1'\wedge \ldots \wedge \wpz_s'=0\text{ for all }i,j\big\},\\
\Zbf_2 &= \big\{(\wpz_1',\ldots,\wpz_s') \in \gfrak^s: \Ad(\gamma_i)\wpz_j' \wedge \wpz_1' \wedge \ldots \wedge \wpz_s' = 0 \text{ for all }i,j\big\}.
\end{align*}
Both $\Zbf_1$ and $\Zbf_2$ are defined by integral equations with height $\ll \height(\G)^\star T^\star$.
As $\mfrak$ is a Lie algebra, $(\wpz_1,\ldots,\wpz_s)\in \Zbf_1(\Q_\gp)$. 
This, \eqref{eq:altalmostnormalizer}, and Theorem~\ref{thm:greenberg} imply that there exists $(\wpz_1',\ldots,\wpz_s') \in (\Zbf_1\cap \Zbf_2)(\Q_\gp)$ with $\norm{\wpz_i'-\wpz_i} \ll \delta^\star$ for all $i$ unless $\delta \gg \height(\G)^{-\star}T^{-\star} \gp^{-\star}$.
The subspace $\mfrak'$ spanned by $\wpz_1',\ldots,\wpz_s'$ is a Lie algebra and satisfies $\metric(\hat{\vpz}_{\mfrak'},\hat{\vpz}_{\mfrak})\ll \delta^\star$.
Also, as $(\wpz_1',\ldots,\wpz_s') \in \Zbf_2$, the Lie algebra $\mfrak'$ is normalized by all $\gamma_i$ and, thus, is a Lie ideal by Zariski density of the group $\langle \gamma_1,\ldots, \gamma_k\rangle$ in $\rho(\G)$.
Lemma~\ref{lem:isolationideals} implies that $\mfrak$ needs to be an ideal as well unless, again, $\delta \gg \height(\G)^{-\star}T^{-\star} \gp^{-\star}$.
This proves the proposition.
\end{proof}

\subsection{Proof of Proposition~\ref{prop:closing-lemma}}\label{sec: proof of closing}

We will complete the proof of Proposition~\ref{prop:closing-lemma} in various steps.
The proof uses an inductive process to construct a nontrivial group $\Lbf<\rho(\G)$ of class $\Hcal$ and of controlled height 
so that a piece of the $U$-orbit through $y$ stays very close to a translate of the orbit $[\Lbf(\A)]$.
Then using our assumption (1) in Proposition~\ref{prop:closing-lemma}, 
we conclude that $\Lbf=\rho(\Gbf)$. After this is established, the proposition follows from Proposition~\ref{prop:exp-return-semisimple}.

\subsection*{Notation and setup for the proof}
Let $y\in X$ and $\mathsf{E}$ be as in the statement of Proposition~\ref{prop:closing-lemma} satisfying $|\mathsf{E}| > {\gp}^{n(1-\kappa)}$ for some $\kappa\in (0,1/2)$ which will be determined later.
In particular, we have for $s,s'\in \mathsf{E}$
\begin{align}\label{eq:excep-return-1} 
yu(s)=yu(s')\rho(g_{ss'})
\end{align}
and
\begin{align}\label{eq:excep-return-2} 
\metric\big((g_{ss'})_{\gp}.\hat{\vpz}_\mfrak,\hat{\vpz}_\mfrak\big)
\leq {\gp}^{-n}.
\end{align}
Furthermore, $(g_{ss'})_\gep \in K_\gep$ for any $\gep$ and $\norm{g_{ss'}}_\infty \leq 2$.

We set $\alpha = \kappa^{1/(2(1+\dim(\G)))}$ and may determine $\alpha$ instead of $\kappa$ in the proof below (depending only on $N$). 
For every $0\leq j\leq 1+\dG$, let 
\be\label{eq:Sm-Tm}
\alpha_j=\alpha^{1+\dG-j}\qquad\text{and}\qquad R_j:={\gp}^{\lceil n\alpha_j\rceil}.
\ee
We have $R_{1+\dG} = p^n$ and $R_{j+1}^\alpha \leq R_{j} \leq R_{j+1}^\alpha \gp$ for every $0 \leq j \leq \dG$.
In view of our assumptions on $n$, we assume that
\begin{align}\label{eq:effcls-Rmlarge}
R_0^{\kappa} \geq \gp\,\height(\G).
\end{align}

\subsection*{A pigeonhole principle argument}
We need {\em good} density points for ${\mathsf E}$ simultaneously in all scales $R_j$ for $0\leq j\leq\dim(\G)+1.$ 
This is obtained using a simple pigeonholing argument. 

We proceed inductively, let $j=\dim(\G)$.
Take a subdivision of 
\[
\mathsf B(0, R_{j+1})=\{r\in \Q_{\gp}: |r|_{\gp}\leq R_{j+1}\}
\] 
into $R_{j+1}/R_j$-disjoint balls $\mathsf B(s,R_j)=\{r: |s-r|_{\gp}\leq R_j\}$.
By the pigeonhole principle, 
there is some $t_{j}\in\mathsf B(0, R_{j+1})$
such that 
\[
\big|\mathsf B(t_{j},R_j)\cap {\mathsf E}\big|
\geq \frac{{\gp}^{n(1-\kappa)}R_j}{R_{j+1}}=R_j{\gp}^{-n\kappa}\geq {\gp}^{n(\alpha-\kappa)}.
\]

Replace $\mathsf B(0, R_{\dG+1})$ with 
$\mathsf B({t_{\dG}}, R_\dG)$ and repeat the above argument.  
Iterating this process $(\dG+1)$-times we get 
\[
 \{t_0,\ldots,t_{\dG}\}\subset \mathsf B(0, R_{\dG+1})
\]
such that the following holds for all $0\leq j\leq \dG$: 
\begin{enumerate}
\item[(a)] $t_{j'}\in \mathsf B(t_j, R_{j}),$ for all $0\leq j'\leq j$, and 
\item[(b)] $\big|\mathsf B(t_{j}, R_j)\cap {\mathsf E}\big|\geq {\gp}^{n(\alpha_j-\kappa)}$. (Here, recall that $\alpha_j>\kappa$.)  
\end{enumerate} 
Put $t_{\dG+1}:=0$.  

\subsection*{Finding $\plw$-arithmetic elements with controlled height}

Write $y=[g]$ for $g \in \rho(\G(\A))$.
Fix some $s_{0}\in \mathsf B(t_0,R_{0})\cap{\mathsf E}$ and let $\gamma_{0}\in \rho(\G(\Q))$ be so that for 
$g_0=\gamma_0 gu(s_0)\in \rho(\G(\A))$ we have
\be\label{eq:g0-in-K0}
\begin{aligned}
    &g_{0,\gep}\in \SL_N(\Z_\gep)\qquad\text{for $\gep\neq \gp$},\\ 
    &\|g_{0,\gp}\|_\gp\ll {\gp}^{\ref{a:diameterestimate}} \height(\G)^{\ref{a:diameterestimate}}, \text{ and}\\
    &\norm{g_{0,\infty}}_\infty \ll 1.
\end{aligned}
\ee
Note that such a lattice element $\gamma_0$ exists thanks to Theorem~\ref{thm:diameter}.

By \eqref{eq:excep-return-1} and~\eqref{eq:excep-return-2} for $s'=s_0$ there exists for all $s\in{\mathsf E}$ a lattice element $\gamma_s\in\rho(\G(\A))\cap \SL_N(\Q) \subset \rho(\G)(\Q)$ so that
\begin{align}\label{eq:t0-exp-return}
\gamma_sg_0u(-s_0)u(s)=g_0\mathsf g_s \qquad\text{and}\qquad 
\metric(\mathsf{g}_{s,\gp}.\hat{\vpz}_\mfrak,\hat{\vpz}_\mfrak)\leq \gp^{-n}
\end{align}
where $\mathsf g_s=\rho(g_{ss_0})$.
This and~\eqref{eq:g0-in-K0} in particular imply that $\gamma_{s,\gep}\in \SL_N(\Z_\gep)$ for all $\gep\neq \gp$.
Thus, $\gamma_s \in \Gamma_{\mathcal{S}}$.
By~\eqref{eq:t0-exp-return}, \eqref{eq:g0-in-K0} and \eqref{eq:effcls-Rmlarge} we further have for all $s\in \mathsf E\cap\mathsf B(t_{j}, R_{j})$
\begin{equation}\label{eq:effcls-sizelatticeel}
\begin{split}
\|\gamma_s\|_\gp &\ll {\gp}^\star \height(\G)^\star R_{j}^\star \ll R_j^\star,\\
\|\gamma_s\|_\infty &\ll 1
\end{split}
\end{equation}
and, in particular,
\begin{align*}
\norm{\gamma_s} &= \max\{\|\gamma_s\|_\gp,\|\gamma_s\|_\infty\} \ll R_j^\star,\\
\height(\gamma_s) &\ll R_j^\star.
\end{align*}
Lastly, notice that if $\gamma_s = \gamma_{s'}$ for $s,s' \in \mathsf{E}$ then by \eqref{eq:t0-exp-return}
\begin{align*}
u(s) = u(s_0)g_0^{-1}\gamma_s^{-1}g_0 \mathsf{g}_s
= u(s_0)g_0^{-1}\gamma_{s'}^{-1}g_0 \mathsf{g}_s
= u(s')\mathsf{g}_{s'}^{-1}\mathsf{g}_s
\end{align*}
and so $u(s-s') \in \SL_N(\Z_\gp)$ using $(\mathsf{g}_s)_\gp,(\mathsf{g}_{s'})_\gp \in \SL_N(\Z_\gp)$.
By choice of the unipotent subgroup $U$ in \eqref{eq:ut in SL2}, we have $s-s' \in \Z_\gp$.
Since $\big|\mathsf B(t_{j}, R_j)\cap {\mathsf E}\big|\geq {\gp}^{n(\alpha_j-\kappa)}$, this proves
\begin{align}\label{eq:manylattel}
\#\{\gamma_s: s \in B(t_{j}, R_j)\cap {\mathsf E}\}
\geq {\gp}^{n(\alpha_j-\kappa)}.
\end{align}

\subsection*{Finding a controlled semisimple $\Q$-group}
For $j \in \{0,\ldots,\dim(\G)+1\}$, define
\begin{align*}
\Lbf_{j}' &= \overline{\Bigl\langle\{\gamma_{s}: s\in\mathsf B(t_j, R_j)\cap \mathsf{E}\}\Bigr\rangle}^z \text{ and}\\
\Lbf_j &= \text{the identity component of }\Lbf_j'.
\end{align*}
These are $\Q$-subgroups of $\rho(\G)$.
Dimension considerations imply that 
there exists some $0\leq j_0\leq \dG,$ so that $\Lbf_{j_0}=\Lbf_{j_0+1}$.
Note that $\Lbf_{j_0}$ is not necessarily in $\hcal$ (i.e.~might not be semisimple). 
Let $\Lbf:=\Lbf_{j_0}^\hcal$ be the maximal subgroup of class $\hcal$ which is contained in $\Lbf_{j_0}$.
Since $\Lbf_{j_0}$ is reductive, $\Lbf = [\Lbf_{j_0},\Lbf_{j_0}]$.
By \eqref{eq:effcls-sizelatticeel} and \cite[Prop.~2.6]{ClosingLemma} we have
\begin{align}\label{eq:cl-ht-M-1}
\height(\Lbf)\ll R_{j_0}^{\star}.
\end{align}

We now investigate properties of $\Lbf$; the goal is to show that $\Lbf=\rho(\Gbf)$. 
This will be established in two steps in Lemmas~\ref{lem: M neq 1} and~\ref{lem: M is G}. 

\begin{lemma}\label{lem: M neq 1}
The group $\Lbf$ is non-trivial. 
\end{lemma}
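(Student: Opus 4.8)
The goal is to show that the class-$\Hcal$ group $\Lbf = \Lbf_{j_0}^\Hcal$ constructed above is non-trivial. The key point is the abundance of lattice elements in \eqref{eq:manylattel}: at scale $R_{j_0}$ we have at least ${\gp}^{n(\alpha_{j_0}-\kappa)}$ distinct elements $\gamma_s$, each of height $\ll R_{j_0}^\star$. The strategy is to argue by contradiction: if $\Lbf$ were trivial, then $\Lbf_{j_0}=\Lbf_{j_0+1}$ would be a torus (a connected reductive group with trivial derived subgroup), and we would show this torus cannot accommodate that many lattice points of such small height.

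\textbf{Key steps.} First I would recall that $\Lbf_{j_0} = \Lbf_{j_0+1}$ by construction of $j_0$, and that both $\Lbf'_{j_0}$ and $\Lbf'_{j_0+1}$ contain all the elements $\gamma_s$ for $s \in \mathsf B(t_{j_0+1}, R_{j_0+1})\cap\mathsf E$ — in particular at least ${\gp}^{n(\alpha_{j_0+1}-\kappa)}$ distinct such elements by \eqref{eq:manylattel} applied with $j = j_0+1$. Next, suppose $\Lbf$ is trivial, so $\Lbf_{j_0}$ has unipotent-free identity component, i.e.\ $\Lbf_{j_0}$ is a torus $\torus$. Since $\G$ is $\Q$-anisotropic, this torus is $\Q$-anisotropic, hence $\torus(\Z[1/\gp]) = \torus(\Q)\cap\Gamma_{\mathcal S}$ has only finitely many elements of bounded height — more precisely, one bounds $\#\{\gamma \in \torus(\Q): \height(\gamma)\leq T\}$. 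The right tool here is a counting bound for integral/$\mathcal S$-integral points on an anisotropic torus: such a count is polylogarithmic, or at worst $\ll_\varepsilon (\gp^\star\height(\G)^\star T)^\varepsilon$ after noting that an anisotropic torus over $\Q$ is a form of a product of norm-one tori, and its $\Q_\gp$-points of norm $\leq T$ lie in a compact-by-finite set whose $\mathcal S$-integral points number $\ll (\dim\G)^\star(\log T + \log\gp + \log\height(\G))^\star$. Comparing this with the lower bound ${\gp}^{n(\alpha_{j_0+1}-\kappa)}$ forces
\[
{\gp}^{n(\alpha_{j_0+1}-\kappa)} \ll \bigl(\log(\gp^\star \height(\G)^\star R_{j_0+1}^\star)\bigr)^\star,
\]
and since $R_{j_0+1}^\star \leq \gp^{\star n}$ the right-hand side is $\ll (n\log\gp + \log\height(\G))^\star$, which for $n$ large (as guaranteed by the hypothesis $n \geq \tfrac{1}{\ref{k:effclosing}}(\log_\gp(\height(\G))+1)$ and $\alpha_{j_0+1}-\kappa > 0$, recalling $\alpha_j > \kappa$ for all relevant $j$) is a contradiction. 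Hence $\Lbf$ is non-trivial.

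\textbf{Main obstacle.} The delicate point is the counting estimate for $\mathcal S$-integral (or rational, bounded-height) points on the anisotropic group $\Lbf_{j_0}$ when it is assumed to be a torus, with the dependence on $\gp$ and $\height(\G)$ made explicit; one must ensure that the number of lattice elements of height $\leq T$ in an anisotropic $\Q$-subtorus of $\rho(\G)$ is at most polynomial in $\log T$, $\log\gp$, $\log\height(\G)$ (indeed, being a subgroup of $\SL_N(\Z[1/\gp])$ cut out by the vanishing of the unipotent radical, its bounded-height points form a finite set, and one needs a quantitative bound). A clean way around the full strength of this is to instead observe that $\Lbf'_{j_0}$, being Zariski-generated by the $\gamma_s$, equals $\Lbf'_{j_0+1}$ by the choice of $j_0$; then run the pigeonholing one more scale down and use that the number of $\gamma_s$ growing geometrically in $n$ cannot all lie in a fixed torus of height $\ll R_{j_0}^\star$ since an anisotropic torus over a local field has no small non-trivial one-parameter subgroups, so its integral points of norm $\leq T$ are $\ll_N (\log(\gp\height(\G)T))^{\dim\G}$. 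I would cite \cite[Prop.~2.6]{ClosingLemma} for the height control and adapt the corresponding argument there for the counting; the $\gp$-adic setting changes little beyond replacing archimedean balls by $\gp$-adic ones.
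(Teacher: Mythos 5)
Your proposal follows essentially the same route as the paper: assume $\Lbf$ is trivial, observe that $\Lbf_{j_0}$ is then a $\Q$-torus, compare the abundance of lattice elements coming from \eqref{eq:manylattel} against a poly\-logarithmic bound on bounded-height $\mathcal S$-integral points on tori, and obtain a contradiction for $n$ large. Two remarks. First, your argument silently assumes the $\gamma_s$ land in the torus $\Lbf_{j_0}$ itself, but a priori they lie only in $\Lbf'_{j_0}$, which may be disconnected; the paper closes this gap by invoking $[\Lbf'_{j_0}:\Lbf_{j_0}] \ll_N 1$ and pigeonholing the $\gamma_s$ into a single coset of $\Lbf_{j_0}$, so that differences $\gamma_s^{-1}\gamma_{s'}$ give $\gg {\gp}^{n(\alpha_{j_0}-\kappa)}$ bounded-height elements of $\Lbf_{j_0}(\Z[1/\gp])$. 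You gesture at this issue in your final paragraph but do not actually carry out the coset pigeonhole, and without it the counting comparison doesn't apply to the connected torus. Second, your detour through anisotropy of the torus (and the remark about anisotropic tori over local fields having no small one-parameter subgroups) is unnecessary: the bound the paper cites, namely $\#\{\gamma \in \Tbf(\Z[1/\gp]):\norm{\gamma} \leq R\} \ll_N \log(R)^\star$ for any $\Q$-torus $\Tbf < \SL_N$, holds without anisotropy assumptions, and that is the clean input to use. With the coset step added, your argument matches the paper's.
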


To clarify, it is also true that $\Lbf_{j}^{\mathcal{H}} \neq \{1\}$ for any $j$ i.e.~the special choice of $j_0$ plays no role in this lemma.

\begin{proof}[Proof of Lemma~\ref{lem: M neq 1}]
We need to show that $\Lbf_{j_0}$ is not a torus.
The idea is to exhibit a `polynomial amount' of lattice elements in $\Lbf_{j_0}$ and combine this with logarithmic growth of the number of $\mathcal{S}$-arithmetic lattice elements in tori.

By \cite[Lemma 2.4]{ClosingLemma} we have $[\Lbf_{j_0}':\Lbf_{j_0}] \ll_N 1$.
By \eqref{eq:manylattel}, there exists a coset $\gamma_s \Lbf_{j_0}$, $s \in B(t_{j_0}, R_{j_0})\cap {\mathsf E}$, which contains $\gg {\gp}^{n(\alpha_{j_0}-\kappa)}$ many lattice elements $\gamma_{s'}$ for $s' \in B(t_{j_0}, R_{j_0})\cap {\mathsf E}$.

In view of \eqref{eq:effcls-sizelatticeel} we have $\norm{\gamma_{s'}}\leq C R_{j_0}^B$ for all $s' \in B(t_{j_0}, R_{j_0})\cap {\mathsf E}$ and some $B,C>0$ absolute.
Now note that
\begin{align*}
\#\big\{\gamma\in\Lbf_{j_0}(\Z[1/\gp]):\ & \norm{\gamma}\leq C^2 R_{j_0}^{2B}\big\}\\
&\geq
\#\big\{\gamma_s^{-1}\gamma_{s'}:s' \in B(t_{j_0}, R_{j_0})\cap {\mathsf E},\ \gamma_{s'} \in \gamma_s \Lbf_{j_0}\big\}\\
&\gg {\gp}^{n(\alpha_{j_0}-\kappa)} \geq R_{j_0}^{\frac{1}{2}}.
\end{align*}
For any $\Q$-torus $\Tbf < \SL_N$ and any $R\gg 1$ we have (see e.g.~\cite[Lemma 6.3]{EL23-nonmaximal})
\begin{align}\label{eq:pts on tori}
\#\{\gamma \in \Tbf(\Z[1/\gp]):\norm{\gamma} \leq R\} \ll_N \log(R)^\star.
\end{align}
This shows that $\Lbf_{j_0}$ is not a $\Q$-torus assuming \eqref{eq:effcls-Rmlarge} and hence $\Lbf$ is non-trivial.
\end{proof}

\begin{lemma}\label{lem: M is G}
We have $\Lbf=\rho(\Gbf)$.
\end{lemma}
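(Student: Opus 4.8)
The strategy is to use the Diophantine hypothesis~(1) from Proposition~\ref{prop:closing-lemma} to upgrade the non-triviality of $\Lbf$ (Lemma~\ref{lem: M neq 1}) to $\Lbf = \rho(\Gbf)$. We already know from \eqref{eq:cl-ht-M-1} that $\height(\Lbf) \ll R_{j_0}^\star$, and by our choice of scales $R_{j_0} \leq R_{\dG+1}^{\alpha} \gp \cdots = \gp^{n\alpha_{j_0+1}}\gp^{\star}$, so $\height(\Lbf) \leq \gp^{n\beta}$ for some $\beta$ that is small once $\alpha$ (hence $\kappa$) is chosen small. The key point is that $\Lbf = \Lbf_{j_0} = \Lbf_{j_0+1}$ means the lattice elements $\gamma_s$ for $s \in \mathsf B(t_{j_0+1},R_{j_0+1})\cap\mathsf E$ already lie in (a bounded-index overgroup of) $\Lbf(\Q)$, up to translating by one fixed representative. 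Concretely, fix $s_1 \in \mathsf B(t_{j_0+1},R_{j_0+1})\cap\mathsf E$ and observe via \eqref{eq:t0-exp-return} that $\gamma_{s_1}^{-1}\gamma_s \in \Lbf_{j_0+1}'(\Z[1/\gp])$ has controlled height for all such $s$. Rewriting \eqref{eq:t0-exp-return}, this says that for $s$ ranging over a large set in $\mathsf B(t_{j_0+1},R_{j_0+1})$ the point $[g_0 u(-s_0)u(s)] = [g_0 u(-s_0)u(s_1) \cdot u(s-s_1)]$ stays within a small neighbourhood of a single translate $[\Lbf(\A)g_\ast]$ of the orbit of $\Lbf$: the displacement from the orbit is governed by $\mathsf g_{s_1}^{-1}\mathsf g_s$ which is bounded at $\infty$ and integral away from $\gp$, and at $\gp$ is $\gp^{-n}$-close to normalizing $\mfrak$ — but more relevantly, the whole $U$-segment $u(s-s_1)$ for $|s-s_1| \leq R_{j_0+1}$ stays close (within $\gp^{-\star n}$) to lying in $\Lbf(\Q_\gp)$ because the unipotent orbit map is polynomial and $U \subset \Lbf$ would follow if $\Lbf$ were $U$-invariant.

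Here is the cleaner route I would actually take. Since $\Lbf = \Lbf_{j_0+1}$, the orbit $[\Lbf_{j_0}'(\A)g_0 u(-s_0)]$ (a closed $\mathcal{S}$-arithmetic homogeneous set of $\Lbf_{j_0}'$, of class-$\mathcal H$-part $\Lbf$) contains all the points $[g_0 u(-s_0)u(s)]$ for $s \in \mathsf B(t_{j_0+1},R_{j_0+1})\cap\mathsf E$, up to the small error $\mathsf g_s$. Because this is a large ($\geq \gp^{n(\alpha_{j_0+1}-\kappa)}$-dense) subset of an $R_{j_0+1}$-ball and the error terms are $\leq \gp^{-n}$-small at $\gp$ and $O(1)$ at $\infty$, a short argument — passing to $X_\mathcal S$ via $\pi_\mathcal S$ and using that $u(\cdot)$ is an algebraic map into $\rho(\G)$ — shows that $U$ itself is contained in $g_\ast^{-1}\Lbf(\Q_\gp)g_\ast$ for an appropriate $g_\ast$, i.e. $\zpz \in \Ad(g_\ast)^{-1}\Lie(\Lbf)$, provided we know $[\Lbf(\A)g_\ast]$ is genuinely a proper subvariety; and then the Diophantine condition~(1) forces $\content_\mathcal S(\eta_\Lbf(\cdot))$ to be large along this segment, which contradicts the height bound $\height(\Lbf)\ll R_{j_0}^\star = \gp^{\star n \alpha_{j_0}}$ once $\alpha$ is small enough that $\alpha_{j_0} \cdot \star < 1$. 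Since $\Lbf$ is non-trivial (Lemma~\ref{lem: M neq 1}) and of class $\mathcal H$, the only way to avoid the contradiction is $\Lbf = \rho(\Gbf)$.

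More precisely, one argues by contradiction: suppose $\Lbf \subsetneq \rho(\Gbf)$. Then $\Lbf \in \mathcal H_\G$ is a proper non-trivial class-$\mathcal H$ subgroup. Pick $s_1 \in \mathsf B(t_{j_0+1},R_{j_0+1})\cap\mathsf E$ and set $g_1 = g_0 u(-s_0)u(s_1) = \gamma_{s_1} g u(s_1)$, which again satisfies bounds of the form \eqref{eq:g0-in-K0} with $R_{j_0+1}$ in place of $\gp^{\ref{a:diameterestimate}}\height(\G)^{\ref{a:diameterestimate}}$. For every other such $s$ we have $\gamma_{s_1}^{-1}\gamma_s \in \Lbf_{j_0+1}'(\Z[1/\gp]) \subset \Lbf'(\Q)\cdot(\text{bounded index})$, and \eqref{eq:t0-exp-return} gives $g_1 u(s-s_1) = \gamma_{s_1}^{-1}\gamma_s\, g_1\, \mathsf g_{s_1}^{-1}\mathsf g_s$. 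Projecting to $X_\mathcal S$ and abbreviating $x_1 = \pi_\mathcal S([g_1])$, this says $x_1 u(s-s_1)$ lies within $\gp^{-n}$ (at $\gp$; $O(1)$ at $\infty$, but that factor is harmless as $\norm{\mathsf g}_\infty \leq 2$) of the $\Lbf_{j_0}'$-orbit through $x_1$. The set of $s-s_1$ arising has size $\geq \gp^{n(\alpha_{j_0+1}-\kappa)}$ inside $\mathsf B(0,R_{j_0+1})$, hence by a Remez/pigeonhole argument (the orbit map $r \mapsto x_1 u(r)$ is polynomial of bounded degree) the entire ball $\mathsf B(0,R_{j_0+1})$ maps under $u(\cdot)$ into a $\gp^{-\star n}$-neighbourhood of $g_1^{-1}\Lbf_{j_0}'(\Q_\gp)g_1$, forcing $\zpz \in \Ad(g_1)^{-1}\Lie(\Lbf_{j_0})$, and then (as $\Lbf = [\Lbf_{j_0},\Lbf_{j_0}]$ contains every unipotent) $U \subset g_1^{-1}\Lbf(\Q_\gp)g_1$, i.e. $\norm{\zpz \wedge \eta_\Lbf(g_1)} = 0$ and $\content_\mathcal S(\eta_\Lbf(g_1)) \ll \height(\Lbf)\cdot \norm{g_1}^\star \ll R_{j_0}^\star$. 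But $[g_1] = [\gamma_{s_1}gu(s_1)]$ and $\pi_\mathcal S([gu(s_1)]) = \pi_\mathcal S(yu(s_1))$ is $\gp^n$-Diophantine by hypothesis~(1) with $s_1 \in \mathsf E$; since $\Lbf \in \mathcal H_\G$ is proper non-trivial and $\norm{\zpz \wedge \eta_\Lbf(g_1)} = 0 < \epsilon_0(\content_\mathcal S(\eta_\Lbf(g_1)))$ while $\content_\mathcal S(\eta_\Lbf(g_1)) < \gp^n$ (this is where $R_{j_0}^\star < \gp^n$, i.e. $\alpha_{j_0}\cdot\star < 1$, is used), Definition~\ref{def:Diophantine-intro} is violated. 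This contradiction shows $\Lbf = \rho(\Gbf)$.

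\textbf{Main obstacle.} The delicate point is the passage from ``a $\gp^{-n(1-\kappa)}$-dense subset of the $R_{j_0+1}$-ball maps $\gp^{-n}$-close to the $\Lbf_{j_0}'$-orbit'' to ``$U$ is literally contained in a conjugate of $\Lbf_{j_0}$'': one must quantify how a polynomial curve that is pointwise very close to an algebraic subvariety on a large finite set is in fact close to it on the whole ball, and then that a curve lying $\gp^{-\star n}$-close to $g_1^{-1}\Lbf_{j_0}'g_1$ over a ball of radius $R_{j_0+1} = \gp^{\star n}$ must have its tangent direction $\zpz$ exactly inside $\Ad(g_1)^{-1}\Lie(\Lbf_{j_0})$ — a ``the curve can't escape'' estimate that trades the radius of the ball against the closeness, and requires $\kappa$ (equivalently $\alpha$) chosen small depending only on $N$, together with tracking that all implied height exponents ($\star$'s) are absolute. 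The $\Ad(U)$-invariance hypothesis \eqref{eq:sfrak is H-invariant} on $\mfrak$ plays the role of keeping the normalization condition~(3) compatible throughout, and the final invocation of Proposition~\ref{prop:exp-return-semisimple} (with the $\gamma_s \in \rho(\G)(\Q)$ of bounded height whose Zariski closure is now all of $\rho(\Gbf) = \Lbf$) converts the normalization of $\mfrak$ by all $\gamma_s$ into $\mfrak$ being an ideal, which combined with $\Ad(U)$-invariance and semisimplicity of $\gfrak_\gp$ yields that $\mfrak$ is a semisimple ideal — completing Proposition~\ref{prop:closing-lemma}.
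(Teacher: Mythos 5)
Your plan captures the broad shape of the argument (use the Diophantine hypothesis on $\mathsf E$ to contradict the existence of a proper $\Lbf$ of small height), and you correctly identify the trade-off between $R_{j_0}$ and $R_{j_0+1}$. However, there are three genuine gaps that the paper's proof addresses and your sketch does not.

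First, and most importantly, you do not split into cases according to whether $\Lbf$ is normal in $\rho(\Gbf)$. When $\Lbf$ is normal the orbit map $\eta_\Lbf$ is invariant (up to scalars) under the whole group, and the linearization via the normalizer that your argument implicitly relies on becomes vacuous, since $\Nbf_\Lbf = \rho(\Gbf)$ is not a proper subgroup to plug into the Diophantine condition. The paper handles this case separately with a Chevalley pair $(\varrho,\vpz)$ for $\Lbf$, passing to the sub-representation of $\Lbf$-invariants and eventually obtaining $\norm{\zpz\wedge\vpz_\Lbf}$ small, contradicting Lemma~\ref{lem:Ufarfromideals} (or rather Lemma~\ref{lem:isolationideals}). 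Your argument does not address this at all.

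Second, you claim that the orbit-closeness on a large subset of the ball forces an exact inclusion $\zpz \in \Ad(g_1)^{-1}\Lie(\Lbf_{j_0})$, and hence $\norm{\zpz\wedge\eta_\Lbf(g_1)}=0$. This is too strong: the argument (Remez inequality on a large finite subset of $\mathsf B(0,R_{j_0+1})$, followed by a linearization estimate) yields only a quantitative bound of the form $\norm{\zpz\wedge\eta_{(\cdot)}(g_{0,\gp}\mathsf g_{s,\gp})}\ll R_{j_0}^\star R_{j_0+1}^{-\star}$, which is small but positive. The contradiction is then with the quantitative Diophantine lower bound $\epsilon_0(\cdot)\gg R_{j_0}^{-\star}$, not with a vanishing. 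Moreover, the Remez inequality is applied to the \emph{polynomial} map $t\mapsto \eta_\Lbf(g_{0,\gp}\mathsf g_{s,\gp}u(t))$, which is bounded (not small) on a dense set; the passage to smallness of $\zpz\wedge\eta_{(\cdot)}$ is a separate step (\cite[Prop.~5.8]{LMMS}), not a ``curve can't escape'' estimate as you describe it.

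Third, the Diophantine condition is ultimately applied to $\Nbf_\Lbf^{\mathcal H}=[\Nbf_\Lbf,\Nbf_\Lbf]$ (the class-$\mathcal H$ part of the normalizer of $\Lbf$), not to $\Lbf$ itself. This is forced by the structure of the argument: the lattice elements $\gamma_t$ for $t$ in the set $\mathsf J(s)$ normalize $\Lbf$ (they act on $\vpz_\Lbf$ by a character), so what one can control is the $\eta_\Lbf$-observable, and the linearization in \cite[Prop.~5.8]{LMMS} converts boundedness of $\eta_\Lbf$ along a long $U$-segment into smallness of $\zpz\wedge\eta_{\Nbf_\Lbf^{\mathcal H}}$. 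Since the $\gamma_t$ are generically \emph{not} in $\Lbf$, one cannot expect to directly control $\norm{\zpz\wedge\eta_\Lbf}$. Your sketch works with $\Lbf$ throughout, which is not the correct linearizing subgroup. As a side remark, your final paragraph invoking Proposition~\ref{prop:exp-return-semisimple} to upgrade $\mfrak$ to a semisimple ideal is the concluding step of Proposition~\ref{prop:closing-lemma} (after both Lemmas~\ref{lem: M neq 1} and~\ref{lem: M is G} are established), not part of the proof of $\Lbf=\rho(\Gbf)$.
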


\begin{proof}
Assume contrary to our claim that $\Lbf\neq\rho(\Gbf)$.

\textsc{Case 1:}
Suppose that $\Lbf$ is not normal.

Notice that for any $s \in B(t_{j_0+1},R_{j_0+1})$ the lattice element $\gamma_s$ satisfies $\gamma_s^{-1}.\vpz_{\Lbf} = \chi(\gamma_s)\vpz_{\Lbf}$ for some $\mathcal{S}$-adic unit $\chi(\gamma_s) \in \Z[1/\gp]^\times$.
Here, we used that $\Lbf$ is a normal subgroup of $\Lbf_{j_0+1}'$ and that $\gamma_s \in \SL_N(\Z_\gep)$ for $\gep \neq \gp$.
By \eqref{eq:effcls-sizelatticeel} we have $|\chi(\gamma_s)|_\infty,|\chi(\gamma_s)|_{\gp} \ll R_{j_0+1}^\star$.
Notice that the number of units $u \in \Z[1/\gp]^\times$ satisfying the same bound is $\ll \log(R_{j_0+1}) \ll \log(R_{j_0})$.
 
For every $s\in \mathsf E\cap \mathsf B(t_{j_0+1},R_{j_0+1})$, put
\begin{align}\label{eq:def-Cv}
\mathsf {J}(s)
:=\big\{t\in \mathsf E\cap \mathsf B(t_{j_0+1},R_{j_0+1}): 
\gamma_t^{-1}. \vpz_{\Lbf} = \gamma_s^{-1}.\vpz_{\Lbf}\big\}.
\end{align}
The above estimate on the number of multiplicative factors $\chi(\gamma_s)$ implies that there exists some 
$s\in \mathsf E\cap \mathsf B(t_{j_0+1},R_{j_0+1})$ so that
\be\label{eq:Cv-big}
\begin{aligned}
\#\mathsf J(s)
&\gg{{\gp}^{n(\alpha_{j_0+1}-\kappa)}}{\bigl(\log(R_{j_0})\bigr)}^{-1}
 \geq R_{j_0+1}R_{j_0}^{-2},
\end{aligned}
\ee
where in the last step we used $\kappa<\alpha_{j_0}$. 

Let $s$ be so that~\eqref{eq:Cv-big} is satisfied. 
Using~\eqref{eq:g0-in-K0}, \eqref{eq:cl-ht-M-1} and the given bounds on $\mathsf g_t$ we have 
\begin{align*}
\norm{\eta_{\Lbf}(g_0\mathsf g_t)} \ll {\gp}^\star \height(\G)^\star \height(\Lbf) \ll R_{j_0}^\star
\end{align*}
for any $t \in \mathsf{E}$.
On the other hand, if $t \in \mathsf{J}(s)$ we have
\begin{align*}
\eta_\Lbf(g_0\mathsf g_t)
&=\eta_\Lbf\big(\gamma_tg_0u(-s_0)u(t)\big)&&\text{by~\eqref{eq:t0-exp-return}}\\
&=\eta_{\Lbf}\big(\gamma_sg_0u(-s_0)u(s)u(-s)u(t)\big)&&\text{by~\eqref{eq:def-Cv}}\\
&=\eta_{\Lbf}\big(g_0\mathsf g_su(t-s)\big)&&\text{by~\eqref{eq:t0-exp-return}}.
\end{align*}
Altogether, we conclude that  
\begin{align}\label{eq:effcls-boundalongorbit}
\|\eta_\Lbf\bigl(g_0\mathsf{g}_s u(t-s)\bigr)
\|\ll R_{j_0}^\star\qquad\text{for all $t\in \mathsf J(s)$}.
\end{align}

The above map
\begin{align*}
t \in \{|t|\leq R_{j_0+1}\} \mapsto \eta_\Lbf\bigl(g_{0,\gp}\mathsf{g}_{s,\gp} u(t)\bigr)
\end{align*}
is a polynomial map. 
The Remez inequality (see e.g.~\cite[Lemma 5.4]{LMMS}) together with \eqref{eq:effcls-boundalongorbit} and \eqref{eq:Cv-big} implies
\begin{align*}
\norm{\eta_\Lbf\bigl(g_{0,\gp}\mathsf{g}_{s,\gp} u(t)\bigr)} \ll R_{j_0}^\star
\end{align*}
for all $|t|_\gp\leq R_{j_0+1}$.
By \cite[Prop.~5.8]{LMMS} we obtain that
\begin{align}\label{eq:effcls-part1contradiction}
\norm{\zpz \wedge \eta_{\Nbf_{\Lbf}^\mathcal{H}}(g_{0,\gp}\mathsf{g}_{s,\gp})}_\gp
\ll R_{j_0}^\star R_{j_0+1}^{-\star}.
\end{align}
where $\Nbf_{\Lbf}$ is the normalizer of $\Lbf$ in $\rho(\G)$ and as before $\Nbf_{\Lbf}^{\mathcal{H}} = [\Nbf_{\Lbf},\Nbf_{\Lbf}]$.

On the other hand, we have $\height(\Nbf_\Lbf^{\mathcal{H}}) \ll \height(\Lbf)^\star$ by \cite[Lemma~4.2]{LMMS} and so $\content(\eta_{\Nbf_\Lbf^{\mathcal{H}}}(g_0\mathsf{g}_s))\ll R_{j_0}^\star$ using additionally \eqref{eq:cl-ht-M-1} and \eqref{eq:g0-in-K0}.
For $\alpha$ sufficiently small, we have $\content(\eta_{\Nbf_\Lbf^{\mathcal{H}}}(g_0\mathsf{g}_s))\leq R_{j_0+1}$.
As $\pi_{\mathcal{S}}([g_0\mathsf{g}_s])=\pi_{\mathcal{S}}(yu(s))$ is $\gp^n$-Diophantine (and in particular $R_{j_0+1}$-Diophantine) by assumption, we deduce from the definitions (see Definition~\ref{def:Diophantine-intro} and \eqref{eq:epsdef})
\begin{align}\label{eq:effcls-part2contradiction}
\norm{\zpz \wedge \eta_{\Nbf_\Lbf^{\mathcal{H}}}(g_{0,\gp}\mathsf{g}_{s,\gp})} 
\geq \epsilon_0(\content(\eta_{\Nbf_\Lbf^{\mathcal{H}}}(g_0\mathsf{g}_s)))
\gg R_{j_0}^{-\star} \height(\G)^{-\star} \gp^{-\star} \gg R_{j_0}^{-\star}.
\end{align}
The two inequalities \eqref{eq:effcls-part1contradiction} and \eqref{eq:effcls-part2contradiction} together yield $R_{j_0+1}\ll R_{j_0}^\star$. However, for $\alpha$ sufficiently small in comparison to the exponent, we obtain a contradiction.

\textsc{Case 2: $\Lbf$ is normal}

The proof is largely analogous in this case, but necessarily needs to use a different representation.
Let $(\varrho,\vpz)$ be a Chevalley pair for the subgroup $\Lbf$ of $\SL_N$ as in \cite[Prop.~2.2]{ClosingLemma}.
By restriction to the subspace of $\Lbf$-invariant vectors we obtain a representation $\hat{\varrho}: \rho(\G)\to \SL_m$ for some $m\ll_N 1$ whose kernel is exactly $\Lbf$.
In particular, the identity component of the image $\hat{\varrho}(\Lbf_{j_0+1}')$ is a $\Q$-torus.
Combined with \eqref{eq:pts on tori} and the estimate on the number of connected components in \cite[Lemma~2.4]{ClosingLemma} we get
\begin{align*}
\#\{\Lbf \gamma_{s}: s \in \mathsf E\cap \mathsf B(t_{j_0+1},R_{j_0+1})\} \ll \log(R_{j_0})^\star.
\end{align*}
For $s \in \mathsf E\cap \mathsf B(t_{j_0+1},R_{j_0+1})$ let
\begin{align*}
\mathsf {J}(s) =\{t\in \mathsf E\cap \mathsf B(t_{j_0+1},R_{j_0+1}): 
\Lbf \gamma_s = \Lbf \gamma_t\}.
\end{align*}
For some $s$ we have as in the first case $\#\mathsf {J}(s) \geq R_{j_0+1}R_{j_0}^{-2}$.
For $t \in \mathsf {J}(s)$
\begin{align*}
\varrho(g_0\mathsf g_t)^{-1}\vpz
&=\varrho\big(\gamma_tg_0u(-s_0)u(t)\big)^{-1}\vpz\\
&=\varrho\big(\gamma_sg_0u(-s_0)u(s)u(-s)u(t)\big)^{-1}\vpz
=\varrho\big(g_0\mathsf g_su(t-s)\big)^{-1}\vpz.
\end{align*}
By the same argument as in the first case using the Remez inequality and the height bound on $\vpz$ from \cite[Prop.~2.2]{ClosingLemma} we obtain
\begin{align*}
\norm{\varrho(g_0\mathsf{g}_su(t))^{-1}.\vpz}_\gp \ll R_{j_0}^\star
\end{align*}
for all $|t|_\gp \leq R_{j_0+1}$.
Using \eqref{eq:g0-in-K0} and taking the derivative we deduce
\begin{align*}
\norm{\mathrm{D}\varrho\big(\Ad(g_{0,\gp}\mathsf{g}_{s,\gp})\zpz\big)\vpz} \ll R_{j_0}^\star R_{j_0+1}^{-\star}.
\end{align*}
The map $\wpz \mapsto \mathrm{D}\varrho(\wpz)\vpz$ is a linear map with coefficients controlled by the entries of $\vpz$ and `almost annihilates' $\Ad(g_{0,\gp}\mathsf{g}_{s,\gp})\zpz$.
Note also that $\mathrm{D}\varrho(\wpz)\vpz = 0$ implies $\wpz\in \Lie(\Lbf) = \lfrak$ (by definition of the representation).
A simple argument involving the Smith normal form (as e.g.~in the proof of Proposition~\ref{prop:localseparatedness}) then shows that there exists $\wpz \in \lfrak(\Q_p)$ such that $\norm{\wpz-\Ad(g_{0,\gp}\mathsf{g}_{s,\gp})\zpz}\ll R_{j_0}^\star R_{j_0+1}^{-\star}$.
Since $\Lbf$ is normal, this implies
\begin{align*}
\norm{\zpz \wedge \vpz_{\Lbf}} \ll  R_{j_0}^\star R_{j_0+1}^{-\star}.
\end{align*}
From here, one concludes in the same manner as in the first case (or invokes Lemma~\ref{lem:isolationideals}).
\end{proof}

\begin{proof}[Proof of Proposition~\ref{prop:exp-return-semisimple}]
We use all of the notation from the current section.
By Lemma~\ref{lem: M is G} and the construction of $\Lbf$, 
the group $\langle \gamma_s: s \in B(t_{j_0},R_{j_0}) \rangle$ is Zariski-dense in $\rho(\G)$. 
In view of \cite[Lemma 2.4]{ClosingLemma}, we may select $s_1,\ldots,s_r \in B(t_{j_0},R_{j_0})$ for some $r \ll \dim(\G)$ so that $\langle \gamma_{s_1},\ldots,\gamma_{s_r}\rangle$ is also Zariski dense.
Moreover, by $U$-invariance of $\mfrak$ and \eqref{eq:t0-exp-return} we have
\begin{align*}
\metric(\gamma_s g_{0,\gp}.\hat{\vpz}_{\mfrak},g_{0,\gp}.\hat{\vpz}_{\mfrak})
&= \metric(\gamma_s g_{0,\gp}u(s-s_0).\hat{\vpz}_{\mfrak},g_{0,\gp}.\hat{\vpz}_{\mfrak})
= \metric(g_{0,\gp}\mathsf{g}_{s,\gp}.\hat{\vpz}_{\mfrak},g_{0,\gp}.\hat{\vpz}_{\mfrak})\\
&\ll \gp^\star \height(\G)^\star \metric(\mathsf{g}_{s,\gp}.\hat{\vpz}_{\mfrak},\hat{\vpz}_{\mfrak})
\leq \gp^\star \height(\G)^\star \gp^{-n}
\ll R_{j_0+1}^{-\frac{1}{2}}.
\end{align*}
Thus, \eqref{eq:vR-almost-inv} holds for the Lie algebra $\Ad(g_{0,\gp})\mfrak$ and some $\delta>0$ with $\delta \ll R_{j_0+1}^{-\frac{1}{2}}$.
The lattice elements $\gamma_{s_i}$ satisfy $\height(\gamma_{s_i})\leq T$ for some $T \ll R_{j_0}^\star$ by \eqref{eq:effcls-sizelatticeel}.
Applying Proposition~\ref{prop:exp-return-semisimple} we conclude for $\alpha$ sufficiently small.
\end{proof}

\section{An alignment lemma}\label{sec:alignment}

In this section we show that whenever $x_1,x_2=x_1g \in X$ are two points where the `small' displacement $g$ does not `almost normalize' an `effectively generated' group $M$, then $x_1,x_2$ can be moved by $m_1,m_2 \in M$ respectively so that the new displacement is in an undistorted complement of $\mfrak$.
Here, the quotation marks `$\ldots$' are to be understood in the sense of \S\ref{sec:outlineproof} and will be made precise (which will include different scales).

Recall that an element $g \in G_\gp[0]$ $\varepsilon$-normalizes a subalgebra $\mfrak \subset\gfrak_{\gp}$ if
\[
\metric\big(g.\hat{\vpz}_\mfrak,\hat{\vpz}_\mfrak\big) \leq \varepsilon.
\]
Here, $\hat{\vpz}_\mfrak \in \mathbb{P}(\wedge^{\dim(\mfrak)}\gfrak_\gp)$ denotes the point corresponding to $\mfrak \subset \gfrak_\gp$ and $\metric(\cdot,\cdot)$ the metric as in \S\ref{sec:closing-lem}.

We note that unlike the normalizer of $\mfrak$, which can change drastically under `small perturbations', 
the notion of $\varepsilon$-normalizer is `stable' under `small perturbations' of the Lie algebra. 
It is worthwhile noting that points in the $\varepsilon$-normalizer need not be $\varepsilon^\star$-close to the normalizer.
For example, for any nilpotent $\vpz \in \mathfrak{sl}_2(\Z_\gp)$ and any $k \geq 0$ the normalizer of the nilpotent Lie algebra $ \Q_\gp (\vpz,\gp^{k}\vpz) \subset \mathfrak{sl}_2(\Q_\gp)\times \mathfrak{sl}_2(\Q_\gp)$ is a $2$-dimensional solvable subgroup of $\SL_2(\Q_\gp)\times\SL_2(\Q_\gp)$.
On the other hand, the $\gp^{-k}$-normalizer contains e.g.~$\{0\}\times \SL_2(\Z_\gp)$.

For any $\Q_{\gp}$-subgroup $\Mbf$ with $H_\gp \subset \Mbf(\Q_\gp)=M$ we denote its Lie algebra by~$\mfrak$ and by $\rfrak_{\mfrak} \subset \gfrak$ a choice of undistorted $\theta_\gp(\SL_2(\Q_{\gp}))$-invariant complement to $\mfrak$ (see Lemma~\ref{lem: undistorted complement}).
Recall that given a vector $\wpz$ in a subrepresentation $V\subset \gfrak_\gp$ we write $\wpz^{\mathrm{nt}}$ for the component in the sum of non-trivial irreducible subrepresentations of $V$ (see the discussion before Lemma~\ref{lem: weight spaces are integral}).

\begin{proposition}[Alignment]\label{prop:alignment}
Let $k_1 > \ref{a:implicitfctthm}$ and $k_2 \geq  3k_1$.
Let $\Mbf\subset\rho(\Gbf)$ be a $\Q_{\gp}$-subgroup containing $\theta_\gp(\SL_2)$
so that $M= \Mbf(\Q_{\gp})$ is $k_1$-generated by nilpotents of pure non-zero weight.
Suppose that $\mathcal{O}_1,\mathcal{O}_2 \subset M[3k_1]$ are two subsets of relative measure at least $\frac{2}{3}$.

Let $g\in G_{\gp}[3k_1]$ be an element that does not ${\gp}^{-k_2}$-normalize $\mfrak$.
Then there exist $m_1 \in \mathcal{O}_1$ and $m_2 \in \mathcal{O}_2$ such that $\exp(\wpz) = m_1 g m_2$ satisfies 
\begin{align*}
\wpz\in \rfrak_{\mfrak}[3k_1]\quad \text{and}\quad \norm{\wpz^{\mathrm{nt}}} \geq {\gp}^{-4k_2}.
\end{align*}
\end{proposition}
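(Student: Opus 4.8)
The idea is to realize the displacement $g$ modulo the action of $M$ on both sides and to rule out the situation where this can only be achieved by an element lying in a neighbourhood of $\mfrak$. First I would consider the map
\[
\Psi\colon M[3k_1]\times M[3k_1]\longrightarrow G_\gp[3k_1],\qquad (m_1,m_2)\mapsto m_1gm_2,
\]
and look at the composition with the projection $\exp^{-1}$ and the decomposition $\gfrak_\gp[3k_1]=\rfrak_\mfrak[3k_1]\oplus\mfrak[3k_1]$ (which is legitimate since $\rfrak_\mfrak$ is an undistorted $\theta_\gp(\SL_2)$-invariant complement, so taking $[\,\cdot\,]$'s commutes with the splitting). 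Using that $M$ is $k_1$-generated by nilpotents of pure non-zero weight, Lemma~\ref{lem:implicitfctthm} gives $\varphi(\gp\Z_\gp^{\dim\mfrak})^{-1}\varphi(\gp\Z_\gp^{\dim\mfrak})\supset M[2k_1+\ref{a:implicitfctthm}]\supset M[3k_1]$ when $k_1>\ref{a:implicitfctthm}$; hence the parametrization $\varphi$ is (up to a bounded loss in the level) a submersion onto $M[3k_1]$, and one can use the two copies of $\varphi$ to move $g$ freely in the $\mfrak$-direction on both sides. Concretely, a quantitative implicit function theorem (Lemma~\ref{lem:openmapping}) lets me solve, for $m_1$ ranging over a positive-measure subset of $\mathcal{O}_1$, for an $m_2\in M$ with $m_1gm_2=\exp(\wpz)$, $\wpz\in\rfrak_\mfrak[3k_1]$; since this can be arranged on a subset of $m_1$'s of relative measure $>\tfrac12$ inside $\mathcal{O}_1$ and $\mathcal{O}_2$ has relative measure $\geq\tfrac23$, a pigeonhole/Fubini argument over the fibers of the map $m_1\mapsto m_2$ produces a pair $(m_1,m_2)\in\mathcal{O}_1\times\mathcal{O}_2$ with the desired property $\wpz\in\rfrak_\mfrak[3k_1]$.

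It then remains to establish the lower bound $\norm{\wpz^{\mathrm{nt}}}\geq\gp^{-4k_2}$. Here is where the hypothesis that $g$ does \emph{not} $\gp^{-k_2}$-normalize $\mfrak$ enters. The element $\exp(\wpz)=m_1gm_2$ satisfies $\Ad(\exp(\wpz)).\hat\vpz_\mfrak=\Ad(m_1).\bigl(\Ad(g).\hat\vpz_\mfrak\bigr)$, and since $m_1\in M$ fixes $\hat\vpz_\mfrak$, the distance $\metric(\Ad(\exp\wpz).\hat\vpz_\mfrak,\hat\vpz_\mfrak)$ equals (up to the Lipschitz constant of the $\Ad$-action of $M[3k_1]$, which is $O(1)$ at this level) the distance $\metric(\Ad(g).\hat\vpz_\mfrak,\hat\vpz_\mfrak)>\gp^{-k_2}$. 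On the other hand, expanding $\Ad(\exp\wpz)=\exp(\ad\wpz)$ and using that $\mfrak$ is $\Ad$-invariant, the failure of $\exp(\wpz)$ to $\gp^{-k_2}$-normalize $\mfrak$ forces $\ad(\wpz)$ to move $\hat\vpz_\mfrak$ by at least $\gg\gp^{-k_2}$ — and since $\ad(\wpz)$ preserves $\mfrak$ whenever $\wpz\in\mfrak$, the component of $\wpz$ transverse to the normalizer of $\mfrak$ (and in particular $\wpz$ modulo the subrepresentation on which $\ad$ acts trivially against $\hat\vpz_\mfrak$) must have size $\gg\gp^{-k_2}$. A short linear-algebra argument, again via the Smith normal form of the linear map $\wpz'\mapsto\ad(\wpz')\wpz_\mfrak$ restricted to $\rfrak_\mfrak[3k_1]$ (whose matrix has $\Z_\gp$-coefficients of bounded size), then converts this into $\norm{\wpz^{\mathrm{nt}}}\geq\gp^{-4k_2}$; the loss from $k_2$ to $4k_2$ absorbs the bounded denominators from the Smith form together with the $O(1)$ Lipschitz factors and the difference between the normalizer-transverse component and the non-trivial-subrepresentation component $\wpz^{\mathrm{nt}}$.

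\textbf{Main obstacle.} The delicate point is the interplay of scales: one must choose the free parameters in the two copies of $\varphi$ in the range $\gp\Z_\gp^{\dim\mfrak}$ so that the resolved $m_2$ stays in $M[3k_1]$ (hence in $\mathcal{O}_2$ after the measure argument) while simultaneously not destroying the transversality lower bound. In other words, the realignment in the $\mfrak$-direction must be performed at level $\gtrsim 3k_1$, which is harmless because $k_2\geq 3k_1$ and the non-normalization defect $\gp^{-k_2}$ lives at a finer scale than $\gp^{-3k_1}$; but making this quantitative — and in particular checking that the Fubini/pigeonhole step is compatible with the $\gp^{-4k_2}$ bound rather than something weaker — is the part that needs the most care. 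I expect the rest (the two open-mapping applications and the Smith normal form estimate) to be routine given Lemmas~\ref{lem:implicitfctthm}, \ref{lem:openmapping}, and the structure theory of \S\ref{sec: adjoint rep}.
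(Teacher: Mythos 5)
The first half of your plan is on the right track and in fact mirrors the paper: the paper defines, for each $m\in M[3k_1]$, the decomposition $mg=\exp(\wpz_m)\Phi(m)^{-1}$ with $\wpz_m\in\rfrak_\mfrak[3k_1]$ and $\Phi(m)\in M[3k_1]$ (using the undistorted diffeomorphism $\Psi(\vpz_1+\vpz_2)=\exp(\vpz_1)\exp(\vpz_2)$), shows $\Phi$ is a measure-preserving self-map of $M[3k_1]$ by a BCH computation, and then intersects $\mathcal{O}_1$ with $\Phi^{-1}(\mathcal{O}_2)$ to get a subset $\mathcal{O}$ of relative measure $\geq\tfrac13$. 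Up to packaging, that is your Fubini step.

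The second half, the lower bound on $\norm{\wpz^{\mathrm{nt}}}$, has a genuine gap, and it is the hard part of the proposition. You reduce "$g$ does not $\gp^{-k_2}$-normalize $\mfrak$'' to "$\exp(\wpz)$ does not $\gp^{-k_2}$-normalize $\mfrak$'' (this step is fine), then argue that $\ad(\wpz)$ must move $\hat\vpz_\mfrak$ by $\gg\gp^{-k_2}$, and then conclude that the component of $\wpz$ transverse to the normalizer of $\mfrak$, \emph{and hence} $\wpz^{\mathrm{nt}}$, is $\gg\gp^{-4k_2}$. The inference from "transverse to the normalizer'' to "nontrivial-representation part'' is where the argument breaks. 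The complement $\wpz-\wpz^{\mathrm{nt}}$ lies in $\cfrak\cap\rfrak_\mfrak$, where $\cfrak$ is the Lie algebra of the centralizer $C$ of the principal $\SL_2$, and $C$ need \emph{not} normalize $\mfrak$. So an element $\wpz$ with $\wpz^{\mathrm{nt}}=0$ can still move $\hat\vpz_\mfrak$ substantially, and a single pair $(m_1,m_2)$ gives you no way to rule out that the realigned displacement sits entirely in $\cfrak$. (Relatedly, the paper emphasizes at the start of \S\ref{sec:alignment} that $\varepsilon$-normalization is not the same as being close to the actual normalizer, so the Smith-normal-form step you invoke cannot be applied to "the normalizer'' naively.) There is also a secondary scale issue in your Taylor expansion: $\wpz\in\rfrak_\mfrak[3k_1]$ has quadratic BCH error $O(\gp^{-6k_1})$, which can dominate $\gp^{-k_2}$ once $k_2>6k_1$, so the linearization step does not directly give $\norm{\ad(\wpz)\vpz_\mfrak}\gg\gp^{-k_2}$ in general.

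What actually closes the gap in the paper is Lemma~\ref{lemma:alignmenttechnical}, which says: if for a positive-measure set $\mathcal{O}\subset M[3k_1]$ one has $\mathcal{O}g\subset C[3k_1]M[3k_1]G_\gp[4k_2]$, then $g$ does $\gp^{-k_2}$-normalize $\mfrak$. Its proof is not a pointwise linear-algebra estimate but a group-theoretic one: it reduces to $c\in C[3k_1]$, shows via linearity over the positive-measure set that $\Ad_{c}^{-1}$ approximately preserves the $\pm$ horospherical subalgebras $\mfrak^{\pm}$, and then uses the $k_1$-generation hypothesis and Lemma~\ref{lem:implicitfctthm} to bootstrap from preservation of $\mfrak^{\pm}$ to approximate preservation of all of $\mfrak$ (hence of $\hat\vpz_\mfrak$). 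Without some input of this kind — and in particular without using the effective-generation hypothesis, which appears in your plan for the parametrization of $M$ but not for the transversality bound — the bound $\norm{\wpz^{\mathrm{nt}}}\geq\gp^{-4k_2}$ does not follow.
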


Let $C < G_\gp$ be the centralizer of $\theta_{\gp}(\SL_2(\Q_{\gp}))$ and let $\cfrak$ be its Lie algebra.
We need the following lemma.

\begin{lemma}\label{lemma:alignmenttechnical}
Let $k_1>\ref{a:implicitfctthm}$, $k_2\geq 3k_1$, and let $\Mbf \subset \rho(\G)$ be a $\Q_\gp$-group containing $\theta_\gp(\SL_2)$.
Assume that $M= \Mbf(\Q_\gp)$ is $k_1$-generated by nilpotents of pure non-zero weight.
Let $\mathcal{O} \subset M[3k_1]$ be a subset of relative measure $>\frac{1}{\gp}$.

Suppose that $g \in G_p[3k_1]$ satisfies
\begin{align}\label{eq:centralizer non-avoid variety}
\mathcal{O}g \subset C[3k_1]M[3k_1]G_\gp[4k_2].
\end{align}
Then $\mfrak$ is ${\gp}^{-k_2}$-normalized by $g$.
\end{lemma}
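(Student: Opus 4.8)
The plan is to argue by contradiction: assume $g$ does not $\gp^{-k_2}$-normalize $\mfrak$ and exhibit, using the hypothesis \eqref{eq:centralizer non-avoid variety} together with the effective generation of $M$, a controlled element of $G_\gp$ that realizes the displacement while acting as if it normalizes $\mfrak$ up to error $\gp^{-k_2}$. The role of the centralizer $C$ is that $C$ acts trivially on the principal $\SL_2$ and hence normalizes any subalgebra built from highest-weight data in the obvious way; more importantly, the identity component of the normalizer of $\Mbf$ is $\Cbf\Mbf$-related in the sense described in \S\ref{sec:outlineproof}, so an element $c m$ with $c \in C$, $m \in M$ automatically $\gp^{-\star k_2}$-normalizes $\mfrak$ when $c,m$ are of controlled size. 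Thus the containment $\mathcal{O}g \subset C[3k_1]M[3k_1]G_\gp[4k_2]$ is saying that $g$ differs from an element normalizing $\mfrak$ by something in $G_\gp[4k_2]$, which should force $g$ itself to $\gp^{-k_2}$-normalize $\mfrak$.

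First I would fix a nilpotent list $\vpz_1,\ldots,\vpz_{\dim(\mfrak)}$ of pure non-zero weight that $k_1$-generates $M$, and apply Lemma~\ref{lem:implicitfctthm}: the product map $\varphi$ has the property that $\varphi(\gp\Z_\gp^{\dim(\mfrak)})^{-1}\varphi(\gp\Z_\gp^{\dim(\mfrak)}) \supset M[2k_1 + \ref{a:implicitfctthm}]$, and in particular (using $k_1 > \ref{a:implicitfctthm}$) this contains $M[3k_1]$. Since $\mathcal{O}$ has relative measure $> 1/\gp$ in $M[3k_1]$, a pigeonhole argument on the fibers of $\varphi$ (whose fibers in the relevant range are cosets of a subgroup of controlled index, or use that $\varphi$ restricted to a suitable ball is a submersion onto a ball in $M$) produces a point $m_0 \in \mathcal{O}$ at which the derivative $\mathrm{D}\varphi$ has a $\dim(\mfrak)$-minor of size $\geq \gp^{-k_1}$; equivalently, $\mathcal{O}$ contains a point $m_0$ such that the local $M$-orbit through $m_0$ inside $M[3k_1]$ has size $\geq \gp^{-\star k_1}$. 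The point $m_0 g$ then lies in $C[3k_1]M[3k_1]G_\gp[4k_2]$, so write $m_0 g = c\, m\, h$ with $c \in C[3k_1]$, $m \in M[3k_1]$, $h \in G_\gp[4k_2]$.

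Next I would compare how $g$ and $c\,m$ act on $\hat{\vpz}_\mfrak \in \mathbb{P}(\wedge^{\dim(\mfrak)}\gfrak_\gp)$. Since $m \in M[3k_1] \subset M$ normalizes $\mfrak$ exactly, $m.\hat{\vpz}_\mfrak = \hat{\vpz}_\mfrak$. For $c \in C[3k_1]$: the centralizer $C$ normalizes $\theta_\gp(\SL_2)$ pointwise, hence commutes with the $\Ad(a(t))$-weight decomposition, hence preserves each $\gfrak_\gp^{(\lambda)}$; I would need the additional fact, which follows from $\mfrak$ being $\theta_\gp(\SL_2)$-invariant and from Lemma~\ref{lem: weight spaces are integral}/Lemma~\ref{lem: reduction completely red}, that $C$ maps $\mfrak$ to a Lie subalgebra $\gp^{-\star \cdot 3k_1}$-close to $\mfrak$ — more precisely, that $c$ $\gp^{-3k_1+\star}$-normalizes $\mfrak$ (this is essentially the content of the discussion of the effective $\big\{g : \Mbf g \subset \overline{\Cbf\Mbf}^z\big\}$ in \S\ref{sec:outlineproof}, and should be isolated as the key sub-claim). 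Granting this, $m_0 g = c m h$ acts on $\hat{\vpz}_\mfrak$ by: $h$ moves it by $\leq \gp^{-4k_2 + \star}$ (Lipschitz estimate for the adjoint action of $G_\gp[4k_2]$, losing a bounded power of $\gp$ and of $\height(\G)$, absorbed since $\gp$ is large), then $m$ fixes it, then $c$ moves it by $\leq \gp^{-3k_1 + \star}$, and finally $m_0 \in M[3k_1]$ moves it by $\leq \gp^{-3k_1+\star}$. Combining, $g$ itself moves $\hat{\vpz}_\mfrak$ by at most... and here is the catch.

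\textbf{The main obstacle.} The naive combination above gives $\metric(g.\hat{\vpz}_\mfrak, \hat{\vpz}_\mfrak) \lesssim \gp^{-3k_1+\star}$, which is \emph{not} the same as $\gp^{-k_2}$ since $k_2$ can be much larger than $3k_1$. So the crude triangle inequality on $\mathbb{P}(\wedge^{\dim\mfrak}\gfrak_\gp)$ loses too much. The resolution — and the genuinely hard step — is that the three pieces $m_0 \in M[3k_1]$, $c \in C[3k_1]$, $m\in M[3k_1]$ are not generic but are \emph{tied together}: one must exploit that $m_0 g m^{-1} c^{-1} = h \in G_\gp[4k_2]$ is exceptionally small, and that the $M$-orbit and the $C$-action on $\hat{\vpz}_\mfrak$ degenerate in a coordinated way. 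Concretely, I would pass to the Lie algebra: write $g = \exp(\wpz)$ with $\wpz \in \gfrak_\gp[3k_1]$, decompose $\wpz = \wpz_\mfrak + \wpz_{\rfrak}$ along $\mfrak[3k_1] \oplus \rfrak_\mfrak[3k_1]$ (undistorted complement), and observe that $g$ $\gp^{-k_2}$-normalizes $\mfrak$ if and only if $\wpz_\rfrak$ has a very specific small size — roughly $\|\Ad(\text{(positive weight part)})\wpz_\rfrak\| \leq \gp^{-k_2+\star}$ in the relevant range, which by the weight structure forces $\|\wpz_\rfrak\| \leq \gp^{-k_2+\star}$. The hypothesis \eqref{eq:centralizer non-avoid variety}, unwound via Greenberg's theorem (Theorem~\ref{thm:greenberg}) applied to the variety $\{(c',m',x) : c' \in \Cbf, m' \in \Mbf, m_0 g = c' m' x\}$ or rather to an affine model of $\Cbf \cdot \Mbf$, and via the open mapping statement Proposition~\ref{prop:localseparatedness}, should show that $m_0 g$ lies $\gp^{-\star k_2}$-close to the variety $\overline{\Cbf\Mbf}^z$; translating back by $m_0^{-1} \in M[3k_1]$ and using that the identity component of this variety agrees with the identity component of $\Nbf_\Mbf$, one concludes $\exp(\wpz) = g$ lies $\gp^{-\star k_2}$-close to $\Nbf_\Mbf$, whence $\wpz_\rfrak^{\mathrm{nt}}$ (the part that obstructs normalization) is $\leq \gp^{-\star k_2}$, giving the $\gp^{-k_2}$-normalization after adjusting the implied constant in $k_2 \geq 3k_1$ appropriately or, if needed, strengthening the hypothesis's exponent $4k_2$ to absorb the loss. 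I expect the bookkeeping of exponents (making sure "$4k_2$" on the input side is enough to yield "$k_2$" on the output side after the polynomial losses from Greenberg and the open mapping theorem) to be the delicate part, and the conceptual core to be the identification of $\overline{\Cbf\Mbf}^z$ with the normalizer's identity component, which lets $C[3k_1]M[3k_1]$ be treated as "almost the normalizer at scale $3k_1$."
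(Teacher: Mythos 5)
Your diagnosis of the difficulty is sharp --- the crude triangle inequality at scale $3k_1$ cannot yield $\gp^{-k_2}$ --- but the proposed resolution has two genuine gaps, and the paper's actual argument is quite different.

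First, your intermediate sub-claim that ``$c \in C[3k_1]$ maps $\mfrak$ to a Lie subalgebra $\gp^{-3k_1+\star}$-close to $\mfrak$, following from $\mfrak$ being $\theta_\gp(\SL_2)$-invariant and Lemmas~\ref{lem: weight spaces are integral}--\ref{lem: reduction completely red}'' is false. The centralizer $C$ preserves the weight decomposition of $\gfrak_\gp$ with respect to the principal $\SL_2$, but it does \emph{not} preserve or approximately normalize $\mfrak$: a generic $c \in C$ can move $\mfrak$ macroscopically within a fixed weight space. That $c$ \emph{does} approximately normalize $\mfrak$ is precisely the conclusion one is trying to extract from the hypothesis \eqref{eq:centralizer non-avoid variety}; it cannot be assumed a priori. (This is exactly why the lemma requires the positive-measure containment rather than a single-point statement.)

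Second, the proposed rescue via Greenberg's theorem (Theorem~\ref{thm:greenberg}) and the open-mapping result (Proposition~\ref{prop:localseparatedness}) applied to $\overline{\Cbf\Mbf}^z$ does not control the exponents in a way compatible with the lemma's conclusion. Greenberg's theorem converts closeness-mod-$\gp^k$ to the variety into closeness of order $\gp^{-k/A}$, where $A$ depends on the height and degree of the defining polynomials. Here those defining polynomials depend on $\Mbf$, whose height is not bounded in terms of fixed data, so $A$ is uncontrolled and $\gp^{-4k_2}$ input need not yield $\gp^{-k_2}$ output. Moreover, $\overline{\Cbf\Mbf}^z$ is not a finite union of closed $\G$-orbits of equal dimension, so Proposition~\ref{prop:localseparatedness} does not apply to it. The paper avoids Greenberg entirely in this lemma. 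Its actual mechanism is: (i) use the measure hypothesis to replace $g$ by an element $c \in C[3k_1]$ whose translates of a positive-measure set $\mathcal{O}' \subset M[2k_2]$ containing the identity lie in $C[3k_1]M[3k_1]G_\gp[4k_2]$; (ii) pass to the much finer scale $\mfrak[2k_2]$, where the Baker--Campbell--Hausdorff commutator terms are absorbed into $\gfrak_\gp[4k_2]$, and prove via a Fubini/linearity argument over $\log(\mathcal{O}')$ that $\Ad_c^{-1}(\mfrak[2k_2]) \subset (\cfrak+\mfrak)[2k_2] + \gfrak_\gp[4k_2]$; (iii) rescale this containment to arbitrary $\mfrak[k]$, restrict to the horospherical pieces $\mfrak^\pm$ (where $\cfrak$ contributes nothing since $(\cfrak+\mfrak)^\pm = \mfrak^\pm$), and then use the effective generation of $M[3k_1]$ by $M^\pm[1]$ to conclude $c^{-1}M[3k_1]c \subset M[3k_1]G_\gp[2k_2]$, which gives the $\gp^{-k_2}$-normalization. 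Your pigeonhole step producing a single point $m_0$ with a large minor of $\mathrm{D}\varphi$ plays no role in this structure; what is actually needed from the measure hypothesis is the ability to recover a basis of $\mfrak$ from differences of elements of $\log(\mathcal{O}')$, used to linearize the containment.
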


As an ineffective analogue, the reader may verify that $\{g \in \G: \Mbf g \subset \overline{\Cbf \Mbf}^z\}$ is a group whose identity component is exactly the identity component of the normalizer of $\Mbf$ (here $\Cbf$ is the centralizer of $\theta_\gp(\SL_2)$).
Indeed, if $g \in \G$ is close enough to the identity and satisfies $\Mbf g \subset \overline{\Cbf \Mbf}^z$ then $g \in \Cbf\Mbf$, i.e.\ $g =cm$ with $c \in \Cbf$ and $m\in \Mbf$. 
It follows from $\Mbf g \subset \overline{\Cbf \Mbf}^z$ that $c^{-1} \Mbf c \subset \overline{\Cbf \Mbf}^z$. But this implies that $\Ad_c^{-1}$ fixes the horospherical subalgebras $\mfrak^+$ and $\mfrak^-$. 
Hence, $\Ad_c^{-1}$ fixes $\mfrak$ as the horospherical subalgebras generate (as a Lie algebra) all of $\mfrak$. It follows that $c$ and, hence, $g$ are in the normalizer of $\Mbf$.

\begin{proof}[Proof of Lemma~\ref{lemma:alignmenttechnical}]
We first reduce the claim to the case of $g \in C[3k_1]$.
Covering $M[3k_1]$ by cosets of $M[2k_2]$, one sees that there exists $m_1 \in M[3k_1]$ such that 
\begin{align*}
\mathcal{O}' =
\mathcal{O}m_1 \cap M[2k_2] \subset M[2k_2]
\end{align*}
has relative measure $>\frac{1}{\gp}$. 
Multiplying $m_1$ by a suitable element of $M[2k_2]$ on the right, we may further assume that $\mathcal{O}'$ contains the identity.
In particular, $m_1^{-1}g \in C[3k_1]M[3k_1]G_\gp[4k_2]$ (since $m_1^{-1} \in \mathcal{O}$) and so there exists $c \in C[3k_1]$ with $c \in m_1^{-1}g M[3k_1]G_\gp[4k_2]$.
The new element $c$ satisfies
\begin{align*}
(\mathcal{O}m_1)c \subset \mathcal{O}gM[3k_1]G_\gp[4k_2] \subset C[3k_1]M[3k_1]G_\gp[4k_2].
\end{align*}
As $c \in m_1^{-1}g M[3k_1]G_\gp[4k_2]$, it suffices to verify the conclusion of the lemma for $c$ and $\mathcal O m_1$.
Since $c \in C[3k_1]$ we have for any $\vpz \in \log(\mathcal{O}m_1)$
\begin{align}
\exp(\Ad_{c}^{-1} \vpz) \in c^{-1}C[3k_1]M[3k_1]G_\gp[4k_2]
=C[3k_1]M[3k_1]G_\gp[4k_2].\label{eq:conjintoCM}
\end{align}

For the remainder of the proof, we will only rely on \eqref{eq:conjintoCM} for $\vpz \in  \log(\mathcal{O}')$ (so we will not invoke \eqref{eq:centralizer non-avoid variety} further) and on the fact that $c \in C[3k_1]$.
The set of $\vpz \in \mfrak[3k_1]$ satisfying \eqref{eq:conjintoCM} is rather complicated in view of the Baker-Campbell-Hausdorff formula and the higher order commutator terms in that formula.
The situation is entirely different in the (much) smaller neighborhood $\mfrak[2k_2]$ where commutator terms are absorbed by $G_\gp[4k_2]$:

\begin{claim*}
For any $\vpz \in \mfrak[2k_2]$ we have
\begin{align}\label{eq:manyvecin c+m}
\Ad_{c}^{-1} \vpz \in (\cfrak+\mfrak)[2k_2]+ \gfrak_\gp[4k_2].
\end{align}
\end{claim*}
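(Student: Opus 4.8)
The plan is to prove the claim by a direct computation using the Baker-Campbell-Hausdorff formula together with the hypothesis $\vpz \in \mfrak[2k_2]$, exploiting that all commutator corrections fall into $\gfrak_\gp[4k_2]$ because $\metric$-level scales add under brackets and $2k_2 + 2k_2 = 4k_2$ while $c \in C[3k_1]$ has level at least $3k_1 \geq 0$ (in fact we only need $c \in G_\gp[0]$ here). Concretely, I would start from the assumption \eqref{eq:conjintoCM}, which gives for each $\vpz \in \log(\mathcal{O}')$ a decomposition $\exp(\Ad_c^{-1}\vpz) = \exp(\xpz_\cfrak)\exp(\xpz_\mfrak)\exp(\xpz_0)$ with $\xpz_\cfrak \in \cfrak[3k_1]$, $\xpz_\mfrak \in \mfrak[3k_1]$, and $\xpz_0 \in \gfrak_\gp[4k_2]$. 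However, for the claim as stated I only need the weaker, purely Lie-algebraic fact: for $\vpz \in \mfrak[2k_2]$, $\Ad_c^{-1}\vpz$ lies within $\gfrak_\gp[4k_2]$ of $(\cfrak + \mfrak)[2k_2]$.

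First I would observe that $c \in C[3k_1]$ means $c = \exp(\cpz)$ for some $\cpz \in \cfrak_\gp$ with $\cpz \in \gfrak_\gp[3k_1]$ (using $\gp$ large so $\log,\exp$ are well-defined isometries on $\gfrak_\gp[1]$, and $C$ smooth over $\Z_\gp$ at the good prime $\gp$ so that $C[3k_1] = \exp(\cfrak_\gp[3k_1])$). Then $\Ad_c^{-1}\vpz = \exp(-\ad_\cpz)\vpz = \vpz - [\cpz,\vpz] + \tfrac12[\cpz,[\cpz,\vpz]] - \cdots$. Since $\cpz \in \gfrak_\gp[3k_1]$ and $\vpz \in \gfrak_\gp[2k_2]$ and brackets satisfy $[\gfrak_\gp[a],\gfrak_\gp[b]] \subset \gfrak_\gp[a+b]$ (this is the defining property of the $\Z_\gp$-Lie-algebra structure on $\gfrak_\gp[0]$, cf.\ \S\ref{sec:setup}), every term $\tfrac{(-1)^j}{j!}\ad_\cpz^j\vpz$ for $j \geq 1$ lies in $\gfrak_\gp[3k_1 j + 2k_2] \subset \gfrak_\gp[4k_2]$ as soon as $3k_1 \geq 2k_2 - 2k_2$, i.e.\ always --- wait, I need $3k_1 j + 2k_2 \geq 4k_2$, i.e.\ $3k_1 j \geq 2k_2$; this requires $j \geq 2k_2/(3k_1)$, which is not automatic since $k_2 \geq 3k_1$ only gives $2k_2/(3k_1) \geq 2$. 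So the first-order term $-[\cpz,\vpz] \in \gfrak_\gp[3k_1 + 2k_2]$ need not be in $\gfrak_\gp[4k_2]$. This is exactly why the claim allows a correction in $(\cfrak+\mfrak)[2k_2]$ rather than just $\gfrak_\gp[4k_2]$: I would instead argue that $\Ad_c^{-1}\vpz - \vpz \in \gfrak_\gp[3k_1+2k_2] \subset \gfrak_\gp[2k_2]$ (using $3k_1 \geq 0$), and crucially that $\Ad_c^{-1}$ preserves the subalgebra $\cfrak + \mfrak$: since $c \in C \subset$ (centralizer of the principal $\SL_2$) and $\mfrak$ is invariant under the principal $\SL_2$ but I do \emph{not} know $c$ normalizes $\mfrak$. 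Hmm. The cleaner route: $\Ad_c^{-1}\vpz$ with $\vpz \in \mfrak$ and $c$ centralizing $\theta_\gp(\SL_2)$ --- decompose $\gfrak_\gp = \cfrak \oplus \rfrak'$ into $\theta_\gp(\SL_2)$-isotypic pieces where $\rfrak'$ is the sum of nontrivial irreducibles; $\Ad_c$ preserves this decomposition (as $c$ commutes with the $\SL_2$-action), and preserves each $[k]$-level. Then project $\vpz = \vpz_\cfrak + \vpz_{\rfrak'}$; since $\vpz \in \mfrak$ and $\mfrak$ is $\theta_\gp(\SL_2)$-invariant, $\vpz_\cfrak, \vpz_{\rfrak'} \in \mfrak$ too by Lemma~\ref{lem: reduction completely red}. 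So $\Ad_c^{-1}\vpz_\cfrak = \vpz_\cfrak \in \mfrak$ exactly (as $c$ centralizes, hence fixes the trivial isotypic part pointwise? no --- $c$ fixes the \emph{trivial} $\SL_2$-subrepresentations, i.e.\ $\cfrak$, pointwise only if $C$ acts trivially on $\cfrak$, which is false).

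Given these subtleties, the honest plan is: reduce to showing $\Ad_c^{-1}\vpz \in (\cfrak+\mfrak)[2k_2] + \gfrak_\gp[4k_2]$ by first noting $\Ad_c^{-1}\vpz \equiv \vpz \pmod{\gfrak_\gp[3k_1+2k_2]}$ via the $\ad$-expansion with $\cpz \in \gfrak_\gp[3k_1]$ (all higher terms are in $\gfrak_\gp[6k_1 + 2k_2] \subset \gfrak_\gp[4k_2]$ since $k_2 \geq 3k_1$ forces $6k_1 \leq 2k_2$), so the only issue is the single term $[\cpz,\vpz]$. Then I would use \eqref{eq:conjintoCM} for a \emph{single} conveniently chosen $\vpz \in \log(\mathcal{O}') \subset \mfrak[2k_2]$ (the identity is in $\mathcal{O}'$, and $\mathcal{O}'$ has positive relative measure in $M[2k_2]$ so contains a $\Z_\gp$-basis' worth of directions of $\mfrak[2k_2]$ modulo $\gfrak_\gp[2k_2+1]$): \eqref{eq:conjintoCM} says $\exp(\Ad_c^{-1}\vpz) = \exp(\xpz_\cfrak)\exp(\xpz_\mfrak)\exp(\xpz_0)$ with the level bounds above; applying $\log$ and BCH on the right-hand side (all mixed brackets land in $\gfrak_\gp[\min] \supset \gfrak_\gp[3k_1 + 3k_1] = \gfrak_\gp[6k_1] \supset \ldots$ hmm these are only level $6k_1$, not $4k_2$ — so BCH corrections on the RHS are a genuine problem too) forces me to work modulo $\gfrak_\gp[4k_2]$ carefully. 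The cleanest fix: since $\xpz_\cfrak, \xpz_\mfrak \in \gfrak_\gp[3k_1]$, $\log(\exp(\xpz_\cfrak)\exp(\xpz_\mfrak)) = \xpz_\cfrak + \xpz_\mfrak + (\text{brackets in } \gfrak_\gp[6k_1])$; and then multiplied by $\exp(\xpz_0)$, $\xpz_0 \in \gfrak_\gp[4k_2]$, adds $\xpz_0 + (\text{brackets in } \gfrak_\gp[3k_1 + 4k_2]) \subset \gfrak_\gp[4k_2]$. So $\Ad_c^{-1}\vpz = \xpz_\cfrak + \xpz_\mfrak \pmod{\gfrak_\gp[\min(6k_1, 4k_2)]} = \pmod{\gfrak_\gp[6k_1]}$. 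This gives $\Ad_c^{-1}\vpz \in (\cfrak+\mfrak) + \gfrak_\gp[6k_1]$, but not yet the level-$[2k_2]$ refinement on the $(\cfrak+\mfrak)$ part nor the $[4k_2]$ error.

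I expect the \textbf{main obstacle} to be exactly this bookkeeping of scales: getting both (i) the $(\cfrak+\mfrak)$-component at level $[2k_2]$ and (ii) the error term at level $[4k_2]$, when the naive BCH bounds only give level $[6k_1]$ or $[3k_1 + 2k_2]$. The resolution should be to combine the two expansions (the $\ad$-expansion of $\Ad_c^{-1}\vpz$ showing it differs from $\vpz \in \mfrak[2k_2]$ only by $-[\cpz,\vpz] \in \gfrak_\gp[3k_1+2k_2]$ plus genuine $\gfrak_\gp[4k_2]$-error, \emph{and} the fact that $-[\cpz,\vpz]$ decomposes along the $\theta_\gp(\SL_2)$-isotypic splitting with its $\mfrak$-component absorbed into $\mfrak[2k_2]$ — using that $\ad_\cpz$ with $\cpz \in \cfrak$ commutes with the $\SL_2$-action and that $[\cfrak, \mfrak] \subset$ (the $\SL_2$-invariant complement issues)) — ultimately one shows $-[\cpz, \vpz] \in \cfrak[2k_2] + \gfrak_\gp[4k_2]$ is too strong but $-[\cpz,\vpz] \in (\cfrak + \mfrak)[2k_2] + \gfrak_\gp[4k_2]$ follows because $[\cpz,\vpz]$ has level $\geq 3k_1 + 2k_2 \geq 2k_2$ everywhere while its nontrivial-isotypic part, after one more bracket with the $\SL_2$, gains enough. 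I would allocate most of the write-up to this, and present the final claim \eqref{eq:manyvecin c+m} as the clean consequence, then feed it back (in the lines following the excerpt) to conclude that $c$, hence $g$, genuinely $\gp^{-k_2}$-normalizes $\mfrak$ by a linear-algebra / Smith-normal-form argument identifying $\Ad_c\mfrak$ with $\mfrak$ modulo $\gfrak_\gp[4k_2] \subset \gfrak_\gp[k_2]$.
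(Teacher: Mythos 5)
Your exploration correctly identifies the crux: the naive $\ad$-expansion of $\Ad_c^{-1}\vpz$ with $\cpz \in \gfrak_\gp[3k_1]$ and $\vpz \in \gfrak_\gp[2k_2]$ only places the first-order correction $[\cpz,\vpz]$ in $\gfrak_\gp[3k_1+2k_2]$, and the BCH corrections on the right-hand side of \eqref{eq:conjintoCM} only land in $\gfrak_\gp[6k_1]$ — neither of which is $\gfrak_\gp[4k_2]$ since $k_2\geq 3k_1$ only gives $3k_1\leq k_2$. But you stop at diagnosing the obstacle and never find the paper's resolution, so there is a genuine gap.

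The missing idea is to refine the $C$-$M$ decomposition so it sits inside the \emph{undistorted} direct sum $\rfrak_\mfrak \oplus \mfrak$. Concretely, since $\rfrak_\mfrak$ is undistorted, one may factor $C[k] = \exp\bigl((\cfrak\cap\rfrak_\mfrak)[k]\bigr)\cdot(C\cap M)[k]$ (the map $\vpz_1+\vpz_2 \mapsto \exp(\vpz_1)\exp(\vpz_2)$ from $(\rfrak_\mfrak\cap\cfrak)[1]\oplus(\mfrak\cap\cfrak)[1]$ to $C[1]$ has unit-determinant derivative). Absorbing the $(C\cap M)$ piece into $M$, relation \eqref{eq:conjintoCM} becomes $\exp(\Ad_c^{-1}\vpz) = \exp(\wpz_1)\exp(\wpz_2)\exp(\wpz_3)$ with $\wpz_1 \in (\cfrak\cap\rfrak_\mfrak)[3k_1]$, $\wpz_2 \in \mfrak[3k_1]$, $\wpz_3\in\gfrak_\gp[4k_2]$. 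Now BCH gives $\Ad_c^{-1}\vpz = \wpz_1+\wpz_2 + O(\max\{\norm{\wpz_1}\norm{\wpz_2}, \gp^{-4k_2}\})$, and crucially $\norm{\wpz_1}\norm{\wpz_2} \leq \norm{\wpz_1+\wpz_2}^2$ \emph{because} $\wpz_1\in\rfrak_\mfrak$, $\wpz_2\in\mfrak$ live in an undistorted complement pair. Feeding in the a priori information $\Ad_c^{-1}\vpz \in \gfrak_\gp[2k_2]$ (since $\vpz\in\mfrak[2k_2]$ and $c\in G_\gp[0]$) then forces $\wpz_1+\wpz_2 \in \gfrak_\gp[2k_2]$, hence $\wpz_1,\wpz_2\in\gfrak_\gp[2k_2]$ individually and $\norm{\wpz_1}\norm{\wpz_2}\leq \gp^{-4k_2}$, which exactly upgrades your unwanted $\gfrak_\gp[6k_1]$ bracket error to $\gfrak_\gp[4k_2]$. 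This proves \eqref{eq:manyvecin c+m} for $\vpz\in\log(\mathcal{O}')$; the extension to all of $\mfrak[2k_2]$ is then a separate Fubini/linearity step using that $\mathcal{O}'$ has relative measure $>1/\gp$, which you gesture at but do not execute. In short: you saw the wall but not the door, which is the interaction between the undistorted-complement property and the factorization of $C$.
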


\begin{proof}[Proof of Claim]
Notice that for any $k \geq 1$ and any $c_0 \in C[k]$ there exist $c_1 \in \exp((\cfrak \cap \rfrak_\mfrak)[k])$ and $c_2 \in (C \cap M)[k]$ with $c_0 = c_1c_2$.
Indeed, the map
\begin{align*}
\cfrak[1] = (\rfrak_\mfrak \cap \cfrak)[1]+(\mfrak \cap \cfrak)[1] \to C[1],\
\vpz = \vpz_1 + \vpz_2 \mapsto \exp(\vpz_1)\exp(\vpz_2)
\end{align*}
is surjective by the inverse function theorem as the derivative at zero is the identity.

We first show \eqref{eq:manyvecin c+m} for all vectors in $\log(\mathcal{O'})$.
Given $\vpz \in \log(\mathcal{O}')$ we may write using \eqref{eq:conjintoCM} and the above observation
\begin{align*}
 \exp(\Ad_{c}^{-1} \vpz) = 
 \exp(\wpz_1)\exp(\wpz_2) \exp(\wpz_3)
\end{align*}
for $\wpz_1 \in (\cfrak \cap \rfrak_\mfrak)[3k_1]$, $\wpz_2 \in \mfrak[3k_1]$, and $\wpz_3 \in \gfrak_\gp[4k_2]$ (where we have absorbed the occurring $(C \cap M)[3k_1]$ term into $M[3k_1]$).
By the Baker-Campbell-Hausdorff formula we have
\begin{align*}
\Ad_{c}^{-1} \vpz = \wpz_1+ \wpz_2 + O(\max\{\norm{\wpz_1}\norm{\wpz_2},\gp^{-4k_2}\})
\end{align*}
where we abuse the Landau notation $O(B)$ to mean an element of $\gfrak_\gp$ of size at most $B$.
Since $\norm{\wpz_1}\norm{\wpz_2} \leq \max\{\norm{\wpz_1},\norm{\wpz_2}\}^2 = \norm{\wpz_1+\wpz_2}^2$ (as $\rfrak_\mfrak$ is an undistorted complement), $\wpz_1+\wpz_2$ is the highest order term in the above right-hand side (unless it is of size $\leq \gp^{-4k_2}$, in which case \eqref{eq:manyvecin c+m} certainly holds).
Using additionally $\Ad_{c}^{-1} \vpz \in \gfrak_\gp[2k_2]$ by definition of $\mathcal{O}'$,
we have $\wpz_1+ \wpz_2 \in \gfrak_{\gp}[2k_2]$ and so $\wpz_1,\wpz_2 \in \gfrak_\gp[2k_2]$ using again that $\rfrak_\mfrak$ is undistorted.
In particular, we have $\norm{\wpz_1}\norm{\wpz_2} \leq \gp^{-4k_2}$ and
\begin{align*}
\Ad_{c}^{-1} \vpz \in \wpz_1 + \wpz_2 + \gfrak_\gp[4k_2] \subset \cfrak[2k_2]+\mfrak[2k_2]+\gfrak_\gp[4k_2].
\end{align*}
Hence \eqref{eq:manyvecin c+m} follows for vectors in $\log(\mathcal{O}')$.

We upgrade this statement to general vectors in $\mfrak[2k_2]$ using linearity of \eqref{eq:manyvecin c+m}.
Let $\vpz_1,\ldots,\vpz_{\dim(\mfrak)}$ be a $\Z_\gp$-basis of $\mfrak[2k_2]$. By Fubini's theorem and using that $\mathcal{O}'$ has relative measure  $>\frac{1}{\gp}$, there exist $t_1,\ldots,t_{\dim(\mfrak)} \in \Z_\gp$ and $t_1' \in \Z_\gp$ such that $t_1-t_1' \in \Z_\gp^\times$ and
\begin{align*}
t_1\vpz_1 + \ldots+t_{\dim(\mfrak)}\vpz_{\dim(\mfrak)},\,
t_1'\vpz_1+t_2 \vpz_2 + \ldots+t_{\dim(\mfrak)}\vpz_{\dim(\mfrak)}
\in \log(\mathcal{O'}).
\end{align*}
Since \eqref{eq:manyvecin c+m} holds for linear combinations of elements of $\log(\mathcal{O}')$,
we obtain by taking the difference of the above two vectors that $(t_1-t_1') \Ad_{c}^{-1}\vpz_1 \in (\cfrak+\mfrak)[2k_2]+ \gfrak_\gp[4k_2]$ and so $\vpz_1$ satisfies \eqref{eq:manyvecin c+m}.
One proceeds analogously to verify \eqref{eq:manyvecin c+m} for the other basis vectors $\vpz_2,\ldots,\vpz_{\dim(\mfrak)}$.
This proves the claim by linearity.
\end{proof}

We now proceed with the proof of Lemma~\ref{lemma:alignmenttechnical}. Given any $k \geq 0$ and any $\vpz \in \mfrak[k]$ we have by the claim that
\begin{align*}
\Ad_{c}^{-1} \vpz 
&= \gp^{-2k_2+k} \Ad_{c}^{-1} (\gp^{2k_2-k}\vpz)\\
&\in \gp^{-2k_2+k} \big((\cfrak+\mfrak)[2k_2]+ \gfrak_\gp[4k_2]\big)
= (\cfrak+\mfrak)[k]+ \gfrak_\gp[2k_2+k].
\end{align*}
In particular,
\begin{align*}
\Ad_{c}^{-1}(\mfrak[k]) \subset (\cfrak+\mfrak)[k]+ \gfrak_\gp[2k_2].
\end{align*}
Now notice that $\Ad_{c}^{-1} \vpz \in \gfrak_\gp^+$ if $\vpz \in \mfrak^+$ and $\Ad_{c}^{-1} \vpz \in \gfrak_\gp^-$ if $\vpz \in \mfrak^-$.
Since $(\cfrak+\mfrak)^\pm = \mfrak^\pm$, we obtain that
\begin{align*}
\Ad_{c}^{-1} (\mfrak^\pm[k]) \subset \mfrak^\pm[k]+ \gfrak_\gp[2k_2] \subset \mfrak[k]+ \gfrak_\gp[2k_2].
\end{align*}
or equivalently
\begin{align}\label{eq:pmMpreserved}
c^{-1}M^\pm[k]c \subset M[k]G_\gp[2k_2]
\end{align}

We now use effective generation to upgrade \eqref{eq:pmMpreserved} to a similar statement for $M[3k_1]$.
Recall that $M$ is assumed to be $k_1$-generated by nilpotents of pure non-zero weight.
In particular, any element of $M[3k_1]$ can be written as a product of (at most $2 \dim(\mfrak)$ many) elements in $M^\pm[1]$ --- see Lemma~\ref{lem:implicitfctthm}.
Since $M[1]G_\gp[2k_2]$ is a group, this and \eqref{eq:pmMpreserved} for $k=1$ implies
\begin{align*}
c^{-1}M[3k_1]c \subset (M[1]G_\gp[2k_2]) \cap G_\gp[3k_1] = M[3k_1]G_\gp[2k_2].
\end{align*}
In particular, $\Ad_{c}^{-1}(\mfrak[3k_1]) \subset \mfrak[3k_1]+ \gfrak_\gp[2k_2]$ and so
\begin{align*}
\Ad_{c}^{-1}(\mfrak[0]) \subset \mfrak[0]+ \gfrak_\gp[2k_2-3k_1].
\end{align*}
For a $\Z_\gp$-basis $\vpz_1,\ldots,\vpz_{\dim(\mfrak)}$ of $\mfrak[0]$ this implies that $\Ad_{c}^{-1}\vpz_i \equiv \vpz_i' \mod \gp^{2k_2-3k_1}$ for some $\vpz_i' \in \mfrak[0]$. Notice that the vectors $\vpz_1',\ldots, \vpz_{\dim(\mfrak)}'$ are linearly independent modulo $\gp$ and hence also form a $\Z_\gp$-basis of $\mfrak[0]$.
Thus,
\begin{align*}
c^{-1}.(\vpz_1\wedge\ldots\wedge \vpz_{\dim(\mfrak)})
\equiv \vpz_1'\wedge \ldots\wedge \vpz_{\dim(\mfrak)}' \mod \gp^{2k_2-3k_1}
\end{align*}
which proves the lemma as $k_2 \geq 3k_1$.
\end{proof}

\begin{proof}[Proof of Proposition~\ref{prop:alignment}]
Since $\rfrak_\mfrak$ is an undistorted complement, the map $\Psi$ sending 
$\vpz = \vpz_1 + \vpz_2$ with $\vpz_1 \in \rfrak_\mfrak[1]$ and $\vpz_2 \in \mfrak[1]$ to $\exp(\vpz_1) \exp(\vpz_2)$ 
is a diffeomorphism $\gfrak_\gp[1] \to G_\gp[1]$ whose derivative has unit determinant at every point, with the derivative at zero being the identity.
In particular, as $g \in G_p[3k_1]$, we may write for $m \in M[3k_1]$
\begin{align}\label{eq:wmPhi-def}
m g = \exp(\wpz_m) \Phi(m)^{-1}
\end{align}
for some $\wpz_m \in \rfrak_{\mfrak}[3k_1]$ and some $\Phi(m) \in M[3k_1]$. 

\begin{claim*}
The map $\Phi$ defines a diffeomorphism $M[3k_1] \to M[3k_1]$ whose derivative has unit determinant at every point.
In particular, $\Phi$ is measure-preserving.
\end{claim*}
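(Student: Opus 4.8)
The plan is to verify the claim by exhibiting an explicit inverse to $\Phi$ and computing the Jacobian. First I would unpack the defining relation \eqref{eq:wmPhi-def}: for $m \in M[3k_1]$ we set $mg = \exp(\wpz_m)\Phi(m)^{-1}$ with $\wpz_m \in \rfrak_\mfrak[3k_1]$ and $\Phi(m) \in M[3k_1]$, these being the uniquely determined factors coming from the diffeomorphism $\Psi\colon \gfrak_\gp[1] \to G_\gp[1]$. Equivalently, $\Phi(m) = \exp(-\wpz_m) m g = \Psi^{-1}(mg)$ composed with projection onto the $M$-coordinate. Since $mg \in G_\gp[3k_1]$ (using $m, g \in G_\gp[3k_1]$ and that $G_\gp[3k_1]$ is a group) and $3k_1 \geq 1$, the factorization lives in $G_\gp[3k_1]$, so indeed $\wpz_m \in \rfrak_\mfrak[3k_1]$ and $\Phi(m) \in M[3k_1]$; this shows $\Phi$ maps $M[3k_1]$ into itself and is well-defined.

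Next I would produce the inverse. Given $m' \in M[3k_1]$, I want $m$ with $\Phi(m) = m'$, i.e.\ $mg(m')^{-1} = \exp(\wpz_m) \in \exp(\rfrak_\mfrak[3k_1])$. Write $g(m')^{-1} = \Psi(\vpz) = \exp(\vpz_1)\exp(\vpz_2)$ with $\vpz_1 \in \rfrak_\mfrak[3k_1]$, $\vpz_2 \in \mfrak[3k_1]$ (again using that $g(m')^{-1} \in G_\gp[3k_1]$ and the diffeomorphism $\Psi$). Then $mg(m')^{-1} \in \exp(\rfrak_\mfrak[3k_1])$ forces, after another application of $\Psi$, the $M$-component of $m \cdot \exp(\vpz_1)\exp(\vpz_2)$ to be trivial; since left-multiplication by $\exp(-\vpz_2) \in M[3k_1]$ and the uniqueness in $\Psi$ pin things down, one solves $m = $ (the $M$-component of $\Psi^{-1}(\exp(\vpz_1)\exp(\vpz_2)^{-1} \cdot \text{something})$) — concretely $m = \exp(\wpz)\exp(\vpz_2)^{-1}\exp(-\vpz_1)$ for the appropriate $\wpz \in \rfrak_\mfrak[3k_1]$, and this $m$ is readily checked to lie in $M[3k_1]$ and to satisfy $\Phi(m) = m'$. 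Both $\Phi$ and this inverse are compositions of $\Psi$, $\Psi^{-1}$, group multiplications, and linear projections along the fixed decomposition $\gfrak_\gp = \rfrak_\mfrak \oplus \mfrak$, hence analytic; so $\Phi$ is a diffeomorphism of $M[3k_1]$.

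For the Jacobian I would argue as follows. Fix $m_0 \in M[3k_1]$ and use right-translation on $M$ to reduce to computing the derivative of $m \mapsto \Phi(m)\Phi(m_0)^{-1}$ near $m_0$, or equivalently differentiate the relation $\exp(\wpz_m)\Phi(m)^{-1} = mg$ in $m$. Writing $m = m_0 \exp(t\vpz)$ for $\vpz \in \mfrak$ and differentiating at $t=0$, the left side contributes $\mathrm{D}\Psi$ applied to $(\dot\wpz, \text{(derivative of }\Phi\text{-component)})$ and the right side contributes $\mathrm{D}_m(\text{right mult.\ by }g)$ applied to $m_0.\vpz$; since right-multiplication by the fixed element $g$ has unit Jacobian (it is right-translation on $G_\gp$, and Haar measure on $G_\gp$ — unimodular as $\G$ is semisimple — is bi-invariant) and $\Psi$ has unit-determinant derivative at every point by the hypothesis recalled just before \eqref{eq:wmPhi-def}, the $\rfrak_\mfrak$-block $\dot\wpz$ decouples and the induced map on the $\mfrak$-tangent space (which is $\mathrm{D}\Phi$ composed with translations) must also have determinant $\pm 1$, in fact $+1$ since all maps involved are orientation-compatible connected-group translations. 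Hence $\mathrm{D}\Phi$ has unit determinant everywhere and $\Phi$ preserves the Haar probability measure on $M[3k_1]$. The main obstacle I anticipate is bookkeeping the decomposition $\gfrak_\gp = \rfrak_\mfrak \oplus \mfrak$ through the two applications of $\Psi$ carefully enough to see that the $\rfrak_\mfrak$-variable genuinely splits off in the derivative — once that block-triangular (in fact block-diagonal after the splitting) structure is in place, the determinant computation is immediate from unimodularity of $g$-translation and the unit-Jacobian property of $\Psi$.
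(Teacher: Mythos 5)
Your reduction steps (passing to a base point by translation, and factoring $g = g'm'$ with $g' \in \exp(\rfrak_\mfrak[3k_1])$, $m'\in M[3k_1]$) match the paper's. The gap is in the Jacobian computation. You claim that unimodularity of right translation by $g$ together with the unit-determinant property of $\mathrm{D}\Psi$ forces the $\mfrak\to\mfrak$ block of the relevant derivative to have unit determinant, because ``the $\rfrak_\mfrak$-block $\dot\wpz$ decouples'' and the structure is ``block-triangular (in fact block-diagonal).'' Neither of those two facts yields this, and the decoupling does not actually hold. For a linear automorphism of $\gfrak_\gp = \rfrak_\mfrak\oplus\mfrak$ with unit determinant, the $\mfrak\to\mfrak$ block can have $p$-adic absolute value as small as you like (e.g.\ $\mathrm{diag}(p^{-1},p)$ has unit determinant but its first block does not). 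And the map $\vpz\mapsto\Psi^{-1}(\Psi(\vpz)g)$ is genuinely not block-triangular with respect to $\rfrak_\mfrak\oplus\mfrak$: $\rfrak_\mfrak$ is only a linear complement, not a subalgebra, so the product $\exp(\vpz_1)\exp(\vpz_2)g$ mixes the two factors already at second order. You flag the ``bookkeeping'' as the obstacle but then assert the splitting holds; it does not.

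What actually closes the argument --- and what the paper proves --- is the more concrete quantitative statement that the $\mfrak\to\mfrak$ block equals $\mathrm{Id} + E$ with $\norm{E} \leq \gp^{-3k_1} < 1$, hence has unit determinant. This is obtained by a direct Baker--Campbell--Hausdorff expansion: writing $\Psi^{-1}(\exp(t\vpz)\exp(\wpz)) = \phi_\rfrak(t)+\phi_\mfrak(t)$ and comparing the linear-in-$t$ terms of the two BCH expansions gives $b_1 = \vpz + O(\gp^{-3k_1}\norm{\vpz})$, from which unit determinant (and invertibility of $\mathrm{D}\Phi$, hence the diffeomorphism property via the open mapping argument) follows. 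That perturbative estimate is the substantive step, and the soft unimodularity argument you give does not substitute for it. (A side remark: your ``in fact $+1$ since orientation-compatible'' is out of place in the $p$-adic setting --- the derivative at the identity is close to $-\mathrm{Id}$ --- but this is harmless since only the $p$-adic absolute value of the determinant matters for Haar measure.)
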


\begin{proof}[Proof of Claim]
We assume first that $g = \exp(\wpz)$ for some $\wpz \in \rfrak_\mfrak[3k_1]$. 
Let $\vpz \in \mfrak[0]$.
By the Baker-Campbell-Hausdorff formula we have for $|t|<1$
\begin{align}\label{eq:exp(tv)exp(w)}
\exp(t \vpz)\exp(\wpz) = \exp(\wpz+ t \vpz + O(\norm{\wpz} \norm{\vpz}|t|) + O(|t|^2)).
\end{align}
As $\Psi$ is analytic, so is its inverse $\Psi^{-1}:G_\gp[1] \to \gfrak_\gp[1]$ (cf.~\cite[Ch.~III]{Serre-LieGroups}).
In particular, we may write for $|t| < 1$
\begin{align*}
\Psi^{-1}(\exp(t \vpz)\exp(\wpz)) = \phi_\rfrak(t) + \phi_\mfrak(t)
\end{align*}
with analytic curves
\begin{align*}
\phi_\rfrak(t) &= \wpz + t a_1 + t^2 a_2 + \ldots \in \rfrak_\mfrak[1],\\
\phi_\mfrak(t) &= t b_1 + t^2 b_2 + \ldots \in \mfrak[1].
\end{align*}
where we used that $\wpz \in \rfrak_\mfrak$.
Using the Baker-Campbell-Hausdorff formula for $\exp(\phi_\rfrak(t))\exp(\phi_\mfrak(t))$ we have
\begin{align*}
\exp\big(\phi_\rfrak(t))\exp(\phi_\mfrak(t)\big)
= \exp\big(\wpz + t(a_1+b_1)+ O(|t| \norm{\wpz}\norm{b_1}) + O(|t|^2)\big).
\end{align*}
Comparing linear terms in $t$ with \eqref{eq:exp(tv)exp(w)} we have
\begin{align*}
\vpz + O(\gp^{-3k_1}\norm{\vpz}) 
= a_1 + b_1 + O(\gp^{-3k_1}\norm{b_1})
\end{align*}
and so, since $\vpz \in \mfrak$, $\norm{b_1} = \norm{\vpz}$ and $b_1 = \vpz + O(\gp^{-3k_1}\norm{\vpz})$. This shows the claim if $g \in\exp(\rfrak_\mfrak[3k_1])$ and the derivative of $\Phi$ is taken at the identity.

The statement for general $g$ and the derivative taken at the identity follows from writing $g = g'm'$ for some $m' \in M[3k_1]$ and $g' \in \exp(\rfrak_\mfrak[3k_1])$.
Moreover, taking the derivative at any other point $m \in M[3k_1]$ reduces to the already proven part of the claim by replacing $g$ with $m g$. The claim follows.
\end{proof}

By the claim, $\mathcal{O} := \mathcal{O}_1 \cap \Phi^{-1}(\mathcal{O}_2)$ is a subset of relative measure at least~$\frac{1}{3}$.
By Lemma~\ref{lemma:alignmenttechnical} and since $g$ does not $\gp^{-k_2}$-normalize $\mfrak$ there exists $m \in \mathcal{O}$ such that $mg \not \in C[3k_1]M[3k_1]G_\gp[4k_2]$ (as long as $k_2 \geq 3k_1$).
In particular, $\wpz_m$ defined in \eqref{eq:wmPhi-def} satisfies $\wpz_m \not \in \cfrak[3k_1] + \gfrak_\gp[4k_2]$ as otherwise
\begin{align*}
mg =  \exp(\wpz_m) \Phi(m)^{-1} \in C[3k_1]G_\gp[4k_2]\Phi(m)^{-1} \subset C[3k_1]M[3k_1]G_\gp[4k_2],
\end{align*}
where we used that $G_\gp[4k_2]$ is a normal subgroup of $G_\gp[0]$. 
In other words, we have asserted that $\norm{(\wpz_m)^\mathrm{nt}} > \gp^{-4k_2}$ which proves the proposition for $m_1=m\in \mathcal O_1$ and $m_2=\Phi(m)\in\mathcal O_2$. 
\end{proof}

\section{Attaining extra almost invariance}\label{sec:extrainv}

In this section, we prove Proposition~\ref{prop:addinv-intro} which provides extra almost invariance. 
To do so, we combine results from previous sections with an effective ergodic theorem for a version of the averaging operator
\begin{align*}
f \mapsto \int_{\Z_\gp} f(\cdot u(-s)a(t)^{-1}) \de s.
\end{align*}

\subsection{Spectral input}
Recall that the representation of $\theta_\gp(\SL_2(\Q_\gp))$ on 
\begin{align*}
L^2_0(Y_\data) = \big\{f \in L^2(Y_\data): \textstyle \int_X f \de \mu = 0\big\}
\end{align*}
is $\tempered$-tempered for some $\tempered>0$ depending only on $\dim(\Hbf)$.
That is, matrix coefficients of the $\tempered$-fold tensor product of $L^2_0(Y_{\data})$ are in $L^{2+\varepsilon}\big(\theta_\gp(\SL_2(\Q_\gp))\big)$ for every $\varepsilon >0$.
If $\Hbf(\Q_\gp)$ has rank at least $2$, this follows from property $(T)$.
Otherwise, it is a strong version of property $(\tau)$; see \cite{Selberg-ThreeSixteenth,Jacquet-Langlands,Burger-Sarnak,Cl-tau,Oh-propertyT,Gorodnik-Maucourant-Oh-adelic}. We also refer to the discussion in \cite[\S4]{EMMV}.

Similarly, the representation of $\theta_\gp(\SL_2(\Q_\gp))$ on $L^2_0(X)$ is $\tempered$-tempered.

\subsection{An effective ergodic theorem}\label{sec:effergthm}

In this subsection, we establish an effective ergodic theorem.
Similar results were also crucially used in e.g.~\cite{EMV,AW-realsemisimple,EMMV} though we use an adaptation here. 
In particular, we will use $L^\infty$-norms to avoid a degree increase step that was necessary for $L^2$-Sobolev norms.

Recall that given a $C^1$-function $f$ on $X$ we write $\lev(f)$ for the level of $f$, i.e.~least integer $L\geq 1$ such that $f$ is invariant under $\prod_\gep G_\gep[\ord_\gep(L)]$.
Moreover, as in the introduction we fix an inner product on $\mathfrak{gl}_N(\R)$
and define the $C^1$-norm $\norm{f}_{C^1(X)} = \norm{f}_{C^1}$ as the maximum of the sup norms of the function and its partial derivatives in directions corresponding to an orthonormal basis of $\gfrak_{\infty}$.
These definitions imply that for any $h \in G_\gp[k]$ and any $C^1$-function $f$
\begin{align}\label{eq:Lipschitzconstant}
\sup_{x \in X}|f(xh)-f(x)| \leq 2 \gp^{-k} \lev(f)\norm{f}_{C^1}.
\end{align}

\begin{definition}\label{def:k1k2typical}
Let $k_2 \geq k_1 \geq 1$ be integers.
A point $x \in X$ is $[k_1,k_2]$-typical for the measure $\mu$ if for any $k \in [k_1,k_2]\cap \Z$ and any ball $B\subset \Z_{\gp}$ of radius at least $\gp^{-\frac{k}{8\tempered}}$ 
the average
\begin{align*}
D_{k,B}f(x) := 
\frac{1}{|B|}\int_{B}f(x u(-s) a^{-k}) \de s- \int f \de \mu 
\end{align*}
satisfies
\begin{align}\label{eq:k1k2typical}
    |D_{k,B}f(x)|  \leq \gp^{-\frac{1}{100\dim(\G)\tempered} k} \lev(f) \norm{f}_{C^1}
\end{align}
for all $f \in C^1(X)$.
Here, $|\cdot|$ is the Haar probability measure on $\Z_\gp$.
\end{definition}

Note that Definition~\ref{def:k1k2typical} requires \eqref{eq:k1k2typical} to hold for \emph{every} $C^1$-function $f$. It is \emph{not} assumed that the point $x$ is in the support of the measure~$\mu$.
We also note that the explicit exponent will be of little importance in what follows, though we point out that it depends only on $\dim(\G)$ (and not on $\mu$).

\begin{proposition}\label{prop:ergthm1}
There exists $\consta\label{a:ergthm}\geq 1$ with the following property:
For any $k_0\geq 1$ with
\begin{align}\label{eq:k0}
\gp^{k_0} \geq \gp^{\ref{a:ergthm}}\cpl(X)^{\ref{a:ergthm}}
\end{align}
the set of points $x \in X$ which are not $[k_0,\infty)$-typical has $\mu$-measure at most $ \gp^{-\frac{1}{4\tempered}k_0}$.
\end{proposition}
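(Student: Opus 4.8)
The plan is to derive Proposition~\ref{prop:ergthm1} from the spectral temperedness of the $\theta_\gp(\SL_2(\Q_\gp))$-action on $L^2_0(X)$ (via property $(\tau)$), by a standard second-moment (variance) estimate for the averaging operators $D_{k,B}$ followed by a union bound over the (countably many) relevant scales $k$ and balls $B$. The three ingredients are: (i) a decay estimate for the matrix coefficients $\langle u(-s)a^{-k}.\varphi,\varphi\rangle$ on $L^2_0(X)$; (ii) a reduction, by approximation and the definition of level, of the estimate for arbitrary $C^1$-functions to the case of smooth functions of controlled Sobolev norm and bounded level, together with the bound $|f(xh)-f(x)|\leq 2\gp^{-k}\lev(f)\norm{f}_{C^1}$ from \eqref{eq:Lipschitzconstant}; and (iii) a covering/Chebyshev argument to pass from the $L^2$-bound on $D_{k,B}f$ (integrated over $x$ with respect to $\mu$) to a pointwise statement holding off a small exceptional set, uniformly over $f$.

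First I would fix $k\geq k_0$ and a ball $B\subset\Z_\gp$ of radius $\gp^{-\ell}$ with $\ell\leq \frac{k}{8\tempered}$, and estimate $\int_X |D_{k,B}f(x)|^2\de\mu(x)$ for $f\in C^\infty(X)$ with $\int f\de\mu=0$. Expanding the square this is
\[
\frac{1}{|B|^2}\int_B\int_B \big\langle (u(-s)a^{-k})\cdot(u(-s')a^{-k})^{-1}.f,\, f\big\rangle_{L^2(\mu)} \de s\,\de s'.
\]
Writing $u(-s)a^{-k}(u(-s')a^{-k})^{-1}=u(-s)u(s'')a^{\,0}$ after conjugating (so that $|s''|\leq \gp^{-\ell}\gp^{2k}$ — here one uses $\Ad(a^{-k})$ expands $U$ by $\gp^{2k}$), one sees that the relevant group elements lie in a subset of $\theta_\gp(\SL_2(\Q_\gp))$ whose size is controlled, and $\tempered$-temperedness gives $|\langle g.f,f\rangle|\ll_\varepsilon \norm{g}^{-\frac{1}{\tempered}+\varepsilon}\norm{f}_{L^2}^2$ for $g$ in the principal $\SL_2$. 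Carrying out the two-dimensional integral and optimizing the split between the "diagonal" region $|s-s'|$ small and the off-diagonal region yields
\[
\int_X |D_{k,B}f|^2\de\mu \ll \gp^{-\frac{1}{2\tempered}k}\norm{f}_{L^2(\mu)}^2 \ll \gp^{-\frac{1}{2\tempered}k}\norm{f}_{C^1}^2,
\]
where the second inequality is the trivial $\norm{f}_{L^2(\mu)}\leq\norm{f}_\infty\leq\norm{f}_{C^1}$. (The numerology in the exponent is generous; I have not optimized.) By Chebyshev, for each fixed $(k,B)$ the set of $x$ with $|D_{k,B}f(x)|>\gp^{-\frac{1}{100\dim(\G)\tempered}k}\norm{f}_{C^1}$ has $\mu$-measure $\ll \gp^{-(\frac{1}{2\tempered}-\frac{2}{100\dim(\G)\tempered})k}$.

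Next I would remove the dependence on the particular $f$ and on the particular ball $B$. For the ball: $D_{k,B}f$ as a function of the centre of $B$ (for fixed radius) only changes by $\ll\gp^{-\ell}\lev(f)\norm{f}_{C^1}$ shifts — more efficiently, it suffices to check \eqref{eq:k1k2typical} for the $\ll\gp^{k}$ many balls of each admissible dyadic radius whose centres lie in a $\gp^{-k}$-net, at the cost of worsening the exponent by a harmless additive constant; so for fixed $k$ there are $\ll \gp^{O(k)}$ genuinely different operators to control, and summing their exceptional sets still gives measure $\ll\gp^{-ck}$ for some $c=c(\dim\G,\tempered)>0$, provided $\ref{a:ergthm}$ (hence $k_0$) is large enough that this $c$ beats the $\gp^{O(k)}$ count. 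For the function $f$: fix $k$ and a level $L$ dividing a power of $\gp$; functions of level $L$ descend to $X/\rho(K_L)$, a fixed finite-volume real manifold, on which a unit ball of the $C^1$-norm is precompact in $C^0$ and can be approximated to error $\gp^{-k}$ by a fixed finite-dimensional family (e.g.\ a net, or smoothing against an approximate identity); since $D_{k,B}$ is an averaging operator of operator norm $\leq 1$ on $C^0$ plus the explicit Lipschitz term \eqref{eq:Lipschitzconstant}, controlling $D_{k,B}$ on a $\gp^{-k}$-net of this family controls it on all of $C^1$ at level $L$, up to acceptable losses; and one only needs levels $L\leq\gp^{O(k)}$ since for $\lev(f)>\gp^{k/8\tempered}$ (say) the required bound \eqref{eq:k1k2typical} is weaker than what \eqref{eq:Lipschitzconstant} gives directly. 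Summing over $k\geq k_0$ the geometric series $\sum_{k\geq k_0}\gp^{-ck}\ll\gp^{-ck_0}$, and choosing $\ref{a:ergthm}$ so that $c k_0\geq\frac{1}{4\tempered}k_0$ and so that all the "$\gp^{O(k)}$ versus $\gp^{-ck}$" comparisons work out, gives the claimed bound $\mu(\{x\text{ not }[k_0,\infty)\text{-typical}\})\leq\gp^{-\frac{1}{4\tempered}k_0}$.

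\textbf{Main obstacle.} The genuinely delicate point is the bookkeeping in the last paragraph: making the union bound over all scales $k\in[k_0,\infty)$, all admissible ball radii and centres, \emph{and} all $C^1$-functions $f$ (of all levels), while keeping everything uniform and producing a clean exponent in $k_0$ with the constant $\ref{a:ergthm}$ depending only on $N$ (equivalently on $\dim\G$). The variance estimate itself is routine given property $(\tau)$; the quantitative approximation argument replacing the $L^2$-Sobolev machinery of \cite{EMMV} by the $L^\infty$/$C^1$-formalism (so as to avoid the degree-increase step, as the authors emphasize) is where one must be careful that the polynomial-in-$\gp$ losses from covering arguments are all absorbed by choosing $k_0\geq\ref{a:ergthm}\log_\gp(\cpl(X))+\ref{a:ergthm}$, using $\vol(X)\ll\cpl(X)^\star$ from Proposition~\ref{prop: vol complexity'} to bound the combinatorial complexity of $X/\rho(K_L)$.
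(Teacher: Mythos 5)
Your proposal follows essentially the same route as the paper: a temperedness-based $L^2$ (variance) bound on $D_{k,B}f$, then Chebyshev, then a covering/union-bound argument over scales $k$, balls $B$, and $C^1$-functions via a finite approximating family (the paper realizes your ``net or smoothing'' idea as an explicit partition of unity $\{\chi_{k,L,i}\}$ of $X/\rho(K_L)$, together with $f = \sum_i f(x_i)\chi_{k,L,i} + O(\gp^{-\delta k}\|f\|_{C^1})$ and linearity of $D_{k,B}$). Two points you should tighten. First, the temperedness concerned here is for the $\theta_\gp(\SL_2(\Q_\gp))$-action on $L^2_0(\mu)=L^2_0(Y_\data)$, not on $L^2_0(X)$: the variance is taken against $\mu$ and $D_{k,B}f$ subtracts $\int f\,\de\mu$, so the matrix coefficient $\langle a^k u(s-s')a^{-k}.f,f\rangle_{L^2(\mu)}-|\mu(f)|^2$ lives in $L^2_0(\mu)$. (Temperedness on $L^2_0(X)$ is used later, in the concluding spectral step.) Second, your displayed matrix-coefficient bound $|\langle g.f,f\rangle|\ll\|g\|^{-1/\tempered+\varepsilon}\|f\|_{L^2}^2$ is not correct as stated without $K$-finiteness control: the decay constant must carry a factor like $\dim\langle\theta_\gp(\SL_2(\Z_\gp)).f\rangle$, which the paper bounds by $\lev(f)^3$. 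This is not merely cosmetic — the resulting factor $L^3$, applied to the partition-of-unity elements of level $L$, is precisely what the union-bound bookkeeping (sum over $L\leq 2\gp^{\delta k}$, over $\ll\varepsilon^{-\dim\G}L^{N^2}\vol(X)$ partition elements, over $\ll\gp^{\delta' k}$ balls, over $k\geq k_0$) has to defeat, and it is why $\delta,\delta'$ must be chosen as explicit small multiples of $\frac{1}{\tempered}$ and why $k_0\gg\log_\gp\cpl(X)$ is needed (via $\vol(X)\ll\cpl(X)^\star$). Since you do reduce to controlled level later, this is a patchable imprecision rather than a fatal gap, but the level factor belongs already in the variance estimate.
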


We will first show the existence of a large set of points for which the averages in \eqref{eq:k1k2typical} are well-behaved with respect to an \emph{individual} function. (Indeed, combine Lemma~\ref{lem:generic for one fct} below with the Chebyshev inequality.)
We will then use a partition of unity to bootstrap to \emph{all} functions.

\begin{lemma}\label{lem:generic for one fct}
For any $f\in C^1(X)$ and any ball $B \subset \Z_\gp$ of radius at least $\gp^{-k}$ we have
\begin{align*}
\norm{D_{k,B}(f)}_{L^2(\mu)}^2 
\ll \gp^{-\frac{1}{2\tempered}k} \lev(f)^{3} \norm{f}_{\infty}^2.
\end{align*}
\end{lemma}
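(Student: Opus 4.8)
\textbf{Plan for the proof of Lemma~\ref{lem:generic for one fct}.}
The strategy is the classical variance estimate via unfolding the square of the averaging operator, spectral decay for the (tempered) action of the principal $\SL_2$, and an equidistribution-of-horocycle-type bound, with careful bookkeeping of the level $L=\lev(f)$ since we are using $L^\infty$-norms rather than $L^2$-Sobolev norms. First I would reduce to the mean-zero case by replacing $f$ with $f-\int f\,\de\mu$; this does not increase $\norm{f}_\infty$ by more than a factor of $2$ and leaves $\lev(f)$ unchanged. Write $a=a(\gp^{-1})$, so $a^{-k}=a(\gp^k)$, and note $D_{k,B}f(x)=\frac{1}{|B|}\int_B f(xu(-s)a^{-k})\,\de s$. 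Then
\[
\norm{D_{k,B}f}_{L^2(\mu)}^2 = \frac{1}{|B|^2}\int_B\int_B \Big(\int_{Y_\data} f(xu(-s)a^{-k})\overline{f(xu(-s')a^{-k})}\,\de\mu(x)\Big)\de s\,\de s'.
\]
Using $U$-invariance of $\mu$ in the inner integral (substitute $x\mapsto xu(s)$) this becomes
\[
\frac{1}{|B|^2}\int_B\int_B \langle \pi(u(s-s')a^{-k})f,\, \pi(a^{-k})f\rangle_{L^2(\mu)}\,\de s\,\de s',
\]
where $\pi$ denotes the right regular representation on $L^2_0(Y_\data)$ and I have written the matrix coefficient so that the variable of integration is the difference $r=s-s'$. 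After a change of variables $r=s-s'$ and estimating the resulting measure of the $r$-fibers by $|B|$, it suffices to bound $\frac{1}{|B|}\int_{|r|\le 2|B|} |\langle \pi(a^ku(r)a^{-k})f, f\rangle|\,\de r$ (conjugating the inner $a^{-k}$ through), i.e.\ a single-integral matrix coefficient of $f$ against the curve $a^ku(r)a^{-k}=u(\gp^{2k}r)$.

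The key input is the spectral gap: the representation of $\theta_\gp(\SL_2(\Q_\gp))$ on $L^2_0(Y_\data)$ is $\tempered$-tempered for $\tempered=\tempered(\dim\H)$, so the matrix coefficient $g\mapsto\langle\pi(g)f,f\rangle$ decays like $\Xi(g)^{1/\tempered}$ where $\Xi$ is the Harish-Chandra function of $\SL_2(\Q_\gp)$ (more precisely, it lies in $L^{2\tempered+\varepsilon}$ after taking the $\tempered$-fold tensor power). Concretely, for $g=u(\gp^{2k}r)$ one has an estimate of the shape $|\langle\pi(u(\gp^{2k}r))f,f\rangle|\ll (1+|\gp^{2k}r|_\gp)^{-1/(2\tempered)}\norm{f}^2_{L^2(\mu)}$ for the tempered piece — but this requires $f$ to be smooth enough to control the non-tempered (finite-dimensional/local) contributions. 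This is where the level enters: $f$ being invariant under $G_\gp[\ord_\gp(L)]$ means $\pi(f)$ is "$L$-smooth" at $\gp$, so the decomposition of $f$ under $\theta_\gp(\SL_2(\Q_\gp))$ only involves $K_\gp[\ord_\gp(L)]$-fixed vectors, and the Harish-Chandra estimate applies with an implied constant polynomial in $L$ — this is exactly the source of the $\lev(f)^3$ factor (one power from bounding the relevant matrix-coefficient constant, roughly two more from controlling the $r$-integral near $r=0$ where $|\gp^{2k}r|_\gp$ is bounded, against the local conductor). I would therefore cite the spectral setup recalled just above (the $\tempered$-temperedness of $L^2_0(Y_\data)$, as in \cite[\S4]{EMMV} and the references therein) and the standard quantitative mixing estimate for tempered $\SL_2(\Q_\gp)$-representations on vectors of bounded level.

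Carrying this out: after the two reductions above, bound
\[
\norm{D_{k,B}f}_{L^2(\mu)}^2 \ll \frac{1}{|B|}\int_{|r|_\gp\le 2|B|} \big|\big\langle \pi(u(\gp^{2k}r))f,f\big\rangle\big|\,\de r \ll \frac{1}{|B|}\int_{|r|_\gp\le 2|B|} \frac{\lev(f)^{3}\norm{f}_\infty^2}{(1+\gp^{2k}|r|_\gp)^{1/(2\tempered)}}\,\de r.
\]
Since $|B|\ge \gp^{-k}$, the change of variables $r'=\gp^{2k}r$ turns this into $\gp^{-2k}/|B| \le \gp^{-k}$ times $\int_{|r'|\le 2\gp^{2k}|B|}(1+|r'|_\gp)^{-1/(2\tempered)}\,\de r'$; evaluating the latter $\gp$-adic integral (a geometric series) gives a bound of order $(\gp^{2k}|B|)^{1-1/(2\tempered)}$ up to constants, so the total is $\ll \gp^{-k}\cdot\gp^{k}\cdot \gp^{-k/(2\tempered)}\,\lev(f)^3\norm{f}_\infty^2 = \gp^{-k/(2\tempered)}\lev(f)^3\norm{f}_\infty^2$, which is the claimed estimate. (Here I have absorbed the harmless $O(1)$ factors coming from $\gp\gg_N 1$ and from the $r$-fibre count in the first unfolding; if the exponent comes out as $\gp^{-k/(2\tempered)}$ times a bounded power of $|B|/\gp^{-k}$, one simply notes $|B|\le 1$ so this is harmless.) The main obstacle — and the only genuinely delicate point — is making the dependence on $\lev(f)$ explicit and polynomial in the matrix-coefficient decay estimate: one must decompose $f$ according to $\theta_\gp(\SL_2(\Q_\gp))$-subrepresentations, handle the finitely many non-tempered (one-dimensional or special) constituents whose contribution is controlled only after using that $f$ has level $L$ (so these constituents have $p$-adic conductor $\le L$ and the "non-generic" part of the matrix coefficient is supported on $|r|_\gp\lesssim L$, contributing $\ll L\norm{f}_\infty^2/|B| \ll L^3\norm{f}_\infty^2$ after crude bounds), and on the tempered part invoke the uniform $L^{2\tempered+\varepsilon}$ bound. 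Everything else is routine unfolding and a geometric-series computation.
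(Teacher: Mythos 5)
Your proposal follows the paper's strategy exactly: unfold the $L^2$-square via Fubini and $U$-invariance of $\mu$, conjugate $a^{-k}$ past the unipotent displacement to expand the ball, and apply the quantitative matrix-coefficient decay for the $\tempered$-tempered representation of $\theta_\gp(\SL_2(\Q_\gp))$ on $L^2_0(\mu)$. However, there is a genuine computational error in the final estimate. After the substitution $r'=\gp^{-2k}r$, the coefficient outside the $r'$-integral is exactly $\gp^{-2k}/|B|$, and the $r'$-integral contributes a factor of order $(\gp^{2k}|B|)^{1-1/(2\tempered)}$. The product of these two quantities is $(\gp^{2k}|B|)^{-1/(2\tempered)}\le\gp^{-k/(2\tempered)}$, with the $|B|$-dependence cancelling precisely. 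You instead replace $\gp^{-2k}/|B|$ by the upper bound $\gp^{-k}$ \emph{before} evaluating the integral; the resulting product $\gp^{-k}\cdot(\gp^{2k}|B|)^{1-1/(2\tempered)}$ equals $(\gp^k|B|)^{1-1/(2\tempered)}\cdot\gp^{-k/(2\tempered)}$, with an extra factor $(\gp^k|B|)^{1-1/(2\tempered)}\geq1$ that can be as large as $\gp^{k(1-1/(2\tempered))}$ (when $|B|=1$). This is much larger than what the lemma asserts, and your parenthetical remark that ``$|B|\leq1$ so this is harmless'' does not rescue it---that only bounds $\gp^k|B|$ by $\gp^k$, not by $O(1)$. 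The remedy is trivial: keep the exact coefficient $\gp^{-2k}/|B|$ (equivalently, write everything in terms of $\delta$ with $|B|=\gp^{-\delta k}$, $\delta\leq1$) and observe that the product $\gp^{-(2-\delta)k}\cdot\gp^{(2-\delta)k(1-1/(2\tempered))}=\gp^{-(2-\delta)k/(2\tempered)}\leq\gp^{-k/(2\tempered)}$, as the paper does.

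A secondary issue is your account of the $\lev(f)^3$ factor. The paper does not, and need not, isolate ``non-tempered constituents'': the whole of $L^2_0(\mu)$ is $\tempered$-tempered uniformly in the data, so the same decay exponent $1/(2\tempered)$ applies to all of $f$, and $\lev(f)^3$ arises purely as the constant in the quantitative matrix-coefficient bound, namely $\dim\langle\theta_\gp(\SL_2(\Z_\gp)).f\rangle\leq|\SL_2(\Z/\gp^{\ord_\gp\lev(f)}\Z)|\leq\lev(f)^3$. Your alternative bound ``$\ll L\norm{f}_\infty^2/|B|\ll L^3\norm{f}_\infty^2$'' would require $1/|B|\leq L^2$, which is not available in general.
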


\begin{proof}
As the representation of $\theta_\gp(\SL_2(\Q_\gp))$ on $L_0^2(Y_{\data})$ is $\tempered$-tempered, we have for any $f \in C^1(Y_{\data})$ and any $s \in \Q_\gp$
\begin{align*}
\big| \langle u(s).&f,f \rangle - \mu(f)\mu(\overline{f}) \big| 
\ll (1+|s|)^{-\frac{1}{2\tempered}} \dim \langle \theta_{\gp}(\SL_2(\Z_{\gp})).f\rangle\,  \norm{f}_{L^2(\mu)}^2.
\end{align*}
Notice also that the dimension $\dim \langle \theta_{\gp}(\SL_2(\Z_{\gp})).f\rangle$ is at most the index of the subgroup
\begin{align*}
\big\{g\in \SL_2(\Z_\gp):\theta_\gp(g) \in G_\gp[\ord_\gp(\lev(f))]\big\}
\end{align*}
in $\SL_2(\Z_\gp)$. 
Since $|\SL_2(\Z/\gp^n\Z)| \leq \gp^{3n}$ for any $n\geq 1$, we have
\begin{align*}
\dim \langle \theta_{\gp}(\SL_2(\Z_{\gp})).f\rangle \leq \lev(f)^3
\end{align*}
In particular,
\begin{align}\label{eq:diagonal matrix coefficients}
\big| \langle u(s).&f,f \rangle - \mu(f)\mu(\overline{f}) \big|
\ll (1+|s|)^{-\frac{1}{2\tempered}} \lev(f)^3 \norm{f}_{\infty}^2.
\end{align}

We finally turn to the problem posed in the lemma. 
Fix $f\in C^1(X)$, $k\geq 1$, and a ball $B$ of radius equal to $\gp^{-\delta k}$ for some $\delta \leq 1$. 
Combining Fubini's theorem and invariance of $\mu$ under the principal $\SL_2$ we have for $f \in C^1(X)$
\begin{align*}
\norm{D_{k,B}(f)}_{L^2(\mu)}^2
= \frac{1}{|B_0|^2}\int_{B_0^2} \big(\langle a^k u(s-s')a^{-k}.f,f\rangle_{L^2(\mu)}-\mu(f)\mu(\overline{f})\big) \de s \de s',
\end{align*}
where $B_0$ is the translate of $B$ containing zero.
Notice that one of the two integration variables $s,s'$ may be removed by substitution in the above. Also, the conjugated ball satisfies $a^ku(B_0)a^{-k} = u(\gp^{-(2-\delta)k}\Z_\gp)$.
Using \eqref{eq:diagonal matrix coefficients} and these observations
\begin{equation*}
\begin{split}
\norm{D_{k,B}(f)}_{L^2(\mu)}^2 
&\ll \gp^{-(2-\delta)k} \int_{|s|\leq \gp^{(2-\delta)k}} (1+|s|)^{-\frac{1}{2\tempered}}\lev(f)^3 \norm{f}_{\infty}^2 \de s \\
&\ll \gp^{-\frac{2-\delta}{2\tempered}k}\lev(f)^3 \norm{f}_{\infty}^2
\end{split}
\end{equation*}
proving the lemma.
\end{proof}

We now construct a partition of unity.
For this, we control first the injectivity radius around each point.
There exist constants $\consta\label{a:injradius}>0$ and $\constc\label{c:injradius}>0$ so that for any $x \in X$ and any $g\in \rho(\G(\R))$ 
\begin{align}\label{eq:injradius}
\Big(\norm{g-\id} \leq \ref{c:injradius}\, \cpl(X)^{-\ref{a:injradius}} \text{ and }xg =x \Big) 
\implies g =\id.
\end{align}
Indeed, by Proposition~\ref{prop:smallvectors} we have $\min_{\wpz \in \Q^N\setminus\{0\}} \content(\rho(g)\wpz)\gg \cpl(X)^{-\star}$ for any $g \in \G(\A)$ (vaguely, there are no short vectors for any coset in the compact space $X$).
Thus, \eqref{eq:injradius} follows from e.g.~\cite[Lemma~2.3]{MSGT-diameter}). 

\begin{lemma}\label{lem:partitionofunity}
For any $L\geq 1$ and any $\varepsilon\in (0,\ref{c:injradius}\, \cpl(X)^{-\ref{a:injradius}})$ there exists a smooth partition of unity $\{\chi_{\varepsilon,L, i}:i \in \mathcal{I}_{\varepsilon,L}\}$ of $X$ with the following properties:
\begin{itemize}
\item The number of functions $\#\mathcal{I}_{\varepsilon,L}$ satisfies
\begin{align*}
\#\mathcal{I}_{\varepsilon,L} \ll \varepsilon^{-\dim(\G)} L^{N^2}\vol(X).
\end{align*}
\item For any $i\in \mathcal{I}_{\varepsilon,L}$, the function $\chi_{\varepsilon,L, i}$ is invariant under $\prod_\gep G_\gep[\ord_\gep(L)]$.
\item For any $i\in \mathcal{I}_{\varepsilon,L}$, there exists $x \in X$ such that $\chi_{\varepsilon,L, i}$ is supported on the neighborhood
\begin{align*}
x \Big(\{g \in \rho(\G(\R)): \norm{g-\id}\leq \varepsilon\} \times \prod_\gep G_\gep[\ord_\gep(L)]\Big).
\end{align*}
\item We have $\norm{\chi_{\varepsilon,L, i}}_{C^1(X)}\ll \varepsilon^{-1}$ for all $i\in \mathcal{I}_{\varepsilon,L}$.
\end{itemize}
\end{lemma}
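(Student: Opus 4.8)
\textbf{Proof proposal for Lemma~\ref{lem:partitionofunity}.}

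The plan is to build the partition of unity by first covering $X$ by small neighborhoods of a suitably chosen finite net of points, and then smoothing out the corresponding characteristic functions using a single fixed bump function that is rescaled to the size $\varepsilon$ at the archimedean place and taken to be $\prod_\gep G_\gep[\ord_\gep(L)]$-invariant at the finite places. Concretely, fix once and for all a smooth non-negative function $\psi$ on $\rho(\G(\R))$ supported on $\{g: \norm{g-\id} \leq 1\}$, equal to $1$ on $\{g: \norm{g-\id} \leq 1/2\}$, with $\norm{\psi}_{C^1}\ll 1$; then $\psi_\varepsilon(g) := \psi\big(\id + \varepsilon^{-1}(g-\id)\big)$ is supported on the $\varepsilon$-ball, equals $1$ on the $\varepsilon/2$-ball, and satisfies $\norm{\psi_\varepsilon}_{C^1}\ll \varepsilon^{-1}$.

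First I would fix the level. Let $K_L := \prod_\gep G_\gep[\ord_\gep(L)]$, a compact open subgroup of $\G(\A_f)$ (via $\rho$). Since $\varepsilon < \ref{c:injradius}\cpl(X)^{-\ref{a:injradius}}$, the injectivity radius bound \eqref{eq:injradius} guarantees that for every $x \in X$ the map $g \mapsto xg$ is injective on $\{g \in \rho(\G(\R)): \norm{g-\id}\leq \varepsilon\} \times K_L$. Next I would choose a maximal $\varepsilon/3$-separated set of points $x_1,\ldots,x_M$ in $X$ with respect to the metric coming from $\norm{\cdot - \id}$ at the real place combined with the level-$L$ structure at the finite places — more precisely, a maximal subset of $X/\rho(K_L)$ that is $\varepsilon/3$-separated in the archimedean direction. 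Maximality of the net gives that the $\varepsilon/2$-balls (times $K_L$) around the $x_i$ cover $X$, while $\varepsilon/3$-separation together with the injectivity radius bound gives a uniform bound on the overlap multiplicity (each point lies in $O_N(1)$ of the $\varepsilon$-balls). Counting: a volume packing argument using $\vol(X)$ and the fact that the $\varepsilon/6$-balls (times $K_L$) around the $x_i$ are pairwise disjoint and each has volume $\gg \varepsilon^{\dim(\G)} L^{-N^2}$ (the $K_L$-factor contributing a factor $\asymp L^{-N^2}$ to the volume, up to the index $[K:K_L]$, and the archimedean factor contributing $\asymp\varepsilon^{\dim(\G)}$) yields $M \ll \varepsilon^{-\dim(\G)} L^{N^2}\vol(X)$.

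Then I would set $\tilde\chi_i(x_j g) := \psi_{\varepsilon}(g)$ (extended by $0$ outside $x_i(\{\norm{g-\id}\leq\varepsilon\}\times K_L)$), which is well-defined by injectivity, $K_L$-invariant, supported in the required neighborhood, and satisfies $\norm{\tilde\chi_i}_{C^1}\ll\varepsilon^{-1}$. The overlap bound ensures $\Sigma := \sum_i \tilde\chi_i$ satisfies $1 \leq \Sigma \ll_N 1$ everywhere (lower bound from the covering by $\varepsilon/2$-balls on which $\psi_\varepsilon\equiv 1$, upper bound from the overlap multiplicity). Finally put $\chi_{\varepsilon,L,i} = \tilde\chi_i/\Sigma$; this is a smooth partition of unity, still $K_L$-invariant (as $\Sigma$ is), still supported in the same neighborhoods, and $\norm{\chi_{\varepsilon,L,i}}_{C^1}\ll \varepsilon^{-1}$ since $\Sigma$ is bounded below by $1$ and $\norm{\Sigma}_{C^1} \ll_N \varepsilon^{-1}$ (again by the overlap bound, only $O_N(1)$ terms are nonzero at any point).

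The main obstacle is making the volume/counting and overlap estimates clean in the adelic setting: one must be careful that the relevant "balls" are products of an archimedean metric ball with the fixed compact open subgroup $K_L$, that $\vol(\cdot)$ as normalized in \S\ref{sec:volumeoverQ} interacts correctly with this product structure (so that the $L^{N^2}$ factor genuinely accounts for $[K:K_L]$), and that the injectivity radius statement \eqref{eq:injradius}, which is phrased only at the real place, suffices because the finite-place directions are already "quantized" by passing to $X/\rho(K_L)$. Everything else is the standard net-plus-bump-function construction.
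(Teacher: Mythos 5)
Your construction is correct, but it is a genuinely different route from the paper's. You build a maximal $\varepsilon/3$-separated net in $X/\rho(K_L)$, place rescaled bump functions $\psi_\varepsilon$ around each net point, verify a uniform $O_N(1)$ overlap bound via packing, and then normalize by the sum $\Sigma$ to obtain a true partition of unity; the cardinality and $C^1$ bounds follow from the packing/volume argument and the quotient rule applied to $\tilde\chi_i/\Sigma$ with $\Sigma \geq 1$ and $\norm{\Sigma}_{C^1}\ll\varepsilon^{-1}$. The paper instead starts from an arbitrary \emph{measurable} partition $\mathcal{P}$ of $X[L]$ into cells of diameter $\leq\varepsilon/4$ (the cardinality bound follows from the same volume count) and then convolves each indicator $1|_P$ with a fixed probability density $\phi$ on $\rho(\G(\R))$ supported on an $\varepsilon/2$-ball. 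The advantage of the paper's route is that $\sum_P \phi\ast(1|_P)\equiv 1$ is automatic, so no overlap estimate and no normalization step are needed; the $C^1$ bound comes directly from $\norm{\nabla(\phi\ast 1|_P)}_\infty \leq \norm{\nabla\phi}_{L^1} \ll \varepsilon^{-1}$. Your version, in exchange, gives the support in metric balls by construction rather than as a consequence of the convolution support, which some readers may find more transparent. Both approaches hinge on the same two inputs you identify: the injectivity radius bound \eqref{eq:injradius} (so that $\varepsilon$-balls at the real place times $K_L$ are injective) and the index bound $[K:K_L]\leq L^{N^2}$ feeding into the packing count. One small remark: the paper's statement that $\norm{\phi}_{C^1(X)}\ll\varepsilon^{-1}$ is an abuse (a probability density on an $\varepsilon$-ball has $C^1$ norm $\gg\varepsilon^{-\dim(\G)-1}$); what is really used is $\norm{\nabla\phi}_{L^1}\ll\varepsilon^{-1}$, which your formulation avoids entirely.
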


\begin{proof}
While such constructions are mostly standard, we give here a proof simplified by the fact that $X$ is compact.
We may as well construct a partition of unity of the smooth manifold $X[L] := X/\prod_\gep G_\gep[\ord_\gep(L)]$.
Note that $X[L]$ is a compact manifold of volume $\ll \vol(X)L^{N^2}$ where the volume is intended with respect to a Riemannian metric induced by the fixed inner product on the Lie algebra.
Indeed, for any prime $\ell$ the index of $G_\gep[\ord_\gep(L)]$ in $G_\gep[0]$ is at most $\gep^{\ord_\gep(L)N^2}$.

Let $\mathcal{P}$ be a measurable partition of $X[L]$ consisting of sets of diameter at most $\varepsilon/4$.
We may assume that the cardinality of $\mathcal{P}$ is $\ll \varepsilon^{-\dim(\G)} \vol(X)L^{N^2}$.
Let $m$ be a Haar measure on $\rho(\G(\R))$.
Let $\phi$ be a smooth density on $\rho(\G(\R))$ with $\int \phi \de m =1$ which is supported on the ball of radius at most $\varepsilon/2$ around the identity and which satisfies $\norm{\phi}_{C^1(X)}\ll \varepsilon^{-1}$.
For any measurable function $f$ on $X[L]$ the convolution
\begin{align*}
\phi \ast f(x) = \int f(xg) \phi(g) \de m(g)
\end{align*}
is smooth.
One may now check that $\{\phi \ast (1|_P)\colon P \in \mathcal{P}\}$ satisfies the required properties.
\end{proof}

\begin{proof}[Proof of Proposition~\ref{prop:ergthm1}] 
Our aim for a given $k_0 \geq 1$ is to find `many' $[k_0,\infty)$-typical points. For simplicity, we fix constants $\delta,\delta'\in (0,1)$ to be determined below and to be used in the definition of typical points as follows:  $\delta$ will be the speed of decay of the averages and $\delta'$ will be appear in the size of the expanded balls.

In view of \eqref{eq:k0} we may assume that $k_0$ satisfies $\gp^{-\delta k_0}< \ref{c:injradius}\, \cpl(X)^{-\ref{a:injradius}}$.
For any $k \geq k_0$ and $L\geq 1$ let $\{\chi_{k,L, i}:i \in \mathcal{I}_{k,L}\}$ be the partition of unity from Lemma~\ref{lem:partitionofunity}, where we used $\varepsilon = \varepsilon(k) = \gp^{-\delta k}$ and simplified the notation accordingly.
Notice that the estimate
\begin{align*}
|D_{k,B}f(x)| \leq \gp^{-\delta k}\lev(f) \norm{f}_{C^1(X)}
\end{align*}
is trivial for functions $f$ of level at least $2\gp^{\delta k}$.

Given any ball $B\subset \Z_{\gp}$ of radius at least $\gp^{-\delta'k}$ we have by Lemma~\ref{lem:generic for one fct}
\begin{align*}
\norm{D_{k,B}(\chi_{k,L,i})}_{L^2(\mu)}^2 
\ll \gp^{-\frac{1}{2\tempered}k} L^{3}.
\end{align*}
In particular, by the Chebyshev inequality
\begin{align}\label{eq:discrepancy-onecharfct}
\mu\big(\{x\colon |D_{k,B}(\chi_{k,L,i})(x)| > \gp^{-\delta k} L (\#\mathcal{I}_{k,L})^{-1}\}\big)
\ll \gp^{2\delta k-\frac{1}{2\tempered}k} L (\#\mathcal{I}_{k,L})^2.
\end{align}
Write $\mathsf{Bad}(k_0)$ for the set of points $x \in X$ for which there exists $k \geq k_0$, a ball $B\subset \Z_\gp$ of radius at least $\gp^{-\delta'k}$, a level $L \leq 2\gp^{\delta k}$, and some $i \in \mathcal{I}_{k,L}$ with
\begin{align*}
|D_{k,B}(\chi_{L,k,i})(x)| > \gp^{-\delta k} L (\#\mathcal{I}_{k,L})^{-1}.
\end{align*}
Notice that there are at most $2\gp^{\delta' k}$ many balls $B \subset \Z_\gp$ of radius at least $\gp^{-\delta'k}$.
Using this and the bound on the number $\#\mathcal{I}_{k,L}$ of elements in the partition of unity from Lemma~\ref{lem:partitionofunity} we obtain from \eqref{eq:discrepancy-onecharfct}
\begin{align*}
\mu(\mathsf{Bad}(k_0)) 
&\ll \sum_{k \geq k_0}\gp^{(\delta'+2\delta -\frac{1}{2\tempered})k} \sum_{L \leq 2\gp^{\delta k}}L (\#\mathcal{I}_{k,L})^3 \\
&\ll  \sum_{k \geq k_0}\gp^{(\delta' +(2+3\dim(\G))\delta-\frac{1}{2\tempered})k} \sum_{L \leq 2\gp^{\delta k}}L^{1+3N^2} \vol(X)^3 \\
&\ll  \sum_{k \geq k_0}\gp^{(\delta'+(2+3N^2)\delta-\frac{1}{2\tempered})k} \gp^{\delta k(2+3N^2)} \vol(X)^3\\
&\ll \sum_{k \geq k_0}\gp^{(\delta'+(6N^2 +4)\delta-\frac{1}{2\tempered})k} \vol(X)^3.
\end{align*}
Thus, for $\delta = (6N^2+4)^{-1}(8\tempered)^{-1}$ and $\delta' = \frac{1}{8\tempered}$ we obtain 
\begin{align*}
\mu(\mathsf{Bad}(k_0)) \ll \gp^{-\frac{1}{4\tempered}k_0}\vol(X)^3.
\end{align*}
In fact, choosing $\delta$ slightly smaller (as in Definition~\ref{def:k1k2typical}) we can guarantee that $\mu(\mathsf{Bad}(k_0)) \leq \gp^{-\frac{1}{4\tempered}k_0}$ in view of \eqref{eq:k0} and Proposition~\ref{prop: vol complexity'}.

We will now show that any $x \not\in \mathsf{Bad}(k_0)$ is $[k_0,\infty)$-typical.
So let $x \not\in \mathsf{Bad}(k_0)$ and suppose that $f \in C^1(X)$ and $k \geq k_0$ are given. Recall that if the level $L=\lev(f)$ satisfies $L \leq 2\gp^{\delta k}$, we are done.
So assume otherwise.
Write $x_i$ for the `center points' in the definition of the partition of unity $\{\chi_{k,L, i}:i \in \mathcal{I}_{k,L}\}$. Then
\begin{align*}
f = \sum_{i \in \mathcal{I}_{k,L}} f(x_i) \chi_{k,L, i} + O\big(\gp^{-\delta k} \norm{f}_{C^1(X)}\big)
\end{align*}
by the mean value theorem. Moreover, by definition of the set $\mathsf{Bad}(k_0)$ we have for any ball $B$ of radius at least $\gp^{-\delta' k}$
\begin{align*}
\Big|\sum_{i \in \mathcal{I}_{k,L}} f(x_i) D_{k,B}\chi_{k,L, i}(x)\Big|
\leq \gp^{-\delta k} L \norm{f}_\infty.
\end{align*}
Together, these estimates show that
\begin{align*}
|D_{k,B}f(x)| \ll \gp^{-\delta k} L \norm{f}_{C^1(X)}.
\end{align*}
Hence the proposition follows by recalling that $\gp$ is assumed to be sufficiently large and decreasing $\delta$ slightly.
\end{proof}

\begin{proposition}\label{prop:ergthm2}
Suppose that $\mu$ is $\gp^{-k}$-almost invariant under all elements of a closed subgroup $M_0 < G_{\gp}[0]$. 
Then for any $k_0\geq 1$ with \eqref{eq:k0} the proportion of pairs $(x,m) \in X \times M_0$ for which $xm$ is not $[k_0,k/(4\dim\G)]$-typical is at most $\gp^{-\frac{1}{4\tempered} k_0}$.

In particular, for every $n \geq 1$ there exists a subset $X' \subset X$ with $\mu(X') \geq \frac{2}{3}$ and with the following property:
For any $x \in X'$ the measure of the set of $s \in \Q_{\gp}$ with $|s| \leq {\gp}^n$ and
\begin{align*}
\vol\big(\{m \in M_0: xu(s)m \text{ is } [k_0, \tfrac{1}{4\dim\G}k]\text{-typical}\}\big) \geq \tfrac{2}{3}
\end{align*}
is at least $\frac{2}{3}{\gp}^n$.
Here, $\vol(\cdot)$ denotes the Haar probability measure on $M_0$.
\end{proposition}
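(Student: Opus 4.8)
\textbf{Proof plan for Proposition~\ref{prop:ergthm2}.}

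The plan is to bootstrap Proposition~\ref{prop:ergthm1} using the almost invariance of $\mu$ under $M_0$, and then extract the ``in particular'' statement by two applications of a Chebyshev/Fubini argument. First I would observe that the first assertion is essentially a restatement of Proposition~\ref{prop:ergthm1} ``averaged over $M_0$''. Indeed, fix $k_0$ as in \eqref{eq:k0} and let $\mathsf{Bad}$ be the set of points that fail to be $[k_0,k/(4\dim\G)]$-typical; by Proposition~\ref{prop:ergthm1} (which gives $[k_0,\infty)$-typicality, hence a fortiori $[k_0,k/(4\dim\G)]$-typicality) we have $\mu(\mathsf{Bad}) \leq \gp^{-\frac{1}{4\tempered}k_0}$. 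The quantity to bound is $\int_{M_0}\mu(\{x : xm \in \mathsf{Bad}\})\,\de\vol(m)$, i.e.\ $\int_{M_0}(m_*\mu)(\mathsf{Bad})\,\de\vol(m)$. The point is that $\mathsf{Bad}$ is a union (over $k\in[k_0,k/(4\dim\G)]$, over balls $B$, over $f$) of sets defined by inequalities on the averages $D_{k,B}f$, and each such average is of the form $\int f(\cdot\,u(-s)a^{-k})\de s - \mu(f)$; however, for the averaged statement one instead uses that $\mathsf{Bad}$ is, up to the estimates already in the proof of Proposition~\ref{prop:ergthm1}, captured by finitely many ``test'' inequalities on $D_{k,B}(\chi_{k,L,i})$. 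The key input is that $\mu$ is $\gp^{-k}$-almost invariant under each $m \in M_0$, so that for a $C^1$-function $f$ of level $L$,
\[
\Big| \int f(xm)\,\de\mu(x) - \int f\,\de\mu \Big| \leq \gp^{-k} L \norm{f}_{C^1(X)},
\]
and since the relevant functions appearing in $\mathsf{Bad}$ have level $L \leq 2\gp^{\delta k} \leq 2\gp^{\delta\cdot \frac{k}{4\dim\G}\cdot 4\dim\G}$ — the truncation at $k/(4\dim\G)$ is precisely what keeps $L$ small relative to the almost-invariance scale $k$ — this error term is negligible compared with the $\mu$-measure bound $\gp^{-\frac{1}{4\tempered}k_0}$. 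More precisely, one repeats the Chebyshev computation in the proof of Proposition~\ref{prop:ergthm1}, but now for the measure $\int_{M_0} m_*\mu\,\de\vol(m)$ in place of $\mu$; the only change is that $\norm{D_{k,B}(\chi_{k,L,i})}_{L^2(m_*\mu)}^2$ must be controlled, and by expanding the square and using the almost-invariance bound to replace matrix coefficients against $m_*\mu$ by matrix coefficients against $\mu$ plus an error of size $\ll \gp^{-k}L^{\star}$, one recovers the same estimate as in Lemma~\ref{lem:generic for one fct} up to a harmless additive term. Summing over $k\geq k_0$, over the $\ll\gp^{\delta' k}$ balls, over $L\leq 2\gp^{\delta k}$, and over $i\in\mathcal{I}_{k,L}$ exactly as before yields $\int_{M_0}(m_*\mu)(\mathsf{Bad})\,\de\vol(m) \leq \gp^{-\frac{1}{4\tempered}k_0}$, which is the first assertion since the proportion of bad pairs is exactly this integral (as $\vol$ is a probability measure).

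For the ``in particular'' statement I would set $k = k_0$ (or any admissible $k$; the statement is about $[k_0,k/(4\dim\G)]$-typicality which is what the first part delivers — here one takes $k$ to be the almost-invariance exponent) and run a two-step averaging. Define
\[
F(x) = \vol\big(\{m \in M_0 : xm \text{ is not } [k_0,\tfrac{k}{4\dim\G}]\text{-typical}\}\big),
\]
so that by the first part $\int_X F\,\de\mu \leq \gp^{-\frac{1}{4\tempered}k_0}$, using $U$-invariance of $\mu$ to replace $x$ by $xu(s)$ and Fubini in $s$ over $|s|\leq\gp^n$. By Chebyshev, the set $X''$ of $x$ with $F(x) > \frac{1}{3}$ has $\mu(X'') \leq 3\gp^{-\frac{1}{4\tempered}k_0}$, which is $\leq\frac{1}{9}$ once $k_0$ is large enough (guaranteed by \eqref{eq:k0} after possibly enlarging $\ref{a:ergthm}$). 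Then for $x\notin X''$, $xm$ is $[k_0,\frac{k}{4\dim\G}]$-typical for a set of $m\in M_0$ of $\vol$-measure $\geq\frac{2}{3}$. Now apply Fubini again: letting $G(x) = \gp^{-n}|\{|s|\leq\gp^n : xu(s)\in X''\}|$, $U$-invariance of $\mu$ gives $\int_X G\,\de\mu = \mu(X'') \leq \frac{1}{9}$, so by Chebyshev the set $X'$ of $x$ with $G(x) \leq \frac{1}{3}$ has $\mu(X') \geq \frac{2}{3}$. For $x\in X'$, at least $\frac{2}{3}\gp^n$ of the $s$ with $|s|\leq\gp^n$ satisfy $xu(s)\notin X''$, and for each such $s$ the set of $m\in M_0$ with $xu(s)m$ being $[k_0,\frac{k}{4\dim\G}]$-typical has $\vol$-measure $\geq\frac{2}{3}$. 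This is exactly the desired conclusion.

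I expect the main obstacle to be the bookkeeping in the averaged $L^2$-estimate: one must check that replacing $\mu$ by $\int_{M_0} m_*\mu\,\de\vol(m)$ in Lemma~\ref{lem:generic for one fct} and its use in the proof of Proposition~\ref{prop:ergthm1} only costs an additive error controlled by the almost-invariance exponent $\gp^{-k}$, and that this error is dominated by $\gp^{-\frac{1}{4\tempered}k_0}$ precisely because the truncation of typicality at $k/(4\dim\G)$ forces every test function in $\mathsf{Bad}$ to have level $L\leq 2\gp^{\delta k}$ with $\delta$ small — so that $\gp^{-k}L^{\star}$ beats $\gp^{-\frac{1}{4\tempered}k_0}$. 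The factor $\frac{1}{4\dim\G}$ in the typicality window is there to give enough room between the level of the functions being tested (governed by $\delta k$ with $\delta = (6N^2+4)^{-1}(8\tempered)^{-1}$, hence $\delta\cdot\frac{k}{4\dim\G}\ll k$) and the almost-invariance scale $k$; one should double-check the constants line up, but no new idea is needed beyond the estimates already established for Proposition~\ref{prop:ergthm1}.
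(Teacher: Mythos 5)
Your overall plan — average the bad set against $\mu$ twisted by $M_0$, control the discrepancy using almost invariance, then extract the Fubini/Chebyshev consequence with two applications of the Chebyshev inequality — is exactly the paper's approach. The ``in particular'' part matches the paper almost verbatim (the paper works with $f(x,s)=F(xu(s))$ and its $s$-average rather than introducing $X''$ and $G$, but the Chebyshev bookkeeping is equivalent).

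There is however a concrete gap in your explanation of the first part, namely the role of the truncation at $k/(4\dim\G)$. You write that the relevant level is $L\leq 2\gp^{\delta k}$ (the level of the partition-of-unity functions $\chi_{k,L,i}$) and then observe $2\gp^{\delta k}\leq 2\gp^{\delta\cdot\frac{k}{4\dim\G}\cdot 4\dim\G}$ — but this inequality is a tautology and explains nothing. The function to which almost invariance is actually applied is not $\chi_{k',L,i}$ itself but $|D_{k',B}\chi_{k',L,i}|^2$ (equivalently, after ``expanding the square'', the product functions $x\mapsto \chi(xa^{k'}u(s-s')a^{-k'})\overline{\chi(x)}$). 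The level of these is governed not by $\lev(\chi)$ but by the level of the translate $\chi(\cdot u(s)a^{-k'})$, which is $\leq \gp^{2\dim(\G)k'}\lev(\chi)$; it is the factor $\gp^{2\dim(\G)k'}$ — not $\lev(\chi)\leq 2\gp^{\delta k'}$ — that forces the cap $k'\leq k/(4\dim\G)$, so that the resulting error $\gp^{-k+2\dim(\G)k'}\lev(\chi)\norm{\chi}_{C^1}^2$ is dominated by $\gp^{-\frac{1}{2\tempered}k'}\lev(\chi)^3\norm{\chi}_{\infty}^2$. In your account, with the level pegged at $2\gp^{\delta k'}$ for small $\delta$, almost no truncation would be needed, which cannot be right. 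This is fixable within your framework (the $\gp^{2\dim(\G)k'}$ factor appears as soon as you actually apply the almost-invariance bound to $|D_{k',B}\chi|^2$), but as written the justification for the specific truncation scale is circular.
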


\begin{proof}
The proof of the first statement is largely analogous to the proof of Proposition~\ref{prop:ergthm1}.
Given a smooth function $\chi$ on $X$ notice that the translate $\chi(\cdot u(s) a^{-k'})$ has level at most $\gp^{2\dim(\G)k'}\lev(\chi)$ for any $k'$ and any $s \in \Z_\gp$.
Correspondingly, for any $k' \geq 0$ and for any ball $B\subset\Z_\gp$ of radius at least $\gp^{-k'}$
\begin{align*}
\int_{M_0} \int_X |(D_{k',B}\chi)&(xm)|^2 \de \mu(x) \de \vol(m)\\
&\leq \gp^{-k+2\dim(\G) k'} \lev(\chi)\norm{\chi}_{C^1(X)}^2 + \norm{D_{k',B}\chi}_{L^2(\mu)}^2 \\
&\ll \gp^{-k+2\dim(\G) k'} \lev(\chi)\norm{\chi}_{C^1(X)}^2 + \gp^{-\frac{1}{2\tempered}k'} \lev(\chi)^{3} \norm{\chi}_{\infty}^2\\
&\ll \gp^{-\frac{1}{2\tempered}k'} \lev(\chi)^3\norm{\chi}_{C^1(X)}^2
\end{align*}
where we used $\gp^{-k}$-almost invariance for the first inequality (cf.~Definition~\ref{def:almost invariance}), Lemma~\ref{lem:generic for one fct} for the second, and where we assumed $k' \leq k/(4\dim\G)$ in the third.
From here, one may proceed in exactly the same fashion as in the proof of Proposition~\ref{prop:ergthm1}. 
The only minor difference is that the above estimate involves derivatives of the elements of the partition of unity; these are controlled by Lemma~\ref{lem:partitionofunity}.

The second statement follows from Fubini's theorem and the first statement. 
Indeed, set for $x \in Y_\data$ and $|s|\leq \gp^n$
\begin{align*}
f(x,s) = \vol\big(\{m \in M_0: xu(s)m \text{ is not } [k_0,k/(4\dim\G)]\text{-typical}\}\big).
\end{align*}
By $U$-invariance of $\mu$, the first statement in the proposition, and since $k_0$ is assumed sufficiently large, we have
\begin{align*}
\gp^{-n}\int \int_{|s|\leq \gp^n} f(x,s) \de s \de \mu(x) \leq \gp^{-\star k_0} < \tfrac{1}{27}.
\end{align*}
By Chebyshev's inequality, the $\mu$-measure of $x \in Y_\data$ with $\gp^{-n}\int_{|s|\leq \gp^n} f(x,s) \de s > \frac{1}{9}$ is less than $\frac{1}{3}$.
For any $x \in Y_\data$ outside this exceptional set, i.e.~with 
\begin{align*}
\gp^{-n}\int_{|s|\leq \gp^n} f(x,s) \de s \leq \tfrac{1}{9},
\end{align*}
another application of Chebyhev's inequality shows that
\begin{align*}
\big|\big\{|s| \leq \gp^n: f(x,s) \geq \tfrac{1}{3}\big\}\big| \leq \tfrac{1}{3}\gp^n.
\end{align*}
This proves the proposition.
\end{proof}

\subsection{An elementary property of almost invariance}\label{sec:almost invariance}

In the following, we verify a few elementary properties of the notion of almost invariance defined in \S\ref{sec:outline-effnotions}.
We note that almost invariance under $\gp$-adic directions in the Lie algebra is significantly less well-behaved than almost invariance under archimedean directions (see e.g.~\cite[\S 8]{EMV} for properties of the latter).
For instance, if $\mu$ is `almost invariant' under two vectors $\vpz,\wpz\in \gfrak_{\gp}[0]$ it is unclear to what extent, in general, $\mu$ is also `almost invariant' under their sum $\vpz+ \wpz$ (or other linear combinations).
Issues arising from this will be resolved via the effective generation result in Theorem~\ref{thm:effgen-intro}.

The following properties are direct consequences of the definition (Definition~\ref{def:almost invariance}):
\begin{enumerate}[label=(A\theenumi)]
\item\label{item:alminv-trivialalminv} $\mu$ is $2\gp^{-k}$-almost invariant under all $g \in G_\gp[k]$ (see also \eqref{eq:trivialalminv}).
\item\label{item:alminv-Lipcont} 
If $\mu$ is $\gp^{-k}$-almost invariant under $g \in G_\gp[0]$, then for any $h\in G_\gp[k']$ the measure $\mu$ is $(\gp^{-k}+2\gp^{-k'})$-almost invariant under $gh$.
\item\label{item:alminv-group} If $\mu$ is $\gp^{-k_1}$-almost invariant under $g_1 \in G_\gp[0]$ and $\gp^{-k_2}$-almost invariant under $g_2 \in G_\gp[0]$, then $\mu$ is also $\gp^{-k_1}$-almost invariant under $g_1^{-1}$ and $(\gp^{-k_1}+\gp^{-k_2})$-almost invariant under $g_1g_2$. 
(Indeed, $\lev(f(\cdot g_i^{-1})=\lev(f)$ for any $f$ and $i=1,2$.)
\item\label{item:alminv-conjugation} If $\mu$ is $\gp^{-k}$-almost invariant under $g \in G_\gp$ and $h \in \theta_\gp(\SL_2(\Q_\gp))$, then $\mu$ is $\gp^{-k}\norm{h}^2$-almost invariant under $gh$ and $hg$. (Indeed, the level of $f(\cdot h)$ is at most $\norm{h}^2\lev(f)$ for any $f$.)
In particular, if $\mu$ is $\gp^{-k}$-almost invariant under $\vpz \in \gfrak_\gp[0]$ then $\mu$ is $\norm{h}^4\gp^{-k}$-almost invariant under $\Ad(h)\vpz$.
\end{enumerate}

Here, we prove the following (arguably minimalistic) statement.

\begin{proposition}\label{prop:from1elto1param}
There exists $\consta\label{a:alminv highest weight}\geq 1$ with the following property.

Let $V \subset \gfrak_\gp$ be a subspace invariant under $\theta_\gp(\SL_2(\Q_{\gp}))$ and let
\begin{align*}
\vpz = \sum_{\lambda>0} \vpz_\lambda \in V^{\mathrm{hw}}[0]\setminus V^{\mathrm{hw}}[1]
\end{align*}
be a vector with pure-weight components $\vpz_\lambda \in V^{(\lambda)}[0]$. 
Suppose that $\mu$ is $\gp^{-k}$-almost invariant under $\exp(\vpz)$.
Then there exists a highest weight vector $\wpz \in V^{\mathrm{hw}}[0]\setminus V^{\mathrm{hw}}[1]$ of pure weight such that $\mu$ is $\gp^{-k/\ref{a:alminv highest weight}+\ref{a:alminv highest weight}}$-almost invariant under $\wpz$. 
\end{proposition}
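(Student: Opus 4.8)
The plan is to extract from the single element $\exp(\vpz)$ a single \emph{pure-weight} highest weight vector under which $\mu$ is almost invariant, by isolating the top-weight component of $\vpz$ and then using conjugation by the diagonal subgroup $\{a(t)\}$ of the principal $\SL_2$ to kill the lower-weight terms. The key point is that all of $\vpz_\lambda$, $a(t)$, and the relevant conjugations have bounded integrality ($\gfrak_\gp[0]$ and $\SL_N(\Z_\gp)$), so the loss in the quality of almost invariance from properties \ref{item:alminv-group} and \ref{item:alminv-conjugation} is at worst polynomial in $\gp^k$ (with exponent depending only on $N$, since the weights $\lambda$ and the required powers of $\gp$ in $a(t)$ are bounded in terms of $N$).

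First I would write $\vpz = \sum_{\lambda>0}\vpz_\lambda$ with $\vpz_\lambda \in V^{(\lambda)}[0]$ as given, and let $\lambda_0$ be the \emph{smallest} weight with $\vpz_{\lambda_0} \not\equiv 0 \bmod \gp$ (such $\lambda_0$ exists since $\vpz \notin V^{\mathrm{hw}}[1]$, by Lemma~\ref{lem: reduction highest weight}(1), which identifies the reduction of $V^{\mathrm{hw}}[0]$ with $\bigoplus_\lambda \underline{V^{(\lambda)}}^{\mathrm{hw}}$). Now conjugate: by \ref{item:alminv-conjugation}, for any $t\in\Z_\gp^\times$ the measure $\mu$ is $\norm{a(t)}^4\gp^{-k}$-almost invariant under $\Ad(a(t))\vpz = \sum_\lambda t^\lambda \vpz_\lambda$, and since $a(t)\in\theta_\gp(\SL_2(\Z_\gp))\subset\SL_N(\Z_\gp)$ we have $\norm{a(t)}=1$, so actually $\mu$ is $\gp^{-k}$-almost invariant under $\sum_\lambda t^\lambda \vpz_\lambda$ for every $t\in\Z_\gp^\times$. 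Taking a suitable $\Z_\gp$-linear combination (Vandermonde in the finitely many weights $\lambda$, with the number and size of weights bounded in terms of $N$) of the vectors $\{\Ad(a(t))\vpz : t\in\Z_\gp^\times\}$ and using \ref{item:alminv-group} repeatedly (each linear combination step costing a sum of the current errors, hence at most a multiplicative $\gp^{O_N(1)}$ loss), I would isolate a vector $\gp^{m}\vpz_{\lambda_0}$ for some bounded integer $m = O_N(1)$ arising from the denominators of the Vandermonde inverse. Then $\wpz := \gp^{m}\vpz_{\lambda_0}$ is a pure-weight (weight $\lambda_0$) highest weight vector in $V^{\mathrm{hw}}[0]$; it lies in $V^{\mathrm{hw}}[1]$ rather than $V^{\mathrm{hw}}[1]$ only if $m\geq 1$, so a final rescaling by $\gp^{-m}$, i.e.\ passing from $\exp(t\wpz)$ with $t\in\gp\Z_\gp$ to $\exp(t\vpz_{\lambda_0})$ with $t\in \gp^{1-m}\Z_\gp$, reintroduces $\vpz_{\lambda_0}$ itself at the cost of another bounded power — here one uses that $\vpz_{\lambda_0}\in V^{(\lambda_0)}[0]$ by Lemma~\ref{lem: weight spaces are integral}, so $\exp(t\vpz_{\lambda_0})\in G_\gp[0]$ for $t\in\Z_\gp$, and the definition of almost invariance under a vector (Definition~\ref{def:almost invariance}) refers to $\{\exp(t\wpz):t\in\gp\Z_\gp\}$. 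Track the constant through to get the claimed $\gp^{-k/\ref{a:alminv highest weight}+\ref{a:alminv highest weight}}$.

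The main obstacle — really the only subtlety beyond bookkeeping — is making the Vandermonde elimination step work $p$-adically while controlling the integrality of both the inverse matrix and the resulting linear combination. One must check that the Vandermonde matrix built from $t$-values in $\Z_\gp^\times$ for the finitely many distinct weights $\lambda$ occurring in $\vpz$ has determinant a $\gp$-adic unit for a good choice of the $t$'s (possible since $\gp\gg_N 1$ and the weights are distinct residues mod $\gp$), so that its inverse has entries in $\Z_\gp$ up to a bounded denominator; this is where the "$+\ref{a:alminv highest weight}$" additive loss enters. A clean alternative avoiding the matrix inverse is an iterative "peeling" argument: having almost invariance under $\sum_{\lambda\geq\lambda_0}t^\lambda\vpz_\lambda$ for all $t\in\Z_\gp^\times$, difference two such vectors at $t$ and $t'$ with $t/t'$ a unit to cancel the $\lambda_0$ term after dividing by $t^{\lambda_0}-t'^{\lambda_0}$ (a unit times a bounded power of $\gp$ if chosen well), lower the top index similarly from above, and induct; at each stage \ref{item:alminv-group} gives the additive error bound and there are only $O_N(1)$ stages, so the total multiplicative loss is $\gp^{O_N(1)}$ and the exponent $k$ degrades by at most a bounded factor. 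Either route yields the proposition with $\ref{a:alminv highest weight}$ depending only on $N$.
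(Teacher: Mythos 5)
Your proposal has a genuine gap at the Vandermonde/peeling step, and it is precisely the difficulty the paper flags at the start of \S\ref{sec:almost invariance}: almost invariance is a multiplicative notion, and it does not transport along linear combinations in $\gfrak_\gp$. Property \ref{item:alminv-group} gives almost invariance under the \emph{product} $\exp(\wpz_1)\exp(\wpz_2)$; by Baker--Campbell--Hausdorff this equals $\exp(\wpz_1+\wpz_2+\tfrac12[\wpz_1,\wpz_2]+\cdots)$, \emph{not} $\exp(\wpz_1+\wpz_2)$, and when the $\wpz_i \in \gfrak_\gp[0]$ have unit norm the bracket corrections are of unit norm too. There is no reason for the distinct highest-weight components $\vpz_\lambda$ to commute --- $[\vpz_\lambda,\vpz_{\lambda'}]$ is a $U$-invariant vector of weight $\lambda+\lambda'$, generically nonzero --- so neither the Vandermonde elimination nor the alternative peeling yields almost invariance under the intended vector $\gp^m\vpz_{\lambda_0}$: one only obtains almost invariance under exponentials whose exponents carry uncontrolled unit-size bracket terms. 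There is a smaller second gap as well: even granting the elimination step, it would produce almost invariance under the single element $\exp(\gp^m\vpz_{\lambda_0})$, whereas ``almost invariance under a vector'' per Definition~\ref{def:almost invariance} means invariance under the whole group $\{\exp(t\vpz_{\lambda_0}): t\in\gp\Z_\gp\}$; your last sentence acknowledges the distinction but does not supply the upgrade.

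The paper's proof avoids linear combinations altogether, and it isolates the \emph{largest} weight $\lambda_0$ with $\vpz_{\lambda_0}\neq 0$ (not, as you propose, the smallest with nonzero reduction). It uses only three operations compatible with almost invariance: taking powers $\exp(\vpz)\mapsto\exp(\gp^{k_2}\vpz)$ via \ref{item:alminv-group}, conjugating by $a^{k_3}$ via \ref{item:alminv-conjugation}, and absorbing a genuinely small multiplicative perturbation via \ref{item:alminv-Lipcont}. Since conjugation by $a^{k_3}$ scales the weight-$\lambda$ component by $\gp^{\lambda k_3}$ and the top weight scales fastest, one can choose $k_2,k_3$ so that $\Ad(a^{k_3})\gp^{k_2}\vpz_{\lambda_0}$ returns to unit norm while every lower-weight component has norm $\gp^{-\Theta(k_2)}$; the latter are then simply dropped by Lipschitz continuity, with no cancellation and hence no BCH obstruction. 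The case where the top-weight component is already tiny is handled by dropping it via \ref{item:alminv-Lipcont} and recursing. Finally, the passage from one element to the one-parameter subgroup is via conjugation by $a(t)$ to produce $\exp(t^\lambda\vpz')$ together with Hensel's lemma to solve $t^\lambda=t'$ for units $t'\equiv 1\bmod\gp$.
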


\begin{proof}
In view of the conclusion, we may assume that $k>B$ for some $B=B(N)\geq 1$ to be determined in the course of the proof.
Indeed, recall from \ref{item:alminv-trivialalminv} that $\mu$ is trivially $\gp$-almost invariant under any vector in $\gfrak_\gp[0]$.

We first claim that there exists $\vpz'$ with the same properties as $\vpz$ and with pure non-zero weight $\lambda$ such that $\mu$ is $\gp^{-\star k + \star}$-almost invariant under $\exp(\vpz')$.
We find $\vpz'$ by induction and a simple case distinction according to whether or not the component of largest weight in $\vpz$ is `small'.

Let $\lambda_0$ be maximal with $\vpz_{\lambda_0} \neq 0$. 
If $\lambda_0 =1$, we are done, so assume otherwise.
Fix two parameters $k_1,k_2< k$ to be determined later (with $k_1,k_2 \geq \star k$).

Assume first that $\norm{\vpz_{\lambda_0}}\geq \gp^{-k_1}$.
By $\gp^{k_2}$-fold iteration of $\exp(\vpz)$ using \ref{item:alminv-group}, the measure $\mu$ is $\gp^{k_2-k}$-almost invariant under $\exp(\gp^{k_2} \vpz)$.
Let 
\begin{align*}
k_3= \lfloor (k_2- \log_\gp(\norm{\vpz_{\lambda_0}}))/\lambda_0 \rfloor.
\end{align*}
Observe that 
\begin{align*}
1 \geq \gp^{-k_2+\lambda_0 k_3} \norm{\vpz_{\lambda_0}}=
\norm{\Ad(a^{k_3})\gp^{k_2} \vpz_{\lambda_0}} > \gp^{-\lambda_0}
\end{align*}
and $k_3 \leq \frac{1}{\lambda_0}(k_1+k_2)$.
In particular,
\begin{align*}
\vpz'=\Ad(a^{k_3})\gp^{k_2} \vpz_{\lambda_0} \in V[0]\setminus V[\lambda_0].
\end{align*}
For any smaller weight $\lambda < \lambda_0$ we have
\begin{align*}
\norm{\Ad(a^{k_3})\gp^{k_2} \vpz_{\lambda}} \leq \gp^{-k_2 +\lambda k_3}
\leq \gp^{\frac{\lambda}{\lambda_0}k_1-(1-\frac{\lambda}{\lambda_0})k_2}.
\end{align*}
Notice that we have not specified either of the parameters $k_1,k_2$ up to this point; we choose $k_1 = \lfloor \frac{1}{2}(\frac{\lambda_0}{\lambda_0-1}-1)k_2\rfloor$ such that $\norm{\Ad(a^{k_3})\gp^{k_2} \vpz_{\lambda}} \leq \gp^{-\star k_2}$ for any $\lambda <\lambda_0$.
By \ref{item:alminv-conjugation} and the earlier iteration, $\mu$ is $\gp^{\star k_3 +k_2 -k}$-almost invariant under $\exp(\Ad(a^{k_3})\gp^{k_2} \vpz)$ where $k_3 \leq k_1+k_2 \leq 2 k_2$ and
\begin{align*}
\Ad(a^{k_3})\gp^{k_2} \vpz = \vpz' + O(\gp^{-\star k_2}).
\end{align*}
Thus, $\mu$ is also $(\gp^{-\star k_2}+ \gp^{\star k_2-k})$-almost invariant under $\exp(\vpz')$ by \ref{item:alminv-Lipcont}.
Choosing $k_2 = \lfloor \star k\rfloor$ appropriately, $\mu$ is $\gp^{-\star k}$-almost invariant under $\exp(\vpz')$.
We may also apply $a$ and iterate by a suitable power of $\gp$ so that $\vpz' \in V[0]\setminus V[1]$ in which case $\mu$ is $\gp^{-\star k+\star}$-almost invariant under $\exp(\vpz')$.
This proves the claim under the assumption $\norm{\vpz_{\lambda_0}}\geq \gp^{-k_1}$.

If $\norm{\vpz_{\lambda_0}}< \gp^{-k_1}$, the measure $\mu$ is $(\gp^{-k}+\gp^{-k_1})$-almost invariant under the element $\exp(\sum_{\lambda<\lambda_0}\vpz_\lambda)$ by \ref{item:alminv-Lipcont}.
Since $k_1 \gg k$, $\mu$ is $\gp^{-\star k}$-almost invariant under $\exp(\sum_{\lambda<\lambda_0}\vpz_\lambda)$.
In that case, we may replace the vector $\vpz$ with $\sum_{\lambda<\lambda_0}\vpz_\lambda$ and restart the above argument with new parameters.

We now show that $\mu$ is $\gp^{-\star k+\star}$-almost invariant under $\vpz'$.
We first claim that $\mu$ is $\gp^{-\star k+\star}$-almost invariant under $\exp(t^\lambda\vpz')$ for all $t \in \Z_{\gp}$.
Indeed, $\mu$ is $|t|^{\star}\gp^{-\star k+\star}$-almost invariant under $a(t) \exp(\vpz') a(t)^{-1} = \exp(t^\lambda \vpz')$ which implies the claim for $|t| \geq \gp^{-\star k+\star}$. 
Otherwise, the claim follows directly from \ref{item:alminv-trivialalminv}.

By Hensel's lemma, any $t' \in \Z_{\gp}$ with $t' \equiv 1 \mod \gp$ is of the form $t' = t^\lambda$ for some $t\in \Z_{\gp}$ (using $\gp >\dim(\gfrak)>\lambda$).
As  $\mu$ is $\gp^{-\star k+\star}$-almost invariant under $\exp((t_1^\lambda-t_2^\lambda)\vpz')$ for all $t_1,t_2 \in \Z_{\gp}$ by \ref{item:alminv-group}, this shows that $\mu$ is  $\gp^{-\star k+\star}$-almost invariant under $\exp(t\vpz')$ for all $t \in \gp\Z_{\gp}$. 
This proves the proposition.
\end{proof}

\subsection{Extra almost invariance from transversal pairs of typical points}

The following proposition establishes `additional almost invariance' assuming the existence of `transversal' typical points; we will assert the latter in Proposition~\ref{prop:transversalpoints} below.

\begin{proposition}\label{prop:addalminv}
There exists $\consta\label{a:extraalminv}>1$ depending only on $N$ with the following property.

Let $\rfrak \subset \gfrak_{\gp}$ be an undistorted $\theta_\gp(\SL_2(\Q_{\gp}))$-invariant subspace.
Let $k_1,k_2 \in \N$ with $k_2 > k_1 >\ref{a:extraalminv}$.
Suppose that there exist two $[k_1/(3\dim(\gfrak)),k_2]$-typical points $x_1,x_2 \in X$ so that
\begin{align*}
x_2 = x_1 g\exp(\vpz)
\end{align*}
where $g \in \rho(K_f)$ centralizes $\theta_\gp(\SL_2(\Q_\gp))$ and where $\vpz \in \rfrak$ satisfies
\begin{align*}
\norm{\vpz} \leq \gp^{-k_1}\quad \text{and}\quad\norm{\vpz^{\mathrm{nt}}} \geq \gp^{-k_2}.
\end{align*}
Then $\mu$ is $\gp^{-k_1/\ref{a:extraalminv}}$-almost invariant under a highest weight vector $\wpz\in \rfrak[0] \setminus \rfrak[1]$ of non-zero weight.
\end{proposition}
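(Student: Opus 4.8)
\textbf{Proof strategy for Proposition~\ref{prop:addalminv}.}
The plan is to use the displacement $g\exp(\vpz)$ between the two typical points $x_1,x_2$ together with the averaging operator $D_{k,B}$ to transfer almost invariance from the ``diagonal'' estimate at $x_1$ and $x_2$ to an honest almost invariance statement for $\mu$. First I would choose an intermediate scale $k_3$ of size $\asymp k_1$ (to be optimized, roughly $k_3 = c k_1$ for a small $c=c(N)$) and consider, for a suitable $C^1$-function $f$ and a ball $B\subset\Z_\gp$ of radius $\gp^{-k_3/(8\tempered)}$, the quantities $D_{k_3,B}f(x_1)$ and $D_{k_3,B}f(x_2)$. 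Since both points are $[k_1/(3\dim\gfrak),k_2]$-typical and $k_3$ lies in this range (for $c$ small and $k_1$ large), each of these is within $\gp^{-\star k_3}\lev(f)\norm{f}_{C^1}$ of $\int f\de\mu$; subtracting gives $|D_{k_3,B}f(x_1)-D_{k_3,B}f(x_2)|\leq \gp^{-\star k_3}\lev(f)\norm{f}_{C^1}$.

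Next I would relate the two averages geometrically. Write $x_2 = x_1 h$ with $h = g\exp(\vpz)$, $g$ centralizing $\theta_\gp(\SL_2(\Q_\gp))$ and $\vpz\in\rfrak$ of size $\leq\gp^{-k_1}$. Then
\[
D_{k_3,B}f(x_2) = \frac{1}{|B|}\int_B f(x_1 h\, u(-s)a^{-k_3})\de s - \int f\de\mu.
\]
Using that $g$ commutes with $u(s)$ and $a$ and that $\exp(\vpz)$ conjugated by $a^{k_3}$ expands the non-trivial components, one has $a^{k_3}u(s)^{-1}\exp(\vpz)u(s)a^{-k_3}$; here the key point is the choice of $k_3$ so that the \emph{non-trivial} part of $\vpz$ reaches size of order $\gp^{-\star k_1}$ but still $\gg \gp^{-k_2}$ under this conjugation: since $\norm{\vpz^{\mathrm{nt}}}\geq \gp^{-k_2}$, after conjugating by $a^{k_3}$ with $k_3$ chosen so that the largest surviving weight component of $\Ad(a^{k_3})\vpz^{\mathrm{nt}}$ is of norm $\asymp\gp^{-k_1/\star}$, one produces a one-parameter family $\exp(\vpz(s))$ along which $f$ is evaluated, and $\vpz(s)$ ranges (as $s$ varies over $B$ — i.e.\ after also using the $U$-twisting inside $\theta_\gp(\SL_2)$) through a positive-measure set of nilpotents in $\rfrak$ of controlled size whose highest-weight parts are nonzero. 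A change of variables in $s$ together with the Lipschitz estimate \eqref{eq:Lipschitzconstant} then yields $|D_{k_3,B}f(x_1) - \int f(\cdot\exp(\vpz(s_0)))\de\mu|$ small, and combining with the diagonal estimate from the previous paragraph gives: for a set of $s$ of positive proportion in $B$, $\mu$ is $\gp^{-\star k_1}$-almost invariant under $\exp(\vpz(s))$. A pigeonhole/averaging over this set produces one explicit nilpotent $\vpz_0\in\rfrak[0]$ with nonzero highest-weight part and $\norm{\vpz_0}$ of controlled size under which $\mu$ is $\gp^{-\star k_1}$-almost invariant.

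Finally I would invoke Proposition~\ref{prop:from1elto1param} (and properties \ref{item:alminv-trivialalminv}--\ref{item:alminv-conjugation} of almost invariance) applied with $V=\rfrak$: from almost invariance under $\exp(\vpz_0)$ with $\vpz_0\in\rfrak^{\mathrm{hw}}[0]$ having nonzero pure-weight components of positive weight (which I would arrange by first projecting $\vpz_0$ to highest-weight vectors using $\Ad(a^t)$ and the completely reducible $\SL_2$-structure from \S\ref{sec: adjoint rep}), one gets a highest weight vector $\wpz\in\rfrak^{\mathrm{hw}}[0]\setminus\rfrak^{\mathrm{hw}}[1]$ of pure non-zero weight under which $\mu$ is $\gp^{-k_1/\star+\star}$-almost invariant, and absorbing the additive constant into the exponent (using $k_1>\ref{a:extraalminv}$) yields the claimed $\gp^{-k_1/\ref{a:extraalminv}}$-almost invariance. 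I expect the main obstacle to be the careful bookkeeping in the second paragraph: ensuring that the conjugation by $a^{k_3}$ combined with the $U$-twisting genuinely sweeps out a positive-measure family of nilpotents whose highest-weight components stay bounded below (so that the projection to $\rfrak^{\mathrm{hw}}$ in the last step is non-degenerate), while simultaneously keeping all norms $\leq\gp^{-\star k_1}$ so the diagonal typicality estimates remain applicable — this is where the hypothesis $\norm{\vpz^{\mathrm{nt}}}\geq\gp^{-k_2}$ and the relation $k_2>k_1$ must be used quantitatively, and where the undistortedness of $\rfrak$ (so that weight-component norms are comparable to the norm of $\vpz$) is essential.
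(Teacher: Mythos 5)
Your overall strategy is the same as the paper's: compare the averages $D_{k,B}f(x_1)$ and $D_{k,B}f(x_2)$ at two typical points, conjugate the displacement by $a^ku(s)$ to bring it to an observable scale, extract a highest-weight direction, and finish with Proposition~\ref{prop:from1elto1param}. However, there are three real gaps in the middle that the paper resolves in a specific way and your description glosses over or gets wrong.

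\textbf{(a) The scale is chosen to make the displacement large, not small.} You write that $k_3$ should be chosen ``so that the non-trivial part of $\vpz$ reaches size of order $\gp^{-\star k_1}$'' and later insist on ``keeping all norms $\leq\gp^{-\star k_1}$ so the diagonal typicality estimates remain applicable.'' This is backwards. If $\norm{\Ad(a^{k}u(s))\vpz}$ stays $\leq\gp^{-\star k_1}$, then the almost-invariance under $\exp(\Ad(a^ku(s))\vpz)$ is already trivially true (by property~\ref{item:alminv-trivialalminv}) and carries no new information. The paper instead takes $k$ \emph{minimal} so that $\norm{\Ad(a^ku(s_0))\vpz}>\gp^{-\dim(\gfrak)}$, so the resolved displacement $\wpz$ is of moderate (unit-ish) size. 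The hypotheses $\norm\vpz\leq\gp^{-k_1}$ and $\norm{\vpz^{\mathrm{nt}}}\geq\gp^{-k_2}$ are used only to control this \emph{scale}: the first forces $k\geq k_1/\dim(\gfrak)$ (which matches the lower end $k_1/(3\dim\gfrak)$ of the typicality window), and the second forces $k\leq k_2$ (so the typicality window covers $k$). The typicality estimates constrain the parameter $k$, not the norm of the displacement, so there is no tension.

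\textbf{(b) The mechanism is convergence to a single $\wpz$, not a sweep.} You describe the conjugated $\vpz(s)$ as ``ranging through a positive-measure set of nilpotents'' and then invoke pigeonhole/averaging to extract one. The paper does the opposite: it first fixes $s_0$ via a ``good shear'' lemma (a polynomial non-vanishing argument modulo $\gp$, ensuring the highest-weight projections of the sheared components retain full norm), and then shows that on a small ball $B$ around $s_0$ (of radius $\gp^{-\lfloor k/(8\tempered)\rfloor}$) the conjugated vector is $\gp^{-\star k}$-close to the \emph{fixed} highest-weight vector $\wpz = (\Ad(a^ku(s_0))\vpz)^{\mathrm{hw}}$. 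This is what lets the two $D_{k,B}$-averages produce a clean almost-invariance statement for a single element. Your ``positive-measure family plus pigeonhole'' route is both unnecessary and, as written, does not obviously produce a single direction with the required lower bound on its highest-weight part.

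\textbf{(c) Removing $g$.} After the averaging argument one obtains almost-invariance under $g\exp(\wpz)$, not under $\exp(\wpz)$: commuting $g$ with $u(s)$ and $a$ moves it out of the way but leaves it in the displacement. Your proposal silently drops $g$. The paper removes it by running the \emph{same} argument at a much smaller scale $k\approx k_1/(2\dim\gfrak)$, where $\Ad(a^ku(s))\vpz$ is still negligibly small (so the displacement becomes $g$ alone up to error), giving almost-invariance under $g$; combining via property~\ref{item:alminv-group} then isolates $\exp(\wpz)$. This step is genuinely needed and does not follow from the rest.

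Your final step --- renormalizing to land in $\rfrak[0]\setminus\rfrak[1]$ and applying Proposition~\ref{prop:from1elto1param} --- matches the paper.
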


We will use the following simple observation relying on polynomial behaviour.

\begin{lemma}
Let $\rfrak \subset \gfrak_{\gp}$ be a $\theta_\gp(\SL_2(\Q_{\gp}))$-invariant subspace. 
Write $\rfrak = \sum_i \rfrak_i$ for irreducible subrepresentations $\rfrak_i$ as in Lemma~\ref{lem: reduction completely red}.
Let $\vpz = \sum_i \vpz_i\in \rfrak[0]$ where  $\vpz_i \in \rfrak_i[0]$.
Then the measure of $s \in \Z_{\gp}$ satisfying
\begin{align}\label{eq:goodshear}
\norm{\big(\Ad(u(s)) \vpz_i\big)^{\mathrm{hw}}} = \norm{\Ad(u(s)) \vpz_i} = \norm{\vpz_i} \text{ for all $i$}
\end{align}
is at least $1-\frac{\dim(\rfrak)}{\gp}$. 
In particular, there exists $s\in \Z_\gp$ with \eqref{eq:goodshear}.
\end{lemma}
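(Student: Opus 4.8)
The plan is to exploit the polynomial nature of the map $s \mapsto \Ad(u(s))\vpz_i$ on each irreducible summand. Fix an irreducible subrepresentation $\rfrak_i \subset \rfrak$ of dimension $d_i = \lambda_i + 1$, and recall from Lemma~\ref{lem: weight spaces are integral} that $\rfrak_i[0]$ has a $\Z_\gp$-basis of pure-weight vectors, say of weights $-\lambda_i, -\lambda_i + 2, \ldots, \lambda_i$. Write $\vpz_i$ in this basis; its coordinates lie in $\Z_\gp$ and, since $\vpz_i \in \rfrak_i[0]$, at least one coordinate is a $\Z_\gp$-unit, so $\norm{\vpz_i} = 1$ (after rescaling; in general $\norm{\Ad(u(s))\vpz_i} \le \norm{\vpz_i}$ always holds by integrality of $u(s)$ on $\gfrak_\gp[0]$, and we only need the reverse inequality at a good $s$). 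Applying $\Ad(u(s))$ acts on $\rfrak_i \simeq \mathrm{Sym}^{\lambda_i}$ of the standard representation by a unipotent matrix whose entries are polynomials in $s$ of degree $\le \lambda_i \le \dim(\rfrak)$ with $\Z_\gp$-coefficients (here we use $\gp \gg_N 1$ so that the binomial coefficients appearing are $\gp$-adic units).

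First I would reduce \eqref{eq:goodshear} to a single non-vanishing condition modulo $\gp$. The highest-weight component $\big(\Ad(u(s))\vpz_i\big)^{\mathrm{hw}}$ is, up to a unit, the top coordinate of $\Ad(u(s))\vpz_i$ in the weight basis, and this is a polynomial $P_i(s) \in \Z_\gp[s]$ of degree $\le \lambda_i$. Its leading coefficient is (a unit multiple of) the lowest-weight coordinate of $\vpz_i$, and more generally $P_i$ is not identically zero modulo $\gp$ precisely when $\vpz_i \not\equiv 0 \bmod \gp$, i.e.\ when $\norm{\vpz_i} = 1$; this is exactly the statement that the reduction $\underline{\vpz_i}$ generates, under $\underline{u(s)}$, a highest-weight vector for generic $s \bmod \gp$, which follows from the irreducibility of $\underline{\rfrak_i}$ over $\SL_2(\mathbb F_\gp)$ (Lemma~\ref{lem: reduction completely red}). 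Thus $P_i \bmod \gp$ is a nonzero polynomial of degree $\le \dim(\rfrak)$ over the field $\mathbb F_\gp$, hence has at most $\dim(\rfrak)$ roots; for any $s \in \Z_\gp$ whose reduction avoids these roots we get $\norm{\big(\Ad(u(s))\vpz_i\big)^{\mathrm{hw}}} = 1 = \norm{\vpz_i}$, and since $\norm{\Ad(u(s))\vpz_i} \le \norm{\vpz_i}$ while the highest-weight component is itself a coordinate of $\Ad(u(s))\vpz_i$, all three norms in \eqref{eq:goodshear} coincide.

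Finally I would take the union bound. For each $i$ the set of $s \in \Z_\gp$ failing the condition for that $i$ is contained in a union of $\le \dim(\rfrak_i)$ cosets of $\gp\Z_\gp$, hence has measure $\le \dim(\rfrak_i)/\gp$. Summing over the (at most $\dim(\rfrak)$, but more precisely $\sum_i \dim(\rfrak_i) \le \dim(\rfrak)$ after counting multiplicities correctly --- in fact one gets $\le \dim(\rfrak)/\gp$ by summing the individual bounds and noting $\sum_i \dim(\rfrak_i) = \dim(\rfrak)$) irreducible summands gives that the bad set has measure at most $\dim(\rfrak)/\gp$, so the good set has measure $\ge 1 - \dim(\rfrak)/\gp$, which is positive since $\gp \gg_N 1 > \dim(\rfrak)$; in particular a good $s$ exists. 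The only mild subtlety --- and the one place to be careful --- is bookkeeping the normalization: one must ensure that passing to the weight basis does not distort the maximum norm, which is fine because at a good prime the change of basis between the elementary-matrix coordinates and the weight coordinates on $\rfrak_i[0]$ is given by a matrix in $\GL(\Z_\gp)$ (Lemma~\ref{lem: weight spaces are integral} again), so all norms are comparable up to units.
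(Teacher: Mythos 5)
Your proof is correct and follows essentially the same strategy as the paper: write the highest-weight component of $\Ad(u(s))\vpz_i$ as an integral polynomial in $s$ whose reduction mod $\gp$ is nonzero (using Lemma~\ref{lem: weight spaces are integral} and $\gp\gg_N 1$ to ensure the binomial coefficients are units), then bound the measure of the zero set via the degree. The only cosmetic difference is that you take a union bound over the summands $\rfrak_i$, whereas the paper multiplies the polynomials $f_i$ together and applies the root-counting observation once to the product of degree $\leq\dim(\rfrak)$; also note that the middle equality in \eqref{eq:goodshear} actually holds for every $s\in\Z_\gp$, since $\Ad(u(s))$ and its inverse both preserve $\gfrak_\gp[0]$, so only the first equality requires a genericity argument.
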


\begin{proof}
Given any polynomial $f \in \Z_\gp[s]$ of degree $d < \gp$ with at least one coefficient of absolute value one, the measure of $\{s \in \Z_\gp: \gp \mid f(s)\}$ is exactly the proportion of zeros of $f \mod \gp$ in $\mathbb{F}_\gp$.
Since $f \mod \gp$ is non-zero, it has at most $d$ zeros and so 
\begin{align*}
|\{s \in \Z_\gp: \gp \nmid f(s)\}| \geq 1-d/\gp.
\end{align*}

Returning to the problem in the lemma, for every $i$ choose $\wpz_i \in \rfrak_i^{\mathrm{hw}}[0]\setminus\rfrak_i^{\mathrm{hw}}[1]$ and let $f_i(s)$ be the polynomial of degree $\dim(\rfrak_i)-1$ with
\begin{align*}
\Big(\Ad(u(s)) \frac{\vpz_i}{\norm{\vpz_i}}\Big)^{\mathrm{hw}} = f_i(s) \wpz_i.
\end{align*}
By Lemma~\ref{lem: weight spaces are integral}, each of these polynomials has at least one coefficient of absolute value one.
The lemma thus follows by applying the above general observation to the polynomial $f(s)=\prod_i f_i(s)$ of degree at most $\dim(\rfrak)$.
\end{proof}

\begin{proof}[Proof of Proposition~\ref{prop:addalminv}]
We assume that $k_1 > C$ for some $C=C(N)$ to be determined.

Write $\rfrak = \sum_i \rfrak_i$ as a direct sum of irreducible subrepresentations with highest weights $\lambda_i$ with $\rfrak[0] = \sum_i \rfrak_i[0]$ in view of Lemma~\ref{lem: reduction completely red}.
Express $\vpz = \sum_i\vpz_i$ accordingly where $\vpz_i \in \rfrak_i[1]$ for each $i$.
Let $s_0 \in \Z_\gp$ be as in \eqref{eq:goodshear} and let $k \geq 0$ be minimal such that 
\begin{align*}
\norm{\Ad(a^k u(s_0))\vpz} > \gp^{-\dim(\gfrak)}
\end{align*}
By the assumptions on $\vpz^{\mathrm{nt}}$, $k$ indeed exists and satisfies $k_1/\dim(\gfrak) \leq k \leq k_2$.
Moreover,
\begin{align*}
\wpz :=  (\Ad(a^k u(s_0))\vpz)^{\mathrm{hw}} =
\Ad(a^k u(s_0)) \vpz + O(\gp^{-k}).
\end{align*}

Let $B$ be the ball around $s_0$ with radius $\gp^{-\lfloor \frac{1}{8\tempered} k\rfloor}$.
Then any $s \in B$ also satisfies \eqref{eq:goodshear} and we have
\begin{align}\label{eq:asympinsmallball}
\Ad(a^k u(s))\vpz =\wpz + O(\gp^{-\star k}).
\end{align}
Indeed, $\Ad(a^k u(s))\vpz = (\Ad(a^k u(s))\vpz)^{\mathrm{hw}} + O(\gp^{-k})$ and
\begin{align*}
(\Ad(a^k u(s))\vpz)^{\mathrm{hw}}
&= \sum_i (\Ad(a^k u(s))\vpz_i)^{\mathrm{hw}}
= \sum_i \gp^{-k \lambda_i} (\Ad(u(s))\vpz_i)^{\mathrm{hw}}\\
&= \sum_i \gp^{-k \lambda_i} \big( (\Ad(u(s_0))\vpz_i)^{\mathrm{hw}} + O(\gp^{-\star k}\norm{\vpz_i})\big)\\
&= \sum_i \gp^{-k \lambda_i} (\Ad(u(s_0))\vpz_i)^{\mathrm{hw}} + O(\gp^{-\star k})
= \wpz + O(\gp^{-\star k})
\end{align*}
where in the fourth equality we used that $\norm{\vpz_i} \leq \gp^{-k\lambda_i}$ by choice of $k$.

Recall that both points $x_1,x_2$ are assumed to be $[k_1/\dim(\gfrak),k_2]$-typical.
Thus, we have for any $f \in C^1(X)$
\begin{align*}
\int f \de \mu
&= \int_{B} f\big(x_2 u(-s)a^{-k}\big)\de s + O(\gp^{-\star k}\lev(f)\norm{f}_{C^1(X)}) \\
&= \int_{B} f\big(x_1 u(-s) a^{-k} g \exp(\Ad(a^k u(s))\vpz)\big)\de s + O(\gp^{-\star k}\lev(f)\norm{f}_{C^1(X)})\\
&= \int_{B} f\big(x_1u(-s)a^{-k} g\exp(\wpz)\big)\de s + O(\gp^{-\star k} \lev(f)\norm{f}_{C^1(X)})\\
&= \int f(\cdot g\exp(\wpz))\de \mu + O(\gp^{-\star k}\lev(f)\norm{f}_{C^1(X)}),
\end{align*}
where we used \eqref{eq:asympinsmallball} together with \eqref{eq:Lipschitzconstant} for the third equality.
This proves that $\mu$ is $\gp^{-\star k_1}$-almost invariant under $g\exp(\wpz)$ (using $\gp$ sufficiently large and $k \gg k_1$).
Note that by construction $\wpz = \wpz^{\mathrm{nt}}$, $\wpz \in \rfrak[0]\setminus\rfrak[\dim(\gfrak)]$ and $\wpz$ is centralized by $U$ i.e.~$\wpz \in \rfrak^{\mathrm{hw}}$.

We claim that $\mu$ is in fact $\gp^{-\star k_1}$-almost invariant under $\exp(\wpz)$ (removing $g$).
To see this, we apply the above argument for $k$ much smaller than $k_1$.
More explicitly, taking $k = \lfloor k_1/(2\dim(\gfrak))\rfloor$ we have $\norm{\Ad(a^ku(s))\vpz} \leq \gp^{-k_1/2}$ for any $s \in \Z_\gp$.
Thus, for $f \in C^1(X)$ using \eqref{eq:Lipschitzconstant}
\begin{align*}
f(x_2u(-s)a^{-k}) 
&= f\big(x_1u(-s)a^{-k} g \exp(\Ad(a^ku(s))\vpz)\big) \\
&= f(x_1 u(-s)a^{-k} g) + O\big(\gp^{-k_1/3} \lev(f) \norm{f}_{C^1}\big).
\end{align*}
Using that $x_1,x_2$ are $[k_1/(3\dim(\gfrak)),k_2]$-typical and proceeding as before, we obtain that $\mu$ is $\gp^{-\star k_1}$-almost invariant under $g$. Hence, $\mu$ is $\gp^{-\star k_1}$-almost invariant under $g^{-1}g\exp(\wpz) = \exp(\wpz)$ (cf.~\ref{item:alminv-group}).

Since $\wpz=\wpz^{\mathrm{hw}}=\wpz^{\mathrm{nt}}$, there is some $m \leq \dim(\gfrak)$ such that $\wpz' := \gp^{m}\Ad(a)\wpz\in \rfrak[0]\setminus\rfrak[1]$.
The measure $\mu$ is $\ll \gp^{-\star k_1+\star}$-almost invariant under $\exp(\wpz')$ by \ref{item:alminv-group} and \ref{item:alminv-conjugation}.
The proposition now follows Proposition~\ref{prop:from1elto1param} applied to $\wpz'$.
\end{proof}

\subsection{Proof of Proposition~\ref{prop:addinv-intro}}

We first prove the following technical proposition which relies on the previous results from \S\ref{sec:prepclosing}--\S\ref{sec:alignment} (including the effective closing lemma in Proposition~\ref{prop:closing-lemma}).
Under suitable conditions, it establishes the existence of a pair of typical points with a `transversal' displacement that is useful in view of Proposition~\ref{prop:addalminv}.

\begin{proposition}\label{prop:transversalpoints}
There exist $\consta\label{a:transversal}>0$ depending only on $N$ with the following property.
Let $k_0$ be minimal with \eqref{eq:k0} and let $k_1 \geq \ref{a:transversal} k_0$.
Let $k\geq 1$ be a further integer with
\begin{align*}
\cpl(X)^{\ref{a:transversal}}\gp^{\ref{a:transversal}}\leq \gp^{k} \leq \mcpl(Y_\data)^{1/\ref{a:transversal}}.
\end{align*}
Suppose that $\Mbf < \rho(\G)$ is a $\Q_{\gp}$-subgroup containing $\theta_\gp(\SL_2)$ such that
\begin{enumerate}[label=\textnormal{(\theenumi)}]
\item $M = \Mbf(\Q_\gp)$ is $k$-generated by nilpotents of pure non-zero weight, and 
\item $\mu$ is ${\gp}^{-k_1}$-almost invariant under $M[3k]$.
\end{enumerate}
Let $\rfrak_\mfrak \subset \gfrak_\gp$ be an undistorted $\theta(\SL_2(\Q_{\gp}))$-invariant complement to $\mfrak =\Lie(M)$. 
Then there exist two points $x_1,x_2 \in X$ which are both $[k_0, k_1/(4\dim \G)]$-typical and where $x_2 = x_1 g$ for $g \in \rho(\G(\A))$ satisfying $g_\gep \in \SL_N(\Z_\gep)$ for all $\gep \neq \gp$, $\norm{g_\infty}\leq 2$, and $g_\gp = \exp(\vpz)$ for $\vpz \in \rfrak_\mfrak$ with
\begin{align*}
\norm{\vpz} \leq {\gp}^{-3k},\
\norm{\vpz^{\mathrm{nt}}} \geq {\gp}^{-\ref{a:transversal}k}.
\end{align*}
\end{proposition}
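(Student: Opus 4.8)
The strategy is the by-now-standard one of producing two nearby Birkhoff-generic points and then ``realigning'' them so that the small displacement between them is transversal to $\mfrak$ and diverges quickly under $\Ad(a(t))$. I would proceed as follows.

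First, I would set $\gp^k$ to be the scale at which we look for nearby generic points. By Proposition~\ref{prop:ergthm2} applied to the group $M_0 = M[3k]$ (under whose elements $\mu$ is $\gp^{-k_1}$-almost invariant by hypothesis (2)), there is a subset $X' \subset X$ with $\mu(X') \geq \tfrac23$ such that for every $x \in X'$ the set of $|s| \leq \gp^n$ (for a suitable $n$, say $n = \lceil \kappa_0^{-1}(\log_\gp(\cpl(X))+1)\rceil$ large enough to feed the closing lemma, but bounded in terms of $k$) for which $xu(s)m$ is $[k_0, k_1/(4\dim\G)]$-typical for a $\tfrac23$-proportion of $m \in M[3k]$ has relative measure at least $\tfrac23$. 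Simultaneously, by Corollary~\ref{cor:FubiniDio} (whose hypothesis $\mcpl(Y_\data) > (\height(\G)\eta_0^{-1}\gp)^{2\ref{a:manydio}}$ follows from the assumed lower bound on $\gp^k$ together with the bound $\gp^k \leq \mcpl(Y_\data)^{1/\ref{a:transversal}}$ for $\ref{a:transversal}$ large), the set of $y \in Y_\data$ for which a $\tfrac23$-proportion of $|s| \leq \gp^n$ make $\pi_\mathcal{S}(yu(s))$ be $\mcpl(Y_\data)^{1/(2\ref{a:manydio})}$-Diophantine (hence $\gp^n$-Diophantine, again using $\gp^k \leq \mcpl(Y_\data)^{1/\ref{a:transversal}}$) has $\mu$-measure at most $\tfrac13$. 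Combining with $\mu(X') \geq \tfrac23$, we can find a single point $y \in Y_\data \cap X'$ enjoying all three properties, and then by a counting/pigeonhole argument over the finitely many $\gp^{-k}$-balls in $\{|s| \leq \gp^n\}$ we find two parameters $s, s'$ in the same $\gp^{-3k}$-neighborhood (after shrinking scales appropriately) so that $x_1 := yu(s)m_1$ and $x_2 := yu(s')m_2$ are both $[k_0, k_1/(4\dim\G)]$-typical, both project to $\gp^n$-Diophantine points, and $x_1 g = x_2$ with $g \in \G(\R)\times K_f$, $\norm{g_\gp} \leq \gp^{-3k+\star}$, $\norm{g_\infty}\leq 2$.

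Second, I would apply the effective closing lemma, Proposition~\ref{prop:closing-lemma}, with the Lie algebra $\mfrak$ (which is $\Ad(U)$-invariant since $\theta_\gp(\SL_2)\subset M$, in particular $U \subset M$, so $\Ad(U)\mfrak = \mfrak$): \emph{if} $\mfrak$ were $\gp^{-n}$-normalized by $g_\gp$ for all the relevant return elements, then $\mfrak$ would be a semisimple ideal of $\gfrak_\gp$. But $M = \Mbf(\Q_\gp) \subsetneq G_p$ is a proper subgroup $k$-generated by nilpotents, hence $\Mbf \neq \rho(\G)$; an ideal $\mfrak$ would force $\Mbf$ to be a proper $\Q_\gp$-factor containing $\theta_\gp(\SL_2)$, contradicting Lemma~\ref{lem:nofactorscontainingHp}. (Here I'm using that $\gp$ is a good prime and the standing assumption that $\iota(\H)$ is in no proper $\Q$-factor.) Therefore the closing lemma does \emph{not} apply, which means that there must exist a return element $g$ (between typical, Diophantine points at the appropriate scale) which does \emph{not} $\gp^{-k_2}$-normalize $\mfrak$, for $k_2$ of size $\asymp k$; more precisely I would run the pigeonhole of the previous paragraph so that the conclusion is exactly the failure of hypothesis (3) of Proposition~\ref{prop:closing-lemma}, yielding a $g = g_{ss'}$ with $\norm{g_\infty}\leq 2$, $g_\gep \in K_\gep$, and $\metric((g_\gp).\hat\vpz_\mfrak, \hat\vpz_\mfrak) > \gp^{-k_2}$ with $k_2 \leq \star k$.

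Third, I would invoke the alignment lemma, Proposition~\ref{prop:alignment}, with $k_1' = 3k$ (playing the role of its $k_1$), $k_2' = \star k$ (its $k_2$, satisfying $k_2' \geq 3k_1'$ after adjusting constants), the group $M$, and the two sets $\mathcal{O}_1, \mathcal{O}_2 \subset M[3k]$ of $m$'s of relative measure $\geq \tfrac23$ produced in the first step (the $m$'s making $x_1$, resp.\ $x_2$, typical). Since $g$ does not $\gp^{-k_2'}$-normalize $\mfrak$, Proposition~\ref{prop:alignment} gives $m_1 \in \mathcal{O}_1$, $m_2 \in \mathcal{O}_2$ with $m_1 g m_2 = \exp(\wpz)$, $\wpz \in \rfrak_\mfrak[3k]$ and $\norm{\wpz^{\mathrm{nt}}} \geq \gp^{-4k_2'} \geq \gp^{-\star k}$. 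Replacing $x_1$ by $x_1 m_1^{-1}$ and $x_2$ by $x_2 m_2$ (still $[k_0, k_1/(4\dim\G)]$-typical by the choice of $\mathcal{O}_i$, still projecting to $\gp^n$-Diophantine points — or rather we only need typicality here) we get the desired pair with displacement $\exp(\vpz)$, $\vpz = \wpz$, satisfying $\norm{\vpz} \leq \gp^{-3k}$ and $\norm{\vpz^{\mathrm{nt}}} \geq \gp^{-\ref{a:transversal}k}$, and the archimedean/off-$\gp$ components are controlled since $m_1, m_2 \in M[3k] \subset K_f \cap (\G(\R)$-trivial$)$ and the original $g$ satisfied $\norm{g_\infty}\leq 2$, $g_\gep \in \SL_N(\Z_\gep)$.

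\textbf{Main obstacle.} The delicate point is the \emph{bookkeeping of scales} in the first step: one must choose $n$ (the length of the orbit piece) large enough that the closing lemma has teeth — i.e.\ $n \gg \kappa_0^{-1}(\log_\gp \height(\G) + 1)$ — yet the Diophantine property must hold at scale $\gp^n$, which through Corollary~\ref{cor:FubiniDio} requires $\gp^n$ (and hence, by the $\mcpl$-Diophantine threshold, essentially $\gp^{k}$) to be bounded by a power of $\mcpl(Y_\data)$; meanwhile the transversality output $\norm{\vpz^{\mathrm{nt}}} \geq \gp^{-\star k}$ and the requirement $\norm{\vpz} \leq \gp^{-3k}$ pin the displacement precisely to the scale $\gp^{-3k}$, forcing us to run the return-time pigeonhole at that exact scale. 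Threading all these inequalities so that a single exponent $\ref{a:transversal}$ (depending only on $N$) works, while also guaranteeing $k_1 \geq \ref{a:transversal} k_0$ is enough for the ergodic-theorem input (Proposition~\ref{prop:ergthm2} needs $k_1$ large relative to $k_0 \asymp \log_\gp \cpl(X)$, and its typicality window $[k_0, k_1/(4\dim\G)]$ must contain the scales $k_0$ through $\star k$ used above, which is where the hypothesis $k_1 \geq \ref{a:transversal} k$... wait, actually $k_1$ governs the \emph{quality} of almost-invariance under $M[3k]$ and we only need $k_1/(4\dim\G) \geq$ the scales arising, so one needs $k_1 \gtrsim k$ as well, absorbed into the constant), is the real content of the proof. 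Everything else is an assembly of the cited propositions.
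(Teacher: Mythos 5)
The high-level outline of your proposal coincides with the paper's: produce, via the effective ergodic theorem (Proposition~\ref{prop:ergthm2}) and the Fubini/Diophantine corollary (Corollary~\ref{cor:FubiniDio}), a point $y$ with many good times; then deploy the effective closing lemma (Proposition~\ref{prop:closing-lemma}) via its contrapositive, using Lemma~\ref{lem:nofactorscontainingHp} to rule out $\mfrak$ being an ideal, to extract a return element that does \emph{not} $\gp^{-n}$-normalize $\mfrak$; then realign with the alignment lemma (Proposition~\ref{prop:alignment}). The lemmas and the three-step structure are exactly right. But the way you set up the pigeonhole step is not just sloppy, it is wrong as written, and it breaks the whole argument.

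The key gap is this. You propose to pigeonhole ``over the finitely many $\gp^{-k}$-balls in $\{|s|\leq\gp^n\}$,'' producing ``two parameters $s,s'$ in the same $\gp^{-3k}$-neighborhood.'' But if $|s-s'|_\gp\leq\gp^{-3k}$, then $yu(s')=yu(s)\,u(s'-s)$ with $u(s'-s)\in U\subset\theta_\gp(\SL_2(\Q_\gp))\subset M$, and after inserting $m_1,m_2\in M[3k]$ the displacement between $x_1$ and $x_2$ lies entirely in $M$. Such a displacement $\gp^{-\infty}$-normalizes $\mfrak$, so the closing lemma yields nothing and the alignment lemma never produces a vector transversal to $\mfrak$. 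What the paper actually does is cover the (compact) space $X$ itself by roughly $\gp^{\star n}\vol(X)$ many balls $B(x)$ of radius $\gp^{-\lceil\kappa n/(2\dim\G)\rceil}$ at the $\gp$-place and then pigeonholes so that a full set $\mathsf{E}\subset\{|s|\leq\gp^n\}$ with $|\mathsf{E}|>\gp^{(1-\kappa)n}$ lands in a \emph{single} such ball; the resulting $s_1,s_2\in\mathsf{E}$ are typically far apart in $\Q_\gp$, and the return element $g$ is small in $X$ precisely because the long orbit wraps around, not because the times are close. Two further consequences of this confusion: your set of pairs is not in the form required by Proposition~\ref{prop:closing-lemma} (which needs the large set $\mathsf{E}$, not a single nearby pair), so you cannot even invoke hypothesis~(3) failing; and your $n\asymp\kappa_0^{-1}(\log_\gp\cpl(X)+1)$ is independent of $k$, whereas the paper must take $n$ comparable to $k$ (minimal with $\kappa\, n/(2\dim\G)\geq 3k$, forcing $n\asymp k$) both so that the $X$-balls have radius $\leq\gp^{-3k}$ and so that the output $\norm{\vpz^{\mathrm{nt}}}\geq\gp^{-4n}$ of Proposition~\ref{prop:alignment} translates into $\gp^{-\star k}$. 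With your $n$, the covering of $X$ by $\gp^{-3k}$-balls has $\gg\gp^{3k\dim\G}\vol(X)$ elements, vastly more than your $\gp^n$ orbit points, so the pigeonhole would give nothing.
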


Note that, in practice, we will take $k_1$ to be much larger than $k$ so that the assumption on almost invariance does not follow from mere Lipschitz continuity (see \eqref{eq:trivialalminv}).

\begin{proof}[Proof of Proposition~\ref{prop:transversalpoints}]
Let $\vol(\cdot)$ denote the Haar measure for $M$ normalized for simplicity such that $\vol(M[3k]) =1$.
We use the specialized notion of Diophantine points from \eqref{eq:epsdef}.
We may assume 
\begin{align}\label{eq:mincpl not tiny}
\mcpl(Y_\data)>(\height(\G){\gp})^{A^2}=(\cpl(X){\gp})^{A^2}
\end{align}
for some very large power $A>0$ (otherwise, there exists no $k>0$ as in the proposition and we conclude).
In particular, \eqref{eq:mcpllowerbound} holds.
Lastly, we let $n >0$ be an auxiliary integer to be chosen later (essentially as a multiple of $k$) and assume it satisfies 
\begin{align}\label{eq:n large}
 (\height(\G){\gp})^A \leq \gp^n \leq \mcpl(Y_\data)^{1/A}.
\end{align}
and
\begin{align}\label{eq:intervallength}
\ref{k:effclosing}n/(2\dim \G )\geq 3k.
\end{align}

We begin by constructing a `good' set of points.
In vague words, these will be points $x \in X$ satisfying, in particular, that for many points along a long piece of the $U$-trajectory many translates by the almost invariance group $M[3k]$ are `typical'.
We also require many points along the $U$-trajectory to be `Diophantine'.
For the precise construction, we combine Corollary~\ref{cor:FubiniDio} and Proposition~\ref{prop:ergthm2} (applied for the group $M_0 = M[3k]$).
Thus, we obtain a set $Y_{\mathrm{good}}\subset Y_\data$ of $\mu$-measure at least $\frac{1}{3}$ with the following properties for any $y \in Y_{\mathrm{good}}$:
\begin{itemize}
\item The set of $s \in \Q_{\gp}$ with $|s| \leq {\gp}^n$ for which $\pi_\mathcal{S}(y)u(s)$ is $\mcpl(Y_\data)^{\frac{1}{2\ref{a:manydio}}}$-Diophantine has measure at least $\frac{2}{3}\gp^n$.
\item The measure of the set of $s \in \Q_{\gp}$ with $|s| \leq {\gp}^n$ and
\begin{align*}
\vol\big(\{m \in M[3k]: yu(s)m \text{ is } [k_0,k_1/(4\dim\G)]\text{-typical}\}\big) \geq \tfrac{2}{3}
\end{align*}
is at least $\frac{2}{3}{\gp}^n$.
\end{itemize}

In the following we fix a point $y \in Y_{\mathrm{good}}$ and let $S \subset \{s: |s|\leq \gp^n\}$ be the set of `good times' for this point i.e.~so that for all $s \in S$ we have
\begin{align*}
\vol(\{m \in M[3k]: yu(s)m \text{ is } [k_0,k_1/(4\dim\G)]\text{-typical}\}) \geq \tfrac{2}{3}
\end{align*}
and $\pi_{\mathcal{S}}(y)u(s)$ is $\mcpl(Y_\data)^{\frac{1}{2\ref{a:manydio}}}$-Diophantine (and in particular $\gp^n$-Diophantine when $A\geq 2\ref{a:manydio}$).
Note that $|S| \geq \frac{1}{3}{\gp}^n$ by construction of the set $Y_{\mathrm{good}}$.

We now use the pigeonhole principle to find two points in the $u(S)$-orbit of $x$ which are `close' to each other, but do not lie on the same local $M$-orbit.
In fact, we will construct them so that their displacement does not `almost normalize' $M$.

Cover $X$ by balls of the form 
\begin{align*}
B(x) = \big\{xg: \norm{g}_\infty \leq \tfrac{4}{3},\, 
g_\gep \in G_\gep[0] \text{ for } \gep \neq \gp,\, 
g_\gp \in G_\gp[\lceil \ref{k:effclosing}n/(2\dim \G) \rceil ]\big\}
\end{align*}
for $x \in X$.
There is a finite cover of $X$ by such balls with multiplicity bounded in terms of $N$ where the number of balls is $\ll {\gp}^{\ref{k:effclosing}n/2+\star} \vol(X)\ll {\gp}^{\ref{k:effclosing}n/2+\star} \cpl(X)^\star$ by definition of the volume and Proposition~\ref{prop: vol complexity'}.
In view of our assumption in \eqref{eq:n large}, the number of these balls is less than $\gp^{\ref{k:effclosing}n}$.
In particular, there exists $x \in X$ such that
\begin{align*}
\mathsf{E}= \{s \in S: yu(s) \in B(x)\}
\end{align*}
satisfies $|\mathsf{E}|> \gp^{(1-\ref{k:effclosing})n}$.
Note that for any $s,s' \in \mathsf{E}$ we have $xu(s)=xu(s')g$ for some $g \in \rho(\G(\A))$ with
$\norm{g}_\infty \leq 2$, $g_\gep \in \SL_N(\Z_\gep)$ for all $\gep \neq \gp$, and $g_\gp \in G_\gp[\lceil \ref{k:effclosing}n/(2\dim \G)\rceil]$.

We now wish to apply the effective closing lemma in Proposition~\ref{prop:closing-lemma} to the point $y$ and the set of times $\mathsf{E}$. 
If the assumption in (3) therein holds, the closing lemma is indeed applicable and we obtain that $\mfrak$ is a semisimple ideal of $\gfrak_\gp$. 
But this is impossible by Lemma~\ref{lem:nofactorscontainingHp} and so (3) cannot hold. 
Thus, there exist $s_1,s_2 \in \mathsf{E}$ such that
\begin{align*}
yu(s_2) = yu(s_1)g
\end{align*}
for $g\in \rho(\G(\A))$ with $\norm{g}_\infty \leq 2$, $g_\gep \in \SL_N(\Z_\gep)$ for $\gep \neq \gp$, $g_\gp \in G_\gp[\lceil \ref{k:effclosing}n/(2\dim \G) \rceil ]$ and, crucially,
\begin{align*}
\metric(g_\gp.\hat{\vpz}_\mfrak,\hat{\vpz}_\mfrak) > \gp^{-n}.
\end{align*}
In particular, $\mfrak$ is not $\gp^{-n}$-normalized by $g_\gp$.

We now apply the alignment lemma in Proposition~\ref{prop:alignment} to the above found displacement $g_\gp$.
Since $g_\gp \in G_\gp[\lceil \ref{k:effclosing}n/(2\dim \G) \rceil ]$ and we assumed \eqref{eq:intervallength}, we are able to apply Proposition~\ref{prop:alignment}.
Explicitly, let $\mathcal{O}_1 \subset M[3k]$ be the subset of points $m$ for which $yu(s_1)m^{-1}$ is $[k_0,k_1/(4\dim\G)]$-typical and let $\mathcal{O}_2 \subset M[3k]$ be the subset of points $m$ for which $yu(s_2)m$ is $[k_0,k_1/(4\dim\G)]$-typical.
Since $s_1,s_2 \in S$, the subsets $\mathcal{O}_1,\mathcal{O}_2$ have relative measure at least $\frac{2}{3}$.
By Proposition~\ref{prop:alignment}, there exist $m_1 \in \mathcal{O}_1$ and $m_2 \in \mathcal{O}_2$ such that $m_1g_\gp m_2 = \exp(\vpz)$ for $\vpz \in \rfrak_\mfrak$ with 
\begin{align*}
\norm{\vpz} \leq \gp^{-3k},\ \norm{\vpz^{\mathrm{nt}}} \geq \gp^{-4 n}.
\end{align*}
We choose $n$ minimal such that \eqref{eq:intervallength} holds.
In particular, this guarantees $\norm{\vpz^{\mathrm{nt}}} \geq \gp^{-\star k}$.
Finally, the two points
\begin{align*}
x_1 = y u(s_1)m_1^{-1},\ x_2 = yu(s_2)m_2
\end{align*}
satisfy the requirements of the proposition with displacement $\exp(\vpz)$.
\end{proof}

\begin{proof}[Proof of Proposition~\ref{prop:addinv-intro}]
As in the proposition, let $\Mbf < \rho(\G)$ be a $\Q_\gp$-subgroup containing $\theta_\gp(\SL_2)$ so that $M= \Mbf(\Q_\gp) < G_p$ is $k$-generated by some nilpotents of pure non-zero weight.
Suppose that $\mu$ is $\gp^{-n}$-almost invariant under $M[3k]$ for some auxiliary parameter $n$.
When $n,k$ are sufficiently large, we may apply Proposition~\ref{prop:transversalpoints} with $k$ and with $n$ instead of $k_1$.
We thus obtain two points $x_1,x_2 \in X$ which are $[k_0,n/(4\dim\G)]$-typical so that $x_2 = x_1 \rho(g)$ for $g \in \G(\A)$ satisfying $g_\gep \in K_\gep$ for all $\gep \neq \gp$, $\norm{g_\infty}\leq 2$, and $g_\gp = \exp(\vpz)$ for $\vpz \in \rfrak_\mfrak$ with
\begin{align*}
\norm{\vpz} \leq {\gp}^{-3k},\
\norm{\vpz^{\mathrm{nt}}} \geq {\gp}^{-\ref{a:transversal}k}.
\end{align*}

When $k$ is sufficiently large and $\ref{a:transversal}k \leq n/(4\dim\G)$, we may apply Proposition~\ref{prop:addalminv} and obtain that $\mu$ is $\gp^{-\star k}$-almost invariant under a highest weight vector $\wpz \in \rfrak_\mfrak[0]\setminus\rfrak_\mfrak[1]$ of non-zero weight.
This implies Proposition~\ref{prop:addinv-intro}.
\end{proof}

\section{Proof of the main theorems}\label{sec:proofmainthms}

In this section, we prove Theorems~\ref{thm:main equi} and \ref{thm:asinEMV}.
For clarity, we shall do so over $F=\Q$ first and then reduce to that case.

\subsection{The case $F= \Q$}

We adopt our standing assumptions from \S\ref{sec:standingassumptions} and prove the following seemingly weaker claim.

\begin{proposition}\label{prop:notweakerprop}
There exists $\consta\label{a:notweakerprop}>0$ depending only on $N$ so that for any $f \in C^1(X)$ we have
\begin{align*}
\Big| \int_X f - \int_{Y_\data} f \de \mu \Big| \leq \frac{\cpl(X)^{\ref{a:notweakerprop}} \gp^{\ref{a:notweakerprop}}}{\mcpl(Y_\data)^{1/{\ref{a:notweakerprop}}}} \lev(f) \norm{f}_{C^1(X)}
\end{align*}
\end{proposition}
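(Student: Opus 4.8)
The strategy is the iterative accumulation of almost invariance outlined in \S\ref{sec:strategy}, combining Proposition~\ref{prop:addinv-intro} (additional almost invariance) and Theorem~\ref{thm:effgen-intro} (effective generation). Fix a good prime $\gp$ for $X$ and $Y_\data$ with $\gp \asymp_N \max\{(\log\vol(Y_\data))^2,(\log\vol(X))^2\}$ as provided by Proposition~\ref{prop:splitting place}, and recall from \S\ref{sec:volumeoverQ} that $\vol$ and $\cpl$ are polynomially equivalent. We may assume $\mcpl(Y_\data)$ is larger than any fixed polynomial in $\cpl(X)$ and $\gp$, as otherwise the bound is trivial (the left-hand side is $\leq 2\lev(f)\norm{f}_{C^1(X)}$ since $\mu,\mu_X$ are probability measures). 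Set $k$ to be an integer comparable to a small power of $\log_\gp(\mcpl(Y_\data))$, chosen so that $\gp^{\ref{a:addinv-intro}}\cpl(X)^{\ref{a:addinv-intro}} \leq \gp^k \leq \mcpl(Y_\data)^{1/\ref{a:addinv-intro}}$ and simultaneously satisfying the hypotheses \eqref{eq:lowerboundeffgen} of Theorem~\ref{thm:effgen-intro} and the lower bound of Proposition~\ref{prop:effgenind}.

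\textbf{The iteration.} We start with the list $\extranil^{(0)} = (\zpz^+,\zpz^-)$ (or the empty list together with $\theta_\gp(\SL_2)$), for which $\mu$ is $2\gp^{-k}$-almost invariant by \eqref{eq:trivialalminv} and by Lemma~\ref{lem:Heffgen} the group $\theta_\gp(\SL_2(\Q_\gp))$ is $0$-generated by nilpotents of pure non-zero weight. At stage $n$ we have a list $\extranil^{(n)}$ of nilpotents of pure non-zero weight such that $\mu$ is $\gp^{-k_n}$-almost invariant under each (with $k_n$ a controlled power of $k$), generating together with $\theta_\gp(\SL_2)$ a group $M_n$ that is $\delta_n k_n$-generated by such nilpotents, where $\delta_n$ is chosen small enough in terms of $\ref{a:addinv-intro}$. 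By Lemma~\ref{lem:implicitfctthm}, $\mu$ is then $\gp^{-\star k_n}$-almost invariant under $M_n[3\delta_n k_n]$. If $M_n = G_p$ we stop; otherwise apply Proposition~\ref{prop:addinv-intro} to find a new highest-weight vector $\vpz_{n+1}$ of non-zero weight, transversal to $\mfrak_n \bmod \gp$, under which $\mu$ is $\gp^{-\star k_n}$-almost invariant. Append $\vpz_{n+1}$ to the list and apply Theorem~\ref{thm:effgen-intro} with a suitably small $\delta$: this produces a perturbed list $\widetilde{\extranil}$, still leaving $\mu$ almost invariant (with only polynomially degraded quality, since the perturbation is exponentially small and we control Lipschitz constants via \ref{item:alminv-Lipcont}), generating a group $\widetilde{M}$ that is effectively generated and of dimension $\geq \dim(M_n)+1$ (as $\vpz_{n+1}$ is transversal to $\mfrak_n$). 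This is $\extranil^{(n+1)}$, $M_{n+1} := \widetilde{M}$. Since dimensions strictly increase, the iteration terminates after at most $\dim(\gfrak)$ steps with $M = G_p$, and at each step the quality of almost invariance and of generation degrades by at most a bounded power, so the final $k_\infty$ is still a positive power of $\log_\gp(\mcpl(Y_\data))$, polynomially controlled below by $\cpl(X)$ and $\gp$.

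\textbf{Concluding via the spectral gap.} Once $\mu$ is $\gp^{-\star k}$-almost invariant under $G_p$, in particular under the full congruence subgroup structure and hence effectively under a ball $G_p[k']$ for $k' \asymp \star k$, we invoke the spectral gap (property $(\tau)$) on the ambient space $X$: the representation of $\theta_\gp(\SL_2(\Q_\gp))$ on $L^2_0(X)$ is $\tempered$-tempered, so almost invariance under $G_p$ — via the averaging operator \eqref{eq:avoperator} and the effective ergodic/mixing estimate analogous to Lemma~\ref{lem:generic for one fct} but on $X$ — forces $|\int_X f \de\mu - \int_X f \de\mu_X|$ to be exponentially small in $k$. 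Combining all bounds and converting $\gp^{-\star k}$ to $\mcpl(Y_\data)^{-\star}\cpl(X)^{\star}\gp^{\star}$ (using $\gp^k \leq \mcpl(Y_\data)^{1/\ref{a:addinv-intro}}$ and $\gp^k \geq \gp^{\star}\cpl(X)^{\star}$), and recalling the choice $\gp \ll_N (\log\vol(X))^\star$, which is $\ll_N \cpl(X)^\star$ hence absorbable, gives the claimed estimate with a single exponent $\ref{a:notweakerprop}$ depending only on $N$. Finally $\lev(f)$ and $\norm{f}_{C^1(X)}$ enter exactly as in Definition~\ref{def:almost invariance}.

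\textbf{Main obstacle.} The delicate point is the bookkeeping of the two competing scales across the iteration: the ``almost invariance quality'' parameter $k_n$ versus the ``effective generation quality'' $\delta_n k_n$, which must be kept in the right ratio (this is the role of the free parameter $\delta$ in Theorem~\ref{thm:effgen-intro}, chosen after $\ref{a:addinv-intro}$ is fixed) so that Proposition~\ref{prop:addinv-intro} remains applicable at the next step, all while ensuring the cumulative polynomial loss over the $\leq \dim(\gfrak)$ steps still leaves $k$ a genuine positive power of $\log_\gp(\mcpl(Y_\data))$ bounded below by $\cpl(X)$ and $\gp$. Verifying that the hypotheses of Propositions~\ref{prop:addinv-intro} and \ref{prop:transversalpoints} (in particular the lower bound $\gp^k \geq \gp^{\star}\cpl(X)^{\star}$ and the upper bound $\gp^k \leq \mcpl(Y_\data)^{1/\star}$) survive every iteration is the crux of the argument.
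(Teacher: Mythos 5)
Your overall strategy matches the paper: iterate Proposition~\ref{prop:addinv-intro} and Theorem~\ref{thm:effgen-intro}, then conclude via spectral gap (the paper packages the final step as Lemma~\ref{lem:concludingstep}). However, there is a genuine flaw in the termination argument.

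You assert that after applying Theorem~\ref{thm:effgen-intro} the perturbed group satisfies $\dim(\widetilde M)\geq \dim(M_n)+1$ because the new vector $\vpz_{n+1}$ is transversal to $\mfrak_n$, and you use ``dimensions strictly increase'' to terminate the induction. This is incorrect. Theorem~\ref{thm:effgen-intro} rests on Proposition~\ref{prop:effgenind}, whose option~(b) \emph{strictly decreases} the dimension of the generated group; iterating this is precisely how effective generation is achieved. The resulting $\widetilde M$ may well have dimension \emph{smaller} than $M_n$, and the paper explicitly flags this (``We point out that we do not prove that $M'$ has larger dimension than $M$''; and in the inductive step of the paper's proof: ``the assumed list $(\vpz_1,\ldots,\vpz_i)$ might not overlap at all with the list attained at the initial step or the previous step'' and ``$M_i$ is not guaranteed to contain the invariance group $H_\gp$''). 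So the group dimension is not a usable monotone quantity here.

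The paper's termination uses a different, subtler invariant: the list $(\vpz_1,\ldots,\vpz_i)$ grows by one element each step, with the first $\dim(\hfrak)$ entries generating $\hfrak_\gp$ modulo $\gp$ and the remaining ones being highest-weight vectors that are linearly independent modulo $\gp$ and lie outside $\hfrak_\gp\bmod\gp$. This mod-$p$ linear independence survives the perturbation (since $\vpz_j'\equiv\vpz_j\bmod\gp$), the new vector from Proposition~\ref{prop:addinv-intro} is guaranteed not in $\mfrak_i\bmod\gp\supset\hfrak_\gp\bmod\gp+\mathrm{span}\{\vpz_j\bmod\gp\}$, and once $\dim(\gfrak)-\dim(\hfrak)$ such vectors are accumulated the reduction $\mfrak_i\bmod\gp$ must be all of $\gfrak_\gp\bmod\gp$, forcing $\Mbf_i=\rho(\G)$. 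This counting argument is the missing idea in your proposal. Relatedly, the paper initializes the iteration from $M_0=H_\gp$ (using Lemma~\ref{lem:Heffgen} and exact $H_\gp$-invariance), not from $\theta_\gp(\SL_2)$ alone; starting from $H_\gp$ is what makes the ``first $\dim(\hfrak)$ entries generate $\hfrak_\gp\bmod\gp$'' part of the invariant available from the outset. Without that base case, the bookkeeping you describe does not get off the ground.
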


Since $\gp$ can potentially be of size at least $\log(\cpl(Y_\data))$, the conclusion of the above proposition is empty if $Y_\data$ is contained in an intermediate period of complexity at most logarithmic in the complexity of $Y_\data$.
The same is not the case in e.g.~Theorem~\ref{thm:main equi}.
We will remedy this issue later by inducing over intermediate periods.

We will use the following lemma relying on spectral gap for the ambient space~$X$.

\begin{lemma}\label{lem:concludingstep}
There exists $\consta\label{a:concludingstep}>0$ with the following property.
Let $k\geq 1$ with $\gp^k \geq \gp^{\ref{a:concludingstep}}\cpl(X)^{\ref{a:concludingstep}}$.
Suppose that $\mu$ is $\gp^{-\ref{a:concludingstep} k}$-almost invariant under $G_\gp[k]$.
Then for any $f \in C^1(X)$
\begin{align}\label{eq:finalstepequi}
\Big| \int_X f - \int_{Y_\data}f\de \mu \Big| \leq \gp^{- k} \lev(f)\norm{f}_{C^1(X)}.
\end{align}
\end{lemma}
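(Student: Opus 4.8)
The proof of Lemma~\ref{lem:concludingstep} follows the standard ``mixing endgame'' common to this circle of ideas (cf.~\cite[\S7]{EMMV}), adapted to the $C^1$-setting with $L^\infty$-bounds and the explicit level dependence. The strategy is: (1) upgrade almost invariance under the small ball $G_\gp[k]$ to almost invariance under the full horospherical subgroups $U^+,U^-$ of $G_\gp$ attached to $a = a(\gp^{-1})$, using the expansion of $a$; (2) upgrade from the horospherical subgroups to almost invariance under a fixed, \emph{bounded} neighbourhood of the identity in $G_\gp$ (equivalently, almost invariance under $G_\gp[0]$ up to a controlled loss), via the Bruhat decomposition or via a finite product of conjugated horospherical pieces; (3) combine this with the spectral gap on $L^2_0(X)$ to conclude the equidistribution estimate.

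\emph{Step 1.} Since $\gp$ is a good prime, the factor of $G_\gp$ in which $\theta_\gp(\SL_2(\Q_\gp))$ projects nontrivially has a diagonal one-parameter subgroup $a(t)$ whose conjugation action on $\gfrak_\gp$ expands the positive-weight spaces. For $\vpz \in \gfrak_\gp^+[k]$ (positive-weight part) we have $\Ad(a^j)\vpz \in \gfrak_\gp^+[k-j]$; iterating almost invariance under $\exp(\vpz) \in G_\gp[k]$ and conjugating by $a^j$ (using property~\ref{item:alminv-conjugation} of \S\ref{sec:almost invariance}, which introduces a factor $\norm{a^j}^4 = \gp^{4j}$), we obtain that $\mu$ is $\gp^{-k + 5j}$-almost invariant under $G_\gp^+[k-j]$ for $0 \le j \le k$. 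Taking $j$ a small fraction of $k$ gives almost invariance, of quality $\gp^{-\star k}$, under $U^+ = G_\gp^+[0]$ and symmetrically under $U^- = G_\gp^-[0]$ (using $\zpz^-$, i.e.~$a^{-j}$, for the negative weights), and also under the centralizer directions that already sit in $G_\gp[k]$ once iterated. This is where one must be slightly careful: the constant $\ref{a:concludingstep}$ is chosen large enough that the loss $5j$ is absorbed.

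\emph{Step 2.} The product set $U^+ U^- U^+$ (or $U^- A_0 U^+$ with $A_0$ the bounded part of the diagonal, handled by \ref{item:alminv-trivialalminv}) contains a fixed neighbourhood of the identity in $G_\gp$ whose size depends only on $N$ (this uses that $\gp \gg_N 1$, so the relevant Bruhat cell is large, together with the fact that $\G$ is semisimple and quasi-split over $\Q_\gp$). By \ref{item:alminv-group}, $\mu$ is $\gp^{-\star k}$-almost invariant under this neighbourhood, hence under every element of $G_\gp[0]$ at quality $\gp^{-\star k}$ after covering $G_\gp[0]$ by $O_N(1)$ such translated neighbourhoods. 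Combining with strong approximation (recall $X$ is the compact adelic quotient and $\gp$ is good, so $G_\gp$-orbit closures are controlled), and with the trivial almost invariance under all $G_\gep$ for $\gep \ne \gp$ that is built into the level of $f$ anyway, we get: for every $g \in G_\gp[0]$,
\begin{align*}
\Big| \int f(\cdot g)\de\mu - \int f \de\mu\Big| \le \gp^{-\star k}\lev(f)\norm{f}_{C^1(X)}.
\end{align*}

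\emph{Step 3.} Now invoke the spectral gap. The representation of $G_\gp$ (through $\theta_\gp(\SL_2(\Q_\gp))$, or directly the semisimple $G_\gp$) on $L^2_0(X)$ is tempered, so matrix coefficients decay: $|\langle g.F, F'\rangle| \ll \norm{F}_2\norm{F'}_2 \cdot (\dim\text{-factors})\cdot \xi(g)$ with $\xi$ a Harish-Chandra function. Averaging $f$ over a ball $G_\gp[-m]$ (radius $\gp^m$) for a suitable $m \asymp k$, the projection of $f$ to $L^2_0(X)$ has $L^2$-norm $\ll \gp^{-\star m}\norm{f}_\infty \cdot (\text{level factors})$; on the other hand Step~2 shows this average differs from $\int f\de\mu$ by $\le \gp^{-\star k}\lev(f)\norm{f}_{C^1}$. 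Since $\int_X (\text{average of }f) = \int_X f$, comparing the two gives \eqref{eq:finalstepequi} with $m$ (hence the exponent) tuned so that the final rate is exactly $\gp^{-k}$, after absorbing the polynomial-in-$\cpl(X)$ and polynomial-in-level losses into the hypothesis $\gp^k \ge \gp^{\ref{a:concludingstep}}\cpl(X)^{\ref{a:concludingstep}}$ and enlarging $\ref{a:concludingstep}$.

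\textbf{Main obstacle.} The delicate point is bookkeeping the interplay between the \emph{quality} of almost invariance and the \emph{level} $\lev(f)$: conjugating by $a^j$ multiplies levels by $\gp^{2j}$ and the averaging in Step~3 multiplies levels by a further $\gp^{\star m}$, so one must verify that the net exponent on $\gp^{-k}$ stays positive, which forces the choices $j, m \ll \epsilon k$ for a dimensional $\epsilon$, and correspondingly a large $\ref{a:concludingstep}$. A secondary subtlety is that the temperedness bound carries dimension-of-$K$-type factors; these are polynomial in $\lev(f)$ (as in Lemma~\ref{lem:generic for one fct}, bounded by $\lev(f)^{O(1)}$) and are likewise absorbed. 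No genuinely new idea is needed beyond what is already set up in \S\ref{sec:effergthm} and \S\ref{sec:almost invariance}.
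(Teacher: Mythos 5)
The overall strategy (leverage almost invariance under $G_\gp[k]$ plus the spectral gap on $L^2_0(X)$) is the right one, but the specific route you take diverges from the paper's and contains two genuine gaps. The paper's proof is more direct: it never attempts to upgrade almost invariance to a larger subgroup. Instead, it uses \ref{item:alminv-conjugation} together with the exact $U$-invariance of $\mu$ to write $\int f\,\de\mu$ as $\int_X\int_{G_\gp[k]} f(xg_\gp u(s))\,\de g_\gp\,\de\mu(x)$ up to an error $|s|^\star\gp^{-\ref{a:concludingstep} k}\lev(f)\norm{f}_{C^1}$; it then observes that for each fixed $x$ the inner average equals $\langle u(s).f,\psi_x\rangle_{L^2(\mu_X)}$ up to $O(\gp^{-2k}\norm{f}_{C^1})$, where $\psi_x$ is a bump function localized at $x$ and invariant under $\Omega_\gp=G_\gp[k]$ and the level subgroups at the other places; and finally it applies temperedness to the pairing $\langle u(s).f,\psi_x\rangle$. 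The decisive point is that the translation parameter $|s|$ is decoupled from the bandwidth $\gp^{-k}$ of the ball, so $|s|$ can be chosen polynomially large in $\gp^k$ to beat the $\gp^{\star k}$ coming from $\norm{\psi_x}_\infty$ and the level factors, while the error $|s|^\star\gp^{-\ref{a:concludingstep} k}$ from the first step is absorbed by enlarging $\ref{a:concludingstep}$.

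Your Step 2 does not go through as stated. The factorization $U^-A_0U^+$ is an Iwahori-type decomposition with respect to the centralizer $Z(a)$ of the cocharacter $a$ inside $G_\gp$, not with respect to a torus: since $a$ sits in the principal $\SL_2$ of $H_\gp$ and $H_\gp\subsetneq G_\gp$ in general, $a$ is far from regular in $G_\gp$, and $Z(a)$ contains (for instance) the full centralizer $C$ of $\theta_\gp(\SL_2(\Q_\gp))$. Conjugation by powers of $a$ cannot expand the $C$-direction at all, so the set you can establish almost invariance on is of the form $U^-[\ast]\,C[k]\,U^+[\ast]$, which is not a neighborhood of the identity in $G_\gp$ of scale independent of $k$. (One could try to cover the centralizer directions by iterated commutators of $U^\pm$ — this does generate $G_\gp$ since $\theta_\gp(\SL_2)$ projects nontrivially to every $\Q_\gp$-factor by Lemma~\ref{lem:nofactorscontainingHp} — but that is exactly the effective generation machinery of \S\ref{sec:effectivegeneration}, not the Bruhat decomposition, and has to be carried out carefully; it is not ``no genuinely new idea''.) Separately, your claim that taking $j$ a ``small fraction of $k$'' reaches $U^\pm[0]$ is off: to reach level $0$ from level $k$ along a weight-$\lambda$ direction you need $j\asymp k/\lambda$, and the viability hinges on $\ref{a:concludingstep}$ being large compared to the conjugation penalty, not on $j$ being small.

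Your Step 3 has the more serious gap. You deduce that the averaged function $F$ satisfies $\norm{F-\int_X f}_{L^2(\mu_X)}\ll\gp^{-\star m}\norm{f}_\infty\cdot(\text{level})$, and then want $\int_{Y_\data}F\,\de\mu\approx\int_X f$. But $\mu$ is a singular measure on $X$, so smallness of $F-\int_X f$ in $L^2(\mu_X)$ tells you nothing about $\int_{Y_\data}(F-\int_X f)\,\de\mu$ without an $L^\infty$ bound. Converting $L^2$-smallness to $L^\infty$-smallness for a $G_\gp[-m]$-invariant function costs a factor $\gp^{\star m}$ (the inverse square root of the volume of a $\gp^{-m}$-ball), and this competes unfavorably with the $\gp^{-m/(2\tempered)}$ gain from temperedness: both are powers of $\gp^m$, and the sign of the net exponent depends on constants you do not control. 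The paper's proof sidesteps this precisely because the decay comes from $(1+|s|)^{-1/(2\tempered)}$ with $|s|$ a free parameter taken much larger than any fixed power of $\gp^k$, while the bump function $\psi_x$ has $L^2$-norm bounded by $\norm{\psi_x}_\infty\ll\gp^{\star k}$ with the exponent $\star$ fixed in advance. Without some version of that decoupling — either a unipotent translation as in the paper, or a more careful Sobolev/injectivity-radius argument — your comparison in Step 3 does not close.
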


\begin{proof}
Fix $A=\ref{a:concludingstep}>0$ to be determined in the proof and suppose that $\mu$ is $\gp^{-A k}$-almost invariant under $G_\gp[k]$ and $\gp^k \geq \gp^{A}\cpl(X)^{A}$. 
We fix $f\in C^1(X)$ and wish to show \eqref{eq:finalstepequi}.
We may assume $\lev(f)\leq 2\gp^{k}$ as the statement is trivial otherwise. 

Fix $s \in \Q_\gp$ to be determined (of size $|s|$ some small power of $\gp^{ A k}$).
As $\mu$ is $\gp^{-A k}$-almost invariant under $G_\gp[k]$, we have by \ref{item:alminv-conjugation}
\begin{align}\label{eq:finalstep-1}
\int_X \int_{G_\gp[k]} f(x g_\gp u(s)) \de g_{\gp} \de \mu(x)
= \int_X f \de \mu + O\big(|s|^\star \gp^{-Ak} \lev(f) \norm{f}_{C^1(X)}\big),
\end{align}
where $\de g_{\gp}$ denotes the normalized Haar measure on $G_\gp[k]$.

We estimate the inner integral on the left-hand side of \eqref{eq:finalstep-1} using effective decay of matrix coefficients for $L^2_0(\mu_X)$.
Let $\Omega_f = \prod_{\gep} \Omega_\gep$ be the compact open subgroup of $\rho(\G(\A_f))$ with $\Omega_\gep = G_{\gep}[\ord_\gep(\lev(f))]$ when $\gep \neq \gp$ and $\Omega_\gp =G_\gp[k]$.
Let $\psi$ be a smooth non-negative function on $X$ with the following properties:
\begin{itemize}
    \item $\psi$ is invariant under $\Omega_f$.
    \item The support of $\psi$ is contained in
\begin{align*}
x \big(\{g \in \rho(\G(\R)): \norm{g-\id} < \gp^{-2k}\} \times \Omega_f\big).
\end{align*}
\item $\norm{\psi}_\infty \ll \gp^{\star k}$. (The volume of the real manifold $X/\Omega_f$ is $\ll \gp^{\star k}$.)
\item $\int_X \psi =1$.
\end{itemize}
See also the discussion in the proof of Lemma~\ref{lem:partitionofunity}; in view of the assumption on $k$ and \eqref{eq:injradius}, the above neighborhood of $x$ is injective for $A>0$ sufficiently large.
By Lipschitz continuity we conclude that
\begin{align}
\int_{G_\gp[k]} f(x g_\gp u(s)) \de g_{\gp} 
&= \int_X f(yu(s))\psi(y) \de \mu_X(y) +O\big(\gp^{-2k}\norm{f}_{C^1(X)}\big)\nonumber\\
&= \langle u(s).f,\psi\rangle_{L^2(\mu_X)} +O\big(\gp^{-2k}\norm{f}_{C^1(X)}\big).\label{eq:finalstep-2}
\end{align}
As the representation of $\theta_\gp(\SL_2(\Q_\gp))$ on $L_0^2(X)$ is $\tempered$-tempered, we have as in Lemma~\ref{lem:generic for one fct}
\begin{align}
 \langle u(s).f,\psi\rangle_{L^2(\mu_X)} &= \int_X f + O\big((1+|s|)^{-\star} \lev(f)^{\frac{3}{2}}\lev(\psi)^{\frac{3}{2}}\norm{f}_\infty \norm{\psi}_\infty\big)\nonumber\\
 &= \int_X f + O\big((1+|s|)^{-\star} \gp^{\star k}\norm{f}_\infty \big) \label{eq:finalstep-3}
\end{align}
using the above properties of $\psi$ and the assumption that $\lev(f) \leq 2 \gp^{k}$.
Combining \eqref{eq:finalstep-1}, \eqref{eq:finalstep-2}, and \eqref{eq:finalstep-3} we obtain
\begin{align*}
\Big| \int_X f - \int_{Y_\data}f\de \mu \Big|
\ll \big(|s|^\star \gp^{-Ak} + \gp^{-2 k} + (1+|s|)^{-\star} \gp^{\star k}\big) \lev(f) \norm{f}_{C^1(X)}.
\end{align*}
For a suitable choice of $s \in \Q_{\gp}$ and $A>0$ the middle term in the above right-hand side dominates. For $\gp$ sufficiently large, this proves the lemma.
\end{proof}

\begin{proof}[Proof of Proposition~\ref{prop:notweakerprop}]
The reader is advised to first recall the inductive scheme outlined in \S\ref{sec:strategy} that we will follow here.
Set $A = \max\{\ref{a:addinv-intro},3\ref{a:concludingstep}\}$ and $\delta = 1/(3A)$.
The following argument alternates between applying Proposition~\ref{prop:addinv-intro} and Theorem~\ref{thm:effgen-intro}.

\textsc{Initial Step:}
Starting from the obvious invariance under $H_\gp$ we find additional almost invariance.
Let $k>0$ be maximal with $\gp^k \leq \mcpl(Y_\data)^{1/\ref{a:addinv-intro}}$.
If $\gp^{k} < \gp^{\ref{a:addinv-intro}}\cpl(X)^{\ref{a:addinv-intro}}$, the proposition is trivially true. So we assume otherwise.
We take $M = H_\gp$ and note that $\mu$ is invariant, and in particular $\gp^{-\ref{a:addinv-intro} k}$-almost invariant, under $M[3k]$.
By Lemma~\ref{lem:Heffgen}, $M = H_\gp$ is $0$-generated (and, in particular, $k$-generated).
Overall, we are in position to apply Proposition~\ref{prop:addinv-intro}.
Thus, there exists a highest weight vector $\vpz \in \gfrak_\gp[0]$ of non-zero weight with $\vpz \mod \gp \not \in \hfrak_{\gp}[0] \mod \gp$ so that $\mu$ is $\gp^{-k/\ref{a:addinv-intro}}$-almost invariant under $\vpz$.

Let $\mathcal{N}$ be the list consisting of the nilpotents $\wpz_1,\ldots,\wpz_{\dim(\hfrak)}$ of pure non-zero weight $0$-generating $H_\gp$ (provided by Lemma~\ref{lem:Heffgen}) and the additional direction $\vpz$.
Now apply Theorem~\ref{thm:effgen-intro} with $\mathcal{N}$, with $k/\ref{a:addinv-intro}$ (instead of $k$) and with the above $\delta$.
As before, if \eqref{eq:lowerboundeffgen} is not satisfied, we conclude.
Else, we obtain a perturbation $\vpz_1,\ldots,\vpz_{\dim(\hfrak)}$ of $\wpz_1,\ldots,\wpz_{\dim(\hfrak)}$ and $\vpz_{\dim(\hfrak)+1}$ of $\vpz$ as well as a constant $\alpha_1 \gg 1$ so that the following hold:
\begin{itemize}
\item $\norm{\vpz_{i} - \wpz_i} < \gp^{-\alpha_1 k/\ref{a:addinv-intro} + \dim(\G)}$ for $1 \leq i \leq \dim(\hfrak)$.
\item $\norm{\vpz_{\dim(\hfrak)+1} - \vpz} < \gp^{-\alpha_1 k/\ref{a:addinv-intro} + \dim(\G)}$.
\item The vector $\vpz_{\dim(\hfrak)+1}$ is a highest weight vector of pure non-zero weight.
\item The vectors $\vpz_i$, $i\leq \dim(\hfrak)$, are of pure non-zero (possibly negative) weight.
\item If $\Mbf_1$ is the Zariski closure of the group generated by $\theta_{\gp}(\SL_2(\Q_\gp))$ and the one-parameter unipotent subgroups $\{\exp(t\vpz_i)\colon t \in \Q_\gp\}_{1 \leq i \leq \dim(\hfrak)+1}$, then $M_1 = \Mbf_1(\Q_\gp)$ is $\delta\alpha_1 k/\ref{a:addinv-intro}$-generated by nilpotents in $\gfrak_\gp[0]$ of pure non-zero weight.
Specifically, this list of nilpotents consists of vectors from $\{\vpz_1,\ldots,\vpz_{\dim(\hfrak)+1}\}$ and the upper and lower nilpotent $\zpz^+,\zpz^-$ from $\mathrm{D}\theta_\gp(\mathfrak{sl}_2(\Q_\gp))$.
\end{itemize}
For convenience, we set $k_1 = \lceil \delta \alpha_1 k/\ref{a:addinv-intro}\rceil$ so that $k_1 \geq c_1 k$ for some constant $c_1>0$ depending only on $N$ and so that $M_1$ is $k_1$-generated in the above manner.
By the first bullet point, $\mu$ is $\gp^{-2Ak_1}$-almost invariant under the new directions $\vpz_i$ (assuming that $k_1$ is sufficiently large).
By Lemma~\ref{lem:implicitfctthm}, this implies that $\mu$ is $\gp^{-Ak_1}$-almost invariant under the group $M[3k_1]$.
Note that we have used our choice of $\delta>0$ (and in particular our freedom to do so) here.

Also, notice that the Lie algebra of the group generated by the one-parameter groups $\{\exp(t\vpz_i)\colon t \in \Q_\gp\}_{1 \leq i \leq \dim(\hfrak)}$ agrees with $\hfrak_\gp$ modulo $\gp$.
Since $\vpz \mod \gp \not\in \hfrak_\gp[0] \mod \gp$, this shows that $\dim(\Mbf_1) \geq \dim(\Hbf)+1$ (or, more precisely, the dimension of $\Mbf_1$ is at least the dimension of $\Hbf$ plus the dimension of the non-trivial representation generated by the highest weight vector $\vpz_{\dim(\hfrak)+1}$).

\textsc{Inductive Step:}
Suppose by induction that for $i \in \{\dim(\hfrak)+1,\ldots,\dim(\gfrak)\}$ we are given $k_i \geq c_i k$ for some constant $c_i$ depending only on $N$ and nilpotents $\vpz_1,\ldots,\vpz_i\in \gfrak_\gp[0]$ of pure non-zero weight with the following properties:
\begin{itemize}
\item The Lie algebra of the group generated by $\{\exp(t\vpz_i)\colon t \in \Q_\gp\}_{1 \leq i \leq \dim(\hfrak)}$ agrees with the Lie algebra $\hfrak_\gp$ modulo $\gp$.
\item The vectors $\vpz_{\dim(\hfrak)},\ldots,\vpz_i$ are highest weight vectors that are linearly independent modulo $\gp$ and that are not contained in $\hfrak_\gp[0] \mod \gp$.
\item Let $\Mbf_i$ the Zariski closure of the group generated by the principal $\SL_2$ and the one-parameter unipotent subgroups  $\{\exp(t\vpz_i)\colon t \in \Q_\gp\}$.
Then $M_i= \Mbf_i(\Q_\gp)$ is $k_i$-generated by a list of nilpotents so that each element is contained in $\{\vpz_1,\ldots,\vpz_{i}\}$ or is equal to $\zpz^+$ or $\zpz^-$.
\item The measure $\mu$ is $\gp^{-Ak_i}$-almost invariant under $M_i[3k_i]$.
\end{itemize}
Note that the assumed list $(\vpz_1,\ldots,\vpz_i)$ might not overlap at all with the list attained at the initial step or the previous step in the induction.
Also, note that $M_i$ is not guaranteed to contain the invariance group $H_{\gp}$.

If $\Mbf_i$ is not a proper subgroup, Lemma~\ref{lem:concludingstep} implies the proposition (and the induction is aborted).
So assume that $\Mbf_i$ is a proper subgroup.
We apply Proposition~\ref{prop:addinv-intro} for the group $M_i$ and for $k_i$.
If $\gp^{k_i}$ is too small, we conclude.
Otherwise, we obtain a highest pure non-zero weight vector $\vpz \in \gfrak_\gp[0]$ with $\vpz \mod \gp \not\in \mfrak_i[0] \mod \gp$ under which $\mu$ is $\gp^{-k_i/\ref{a:addinv-intro}}$-almost invariant.

Now apply Theorem~\ref{thm:effgen-intro} to the list $(\vpz_1,\ldots,\vpz_i,\vpz)$, $\delta = \frac{1}{3 \ref{a:addinv-intro}}$ (as before), and $k = k_i/\ref{a:addinv-intro}$.
Again, if $k$ is too small, we conclude.
Otherwise, we find a new list $(\vpz_1',\ldots,\vpz_{i+1}')\in \gfrak_\gp[0]^{i+1}$ of nilpotent elements and $\alpha_{i} \in (\ref{k:effgen-intro}\delta^{\dim(\gfrak)},1]$ such that the inductive assumption is satisfied for the nilpotents $\vpz_1',\ldots,\vpz_{i+1}'$ and for $k_{i+1} = \lceil \delta \alpha_i k_i/\ref{a:addinv-intro}\rceil$.

Finally, we remark that the above induction automatically stops after $\dim(\gfrak)-\dim(\hfrak)$ many steps. This proves the proposition.
\end{proof}

\begin{proof}[Proof of Theorem~\ref{thm:main equi} for $F=\Q$]
Recall the setting:
\begin{itemize}
    \item $\G$ is a $\Q$-anisotropic simply connected semisimple group and $\rho: \G \to \SL_N$ is a homomorphism with central kernel defined over $\Q$.
    \item $\data = (\H,\iota,g_\data)$ is semisimple simply connected data over $\Q$ consistent with the pair $(\G,\rho)$.
    \item $X = [\rho(\G(\A))]$ and $Y_\data = [\iota(\H(\A))g_\data] \subset X$.
\end{itemize}

To simplify notation, we will first reduce to the case $g_\data = \id$.
By Theorem~\ref{thm:diameter} (based on \cite{MSGT-diameter} and Proposition~\ref{prop:smallvectors}) using a good prime $\gp_0$ for $X$ with $\gp_0 \ll \log(\cpl(X))^2$ there exists $\gamma \in \G(\Q)$  so that
\begin{align*}
\norm{\rho(\gamma)g_\data}_\infty \ll 1,\
\norm{\rho(\gamma)g_\data}_{\gep} = 1 \text{ for all } \gep \neq \gp_0,\
\text{ and }\norm{\rho(\gamma)g_\data}_{\gp_0} \ll \cpl(X)^\star.
\end{align*}
Let $g = \rho(\gamma) g_\data$. Then $Y_\data$ agrees with $Y_{(\H,\iota^\gamma,g)}$ where $\iota^\gamma(\cdot) = \rho(\gamma) \iota(\cdot) \rho(\gamma)^{-1}$.
By the above bounds on $g$, the theorem follows if we establish it for $(\H,\iota^\gamma,\id)$.
Hence, we assume without loss of generality that $g_\data = \id$.
In the absence of the translate, we may use the complexity of an orbit and the height of the associated group interchangably.

Recall e.g.~from \cite[Lemma 8.6]{AW-realsemisimple} that there exists $\consta\label{a:heightnormal}>0$ depending only on $N$ so that $\height(\Lbf) \ll \height(\G)^{\ref{a:heightnormal}}$ for any normal subgroup $\Lbf \lhd \G$.
Set $A = \max\{2\ref{a:notweakerprop},\ref{a:heightnormal}\}$.
We prove the following claim by induction on the dimension of $\G$, which in turn clearly implies the theorem.

\begin{claim*}
Assuming $g_\data = \id$ we have for all $C^1$-functions $f$
\begin{align}\label{eq:proofmain-inductiveclaim}
\Big| \int_{X} f - \int_{Y_\data} f\Big|
\ll \frac{\cpl(X)^{A}}{\mcpl(Y_\data)^{1/A}} \lev(f)\norm{f}_{C^1(X)}.
\end{align}
\end{claim*}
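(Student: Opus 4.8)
\textbf{Proof plan for the claim (Theorem~\ref{thm:main equi}, case $F=\Q$).}
The plan is to run an induction on $\dim(\G)$ whose base case and inductive step are both fed by Proposition~\ref{prop:notweakerprop}. The single issue that Proposition~\ref{prop:notweakerprop} does not handle is that the good prime $\gp$ supplied by Proposition~\ref{prop:splitting place} is only bounded by $\max\{(\log\vol(Y_\data))^2,(\log\vol(X))^2\}$, so the factor $\gp^{\ref{a:notweakerprop}}$ in that proposition is harmless \emph{unless} $\mcpl(Y_\data)$ is only logarithmically large compared to $\cpl(Y_\data)$ --- which forces us to peel off a genuine intermediate orbit and apply the inductive hypothesis there.

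\textbf{Step 1: apply Proposition~\ref{prop:notweakerprop} directly when it suffices.}
Fix a good prime $\gp$ for $X$ and $Y_\data$ with $\gp\ll \max\{(\log\vol(Y_\data))^2,(\log\vol(X))^2\}$; by Proposition~\ref{prop: vol complexity'} this gives $\gp \ll (\log(\cpl(X)\cpl(Y_\data)))^2 \ll_\varepsilon \cpl(X)^\varepsilon \mcpl(Y_\data)^\varepsilon$ for any small $\varepsilon>0$, say with an explicit $\varepsilon$ depending only on $\ref{a:notweakerprop}$. If we are in the regime where $\mcpl(Y_\data) \geq \cpl(X)^{2\ref{a:notweakerprop}^2}$ --- equivalently where $\gp^{\ref{a:notweakerprop}}$ can be absorbed into a fixed power of $\mcpl(Y_\data)^{1/(2\ref{a:notweakerprop})}$ at the cost of a fixed power of $\cpl(X)$ --- then Proposition~\ref{prop:notweakerprop} directly yields \eqref{eq:proofmain-inductiveclaim} with room to spare (for instance with $1/A$ replaced by $1/(2\ref{a:notweakerprop})$ on the right, and $\cpl(X)^{\ref{a:notweakerprop}}$ bounded by $\cpl(X)^{A}$). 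The case $\dim(\G)$ minimal is then complete, since there is then no proper intermediate semisimple subgroup and the alternative in Step 2 cannot occur.

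\textbf{Step 2: the small-$\mcpl$ regime via an intermediate orbit.}
Suppose instead $\mcpl(Y_\data) < \cpl(X)^{2\ref{a:notweakerprop}^2}$. By definition of $\mcpl$ there is a semisimple $F$-subgroup $\Mbf$ with $\iota(\H)\subseteq \Mbf\subsetneq \rho(\G)$ realizing the minimum, so $\cpl([\Mbf(\A)])=\mcpl(Y_\data)$ and thus $\height(\Mbf) \ll \cpl(X)^{\star}$ by Proposition~\ref{prop: vol complexity'} applied in $\SL_N$ (here we use $g_\data=\id$). We may replace $\Mbf$ by the minimal normal $F$-subgroup of $\Mbf$ containing $\iota(\H)$ at the cost of a fixed power of $\height(\Mbf)$, using \cite[Lemma 8.6]{AW-realsemisimple}, and thereby assume $\iota(\H)$ is not contained in a proper $F$-factor of $\Mbf$; write $\Mbf = \Mbf_0 \times \Mbf'$ where $\Mbf_0$ is simply connected semisimple (the simply connected cover of $\Mbf$, which is again $\Q$-anisotropic since $\Mbf < \G$ is) with a natural map $\tilde\iota\colon \H\to \Mbf_0 \hookrightarrow \SL_N$ through $\rho$; all the relevant complexities of $(\Mbf_0,\tilde\iota)$ are bounded polynomially in $\cpl(X)$ and $\cpl(Y_\data)$. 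Now decompose
\begin{align*}
\Big|\int_X f - \int_{Y_\data} f\Big|
\leq \Big|\int_X f - \int_{[\Mbf(\A)]} f\Big| + \Big|\int_{[\Mbf(\A)]} f - \int_{Y_\data} f\Big|.
\end{align*}
For the first term we apply the claim with $\G$ (so $\dim\G$ unchanged but now the intermediate orbit is $[\Mbf(\A)]$, whose minimal complexity is $\geq \mcpl(Y_\data)$, indeed $\gg \cpl(X)^{2\ref{a:notweakerprop}^2}$ by our having assumed we are \emph{not} in the Step 1 regime for $[\Mbf(\A)]$; if $[\Mbf(\A)]$ itself had small $\mcpl$ we would recurse once more, but this can happen at most $\dim(\G)$ times before reaching a maximal intermediate orbit, at which point Step 1 applies) --- more cleanly, apply Step 1's estimate to $[\Mbf(\A)]\subset X$ directly, since $\mcpl([\Mbf(\A)]) \geq \mcpl(Y_\data) \geq \mcpl(Y_\data)^{1/2}\cdot\mcpl(Y_\data)^{1/2}$ and we may reorganize. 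For the second term we apply the inductive hypothesis for the strictly smaller group $\Mbf_0$ (since $\Mbf\subsetneq\rho(\G)$ forces $\dim\Mbf_0 < \dim\G$), viewing $Y_\data$ as an algebraic homogeneous set inside $[\Mbf_0(\A)] \cong [\Mbf(\A)]$; its complexity and min-complexity relative to $\Mbf_0$ are $\gg \mcpl(Y_\data)^{\star}/\cpl(X)^\star$ by reduction-theoretic comparison (the intermediate groups between $\iota(\H)$ and $\Mbf$ are a subfamily of those between $\iota(\H)$ and $\rho(\G)$). Both terms are then $\ll \cpl(X)^{A}\mcpl(Y_\data)^{-1/A}\lev(f)\|f\|_{C^1(X)}$ after adjusting $A$ to absorb the finitely many polynomial losses.

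\textbf{Expected main obstacle.}
The genuinely delicate point is bookkeeping in Step 2: one must check that $\lev(f)$, $\|f\|_{C^1(X)}$, and the ambient $C^1$-structure are compatible when $f$ is restricted to the smaller homogeneous space $[\Mbf_0(\A)]$ --- i.e.\ that a function of level $L$ on $X$ restricts to a function of level $\ll_N L$ on $[\Mbf_0(\A)]$ with comparable $C^1$-norm --- and that the complexity $\cpl(X)$ for the ambient space of the inductive application (namely $[\Mbf_0(\A)]$) is bounded by $\cpl(X)^\star$. These are routine consequences of reduction theory and the relation between $\cpl$ and $\vol$ in Proposition~\ref{prop: vol complexity'}, but they are exactly where the polynomial exponents accumulate, so $A$ must be chosen last, after all comparisons are in hand. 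Everything else is a direct invocation of Proposition~\ref{prop:notweakerprop} and Theorem~\ref{thm:diameter}.
\qed
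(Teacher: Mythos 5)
The central gap is in Step~1, where you assert
\[
\gp \ll \bigl(\log(\cpl(X)\cpl(Y_\data))\bigr)^2 \ll_\varepsilon \cpl(X)^\varepsilon \mcpl(Y_\data)^\varepsilon.
\]
The first inequality is fine (Proposition~\ref{prop: vol complexity'}), but the second is false: $\cpl(Y_\data)$ can be arbitrarily large compared to $\mcpl(Y_\data)$. For instance, $\iota(\H)$ may sit inside a fixed intermediate semisimple group $\Lbf$ of bounded complexity while $\iota(\H)$ itself is "deep" inside $\Lbf$ (so $\cpl(Y_\data)\to\infty$ while $\mcpl(Y_\data)$ stays bounded). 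Then $(\log\cpl(Y_\data))^2$ overwhelms any power of $\mcpl(Y_\data)$, and the factor $\gp^{\ref{a:notweakerprop}}$ coming out of Proposition~\ref{prop:notweakerprop} is uncontrolled. So the direct application of Proposition~\ref{prop:notweakerprop} to $Y_\data\subset X$ in your Step~1 does not close; this is precisely the scenario Proposition~\ref{prop:notweakerprop} is unable to handle on its own, as the paper flags.

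The paper's resolution is to \emph{always} peel off an intermediate orbit $Y_{\data'}=[\Lbf(\A)]$ and apply Proposition~\ref{prop:notweakerprop} to the pair $(X,Y_{\data'})$ rather than $(X,Y_\data)$, so that the good prime only sees $\cpl(X)$ and $\cpl(Y_{\data'})$, and then close by induction for $(Y_{\data'},Y_\data)$. This forces two constraints on $\Lbf$: $\cpl(Y_{\data'})$ must be polynomially bounded by $\mcpl(Y_\data)$ (to control $\gp$), and the inductive rate $\cpl(Y_{\data'})^A/\mcpl_\Lbf(Y_\data)^{1/A}$ must actually decay. The paper guarantees both by running a descending chain $\Lbf_0\gneq\Lbf_1\gneq\cdots$ starting from a minimizer $\Lbf_0$ of $\mcpl(Y_\data)$, stopping at the first $\Lbf_j$ in the \emph{gap collection} $\mathcal{B}$ (i.e.\ $\height(\Lbf_j)^{3A^2}<\height(\Lbf')$ for every proper semisimple $\Q$-subgroup $\Lbf'\lneq\Lbf_j$ containing $\iota(\H)$). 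The chain has $\leq\dim(\G)$ steps, giving $\cpl(Y_{\data'})\leq\mcpl(Y_\data)^{(3A^2)^{\dim\G}}$ for the prime, and the gap condition gives $\mcpl_\Lbf(Y_\data)^{1/A}\geq\cpl(Y_{\data'})^{3A}$, so the inductive ratio is $\leq\cpl(Y_{\data'})^{-2A}\leq\mcpl(Y_\data)^{-2A}$.

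Your Step~2 is missing exactly this gap condition. If $\Mbf$ is merely the minimizer (so $\cpl([\Mbf(\A)])=\mcpl(Y_\data)\leq\mcpl_\Mbf(Y_\data)$), the inductive rate for the second term is $\leq\mcpl(Y_\data)^{A-1/A}$, which grows rather than decays for $A>1$. In your framework this failure is masked because your case split makes Step~2 trivially true once $A$ is large (the claimed bound exceeds $1$ when $\mcpl(Y_\data)<\cpl(X)^{2\ref{a:notweakerprop}^2}$), so all the real work is pushed onto Step~1 — where the gap I described above lives. Your triangle-inequality decomposition and your first-term estimate (Proposition~\ref{prop:notweakerprop} applied to $[\Mbf(\A)]\subset X$, using $\mcpl([\Mbf(\A)])\geq\mcpl(Y_\data)$) are both sound, and the restriction-of-$\lev(f)$ and $C^1$-norm issues you flag as the ``expected main obstacle'' are indeed present but genuinely routine, as you anticipate. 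The missing idea is the chain-and-gap construction itself.
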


If $\iota(\H)$ is contained in a proper normal subgroup $\Lbf\lhd \rho(\G)$, then
\begin{align*}
\mcpl(Y_\data) \leq \cpl([\Lbf(\A)]) = \height(\Lbf) \ll \cpl(X)^{\ref{a:heightnormal}}.
\end{align*}
Thus, the claim in \eqref{eq:proofmain-inductiveclaim} is trivial in this case.
Assume from now that $\iota(\H)$ is not contained in a proper normal subgroup $\Lbf\lhd \rho(\G)$.

We prove \eqref{eq:proofmain-inductiveclaim} by induction on the dimension of $\G$ and so may assume that \eqref{eq:proofmain-inductiveclaim} holds for $Y_{\data}$ considered inside intermediate semisimple orbits.
Let $\mathcal{B}$ be the collection of connected semisimple proper $\Q$-subgroups $\Lbf < \rho(\G)$ with $\iota(\H) < \Lbf$ and with
\begin{align*}
\height(\Lbf)^{3A^2} <  \height(\Lbf')
\end{align*}
for all semisimple $\Q$-subgroups $\Lbf'\lneq \Lbf$ containing $\iota(\H)$.
For convenience, we define the relative minimal complexity
\begin{align}\label{eq:relative mincpl}
\mcpl_\Lbf(Y_\data) = \min\big\{\cpl([\Lbf'(\A)]): 
\iota(\H)\subseteq \Lbf' \subsetneq \Lbf \text{ semisimple}\big\}.
\end{align}
Then $\Lbf \in \mathcal{B}$ if and only if $\cpl([\Lbf(\A)])^{3A} < \mcpl_\Lbf(Y_\data)^{1/A}$.

Let $\Lbf_0 > \iota(\H)$ be a connected semisimple $\Q$-subgroup of $\rho(\G)$ so that 
\begin{align*}
\cpl([\Lbf_0(\A)]) = \mcpl(Y_\data).
\end{align*}
We construct a subgroup of $\Lbf_0$ in the collection $\mathcal{B}$ as follows:
If $\Lbf_0 \in \mathcal{B}$, we are done.
Otherwise, there exists $\Lbf_1 \lneq \Lbf_0$ with $\height(\Lbf_1)\leq 
\height(\Lbf_0)^{3A^2}$.
Continuing inductively, we find a sequence of subgroups $\Lbf_0 \gneq \Lbf_1 \gneq \ldots$ which has to terminate after at most $\dim(\G)$ steps.
Thus, there exists $j \leq \dim(\G)$ such that $\Lbf_j \in \mathcal{B}$ and
\begin{align*}
\height(\Lbf_j) \leq \height(\Lbf_0)^{(3A^2)^j}.
\end{align*}
In particular,
\begin{align}\label{eq:goodintermediategroup}
\mcpl(Y_\data) \leq \cpl([\Lbf_j(\A)]) \leq \mcpl(Y_\data)^{(3A^2)^{\dim(\G)}}
\end{align}
For simplicity, we set $\Lbf := \Lbf_j$.
Note that $\Lbf = \iota(\H)$ is entirely possible and not ruled out here.
Write $\tilde{\Lbf}$ for the simply connected cover of $\Lbf$ and $\iota_\Lbf\colon \tilde{\Lbf} \to \Lbf$ for the covering map.
Set $\data' = (\tilde{\Lbf},\iota_\Lbf,\id)$.

We wish to show equidistribution with a rate for $Y_{\data'}$ in $X$ and for $Y_\data$ in $Y_{\data'}$.
For the former, note that there is by Proposition~\ref{prop:splitting place} a good prime $\gp$ for the homogeneous set $Y_{\data'}$ and for $X$ with
\begin{align*}
\gp \ll \max\{\log(\cpl(X)),\log(\cpl(Y_{\data'}))\}^2 \ll \cpl(X) \log(\mcpl(Y_\data))^2
\end{align*}
where \eqref{eq:goodintermediategroup} was used for the second inequality.
Thus, by Proposition~\ref{prop:notweakerprop} (noting also that $\Lbf$ is not contained in a proper normal subgroup of $\G$ because $\iota(\H)$ is not) we have for all $C^1$-functions $f$ on $X$
\begin{align}\nonumber
\Big| \int_{X}f -\int_{Y_{\data'}}f \Big| 
&\ll \frac{\cpl(X)^{2\ref{a:notweakerprop}}\log(\mcpl(Y_\data))^{2\ref{a:notweakerprop}}}{\mcpl(Y_{\data'})^{1/\ref{a:notweakerprop}}} \lev(f) \norm{f}_{C^1(X)}\\
&\ll \frac{\cpl(X)^{2\ref{a:notweakerprop}}}{\mcpl(Y_{\data})^{1/(2\ref{a:notweakerprop})}} \lev(f) \norm{f}_{C^1(X)}\label{eq:equiofintermediateinbig}
\end{align}
using also $\mcpl(Y_{\data'}) \geq \mcpl(Y_{\data})$.

By the inductive hypothesis, we have for all $f \in C^1(X)$
\begin{align*}
\Big| \int_{Y_{\data'}} f - \int_{Y_{\data}} f\Big| 
\ll \frac{\cpl(Y_{\data'})^{A}}{\mcpl_\Lbf(Y_{\data})^{1/A}} \lev(f|_{Y_{\data'}}) \norm{f|_{Y_{\data'}}}_{C^1(Y_{\data'})}
\end{align*}
Note that since $\Lbf \in \mathcal{B}$
\begin{align*}
\mcpl_\Lbf(Y_{\data})^{1/A} \geq \cpl(Y_{\data'})^{3A}.
\end{align*}
Also, any $C^1$-function $f$ on $X$ is invariant under $\prod_\gep G_\gep[\ord_\gep(\lev(f))]$ and hence the restriction $f|_{Y_{\data'}}$ is also invariant under $\prod_\gep L_\gep[\ord_\gep(\lev(f))]$.
Moreover, we have $\norm{f|_{Y_{\data'}}}_{C^1(Y_{\data'})} \ll \norm{f}_{C^1(X)}$ where the implicit constant accounts for possibly incompatible choices of orthonormal bases of the Lie algebras $\lfrak_\infty < \gfrak_\infty$.
Overall, we obtain
\begin{align}
\Big| \int_{Y_{\data'}} f - \int_{Y_{\data}} f\Big| 
\ll \frac{1}{\mcpl(Y_\data)^{2A}} \lev(f) \norm{f}_{C^1(X)}.
\label{eq:equiofsmallinintermediate}
\end{align}
Combining \eqref{eq:equiofintermediateinbig} and \eqref{eq:equiofsmallinintermediate} proves the claim in \eqref{eq:proofmain-inductiveclaim}. 
As discussed earlier, the theorem follows.
\end{proof}

\begin{proof}[Proof of Theorem~\ref{thm:asinEMV} for $F=\Q$]
As in the above proof of Theorem~\ref{thm:main equi} for $F=\Q$, we may assume $g_\data = \id$.
We set $A= 2\ref{A:main}^2$ (where $\ref{A:main}>1$ is as in Theorem~\ref{thm:main equi} for $F=\Q$) and $\delta= A^{-\dim(\G)}$.
In view of the assumptions of the theorem, we may assume $B^{\delta/(2\ref{A:main})} \geq \cpl(X)^{\ref{A:main}}$ or equivalently $B \geq \cpl(X)^{A/\delta}$.

For any $\delta'>0$ let $\mathcal{C}_{\delta'}$ be the collection of connected semisimple $\Q$-subgroups $\Lbf <\rho(\G)$ containing $\iota(\Hbf)$ and 
\begin{align*}
B^{\delta'} < \mcpl_{\Lbf}(Y_{\data}),
\end{align*}
where we use the relative minimal complexity $\mcpl_{\Lbf}(\cdot)$ introduced in \eqref{eq:relative mincpl} as a shorthand.

If $\rho(\G) \in \mathcal{C}_\delta$, we have $\mcpl(Y_\data) \geq B^\delta$ (by definition of $\mathcal{C}_\delta$) and so
\begin{align*}
\frac{\cpl(X)^{\ref{A:main}}}{\mcpl(Y_\data)^{1/\ref{A:main}}}
\leq \frac{B^{\delta/(2\ref{A:main})}}{B^{\delta/\ref{A:main}}} \leq B^{-\delta/(2\ref{A:main})}
\end{align*}
so that the theorem follows from Theorem~\ref{thm:main equi} for $F=\Q$ with $\ref{a:main2} \geq 3 \ref{A:main}/\delta$ (accounting also for implicit constants and using again that $B$ is sufficiently large).

So assume now that $\rho(\G)\not \in \mathcal{C}_\delta$.
Thus, there exists a proper semisimple subgroup $\Lbf_1<\rho(\G)$ with $\Lbf_1 > \iota(\Hbf)$ and $\cpl([\Lbf_1(\A)]) \leq B^{\delta}\leq B$ (where the exponent has worsened by a factor of $A$).
Let $\tilde{\Lbf}_1$ be the simply connected cover of $\Lbf_1$ and let $\iota_{\Lbf_1}\colon \tilde{\Lbf}_1 \to \Lbf_1$ be the covering map.

If $\Lbf_1 \in \mathcal{C}_{A \delta}$, we apply Theorem~\ref{thm:main equi} for $F=\Q$ with the pair $(\tilde{\Lbf}_1,\iota_{\Lbf_1})$ instead of $(\G,\rho)$.
Thus, we have
\begin{align*}
\frac{\cpl([\Lbf_1(\A)])^{\ref{A:main}}}{\mcpl_{\Lbf_1}(Y_\data)^{1/\ref{A:main}}}
\leq \frac{B^{\ref{A:main}\delta}}{B^{A\delta/\ref{A:main}}}
\leq B^{-\ref{A:main}\delta}
\end{align*}
and the theorem follows from Theorem~\ref{thm:main equi} for $F=\Q$ with $\ref{a:main2}\geq 2/(\ref{A:main}\delta)$.

If $\Lbf_1 \not \in \mathcal{C}_{A\delta}$, there exists a proper semisimple subgroup $\Lbf_2 < \Lbf_1$ with
\begin{align*}
 \cpl([\Lbf_2(\A)]) \leq B^{A\delta} \leq B
\end{align*}
containing $\iota(\Hbf)$.
We continue like this by induction constructing a sequence of subgroups $\Lbf_1 \gneq \Lbf_2 \gneq\Lbf_3 \gneq \ldots$ containing $\iota(\Hbf)$ with $\cpl([\Lbf_j(\A)]) \leq B^{A^{j-1}\delta}\leq B$ until we find a subgroup $\Lbf_j \in \mathcal{C}_{A^j \delta}$ with $j < \dim(\G)$.
Let $\tilde{\Lbf}_j$ be the simply connected cover of $\Lbf_j$ and let $\iota_{\Lbf_j}\colon \tilde{\Lbf}_j \to \Lbf_j$ be the covering map.
Applying Theorem~\ref{thm:main equi} for $F=\Q$ with the pair $(\tilde{\Lbf}_j,\iota_{\Lbf_j})$ we obtain the rate
\begin{align*}
\frac{\cpl([\Lbf_j(\A)])^{\ref{A:main}}}{\mcpl_{\Lbf_j}(Y_\data)^{1/\ref{A:main}}}
\leq \frac{B^{A^{j-1}\delta\ref{A:main}}}{B^{A^j \delta/\ref{A:main}}}
= \frac{B^{A^{j-1}\delta\ref{A:main}}}{B^{2A^{j-1} \delta\ref{A:main}}} = B^{-A^{j-1}\delta\ref{A:main}}.
\end{align*}
This implies the theorem.
\end{proof}

 \subsection{The number field case}\label{sec:numberfield case}

We finally prove Theorems~\ref{thm:main equi} and \ref{thm:asinEMV} as stated by reducing them to the already proven case $F=\Q$ using restriction of scalars.
So let $F$ be a number field and write $D_F$ for the absolute value of its discriminant $\disc(F)$ and $d = [F:\Q]$ for the degree.

Recall first the setup:
\begin{itemize}
    \item $\G$ is an $F$-anisotropic simply connected semisimple group and $\rho: \G \to \SL_N$ is a homomorphism with finite central kernel defined over $F$.
    \item $\data = (\H,\iota,g_\data)$ is semisimple simply connected data over $F$ that is consistent with $(\G,\rho)$, i.e.~$\iota\colon \H \to \rho(\G)$ is a homomorphism with central kernel and $g_\data \in \rho(\G(\A_F))$.
\end{itemize}

We introduce some notation pertaining to restriction of scalars.
Set for simplicity $N' = N \, [F:\Q] = N \, d$.
By Minkowski's second theorem, we may fix linearly independent vectors $\alpha_1,\ldots,\alpha_{d} \in \mathcal{O}_F$ which span a sublattice of index $O_{d}(1)$ and which have norm $\ll_{d} D_F^{\frac{1}{2}}$ (under the complete embedding $F \to \R^{d}$). Representing multiplication by $F$ in this basis we obtain homomorphisms
$\vartheta\colon F \to \Mat_{d}(\Q)$ and $\vartheta\colon\Mat_N(F) \to \Mat_{N'}(\Q)$.
These will serve as our explicit realization of restriction of scalars.

For an $F$-subspace $V \subset \mathfrak{sl}_N$ write $V' = \vartheta(V)$ for its restriction of scalars.
We write $\height_F(V)$ for the height of $V$ and $\height_\Q(V')$ for the height of $V'$ where the indices aim to emphasize the field of definition.

\begin{lemma}\label{lem:heightafterrestrofscalars}
For any $F$-subspace $V \subset \mathfrak{sl}_N$ we have
\begin{align}\label{eq:ht restrofscalars}
D_F^{-\star} \height(V')
\ll \height(V)
\ll D_F^{\star} \height(V').
\end{align}
\end{lemma}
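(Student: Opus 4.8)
The plan is to prove \eqref{eq:ht restrofscalars} by relating the primitive integral wedge-power vector of $V$ over $F$ to the primitive integral wedge-power vector of $V' = \vartheta(V)$ over $\Q$, and controlling the distortion introduced by the choice of integral basis $\alpha_1,\ldots,\alpha_d$ of $\mathcal{O}_F$. Recall that $\height(V)$ is (up to a choice of sign) $\prod_v \norm{\vpz_V}_v$ where $\vpz_V$ is any nonzero vector in the $F$-line $\bigwedge^{\dim V}V \subset \bigwedge^{\dim V}\mathfrak{sl}_N$, using the max-norm at finite places and the Euclidean norm at infinite places; similarly $\height(V')$ is $\norm{\vpz_{V'}}$ for $\vpz_{V'}$ a primitive integral vector in the $\Q$-line $\bigwedge^{d\dim V}V'$. (Note $\dim_\Q V' = d\dim_F V$.)

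First I would set up the linear-algebra comparison. If $w_1,\ldots,w_m$ is an $F$-basis of $V$ (with $m = \dim_F V$), then the $Fd$ vectors $\vartheta(\alpha_i w_j)$, $1\le i\le d$, $1\le j\le m$, form a $\Q$-basis of $V'$; consequently $\bigwedge_{i,j}\vartheta(\alpha_i w_j)$ spans the line $\bigwedge^{dm}V'$ over $\Q$. The key point is that this wedge can be expressed — via the multilinearity of $\vartheta$ and a Cauchy–Binet / determinant computation — in terms of $\vartheta^{\wedge}(w_1\wedge\cdots\wedge w_m)$ (the induced map on $\bigwedge^m$) and the $d\times d$ Gram-type determinants built from the $\alpha_i$. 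Concretely, one shows that the primitive integral representative of $\bigwedge^{dm}V'$ over $\Q$ is, up to a rational scalar whose archimedean and non-archimedean sizes are controlled by $\disc(\mathcal{O}_F)$ and the index $[\mathcal{O}_F : \sum \mathbb{Z}\alpha_i] = O_d(1)$, the image of $\vpz_V^{\otimes}$ under an explicit polynomial map with integer coefficients of size $\ll_d D_F^\star$. Then one compares norms place by place: at the archimedean places of $\Q$ the norm of $\vartheta$ on matrices is bounded by $\max_i\norm{\alpha_i}\ll D_F^{1/2}$ and its "inverse" (passing back from $V'$ to $V$) costs a power of $D_F$ by Cramér's rule applied to the basis change $\alpha_i$; at the finite places one uses that $\vartheta$ preserves $\Z$-integrality up to denominators dividing a power of $\disc(F)$, together with the product formula to reassemble $\prod_v$ into $\prod_{p}$. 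Combining both directions yields \eqref{eq:ht restrofscalars}.

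The main obstacle I anticipate is bookkeeping the archimedean contribution precisely: over $F$ the height uses the Euclidean norm at each of the $d$ infinite places separately, whereas over $\Q$ there is a single Euclidean norm on $\R^{N'\times N'}$, and the basis $\{\alpha_i\}$ mixes the real and complex embeddings in a way that is only comparable to the "diagonal" embedding up to a factor polynomial in $D_F$ (this is exactly Minkowski's second theorem controlling the $\alpha_i$). Making this comparison uniform — i.e.\ showing $\prod_{v\mid\infty}\norm{\cdot}_v$ over $F$ and $\norm{\vartheta(\cdot)}$ over $\Q$ differ by $D_F^{\pm\star}$ — requires care with the geometry of the Minkowski embedding, but is standard; I would cite or reproduce the elementary estimate that the change-of-basis matrix between $\{\alpha_i\}$ and the standard Minkowski basis has determinant $\asymp D_F^{1/2}$ and entries $\ll D_F^\star$, and similarly for its inverse. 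The finite-place part is comparatively routine: $\vartheta$ maps $\mathcal{O}_F$-integral elements into $\frac{1}{c}\Z$-integral ones for some $c \mid \disc(F)^\infty$ of bounded support, and conversely, so the finite-adelic content is altered by at most $\prod_{p\mid\disc(F)} p^{O_d(1)} \ll D_F^{O_d(1)}$. Assembling these estimates in both directions gives the two inequalities in \eqref{eq:ht restrofscalars}.
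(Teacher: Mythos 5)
Your proposal captures the right conceptual structure: compare the primitive integral vectors in $\bigwedge^{\dim_F V}_F V$ and $\bigwedge^{\dim_\Q V'}_\Q V'$ by tracking how $\vartheta$ and the chosen basis $\alpha_1,\ldots,\alpha_d$ distort norms, with everything controlled by powers of $D_F$. The paper does this too, but via a cleaner implementation that avoids the multilinear bookkeeping you flag as the main obstacle.

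Specifically, rather than computing $\bigwedge_{i,j}\vartheta(\alpha_i w_j)$ directly via a Cauchy--Binet expansion and relating it to $\vartheta^\wedge(w_1\wedge\cdots\wedge w_m)$ (note that the latter lives in $\bigwedge^m\mathfrak{sl}_{N'}$, not $\bigwedge^{dm}\mathfrak{sl}_{N'}$, so the relationship is a \emph{norm-homogeneous} map of degree $d$, not linear --- your ``$\vpz_V^{\otimes}$'' is implicitly a $d$-th power, and this should be made explicit), the paper argues the two inequalities separately by Siegel's lemma. For $\height(V) \ll D_F^\star \height(V')$: apply Siegel's lemma over $\Q$ to get short integral vectors spanning $V'$, extract an $F$-linearly independent subset of cardinality $\dim_F V$, and wedge. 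For the reverse: apply Siegel's lemma over $F$ (Bombieri--Vaaler) to reduce to the case $\dim_F V=1$, and in that case compute the covolume of the lattice $\mathcal{O}_F.\vpz$ by expanding the Gram determinant $\det(\langle\alpha_i\vpz,\alpha_j\vpz\rangle)$ --- this is precisely the discriminant-type combinatorics you anticipate, but confined to a single vector where it is manageable.

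Both routes rest on the same two elementary facts about $\vartheta$: near-integrality at finite places up to a $D_F^{O_d(1)}$-bounded correction, and the archimedean comparison $D_F^{-\star}\norm{\vartheta(\vpz)}_\infty^2 \ll \sum_{w\mid\infty}\norm{\vpz}_w^2 \ll D_F^\star\norm{\vartheta(\vpz)}_\infty^2$, which is your Minkowski-embedding estimate. Where they differ is that the paper never needs to express the full $dm$-fold wedge over $\Q$ in terms of the $m$-fold wedge over $F$: it only ever computes the $d$-fold wedge for $m=1$. Your route is not wrong, but identifying ``the primitive integral representative up to a rational scalar controlled by $\disc(\mathcal{O}_F)$'' for general $m$ is exactly the content of the lemma rather than a lemma-free step, and since the relevant map is a polynomial of degree $d$ rather than linear, the integrality of that scalar at primes dividing $\disc(F)$ has to be established, not asserted. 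The Siegel's-lemma-both-ways route sidesteps that.
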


\begin{proof}
This is certainly well-known; we include a proof for lack of explicit reference.
We record first a few elementary properties of $\vartheta$.
For any rational prime $\gep$ and all $\vpz \in \Mat_N(F)$ we have 
\begin{align}\label{eq:finitenormsafterrestrofscalars}
\norm{\vartheta(\vpz)}_\gep \ll \max_{w\mid \ell}\norm{\vpz}_w \ll \norm{\vartheta(\vpz)}_\gep
\end{align}
where the implicit constant is $1$ at all but $O_{d}(1)$-many places.
Moreover,
\begin{align}\label{eq:Euclnormafterrestrofscalars}
D_F^{-\star}\norm{\vartheta(\vpz)}_\infty^2 
\ll \sum_{w\mid \infty}\norm{\vpz}_w^2 \ll 
D_F^{\star}\norm{\vartheta(\vpz)}_\infty^2.
\end{align}

We turn to proving \eqref{eq:ht restrofscalars} beginning with the upper bound.
Apply Siegel's lemma to obtain a basis $(\vpz_i)_{i \leq d\dim_F(V)}$ of $V'$ consisting of integer vectors such that $\norm{\vpz_i}_\infty \ll \height(V')$.
Note that $V'$ is invariant under the $F$-module structure, i.e.~under $\vartheta(F)$.
Pick a subset $\mathcal{I} = \{i_1,\ldots, i_{\dim_F(V)}\}$ of indices $i$ such that $(\vpz_i)_{i \in \mathcal{I}}$ are linearly independent over $F$ and let $\upz_i \in V$ with $\vartheta(\upz_i)= \vpz_i$.
The vectors $\upz_i$, $i\in \mathcal{I}$, span $V$ and hence by \eqref{eq:finitenormsafterrestrofscalars} and \eqref{eq:Euclnormafterrestrofscalars}
\begin{align*}
\height(V) = \prod_{w} \norm{\upz_{i_1}\wedge \ldots \wedge \upz_{i_{\dim_F(V)}}}_w \ll D_F^\star \height(V')^\star.
\end{align*}

To prove the lower bound in \eqref{eq:ht restrofscalars}, we first assume $\dim(V)=1$ and let $\vpz \in V$ be an integral vector with $\prod_{w \nmid \infty} \norm{\vpz}_w \ll D_F$.
We compute the covolume of the lattice $\mathcal{O}_F.\vpz$ with respect to the Euclidean norm $\sum_{w\mid \infty}\norm{\cdot}_w^2$. Explicitly, it is given by $|\det(\langle \alpha_i\vpz,\alpha_j \vpz\rangle)|$ where $\langle \upz,\upz'\rangle = \sum_{w}\langle \upz,\upz'\rangle_w$.
Expanding this expression
\begin{align*}
\det(\langle \alpha_i\vpz,\alpha_j \vpz\rangle)
&= \sum_{\tau} (-1)^{\mathrm{sgn}(\tau)} \prod_{i}\langle \alpha_i\vpz,\alpha_{\tau(i)} \vpz\rangle \\
&= \sum_{\tau} (-1)^{\mathrm{sgn}(\tau)} \prod_{i}
\sum_{\sigma} \prescript{\sigma}{}{\alpha_i} \prescript{\sigma}{}{\alpha_{\tau(i)}}
\langle \prescript{\sigma}{}{\vpz},\prescript{\sigma}{}{\vpz}\rangle
\end{align*}
where $\tau$ runs over all permutations of $\{1,\ldots,d\}$ and $\sigma$ runs over all embeddings $\sigma: F\to \C$. We enumerate these embeddings by $\sigma_1,\ldots$ and obtain 
\begin{align*}
\det(\langle \alpha_i\vpz,\alpha_j \vpz\rangle)
= \sum_{\tau} (-1)^{\mathrm{sgn}(\tau)} \sum_{\tau'}
\prod_{i}  \prescript{\sigma_{\tau'(i)}}{}{\alpha_i} \prescript{\sigma_{\tau'(i)}}{}{\alpha_{\tau(i)}}
\langle \prescript{\sigma_{\tau'(i)}}{}{\vpz},\prescript{\sigma_{\tau'(i)}}{}{\vpz}\rangle
\end{align*}
where $\tau'$ runs over all self-maps of $\{1,\ldots,d\}$. The expression 
\begin{align*}
\sum_{\tau} (-1)^{\mathrm{sgn}(\tau)} \prod_i\prescript{\sigma_{\tau'(i)}}{}{\alpha_{\tau(i)}}
\end{align*}
vanishes unless $\tau'$ is bijective in which case its absolute value is $\ll D_F$.
So we may restrict the summation to permutations $\tau'$ to obtain 
\begin{align*}
|\det(\langle \alpha_i\vpz,\alpha_j \vpz\rangle)|
&= \Big|\sum_{\tau} (-1)^{\mathrm{sgn}(\tau)} \sum_{\tau'}
\prod_{i}  \prescript{\sigma_{i}}{}{\alpha_{\tau'(i)}} \prescript{\sigma_{i}}{}{\alpha_{\tau\circ \tau'(i)}}
\langle \prescript{\sigma_{i}}{}{\vpz},\prescript{\sigma_{i}}{}{\vpz}\rangle\Big| \\
&\ll D_F^3 \height_F(V).
\end{align*}
By \eqref{eq:Euclnormafterrestrofscalars}, this implies \eqref{eq:ht restrofscalars} when $\dim(V) =1$.

For $\dim(V)>1$, apply a version of Siegel's lemma over $F$ --- see e.g.~\cite{BombieriVaaler} --- to find a basis $\upz_i$ of $V$ of integral vectors with $\prod_i \height_F(\upz_i) \ll D_F^\star\height_F(V)$. 
Here, $\height_F(\upz_i) = \prod_{w} \norm{\upz_i}_w$.
Applying the above calculation for dimension $1$, we find that the covolume of $\mathcal{O}_F.\upz_i$ with respect to either Euclidean norm in \eqref{eq:Euclnormafterrestrofscalars} is $\ll D_F^\star \height(\upz_i)$. Taking the product, this yields the lower bound in \eqref{eq:ht restrofscalars} and hence the lemma.
\end{proof}

Denote by $\G'= \Res_{F/\Q}(\G)$ the restriction of scalars of $\G$ and by 
\begin{align*}
\rho': \G' \to \SL_{N'}
\end{align*}
the homomorphism defined over $\Q$ which is induced by $\rho$.
Note that $\G'$ is a simply connected semisimple $\Q$-group that is $\Q$-anisotropic. Also, $\rho'$ has finite kernel.
The analogous notation is used for $(\Hbf,\iota)$ and we write $\Lbf'= \Res_{F/\Q}(\Lbf)<\mathfrak{sl}_{N'}$ for any $F$-subgroup $\Lbf < \rho(\G)$ as well.
To clarify the proof, we denote by $g' \in \SL_{N'}(\A_\Q)$ the element corresponding to $g \in \SL_N(\A_F)$.
Lastly, we use the notation $[B]$ for the image in $\SL_N(F)\backslash \SL_N(\A_F)$ resp.~$\SL_{N'}(\Q)\backslash \SL_{N'}(\A_\Q)$ of a subset $B$ of $\SL_N(\A_F)$ resp.~$\SL_{N'}(\A_\Q)$.
We set $X' = [\rho'(\G'(\A_\Q))]$ and $Y_{\data}' = [\iota'(\Hbf'(\A_\Q))]$.

\begin{lemma}\label{lem:cplafterres}
We have
\begin{align}\label{eq:Xrestrscalars}
D_F^{-\star} \cpl(X')^\star 
\ll \cpl(X) 
\ll D_F^\star \cpl(X')^\star.
\end{align}
Moreover, for any semisimple $F$-subgroup $\Lbf < \rho(\G)$ and let $g \in \rho(\G(\A_F))$
\begin{align}\label{eq:intermrestrscalars}
\cpl([\Lbf(\A_F)g]) \ll D_F^\star\, \cpl(X')^\star \cpl([\Lbf'(\A_F)g']).
\end{align}
\end{lemma}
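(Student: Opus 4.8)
\textbf{Plan for the proof of Lemma~\ref{lem:cplafterres}.}
The strategy is to translate both displayed inequalities into statements about heights of Lie algebras and then invoke Lemma~\ref{lem:heightafterrestrofscalars} together with the relationship between complexity, content, and height established in the preliminaries. Recall that $\cpl(X) = \content(\vpz_{\rho(\G)}) = \height_F(\rho(\G))$ and $\cpl(X') = \height_\Q(\rho'(\G'))$. The key geometric input is that $\Lie(\rho'(\G')) = \vartheta(\Lie(\rho(\G)))$ as a subspace of $\mathfrak{sl}_{N'}$, since restriction of scalars is compatible with the passage to Lie algebras (this is essentially the definition of $\G' = \Res_{F/\Q}(\G)$ realized through $\vartheta$, up to reorganizing coordinates). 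Thus \eqref{eq:Xrestrscalars} is an immediate consequence of \eqref{eq:ht restrofscalars} applied to $V = \Lie(\rho(\G))$, after noting $\dim_\Q(\Lie(\rho'(\G'))) = d \dim_F(\Lie(\rho(\G)))$ so the exponents match up to $O_N(1)$ (the $\star$'s in the statement absorb the degree).

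For \eqref{eq:intermrestrscalars}, the complication is the translate $g$. First I would unwind the definition: $\cpl([\Lbf(\A_F)g]) = \content(g^{-1}.\vpz_{\Lbf}) = \prod_w \norm{g_w^{-1}.\vpz_\Lbf}_w$, where the product is over all places of $F$. Using \eqref{eq:finitenormsafterrestrofscalars} and \eqref{eq:Euclnormafterrestrofscalars} from the proof of Lemma~\ref{lem:heightafterrestrofscalars}, I would compare this place-by-place with $\content_\Q((g')^{-1}.\vpz_{\Lbf'}) = \cpl([\Lbf'(\A_\Q)g'])$. The subtlety is that $\vpz_{\Lbf'}$ (the primitive integral vector for the line $\bigwedge^{\dim \Lbf'}\Lie(\Lbf')$ over $\Q$) is not literally $\vartheta$ applied to $\vpz_\Lbf$; rather, $\vartheta(\bigwedge^{\dim \Lbf}\Lie(\Lbf))$ sits inside $\bigwedge^{\dim \Lbf'}\Lie(\mathfrak{sl}_{N'})$ but one must compare the $\Q$-line it spans, or more precisely the product of local norms of its orbit, with $\vpz_{\Lbf'}$. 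Since these two vectors differ by a bounded rational scalar (whose numerator and denominator are controlled by $D_F$), and content is scale-invariant over $\Q^\times$, this discrepancy contributes only a $D_F^\star$ factor. The extra factor $\cpl(X')^\star$ on the right-hand side of \eqref{eq:intermrestrscalars} is there to absorb the comparison between $\norm{g_w^{-1}.\vpz_\Lbf}_w$ and the corresponding rational quantity when $g$ does not lie in a nice compact: one uses $g \in \rho(\G(\A_F))$, so the bound from Theorem~\ref{thm:diameter} (applied over $\Q$ to $g'$, with a good prime for $X'$) lets one replace $g'$ by $\gamma g'$ with $\gamma \in \rho'(\G'(\Q))$ and $\norm{\gamma g'}$ polynomially bounded by $\cpl(X')$; since content is invariant under the left $\SL_{N'}(\Q)$-action this does not change $\cpl([\Lbf'(\A_\Q)g'])$, and then one estimates $\norm{(\gamma g')_\gep^{-1}.\vpz_{\Lbf'}}_\gep$ crudely using $\height_\Q(\Lbf')$ and the norm bound on $\gamma g'$.

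The main obstacle I anticipate is bookkeeping the interaction between the archimedean places of $F$ and the single archimedean place of $\Q$ in the presence of the translate $g$: the Euclidean-norm comparison in \eqref{eq:Euclnormafterrestrofscalars} involves the basis $\alpha_1,\ldots,\alpha_d$ and hence $D_F$, and one must make sure these $D_F$-losses compound only polynomially (which they do, since $\dim \Lbf \leq N^2$ so the exterior power has bounded degree). A clean way to organize this is: (i) reduce to $g$ with $\norm{g}$ polynomially bounded in $\cpl(X') D_F$ using Theorem~\ref{thm:diameter} (as done in the proof of Theorem~\ref{thm:main equi} for $F=\Q$); (ii) for such $g$, bound $\cpl([\Lbf(\A_F)g]) \ll \norm{g}^\star \height_F(\Lbf) \ll D_F^\star \cpl(X')^\star \height_\Q(\Lbf')$ using Lemma~\ref{lem:heightafterrestrofscalars} for the last step; (iii) bound $\height_\Q(\Lbf') \ll \cpl([\Lbf'(\A_\Q)g'])\,\norm{g'}^\star \ll \cpl([\Lbf'(\A_\Q)g'])\,\cpl(X')^\star D_F^\star$ the same way in reverse. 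Chaining (ii) and (iii) gives \eqref{eq:intermrestrscalars}. Since everything here is a routine combination of already-proven statements, I would keep the write-up short, citing Lemma~\ref{lem:heightafterrestrofscalars}, Proposition~\ref{prop: vol complexity'}, and Theorem~\ref{thm:diameter}, and leaving the explicit norm estimates to the reader.
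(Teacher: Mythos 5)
Your proposal is correct and follows essentially the same route as the paper: deduce \eqref{eq:Xrestrscalars} directly from Lemma~\ref{lem:heightafterrestrofscalars}, then for \eqref{eq:intermrestrscalars} use Theorem~\ref{thm:diameter} applied to $X'$ to normalize $g'$ (and correspondingly $g$), bound each complexity by the height of the conjugated group times $\cpl(X')^\star D_F^\star$, and close the loop with Lemma~\ref{lem:heightafterrestrofscalars}. The only slip is that in steps (ii)--(iii) you should write the heights of the conjugated group $\tilde\Lbf=\gamma\Lbf\gamma^{-1}$ (resp.\ $\tilde\Lbf'$) rather than of $\Lbf$ itself, since the left multiplication by $\gamma$ changes the group but preserves the complexity of the orbit.
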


\begin{proof}
The estimates in \eqref{eq:Xrestrscalars} are a direct consequence of Lemma~\ref{lem:heightafterrestrofscalars}.

For \eqref{eq:intermrestrscalars}, notice first that, in view of Theorem~\ref{thm:diameter}, there exists $\gamma' \in \G'(\Q)$ with $(\rho'(\gamma')g')_\gep \in \SL_{N'}(\Z_\gep)$ for $\gep \neq \gp$, $\norm{(\rho'(\gamma')g')_\infty}\ll 1$, and $\norm{(\rho'(\gamma')g')_\gp} \ll \gp^\star \cpl(X')^\star$.
Here, we may take $\gp$ to be a good prime in the sense of Proposition~\ref{prop:splitting place} for $X'$ only and so $\gp \ll (\log\vol(X'))^2 \ll (\log\cpl(X'))^2$ (see Proposition~\ref{prop: vol complexity'}).
The corresponding bounds (with an additional polynomial dependence on $D_F$) also hold for $g$ and the element $\gamma \in \G(F)$ corresponding to $\gamma'$ (see \eqref{eq:finitenormsafterrestrofscalars} and \eqref{eq:Euclnormafterrestrofscalars}).
In particular, $\tilde{\Lbf} = \gamma \Lbf \gamma^{-1}$ satisfies
\begin{align*}
\cpl([\Lbf(\A_F)g]) \ll \cpl([\tilde{\Lbf}(\A_F)])\cpl(X')^\star = \height_{\Q}(\tilde{\Lbf})\cpl(X')^\star D_F^\star
\end{align*}
and
\begin{align*}
\cpl([\Lbf'(\A_F)g']) \gg \height_F(\tilde{\Lbf}') \cpl(X')^{-\star}.
\end{align*}
This together with Lemma~\ref{lem:heightafterrestrofscalars} applied to the Lie algebra of $\tilde{\Lbf}$ yields the lemma.
\end{proof}

\begin{lemma}\label{lem: Q subgroup of res containg H}
Let $\Mbf>\iota(\H)$ be a connected semisimple $F$-subgroup of $\rho(\G)$. 
Let $\Lbf< \Mbf'$ be a $\Q$-subgroup which contains $\iota'(\H')$. Assume further that 
\begin{enumerate}[label=\textnormal{(\theenumi)}]
\item $\iota(\H)$ projects non-trivially onto all $F$-almost simple factors of $\bf M$, and   
\item $\Lbf(\Q)\subset \Mbf'(\Q) = \Mbf(F)$ is Zariski dense in the $F$-group $\Mbf$. 
\end{enumerate}
Then $\Lbf=\Mbf'$. 
\end{lemma}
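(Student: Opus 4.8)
\textbf{Proof plan for Lemma~\ref{lem: Q subgroup of res containg H}.}
The plan is to exploit the well-understood Galois-descent description of $\Q$-subgroups of a restriction of scalars $\Mbf' = \Res_{F/\Q}(\Mbf)$. Writing $\bar{F}$ for an algebraic closure of $\Q$ containing $F$ and letting $\Sigma = \operatorname{Hom}(F,\bar{F})$, one has a canonical $\bar{F}$-isomorphism $\Mbf'_{\bar F} \cong \prod_{\sigma \in \Sigma} \prescript{\sigma}{}{\Mbf}$, under which $\iota'(\H')_{\bar F}$ becomes $\prod_{\sigma} \prescript{\sigma}{}{(\iota(\H))}$. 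A connected $\Q$-subgroup $\Lbf \subset \Mbf'$ corresponds, over $\bar F$, to a connected $\bar F$-subgroup of $\prod_\sigma \prescript{\sigma}{}{\Mbf}$ that is stable under the natural $\Gal(\bar F/\Q)$-action permuting the factors (through the action on $\Sigma$) and acting on each factor. First I would fix such a $\Lbf$ and set $\Lbf_\sigma = \pr_\sigma(\Lbf_{\bar F})$ for the projection to the $\sigma$-th factor; each $\Lbf_\sigma$ is a connected $\bar F$-subgroup of $\prescript{\sigma}{}{\Mbf}$ containing $\prescript{\sigma}{}{(\iota(\H))}$, and the collection $\{\Lbf_\sigma\}$ is permuted by $\Gal(\bar F/\Q)$ compatibly with its action on $\Sigma$.

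The second assumption says exactly that the projection $\pr_{\id}(\Lbf_{\bar F}) = \Lbf_{\id}$ is Zariski dense in $\Mbf$, i.e.\ $\Lbf_{\id} = \Mbf_{\bar F}$; applying Galois elements then gives $\Lbf_\sigma = \prescript{\sigma}{}{\Mbf}$ for every $\sigma \in \Sigma$. So $\Lbf_{\bar F}$ surjects onto every factor. The remaining task is a Goursat-type argument: I would show that a connected subgroup of $\prod_\sigma \prescript{\sigma}{}{\Mbf}$ that surjects onto each factor and contains $\prod_\sigma \prescript{\sigma}{}{(\iota(\H))}$ must be the whole product. Here assumption~(1) enters decisively. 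By Goursat's lemma (in the form for products of semisimple — or reductive — groups, cf.\ the description of subdirect products via graphs of isogenies between quotients), if $\Lbf_{\bar F}$ were a proper subgroup surjecting onto each factor, then for some pair $\sigma \neq \tau$ there would be a nontrivial $\bar F$-almost-simple factor of $\prescript{\sigma}{}{\Mbf}$ that is "linked" via an isomorphism to an almost-simple factor of $\prescript{\tau}{}{\Mbf}$, meaning $\Lbf_{\bar F}$ would intersect these two factors only along the graph of that isomorphism. But $\Lbf_{\bar F} \supseteq \prod_\sigma \prescript{\sigma}{}{(\iota(\H))}$ contains, by assumption~(1), a nontrivial image in each almost-simple factor of each $\prescript{\sigma}{}{\Mbf}$ independently; in particular it contains elements that are nontrivial in the $\sigma$-component of a linked factor while trivial in the $\tau$-component, contradicting the graph constraint. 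Hence no linking can occur and $\Lbf_{\bar F} = \prod_\sigma \prescript{\sigma}{}{\Mbf} = \Mbf'_{\bar F}$, so $\Lbf = \Mbf'$ by faithfully flat descent.

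I expect the main obstacle to be the careful bookkeeping in the Goursat step: one has to track almost-simple factors \emph{of each Galois conjugate} $\prescript{\sigma}{}{\Mbf}$ separately (since $\Mbf$ need not be $F$-simple, only that $\iota(\H)$ hits each $F$-almost-simple factor), and argue that assumption~(1) precisely rules out every possible nontrivial fibered-product relation between a factor in the $\sigma$-slot and a factor in the $\tau$-slot. A clean way to organize this is: (i) reduce to $\Mbf$ being a single $F$-almost-simple group by treating each $F$-almost-simple factor $\Mbf^{(j)}$ of $\Mbf$ and the corresponding $\Q$-almost-simple factor $(\Mbf^{(j)})' = \Res_{F/\Q}\Mbf^{(j)}$ of $\Mbf'$ separately, using that $\iota(\H)$ projects non-trivially to $\Mbf^{(j)}$; (ii) for $\Mbf$ $F$-almost-simple, note $(\Mbf)'$ is $\Q$-almost-simple with $\bar F$-points $\prod_\sigma \prescript{\sigma}{}{\Mbf}$ where the $\prescript{\sigma}{}{\Mbf}$ are themselves $\bar F$-almost-simple or products of Galois-conjugate $\bar F$-almost-simple groups; (iii) invoke the fact that a nontrivial normal subgroup of a semisimple group is a product of some of its almost-simple factors, together with the surjectivity onto each factor, to conclude the subdirect product is everything once $\prod_\sigma \prescript{\sigma}{}{(\iota(\H))}$ sits inside it. Throughout, the hypothesis that $\Mbf > \iota(\H)$ is connected semisimple keeps all groups reductive so Goursat-type decompositions apply without pathology. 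Once the factor-by-factor analysis is done, reassembling gives $\Lbf = \Mbf'$.
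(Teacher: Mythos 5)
Your approach is correct in outline but genuinely different from the paper's. The paper works entirely over $\Q$ at the Lie-algebra level: it introduces $\sfrak' = \bigcap_{\alpha \in F}\alpha.\lfrak$, the maximal subspace of $\lfrak$ stable under the natural $F$-module structure on $\gfrak' = \Res_{F/\Q}(\gfrak)$. Being $F$-stable, $\sfrak'$ equals $\Res_{F/\Q}(\sfrak)$ for an $F$-Lie-subalgebra $\sfrak \subseteq \mfrak$; since $\lfrak \supseteq \Lie(\iota'(\Hbf'))$ is already $F$-stable, $\sfrak$ contains $\Lie(\iota(\Hbf))$; and $\sfrak$ is $\Ad(\Lbf(\Q))$-invariant, hence a Lie ideal of $\mfrak$ by the Zariski density (2), hence all of $\mfrak$ by (1), giving $\mfrak' = \sfrak' \subseteq \lfrak$. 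This avoids Galois descent and Goursat entirely and needs no decomposition into $\bar F$-simple factors. Your Goursat-over-$\bar F$ argument does reach the same conclusion, but requires more bookkeeping that you should make explicit: (a) $\Lbf$ is not assumed connected in the lemma, so either run Goursat on a possibly disconnected subgroup or pass to $\Lbf^\circ$ and note $\pr_\sigma((\Lbf^\circ)_{\bar F})$ still fills the connected group $\prescript{\sigma}{}{\Mbf}$ because it is an open finite-index subgroup; (b) hypothesis (1) refers to $F$-almost-simple factors, and you need the extra step that $\iota(\Hbf)$ then projects non-trivially to each $\bar F$-almost-simple factor, which holds because $\Gal(\bar F/F)$ permutes the $\bar F$-factors inside each $F$-factor transitively; (c) the Goursat step must also exclude fiber-product linkings between two $\bar F$-factors in the \emph{same} $\sigma$-slot, which follows from $\pr_\sigma(\Lbf_{\bar F})$ being all of $\prescript{\sigma}{}{\Mbf}$, not merely onto each simple constituent. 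Once these are inserted your route is sound; the paper's is shorter and stays at the source of the statement (the $F$-module structure), while yours makes the product geometry over $\bar F$ more visible at the cost of the above caveats.
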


\begin{proof}
Let $\lfrak$ resp.~$\mfrak'$ be the Lie algebra of $\Lbf$ resp.~$\Mbf'$ over $\Q$.
We will show $\mathfrak l=\mfrak'$; this establishes the claim since $\bf M$ is connected. 

For this, we let $\sfrak'=\bigcap_{\alpha \in F} \alpha. \lfrak$ where we use that $\gfrak'$ is naturally an $F$-module.
The Lie algebra $\sfrak'$ is invariant under scalars in $F$ and so $\sfrak' = \Res_{F/\Q}(\sfrak)$ for some $F$-subalgebra $\sfrak$ of $\mfrak$.
By definition, we have $[\mathfrak l,\sfrak']\subset \sfrak'$ and in particular 
\begin{align*}
\Ad(\Lbf(\Q))\sfrak(F) = \Ad(\Lbf(\Q))\sfrak'(\Q) \subset \sfrak'(\Q) =\sfrak(F)   
\end{align*}
when viewing ${\bf L}(\Q)$ as a subgroup of $\Mbf(F)$.
In view of our assumption~(2), $\sfrak$ is a Lie ideal of $\mfrak$.
Notice that $\sfrak$ contains the Lie algebra of $\iota(\Hbf)$ and hence $\sfrak = \mfrak$ by assumption (1).
We conclude that $\mfrak'= \sfrak' \subset \lfrak \subset \mfrak'$ and the lemma follows.
\end{proof}

\begin{proof}[Proof of Theorem~\ref{thm:main equi} over general number fields]
The theorem follows from its version over $\Q$ (proven earlier) once we can show that
\begin{align}\label{eq:mcplvsmcpl'}
\mcpl(Y_\data) \ll D_F^{\star} \cpl(X)^{\star} \mcpl(Y_\data')^\star.
\end{align}
In view of \eqref{eq:intermrestrscalars}, problems may arise precisely from subgroups $\Lbf \lneq \rho'(\G')$ with $\iota'(\Hbf') < \Lbf$ that do not arise through restriction of scalars; here, we wish to use Lemma~\ref{lem: Q subgroup of res containg H}.

Let $\mcpl_{\mathrm{erg}}(Y_\data')$ be the minimum over the complexities $\cpl([\Lbf(\A)g'])$ where $\Lbf$ runs over all connected semisimple $\Q$-subgroups $\Lbf \lneq \rho'(\G')$ with $\iota'(\Hbf') < \Lbf$ \emph{and} for which $\iota'(\Hbf')$ is not contained in a proper factor of $\Lbf$.
Clearly, $\mcpl_{\mathrm{erg}}(Y_\data') \geq \mcpl(Y_\data')$. We claim that conversely
\begin{align}\label{eq:ergmcpl}
\mcpl_{\mathrm{erg}}(Y_\data') \ll \cpl(X')^\star \mcpl(Y_\data')^\star.
\end{align}
This implies \eqref{eq:mcplvsmcpl'} (and hence the theorem) since, in view of Lemmas~\ref{lem: Q subgroup of res containg H} and \ref{lem:cplafterres}, we have
\begin{align*}
\mcpl(Y_\data) \ll |\disc(F)|^{\star} \cpl(X)^{\star} \mcpl_{\mathrm{erg}}(Y_\data')^\star.
\end{align*}

It remains to show \eqref{eq:ergmcpl}. 
By the same argument as in Lemma~\ref{lem:cplafterres} (relying on Theorem~\ref{thm:diameter}) we may assume that $g'_\gep \in \SL_{N'}(\Z_\gep)$ for $\gep \neq \gp$, $\norm{g'_\infty}\ll 1$, and $\norm{g'_\gp} \ll \cpl(X')^\star$ where $\gp \ll (\log \cpl(X))^2$.
In particular, for any connected semisimple $\Q$-subgroup $\Lbf < \rho'(\G')$
\begin{align*}
\cpl(X')^{-\star} \height_{\Q}(\Lbf) \ll \cpl([\Lbf(\A)g']) \ll \cpl(X')^\star \height_{\Q}(\Lbf).
\end{align*}
Factors of a semisimple $\Q$-subgroup $\Lbf< \rho'(\G')$ have height controlled polynomially by the height of $\Lbf$ (see e.g.~\cite[Lemma 8.6]{AW-realsemisimple}). Together, these estimates establish \eqref{eq:ergmcpl} and hence the theorem.
\end{proof}

\begin{proof}[Proof of Theorem~\ref{thm:asinEMV}]
As in the above proof of Theorem~\ref{thm:main equi}, the theorem can be reduced to the version over $\Q$ that we have already established.
Alternatively, it may be deduced from Theorem~\ref{thm:main equi} directly as was done earlier for $F=\Q$.
\end{proof}

\newpage

\begin{appendix}
\section{Volume and arithmetic complexity}\label{sec:volvscpl}

In this appendix we establish, in particular, the volume and complexity comparison of Proposition~\ref{prop: vol complexity'}.
In fact, we will establish a slightly finer result not assuming that the ambient homogeneous space is compact.

For the following we fix
\begin{itemize}
    \item a semisimple simply connected group $\Hbf$ defined over $\Q$,
    \item a homomorphism $\iota: \Hbf \to \SL_N$ with central kernel, and
    \item an element $g \in \SL_N(\A)$.
\end{itemize}
Set $Y = [\iota(\Hbf(\A))g]$ and $H = g^{-1}\iota(\Hbf(\A))g$.

\subsubsection*{The notion of volume}
Let us first recall the definition of a volume from \cite{EMMV} which was already introduced in \S\ref{sec:volumeoverQ}.
Let $\Omega \subset \SL_N(\A)$ be an open neighborhood of the identity with compact closure
and define the volume (with respect to $\Omega$) to be
\begin{align}\label{eq:defvolume}
\vol(Y_\data) = \vol_{\Omega}(Y) = m_H(\Omega \cap H)^{-1}
\end{align}
where $m_H$ is the Haar measure on $H$ that descends to the invariant probability measure on $Y$.
A different choice $\Omega$ of an open neighborhood of the identity yields a comparable notion of volume --- see the discussion in \cite[\S2.3]{EMMV}.

\begin{remark}
Instead of using the group $H$ one might also use the full stabilizer group of the orbit $Y$. The full stabilizer group is $H' = g^{-1}\iota(\Hbf(\A))\Nbf(F)g$ \cite[Lemma 2.2]{EMMV} where $\Nbf$ is the normalizer of $\Hbf$.
In particular, it is often contains $H$ as an infinite index subgroup.
The volume defined using $H'$ turns out to be comparable to the volume defined using $H$ --- see \cite[\S5.12]{EMMV} which relies on a deep result of Borel and Prasad \cite{BorPr-Finitness}.
\end{remark}

In the following, we assume that $\Omega$ is of the form 
\begin{align*}
\Omega = \Omega_\infty \times \textstyle{\prod}_p \SL_N(\Z_p)
\end{align*}
where $\Omega_\infty = \exp(\Xi_\infty) \subset \SL_N(\R)$ for a bounded symmetric open neighborhood $\Xi_\infty \subset\mathfrak{sl}_N(\R)$ of $0$ on which $\exp$ is a diffeomorphism.
In practice, one needs to assume that $\Xi_\infty$ is sufficiently small (depending only on $N$) --- see the discussion around \cite[(5.3),(5.5)]{EMMV}.

\subsubsection*{The notion of complexity}

For a $\Q$-subgroup $\Lbf< \SL_N$ with Lie algebra $\lfrak$ we let
\begin{align*}
\vpz_\Lbf \in \bigwedge^{\dim(\lfrak)} \lfrak(\Q) 
\subset \bigwedge^{\dim(\lfrak)} \mathfrak{sl}_N(\Q)
\end{align*}
be a non-zero vector. The height of $\Lbf$ is then defined to be $\height(\Lbf) = \prod_{\gplace \in \Sigma} \norm{\vpz_{\Lbf}}_{\gplace}$ where $\Sigma$ is the set of places of $\Q$ (see also \S\ref{sec:setup}).
As in the introduction, the complexity of the orbit $Y$ is given by
\begin{align*}
\cpl(Y) = \prod_{\gplace\in\Sigma} \norm{g_\gplace^{-1}.\vpz_{\iota(\Hbf)}}_\gplace.
\end{align*}

\subsubsection*{The notion of minimal height}
Define
\begin{align*}
\minht(Y) = \min_{[g]\in Y} \max_{\vpz \in \Q^n \setminus \{0\}} \content(g^{-1}.\vpz)^{-1}
\end{align*}
where $\content(g^{-1}.\vpz) = \prod_{w \in \Sigma} \norm{g^{-1}.\vpz}_w$.
The quantity $\minht(Y)$ measures how far up the cusp the homogeneous set $Y$ is.

\subsubsection*{Comparing volume and complexity}

The rest of this section is dedicated to the proof of the following proposition.

\begin{proposition}\label{prop: vol complexity}
There exist a constant $\consta\label{a:appmain}>1$ depending only on $N$ such that
\begin{align}\label{eq:volvscpl}
\cpl(Y)^{1/\ref{a:appmain}}
\ll \vol(Y) 
\ll\minht(Y)^{\ref{a:appmain}}\cpl(Y)^{\ref{a:appmain}}
\end{align}
where the implicit constants depend only on $N$.
\end{proposition}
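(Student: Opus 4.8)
The two inequalities in \eqref{eq:volvscpl} are genuinely different in flavour, so I would treat them separately. Both sides should ultimately be compared place by place, and the key observation is that the orbit $Y = [\iota(\Hbf(\A))g]$ can, after multiplying $g$ by a suitable $\gamma \in \SL_N(\Q)$ (which changes neither $\vol$ nor $\cpl$), be brought into a ``reduced'' position, where the archimedean component is of bounded size away from the cusp and the finite components are integral outside a controlled finite set of primes. The quantity $\minht(Y)$ is precisely what measures the obstruction to doing this at the archimedean place, and in the compact (i.e.\ $\Q$-anisotropic) case it is bounded in terms of $\cpl(X)$ by Proposition~\ref{prop:smallvectors}, which is how Proposition~\ref{prop: vol complexity'} will be recovered.

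\textbf{Lower bound $\cpl(Y)^{1/\ref{a:appmain}} \ll \vol(Y)$.} The idea is that a large volume orbit must have a large complexity because the stabilizer group $H = g^{-1}\iota(\Hbf(\A))g$ intersects the fixed neighbourhood $\Omega$ in a small set, and this smallness has to be ``explained'' arithmetically. Concretely, I would argue: the local factor $H_\gep = g_\gep^{-1}\iota(\Hbf(\Q_\gep))g_\gep$ has $m_{H_\gep}(\Omega_\gep \cap H_\gep) = m_{H_\gep}(\SL_N(\Z_\gep)\cap H_\gep)$ comparable to the reciprocal of the index-type quantity governing how far $\vpz_{\iota(\Hbf)}$ (transported by $g_\gep^{-1}$) is from being $\Z_\gep$-primitive, i.e.\ to $\norm{g_\gep^{-1}.\vpz_{\iota(\Hbf)}}_\gep^{-\star}$; compare the classical bound relating $p$-adic volumes of arithmetic subgroups and their covolumes (cf.\ the local mass/volume computations underlying Prasad's formula, as used in \cite[\S5--\S6]{EMMV}). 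Multiplying over all places, $\vol(Y) = m_H(\Omega\cap H)^{-1} \gg \prod_\gplace \norm{g_\gplace^{-1}.\vpz_{\iota(\Hbf)}}_\gplace^{1/\star} = \cpl(Y)^{1/\star}$, using also that at the archimedean place $m_{H_\infty}(\Omega_\infty \cap H_\infty)^{-1} \gg 1$ since $\Omega_\infty$ is a fixed bounded set. This direction does not require any bound on $\minht(Y)$.

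\textbf{Upper bound $\vol(Y) \ll \minht(Y)^{\ref{a:appmain}}\cpl(Y)^{\ref{a:appmain}}$.} Here I would run the argument of \cite[App.~B]{EMMV} keeping track of the dependence on $\G$. First replace $g$ by $\gamma g$, $\gamma \in \SL_N(\Q)$, so that the finite part $(\gamma g)_f$ lies in $\SL_N(\Z_\gep)$ for all but $O_N(1)$ primes and the archimedean part satisfies $\content((\gamma g)^{-1}.\vpz) \gg \minht(Y)^{-1}$ for every nonzero $\vpz\in\Q^N$ — this is reduction theory for $\SL_N$ over $\Q$ (Minkowski/Siegel), and the ``height in the cusp'' of the resulting point is controlled by $\minht(Y)$ by construction. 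With $g$ in this position one bounds $\vol(Y)$ by bounding $m_H(\Omega\cap H)$ from below place by place: at each finite prime, $m_{H_\gep}(\SL_N(\Z_\gep)\cap H_\gep)$ is bounded below in terms of $\norm{g_\gep^{-1}.\vpz_{\iota(\Hbf)}}_\gep$ (the same local comparison as above, now used in the other direction, which is where one must invoke that $\iota$ has central kernel so $\Hbf(\Q_\gep)\to \iota(\Hbf)(\Q_\gep)$ is a finite-index-controlled map of $\gep$-adic groups); at the archimedean place one uses that a neighbourhood of the identity in $H_\infty$ of fixed ``intrinsic'' size maps into $\Omega_\infty$ once one knows the orbit does not go too far up the cusp, which is exactly the role of $\minht(Y)$. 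Here one also needs an upper bound on $\height(\iota(\Hbf))$, hence on $\cpl(X)$-type data for the intermediate group, in terms of $\cpl(Y)$ and $\minht(Y)$; this comes from Siegel's lemma applied to a reduced basis of $\Lie(\iota(\Hbf))$ together with the bounds on $g$. Collecting the local contributions gives $m_H(\Omega\cap H) \gg \minht(Y)^{-\star}\cpl(Y)^{-\star}$, i.e.\ the claimed upper bound.

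\textbf{Main obstacle.} The crux — and the only genuinely delicate point — is the archimedean factor in the upper bound: showing that once the orbit is put in reduced position (height in the cusp $\ll \minht(Y)$), a fixed-size piece of $H_\infty$ near the identity is contained in $\Omega_\infty$, with the loss being polynomial in $\minht(Y)$ and $\cpl(Y)$. This requires controlling the conjugate $g_\infty^{-1}\iota(\Hbf(\R))g_\infty$ inside $\SL_N(\R)$ in terms of how non-integral/large $g_\infty$ is, i.e.\ a quantitative statement that the distortion of the embedded subgroup is governed by the distance to the cusp; in the compact case this is clean because $\minht(Y) \ll \cpl(X)^\star$ by Proposition~\ref{prop:smallvectors}, but stating it uniformly (as \eqref{eq:volvscpl} does) is where the $\minht$ factor is unavoidable and where one must be careful with constants depending only on $N$. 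Once \eqref{eq:volvscpl} is in hand, Proposition~\ref{prop: vol complexity'} follows immediately by substituting $Y=Y_\data\subset X$ with $X$ compact and using $\minht(X),\minht(Y_\data) \ll \cpl(X)^\star$ from Proposition~\ref{prop:smallvectors}.
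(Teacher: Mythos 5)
Your proposal does not follow the paper's route, and as written the upper bound has a genuine gap. The paper does not attempt any place-by-place comparison of $m_H(\Omega\cap H)$ with $\cpl(Y)$: instead it imports \cite[(B.2)]{EMMV} (which is Prasad's volume formula packaged for adelic orbits), giving $\disc(Y)^\star\ll\vol(Y)\ll\disc(Y)^\star$, and then exploits the identity $\disc(Y)=\tfrac{\mathcal D(\Hbf)}{\mathcal E(\Hbf)}\cpl(Y)$. The lower bound is then immediate from $\mathcal D(\Hbf)\geq1$, $\mathcal E(\Hbf)\leq1$, so $\disc(Y)\geq\cpl(Y)$; and the entire content of the upper bound is showing $\mathcal D(\Hbf)\ll\minht(Y)^\star\cpl(Y)^\star$ and $\mathcal E(\Hbf)\gg1$. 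The first of these is the heart of the argument and requires the new Lemma~\ref{lem:heightofmaxtorus}, which produces a maximal $\Q$-torus $\Tbf<\iota(\Hbf)$ of height polynomial in $\height(\iota(\Hbf))$ and bounds the discriminant of its splitting field; these splitting-field discriminants are exactly what enter $\mathcal D(\Hbf)$ (the $D_{L_i/F_i}$, $D_{F_i}$ terms from Prasad's formula) and they are not present anywhere in your sketch.

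The reason your place-by-place argument cannot be made to work as stated is twofold. First, $m_H$ is the Haar measure normalized so that $Y$ has total mass $1$; that is a \emph{global} constraint, so $m_H(\Omega\cap H)$ is not a product of independently normalized local volumes --- the missing scalar is the covolume of $\Hbf(\Q)$ in $\Hbf(\A)$ with respect to the local normalizations, which is precisely what Prasad's formula computes, and it cannot be recovered from $\norm{g_\gplace^{-1}.\vpz_{\iota(\Hbf)}}_\gplace$ alone. Second, even after reduction so that $g_\gep\in\SL_N(\Z_\gep)$ at all but $O_N(1)$ primes, the finite-place contributions to $\cpl(Y)$ are trivially $1$ at almost all $\gep$, yet the local masses $m_{H_\gep}(\SL_N(\Z_\gep)\cap H_\gep)$ continue to encode the ramification of $\Hbf$ at $\gep$ and hence the discriminants $D_{L_i}$. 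Your estimate ``$m_{H_\gep}(\SL_N(\Z_\gep)\cap H_\gep)\asymp\norm{g_\gep^{-1}.\vpz_{\iota(\Hbf)}}_\gep^{-\star}$'' misses this entirely, and the product of the correct local masses over primes of ramification is, a priori, unbounded in terms of $\cpl(Y)$. Closing this gap --- i.e.\ showing that the splitting-field discriminants of $\Hbf$ are polynomially controlled by $\cpl(Y)\cdot\minht(Y)^\star$ --- is the actual mathematical content of Proposition~\ref{prop: vol complexity}, and it requires the torus construction of Lemma~\ref{lem:heightofmaxtorus} (Minkowski's second theorem plus Lemma~\ref{lem:polynotzero} to exhibit a small semisimple element, induction on centralizers, and Cauchy-type bounds on the characteristic polynomial to control the splitting field). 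Your ``main obstacle'' paragraph correctly identifies that the archimedean place is where $\minht(Y)$ enters, but it is not the only obstacle; the discriminant issue at the finite places is the one your proposal does not see.
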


Note that Proposition~\ref{prop: vol complexity} implies Proposition~\ref{prop: vol complexity'} in light of the bound on $\minht(Y)$ obtained from Proposition~\ref{prop:smallvectors}.
Since the height in the cusp of $Y$ should, in principle, be dictated by the amount the translation occurs within the centralizer of the invariance group, it is conceivable that Proposition~\ref{prop: vol complexity} holds without the polynomial factor in $\minht(Y)$.

For the proof of Proposition~\ref{prop: vol complexity}, we shall need bounds on the minimal discriminant of a splitting field of $\Hbf$ provided by the following lemma.

\begin{lemma}\label{lem:heightofmaxtorus}
There exists $\consta\label{a:heightofmaxtorus}>0$ depending only on $N$ with the following property.
Let $\Lbf < \SL_N$ be a connected reductive $\Q$-subgroup.
Then there exists a maximal $\Q$-torus $\Tbf< \Lbf$ with
\begin{align*}
\height(\Tbf)\ll \height(\Lbf)^{\ref{a:heightofmaxtorus}}.
\end{align*}
Moreover, if $F$ is the (Galois) splitting field of $\Tbf$ then
\begin{align*}
|\disc(F)| \ll \height(\Tbf)^\star.
\end{align*}
\end{lemma}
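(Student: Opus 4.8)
The statement has two assertions: the existence of a maximal $\Q$-torus $\Tbf < \Lbf$ of height polynomially bounded in $\height(\Lbf)$, and the bound on the discriminant of its splitting field in terms of $\height(\Tbf)$. I would prove these in that order, as the second follows from the first together with a counting argument.

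\emph{Step 1: Finding a maximal torus of controlled height.} The plan is to produce a regular semisimple element $X \in \lfrak(\Z)$ (i.e.\ one whose centralizer in $\Lbf$ is a maximal torus) of controlled height, and take $\Tbf$ to be the connected centralizer $Z_\Lbf(X)^\circ$. First fix a $\Q$-basis $\wpz_1,\ldots,\wpz_{\dim(\lfrak)}$ of $\lfrak$ consisting of integral vectors of height $\ll \height(\Lbf)^\star$ (via Siegel's lemma, exactly as in the proof of Lemma~\ref{lem:GITeff}). The condition that a vector $X = \sum_i t_i \wpz_i \in \lfrak$ fail to be regular semisimple is a proper Zariski-closed condition cut out by the vanishing of a fixed discriminant-type polynomial $P(t_1,\ldots,t_{\dim(\lfrak)})$ of degree $O_N(1)$ whose coefficients are $\ll \height(\Lbf)^\star$ (it is essentially a sub-discriminant of the characteristic polynomial of $\ad X$ acting on $\lfrak$, written in the chosen integral basis, which also detects semisimplicity). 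Since $P$ is not identically zero and has degree $\ll_N 1$, a Schwartz--Zippel / pigeonhole argument over a box $\{0,1,\ldots,C_N\}^{\dim(\lfrak)}$ of integers produces $t \in \Z^{\dim(\lfrak)}$ with $P(t)\neq 0$ and $|t_i|\ll_N 1$, hence a regular semisimple $X \in \lfrak(\Z)$ with $\norm{X}\ll \height(\Lbf)^\star$. Then $\Tbf := Z_\Lbf(X)^\circ$ is a maximal $\Q$-torus. To bound $\height(\Tbf)$: the Lie algebra $\tfrak = \Lie(\Tbf)$ is exactly the kernel of the linear map $\ad X \colon \lfrak \to \lfrak$ (restricted to $\lfrak$, using regularity so that the $0$-generalized-eigenspace is the $0$-eigenspace), a linear map with integral matrix of norm $\ll \height(\Lbf)^\star$ in the chosen basis; computing a primitive integral basis of $\ker(\ad X)$ by Smith normal form gives $\height(\Tbf) = \height(\tfrak) \ll \height(\Lbf)^\star$, which is the claimed bound.

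\emph{Step 2: Bounding the discriminant of the splitting field.} Let $F$ be the Galois splitting field of $\Tbf$, i.e.\ the smallest field over which all characters of $\Tbf$ are defined; equivalently $\Gal(\overline\Q/F)$ is the kernel of the action on the character lattice $X^*(\Tbf)$. The Galois action on $X^*(\Tbf)$ factors through a subgroup of $\GL_r(\Z)$ with $r = \dim(\Tbf) \leq N-1$, and the image lies in a finite subgroup of $\GL_r(\Z)$, so $[F:\Q] = \#\Gal(F/\Q) \ll_N 1$ is bounded. A clean way to realize $F$ concretely is: the characteristic polynomial of $\Ad$ of a generator, or better, observe that $F$ is generated by the eigenvalues of $\ad X$ acting on $\lfrak$ for the regular semisimple element $X$ of Step~1 — more precisely, the splitting field of $\Tbf$ is contained in the splitting field $F'$ of the characteristic polynomial $\chi_X(T) \in \Z[T]$ of $\ad X$ on $\lfrak$, which is a monic integer polynomial of degree $\dim(\lfrak)\ll_N 1$ and coefficients $\ll \norm{X}^\star \ll \height(\Tbf)^\star$ (since $X$ has height $\ll \height(\Lbf)^\star \ll \height(\Tbf)^\star$ by Step 1; note we only need a bound in terms of $\height(\Tbf)$, and Step~1 gives $\height(\Tbf)\gg \height(\Lbf)^{1/\star}$ from $\tfrak \subset \lfrak$ and... actually more carefully, one just carries along $\norm X$ which is $\ll \height(\Lbf)^\star$, and since the lemma permits the implicit constants to depend on $N$ only while the final bound is phrased via $\height(\Tbf)$, it suffices to additionally record $\height(\Lbf)\ll\height(\Tbf)^\star$ — this last inequality is false in general, so instead I bound everything directly: redo Step 1 tracking that $X$ can be chosen with $\norm X \ll_N \height(\Lbf)^\star$, and separately note $\height(\Tbf)\gg 1$; the cleanest route is to express $\disc(F)$ via $\disc(\Tbf)$ as below rather than via $X$). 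The discriminant of $F$ divides a power of $\disc(\chi_X)$, and $|\disc(\chi_X)| \ll \norm{X}^\star$; combined with $[F:\Q]\ll_N 1$ and standard bounds relating the discriminant of a number field to the discriminant of any generating polynomial, one gets $|\disc(F)| \ll \norm X^\star$. The only remaining point is to reconcile this with the requirement that the bound be stated as $\height(\Tbf)^\star$: since we may instead start Step~1 from $\Tbf$ itself — take any regular element $X \in \tfrak(\Z)$ primitive with $\norm X \ll \height(\Tbf)^\star$ (possible as $\tfrak$ has a primitive integral basis of that height, and a generic small integer combination is regular by the same non-vanishing-of-a-bounded-polynomial argument) — we obtain $|\disc(F)| \ll \norm X^\star \ll \height(\Tbf)^\star$, as claimed.

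\textbf{Main obstacle.} The delicate point, and the one I expect to be the real work, is Step~1: producing a regular semisimple element of \emph{bounded} (not merely polynomially bounded in some auxiliary quantity) height with all constants depending only on $N$. This requires (i) writing down the regularity/semisimplicity condition as the non-vanishing of an explicit polynomial of degree $O_N(1)$ in the coordinates with respect to an integral basis of $\lfrak$ of height $\ll \height(\Lbf)^\star$, and (ii) invoking a Schwartz--Zippel-type bound to find an integer point avoiding this hypersurface in a box of size $O_N(1)$. Both are standard in spirit but must be set up carefully so that the degree and number of variables stay bounded purely in terms of $N$; the structure theory input — that the centralizer of a regular semisimple element is a maximal torus, and that its Lie algebra is the kernel of $\ad X$ — is classical. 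The discriminant bound in Step~2 is then essentially bookkeeping: a bounded-degree, bounded-height generating polynomial of a number field has bounded-height discriminant, hence the field does too.
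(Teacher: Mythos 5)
Your Step 1 is correct and takes a genuinely different (and arguably cleaner) route than the paper. The paper finds a non-nilpotent element via Lemma~\ref{lem:polynotzero}, passes to its semisimple part by Jordan decomposition, takes the centralizer (which is reductive of the same absolute rank but smaller dimension), and then \emph{inducts on dimension}. You instead directly produce a regular semisimple element in one shot, which hinges on the fact that ``regular semisimple'' is the nonvanishing of a single polynomial of bounded degree, namely the coefficient $c_r$ of $T^r$ (where $r$ is the rank) in $\det(T - \ad X|_{\lfrak})$; this is true (Jordan decomposition plus the standard ``$\ker(\ad X_s)$ is a Cartan implies $X_n=0$'' argument), and your application of Schwartz--Zippel and of Smith normal form to bound $\height(\ker(\ad X))$ is fine. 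Your approach trades the induction for the slightly less elementary fact that $c_r$ detects regularity-and-semisimplicity simultaneously; both are valid.

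Step 2 has a genuine gap. You propose bounding $|\disc(F)|$ via the splitting field of the characteristic polynomial of $\ad X$ acting on $\lfrak$, but the eigenvalues of $\ad X$ are only the root values $\{\alpha(X)\}$, and roots do \emph{not} generate the character lattice of $\Tbf$ when $\Lbf$ (hence $\Tbf$) has a nontrivial central torus --- in the extreme case where $\Lbf$ is itself a torus, $\ad X=0$ and your $F'$ would be $\Q$ even though $F$ can be large. Since the lemma is stated for reductive $\Lbf$ (and is applied in the proof of Proposition~\ref{prop: vol complexity} to groups that need not be semisimple), this is not a cornercase you can ignore. The patch at the end (``take any regular element $X \in \tfrak(\Z)$'') still seems to invoke $\ad X$, so it inherits the same problem, and it additionally introduces an unnecessary genericity requirement (you'd need $X$ chosen so that the eigenvalues separate the characters). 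The clean fix, which is what the paper does, is to bound $|\disc(F)|$ via the eigenvalues of the \emph{matrix} $\wpz_i$ on $\C^N$ (not $\ad\wpz_i$ on $\lfrak$) for an integral basis $(\wpz_i)$ of $\tfrak$ of height $\ll\height(\Tbf)^\star$: then $F$ is contained in the compositum of the splitting fields of the $\chi_{\wpz_i}(T)=\det(T-\wpz_i)$, no genericity argument is needed, and the discriminant bound follows from the controlled coefficients of each $\chi_{\wpz_i}$.
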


We will use the following simple and well-known lemma.

\begin{lemma}\label{lem:polynotzero}
Let $f \in \C[x_1,\ldots,x_n]$ be a non-zero polynomial of degree $d$.
Then there exists $x \in \Z^n$ with $\sup_i |x_i| \leq d^n$ and $f(x) \neq 0$. 
\end{lemma}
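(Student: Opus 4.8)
\textbf{Proof proposal for Lemma~\ref{lem:polynotzero}.}

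The plan is to prove this by induction on the number of variables $n$, using the fact that a non-zero single-variable polynomial of degree $d$ has at most $d$ roots. For the base case $n=1$, if $f \in \C[x_1]$ is non-zero of degree $d$, then $f$ has at most $d$ roots, so among the $d^1+1 = d+1$ integers $0,1,\ldots,d$ there is at least one value $x_1$ with $f(x_1) \neq 0$; this $x_1$ satisfies $|x_1| \leq d = d^1$, which suffices. (For $d=0$ the polynomial is a non-zero constant and any $x_1$ works.)

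For the inductive step, suppose $n \geq 2$ and the statement holds for $n-1$ variables. Write $f \in \C[x_1,\ldots,x_n]$ as a polynomial in $x_n$ with coefficients in $\C[x_1,\ldots,x_{n-1}]$:
\[
f(x_1,\ldots,x_n) = \sum_{j=0}^{D} c_j(x_1,\ldots,x_{n-1})\, x_n^j,
\]
where $D \leq d$ and some $c_j$ is non-zero. Pick such a $j$; then $c_j$ is a non-zero polynomial of degree at most $d$ in $n-1$ variables, so by the inductive hypothesis there exists $(a_1,\ldots,a_{n-1}) \in \Z^{n-1}$ with $\sup_i |a_i| \leq d^{n-1}$ and $c_j(a_1,\ldots,a_{n-1}) \neq 0$. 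Specializing, $g(x_n) := f(a_1,\ldots,a_{n-1},x_n) \in \C[x_n]$ is a non-zero polynomial (its coefficient of $x_n^j$ is non-zero) of degree at most $d$. By the base case reasoning, there is an integer $a_n$ with $|a_n| \leq d$ and $g(a_n) \neq 0$. Then $x = (a_1,\ldots,a_{n-1},a_n) \in \Z^n$ satisfies $\sup_i |x_i| \leq \max\{d^{n-1}, d\} = d^{n-1} \leq d^n$ (using $d \geq 1$; the case $d = 0$ is again handled by constants directly) and $f(x) = g(a_n) \neq 0$.

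There is no real obstacle here — the argument is entirely elementary. The only minor care needed is the bookkeeping of the degree bound under specialization (the degree of $g$ in $x_n$ is at most the total degree of $f$, which is $\leq d$) and the trivial edge cases $d = 0$; I would dispatch these in a sentence each rather than belabor them.
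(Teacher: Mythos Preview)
Your proof is correct. Both your argument and the paper's proceed by induction on $n$, but the inductive steps differ. You expand $f$ in the last variable $x_n$, apply the inductive hypothesis to a nonzero coefficient $c_j(x_1,\ldots,x_{n-1})$, specialize, and then pick $a_n$ among $\{0,\ldots,d\}$; this is the standard Schwartz--Zippel style reduction. The paper instead performs a linear change of coordinates $x_i \mapsto x_i + \alpha_i x_1$ so that the leading coefficient in $x_1$ becomes a nonzero polynomial $\phi(\alpha_2,\ldots,\alpha_n)$, applies the inductive hypothesis to $\phi$, and recovers the point as $(\alpha_1,\alpha_1\alpha_2,\ldots,\alpha_1\alpha_n)$. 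Your route is more direct, avoids the substitution, and in fact yields the sharper bound $\sup_i|x_i|\leq d^{n-1}$ for $n\geq 2$ (versus the paper's $d^n$, which arises from the product $|\alpha_1|\cdot|\alpha_i|\leq d\cdot d^{n-1}$). Either bound suffices for the application in Lemma~\ref{lem:heightofmaxtorus}.
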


\begin{proof}
We prove the claim by induction on the number of variables $n$.
If $n=1$, then $f$ has at most $d$ zeroes and the lemma is clear.

Suppose the lemma is known for all polynomials in $n-1$ variables and let $f$ be given.
Consider the polynomial
\begin{align*}
g(x_1,\ldots,x_n) = f(x_1,x_2+\alpha_2x_1,\ldots,x_n+\alpha_nx_1).
\end{align*}
for some $\alpha_2,\ldots,\alpha_n\in \Z$.
Then $g$ is of the form $g = \phi(\alpha_2,\ldots,\alpha_n)x_1^d + g_1$ for some non-zero polynomial $\phi$ of degree at most $d$ and $g_1$ of degree smaller than $d$ in $x_1$.
By assumption we may choose $\alpha_2, \ldots, \alpha_n \in \Z$ with $|\alpha_i| \leq d^{n-1}$ such that $\phi(\alpha_2,\ldots,\alpha_n)\neq 0$.
In particular, $g(x_1,0,\ldots,0)$ is a non-constant polynomial in $x_1$ and so we may choose $\alpha_1\in\Z$ with $|\alpha_1|\leq d$ and $g(x_1,0,\ldots,0) \neq 0$.
This proves the lemma with the point $(\alpha_1,\alpha_1\alpha_2,\ldots,\alpha_1\alpha_n)$.
\end{proof}

\begin{proof}[Proof of Lemma~\ref{lem:heightofmaxtorus}]
We may choose the vector $\vpz_\Lbf$ to be integral (i.e.~contained in $\wedge^{\dim(\Lbf)}\mathfrak{sl}_N(\Z)$) and primitive (i.e.~$\norm{\vpz_{\Lbf}}_\gep = 1$ for every prime $\gep$).
In particular, $\height(\Lbf) = \norm{\vpz_{\Lbf}}_\infty$.

We first prove the following claim.

\begin{claim*}
There exists a non-trivial $\ad|_{\lfrak}$-semisimple integral element $\wpz \in \lfrak = \Lie(\Lbf)$ with $\norm{\wpz}_\infty \ll \norm{\vpz_{\Lbf}}_\infty^\star$.
\end{claim*}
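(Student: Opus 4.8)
\textbf{Proof plan for the Claim inside Lemma~\ref{lem:heightofmaxtorus}.}

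The plan is to produce a single non-trivial $\ad|_{\lfrak}$-semisimple element of $\lfrak$ of controlled height, and then (in the remainder of the lemma, not this claim) enlarge its centralizer to a maximal torus, iterating the height control. For the claim itself, I would argue as follows. Since $\Lbf$ is connected reductive, its Lie algebra $\lfrak$ contains regular semisimple elements; more to the point, the set of elements $\wpz \in \lfrak$ that fail to be $\ad$-semisimple is contained in the vanishing locus of a suitable polynomial. Concretely, for $\wpz \in \lfrak$ let $p_\wpz(t) = \det(tI - \ad_{\mathfrak{sl}_N}(\wpz))$ be the characteristic polynomial of $\ad(\wpz)$ acting on $\mathfrak{sl}_N$, write $p_\wpz(t) = t^{e}\bigl(t^{m} + c_{m-1}(\wpz)t^{m-1} + \cdots + c_0(\wpz)\bigr)$ with $c_0(\wpz) \neq 0$ on a dense open set, where $e$ is the generic dimension of the centralizer. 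The coefficient $c_0$ (the lowest nonzero coefficient, i.e.\ essentially the discriminant-type invariant detecting regular semisimplicity) restricts to a polynomial function on the linear subspace $\lfrak$; call this restriction $q$. Since $\Lbf$ is reductive, $\lfrak$ contains an element $\wpz_0$ with $\ad(\wpz_0)$ semisimple and nonzero, and one checks $q$ is not identically zero on $\lfrak$ (a regular semisimple element of $\lfrak$ does not vanish under $q$). The degree of $q$ is $O_N(1)$ since $\deg c_0 \le \dim \mathfrak{sl}_N = N^2-1$ and the restriction to $\lfrak$ does not raise the degree.

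Next I would parametrize $\lfrak$ by an integral basis of controlled height. Writing $\vpz_{\Lbf} = \wpz_1 \wedge \cdots \wedge \wpz_{\dim\Lbf}$ after Gram--Schmidt-type reduction (or invoking Siegel's lemma / Minkowski reduction for the lattice $\lfrak \cap \wedge$-integral structure), one obtains an integral $\Z$-basis $\wpz_1,\ldots,\wpz_{\dim\Lbf}$ of $\lfrak(\Q) \cap \mathfrak{sl}_N(\Z)$ (or of a finite-index sublattice) with $\prod_i \norm{\wpz_i}_\infty \ll \norm{\vpz_\Lbf}_\infty$, hence each $\norm{\wpz_i}_\infty \ll \norm{\vpz_\Lbf}_\infty^{\star}$. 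Then $\tilde q(x_1,\ldots,x_{\dim\Lbf}) := q\bigl(\sum_i x_i \wpz_i\bigr)$ is a non-zero polynomial with integer coefficients of degree $O_N(1)$ and coefficients of size $\ll \norm{\vpz_\Lbf}_\infty^{\star}$. By Lemma~\ref{lem:polynotzero} there is an integral point $(x_1,\ldots,x_{\dim\Lbf}) \in \Z^{\dim\Lbf}$ with $\sup_i|x_i| \le (\deg \tilde q)^{\dim\Lbf} = O_N(1)$ at which $\tilde q$ does not vanish. Setting $\wpz = \sum_i x_i \wpz_i$ gives a nonzero integral element of $\lfrak$ with $\norm{\wpz}_\infty \ll \max_i |x_i| \cdot \max_i \norm{\wpz_i}_\infty \ll \norm{\vpz_\Lbf}_\infty^{\star} = \height(\Lbf)^{\star}$, and $\ad(\wpz)$ has nonzero lowest characteristic coefficient, hence (up to discarding the $0$-eigenspace) $\wpz$ is $\ad|_{\lfrak}$-semisimple and nonzero. (One should check that $q(\wpz)\neq 0$ forces $\ad_{\lfrak}(\wpz)$ to be semisimple, not merely $\ad_{\mathfrak{sl}_N}(\wpz)$; this holds because $q$ was built from the invariant distinguishing regular semisimple elements within $\lfrak$, and the semisimple part of $\wpz$ in $\mathfrak{sl}_N$ lies in $\lfrak$ anyway since $\lfrak$ is an algebraic Lie algebra.) This proves the claim.

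The main obstacle I anticipate is not the existence argument — that is routine geometric invariant theory / Chevalley-restriction bookkeeping — but the \emph{uniformity} of the degree and coefficient bounds in $N$ only (independent of $\Lbf$), together with keeping the implicit exponent $\star$ genuinely bounded in terms of $N$ throughout the basis-reduction step. The subtle point is that the ``lowest nonzero coefficient'' of $p_\wpz$ depends on the generic centralizer dimension of $\lfrak$, which varies with $\Lbf$; one fixes this by taking $q$ to be $c_{j}$ where $j$ ranges over the finitely many possibilities $0 \le j \le N^2-1$ and choosing, for the given $\lfrak$, the smallest $j$ with $c_j|_{\lfrak} \not\equiv 0$ — a finite case distinction with $N$-bounded degree in each case. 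After the claim, the remainder of Lemma~\ref{lem:heightofmaxtorus} proceeds by taking $\Tbf$ to be a maximal torus of the (reductive, smaller) centralizer $Z_{\Lbf}(\wpz)^\circ$, whose defining equations have height $\ll \height(\Lbf)^\star$, and iterating boundedly many times (at most $\mathrm{rk}\,\Lbf \le N$ steps) until the centralizer is a torus; the discriminant bound on the splitting field $F$ then follows since the Galois action on the character lattice of $\Tbf$ is realized on integral data of height $\ll \height(\Tbf)^\star$, so the splitting field is generated by roots of polynomials of height $\ll \height(\Tbf)^\star$, whence $|\disc(F)| \ll \height(\Tbf)^\star$ by standard estimates.
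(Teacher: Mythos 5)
Your overall strategy — integral basis of controlled height via Minkowski/Siegel, a polynomial detecting "bad" elements, Lemma~\ref{lem:polynotzero} to avoid its zero set — is exactly the paper's, but there is a genuine gap in the key step where you conclude semisimplicity, and your parenthetical defence of it is not correct.

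The polynomial $q$ you define (the lowest char-coefficient $c_j$ of $\ad_{\mathfrak{sl}_N}(\wpz)$ with $c_j|_{\lfrak}\not\equiv 0$) does \emph{not} distinguish regular semisimple elements. The characteristic polynomial of $\ad(\wpz)$ only sees the semisimple part of $\wpz$: if $\wpz=\wpz_s+\wpz_n$ is the Jordan decomposition, then $\ad(\wpz_s)$ and $\ad(\wpz_n)$ commute, $\ad(\wpz_n)$ is nilpotent, and so the eigenvalues of $\ad(\wpz)$ (with multiplicities) coincide with those of $\ad(\wpz_s)$. Hence $c_j(\wpz)=c_j(\wpz_s)$ for every $j$, and in particular $q(\wpz)=q(\wpz_s)$. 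So $q(\wpz)\neq 0$ tells you only that $\wpz_s\neq 0$ (and that $\wpz_s$ achieves the generic nullity), \emph{not} that $\wpz$ itself is $\ad|_{\lfrak}$-semisimple. A concrete failure: take $\wpz=\wpz_s+\wpz_n$ with $\wpz_s$ regular semisimple in $\lfrak$ and $\wpz_n$ any nonzero nilpotent of $\lfrak$ commuting with $\wpz_s$ — then $q(\wpz)=q(\wpz_s)\neq 0$ but $\wpz$ is not semisimple.

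The missing idea, which is how the paper closes the argument, is to pass to the semisimple part. The paper does the simpler thing of demanding only that $\wpz'=\sum_i x_i\vpz_i$ be \emph{not} $\ad|_{\lfrak}$-nilpotent — a lower bar, still detected by degree-$O_N(1)$ polynomials (char-coefficients of $\ad|_{\lfrak}$, note $\ad|_{\lfrak}$ rather than $\ad_{\mathfrak{sl}_N}$). This forces $\wpz'$ off the centre of $\lfrak$, and then the Jordan decomposition $\wpz'=\wpz_s+\wpz_n$ in $\mathfrak{sl}_N$ yields $\wpz=\wpz_s\in\lfrak$ nonzero (since $\lfrak$ is algebraic, so closed under taking semisimple and nilpotent parts, and since $\ad|_{\lfrak}(\wpz')$ non-nilpotent forces $\ad|_{\lfrak}(\wpz_s)\neq 0$). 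The norm of $\wpz_s$ is polynomially controlled in $\norm{\wpz'}_\infty$ because the Jordan projection is given by a polynomial in $\wpz'$ whose coefficients are bounded in terms of the char-coefficients of $\wpz'$. You could salvage your argument by inserting this Jordan-decomposition step at the end; as written, it does not establish the claim.

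Two smaller points: you should use $\ad|_{\lfrak}$ rather than $\ad_{\mathfrak{sl}_N}$ if you want "$\ad|_{\lfrak}$-semisimple" (the two notions of generic nullity differ, and only the former feeds into the centralizer-recursion that follows); and the argument implicitly assumes $\Lbf$ is not a torus (otherwise $\ad|_{\lfrak}\equiv 0$ and the nilpotency polynomial is identically zero), which is harmless since the claim is trivial in that case, but worth flagging.
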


Note that $\norm{\vpz_{\Lbf}}_\infty$ is precisely the covolume of the lattice $\lfrak(\Z)$ in $\lfrak(\R)$.
By Minkowski's second theorem, there exist vectors $\vpz_1,\ldots,\vpz_{\dim(\lfrak)} \in \lfrak(\Z)$ which are linearly independent and which satisfy $\norm{\vpz_i} \ll \norm{\vpz_{\Lbf}}_\infty$ for every $i =1,\ldots,\dim(\lfrak)$.
Consider the proper subvariety of $\Abf^{\dim(\lfrak)}$ 
\begin{align*}
\Big\{(x_1,\ldots,x_{\dim(\lfrak)}): \sum_i x_i \vpz_i \text{ is } \ad|_{\lfrak}\text{-nilpotent}\Big\}
\end{align*}
which is defined by polynomials of degree $O_N(1)$.
By Lemma~\ref{lem:polynotzero} applied to any of these defining polynomials,
there exist $x_i \in \Z$, $|x_i| \ll 1$, such that $\wpz' = \sum_i x_i \vpz_i$ is not $\ad|_{\lfrak}$-nilpotent.
In particular, $\norm{\wpz'}_\infty \ll \norm{\vpz_{\Lbf}}_\infty^\star$.
Note that the center of $\lfrak$ trivially consists of $\ad|_{\lfrak}$-nilpotent (but not nilpotent) elements and so $\wpz'$ is not contained in the center.
The coefficients of the characteristic polynomial of $\wpz'$ are of size $\ll \norm{\wpz'}_\infty^\star$.
The Jordan decomposition then implies that the semisimple part $\wpz$ of $\wpz'$ is non-zero and satisfies $\norm{\wpz}_\infty\ll \norm{\wpz'}_\infty^\star$ which concludes the claim.

As any semisimple element is contained in the Lie algebra of a maximal torus, the centralizer $\lfrak'$ of the above constructed element $\wpz$ has the same absolute rank as $\lfrak$.
By the above bound on the size of $\wpz$, we have $\height(\lfrak') \ll \height(\lfrak)^\star = \height(\Lbf)^\star$. 
Note that $\lfrak'$ is a proper reductive subalgebra of $\lfrak$.
Overall, we may as well find a maximal subalgebra of $\lfrak'$ consisting of semisimple elements.
Thus, the first part of the lemma follows by induction on the dimension.

For the second part, recall that for any element $\wpz \in \mathfrak{sl}_N(\R)$ the coefficients of the characteristic polynomial are integral polynomials in the entries of $\wpz$ and have size $\ll \norm{\wpz}_\infty^\star$.
We apply this observation to an integral basis $(\wpz_i)_i$ of $\tfrak$ with $\norm{\wpz_i}_\infty \ll \height(\Tbf)^\star$. We obtain that for each $\wpz_i$ all eigenvalues are of size $\ll \height(\Tbf)^\star$ (e.g.~by Cauchy's bound on zeros of real polynomials).
Since the discriminant of a compositum of extensions is bounded in terms of the individual extensions, the splitting field of a $\Q$-torus $\Tbf<\SL_N$ has the required bound. 
\end{proof}

\begin{proof}[Proof of Proposition~\ref{prop: vol complexity}]
A version of \eqref{eq:volvscpl} was established in \cite[(B.2)]{EMMV}, namely
\begin{align}\label{eq:discvsvol}
\disc(Y)^\star \ll \vol(Y) \ll \disc(Y)^\star
\end{align}
where the notion of discriminant $\disc(Y)$ from \cite{EMMV} is equal to $ \cpl(Y)$ up to a factor depending only on $\Hbf$ (or rather the quasi-split inner forms of the factors of $\Hbf$).
More specifically, 
\begin{align}\label{eq:discvscpl}
\disc(Y) = \frac{\mathcal{D}(\Hbf)}{\mathcal{E}(\Hbf)}\cpl(Y)
\end{align}
for some numbers $\mathcal{D}(\Hbf) \geq 1$ and $0 < \mathcal{E}(\Hbf) \leq 1$.
In particular, $\disc(Y) \geq \cpl(Y)$ and thus $\cpl(Y)^\star \ll \vol(Y)$ follows.
Here, the notation is taken from \cite{EMMV} for the readers' convenience and, in particular, $\mathcal{D}(\Hbf)$ does not refer to data of a homogeneous set.
In the remainder of the proof we will show that
\begin{align*}
\mathcal{D}(\Hbf) \ll \minht(Y)^\star \cpl(Y)^\star
\quad \text{and} \quad
\mathcal{E}(\Hbf) \gg 1.
\end{align*}
We note that both of the quantities $\mathcal{D}(\Hbf),\mathcal{E}(\Hbf)$ depend only on the $\SL_N(\Q)$-conjugacy class of $\Hbf$ and not on $\Hbf$ itself.

To recall the definition of $\mathcal{D}(\Hbf)$ from \cite[(B.13)]{EMMV}, we repeat parts of the discussion at the beginning of \S\ref{sec:good place}.
As $\Hbf$ is simply connected, we may write $\Hbf = \Hbf_1\cdots\Hbf_k$ where the groups $\Hbf_i$ are simply connected $\Q$-almost simple $\Q$-subgroups of $\Hbf$.
Write $\Hbf_i = \Res_{F_i/\Q}\Hbf_i'$ where $F_i/\Q$ is a finite extension and $\Hbf_i'$ is an absolutely almost simple group over $F_i$.
Associated to $\Hbf_i'$ is a `least distorted' form of it:
let $\mathcal{H}_i'$ be the unique quasi-split inner form of $\Hbf_i'$ over $\Q$.
Let $L_i/F_i$ be the corresponding number field defined as in~\cite[\S0.2]{Pr-Volume}. 
That is, $L_i$ is the splitting field of~$\Hcal_i'$ except in the case where~$\Hcal_i'$ is a triality form of type~${}^6\mathsf{D}_4$ where it is a degree~$3$ subfield
of the degree~$6$ Galois splitting field with Galois group~$S_3$. 
Let $s(\mathcal{H}_i')$ be as defined in \cite[\S0.4]{Pr-Volume} which is an integer only depending on the root system.
Overall, we set
\begin{align*}
\mathcal{D}(\Hbf) = \prod_i \mathcal{D}(\Hbf_i),\
\mathcal{D}(\Hbf_i) = \big( D_{L_i/F_i}^{s(\mathcal{H}_i')} D_{F_i}^{\dim(\Hbf_i)}\Big)^{\frac{1}{2}}.
\end{align*}
Here, we write $D_F$ for the absolute value of the discriminant of a number field $F$ and, similarly, we set $D_{L/F}$ to be absolute value of the norm of the relative discriminant ideal for extensions $L/F/\Q$.

We claim that for every $\Hbf_i$
\begin{align}\label{eq:estimateD(H_i)}
\mathcal{D}(\Hbf_i) \ll \minht(Y)^\star\cpl(Y)^\star
\end{align}
and in particular
\begin{align}\label{eq:estimateD(H)}
\mathcal{D}(\Hbf) \ll \minht(Y)^\star\cpl(Y)^\star.
\end{align}
To see \eqref{eq:estimateD(H_i)}, note that $\mathcal{D}(\Hbf_i) \ll D_{L_i}^\star\ll D_{L_i'}^\star$ where $L_i'/\Q$ is the Galois closure of $L_i/\Q$.
Here, the extension $L_i'/\Q$ is the (Galois) splitting field of the quasi-split group $\Res_{F_i/\Q}\mathcal{H}_i'$.
As $\Res_{F_i/\Q}\mathcal{H}_i'$ is an inner form of $\Hbf_i$, any splitting field of $\Hbf_i$ contains $L_i'$ and the same is true for any $\SL_N(\Q)$-conjugate of $\Hbf_i$.
Note that the height of any factor of a semisimple $\Q$-group is controlled polynomially by the height of the $\Q$-group --- see e.g.~\cite[Lemma 8.6]{AW-realsemisimple}.
Combining this with Lemma~\ref{lem:heightofmaxtorus} we have 
\begin{align*}
D_{L_i'} \ll \min_{\gamma \in \SL_N(\Q)}\height(\gamma \iota(\Hbf_i)\gamma^{-1})^\star
\ll \min_{\gamma \in \SL_N(\Q)}\height(\gamma \iota(\Hbf)\gamma^{-1})^\star.
\end{align*}
Let $[g]\in Y$ be a coset achieving the minimal height of $Y$ in the cusp.
We may assume $g\in \SL_N(\R)$ by strong approximation.
By reduction theory, there exists $\gamma \in \SL_N(\Z)$ such that 
\begin{align*}
\norm{\gamma g}_{\infty},\norm{(\gamma g)^{-1}}_{\infty} \ll \big(\min_{\vpz \in \Z^n \setminus \{0\}} \norm{g^{-1}.\vpz}\big)^{-\star} = \minht(Y)^{\star}
\end{align*}
Thus,
\begin{align*}
D_{L_i'} \ll \height(\gamma^{-1} \iota(\Hbf)\gamma)^\star 
\ll \minht(Y)^{\star} \cpl(Y)^\star
\end{align*}
proving \eqref{eq:estimateD(H_i)} and hence \eqref{eq:estimateD(H)}.

The quantity $\mathcal{E}(\Hbf)$ is a local product of normalized cardinalities of certain finite groups associated to the quasi-split inner forms $\mathcal{H}_i'$. We refer to \cite[(B.15)]{EMMV} for an exact definition, but note that it may be expressed as reciprocal of the Dedekind zeta functions of the fields $L_i$ and of certain Dirichlet $L$-functions at integer values $\geq 2$ --- see \cite[Rem.~3.11]{Pr-Volume} or \cite[\S5.2]{MohammadiSalehiGolsefidy-vertices}.
In particular, $\mathcal{E}(\Hbf) \gg \zeta(2)^{-\star} \gg 1$.
Combining this with \eqref{eq:discvscpl} and \eqref{eq:estimateD(H)} yields
\begin{align*}
\cpl(Y) \leq \disc(Y) \ll \minht(Y)^\star \cpl(Y)^\star.
\end{align*}
In view of \eqref{eq:discvsvol}, the proposition follows.
\end{proof}
\end{appendix}

\bibliographystyle{plain}
\bibliography{papers}

\end{document}